\title{First steps in stable Hamiltonian topology}
\author{K.~Cieliebak and E.~Volkov\footnote{Supported by DFG
    grant MO 843/3-3, FNRS grant FRFC 2.4655.06, and the ESF
    Research Networking Programme {\em Contact and Symplectic Topology
      (CAST)}}}
\theoremstyle{plain}
\newtheorem{theorem}{Theorem}[section]
\newtheorem{thm}[theorem]{Theorem}
\newtheorem{corollary}[theorem]{Corollary}
\newtheorem{cor}[theorem]{Corollary}
\newtheorem{proposition}[theorem]{Proposition}
\newtheorem{prop}[theorem]{Proposition}
\newtheorem{lemma}[theorem]{Lemma}
\theoremstyle{remark}
\newtheorem{question}[theorem]{Question}
\newtheorem{remark}[theorem]{Remark}
\newtheorem{example}[theorem]{Example}
\newtheorem*{definition}{Definition}
\newcommand{\id}{{{\mathchoice {\rm 1\mskip-4mu l} {\rm 1\mskip-4mu l}
{\rm 1\mskip-4.5mu l} {\rm 1\mskip-5mu l}}}}
\newcommand{\ddtt}{\frac{d}{dt}\bigl|_{t=0}}
\newcommand{\p}{\partial}
\newcommand{\om}{\omega}
\newcommand{\Om}{\Omega}
\newcommand{\eps}{\varepsilon}
\newcommand{\into}{\hookrightarrow}
\newcommand{\la}{\langle}
\newcommand{\ra}{\rangle}
\newcommand{\N}{{\mathbb{N}}}
\newcommand{\Z}{{\mathbb{Z}}}
\newcommand{\R}{{\mathbb{R}}}
\newcommand{\C}{{\mathbb{C}}}
\newcommand{\Q}{{\mathbb{Q}}}
\newcommand{\im}{{\rm im }}        
\newcommand{\st}{{\rm st}}
\newcommand{\vol}{{\rm vol}}
\newcommand{\const}{{\rm const}}
\newcommand{\curl}{{\rm curl\,}}
\renewcommand{\min}{{\rm min}}
\renewcommand{\max}{{\rm max}}
\newcommand{\inn}{{\rm int\,}}
\newcommand{\tb}{{\rm tb}}
\newcommand{\loc}{{\rm loc}}
\newcommand{\ot}{{\rm ot}}
\newcommand{\nv}{{\rm nv}}
\newcommand{\Hel}{{\rm Hel}}
\newcommand{\Leg}{{\rm Leg}}
\newcommand{\trans}{{\rm trans}}
\newcommand{\Diff}{{\rm Diff}}
\newcommand{\DD}{\mathcal{D}}
\newcommand{\EE}{\mathcal{E}}
\newcommand{\CC}{\mathcal{C}}
\newcommand{\LL}{\mathcal{L}}
\newcommand{\FF}{\mathcal{F}}
\renewcommand{\AA}{\mathcal{A}}
\newcommand{\UU}{\mathcal{U}}
\newcommand{\RR}{\mathcal{R}}
\newcommand{\SFT}{\mathcal{SFT}}
\newcommand{\SHS}{\mathcal{SHS}}
\newcommand{\HS}{\mathcal{HS}}
\newcommand{\Cont}{\mathcal{CF}}
\begin{document}
\maketitle
In this paper we study topological properties of stable Hamiltonian
structures. In particular, we prove the following results in dimension
three: The space of stable Hamiltonian structures modulo homotopy is
discrete; there exist stable Hamiltonian structures that are not
homotopic to a positive contact structure; stable Hamiltonian
structures are generically Morse-Bott (i.e.~all closed orbits are Bott
nondegenerate) but not Morse; the standard contact structure on $S^3$
is homotopic to a stable Hamiltonian structure which cannot be
embedded in $\R^4$. Moreover, we derive a structure theorem in
dimension three and classify stable Hamiltonian structures supported
by an open book. We also discuss implications for the foundations of
symplectic field theory.
\tableofcontents

\section{Introduction}\label{sec:intro}

A stable Hamiltonian structure is a generalization of a contact
structure as well as a taut foliation defined by a closed form. 
Stable Hamiltonian structures first appeared in~\cite{HZ} as a 
condition on hypersurfaces for which the Weinstein conjecture can be
proved. Later, they attained importance as the structure on a
manifold needed for the compactness result in symplectic field
theory~\cite{EGH,BEHWZ,CM}. Further interest in stable Hamiltonian
structures arises from the recent proof of the Weinstein conjecture in
dimension three by Hutchings and Taubes~\cite{HT}, and from their
relation to Ma\~n\'e's critical values~\cite{CFP}. Stable Hamiltonian
structures also appear in work of Eliashberg, Kim and
Polterovich~\cite{EKP} and Wendl~\cite{We}.  
 
In this paper we take first steps in
studying the topology of stable Hamiltonian structures. Besides their
intrinsic interest, some of these topological questions are also
relevant for the foundations of symplectic field theory. 

\begin{definition}
A {\em Hamiltonian structure (HS)} on an oriented $(2n-1)$-dimensional
manifold $M$ is a closed 2-form $\om$ of maximal rank, i.e.~such that
$\om^{n-1}$ vanishes nowhere. Associated to $\om$ is its 1-dimensional
{\em kernel distribution (foliation)} $\ker(\om):=\{v\in TM\mid
i_v\om=0\}$. We orient $\ker(\om)$ using the orientation on $M$
together with the orientation on the local transversal to $\ker\om$
given by $\om^{n-1}$. 
A {\em stabilizing 1-form} for $\om$ is a 1-form $\lambda$ such that 
\begin{equation}\label{eq:def}
\lambda\wedge\om^{n-1}>0 \qquad\text{and}\qquad  
\ker(\om)\subset\ker(d\lambda).
\end{equation}
A Hamiltonian structure $\om$ is called {\em stabilizable} if
it admits a stabilizing 1-form $\lambda$, and the pair $(\om,\lambda)$
is called a {\em stable Hamiltonian structure (SHS)}. A SHS
$(\om,\lambda)$ induces a canonical {\em Reeb vector field} $R$
generating $\ker(\om)$ and normalized by $\lambda(R)=1$. Note that if
$(\om,\lambda)$ is a SHS, then $(\om,-\lambda)$ is a SHS inducing the
opposite orientation. 

We call an oriented 1-foliation $\LL$ a {\em stable Hamiltonian
  foliation} if it is the kernel foliation of a SHS. 
\end{definition}

Note that there is no analogue for (stable) Hamiltonian structures of the
stability results of Moser and Gray for symplectic resp.~contact
structures (see e.g.~\cite{MS}) because the dynamics of the kernel
foliation can change drastically under small
perturbations. By ``stable Hamiltonian topology'' we mean the study
of stable Hamiltonian structures up to homotopy in the following
sense. 

\begin{definition}
A {\em homotopy of Hamiltonian structures} is a smooth family $\om_t$, for 
$t$ in some interval $I\subset\R$, such that the cohomology class of
$\om_t$ remains constant. The homotopy is called {\em
  stabilizable} if it admits a smooth family of stabilizing 1-forms
$\lambda_t$, and the pair $(\om_t,\lambda_t)$ is called a  
{\em homotopy of stable Hamiltonian structures}, or simply {\em stable
  homotopy}. 
\label{homdef}
\end{definition}

\begin{remark}\label{rem:exact}
Let us emphasize that for a homotopy of HS $\om_t$ we always require
$\dot\om_t$ to be {\em exact}. This is the relevant notion for many
reasons that will become clear in this paper (e.g.~to get invariance
of symplectic field theory under stable homotopies, see
Section~\ref{subsec:sft}). We will exclusively use this notion of
homotopy, with the exception of Sections~\ref{subsec:linstab}
and~\ref{subsec:pert-contact} where we will briefly consider
deformations $\om_t$ with varying cohomology class (to which we will
explicitly refer as ``non-exact deformations''). 
\end{remark}

{\bf Discreteness. }
For a de Rham cohomology class $\eta\in H^2(M;\R)$, we denote by
$\SHS_\eta(M)$ and $\HS_\eta(M)$ the spaces of SHS resp.~HS with
$[\om]=\eta$, equipped with the $C^k$-topology for some $2\leq
k\leq\infty$. 

\begin{thm}[cf.~Theorem~\ref{thm:discrete}]\label{thm:discrete-intro} 
Every SHS $(\om,\lambda)$ on a closed 3-manifold has a
$C^2$-neighbourhood in $\SHS_{[\om]}(M)$ in which all SHS are stably 
homotopic to $(\om,\lambda)$. 
\end{thm}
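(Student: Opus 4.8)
The plan is to separate two kinds of motion: motion of the stabilizing $1$-form with the Hamiltonian structure held fixed, and motion of the Hamiltonian structure itself. For the first, the key remark is that in dimension three the stabilizing condition simplifies. Since $\ker\om$ is an oriented line field spanned by a vector field $R$ with $i_R\om=0$, and since a $2$-form $\tau$ on a $3$-manifold satisfies $i_R\tau=0$ exactly when $\tau$ is a pointwise multiple of $\om$, the condition $\ker\om\subset\ker(d\lambda)$ is equivalent to $d\lambda=f\,\om$ for some function $f\in C^\infty(M)$. It follows at once that for a fixed HS $\om$ the set $\mathcal S(\om)$ of its stabilizing $1$-forms is convex: if $d\lambda_i=f_i\om$ and $\lambda_i\wedge\om>0$ for $i=0,1$, then $\lambda_s:=(1-s)\lambda_0+s\lambda_1$ satisfies $d\lambda_s=\bigl((1-s)f_0+sf_1\bigr)\om$ and $\lambda_s\wedge\om=(1-s)\lambda_0\wedge\om+s\lambda_1\wedge\om>0$, so $\lambda_s\in\mathcal S(\om)$, and $s\mapsto(\om,\lambda_s)$ is a stable homotopy (with $\dot\om\equiv0$, which is exact). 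Consequently it suffices to produce, for every $(\om',\lambda')$ sufficiently $C^2$-close to $(\om,\lambda)$ with $[\om']=[\om]$, a homotopy of Hamiltonian structures $\om_t$ from $\om$ to $\om'$ together with a smooth family $\lambda_t\in\mathcal S(\om_t)$ with $\lambda_0=\lambda$: concatenating $(\om_t,\lambda_t)$ with the convex segment from $(\om',\lambda_1)$ to $(\om',\lambda')$ (both lie in $\mathcal S(\om')$, hence so does the whole segment) and smoothing the concatenation then yields the desired stable homotopy.

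Next I would set up the homotopy of Hamiltonian structures and reduce the construction of $\lambda_t$ to a linear equation. Write $\om'-\om=d\beta$, choosing the primitive $\beta$ by means of a bounded right inverse of $d$ on exact forms (Hodge theory on the closed manifold $M$), so that $\|\beta\|_{C^1}$ is controlled by $\|\om'-\om\|_{C^2}$; set $\om_t:=\om+t\,d\beta$. For the $C^2$-neighbourhood small enough $\om_t$ is nowhere vanishing for all $t\in[0,1]$, hence a genuine homotopy of HS in the required (exact) sense, with $[\om_t]\equiv[\om]$. Let $R_t$ span $\ker\om_t$, normalized by $\lambda(R_t)=1$, so that $R_0=R$ (this is admissible for small perturbations). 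Looking for $\lambda_t$ of the form $\lambda+\mu_t$ with $\mu_0=0$, and using $\om=\om_t-t\,d\beta$, $i_{R_t}\om_t=0$ and $d\lambda=f\om$, one computes $i_{R_t}d\lambda=f\,i_{R_t}\om=-t f\,i_{R_t}d\beta$, so that $\lambda_t\in\mathcal S(\om_t)$ is equivalent to
\begin{equation*}
i_{R_t}\,d\mu_t=\sigma_t,\qquad \sigma_t:=t f\,i_{R_t}d\beta ,
\end{equation*}
a $1$-form of size $O(\|\beta\|_{C^1})$ satisfying $i_{R_t}\sigma_t=0$; the positivity $\lambda_t\wedge\om_t>0$ is automatic for $\mu_t$ small.

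The heart of the matter — and the step I expect to be the main obstacle — is solving this linear equation for a family $\mu_t$ depending smoothly on $t$, with $\mu_0=0$ and uniformly small norm. Using the pointwise splitting $\Lambda^2T^*M=\R\,\om_t\oplus(\lambda\wedge A_t)$, where $A_t$ is the annihilator of $R_t$ and the splitting is valid near $t=0$ since $\lambda(R_t)\neq0$ there, the equation forces the $\lambda\wedge A_t$-component of $d\mu_t$ to be the prescribed form $\lambda\wedge\sigma_t$, so the problem becomes: choose a function $h_t$ such that $h_t\,\om_t+\lambda\wedge\sigma_t$ is exact. Closedness of this $2$-form is one scalar condition which prescribes the derivative of $h_t$ along the foliation $\ker\om_t$, hence determines $h_t$ on each leaf up to an $R_t$-basic function, subject to a period obstruction over the closed leaves; one must then also arrange that the de Rham class of the resulting $2$-form vanishes. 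These are genuine obstructions, reflecting the absence of Moser-type stability, and controlling them is where the hypotheses — dimension three, exactness of $\dot\om$, and $C^2$-closeness — are used. The plan is to kill them using (i) the remaining freedom of adding a closed $1$-form to the primitive $\beta$; (ii) the freedom, by convexity of $\mathcal S(\om)$, to replace $\lambda$ by any other stabilizing form of $\om$ before starting; and (iii) the structural description of closed nowhere-vanishing $2$-forms on $3$-manifolds and their kernel foliations (the structure theorem derived later in the paper), together with the facts that all obstructions vanish at $t=0$, depend continuously on $t$, and involve only finite-dimensional cohomological data which is robust under small perturbations. Shrinking the neighbourhood accordingly produces the family $\mu_t$, hence $\lambda_t=\lambda+\mu_t\in\mathcal S(\om_t)$, and the splicing described above finishes the argument. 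As a consistency check, in the contact case $f>0$ the corresponding function for any $C^2$-nearby SHS is again positive, so both $\lambda$ and $\lambda'$ are positive contact forms and Gray stability provides an alternative route, modulo the constraint of keeping $[\om_t]$ constant along the path.
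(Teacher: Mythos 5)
Your reduction to constructing a single lift $t\mapsto\lambda_t$ over \emph{some} path $\om_t$ from $\om$ to $\om'$ is correct (convexity of the fibre $\Pi^{-1}(\om')$ handles the endpoint), and your derivation of the linearized equation $i_{R_t}d\mu_t=\sigma_t$ is the right local computation. But the step you yourself flag as ``the main obstacle'' is the entire content of the theorem, and the plan you sketch for overcoming it would fail. You propose to lift the \emph{straight-line} path $\om_t=\om+t\,d\beta$ and to kill the obstructions perturbatively, on the grounds that they ``involve only finite-dimensional cohomological data which is robust under small perturbations.'' Neither claim is tenable: the relevant obstruction space is governed by the foliated cohomology of the (wildly $t$-dependent) kernel foliation, which is typically \emph{infinite}-dimensional (Proposition~\ref{prop:folcoh3}), and it is not robust --- Theorem~\ref{thm4} exhibits a Baire set of exact deformations of suitable HS that admit \emph{no} stabilization at all, and Theorem~\ref{thm:seq} shows that a $C^2$-convergent sequence of stabilizable HS need not admit stabilizing forms converging to $\lambda_0$. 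The dynamics of $\ker\om_t$ (rational versus irrational invariant tori, and the measure they fill) changes discontinuously along your path, and this is exactly what obstructs a smooth family $\mu_t$: the periods of the putative primitive over closed leaves are not controlled by any finite-dimensional, perturbation-stable datum.

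The paper's proof avoids this by \emph{not} lifting the straight-line path globally. It first invokes the structure theorem (Theorem~\ref{thm:structure}, built on Proposition~\ref{prop:thick}) to decompose $M$ into a region $N$ on which $\bar f=d\bar\lambda/\bar\om$ is locally constant and finitely many integrable regions $U_i\cong[0,1]\times T^2$ on which everything is $T^2$-invariant; persistence of invariant tori (Theorem~\ref{thm1}) puts the nearby $(\om,\lambda)$ into the same normal form after a small diffeomorphism. On $N$ the linear path is trivially stabilized by $\lambda_t=\bar\lambda+\tau(t)\,c\,\alpha$ since $d\bar\lambda=c\,\bar\om$ there. On each $U_i$ the path of $2$-forms is \emph{modified} (via Remark~\ref{rem:nonconst}) so that the slope function stays nonconstant throughout the homotopy, which is precisely the hypothesis under which Proposition~\ref{prop:stabhom} produces the family of stabilizing $1$-forms matching the boundary data coming from $N$. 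To complete your argument you would need both ingredients: the decomposition that localizes the problem to regions where the equation is solvable, and the freedom to deform the path of Hamiltonian structures itself inside the integrable regions rather than insisting on the linear interpolation.
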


This means that, for any closed 3-manifold $M$ and cohomology class
$\eta\in H^2(M;\R)$, the space $SHS_\eta(M)/\sim$ is discrete in the
topological sense, which justifies the term ``stable Hamiltonian
topology''. In particular, this implies that there are at most
countably many homotopy classes of SHS representing $\eta$
(Corollary~\ref{cor:countable}). 

\begin{question}
Does an analogue of Theorem~\ref{thm:discrete-intro} hold in higher 
dimensions? 
\end{question}

{\bf Stabilization. }
Much of this paper is concerned with properties of the natural map
$$
   \Pi: \SHS_\eta\to \HS_\eta,\qquad (\om,\lambda)\mapsto\om. 
$$
It has the following properties:

(a) Each nonempty fibre $\Pi^{-1}(\om)$ is convex and hence
contractible.

(b) $\Pi$ is in general not surjective. In fact, its image
$\im(\Pi)\subset\HS_\eta$ is in general neither closed (this is easy to
see) nor open (this is much harder and proved
in~\cite{CFP-notopen}. 

(c) There exist smooth paths in $\im(\Pi)$ which have no smooth lift
to $\SHS_\eta$. More precisely, we show

\begin{thm}[cf.~Theorem~\ref{thm4}]
On every closed oriented 3-manifold there exists a stable
Hamiltonian foliation $\LL$ with the following property. 
For any HS $\om_0$ defining $\LL$ there exists
a Baire set $\tilde B$ in the space $\tilde E$ of (exact) deformations
$\{\om_t\}_{t\in[0,1]}$ of $\om_0$ that cannot be stabilized, no
matter what stabilizing  $1$-form $\lambda_0$ we take for $\om_0$. 
\end{thm}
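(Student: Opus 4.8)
The strategy is to locate, on every closed oriented 3-manifold $M$, a stable Hamiltonian foliation $\LL$ whose Reeb dynamics are so rigid that the condition $\ker\om\subset\ker d\lambda$ becomes essentially a cohomological constraint tied to $\om$, and then to exhibit an obstruction to solving that constraint along a generic deformation. The natural candidate is a foliation by the fibres of a fibration $M\to S^1$ (or, on a general $M$, by circles coming from a suitable Morse-Bott family), where $\ker\om = \LL = \R R$ for a nowhere-vanishing vector field $R$ tangent to the fibres. For such an $\om$, a stabilizing $1$-form $\lambda$ must satisfy $i_R d\lambda = 0$, i.e.\ $d\lambda$ is a closed $2$-form annihilating $R$; these are exactly the pullbacks (up to a multiple) of area forms on the base, so $d\lambda$ is forced to lie in a very restricted cone. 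The first step is therefore to pin down, for a chosen $\LL$, the cone $\CC_\om\subset\Om^2(M)$ of admissible exact forms $d\lambda$ and to verify $\CC_\om\ne\emptyset$ (so that $\LL$ is genuinely a stable Hamiltonian foliation, as required by the statement).

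Next I would set up the bundle picture over the space $\tilde E$ of exact deformations $\{\om_t\}_{t\in[0,1]}$ of a fixed $\om_0$ defining $\LL$. Fixing $\om_0$, choose any $\lambda_0$; a stabilization of the path amounts to a family $\lambda_t$ with $\lambda_t\wedge\om_t^{n-1}>0$ and $i_{R_t}d\lambda_t=0$, where $R_t$ spans $\ker\om_t$. Differentiating in $t$ turns this into a linear evolution equation for $\dot\lambda_t$ whose solvability at each time is governed by whether $\dot\om_t$ (which is exact, $=d\mu_t$) is compatible with keeping $d\lambda_t$ inside the moving cone of $R_t$-annihilating exact forms. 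The key observation is that as $\om_t$ deforms, $R_t$ rotates, and an $R_t$-annihilating exact $2$-form need not stay exact-and-$R_{t+\epsilon}$-annihilating: there is a ``holonomy'' or ``drift'' term, linear in $\dot\om_t$, measuring the failure. I would encode this as a map from deformation directions to an obstruction space $\OO$ (a finite-dimensional quotient of closed $2$-forms, or an $H^1$ of the leaf space), and show that stabilizability of $\{\om_t\}$ forces this obstruction cocycle to vanish identically along the path.

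The heart of the argument — and the main obstacle — is the Baire-category statement: that the deformations which fail are not just nonempty but comeagre. For this I would argue that the ``good'' set $\tilde B^c$ of stabilizable deformations is contained in a countable union of nowhere-dense sets. Concretely: a stabilizable path admits a smooth $\lambda_t$, hence (by compactness of $[0,1]$ and of $M$) lies in one of countably many ``tubes'' indexed by quantitative bounds ($C^2$-norm of $\lambda_t$, lower bound on $\lambda_t\wedge\om_t^{n-1}$, size of the interval on which a fixed chart trivializes $\LL$); each such tube I would show to be closed with empty interior in $\tilde E$, the emptiness of the interior coming from the fact that the obstruction cocycle above can be made nonzero by an arbitrarily $C^\infty$-small exact perturbation of the path supported near a chosen leaf — this is where one uses that $\dot\om_t$ being small in any $C^k$ norm still leaves room to push $d\lambda_t$ out of the rigid cone. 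I expect the delicate points to be (i) making the obstruction space genuinely finite-dimensional / the tubes genuinely closed in the $C^k$-topology, so that a Baire argument applies, and (ii) producing the small perturbation that defeats a given tube while still being an \emph{exact} deformation with the prescribed cohomology class — this exactness constraint is the nontrivial restriction (cf.\ Remark~\ref{rem:exact}) and forces the perturbation to be chosen in $d\Om^1(M)$ rather than freely. The construction of $\LL$ on an \emph{arbitrary} closed oriented $3$-manifold (not just fibred ones) I would handle by a standard surgery/open-book type construction producing a Morse-Bott foliation, reducing the general case to the model computation done on a neighbourhood of one rigid leaf.
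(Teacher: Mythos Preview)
Your proposal has a genuine gap: the foliation you choose and the obstruction you plan to detect are both the wrong ones for \emph{exact} deformations.

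You want $\LL$ to be a periodic (circle-fibre) foliation so that $i_R d\lambda=0$ pins $d\lambda$ into a small ``cone'' (pullbacks of area forms). But this rigidity is exactly what makes exact deformations \emph{easy} to stabilize. For instance, if $(\om_0,\lambda_0)$ is of contact type (which covers the circle-bundle examples you have in mind via Boothby--Wang), then every exact deformation $\om_t=\om_0+d\mu_t$ is stabilized by $\lambda_t=\lambda_0\pm\mu_t$; see the opening paragraph of Section~\ref{subsec:pert-contact}. The cohomological obstruction you sketch --- the ``drift'' of $d\lambda_t$ out of the cone of $R_t$-annihilating exact forms --- is precisely the linear stabilizability obstruction of Section~\ref{subsec:linstab}, and that obstruction is governed by $[\dot\om_0]\in H^2(M)$ via~\eqref{obst2}. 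It vanishes automatically when $\dot\om_0$ is exact. So your mechanism produces no obstruction on the very class of deformations the theorem is about, and the ``tubes'' you describe will not have empty interior.

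The paper's argument runs on a completely different mechanism. The foliation $\LL$ is chosen (Theorem~\ref{thm3}) to contain an integrable region $I\times T^2$ on which the slope function \emph{twists} and is nowhere constant; this forces \emph{every} stabilizing $1$-form $\lambda_0$ for $\om_0$ to have nonconstant $f_0=d\lambda_0/\om_0$ (Theorem~\ref{thm2}), hence $(\om_0,\lambda_0)$ satisfies the hypotheses of the persistence-of-invariant-tori result (Theorem~\ref{thm1}). Consequently any SHS $C^2$-close to $(\om_0,\lambda_0)$ still carries rational invariant tori and is therefore \emph{not Morse}. Now the Baire argument is direct and does not go through your tube decomposition: Morse Hamiltonian structures form a Baire set $B\subset\HS_\eta$, the evaluation maps $ev_{1/n}:\tilde E\to\HS_\eta$ are open and continuous (Lemma~\ref{ev}), so $\tilde B:=\{\om_\bullet:\exists\,t_n\to 0,\ \om_{t_n}\in B\}\supset\bigcap_n ev_{1/n}^{-1}(B)$ is Baire in $\tilde E$. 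Any path in $\tilde B$ is unstabilizable for every choice of $\lambda_0$, because stabilizability would force $\om_{t_n}$ to be non-Morse for all large $n$. The point you are missing is that the obstruction is dynamical (presence of invariant tori prohibiting Morse nondegeneracy), not cohomological.
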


A similar result holds for lifts of convergent sequences, see
Theorem~\ref{thm:seq}. For non-exact deformations of HS, easy
obstructions to stabilizability arise from foliated cohomology
(Section~\ref{subsec:pert-contact}).  

{\bf Contact structures. }
Every positive contact form $\lambda$ induces a SHS
$(d\lambda,\lambda)$ and homotopies of contact forms induce stable
homotopies, so we have a natural map 
$$
   \Cont/\sim\to \SHS_0/\sim
$$
from homotopy classes of positive contact forms to homotopy classes of
exact SHS. Using Eliashberg's classification of contact structures on
$S^3$ we prove 
 
\begin{thm}[cf.~Theorem~\ref{thm:contact-SHS}]\label{thm:contact-SHS-intro}
On $S^3$ the map $\Cont/\sim\to \SHS_0/\sim$ is not
bijective. In fact, if transversality for holomorphic curves can be
achieved (see Remark~\ref{rem:SFT-intro} below), then the map is
injective but not surjective . 
\end{thm}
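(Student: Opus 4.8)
The plan is to extract two invariants of stable homotopy for SHS on $S^3$ and play them against Eliashberg's classification of contact structures on $S^3$. The first is soft: along a stable homotopy $(\om_t,\lambda_t)$ the cooriented $2$-plane fields $\ker\lambda_t$ vary continuously, so the homotopy class of $\ker\lambda$ in $[S^3,S^2]\cong\pi_3(S^2)\cong\Z$ is a stable-homotopy invariant. The second is the contact homology algebra of the SHS (or any other SFT invariant); here the transversality hypothesis enters, and, as discussed in Remark~\ref{rem:SFT-intro} and Section~\ref{subsec:sft}, its stable-homotopy invariance comes from the fact that a stable homotopy, having $\dot\om_t$ exact, gives rise to a symplectic cobordism with no extra closed Reeb orbits and hence to an algebra isomorphism.

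For injectivity, recall first that two positive contact forms on $S^3$ are homotopic through contact forms precisely when their kernels are isotopic: Gray stability gives one implication, and for the other one pushes a contact form forward along an isotopy of its kernel and rescales. Thus, by Eliashberg, $\Cont/\sim$ is the set consisting of the tight structure $\xi_{\st}$ together with one overtwisted structure $\xi_{\ot}^n$ for each $n\in\Z$, where $\xi_{\st}$ and $\xi_{\ot}^0$ lie in the same plane-field class. The plane-field invariant separates every pair except $\{\xi_{\st},\xi_{\ot}^0\}$, and these two are separated by contact homology, which vanishes for an overtwisted contact structure in dimension three (algebraic overtwistedness) while it is nonzero for $(S^3,\xi_{\st})$ (for instance from an irrational-ellipsoid perturbation). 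Hence distinct classes have distinct images and the map is injective.

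For failure of surjectivity I would exhibit an SHS $(\om,\lambda)$ on $S^3$ (automatically with $[\om]=0$, as $H^2(S^3)=0$) that is stably homotopic to no positive contact form. The model is a Lutz-type surgery on $(d\lambda_{\st},\lambda_{\st})$ in a tubular neighbourhood $T^2\times(-\eps,\eps)$ of a Hopf torus $T$: one replaces $\lambda_{\st}$ there by $h_1(r)\,d\theta_1+h_2(r)\,d\theta_2$ and matches it with a closed, maximal-rank $\om$, choosing the path $r\mapsto(h_1(r),h_2(r))$ to wind enough to move the plane-field class to some $n\neq0$, yet allowing it to stop turning on a subinterval; this keeps $(\om,\lambda)$ a genuine, non-contact SHS rather than an overtwisted contact form, and produces a separating invariant torus carrying an irrational Reeb flow. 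Since the plane-field class is now $n\neq0$, the only positive contact form $(\om,\lambda)$ could be stably homotopic to is the overtwisted $\xi_{\ot}^n$; but a neck-stretching argument along the invariant torus shows $(\om,\lambda)$ has nonvanishing contact homology, whereas $\xi_{\ot}^n$ is algebraically overtwisted. So $(\om,\lambda)$ is not in the image, the map is not surjective, and in particular not bijective. (For the unconditional non-bijectivity asserted in the statement one replaces this last step by a softer, SFT-free obstruction, for instance a fillability obstruction, a positive contact form on $S^3$ in a nontrivial plane-field class being overtwisted and hence not fillable.)

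The principal obstacle, as signalled in the statement, is transversality for the holomorphic curves underlying contact homology and SFT: without it neither the invariance used for injectivity nor the neck-stretching computation used for non-surjectivity is available in general. A subsidiary technical point is to perform the Lutz-type surgery so that the resulting Reeb dynamics is Morse--Bott, compatibly with the generic structure theorem, and so that the stretched count is genuinely computable — for the argument it is enough to know it is nonzero, and if instead one keeps the plane-field class equal to $0$, one needs it to be non-isomorphic to $CH(S^3,\xi_{\st})$.
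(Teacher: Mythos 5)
Your injectivity argument is essentially the paper's: the oriented plane-field class of $\ker\lambda_t$ is a stable-homotopy invariant and separates every pair of contact structures on $S^3$ except the tight one and the overtwisted one in the same class, and those two are separated by rational SFT (this is the only place transversality is needed, together with the machinery that makes SFT a stable-homotopy invariant — note that a stable homotopy does \emph{not} directly give a symplectic cobordism; one must chop it into short pieces and use the Morse--Bott approximation, which is the content of Section~\ref{subsec:sft}). The non-surjectivity part, however, has a genuine gap and is also not the paper's route. You propose to build an explicit non-contact SHS by a Lutz-type twist and to distinguish it from the overtwisted structure $\xi_\ot^n$ in its plane-field class because ``a neck-stretching argument along the invariant torus shows it has nonvanishing contact homology.'' That computation is the entire content of the claim and is only asserted: for a non-contact SHS with an irrational invariant torus one would have to set up and evaluate SFT from scratch (Morse--Bott families of orbits, energy control without a global contact form), and nothing in your sketch does this. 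The paper avoids any SFT computation for a non-contact SHS. Its witness for non-surjectivity is $(d\bar\lambda_\st,-\bar\lambda_\st)$, the pullback of the standard structure under an orientation-reversing involution $\Psi$ of $S^3$: its plane-field class $\bar\CC$ differs from $\CC$ by a Hopf-invariant computation (Lemma~\ref{lem:Hopf}), so any positive contact form whose SHS lies in that homotopy class must be overtwisted by uniqueness of the tight structure; if the map were surjective, both $(d\bar\lambda_\st,-\bar\lambda_\st)$ and $(d\bar\lambda_\ot,-\bar\lambda_\ot)$ would be homotopic to overtwisted positive contact SHS, which are homotopic to each other by Eliashberg's classification, and transitivity would force $(d\lambda_\st,\lambda_\st)\sim(d\lambda_\ot,\lambda_\ot)$, contradicting the standard RSFT computation for these two honest contact forms.

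A second gap is your parenthetical ``SFT-free'' route to the unconditional non-bijectivity via fillability. Fillability obstructs \emph{weak symplectic cobordisms}, but whether a stable homotopy produces a weak cobordism is precisely the open Large Cobordism Question of Section~\ref{subsec:largeh}; only sufficiently short stable homotopies yield strong cobordisms. So you cannot conclude from ``my SHS is fillable and $\xi_\ot^n$ is not'' that they fail to be stably homotopic. The paper's unconditional argument is instead a soft dichotomy requiring no invariant beyond plane fields and Eliashberg's theorems: either $(d\lambda_\st,\lambda_\st)$ and $(d\lambda_\ot,\lambda_\ot)$ are stably homotopic, in which case the map is not injective, or they are not, in which case the involution argument above shows it is not surjective; either way it is not bijective.
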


{\bf Open books. }
Consider an open book decomposition $(B,\pi)$ of a closed oriented
3-manifold $M$ with binding $B$ and fibration $\pi:M\setminus B\to S^1$
(see Section~\ref{sec:openbook} for the precise definition). 
We call a SHS $(\om,\lambda)$ {\em supported by $(B,\pi)$}
if $\om$ is positive on each page $\pi^{-1}(\theta)$. It follows that
$\lambda$ is  
nowhere vanishing on the binding. Thus for every binding component
$B_i$ we have a sign $\pm$ which is $+$ iff $\lambda$ induces the
orientation of $B_i$ as boundary of a page. 

\begin{thm}[cf.~Theorems~\ref{obd1} and~\ref{obd2}]
Let $(B,\pi)$ be an open book decomposition of a closed oriented
3-manifold $M$ and $\eta\in H^2(M;\R)$. Then there exists a SHS
$(\om,\lambda)$ supported by $(B,\pi)$ with $[\om]=\eta$ and with
given signs at the binding components.   
Moreover, any two SHS $(\om,\lambda)$ and $(\tilde\om,\tilde\lambda)$
supported by $(B,\pi)$ with $[\om]=[\tilde\om]\in
H^2(M;\R)$ and have the same signs at the binding components are connected
by a stable homotopy supported by $(B,\pi)$.    
\end{thm}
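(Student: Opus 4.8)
The plan is to treat existence and uniqueness separately, and in both cases to reduce the problem to a statement on a single page of the open book, using the product structure near the binding to glue.

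\textbf{Existence.} First I would fix an explicit model near the binding. Near each component $B_i$ we have coordinates $(r,\phi,\theta)$ with $B_i=\{r=0\}$, pages $\{\theta=\const\}$, and $\pi=\theta$ away from $r=0$; I would look for $(\om,\lambda)$ of the form $\om=d(g(r)d\phi)$ (times the desired cohomological constant along the core, adjusted below) and $\lambda=f(r)d\theta+h(r)d\phi$ with $f,h,g$ chosen so that $f(0)=\pm\,(\text{prescribed sign})\cdot 1$, $h(0)=0$, $g'(r)>0$, and the two conditions of~\eqref{eq:def} hold — note $\ker\om$ is spanned by $\p_\theta$ here, so $\ker\om\subset\ker d\lambda$ forces $f,h$ to depend on $r$ only, which the ansatz already encodes. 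This pins down the germ of the SHS near $B$ with the prescribed signs. Away from the binding, I would choose on a single page $P$ (a surface with boundary) an area form $\sigma$ that agrees with the above model near $\p P$ and represents the correct area, then extend it over $M\setminus B$ by the monodromy: writing $M\setminus B$ as a mapping torus of the return map $\psi$ of $\pi$, I average $\sigma$ along the $S^1$-direction (using the mapping-torus coordinate) to get a closed 2-form $\om$ on $M\setminus B$ that is positive on every page and matches the binding model, and I let $\lambda=d\theta$ near the binding interpolated to $d\theta$ plus a small primitive-type correction globally so that $\lambda\wedge\om>0$ and $\ker\om=\langle R\rangle$ with $d\lambda$ vanishing on $\ker\om$; concretely one takes $\lambda$ with $d\lambda$ a multiple of $\om$, i.e.\ arranges $d\lambda=c\,\om$ for a suitable function, which automatically gives $\ker\om\subset\ker d\lambda$. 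Finally the cohomology class $\eta$ is adjusted by adding a closed form supported away from the binding (adjusting the page areas), which does not affect stability since adding a closed 2-form to $\om$ that is small and positive on pages keeps all conditions open.

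\textbf{Uniqueness.} Given two supported SHS $(\om_0,\lambda_0)$ and $(\om_1,\lambda_1)$ with equal class and equal binding signs, I would first normalize the germs near $B$: using the existence construction and a Gray–Moser-type argument in the $r$-variable only (there is no dynamical instability transverse to a single leaf here since everything is $r$-dependent), connect each $(\om_i,\lambda_i)$ through supported SHS to one agreeing with a fixed standard model near $B$. Then on $M\setminus B$, viewed as a mapping torus, both $\om_0,\om_1$ are cohomologous page-positive closed 2-forms agreeing near $\p$; the straight line $\om_t=(1-t)\om_0+t\om_1$ stays page-positive (positivity on each page is convex) and has constant cohomology class (their difference is exact, by Remark~\ref{rem:exact} this is exactly what a homotopy of HS allows) and agrees with the model near $B$. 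It remains to produce $\lambda_t$: since $\ker\om_t$ varies smoothly and is $1$-dimensional and transverse to the pages, I would pick $R_t$ spanning $\ker\om_t$ normalized by $d\theta(R_t)=1$ on the mapping-torus part, and then solve for $\lambda_t$ with $\lambda_t(R_t)=1$ and $d\lambda_t$ a prescribed multiple $c_t\om_t$ of $\om_t$ (so the kernel condition is automatic); the obstruction $[c_t\om_t]=0\in H^2$ vanishes because we may take $c_t$ supported where $\om_t$ is exact, and $\lambda_t\wedge\om_t>0$ then follows from $\lambda_t(R_t)=1$ together with $\om_t>0$ on pages by an open condition that we keep along the path by shrinking $c_t$. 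Patching the near-binding homotopy with this interior homotopy via a cutoff in $\theta$ gives the desired supported stable homotopy.

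\textbf{Main obstacle.} The delicate point is the simultaneous control of $\lambda_t$: we need a smooth family of stabilizing $1$-forms, not just stability at each fixed $t$, and the naive linear interpolation $(1-t)\lambda_0+t\lambda_1$ need not satisfy $\ker\om_t\subset\ker d\lambda_t$. The trick of demanding $d\lambda_t=c_t\om_t$ converts the kernel condition into a cohomological triviality condition on $c_t\om_t$, and the whole argument hinges on being able to choose $c_t$ small and supported in the exact region so that (i) $[c_t\om_t]=0$, (ii) $\lambda_t$ exists with $\lambda_t(R_t)=1$, and (iii) $\lambda_t\wedge\om_t>0$ persists; verifying that these three can be met simultaneously, uniformly in $t$, is the technical heart, and I expect it to occupy the bulk of the proof. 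The existence half is comparatively routine once the binding model is fixed; matching the prescribed class $\eta$ and signs is bookkeeping. I would also need to check throughout that all homotopies can be taken supported by $(B,\pi)$, which is automatic as long as page-positivity is preserved — and that is the convex, open condition we have been careful to maintain.
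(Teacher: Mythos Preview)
Your overall architecture is right (standardize near the binding, then do convex interpolation on the mapping torus), but there are genuine gaps, and the technical heart is not where you think it is.

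\textbf{The binding model is broken.} With your conventions ($\phi$ along $B_i$, $(r,\theta)$ polar on the normal disk), the form $\om=d(g(r)\,d\phi)=g'(r)\,dr\wedge d\phi$ is \emph{degenerate} at $r=0$: the only way $g'(r)\,dr$ extends smoothly is with $g'(0)=0$, and then $\om|_{r=0}=0$. A nondegenerate supported $\om$ must carry a $d\theta$--component near the core; the paper's ``special'' model is $\om=r\,dr\wedge(\pm d\theta-d\phi)$, whose $r\,dr\wedge d\theta=dx\wedge dy$ part survives at $r=0$. Likewise $\lambda=f(r)\,d\theta+h(r)\,d\phi$ with $f(0)=\pm1$ is singular (since $d\theta$ does not extend over $r=0$); the sign at $B_i$ is encoded by $h(0)=\pm1$, not by $f(0)$. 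These are fixable, but as written your Reeb field is not tangent to the binding and your $\om$ is not a HS there.

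\textbf{The real difficulty is the standardization near $B$ in the uniqueness part, and your argument for it is circular.} You assert that ``everything is $r$-dependent'' near the binding and propose a Moser argument in $r$. But a general supported SHS has \emph{no} $T^2$-symmetry near $B$: neither $\om$, nor $\lambda$, nor $f=d\lambda/\om$ need depend only on $r$. The paper's Proposition~\ref{prop:obd} is substantial precisely for this reason: one first invokes the ``thickening'' result (Corollary~\ref{cor:thick-a}, resting on the analysis of Proposition~\ref{prop:thick}) to replace $\lambda$ by a new stabilizing form with $f$ \emph{constant} on a neighbourhood of $B_l$; only then does one know the SHS is either contact or flat near the binding, and the two cases are handled by separate, careful interpolation lemmas (Lemmas~\ref{lem:obd-cont} and~\ref{lem:obd-fol}). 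This step, not the construction of $\lambda_t$ on the mapping torus, is the technical heart.

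\textbf{On the mapping torus you are working too hard.} Once both SHS agree with a fixed model near $B$, the closed $1$-form $d\theta$ pulled back from $S^1$ stabilizes \emph{every} page-positive $\om_t$: indeed $d(d\theta)=0$ gives the kernel condition for free, and $d\theta\wedge\om_t>0$ is exactly page-positivity. So first linearly homotope $\lambda_0$ and $\lambda_1$ to (multiples of) $d\theta$, then take $\om_t=(1-t)\om_0+t\om_1$ with $\lambda_t=d\theta$; your scheme of solving $d\lambda_t=c_t\om_t$ with $c_t$ ``supported where $\om_t$ is exact'' is unnecessary and, as stated, not clearly solvable.

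\textbf{A missing cohomological step.} For the linear interpolation on the mapping torus to match the fixed binding model on $\partial$, you need $[\om_1-\om_0]=0$ in $H^2(M,B)$, not merely in $H^2(M)$. The paper arranges this by an explicit correction using Lemma~\ref{algtop}(b) (Thom-type forms $\sum c_l\,d(\sigma(r_l)\,d\phi_l)$ supported in the standardized collar), which is compatible with $d\theta$ as stabilizer. You should add this step before interpolating.
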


By a theorem of Giroux every contact structure is {\em positively}
(i.e.~with all signs $+$) 
supported by some open book. This is not the case for SHS: A SHS as in
Theorem~\ref{thm:contact-SHS-intro} which is not homotopic to a
contact one is not homotopic to one that is positively supported by an
open book. 

\begin{question}
Is every SHS in dimension 3 homotopic to one that is supported by an
open book (with some signs at the binding components)? 
\end{question}

{\bf Structure theorem in dimension 3. }
We have seen that not every SHS is homotopic to a contact
one. However, in dimension 3 we have the following structure theorem. 

\begin{theorem}[cf.~Corollary~\ref{cor:structure}] 
Every stable Hamiltonian structure on a closed 3-manifold $M$ is 
stably homotopic to a SHS $(\om,\lambda)$ for which 
there exists a (possibly disconnected and possibly with boundary)
compact $3$-dimensional submanifold $N=N^+\cup N^-\cup N^0$ of $M$,
invariant under the Reeb flow, and a (possibly empty) disjoint union
$U=U_1\cup\dots\cup U_k$ of compact regions with the
following properties:  
\begin{itemize}
\item $\inn U\cup \inn N=M$;
\item $d\lambda=\pm\om$ on $N^\pm$ and $d\lambda=0$ on $N^0$;
\item on each $U_i\cong [0,1]\times T^2$ the SHS $(\om,\lambda)$ is
  $T^2$-invariant. 
\end{itemize}
Moreover, we can always arrange that $N^+$ is nonempty, and if $\om$
is exact we can arrange that $N^0$ is empty. 
\end{theorem}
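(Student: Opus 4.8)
Write $R$ for the Reeb field of $(\om,\lambda)$. Since $\dim M=3$ and $\om$ has maximal rank, $\om$ is nowhere zero with $\ker\om=\R R$; as $d\lambda$ also annihilates $R$ and the $2$-forms on $T_pM$ killing $R_p$ form a line, there is a unique smooth $f\colon M\to\R$ with $d\lambda=f\om$. Taking $d$ gives $df\wedge\om=0$, so $df$ vanishes on $\ker\om$: the function $f$ is constant along Reeb orbits, its regular level sets are closed surfaces everywhere tangent to the nowhere-zero field $R$ (hence disjoint unions of $2$-tori), and they are Reeb-invariant. Also $\lambda\wedge d\lambda=f(\lambda\wedge\om)$, so $\lambda$ is a positive (resp.~negative) contact form where $f>0$ (resp.~$f<0$) and is closed where $f=0$. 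The pieces $N^{\pm}$, $N^{0}$ will be the regions on which, after a stable homotopy, $f\equiv\pm1$, resp.~$f\equiv0$, and the $U_i$ the collars of regular tori on which $f$ interpolates.

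The crux is to stably homotope $(\om,\lambda)$ so that $f$ is brought into normal form. The key structural point is that a regular level of $f$ can never have positive genus, so $f$ can have no Morse-type saddle and no isolated critical point; after a generic stable homotopy — using that the critical circles of $f$ are closed Reeb orbits, that its critical tori are Reeb-invariant, and that Bott-nondegeneracy of closed Reeb orbits is generic (established earlier in the paper) — the critical set of $f$ consists only of tori that are whole components of critical levels and of circles, in each case a normal extremum. Thus $M$ is assembled from finitely many blocks on which $f$ has no critical points: collars $\{a_i\le f\le b_i\}\cong[0,1]\times T^2$ glued along critical tori, together with solid-torus neighbourhoods of critical circles. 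Since $\om$ vanishes on every surface tangent to $R$, it vanishes on each level torus of $f$, hence is exact on every block; so on each block $f$ can be reshaped by a stable homotopy supported there without disturbing $[\om]$, interpolating $\lambda$ towards a suitable primitive of $\pm\om$ where $f\neq0$ and using closedness where $f=0$. This produces a SHS with $f\in[-1,1]$, constant equal to $+1$, $-1$ or $0$ on open sets, and non-constant only within thin transition zones contained in the interiors of the collars.

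On each collar $U_i\cong[0,1]\times T^2$ we now homotope $(\om,\lambda)$, rel $\partial U_i$, to a $T^2$-invariant SHS. Trivialise $U_i$ so that $R$ is tangent to each torus and linear; then $(\om,\lambda)$ is encoded by two paths on $[0,1]$ — the Reeb slope and the value of $f$ — together with $T^2$-dependent error terms, and averaging over $T^2$ kills the error terms. The averaged pair is again a SHS, since the $C^0$-open conditions $\lambda\wedge\om>0$ and $i_Rd\lambda=0$ survive the averaging once the framing is fixed (using $\om|_{T^2}=0$), and the boundary data are already invariant, matching near $\partial N^{\pm}$, $\partial N^{0}$ the standard contact, resp.~closed, models. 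Finally set $N^{\pm}:=\overline{\{f=\pm1\}}$ and $N^{0}:=\overline{\{f=0\}^{\circ}}$, each thickened slightly into the adjacent collars so that $\inn N\cup\inn U=M$; then $d\lambda=\pm\om$ on $N^{\pm}$, $d\lambda=0$ on $N^{0}$, and $N$ is Reeb-invariant, as required.

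It remains to make $N^{+}$ non-empty and, in the exact case, $N^{0}$ empty. If the construction gave $N^{+}=\emptyset$ — e.g.\ when $f\le0$ everywhere, as for a taut foliation — a compactly supported stable homotopy inside a ball, inserting a small region on which $d\lambda=\om$ and matching the interpolation to $\om$ via a suitable local primitive, creates $N^{+}\neq\emptyset$ without changing $[\om]$. If moreover $\om=d\alpha$ is exact, then on each component of $\{f=0\}^{\circ}$ the restriction of $\om$ is exact, and one may homotope $\lambda$ using a global primitive so as to push $f$ off the value $0$ into an adjacent contact regime; the obstruction to doing this in general is exactly the non-vanishing of $[\om]$ on such a component, which is excluded by hypothesis, so $N^{0}$ can be emptied. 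The main obstacle is the normalisation of $f$ in the second step: taming its critical set by a stable homotopy, which is precisely the ``generically Morse--Bott but not Morse'' phenomenon and turns on the interplay between the torus constraint on regular levels and the genericity of the Reeb dynamics; the $T^{2}$-averaging and local-surgery steps are comparatively soft.
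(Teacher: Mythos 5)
Your overall architecture matches the paper's: reduce to the proportionality coefficient $f=d\lambda/\om$, flatten it to the values $\pm1,0$ on large invariant pieces, and make the transition collars $T^2$-invariant via Arnold--Liouville coordinates. But the central step — ``after a generic stable homotopy the critical set of $f$ consists only of finitely many normally nondegenerate tori and circles, so $M$ is assembled from finitely many blocks on which $f$ has no critical points'' — is a genuine gap, and it is exactly the point where the paper has to work hardest. A stable homotopy does not give you access to generic perturbations of $f$: any admissible new coefficient $\tilde f$ must be Reeb-invariant \emph{and} satisfy that $\tilde f\om$ is exact with a primitive stabilizing $\om$, so the available perturbations form a very thin (often finite-codimension-constrained, sometimes essentially trivial) subset of $C^\infty(M)$, nowhere near rich enough to invoke Sard/Thom-type genericity for the critical set. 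A priori the critical set of $f$ can be, say, a fat Cantor set of invariant tori inside an integrable region, and no small admissible perturbation removes this. The paper circumvents the problem entirely in Proposition~\ref{prop:thick}: rather than making $f$ Morse--Bott, it post-composes with a function $\sigma$ that is locally constant on a neighbourhood of the (measure-zero) set of critical \emph{values}, and the real work is the functional-analytic closure argument (the identity $\bar K_\DD^b=K^b$ in the $C^s$-topology plus the elliptic estimate of Lemma~\ref{lem:ellipt-small}) showing that $\sigma\circ f$ is again of the form $d\tilde\lambda/\om$ for a stabilizing $\tilde\lambda$ that is $C^{1+s}$-close to $\lambda$. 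You have no substitute for this step; without it the finite block decomposition never gets off the ground. Note also that the paper's own Theorems~\ref{thm1} and~\ref{thm3} show that genericity arguments for SHS behave badly, so appealing to ``generically Morse--Bott'' here is circular in spirit (the Morse--Bott approximation Theorem~\ref{thm:Morse-Bott} is itself a \emph{consequence} of the structure theorem).

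Two smaller issues. First, when you reshape $f$ block by block, the stabilizing forms must match across the gluing tori; on a collar where the slope of the kernel foliation is constant there is a genuine obstruction (the constant $c=\langle g,ik\rangle$ in equation~\eqref{eq:g-lin} must agree at both ends), which is why the paper first perturbs to nonconstant slope and then invokes Propositions~\ref{prop:t2inv-stab} and~\ref{prop:stabhom}; your proposal does not address this. Second, your $T^2$-averaging of $\lambda$ is essentially fine once Arnold--Liouville coordinates are in place (and the paper's Remark~\ref{rem:integr} explains why averaging genuinely changes $\lambda$ unless $f$ is constant on the tori), but it presupposes that $\om$ and $R$ are already invariant, i.e.\ Theorem~\ref{thm:integr} — worth stating explicitly rather than folding into ``error terms''.
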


Roughly speaking, this says that $M$ is a union of regions on which
$(\om,\lambda)$ is positive contact, negative contact, or flat,
glued along 2-tori invariant under the Reeb flow. 

{\bf Morse-Bott approximations and SFT. }
In order to define invariants such as symplectic field theory (SFT) or
Rabinowitz Floer homology, one needs to consider SHS that are {\em
  Morse} (i.e.~all closed Reeb orbits are nondegenerate) or at least
{\em Morse-Bott} (see Section~\ref{subsec:Morse-Bott}). It is well
known that any contact form can be $C^\infty$-approximated by one
which is Morse. Surprisingly, this fails for more general SHS:
 
\begin{thm}[cf.~Theorems~\ref{thm1}, \ref{thm2} and
  \ref{thm3}]\label{thm:main} 
In every stable homotopy class in dimension 3 there exists a stable
Hamiltonian structure which cannot be
$C^2$-approximated by stable Hamiltonian structures which are Morse.  
\end{thm}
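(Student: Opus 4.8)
The plan is to exploit Theorem~\ref{thm:discrete-intro}: since any $C^2$-close SHS is stably homotopic to the given one, being $C^2$-approximable by Morse SHS is a property internal to the stable homotopy class, and it suffices to exhibit in each class a SHS possessing a $C^2$-robust non-Morse feature. First I would normalise the class: given an arbitrary representative, Corollary~\ref{cor:structure} (or a direct surgery inside a Darboux ball in $\inn N^+$, which is nonempty) produces, by a stable homotopy supported in a small ball, a stably homotopic SHS $(\om,\lambda)$ which on an embedded ``block'' $B\cong[0,1]_r\times T^2_{\theta_1,\theta_2}$ has the standard $T^2$-invariant form
\[
  \om=b(r)\,dr\wedge d\theta_2,\qquad \lambda=q\,d\theta_1+s(r)\,d\theta_2,
\]
with $q$ a nonzero constant and $b,s$ chosen so that $q\,b<0$ and $s$ is suitably non-constant. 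Because the block sits inside a ball, $[\om]$ is unchanged by this modification. On $B$ one has $\ker\om=\langle\p_{\theta_1}\rangle$ and Reeb field $R=\p_{\theta_1}/q$, so every $\theta_1$-circle is a closed Reeb orbit of period $q$: the block carries a $2$-parameter family of closed orbits, and $(\om,\lambda)$ is manifestly far from Morse.

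\textbf{The rigidity lemma.} The heart of the argument is the claim that \emph{every} SHS $(\om',\lambda')$ which is $C^2$-close to $(\om,\lambda)$ has a Reeb field $R'$ with a non-isolated (or degenerate) closed orbit inside $B$, hence is not Morse. Here I would argue as follows. By Theorem~\ref{thm:discrete-intro}, $(\om',\lambda')$ is stably homotopic to $(\om,\lambda)$, so $[\om']=[\om]$; in particular $\om'|_B$ is again exact and $C^2$-close to $b(r)\,dr\wedge d\theta_2$, and $R'$ is $C^1$-close to $\p_{\theta_1}/q$. I would then pass to the first-return map $\psi'$ on the transverse annulus $\Sigma=\{\theta_1=\mathrm{const}\}\cong[0,1]_r\times S^1_{\theta_2}$ (transversality and the return being defined near an invariant sub-annulus follow from $C^1$-closeness, using that the model field is tangent to $\p B$). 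Two structural facts constrain $\psi'$: since $\LL_{R'}(\lambda'\wedge\om')=0$ and $\iota_{R'}(\lambda'\wedge\om')=\om'$, the map $\psi'$ preserves the area form $\om'|_\Sigma$; and since $R'\in\ker\om'\subset\ker d\lambda'$ forces $\iota_{R'}d\lambda'=0$, hence $\LL_{R'}\lambda'=0$, one gets $(\psi')^{*}(\lambda'|_\Sigma)-\lambda'|_\Sigma=d\tau$ with $\tau$ the (variable) return time. Thus $\psi'$ is an area-preserving annulus map $C^1$-close to the identity carrying an extra exactness constraint coming from the non-constant form $\lambda'|_\Sigma\approx s(r)\,d\theta_2$. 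One must show these constraints prevent the $2$-parameter fixed-point set of the model return map from dissolving into a finite nondegenerate set; granting this, $\psi'$ has either infinitely many periodic points of a bounded period or a degenerate one (Poincar\'e--Birkhoff/Franks-type input), whence a non-isolated or degenerate closed Reeb orbit.

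\textbf{Main obstacle and conclusion.} The difficult step is exactly the persistence just invoked: proving that the exact-symplectic rigidity of the return map of a SHS Reeb flow (area preservation together with $(\psi')^{*}\lambda'|_\Sigma-\lambda'|_\Sigma$ being exact relative to a non-closed $1$-form) is \emph{incompatible} with the Kupka--Smale property near the identity, so that the $2$-parameter family cannot be broken into finitely many nondegenerate orbits. I expect this to require either attaching the block along a normally hyperbolic (hence robust) invariant torus so that the perturbed flow still has an invariant region $[0,1]\times T^2$ tangent to its boundary, or the cohomological input $\iota_R(\lambda\wedge\om)=\om$, which pins the $\lambda\wedge\om$-asymptotic cycle of $R'$ to a fixed rational class independent of the (stably homotopic) perturbation; I would organise the proof around whichever makes the dynamical persistence cleanest. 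Once the lemma is in place, the SHS of the first step lies in the $C^2$-interior of the non-Morse locus and hence admits no $C^2$-approximation by Morse SHS; as the stable homotopy class was arbitrary, this is the theorem. Presumably Theorems~\ref{thm1},~\ref{thm2} and~\ref{thm3} carry out, respectively, the genericity of the Morse--Bott condition, the local rigidity lemma (an open set of non-Morse SHS), and the per-class assembly via discreteness and the structure/open-book results.
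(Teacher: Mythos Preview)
Your approach has a genuine gap, and it lies exactly where you flag the ``main obstacle.'' The local model you set up has \emph{constant} Reeb slope on the block (every torus $\{r=\text{const}\}$ carries orbits in the $\partial_{\theta_1}$-direction), and this is precisely the situation in which the rigidity you want fails. Consider the $T^2$-invariant perturbation $\om_\epsilon=b(r)\,dr\wedge(d\theta_2+\epsilon\,d\theta_1)$ with stabilising form $\lambda_\epsilon=q\,d\theta_1+s(r)(d\theta_2+\epsilon\,d\theta_1)$. One checks directly that $(\om_\epsilon,\lambda_\epsilon)$ is a SHS, arbitrarily $C^\infty$-close to your model, with Reeb vector field proportional to $\partial_{\theta_1}-\epsilon\,\partial_{\theta_2}$. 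For irrational $\epsilon$ the return map on your annulus $\Sigma$ is the rotation $(r,\theta_2)\mapsto(r,\theta_2-\epsilon)$: it is area-preserving, satisfies your exactness constraint $(\psi')^*\lambda'|_\Sigma-\lambda'|_\Sigma=0=d\tau$ with $\tau\equiv q$, and has \emph{no} periodic points. So neither the area-preservation nor the $\lambda$-exactness forces persistence of closed orbits. Normal hyperbolicity is likewise unavailable, since your Reeb flow is completely integrable with zero normal exponents; and the asymptotic-cycle idea does not pin down rationality of the slope after perturbation. (Incidentally, $[0,1]\times T^2$ cannot sit inside a ball; the relevant neighbourhood is a solid torus, but this does not rescue the argument.)

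The paper's mechanism is entirely different and hinges on \emph{nonconstant, twisting} slope. Theorem~\ref{thm3} shows that every stable homotopy class contains a SHS with an integrable region on which the slope function $h'/|h'|$ twists and is nowhere constant. Theorem~\ref{thm2} then says that twisting forces the integral of motion $f=d\lambda/\om$ to be nonconstant on that region \emph{for every} stabilising form, so one may assume $f$ is a submersion onto an interval there. Theorem~\ref{thm1} is the persistence statement: for any $C^2$-close SHS $(\om',\lambda')$, the new integral $f'=d\lambda'/\om'$ is $C^1$-close to $f$, hence its regular level sets still foliate a region by invariant $2$-tori, and the perturbed slope function on these tori is $C^0$-close to the original nonconstant one, so it takes both rational and irrational values. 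A rational invariant torus is a $T^2$-family of closed Reeb orbits, which obstructs the Morse condition. Thus your attribution of the three theorems is off: Theorem~\ref{thm1} is persistence of invariant tori via the first integral $f$, Theorem~\ref{thm2} is the ``twisting $\Rightarrow$ $f$ nonconstant'' lemma, and Theorem~\ref{thm3} is the per-class construction of such a model. The key idea you are missing is to build nonconstant slope into the model so that the invariant $f$ gives KAM-type persistence, rather than trying to extract rigidity from an area-preserving return map near the identity.
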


\begin{remark}
Using the techniques in the proof of Theorem~\ref{thm:Morse-Bott}, the
stable Hamiltonian structure in Theorem~\ref{thm:main} can actually be 
chosen to be Morse-Bott. 
\end{remark}

This is bad news for the foundations of SFT: Besides the
transversality problems for holomorphic curves, one faces the
additional difficulty of making a SHS Morse, or at least
Morse-Bott. We discuss this in more detail in
Section~\ref{subsec:sft} and show that this difficulty can be overcome
in dimension 3:

\begin{theorem}[cf.~Theorems~\ref{thm:Morse-Bott} and~\ref{thm:sft}] 
Any SHS on a closed oriented 3-manifold can be connected to a SHS
which is Morse-Bott by a $C^1$-small stable homotopy. This suffices to
define SFT as a homotopy invariant of SHS in dimension 3, provided
transversality for holomorphic curves can be achieved. 
\end{theorem}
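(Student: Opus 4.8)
The statement packages two results: first (Theorem~\ref{thm:Morse-Bott}) that every stable Hamiltonian structure on a closed oriented $3$-manifold is connected by a $C^1$-small stable homotopy to one that is Morse-Bott, and second (Theorem~\ref{thm:sft}) that, granting transversality for holomorphic curves, this makes symplectic field theory an invariant of SHS up to stable homotopy. The plan for the first part is to localise the difficulty along the non-contact locus. The starting observation is a pointwise dichotomy in dimension three: on the interior of $\{\lambda\wedge d\lambda=0\}$ one actually has $d\lambda=0$, because $\lambda\wedge d\lambda=0$ with $\lambda\neq 0$ forces $d\lambda=\lambda\wedge\theta$, and then $v\in\ker d\lambda$ iff $\lambda(v)\theta=\theta(v)\lambda$, which evaluated on the Reeb field (where $\lambda(R)=1$, $i_Rd\lambda=0$) gives $\theta=\theta(R)\lambda$ and hence $d\lambda=0$. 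So away from this locus the SHS is of contact type ($\om=h\,d\lambda$ with $h>0$ constant along Reeb orbits, so its Reeb dynamics is exactly that of the contact form $\lambda$), and on the interior of the locus it is flat. After a preliminary stable homotopy I would put $(\om,\lambda)$ into the normal form of the structure theorem (Corollary~\ref{cor:structure}): $M=\inn U\cup\inn N$, $N=N^+\cup N^-\cup N^0$, $U=U_1\cup\dots\cup U_k$, with $d\lambda=\pm\om$ on $N^\pm$, $d\lambda=0$ on $N^0$, each $U_i\cong[0,1]\times T^2$ carrying a $T^2$-invariant SHS, and everything glued along Reeb-invariant $2$-tori.

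I would then perturb piece by piece, keeping each perturbation supported in the interior of its piece so that they assemble into one global stable homotopy. On $N^\pm$ the Reeb flow is the contact Reeb flow of $\lambda$, so the classical genericity theorem for contact forms gives a $C^\infty$-small perturbation of $\lambda$ (keeping $\om=\pm d\lambda$) after which all closed Reeb orbits of period $\le T$ in $\inn N^\pm$ are nondegenerate; a Baire argument over $T\in\N$ with shrinking perturbations handles all periods. On $N^0$ the condition $d\lambda=0$ means $\lambda$ constrains nothing ($\ker d\lambda=TM$ there), so any $C^1$-small exact perturbation of $\om$ supported in $\inn N^0$ is still stabilised by the unchanged $\lambda$, and a standard jet-transversality argument makes the Reeb flow there nondegenerate, again via a Baire argument over the period. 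On each $U_i\cong[0,1]\times T^2$ the Reeb-invariance of the boundary tori forces the $T^2$-invariant data into the shape $\om=dr\wedge(p(r)\,dx+q(r)\,dy)$ with Reeb direction of slope $\rho(r)$ on $\{r\}\times T^2$; the closed orbits exactly fill those tori where $\rho(r)\in\Q\cup\{\infty\}$, and such a torus is a Morse-Bott family precisely when $\rho'(r_0)\ne 0$ (or $\rho$ is locally constant with transverse ends). A $C^\infty$-small $T^2$-invariant perturbation of $(p,q)$ — co-adjusting the stabilising form so that $\ker\om\subset\ker d\lambda$ and $\lambda\wedge\om>0$ persist, fixing the data near $\p U_i$, and respecting the flux constraints $\int_0^1 p\,dr$ and $\int_0^1 q\,dr$ so that $\dot\om$ stays exact — arranges that $\rho$ has no critical rational value, whence every orbit torus in $U_i$ becomes Morse-Bott, and this single open condition covers all periods at once. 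Assembling the piecewise homotopies and running a diagonal argument over $T$ (using that ``all closed orbits of period $\le T$ are Morse-Bott'' is $C^2$-open) produces a single $C^1$-small stable homotopy from $(\om,\lambda)$ to a Morse-Bott SHS.

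For the second part, I assume throughout that transversality for holomorphic curves can be achieved, so that Bourgeois's Morse-Bott formulation of SFT is defined for any Morse-Bott SHS and, by the usual neck-stretching arguments, is independent of the auxiliary data (compatible almost complex structure, abstract perturbations). I would then define the SFT of a stable homotopy class as that of any Morse-Bott representative, which exists by the first part. Independence of the representative is the standard deformation invariance: a stable homotopy between two SHS yields an exact symplectic cobordism on $\R\times M$ interpolating the two symplectizations, hence an SFT morphism, and composing with the reversed cobordism — homotopic, relative to its ends, to the trivial cobordism — exhibits these morphisms as mutually inverse isomorphisms. Finally, by the discreteness theorem (Theorem~\ref{thm:discrete-intro}) stable homotopy classes form a discrete set, so this construction genuinely yields an invariant of SHS up to stable homotopy rather than something sensitive to small deformations.

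The main obstacle is to make the first part a single, honestly $C^1$-small stable homotopy starting from the \emph{given} SHS: the piecewise perturbations are easy and $C^\infty$-small within each standard piece, but the reduction to the structure-theorem normal form is a priori not $C^1$-small, so one must instead argue locally — showing that a neighbourhood of the non-contact locus is already $C^1$-close to a disjoint union of flat and $T^2$-invariant pieces — and then verify that on those pieces the perturbation of $\rho$ (respectively of the first-return map) can be carried out while simultaneously respecting the stabilising condition $\ker\om\subset\ker d\lambda$, the fixed boundary data along the Reeb-invariant tori, and the cohomological flux constraints keeping $\dot\om$ exact. Checking that this codimension-two flux constraint still leaves enough room to reach the open and dense Morse-Bott condition — and that the gluing tori between $U$ and $N$ are consistently nondegenerate on both sides — is where the argument needs the most care.
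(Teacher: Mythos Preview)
Your plan for the Morse--Bott approximation identifies the right pieces but stumbles exactly where you feared. Invoking Corollary~\ref{cor:structure} at the outset destroys $C^1$-smallness, and your proposed local workaround (``argue locally near the non-contact locus'') is not an argument. The paper sidesteps this entirely: it uses Theorem~\ref{thm:structure} instead, which keeps $\om$ \emph{unchanged} and only replaces $\lambda$ by a $C^1$-close stabilising form $\tilde\lambda$ whose proportionality function $\tilde f=d\tilde\lambda/\om$ is constant on each component of $N$ (with arbitrary constant values, not just $0,\pm 1$). The linear homotopy from $\lambda$ to $\tilde\lambda$ is then genuinely $C^1$-small. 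After that, one makes a further $C^\infty$-small $T^2$-invariant perturbation on each $U_i$ so that the slope is constant \emph{irrational} near $\partial U_i$; this guarantees there are no closed orbits near $\partial N$, so a compactly supported perturbation on each $N_i$ (perturbing $\om$ by $d\nu$ and $\lambda$ by $c_i\nu$, which preserves $d\lambda=c_i\om$) makes all closed orbits in $N$ nondegenerate. On the $T^2$-invariant pieces, Lemma~\ref{lem:Morse-Bott} together with the estimate in Lemma~\ref{lem:g-pert}(b) gives a $C^1$-small stable homotopy to a Morse--Bott SHS. Your worry about ``flux constraints'' is misplaced: compactly supported perturbations of $\om$ are automatically exact.

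Your SFT argument has a more serious gap: it is simply \emph{false} that a stable homotopy yields a symplectic cobordism on $[0,1]\times M$ --- see Proposition~\ref{prop:hom-weakcob}. The paper's Section~\ref{subsec:shorth} is devoted to defining a \emph{length} $L(\gamma)$ of a stable homotopy such that only homotopies of length $<1/3$ produce strong cobordisms whose back-and-forth composition is homotopic to a trivial cobordism (Proposition~\ref{maincob}). To make SFT a stable-homotopy invariant (Proposition~\ref{prop:SFT-functor}), one cuts an arbitrary homotopy into pieces of length $<1/3$; the difficulty is that the cut points need not be Morse--Bott. This is precisely where Theorem~\ref{thm:Morse-Bott} enters: by Lemma~\ref{lem:concat}, concatenating with a $C^1$-small detour to a Morse--Bott SHS increases the length arbitrarily little, so one can arrange all cut points to be Morse--Bott while keeping each piece short (Corollary~\ref{cor:Morse-Bott}). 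That is the actual role of $C^1$-smallness. Your appeal to the discreteness theorem is a red herring; it plays no part in this argument.
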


\begin{cor}[cf.~Corollary~\ref{cor:sft}]\label{cor:sft-intro}
Suppose transversality for holomorphic curves can be achieved. 
Then the SHS $(d\alpha_\st,\alpha_\st)$ and $(d\alpha_\ot,\alpha_\ot)$ on
$S^3$ are not stably homotopic, where $\alpha_\st$ is the standard
contact form and $\alpha_\ot$ is an overtwisted contact form defining
the same orientation. 
\end{cor}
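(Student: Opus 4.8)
The plan is to deduce this purely formally from the invariance statement of Theorem~\ref{thm:sft}, using the contact homology part of symplectic field theory as the distinguishing invariant. Suppose for contradiction that $(d\alpha_\st,\alpha_\st)$ and $(d\alpha_\ot,\alpha_\ot)$ were stably homotopic. Since $H^2(S^3;\R)=0$ both structures are exact and represent the same class, and by hypothesis $\alpha_\ot$ induces the same orientation as $\alpha_\st$, so Theorem~\ref{thm:sft} would force their symplectic field theory invariants — and in particular their contact homology algebras — to be isomorphic. Hence it suffices to show that these two algebras are \emph{not} isomorphic: the one of $\alpha_\st$ is nontrivial (the unit $1$ is not a boundary), while the one of $\alpha_\ot$ is trivial, i.e.\ $1=0$.

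For the standard contact form I would recall the classical computation. The Reeb flow of $\alpha_\st$ is the Hopf flow, all of whose orbits are closed and simply covered by the Hopf fibres; after a $C^1$-small stable (in fact contact) perturbation making all closed orbits nondegenerate — or, alternatively, working directly in the Morse--Bott setting furnished by Theorem~\ref{thm:Morse-Bott} — one obtains that the cylindrical contact homology of $(S^3,\xi_\st)$ is nonzero, a copy of $\Q$ in infinitely many even degrees, the differential vanishing for index reasons. Since cylindrical contact homology is a specialization of the full SFT package, this shows that the contact homology of $(d\alpha_\st,\alpha_\st)$ does not vanish.

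The substantive input is the vanishing of contact homology for overtwisted contact structures in dimension three, i.e.\ ``algebraic overtwistedness''. Here I would invoke the following well-known mechanism: an overtwisted disk carries a Bishop family of $J$-holomorphic disks with boundary on it, which, after the standard normalization of $\alpha_\ot$ near the disk, gives a closed contractible Reeb orbit $e$ admitting exactly one rigid punctured holomorphic plane asymptotic to it, so that $\partial e = 1$ (up to sign) in the contact homology differential; hence the unit is a boundary and the algebra collapses to zero. I would cite the relevant references for this in dimension three (it goes back to Hofer's proof of the Weinstein conjecture for overtwisted contact forms and is made algebraically precise in the SFT literature). Combining the three steps produces the desired contradiction.

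The real content lies not in this deduction but in Theorem~\ref{thm:sft} (that SFT is a well-defined stable-homotopy invariant in dimension three, modulo transversality) together with the overtwisted vanishing result, and that is where I expect the only genuine care to be needed: one must make sure the precise flavour of SFT for which invariance is proved is the flavour for which the vanishing and non-vanishing computations are available, both under the same standing transversality hypothesis. The remaining bookkeeping on $S^3$ — the cohomological and orientation normalizations, and the passage from full SFT to contact homology — is routine.
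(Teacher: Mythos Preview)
Your argument is correct and follows the same logic as the paper's proof: invoke Theorem~\ref{thm:sft} to get a homotopy invariant, then show it vanishes for $\alpha_\ot$ but not for $\alpha_\st$. The only cosmetic difference is that the paper uses rational SFT (the free Poisson algebra, nontrivial because all Reeb orbits of $\alpha_\st$ have even degree so the Hamiltonian vanishes; trivial for $\alpha_\ot$ by~\cite{Ya,BN}) while you specialize further to the contact homology algebra; either flavour suffices and the even-degree/index reasoning you sketch is exactly what the paper invokes for $\alpha_\st$.
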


\begin{remark}\label{rem:SFT-intro}
The assumption on transversality for holomorphic curves will be
satisfied once the polyfold program of Hofer, Wysocki and
Zehnder~\cite{HWZ} is finished. In the present paper we use this
assumption only in Corollary~\ref{cor:sft} and the second part of 
Theorem~\ref{thm:contact-SHS}. 
\end{remark}

{\bf h-principle. }
Hamiltonian structures satisfy an h-principle (\cite{McD}, see
Section~\ref{subsec:h} for the precise statement). For stable
Hamiltonian structures in dimension 3, the 0-parametric h-principle
holds (Proposition~\ref{prop:ex}). By contrast,
Corollary~\ref{cor:sft-intro} shows that, assuming 
transversality for holomorphic curves can be achieved, the
1-parametric h-principle fails for stable Hamiltonian structures in
dimension 3.  

{\bf Cobordisms. }
Symplectic field theory satisfies TQFT type axioms with respect to
symplectic cobordisms. In order to turn it into a homotopy invariant
of SHS, we need to construct suitable symplectic cobordisms from stable
homotopies. The naive definition of a {\em (topologically trivial)
  symplectic cobordism} between HS 
$\om_a$ and $\om_b$ on $M$ is a symplectic manifold $([a,b]\times M,\Om)$
such that $\Om|_{\{i\}\times M}=\om_i$ for $i=a,b$. This definition
turns out to be too restrictive (a SHS is not even cobordant to itself
in this sense!). We introduce two more general notions: {\em strong
  cobordism} (replacing $\om_i$ by $\om_i+t_id\lambda_i$ for
sufficiently small $t_i$), and {\em weak cobordism} (replacing $\om_i$
by more general closed 2-forms with the same kernel), see
Section~\ref{ss:cob-def}. Moreover, we define a {\em length} of a
stable homotopy (Section~\ref{subsec:shorth}) and show that every
sufficiently short stable homotopy gives  rise to a strong cobordism
(Proposition~\ref{maincob}). This suffices for the homotopy invariance
of SFT, modulo transversality for holomorphic curves
(Theorem~\ref{thm:sft}).  

In Section~\ref{subsec:largeh} we construct
pairs of SHS which are homotopic as HS but not weakly cobordant, with
obstructions to weak cobordisms arising from helicity and fillability.
Our failure to construct such examples which are {\em stably} homotopic
motivates the following

\begin{question}
Given a stable homotopy $(\om_t,\lambda_t)_{t\in[0,1]}$, are $\om_0$ and
$\om_1$ weakly bicobordant?
\end{question} 

{\bf Embeddability and ambient homotopies. }
Given an abstract SHS $(M^{2n-1},\om,\lambda)$, we can ask whether it
admits an embedding $\iota:M\into W$ into a given symplectic manifold
$(W^{2n},\Om)$ such that $\iota^*\Om=\om$. 

\begin{thm}[cf.~Corollary~\ref{cor:exotic2}]
There exists a SHS $(\om,\lambda)$ on $S^3$ which is 
stably homotopic to $(d\alpha_\st,\alpha_\st)$ but cannot be embedded
in $(\R^4,\Om_\st)$. 
\end{thm}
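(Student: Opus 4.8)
The plan is to combine the structure theorem in dimension 3 with the recent deep results on the non-embeddability of (ellipsoid-like or contact-type) hypersurfaces in $\R^4$, via a SHS that is positive contact on part of $S^3$ but has a flat ($d\lambda=0$) or negative-contact region that forces a volume or action obstruction. Concretely, I would start from $(d\alpha_\st,\alpha_\st)$ and perform a stable homotopy that inserts a $T^2$-invariant region $U\cong[0,1]\times T^2$ (or more precisely modifies the structure on a neighbourhood of a pre-Lagrangian torus), using the open-book classification (Theorems~\ref{obd1}, \ref{obd2}) or the structure theorem to see that the resulting $(\om,\lambda)$ is still stably homotopic to the standard one. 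The point is that on the invariant torus region one has freedom to prescribe the Reeb dynamics — in particular to create a region whose induced "symplectic filling" would have to contain an embedded piece violating a quantitative constraint.

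The key steps, in order, are: (i) Recall that an embedding $\iota:S^3\into(\R^4,\Om_\st)$ with $\iota^*\Om_\st=\om$ realizes $(\om,\lambda)$ as a hypersurface of contact or stable type bounding a domain $D\subset\R^4$; since $[\om]=0$ on $S^3$ this is automatic cohomologically, so the obstruction must be geometric, not topological. (ii) Use the structure theorem to replace $(\om,\lambda)$ within its stable homotopy class by one with an explicit decomposition $M=\inn U\cup\inn N$, where $N^+$ carries the standard contact dynamics and $U$ is a $T^2$-invariant neck; arrange the Reeb flow on $U$ to have a prescribed rotation number / action spectrum. (iii) On the filling side, translate an embedding into a constraint: the presence of the $T^2$-invariant region (or of a negative-contact piece) forces the symplectic domain $D$ to contain two "ends" with incompatible capacities, or forces a closed characteristic of arbitrarily small action bounding a disk of controlled area, contradicting the isoperimetric/monotonicity inequality for $\Om_\st$ in $\R^4$ (no closed exact Lagrangian, or Gromov's non-squeezing / the symplectic capacity inequalities). (iv) Conclude that no such embedding exists, while the stable homotopy to $(d\alpha_\st,\alpha_\st)$ was constructed in step (ii).

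For the construction in step (ii) I would appeal directly to the earlier results: pick the open book supporting $(d\alpha_\st,\alpha_\st)$ (e.g.\ the trivial one on $S^3$), then by the open-book theorem any SHS supported by it with $[\om]=0$ and the correct binding signs is stably homotopic to the standard one; so it suffices to build a supported SHS whose Reeb dynamics on some page region is degenerate in the required way (a large flat neck). Alternatively, use the Morse--Bott result to first normalize, then insert an invariant $T^2$ along a Reeb-invariant torus obtained from a periodic orbit family.

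The hard part will be step (iii): producing a genuine symplectic obstruction to embedding a SHS that is merely stably (not contact-wise) homotopic to the standard sphere. One cannot invoke the classification of contact structures or symplectic fillings directly, because the SHS is not contact on the neck. The right tool is likely a capacity or helicity-type argument analogous to the weak-cobordism obstructions constructed in Section~\ref{subsec:largeh}: show that an embedding $\iota:S^3\into\R^4$ with $\iota^*\Om_\st=\om$ would let one compute the enclosed volume $\int_D\Om_\st\wedge\Om_\st$ in two incompatible ways, once from $N^+$ (finite, standard) and once from the flat or negative region (where $\lambda\wedge\om$ integrates to something that, combined with Stokes on $D$, forces a sign or size contradiction). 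Making this quantitative — choosing the neck parameters so that the contradiction is sharp — is where the real work lies, and it is essentially the content of Corollary~\ref{cor:exotic2} as cited.
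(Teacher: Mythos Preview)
Your proposal has a genuine gap at step (iii), and the mechanism you sketch there does not work. The enclosed volume $\int_D\Om_\st\wedge\Om_\st$ for an embedded $S^3$ in $(\R^4,\Om_\st)$ equals, by Stokes, exactly $\Hel(\om)$ (up to sign depending on which side $D$ lies); there is no ``second incompatible way'' to compute it from different regions of $M$. Inserting a flat neck or a negative-contact patch changes the Reeb dynamics but does not by itself create any volume or action contradiction. The helicity arguments of Section~\ref{subsec:largeh} obstruct \emph{strong} cobordisms between two SHS with controlled $C_{(\om_i,\lambda_i)}$; they do not obstruct embedding a single HS into $\R^4$. Likewise, capacity-type arguments would require identifying a specific symplectic invariant that the exotic SHS violates, and you have not proposed one.

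The paper's argument is entirely different and much more concrete. One works in the $T^2$-invariant model of Section~\ref{subsec:t2inv}: a $T^2$-invariant symplectic form on $[0,1]\times S^3$ corresponds to a \emph{monotone homotopy} of curves $h_t:[0,1]\to\C$ (condition~\eqref{eq:mon-hom}). Starting from $h_\st(r)=(r^2,1-r^2)$ one explicitly draws a standardized monotone homotopy to a curve $h$ whose winding number around $0$ has dropped (Example~\ref{ex:nocob}); this forces $h_t(r)=0$ for some $(t,r)$, so the primitive $\alpha_t$ vanishes on the torus $\{(t,r)\}\times T^2$, producing an \emph{exact Lagrangian} $2$-torus inside the resulting exotic ball $(B^4,\Om)$ of Proposition~\ref{prop:exotic}. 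The SHS $(\om,\lambda)$ is the outer boundary of this ball; the monotone homotopy itself, stabilized via Corollary~\ref{cor:stablin}, gives the stable homotopy to $(d\alpha_\st,\alpha_\st)$. Non-embeddability then follows from Gromov: if $(S^3,\om)$ embedded in $(\R^4,\Om_\st)$, the region outside would be a cobordism from $\om$ to a large round sphere; gluing this to $(B^4,\Om)$ and then to the standard exterior (Lemma~\ref{lem:gluing}) yields a symplectic $\R^4$ standard at infinity, hence standard by Gromov's uniqueness, yet containing an exact Lagrangian torus --- contradiction. The key missing idea in your outline is this exact Lagrangian torus and the two Gromov theorems; no helicity or capacity computation enters.
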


This shows that abstract stable homotopies cannot in general be
realized by homotopies of hypersurfaces
in a fixed symplectic manifold. On the other hand, under an additional
``tameness'' assumption, many examples of smoothly but not tame stably
homotopic hypersurfaces are constructed in~\cite{CFP}. In
Section~\ref{subsec:tame} we exhibit a large class of SHS that are not
tame (Corollary~\ref{cor:tame}).  

{\bf Integrability in dimension 3. }
Although we set up the theory in arbitrary dimensions, most of our
actual results concern dimension 3. The reason is that for a SHS
$(\om,\lambda)$ on a 3-manifold $M$ we have $d\lambda=f\om$ for a
function $f:M\to\R$ which is invariant under the Reeb flow. By a
version of the Arnold-Liouville theorem first observed
in~\cite{Ar-hydro}, regions where $df\neq 0$ are foliated by invariant
2-tori on which the Reeb flow is linear
(Theorem~\ref{thm:integr}). The study of SHS on these {\em integrable
  regions} reduces to the study of $T^2$-invariant SHS on $[0,1]\times
T^2$ (Section~\ref{subsec:t2inv}). On the other hand, on regions where
$f$ is constant the SHS is either positive contact ($f>0$), negative
contact ($f<0$), or flat ($f=0$), so it can be studied using methods
from contact topology and foliations. The remaining difficulty is thus
to understand the boundaries between these regions. It is overcome by
our main technical result, Proposition~\ref{prop:thick}, which allows
to replace $\lambda$ by a new stabilizing 1-form $\tilde\lambda$ such
that the boundaries of the regions where the new function $\tilde
f=d\tilde\lambda/\om$ is constant are contained in integrable regions
for $f$. 
\bigskip

{\bf Acknowledgements. }
We thank the participants of the 2008 seminar ``Topics in Symplectic
Geometry'' at LMU for patiently listening to our half-baked ideas on
stable Hamiltonian structures and correcting our numerous
misconceptions. Moreover, we benefited from discussions with many
people of whom we only mention a few (and ask those omitted for
forgiveness): 
J.~Bowden, 
Y.~Chekanov, 
Y.~Eliashberg,
U.~Frauenfelder, 
G.~Paternain, 
T.~Vogel,
C.~Wendl.

The second author 
thanks F.~Bourgeois for his hospitality at ULB Bruxelles, 
K.~Mohnke for his hospitality at HU Berlin, and C.~Stamm
for generously sharing her unpublished work~\cite{Sta}.

\section{Background on stable Hamiltonian structures}\label{sec:SHS}

In this section we discuss the basic properties of stable Hamiltonian
structures and collect some examples. 

\subsection{Basic examples}\label{subsec:ex}

The three basic examples of stable Hamiltonian structures are the
following. 

{\em Contact manifolds: }$(M,\lambda)$ is a contact manifold, $R$ is
  the Reeb vector field, and $\om=\pm d\lambda$.  

{\em Mapping tori: }$M:=W_\phi:=\R\times
  W/(t,x)\sim\bigl(t+1,\phi(x)\bigr)$ is the mapping torus of a
  symplectomorphism $\phi$ of a symplectic manifold $(W,\bar\om)$, 
  $R=\frac{\p}{\p t}$, $\lambda=dt$, and $\om$ is the
  form on $M$ induced by $\bar\om$. Note that $d\lambda=0$, so
  $\ker(\lambda)$ defines a {\em foliation}. Note that 
  $W_\phi\cong[0,1]\times W/(0,x)\sim (1,\phi(x))$

{\em Circle bundles: }$\pi:M\to W$ is a principal circle bundle over a
  symplectic manifold $(W,\bar\om)$, $R$ is the vector field
  generating the circle action, $\lambda$ is a connection form, and
  $\om=\pi^*\bar\om$. Note that if $\om=d\lambda$ is the curvature
  of the connection $\lambda$ then we are actually in the contact
  case (Boothby-Wang construction).

\subsection{Stable hypersurfaces}\label{subsec:hyper}

Historically, the stability condition for Hamiltonian structures first
appeared as a dynamical stability condition for hypersurfaces in
symplectic manifolds~\cite{HZ}. To see this relation, let $(X,\Om)$ be
a symplectic manifold and let $M\subset X$ be a closed
hypersurface. Note that the restriction $\om:=\Om\bigl|_M$ is a HS. 

\begin{lemma}[\cite{CM}, Lemma 2.3]\label{lem:hyper}
For a closed hypersurface $M$ in a symplectic manifold $(X,\Om)$ the
following are equivalent: 
\begin{itemize}
\item [(a)] The hypersurface $M$ is stable in the sense of \cite{HZ}, i.e. 
there exists a tubular neighbourhood 
$(-\varepsilon,+\varepsilon)\times M$ of $\{0\}\times M$ such that the
$1$-dimensional kernel 
foliations of $\Om\bigl|_{\{t\}\times M}$ on $\{t\}\times M$ are all
conjugate via a family of diffeomorphisms depending smoothly on $t$.
\item [(b)] There exists a vector field $Y$ transverse to $M$ such
  that $\ker(L_Y\Om\bigl|_M)\subset\ker(\Om\bigl|_M)$.
\item [(c)] The Hamiltonian structure $(M,\Om\bigl|_M)$ is stabilizable.
\end{itemize}
\label{equiv}
\end{lemma}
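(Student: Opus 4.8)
The plan is to route everything through the correspondence $Y\leftrightarrow\lambda:=(i_Y\Omega)|_M$ between vector fields $Y$ defined along $M$ and transverse to it and $1$-forms $\lambda$ on $M$. Since $\Omega$ is closed, $d\lambda=d\bigl((i_Y\Omega)|_M\bigr)=(d\,i_Y\Omega)|_M=(L_Y\Omega)|_M$, so, writing $\omega:=\Omega|_M$, the kernel condition in (b) becomes the kernel condition $\ker(\omega)\subset\ker(d\lambda)$ defining a stabilizing $1$-form; and from $i_Y(\Omega^n)=n\,(i_Y\Omega)\wedge\Omega^{n-1}$ we get $(i_Y\Omega^n)|_M=n\,\lambda\wedge\omega^{n-1}$, which is nowhere zero precisely because $\Omega^n$ is a volume form on $X$ and $Y$ is transverse to $M$, so after possibly replacing $Y$ by $-Y$ we may arrange $\lambda\wedge\omega^{n-1}>0$. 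This yields (b)$\Rightarrow$(c) at once. For (c)$\Rightarrow$(b) the only thing to verify is that a given stabilizing $\lambda$ can be written as $(i_Y\Omega)|_M$ for some transverse $Y$: at each $p\in M$ the linear map $T_pX\to T_p^*M$, $v\mapsto(i_v\Omega)|_{T_pM}$, is onto — its kernel is the $\Omega$-orthogonal complement of the hyperplane $T_pM$, which is $1$-dimensional and equals $\ker\omega_p$ — so solutions exist and form an affine line modelled on $\ker\omega_p$; moreover $\lambda_p$ does not vanish on $\ker\omega_p$ (otherwise $\lambda\wedge\omega^{n-1}$ would vanish at $p$), so no solution lies in $T_pM$. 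The solutions therefore form a smooth affine sub-bundle of $TX|_M$ disjoint from $TM$, and any smooth section is a transverse $Y$ with $(i_Y\Omega)|_M=\lambda$.

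For (c)$\Rightarrow$(a) I would produce an explicit model. On $(-\varepsilon,\varepsilon)\times M$ put $\Omega_{\mathrm{mod}}:=\omega+d(t\lambda)=\omega+dt\wedge\lambda+t\,d\lambda$. It is closed, and $\Omega_{\mathrm{mod}}^{\,n}=n\,dt\wedge\lambda\wedge(\omega+t\,d\lambda)^{n-1}$ (every other term either carries no $dt$, hence vanishes on the $(2n-1)$-manifold, or carries $(dt)^2$); since $\lambda\wedge(\omega+t\,d\lambda)^{n-1}=\lambda\wedge\omega^{n-1}+O(t)>0$ for $\varepsilon$ small, $\Omega_{\mathrm{mod}}$ is symplectic. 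Its restriction to the slice $\{t\}\times M$ is $\omega_t:=\omega+t\,d\lambda$, and $i_R\omega_t=i_R\omega+t\,i_R\,d\lambda=0$ because $R$ spans $\ker\omega\subset\ker d\lambda$; so all slices carry the \emph{same} kernel foliation, the one spanned by $R$. It then remains to identify a neighbourhood of $M$ in $(X,\Omega)$ with a neighbourhood of $M\times\{0\}$ in $\bigl((-\varepsilon,\varepsilon)\times M,\Omega_{\mathrm{mod}}\bigr)$ by a diffeomorphism fixing $M$: take $Y$ as in the previous paragraph, extend it off $M$ in $X$, and use its flow to build a tubular embedding $\Psi:(-\varepsilon,\varepsilon)\times M\to X$; then $\Psi^*\Omega$ and $\Omega_{\mathrm{mod}}$ agree on the whole of $T\bigl((-\varepsilon,\varepsilon)\times M\bigr)|_{M\times\{0\}}$ (both restrict to $\omega$ on $TM$, both pair $\partial_t$ with $TM$ via $\lambda$, and both kill $(\partial_t,\partial_t)$), so a Moser argument relative to $M$ — writing their difference, which vanishes along $M$, as $d\mu$ with $\mu$ vanishing along $M$ — makes them equal on a smaller neighbourhood. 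Pulling back, the slice foliations of $\Omega$ in a neighbourhood of $M$ are all equal, and (a) holds with all conjugating diffeomorphisms equal to the identity.

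For (a)$\Rightarrow$(b): given the tubular neighbourhood $(-\varepsilon,\varepsilon)\times M$ and a smooth family $\psi_t$ of diffeomorphisms of $M$ conjugating the kernel foliations of $\Omega_t:=\Omega|_{\{t\}\times M}$, we may assume $\psi_0=\id$ (compose with $\psi_0^{-1}$). Let $X_t:=(\partial_t\psi_t)\circ\psi_t^{-1}$, extend $X_0$ to a vector field $\tilde X_0$ near $M$ tangent to the slices, and set $Y:=\partial_t+\tilde X_0$, transported to $X$; it is transverse to $M$ since $\partial_t$ is. Decomposing $\Omega=\Omega_t+dt\wedge\beta_t$, closedness gives $\partial_t\Omega_t=d_M\beta_t$, so $(L_{\partial_t}\Omega)|_{\{t\}\times M}=\partial_t\Omega_t$; put $\mu:=\partial_t\Omega_t|_{t=0}$. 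Likewise $(L_{\tilde X_0}\Omega)|_M=d_M\bigl((i_{X_0}\Omega)|_M\bigr)=d_M(i_{X_0}\omega)=L_{X_0}\omega$, hence $(L_Y\Omega)|_M=\mu+L_{X_0}\omega$. Conjugation of the foliations means $\ker(\psi_t^*\Omega_t)=\ker\Omega_0=\langle R\rangle$ for all $t$, i.e. $i_R(\psi_t^*\Omega_t)\equiv 0$; differentiating at $t=0$ gives $i_R\bigl(L_{X_0}\omega+\mu\bigr)=0$, i.e. $R\in\ker\bigl((L_Y\Omega)|_M\bigr)$, which is (b).

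Finally, the step I expect to carry the real weight is (c)$\Rightarrow$(a): passing from the explicit model $\Omega_{\mathrm{mod}}$ to the \emph{given} embedding $M\subset X$ requires the uniqueness of symplectic neighbourhoods (the Moser argument above) with the normal data along $M$ pinned down exactly by the chosen $Y$; and the orientation and sign bookkeeping needed to promote ``$\neq 0$'' to ``$>0$'' for $\lambda\wedge\omega^{n-1}$ (with the corresponding choice of sign of $Y$) runs through all three implications and must be kept consistent.
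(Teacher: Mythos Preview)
The paper does not give its own proof; the lemma is quoted from \cite{CM}. Your argument is correct and essentially complete.

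One point worth making explicit: the inclusion in condition (b) as printed, $\ker(L_Y\Omega|_M)\subset\ker(\Omega|_M)$, is a typo --- it should read $\ker(\Omega|_M)\subset\ker(L_Y\Omega|_M)$, and this is what you (correctly) prove and use throughout. The printed inclusion would force $d\lambda$ to be nondegenerate on a complement of $\ker\omega$ everywhere, i.e.\ $\lambda$ contact, since $\ker\omega$ is a line and the kernel of a $2$-form on an odd-dimensional space is at least one-dimensional; that cannot be equivalent to general stabilizability (the foliation case $d\lambda=0$ would be excluded).

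Your identification of the Moser step in (c)$\Rightarrow$(a) as the substantive part is accurate: condition (b) is first-order along $M$ while (a) is a condition on a full neighbourhood, and the symplectic neighbourhood theorem (Weinstein/Moser relative to $M$, using that $\Psi^*\Omega$ and $\Omega_{\mathrm{mod}}$ agree on $T\bigl((-\varepsilon,\varepsilon)\times M\bigr)|_{M\times\{0\}}$) is exactly the bridge. After that identification the slice foliations are literally constant in $t$, so the conjugating diffeomorphisms in (a) can be taken to be the identity --- which is also how the paper describes the situation in the symplectization model immediately after the lemma statement.
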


Given a SHS $(\omega,\lambda)$ on an abstract manifold $M$, we can
always realize $M$ as a stable 
hypersurface in some symplectic manifold $(X,\Om)$, i.e. there exists
an embedding  
$i:M\hookrightarrow X$ in a symplectic manifold $(X,\Om)$ such that
$i^{\star}\Om=\om$ and $i(M)$ is a stable hypersurface in $X$. For
this take, for instance, the {\em symplectization}
\begin{equation}
   \Bigl(X:=(-\varepsilon,\varepsilon)\times M,\ 
   \Om:=\om+td\lambda+dt\wedge \lambda\Bigr)
\label{trivcob}
\end{equation}
for $\varepsilon>0$ small enough. In this example
a transverse vector field $Y$ in Lemma~\ref{lem:hyper} (b) can be
taken to be just $\p_t$ and a family  
$\{\phi_t\}_{t\in (-\varepsilon,+\varepsilon)}$
of diffeomorphisms $$\phi_t\colon\{0\}\times M\longrightarrow
\{t\}\times M$$ 
with $$\phi_{t*}\ker\Om\bigl|_{\{0\}\times
  M}=\ker\Om\bigl|_{\{t\}\times M}$$  
to be the flow of $\p_t$,
$$\phi_t(x,\tau):=(x,\tau+t).$$
The much more interesting question whether a given SHS can be embeded
into more specific symplectic manifolds, e.g.~closed ones or the
standard $\C^n$, will be discussed in Section~\ref{subsec:emb}.

\subsection{Stability and geodesibility}\label{subsec:geod}
The property of a Hamiltonian structure $\om$ to be stabilizable
depends only on its kernel distribution $\ker(\om)$. More generally,
we say that an oriented $1$-dimensional foliation $\LL$ is {\it
  stabilizable} if there exists a vector field $X$ generating $\LL$
such that 
\begin{equation}
\lambda(X)=1\qquad\text{and}\qquad i_Xd\lambda=0.
\label{eq:stab}
\end{equation}

\begin{definition} 
An orientable $1$-dimensional foliation $\LL$ is called {\it
  geodesible} if there exists a vector 
field $X$ generating $\LL$ and a Riemannian metric $g$ such that the
  (naturally parametrized) flow lines of $X$ are geodesics for $g$. 
\end{definition}

The following theorem gives a characterization of stabilizability which
is sometimes more convenient than Equation~\eqref{eq:def}. 

\begin{theorem}[Wadsley~\cite{Wa}]
An orientable $1$-dimensional foliation $\LL$ is stabilizable if and
only if it is geodesible. Given a vector field $X$
generating $\LL$ whose flow lines are geodesics for a metric $g$ and
such that $g(X,X)\equiv 1$, a stabilizing 1-form (called {\em Wadsley
  form}) is obtained by 
$$
   \lambda:=i_Xg.
$$
\label{thm:Wad}
\end{theorem}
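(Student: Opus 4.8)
The plan is to reduce both implications to one short computation with the Levi-Civita connection, organized around the elementary remark that whenever $X$ generates $\LL$ and $\lambda(X)\equiv 1$, Cartan's formula gives $L_X\lambda = i_Xd\lambda + d(i_X\lambda) = i_Xd\lambda$; hence the stabilizability condition $i_Xd\lambda=0$ of~\eqref{eq:stab} says precisely that $\lambda$ is invariant under the flow of $X$, and unravelled it reads $X(\lambda(Y)) = \lambda([X,Y])$ for every vector field $Y$.

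First I would treat ``geodesible $\Rightarrow$ stabilizable'' together with the ``moreover'' clause. After replacing the given generator by $X/\sqrt{g(X,X)}$ we may assume $g(X,X)\equiv 1$ and $\nabla_XX=0$ (unit-speed flow lines that are geodesics). Put $\lambda:=i_Xg$; then $\lambda(X)=g(X,X)=1$, and using metric compatibility and torsion-freeness,
\begin{align*}
(i_Xd\lambda)(Y) &= X\bigl(g(X,Y)\bigr) - g\bigl(X,[X,Y]\bigr)\\
&= g(\nabla_XX,Y) + g\bigl(X,\nabla_XY-[X,Y]\bigr) = g(\nabla_XX,Y) + g(X,\nabla_YX).
\end{align*}
The first term vanishes because $\nabla_XX=0$, and the second equals $\frac{1}{2}Y\bigl(g(X,X)\bigr)=0$ because $|X|_g$ is constant. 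Hence $i_Xd\lambda=0$ and $\lambda=i_Xg$ is a stabilizing $1$-form. The general ``geodesible'' case is subsumed: one normalizes an arbitrary geodesic generator to unit length as above before applying this.

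For the converse the one step that needs an idea is to produce a metric making $X$ simultaneously of unit length and metrically dual to $\lambda$. Given $X$ generating $\LL$ and $\lambda$ with $\lambda(X)=1$, $i_Xd\lambda=0$, use the splitting $TM=\R X\oplus\ker\lambda$ (valid since $\lambda(X)\neq 0$), choose any fibre metric on $\ker\lambda$, and let $g$ be the metric in which $X$ is a unit vector orthogonal to $\ker\lambda$ and which restricts to the chosen metric on $\ker\lambda$. Writing $Z=\lambda(Z)X+\bigl(Z-\lambda(Z)X\bigr)$ with the second summand in $\ker\lambda$ shows $g(X,Z)=\lambda(Z)$ for all $Z$, i.e.\ $i_Xg=\lambda$. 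Then the Koszul formula with both differentiated slots equal to $X$ gives
\begin{align*}
2g(\nabla_XX,Y) &= 2X\bigl(g(X,Y)\bigr) - Y\bigl(g(X,X)\bigr) - 2g\bigl([X,Y],X\bigr)\\
&= 2\bigl(X(\lambda(Y))-\lambda([X,Y])\bigr) = 2(i_Xd\lambda)(Y) = 0,
\end{align*}
using $g(X,X)\equiv 1$ and $i_Xg=\lambda$. So $\nabla_XX=0$; since $X$ is a unit field its integral curves are arc-length geodesics, and $\LL$ is geodesible.

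I do not expect a serious obstacle here: it is a few lines of Riemannian bookkeeping. The only genuinely nontrivial move is the metric constructed in the converse direction --- one must notice that a metric for which $X$ is unit and $i_Xg=\lambda$ exists and is easily built from the $\ker\lambda$-splitting, and that for any such metric the geodesic equation $\nabla_XX=0$ is literally identical to the condition $i_Xd\lambda=0$; everything else is Cartan's magic formula and the Koszul identity. (One could alternatively imitate Wadsley's original periodic-orbit framework by averaging a metric over the flow, but the direct construction above needs no compactness or closed-orbit hypothesis and applies to every orientable geodesible $1$-foliation.)
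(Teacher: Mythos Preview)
Your proof is correct and follows essentially the same approach as the paper: both directions hinge on the identity $i_Xd(i_Xg)=i_{\nabla_XX}g-\tfrac{1}{2}d(|X|^2)$, and for the converse both you and the paper construct the metric by declaring $X$ to be unit and orthogonal to $\ker\lambda$. The only cosmetic difference is that the paper derives this identity once and reuses it in both directions, whereas you redo the computation via the Koszul formula for the converse; the content is identical.
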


\begin{proof}
Let us give the simple proof since we will need parts of it in the
sequel. Given a metric $g=\la\ ,\ \ra$ and a vector field $X$, we set
$\lambda:=i_Xg$ and compute for a second vector field $Y$:
\begin{align*}
   d\lambda(X,Y) 
   &= X\cdot\lambda(Y)-Y\cdot\lambda(X)-\lambda([X,Y]) \cr
   &= X\cdot\la X,Y\ra - Y\cdot|X|^2 - \la X,[X,Y]\ra \cr
   &= \la\nabla_XX,Y\ra + \la X,\nabla_XY-[X,Y]\ra - Y\cdot|X|^2 \cr 
   &= \la\nabla_XX,Y\ra + \la X,\nabla_YX\ra - Y\cdot|X|^2 \cr 
   &= \la\nabla_XX,Y\ra - \frac{1}{2}Y\cdot|X|^2.
\end{align*}
So we have shown the formula
\begin{equation}\label{eq:Wad}
   i_Xd(i_Xg) = i_{\nabla_XX}g-d\left(\frac{|X|^2}{2}\right). 
\end{equation}
Now let $X$ be a vector field generating $\LL$. If its 
flow lines are geodesics for a metric $g$ and $g(X,X)\equiv 1$, then
the right-hand side in Equation~\eqref{eq:Wad} vanishes, so the
1-form $\lambda=i_Xg$ satisfies Equation~\eqref{eq:stab}. Conversely,
if $\lambda$ is a 1-form satisfying Equation~\eqref{eq:stab} pick any
metric $g$ such that $i_Xg=\lambda$, i.e.~$X$ has length 1 and is
perpendicular to $\ker\lambda$ with respect to $g$. Then $g(X,X)=1$,
so in Equation~\eqref{eq:Wad} the left-hand side and the second term
on the right-hand side vanish and we obtain $\nabla_XX=0$,
i.e.~flow lines of $X$ are geodesics. 
\end{proof}

\begin{corollary}\label{cor:geod}
On an oriented 3-manifold, a vector field $X$ is the Reeb vector field
of a stable Hamiltonian structure if and only if there exist a metric
$g$ and a volume form $\mu$ such that 
\begin{equation}\label{eq:geod}
   \nabla_XX=0,\qquad g(X,X)\equiv 1,\qquad L_X\mu=0.  
\end{equation}
Given a metric $g$ and volume form $\mu$ satisfying~\eqref{eq:geod},
the stable Hamiltonian structure is given by
$$
   \lambda:= i_Xg,\qquad \om:= i_X\mu. 
$$
Moreover, given a stable Hamiltonian structure $(\om,\lambda)$ with
Reen vector field $X$, the metric and volume form
satisfying~\eqref{eq:geod} can be chosen such that
$\mu=\lambda\wedge\om$ is the volume form induced by $g$.  
\end{corollary}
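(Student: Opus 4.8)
\textbf{Proof plan for Corollary~\ref{cor:geod}.}

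The plan is to deduce everything from Wadsley's theorem (Theorem~\ref{thm:Wad}) together with the basic structure of a SHS in dimension 3. First I would prove the ``only if'' direction. Suppose $X$ is the Reeb vector field of a SHS $(\om,\lambda)$ on the oriented 3-manifold $M$, so $i_X\om=0$, $\lambda(X)=1$, $i_Xd\lambda=0$. Since $\lambda\wedge\om$ is a volume form (this is the condition $\lambda\wedge\om^{n-1}>0$ in dimension 3), I set $\mu:=\lambda\wedge\om$. Then $i_X\mu=i_X(\lambda\wedge\om)=\lambda(X)\om-\lambda\wedge i_X\om=\om$, which already identifies $\om=i_X\mu$. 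Moreover $L_X\mu=d\,i_X\mu+i_X\,d\mu=d\om+0=0$, using that $\om$ is closed and $\mu$ is a top form; so the volume-preserving condition holds. It remains to produce a compatible metric: by the converse half of the proof of Theorem~\ref{thm:Wad}, since $\lambda$ satisfies~\eqref{eq:stab} we may pick any metric $g$ with $i_Xg=\lambda$ (equivalently $g(X,X)=1$ and $X\perp\ker\lambda$), and then formula~\eqref{eq:Wad} forces $\nabla_XX=0$. This $g$ need not have $\mu$ as its volume form, but we have freedom in choosing $g$ on the 2-plane field $\ker\lambda$: once $g$ is fixed on the line spanned by $X$ and declared to make $X\perp\ker\lambda$, rescaling the metric on $\ker\lambda$ by a positive function changes $\nabla_XX$ only through terms that stay tangent to... — more carefully, I would instead choose the metric on $\ker\lambda$ directly so that the induced area form on $\ker\lambda$ equals $i_X\mu|_{\ker\lambda}=\om|_{\ker\lambda}$, which pins down the metric volume to be $\mu$; this choice still satisfies $i_Xg=\lambda$ and $g(X,X)=1$, hence still forces $\nabla_XX=0$ by~\eqref{eq:Wad}. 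This gives the last sentence of the statement as well.

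Next the ``if'' direction. Given $g$ and $\mu$ with $\nabla_XX=0$, $g(X,X)\equiv1$, $L_X\mu=0$, I set $\lambda:=i_Xg$ and $\om:=i_X\mu$. Then $\om$ is closed: $d\om=d\,i_X\mu=L_X\mu-i_X\,d\mu=0$. It has maximal rank (is nowhere zero as a 2-form on a 3-manifold) because $i_X\mu$ is nonvanishing whenever $X$ and $\mu$ are. Its kernel is spanned by $X$: $i_X\om=i_Xi_X\mu=0$, and since $\om$ is a nonzero 2-form on a 3-dimensional space its kernel is exactly $1$-dimensional, hence $=\R X$. For the stabilizing conditions: $\lambda(X)=g(X,X)=1>0$, and $\lambda\wedge\om=i_Xg\wedge i_X\mu$ is a positive multiple of $\mu$ (it is nonzero since it pairs positively with $X$ against... — concretely, evaluating on a positively oriented frame $(X,e_1,e_2)$ with $e_i\in\ker\lambda$ gives $\lambda\wedge\om(X,e_1,e_2)=\om(e_1,e_2)=\mu(X,e_1,e_2)\neq0$ and of the correct sign); after possibly noting orientations this is $>0$. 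Finally $\ker(\om)=\R X\subset\ker(d\lambda)$: by~\eqref{eq:Wad}, $i_Xd\lambda=i_Xd(i_Xg)=i_{\nabla_XX}g-d(|X|^2/2)=0-0=0$ since $\nabla_XX=0$ and $|X|^2\equiv1$. Hence $(\om,\lambda)$ is a SHS with Reeb field $X$.

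The main obstacle I expect is purely bookkeeping rather than conceptual: reconciling the two appearances of the metric. In the forward direction Wadsley's construction gives some metric realizing $X$ as a unit geodesic field, but the corollary also asks that $\mu=\lambda\wedge\om$ be the metric volume; in the backward direction, conversely, $\mu$ is given independently of $g$ and one must check the induced volume forms are compatible up to the orientation conventions fixed in the first Definition (orienting $\ker\om$ via $M$ and $\om^{n-1}$). So the one point needing genuine care is the orientation/normalization: verifying that with $i_Xg=\lambda$ and the area form of $g|_{\ker\lambda}$ chosen to be $\om|_{\ker\lambda}$, the volume form of $g$ is exactly $\lambda\wedge\om=\mu$ and not its negative, which amounts to checking that the frame conventions line up. Everything else is a direct application of Cartan's formula and the identity~\eqref{eq:Wad} already established in the proof of Theorem~\ref{thm:Wad}.
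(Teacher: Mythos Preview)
Your proposal is correct and follows essentially the same approach as the paper's proof: both directions are deduced from formula~\eqref{eq:Wad}, with $\mu=\lambda\wedge\om$ in the forward direction and the freedom in choosing $g|_{\ker\lambda}$ exploited to make $\mu$ the metric volume. The only cosmetic difference is that where you say ``choose the metric on $\ker\lambda$ so that its area form equals $\om|_{\ker\lambda}$'', the paper phrases this as picking a complex structure $J$ on $\ker\lambda$ compatible with $\om$ and setting $g|_{\ker\lambda}=\om(\cdot,J\cdot)$; these are of course the same construction, and your worry that modifying $g|_{\ker\lambda}$ might disturb $\nabla_XX=0$ is unfounded, since~\eqref{eq:Wad} shows $i_{\nabla_XX}g=0$ depends only on $i_Xg=\lambda$ and $|X|^2\equiv1$.
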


\begin{proof}
Given a stable Hamiltonian structure $(\om,\lambda)$ with
Reeb vector field $X$, pick a metric $g$ such that $i_Xg=\lambda$,
i.e.~$X$ has length 1 and is perpendicular to $\ker\lambda$. Then
$g(X,X)=1$ and $\nabla_XX=0$ follows from $i_Xd\lambda=0$ and
Equation~\eqref{eq:Wad}. The volume form $\mu:=\lambda\wedge\om$
satisfies $L_X\mu=d\om=0$. If we pick $g$ on $\ker\lambda$ to be
$\om(\cdot,J\cdot)$ for a complex structure $J$ on $\ker\lambda$
compatible with $\om$, then $\mu$ is the volume form induced by $g$. 

The converse direction is an immediate consequence of
Equation~\eqref{eq:Wad}. 
\end{proof}

\subsection{Relation to hydrodynamics}

In dimension 3, Reeb vector fields of stable Hamiltonian structures
naturally arise as special solutions in hydrodynamics. 
This observation is due to Etnyre and Ghrist~\cite{EG} (in the contact
case, but the stable case is analogous). We refer to~\cite{AK} for
background on hydrodynamics. 

The velocity field $X$ of an ideal incompressible fluid on a closed
oriented Riemannian 3-manifold $(M,g)$ with volume form $\mu$ (not
necessarily the one induced by the metric) satisfies the {\em Euler
  equation} 
\begin{equation*}
   \frac{\p X}{\p t} + \nabla_XX = -\nabla p,\qquad L_X\mu=0.
\end{equation*}
Here the {\em pressure function} $p$ is uniquely determined up to a constant
by the equation $L_X\mu=0$. Stationary (i.e.~time-independent) solutions
thus satisfy 
\begin{equation}\label{eq:Euler1}
   \nabla_XX = -\nabla p,\qquad L_X\mu=0.
\end{equation}
In view of Equation~\eqref{eq:Wad}, this equation can be rewritten as 
\begin{equation}\label{eq:Euler2}
   i_Xd(i_Xg)=-d\alpha,\qquad L_X\mu=0,
\end{equation}
where $\alpha:= p+\frac{|X|^2}{2}$ is the
Bernoulli function. It follows from the first equation that $\alpha$
is a first integral for $X$ and it is shown in~\cite{Ar-hydro} that in
regions where $\alpha$ is nonconstant the flow of $X$ is completely
integrable (cf.~\cite{AK,EG} and the discussion in Section~\ref{subsec:integr}
below). The other extreme (and little understood) case are the {\em
  Beltrami fields}: solutions of Equation~\eqref{eq:Euler2} with
constant $\alpha$, i.e.~such that
\begin{equation}\label{eq:Beltrami}
   i_Xd(i_Xg)=0,\qquad L_X\mu=0
\end{equation}
for some metric $g$ and volume form $\mu$. The proof of the following
corollay is analogous to that of Corollary~\ref{cor:geod}. 

\begin{corollary}
On an oriented 3-manifold, a vector field $X$ is a Beltrami field if
and only if it generates the kernel foliation of a stable Hamiltonian
structure. 
Given a metric $g$ and volume form $\mu$
satisfying~\eqref{eq:Beltrami}, the stable Hamiltonian structure is
given by 
$$
   \lambda:= i_Xg,\qquad \om:= i_X\mu. 
$$
Moreover, given a stable Hamiltonian structure $(\om,\lambda)$ whose
kernel foliation is generated by $X$, the metric and volume form
satisfying~\eqref{eq:Beltrami} can be chosen such that
$\mu=\frac{1}{\lambda(X)}\lambda\wedge\om$ is the volume form induced
by $g$. 
\end{corollary}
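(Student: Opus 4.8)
The plan is to mimic the proof of Corollary~\ref{cor:geod}, keeping careful track of the volume form since here we work with the kernel foliation rather than the Reeb field itself.

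\textbf{From a SHS to a Beltrami field.} Suppose $(\om,\lambda)$ is a stable Hamiltonian structure on the oriented 3-manifold $M$ whose kernel foliation is generated by a vector field $X$. Since $i_X\om=0$ and $X$ generates $\ker\om$, and $\lambda\wedge\om>0$, we have $\lambda(X)\neq 0$; after replacing $X$ by $-X$ if necessary we may assume $\lambda(X)>0$. Set $\mu:=\frac{1}{\lambda(X)}\lambda\wedge\om$ and pick a metric $g$ with $i_Xg=\frac{\lambda}{\lambda(X)}$; concretely, choose $g$ so that $X\perp\ker\lambda$ and $g(X,X)=\frac{1}{\lambda(X)}$, and on $\ker\lambda$ take $g:=\om(\cdot,J\cdot)$ for a compatible complex structure $J$. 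A short computation then shows $\mu$ is the Riemannian volume form of $g$: on the oriented orthonormal frame $(|X|^{-1}X,e_1,e_2)$ with $(e_1,e_2)$ a $g$-orthonormal basis of $\ker\lambda$, one evaluates $\mu$ and gets $1$, using $\om(e_1,e_2)=\pm 1$ and $|X|=\lambda(X)^{-1/2}$. Now $i_Xd(i_Xg)=\lambda(X)^{-1}i_Xd\lambda+i_Xd(\lambda(X)^{-1})\wedge\lambda)$; since $\ker\om\subset\ker d\lambda$ gives $i_Xd\lambda=0$, and $i_X(d(\lambda(X)^{-1})\wedge\lambda)=\bigl(X\cdot\lambda(X)^{-1}\bigr)\lambda-\lambda(X)^{-1}\lambda(X)\,d(\lambda(X)^{-1})$, which one checks vanishes because $\lambda(X)$ is constant along $X$ (as $X$ spans $\ker\om=\ker d\lambda$, the function $\lambda(X)$ is a first integral — this follows from $L_X(\lambda(X))$ being expressible via $i_Xd\lambda$ and $d(i_X\lambda)$ contracted with $X$). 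Hence $i_Xd(i_Xg)=0$, and $L_X\mu=d(i_X\mu)=d\bigl(\frac{1}{\lambda(X)}i_X(\lambda\wedge\om)\bigr)$; since $i_X(\lambda\wedge\om)=\lambda(X)\om$ (as $i_X\om=0$), this is $d\om=0$. So $X$ is a Beltrami field.

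\textbf{From a Beltrami field to a SHS.} Conversely, let $g,\mu$ satisfy Equation~\eqref{eq:Beltrami} for a nowhere-zero $X$, and set $\lambda:=i_Xg$, $\om:=i_X\mu$. Then $\om$ is closed since $d\om=d(i_X\mu)=L_X\mu=0$ (using $i_X\mu$ and Cartan's formula together with $d\mu=0$), and $\om$ has maximal rank on the $2n-1=3$-manifold: $i_X\mu$ is a 2-form whose kernel is exactly $\R X$ (as $\mu$ is a volume form), so $\om$ is nowhere zero. Also $\ker\om=\R X=\ker d\lambda$: indeed $i_Xd\lambda=i_Xd(i_Xg)=0$ by~\eqref{eq:Beltrami}, so $X\in\ker d\lambda$, and since $d\lambda$ on a 3-manifold has at most 1-dimensional kernel at generic points this identifies the kernels; one must argue $d\lambda\neq 0$ or handle the foliation case directly, but in any event $\ker\om\subset\ker d\lambda$ holds. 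Finally $\lambda\wedge\om=(i_Xg)\wedge(i_X\mu)$: evaluating on the frame above gives $|X|^2\mu>0$, so $\lambda\wedge\om$ is a positive volume form. Thus $(\om,\lambda)$ is a SHS with kernel foliation generated by $X$, and by construction its kernel foliation is $\LL_X$.

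\textbf{Main obstacle.} The routine parts are Cartan-calculus manipulations; the one point demanding care is the verification that $\mu=\frac{1}{\lambda(X)}\lambda\wedge\om$ is precisely the metric volume form, and dually that $\lambda\wedge\om=|X|^2\mu$ — this is where the normalization factor $\frac{1}{\lambda(X)}$ (absent in Corollary~\ref{cor:geod}, where $\lambda(X)=1$) enters, and getting the powers of $\lambda(X)$ right is the only place an error can creep in. The argument that $\lambda(X)$ is a first integral of $X$ (needed so that $i_Xd(i_Xg)=0$) should be stated cleanly: it follows from $0=i_Xd\lambda$ applied appropriately, or more invariantly from the fact that $X$ spans $\ker\om=\ker d\lambda$ so that $d(\lambda(X))$ annihilates $X$. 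Beyond that, the proof is a direct transcription of the proof of Corollary~\ref{cor:geod} with $\om=i_X\mu$ in place of the Reeb normalization.
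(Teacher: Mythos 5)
Your Beltrami-to-SHS direction is essentially correct, but the SHS-to-Beltrami direction contains a genuine error. You choose $g$ with $i_Xg=\lambda/\lambda(X)$ and then need $i_Xd(i_Xg)=0$. Writing $h:=1/\lambda(X)$, one has $i_Xd(h\lambda)=(X\cdot h)\,\lambda-\lambda(X)\,dh+h\,i_Xd\lambda=(X\cdot h)\,\lambda-\lambda(X)\,dh$. You claim this vanishes because $\lambda(X)$ is a first integral of $X$. That claim is false: if $R$ is the Reeb vector field and $X=fR$ for a positive function $f$, then $X$ generates the kernel foliation and $\lambda(X)=f$, which is not $X$-invariant unless $f$ happens to be invariant under the Reeb flow. (Your proposed justification is circular: $d(i_X\lambda)(X)=(L_X\lambda)(X)-(i_Xd\lambda)(X)=X\cdot\lambda(X)$ yields no constraint.) Moreover, even at points where $X\cdot h=0$, the term $-\lambda(X)\,dh$ is a genuine $1$-form that does not vanish unless $\lambda(X)$ is locally constant, so the computation would not close up in any case. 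There is also an internal inconsistency in your choice of $g$: $i_Xg=\lambda/\lambda(X)$ forces $g(X,X)=1$, whereas you prescribe $g(X,X)=1/\lambda(X)$.

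The repair is to put the normalization into $g|_{\ker\lambda}$ rather than into $i_Xg$. Choose $g$ so that $i_Xg=\lambda$ exactly, i.e.\ $X\perp_g\ker\lambda$ and $g(X,X)=\lambda(X)$; then $i_Xd(i_Xg)=i_Xd\lambda=0$ holds with no further argument, since $\ker\om\subset\ker d\lambda$. Taking $g|_{\ker\lambda}=\lambda(X)^{-1/2}\,\om(\cdot,J\cdot)$ for an $\om$-compatible $J$, a frame computation as in Corollary~\ref{cor:geod} (with $|X|=\lambda(X)^{1/2}$ and $\om(e_1,e_2)=\lambda(X)^{1/2}$ on a $g$-orthonormal basis of $\ker\lambda$) shows that the Riemannian volume form of $g$ is precisely $\mu=\frac{1}{\lambda(X)}\lambda\wedge\om$. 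Finally $i_X\mu=\frac{1}{\lambda(X)}i_X(\lambda\wedge\om)=\om$, so $L_X\mu=d\om=0$. This is exactly the rescaling mechanism of Remark~\ref{rem:Beltrami}\,(a): passing from the Reeb field to a general generator $X$ of the kernel foliation is absorbed by rescaling $g$ and $\mu$, not by rescaling the identity $i_Xg=\lambda$.
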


\begin{remark}\label{rem:Beltrami}
(a) Note that any rescaling $fX$ of a Beltrami field by a positive
function $f:M\to\R$ is again a Beltrami field (just rescale $g$ and
$\mu$ accordingly), but the same is not true for the Reeb vector field
of a stable Hamiltonian structure. 

(b) Beltrami vector fields arise e.g.~as solutions of the stationary
Euler equation~\eqref{eq:Euler1} with constant pressure,
i.e.~$\nabla_XX=0$ and $L_X\mu=0$. Indeed, the rescaled vector field
$\bar X:=X/|X|$ satisfies the same equations and in addition $g(\bar
X,\bar X)\equiv 1$, so it satisfies Equation~\eqref{eq:Euler2} with
vanishing Bernoulli function, hence $\bar X$ as well as $X$ is
Beltrami. 

(c) The {\em curl} $\curl X$ of a Beltrami field $X$ (with respect to
$g,\mu$) is defined by the equation 
$$
   i_{\curl X}\mu = d(i_Xg). 
$$
The first equation in~\eqref{eq:Beltrami} implies $i_Xi_{\curl
X}\mu=0$, so $\curl X=fX$ for a function $f:M\to\R$. This function $f$
is a first integral of $X$ and will play a crucial role in
Section~\ref{sec:dim3}. 
\end{remark}

\subsection{Obstructions to stability}\label{subsec:obstr}

Not every orientable orientable $1$-dimensional foliation is
geodesible and, moreover, not every kernel foliation of a HS is. In
this paper, we will only use the second one of the following two easy
obstructions.  

{\em Obstruction}~1: An oriented $1$-foliation $\LL$ is non-geodesible
if it contains a {\em Reeb component}, i.e.~an oriented embedded
annulus $A$ consisting of leaves of $\LL$ such that the boundary
orientation of $\p A$ coincides with that given by $\LL$. Indeed, if 
the ambient manifold contains a Reeb component, then for any vector
field $X$ spanning $\LL$ and any $1$-form $\lambda$ satisfying
$i_Xd\lambda=0$ we have $\int_{A}d\lambda=0$; hence
by Stokes' theorem $\int_{\p A}\lambda=0$ and the equation
$\lambda(X)=1$ in~\eqref{eq:stab} cannot be satisfied.

{\em Obstruction}~2: A Hamiltonian structure $\om$ on a closed
manifold is not stabilizable
if $\om$ has a primitive $\alpha$ for which  
\begin{equation}
\alpha\wedge \om^{n-1}\equiv 0.
\label{foliat}
\end{equation}
Indeed, assume that there exits $\lambda$ stabilizing $\om$. Consider 
$$
   d(\alpha\wedge\lambda\wedge\om^{n-2}) = 
   \lambda\wedge\omega^{n-1} - \alpha\wedge d\lambda\wedge\om^{n-2}.
$$
The first term on the right-hand side is pointwise positive by the
first condition in~\eqref{eq:stab}, the second term is identically
zero by the second condition in~\eqref{eq:stab} and~\eqref{foliat}
(all factors vanish on the Reeb vector field), and the left-hand side
is exact. Integration over the manifold thus yields a contradiction. 

Obstructions 1 and 2 are special cases of the following general
characterization of geodesibility due to Sullivan. 

\begin{thm}[Sullivan~\cite{Su}]\label{thm:sullivan}
An oriented foliation $\LL$ is non-geodesible if and only if 
there exists a foliation cycle which can be arbitrarily well
approximated by boundaries of singular $2$-chains tangent to the
foliation. 
\end{thm}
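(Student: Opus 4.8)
The only case needed here is that of a $1$-dimensional oriented foliation $\LL$, and by Wadsley's theorem (Theorem~\ref{thm:Wad}) for such $\LL$ the word ``geodesible'' is synonymous with ``stabilizable'', i.e.~with the existence of a nowhere-zero vector field $X$ spanning $\LL$ together with a $1$-form $\lambda$ satisfying \eqref{eq:stab}. The plan is to recast both sides of the asserted equivalence as a Hahn--Banach separation statement for convex cones of de Rham currents. I would work in the space $\DD'_1$ of $1$-currents on the closed manifold $M$ equipped with the weak-$*$ topology, for which the continuous linear functionals are exactly the smooth $1$-forms $\Om^1(M)$. Inside $\DD'_1$, let $\CC_\LL$ be the weak-$*$ closed convex cone generated by the Dirac currents along positively oriented leaf directions; its nonzero elements with vanishing boundary are the \emph{foliation cycles}. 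Let $\BB\subset\DD'_1$ be the weak-$*$ closure of the linear subspace $\{\p S\}$, where $S$ runs over $2$-currents tangent to $\LL$ (weak-$*$ limits of combinations of $\delta_y\otimes(v\wedge X_y)$); these are precisely the limits of boundaries of singular $2$-chains whose tangent planes contain the leaf direction. The hypothesis ``some foliation cycle is arbitrarily well approximated by boundaries of $2$-chains tangent to $\LL$'' then reads ``$\BB$ contains a nonzero foliation cycle'', and the theorem becomes: this happens if and only if $\LL$ is not geodesible.

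For the easy direction I would assume $\LL$ geodesible with $(X,\lambda)$ as in \eqref{eq:stab}. Since $\lambda(X)\equiv1$, the functional $C\mapsto\la C,\lambda\ra$ is strictly positive on every nonzero $C\in\CC_\LL$ (it computes the total mass); on the other hand $\la\p S,\lambda\ra=\la S,d\lambda\ra=0$ for every tangent $2$-current $S$, because $d\lambda(v,X)=-(i_Xd\lambda)(v)=0$, so this functional vanishes on all of $\BB$ by weak-$*$ continuity. Hence no nonzero foliation cycle lies in $\BB$. For the converse, suppose $\BB$ contains no nonzero foliation cycle. The key preliminary remark is that a weak-$*$ limit of foliated boundaries is automatically a de Rham cycle, because $\p$ is weak-$*$ continuous and $\p\p=0$; thus every element of $\CC_\LL\cap\BB$ is a cycle and, by hypothesis, $\CC_\LL\cap\BB=\{0\}$. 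Now fix a smooth $1$-form $\nu$ positive on $\LL$ and put $\CC_{\LL,1}:=\{C\in\CC_\LL:\la C,\nu\ra=1\}$; this is a nonempty convex set, weak-$*$ compact (positivity of $\nu$ bounds the mass, then Banach--Alaoglu), and disjoint from the closed subspace $\BB$. Hahn--Banach separation in $\DD'_1$ then yields $\lambda\in\Om^1(M)$ with $\la\,\cdot\,,\lambda\ra\equiv0$ on $\BB$ (a functional bounded above on a subspace vanishes on it) and $\la C,\lambda\ra\ge\beta>0$ for all $C\in\CC_{\LL,1}$. Evaluating the inequality on normalized Diracs gives $\lambda>0$ on $\LL$, and evaluating $\la\,\cdot\,,\lambda\ra\equiv0$ on the currents $\delta_x\otimes(v\wedge X_x)\in\BB$ gives $i_Xd\lambda=0$. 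Rescaling $X$ so that $\lambda(X)\equiv1$ produces a solution of \eqref{eq:stab}, so $\LL$ is stabilizable and hence geodesible.

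The hard part will not be the formal argument but the functional-analytic setup: choosing the topology on currents so that its dual is $\Om^1(M)$, checking that the normalized cone $\CC_{\LL,1}$ is genuinely weak-$*$ compact (so that Hahn--Banach applies to a compact/closed pair), and verifying that ``$2$-currents tangent to $\LL$'' is the correct completion of the classical notion of a singular $2$-chain tangent to the foliation, so that $\BB$ really encodes the approximation in the statement. The one genuinely slick point is the remark that limits of foliated boundaries are automatically closed: without it one would only be separating foliation \emph{cycles} from $\BB$ and could not deduce pointwise positivity of $\lambda$ along every leaf direction. For foliations of dimension $p>1$ the same scheme works with ``$2$-chain'' replaced by ``$(p+1)$-chain'', ``geodesible'' by ``leaves minimal'', and $\lambda$ by a $p$-form that is positive on $\LL$ and whose differential annihilates $(p+1)$-vectors containing the leaf plane; since only the $1$-dimensional case is used here I would not develop this.
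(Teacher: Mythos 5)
The paper does not prove this theorem; it is quoted from Sullivan's paper \cite{Su} and used as a black box (Obstructions 1 and 2 in Section~\ref{subsec:obstr} are its only traces in the text). Your argument is, in substance, Sullivan's original proof: identify geodesibility with the existence of a $1$-form $\lambda$ positive on $\LL$ with $i_Xd\lambda=0$ (Wadsley), and obtain $\lambda$ by Hahn--Banach separation of a compact base of the foliation cone from the weak-$*$ closure of the foliated boundaries in the space of $1$-currents dual to $\Om^1(M)$. The two points you flag yourself are exactly the ones that carry the real content and must be done as in \cite{Su}: the compactness of the normalized base $\CC_{\LL,1}$ (mass of a cone element is controlled by its pairing with any form positive on $\LL$, then a Banach--Alaoglu-type argument in the dual of the Fr\'echet space $\Om^1(M)$), and the identification of the closure of boundaries of singular tangent $2$-chains with the closure of boundaries of Dirac-type tangent $2$-currents, without which evaluating the vanishing of $\lambda$ on $\p\bigl(\delta_x\otimes(v\wedge X_x)\bigr)$ to get $i_Xd\lambda=0$ is not justified. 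Modulo filling in those two standard steps, the proof is correct.
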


\subsection{The h-principle for Hamiltonian
  structures}\label{subsec:h} 

Hamiltonian structures satisfy the h-principle. For this, let $M$ be
an odd-dimensional manifold. Denote by $\Om^2_{\rm nondeg}(M)$ the
space of (not necessarily closed) 2-forms on $M$ of maximal rank, and
by $\mathcal{HS}_a(M)$ the space of {\em closed} 2-forms of maximal
rank (i.e.~Hamiltonian structures) representing the
cohomology class $a\in H^2(M;\R)$. Both spaces are equipped with the
$C^\infty_\loc$-topology. The following h-principle was first proved
by McDuff~\cite{McD}, see also~\cite{EM}. In fact, it is a consequence
(via symplectization) of the h-principle for symplectic forms on open
manifolds. 

\begin{theorem}[McDuff~\cite{McD}]\label{thm:h}
The inclusion $\mathcal{HS}_a(M)\into\Om^2_{\rm nondeg}(M)$ is a
homotopy equivalence. In particular, if $M$ admits a 2-form of maximal
rank then every $a\in H^2(M)$ is represented by a Hamiltonian
structure, and two cohomologous Hamiltonian structures are 
homotopic iff they are homotopic through (not necessarily closed)
2-forms of maximal rank.
\end{theorem}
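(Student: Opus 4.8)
The plan is to reduce this, following McDuff~\cite{McD} (see also~\cite{EM}), to Gromov's h-principle for symplectic forms in a fixed cohomology class on \emph{open} manifolds, via symplectization. Let $W:=M\times\R$ with projection $\pi\colon W\to M$ and let $t$ denote the coordinate on the $\R$-factor; since $\pi$ is a homotopy equivalence, $\pi^\ast\colon H^2(M;\R)\to H^2(W;\R)$ is an isomorphism. To a $2$-form $\sigma$ of maximal rank on $M$ I would associate the $2$-form $\Phi(\sigma):=\pi^\ast\sigma+d(t\beta)$ on $W$, where $\beta$ (also written for its pullback) is an auxiliary $1$-form on $M$ with $\beta\wedge\sigma^{n-1}>0$. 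Such a $\beta$ exists because the oriented kernel line field of $\sigma$ is trivial, and it can be chosen to depend continuously on $\sigma$ since for each $\sigma$ the admissible $\beta$ form a convex set. A direct computation gives $\Phi(\sigma)^n|_{t=0}=n\,dt\wedge\beta\wedge\pi^\ast\sigma^{n-1}\neq0$, so $\Phi(\sigma)$ is nondegenerate on a neighbourhood $M\times(-\eps,\eps)$ of the zero slice; moreover $d\Phi(\sigma)=\pi^\ast d\sigma$ and $d(t\beta)$ is globally exact on $W$, so $\Phi$ sends Hamiltonian structures in the class $a$ to symplectic forms in the class $\pi^\ast a$. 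After rescaling the $\R$-factor we may view these as symplectic forms on all of the open manifold $W$.

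In the opposite direction, restriction to the slice $M\times\{0\}$ defines a map $\Psi$ from nondegenerate (resp.\ symplectic) $2$-forms on $W$ to maximal-rank (resp.\ closed maximal-rank) $2$-forms on $M$. The technical heart of the proof is to show that $\Phi$ and $\Psi$ are homotopy inverse to one another on these section spaces, and that this persists in parametrized families. Since $\Psi\circ\Phi=\mathrm{id}$, the issue is $\Phi\circ\Psi$: a general symplectic form $\Omega$ on $W$ is not of the split form $\pi^\ast\sigma+d(t\beta)$ near the slice. One connects $\Omega$ to $\Phi(\Psi(\Omega))$ through symplectic forms in the same cohomology class --- and analogously through nondegenerate forms on the formal side --- using that the two agree along $M\times\{0\}$ together with a Moser-type deformation in a tubular neighbourhood of the slice (equivalently, a relative parametric form of the h-principle on $W$). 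This identifies $\HS_a(M)$ up to homotopy with the space of symplectic forms on $W$ representing $\pi^\ast a$, and $\Om^2_{\rm nondeg}(M)$ with the space of all nondegenerate $2$-forms on $W$.

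It then remains to invoke Gromov's h-principle on the open manifold $W$: for every $b\in H^2(W;\R)$ the inclusion of the space of symplectic forms representing $b$ into the space of all nondegenerate $2$-forms is a weak homotopy equivalence. Indeed, a formal solution --- a nondegenerate $2$-form, a closed $2$-form in the class $b$, and a path of arbitrary $2$-forms between them --- deformation retracts onto just a nondegenerate $2$-form, because the space of closed $2$-forms in a fixed class and the space of all $2$-forms are both affine; and genuine solutions satisfy the (parametric) h-principle on open manifolds by convex integration, or by Gromov's sheaf-theoretic method. Composing the identifications of the previous paragraph with this equivalence shows that $\HS_a(M)\hookrightarrow\Om^2_{\rm nondeg}(M)$ is a weak homotopy equivalence, which upgrades to a homotopy equivalence since both spaces have the homotopy type of CW complexes. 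The existence statement is the non-emptiness of $\pi_0$ (given $\Om^2_{\rm nondeg}(M)\neq\emptyset$), and the statement about homotopy of cohomologous Hamiltonian structures is the bijectivity on $\pi_0$.

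I expect the main obstacle to be the second step --- establishing that symplectization and slice-restriction are genuinely homotopy inverse as maps of section spaces, uniformly over compact parameter families. The individual-form computations are elementary, but the homotopy-inverse claim requires a careful normal-form / Moser argument near $M\times\{0\}$ to absorb the fact that an arbitrary symplectic form on $W$ need not be split there, and one must verify that all auxiliary choices --- the $1$-form $\beta$, the rescaling of the $\R$-coordinate, and the Moser interpolations --- can be made continuously and compatibly in families, so that the full parametric strength of Gromov's h-principle on $W$ transfers back to $M$.
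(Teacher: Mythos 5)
Your reduction via symplectization to Gromov's h-principle for symplectic forms on the open manifold $M\times\R$ is exactly the route the paper indicates: Theorem~\ref{thm:h} is quoted from McDuff without proof, accompanied only by the remark that it follows in precisely this way from the open-manifold h-principle. The one detail worth adding to your sketch is that you must restrict throughout to forms $\Omega$ on $W=M\times\R$ inducing the product orientation, i.e.\ with $(i_{\partial_t}\Omega)\wedge(\Omega|_{TM})^{n-1}>0$ along $M\times\{0\}$, since otherwise the interpolation between $\Omega$ and $\Phi(\Psi(\Omega))$ degenerates on the zero slice and $\Phi\circ\Psi$ is only a homotopy inverse on that component.
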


\begin{cor}
Let $M$ be an oriented 3-manifold $M$. Then every $a\in H^2(M)$ is
represented by a Hamiltonian structure, and two cohomologous
Hamiltonian structures are homotopic iff their kernel
distributions are homotopic as oriented line fields. In particular,
the Hamiltonian structures $d\alpha_0,d\alpha_1$ induced by contact
forms $\alpha_0,\alpha_1$ on $M$ are homotopic iff the contact
distributions are homotopic as oriented plane fields. 
\end{cor}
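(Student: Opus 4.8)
The plan is to deduce the corollary from McDuff's h-principle (Theorem~\ref{thm:h}) by showing that on an oriented $3$-manifold the space $\Om^2_{\rm nondeg}(M)$ of (not necessarily closed) $2$-forms of maximal rank is homotopy equivalent — via the map sending a form to its kernel line field — to the space of oriented line fields on $M$. The key observation is that in dimension $3$ a $2$-form $\om$ of maximal rank is the same thing as a nowhere-vanishing $2$-form, and such a form is determined by its oriented kernel line $\ker\om$ together with a positive ``density'' on the quotient $TM/\ker\om$. More precisely, once we fix an auxiliary Riemannian metric $g$, an oriented line field $\ell$ determines an oriented rank-$2$ bundle $\ell^\perp$, and the $2$-forms with kernel exactly $\ell$ inducing the chosen co-orientation are exactly $f\,\mu_\ell$ where $\mu_\ell$ is the $g$-area form on $\ell^\perp$ (extended by $i_v\mu_\ell=0$ for $v\in\ell$) and $f:M\to(0,\infty)$ is an arbitrary positive function. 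Hence the fibre of the map $\om\mapsto\ker\om$ over $\ell$ is the convex (hence contractible) space of positive functions, and a standard argument shows the map is a fibration, so it is a homotopy equivalence.

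The steps I would carry out, in order: (1) Recall from Theorem~\ref{thm:h} that $\HS_a(M)\into\Om^2_{\rm nondeg}(M)$ is a homotopy equivalence for every $a\in H^2(M;\R)$; in particular $\pi_0$ and $\pi_1$ agree. (2) Show that in dimension $3$, ``maximal rank'' for a $2$-form means ``nowhere zero'', and set up the map $r:\Om^2_{\rm nondeg}(M)\to\LL^+(M)$ to the space of oriented line fields, $r(\om):=\ker\om$ with the orientation fixed in the Definition (using the ambient orientation of $M$). (3) Using a fixed background metric $g$, exhibit the homotopy inverse: given an oriented line field $\ell$, let $X$ be the unit $g$-length positively-oriented generator, and send $\ell\mapsto i_X\mu_g$ where $\mu_g$ is the Riemannian volume form; verify that $r(i_X\mu_g)=\ell$ with the correct orientation, and that the two composites are homotopic to the identity by linearly interpolating in the fibre (the positive-function part) and by using that the space of metrics is convex. (4) Combine (1)–(3): $\HS_a(M)\simeq\Om^2_{\rm nondeg}(M)\simeq\LL^+(M)$, and read off the statement about $\pi_0$ (existence and homotopy classification of HS in a fixed class) and about homotopies (two cohomologous HS are homotopic through HS iff their kernel line fields are homotopic). (5) For the last sentence, note that for a contact form $\alpha$ the kernel line $\ker(d\alpha)$ is the Reeb line, which is naturally homotopic (as an oriented line field, rescaling $R_\alpha$ linearly) through line fields to any line field transverse to $\xi=\ker\alpha$ co-oriented compatibly; more directly, $\ker(d\alpha_0)$ and $\ker(d\alpha_1)$ are homotopic as oriented line fields iff the oriented plane fields $\xi_0,\xi_1$ are homotopic, since an oriented line field and a co-oriented plane field transverse to it determine each other up to contractible choice (the line field is the kernel, the plane field its orthogonal complement for any metric). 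I should be slightly careful about orientations here: the orientation on $\ker(d\alpha)$ coming from the Definition must be checked to agree with the one given by the Reeb vector field $R_\alpha$, which amounts to the standard fact that $\alpha\wedge d\alpha>0$ for a positive contact form.

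The main obstacle I expect is Step~(3): verifying that $r$ is genuinely a Serre (or Hurewicz) fibration, or at least a homotopy equivalence, rather than merely a surjection with contractible fibres — one must check the fibres vary ``locally trivially'', which follows because the space of positive functions is an open convex cone and the construction $\ell\mapsto i_X\mu_g$ gives a continuous global section, so $\Om^2_{\rm nondeg}(M)$ deformation retracts onto the image of this section along the fibrewise straight-line homotopy $\om_t:=(1-t)\om+t\,(\text{section value at }r(\om))$, which stays in $\Om^2_{\rm nondeg}$ because the fibre over each point is convex. Once this deformation retraction is in hand the homotopy equivalence $r$ is immediate, and the rest is bookkeeping with orientations. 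A secondary subtlety is the precise matching of orientations throughout — between the orientation on $\ker\om$ fixed in the Definition, the orientation induced by $i_X\mu_g$, and (in the contact case) the Reeb orientation — but these are all consequences of the sign convention $\alpha\wedge d\alpha>0$ and the definition of the induced orientation on $\ker\om$, so they cause no real difficulty.
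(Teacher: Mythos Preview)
Your proposal is correct and is precisely the intended argument; the paper states this corollary immediately after Theorem~\ref{thm:h} without proof, treating it as a direct consequence of the h-principle together with the elementary observation that in dimension~3 the kernel map $\Om^2_{\rm nondeg}(M)\to\LL^+(M)$ is a homotopy equivalence. Your Steps~(1)--(5) supply exactly the details the paper omits, including the convex-fibre retraction and the orientation bookkeeping for the contact case.
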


We will show in Proposition~\ref{prop:ex} below that the existence
part of this corollary also holds for stable Hamiltonian
structures. In Theorem~\ref{thm:contact-SHS} we will show that,
assuming transversality for holomorphic curves can be achieved, the
homotopy part fails for stable Hamiltonian structures.

\subsection{Foliated cohomology}\label{ss:folcoh}

Let $\LL$ be an oriented 1-dimensional foliation on a closed manifold
$M$. Fix any vector field $R$ generating $\LL$ and consider the
subcomplex  
$$
   \Om^k_\LL(M):=\{\alpha\in \Om^k(M)\mid i_R\alpha=0,i_Rd\alpha=0\}
$$
of the de Rham complex (note that this does not depend on the choice
of $R$). Its cohomology $H^k_\LL(M)$ is the {\em foliated
  cohomology} of the foliation $\LL$. 
The inclusion $\Om^k_\LL(M)\subset \Om^k(M)$ induces a canonical map
$$
   \kappa:H^k_\LL(M)\to H^k(M). 
$$
Note that any HS $\om$ with $\ker\om=\LL$ carries a cohomology class
$[\om]\in H^2_\LL(M)$, and any stabilizing 1-form $\lambda$ for $\LL$
defines a cohomology class $[d\lambda]\in \ker\bigl(\kappa:H^2_\LL(M)\to
H^2(M)\bigr)$. 

Foliated cohomology will appear throughout this paper. A first
illustration of its importance is the following ``foliated'' version of
Moser's stability theorem. 

\begin{lemma}\label{lem:Moser}
Let $(\om_t)_{t\in[0,1]}$ be a smooth family of Hamiltonian structures
with constant kernel foliation $\ker(\om_t)=\LL$ and
$[\om_t]=[\om_0]\in H^2_\LL(M)$ for all 
$t$. Then there exists a smooth family of diffeomorphisms
$(\phi_t)_{t\in[0,1]}$ with $\phi_0=\id$, $\phi_t^*\LL=\LL$ and
$\phi_t^*\om_t=\om_0$ for all $t$. 
\end{lemma}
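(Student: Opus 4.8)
The plan is to imitate the standard Moser argument, but carried out entirely inside the foliated de Rham complex $\Om^\bullet_\LL(M)$ so that the resulting isotopy preserves $\LL$. First I would observe that $\dot\om_t$ is a closed $2$-form which lies in $\Om^2_\LL(M)$: indeed $i_R\om_t=0$ for all $t$ (since $\ker\om_t=\LL$), hence $i_R\dot\om_t=0$, and $i_Rd\dot\om_t=0$ since $\dot\om_t$ is closed. The hypothesis $[\om_t]=[\om_0]\in H^2_\LL(M)$ says precisely that $[\dot\om_t]=0$ in foliated cohomology, so there is a family $\mu_t\in\Om^1_\LL(M)$ (i.e.\ $i_R\mu_t=0$ and $i_Rd\mu_t=0$), depending smoothly on $t$, with $d\mu_t=-\dot\om_t$; smoothness in $t$ can be arranged by a standard argument (apply a fixed chain homotopy, or use that the primitive can be chosen to depend linearly on $\dot\om_t$).

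Next I would define a time-dependent vector field $X_t$ by the Moser equation $i_{X_t}\om_t=\mu_t$. This is solvable and gives a unique $X_t$ once we demand $X_t\in\ker\lambda$-type normalization; more precisely, since $\om_t$ has maximal rank with kernel $\LL=\la R\ra$, and since $i_R\mu_t=0$, the equation $i_{X_t}\om_t=\mu_t$ has a solution $X_t$, unique modulo $\R R$, and I would pin it down by requiring $X_t$ to be tangent to some fixed complement of $\LL$ (or simply work modulo $\LL$ throughout, which is all that matters). The key point is then to check that the flow $\phi_t$ of $X_t$ preserves $\LL$: this holds because $L_{X_t}$ of any section of $\LL$ stays in $\LL$, which one sees from $i_R\mu_t=0$ forcing $X_t$ to preserve $\ker\om_t=\LL$ (equivalently, $\mu_t\in\Om^1_\LL$ guarantees $L_{X_t}R\in\LL$). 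Then the Moser computation
\[
   \ddt\phi_t^*\om_t = \phi_t^*\bigl(L_{X_t}\om_t+\dot\om_t\bigr)
   = \phi_t^*\bigl(d\,i_{X_t}\om_t+\dot\om_t\bigr)
   = \phi_t^*\bigl(d\mu_t+\dot\om_t\bigr) = 0
\]
(using $\om_t$ closed for Cartan's formula) gives $\phi_t^*\om_t=\om_0$, and $\phi_0=\id$ since $X_0$ can be taken to vanish (choose $\mu_0=0$). Since $M$ is not assumed closed here—wait, in the stated lemma $M$ carries a closed manifold hypothesis implicitly via the foliated cohomology setup—so completeness of the flow $\phi_t$ on the compact manifold is automatic.

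The main obstacle is the interplay between two conditions that must hold simultaneously: the primitive $\mu_t$ must be chosen \emph{within} $\Om^1_\LL(M)$ (so that $\phi_t$ preserves $\LL$), and it must depend smoothly on $t$. The first is exactly what the hypothesis $[\om_t]=[\om_0]$ in \emph{foliated} cohomology buys us—this is why the lemma is stated with foliated rather than ordinary cohomology, and it is the crucial difference from naive Moser stability. For smoothness in $t$, I would fix a linear chain-homotopy operator $P$ on the foliated complex (obtained e.g.\ from a foliated tubular-neighbourhood/averaging construction, or more cheaply just by noting $\dot\om_t$ depends smoothly on $t$ and picking $\mu_t$ to depend linearly on $\dot\om_t$ via any fixed splitting of $d\colon\Om^1_\LL\to B^2_\LL$), so that $\mu_t := P(-\dot\om_t)$ is automatically smooth in $t$ and lies in $\Om^1_\LL(M)$. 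Once these two points are in place, the rest is the verbatim Moser argument, with the only extra bookkeeping being the check that solving $i_{X_t}\om_t=\mu_t$ with $\mu_t$ foliated produces a vector field whose flow preserves the foliation—which, as noted, follows because $i_R\mu_t=0$ implies $L_{X_t}$ maps $\Gamma(\LL)$ to $\Gamma(\LL)$.
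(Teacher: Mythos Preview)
Your proposal is correct and follows essentially the same Moser argument as the paper: choose foliated primitives $\mu_t$ smoothly in $t$ (the paper cites the argument of Theorem~3.17 in McDuff--Salamon for this), solve $i_{X_t}\om_t=-\dot\mu_t$ with $X_t$ normalized by $\lambda(X_t)=0$ for a fixed $\lambda$ with $\lambda(R)=1$, and run the Moser computation. One simplification worth noting: rather than verifying infinitesimally that the flow of $X_t$ preserves $\LL$, the paper simply reads off $\phi_t^*\LL=\LL$ at the end from $\phi_t^*\om_t=\om_0$, since $\ker(\phi_t^*\om_t)=(d\phi_t)^{-1}(\ker\om_t)$ and both kernels equal $\LL$.
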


\begin{proof}
Pick a vector field $R$ generating $\LL$ and a 1-form $\lambda$ with
$\lambda(R)=1$.  
Since $[\om_t]=[\om_0]\in H^2_\LL(M)$, there exist 1-forms $\mu_t$ with
$\om_0+d\mu_t=\om_t$ and $i_R\mu_t=0$. By an argument as for Theorem
3.17 in~\cite{MS},
the $\mu_t$ can be chosen to depend smoothly on $t$. 

Since $i_R\dot\mu_t=0$ and $\ker\om_t=\LL$, there exists a unique
$t$-dependent vector field $X_t$ satisfying $i_{X_t}\om_t=-\dot\mu_t$
and $\lambda(X_t)=0$. Its flow $\phi_t$ satisfies 
$$
   \frac{d}{dt}\phi_t^*\om_t = \phi_t^*(L_{X_t}\om_t+\dot\om_t) =
   \phi_t^*d(i_{X_t}\om_t+\dot\mu_t) = 0
$$
and hence $\phi_t^*\om_t=\om_0$, which in turn implies
$\phi_t^*\LL=\LL$. 
\end{proof}

\begin{example}\label{ex:circle}
Suppose $\LL$ are the orbits of a (locally free) circle action, so the
orbit space $B$ is an orbifold and we have a projection $\pi:M\to B$. 
Then $\Om^k_\LL(M)=\pi^*\Om^k(B)$ and $H^k_\LL(M)\cong H^k(B)$. The
map $\kappa:H^k_\LL(M)\to H^k(M)$ corresponds to the pullback
$\pi^*:H^k(B)\to H^k(M)$, which is in general neither surjective nor
injective. This example also shows that Lemma~\ref{lem:Moser}
fails under the weaker hypothesis $[\om_t]=[\om_0]\in H^2(M)$
(e.g.~take the Hopf fibration $S^3\to S^2$ and $\om_t$ the pullback of
an area form on $S^2$ of area $1+t$).  
\end{example}

\begin{remark}
In the preceding example $H^k_\LL(M)$ was finite dimensional. We will
see in Section~\ref{ss:folcoh3} that for stable Hamiltonian structures
in dimension three $H^2_\LL(M)$ is often infinite dimensional. 
\end{remark}

\subsection{Helicity}\label{ss:hel}
Let $M$ be a closed oriented 3-manifold. The {\em helicity pairing} of
two exact 2-forms $\mu=d\alpha$ and $\nu=d\beta$ is 
$$
   \Hel(\mu,\nu) := \int_M\alpha\wedge \nu = \int_M\beta\wedge \mu. 
$$
This is independent of the primitives $\alpha,\beta$ and defines a
symmetric bilinear form on the space of exact 2-forms. The associated quadratic
form 
$$
   \Hel(\mu) = \int_M\alpha\wedge\mu
$$
is called the {\em helicity} of $\mu$. 

Now consider a 1-dimensional oriented foliation $\LL$. An
exact 2-form $\om$ with $\LL\subset\ker(\om)$ carries a helicity
$\Hel(\om)$. If $\om=d\alpha$ with $\alpha|_\LL=0$ then $\alpha\wedge\om$
contracts to zero with $\LL$ and thus vanishes identically, so we have
$\Hel(\om)=0$. Thus helicity descends to a quadratic form on 
$$
   D_\LL := \ker\bigl(\kappa:H^2_\LL(M)\to H^2(M)\bigr). 
$$
Note that for each stabilizing 1-form $\lambda$ for $\LL$ we have
$d\lambda\in D_\LL$ and thus a helicity $\Hel(d\lambda)$.

\begin{example}
If $\om=d\alpha$ for a positive contact form $\alpha$ its helicity satisfies 
$$
   \Hel(\om) = \int_M\alpha\wedge d\alpha > 0. 
$$
\end{example}

\begin{example}
For $\LL$ corresponding to the fibres of a circle bundle $\pi:M\to B$
as in Example~\ref{ex:circle} 
we distinguish two cases according to its Euler number $e\in\Z$. If
$e=0$ the pullback $\pi^*:H^2(B)\to H^2(M)$ is injective, thus
$D_\LL=0$ and the helicity vanishes identically. If $e\neq 0$ the
pullback $\pi^*:H^2(B)\to H^*(M)$ is the zero map, thus $D_\LL\cong
H^2(B)$ and the helicity pairing is given by
$$
   \Hel:H^2(B)\times H^2(B)\to\R,\qquad
   (x,y)\mapsto\frac{\int_Bx\int_By}{e}.
$$
\end{example}

\subsection{Left-invariant Hamiltonian structures on
  $PSL(2,\R)$}\label{subsec:left}  

This example is a particular case of the contact one. We consider
left-invariant $1$-forms  
on the Lie group $PSL(2,\R)$. To be more concrete, we choose a basis 
$$H:=\left(\begin{array}{cc}
1 & 0 \\
0 & -1  \\
\end{array}\right),\qquad E_+:=\left(\begin{array}{cc}
0 & 1 \\
0 & 0  \\
\end{array}\right),\qquad E_-:=\left(\begin{array}{cc}
0 & 0 \\
1 & 0  \\
\end{array}\right).$$
of the Lie algebra $sl(2,\R)$. 
The structural equations are: 
$$
   [H,E_+]=2E_+,\qquad[H,E_-]=-2E_-,\qquad [E_+,E_-]=H.
$$ 
Let $(h,e^+,e^-)$ be the basis of $sl(2,\R)^{\star}$
dual to $(H,E_+,E_-)$. We extend $h$, $e^+$ and 
$e^-$ to left invariant $1$-forms on $PSL(2,\R)$
by left multiplication and denote the $1$-forms by the same
letters. The structural 
equations above can be rewritten in terms of the $1$-forms $h$, $e^+$ and
$e^-$ as follows: 
$$
   dh=e^-\wedge e^+,\qquad de^+=2e^+\wedge h,\qquad de^-=2h\wedge e^-.
$$ 
This shows that if $0\neq\alpha\in sl(2,\R)^*$, then $d\alpha$ is
nowhere zero and so for dimensional reasons defines a Hamiltonian
structure. For $$\alpha=ah+a_+e^++a_-e^-\ne 0$$ we
compute 
\begin{align*}
   \alpha\wedge d\alpha &= (ah+a_+e^++a_-e^-)\wedge 
   (adh+a_+de^++a_-de^-) \cr
   &= a^2h\wedge e^- \wedge e^++
   a_+a_-2e^+\wedge h\wedge e^-+a_-a_+2e^-\wedge e^+\wedge h \cr
   &= (a^2+4a_+a_-)h\wedge e^-\wedge e^+.
\end{align*} 
We fix an orientation of $PSL(2,\R)$
(and its compact quotients) by specifying the volume form 
$h\wedge e^-\wedge e^+$. It is clear from the computation above that
$\alpha$ is a positive contact 
form when $a^2+4a_+a_->0$, negative contact when $a^2+4a_+a_-<0$, and
defines a foliation when 
$a^2+4a_+a_-=0$. The cone 
$$
   C:=\{\alpha\in sl(2,\R)^*|a^2+4a_+a_-=0\}
$$
separates the space $sl(2,\R)^*$ into three connected components: one
consisting of positive contact forms, and the other two consisting   
of negative contact forms. For a contact form $\alpha\in sl(2,\R)^*$
its Reeb vector field (viewed as an element of $sl(2,\R)$) is given by 
$R=X/(a^2+4a_+a_-)$ with
$$
   X=aH+2a_-E_++2a_+E_-= \left(\begin{array}{cc}
a & 2a_- \\
2a_+ & -a  \\
\end{array}\right).
$$ 
Note that the spectrum of $X$ consists of two distinct real eigenvalues
if $\alpha$ is positive contact and of two conjugate imaginary
eigenvalues if $\alpha$ is negative contact.  
Let $\Gamma$ be a lattice in $PSL(2,\R)$ such that the quotient
$\Gamma\setminus PSL(2,\R)$ is smooth and compact. Then the
space of left-invariant forms on $PSL(2,\R)$ 
descends to a certain space $V$ of exterior forms on the quotient 
$\Gamma\setminus PSL(2,\R)$ 
and the natural identification is an isomorphism of graded commutative
algebras. Thus all our equalities involving left-invariant
forms or left-invariant vector fields on $PSL(2,\R)$ induce 
the corresponding equalities of their descendants on
$\Gamma\setminus PSL(2,\R)$. In particular, the cone $C$ descends to a
cone in $V$ (still denoted by $C$) which separates $V$ into three
connected components as before. Moreover, {\it Obstruction}~2 in
Section \ref{subsec:geod} yields: {\em For any nonzero $\alpha\in
C\subset V$ (i.e., $\alpha$ defines a foliation) the HS $d\alpha$ is
not stabilizable.}    

The dynamics on the quotient is well understood in view of
\begin{theorem}[\cite{Aus} Theorem 5.3]\label{thm:left}
(a) For a negative contact form $\alpha_-\in V$ the Reeb flow is
periodic. 

(b) For a positive contact form $\alpha_+\in V$ the Reeb flow is
ergodic.  
\end{theorem}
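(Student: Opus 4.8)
The plan is to exploit that the Reeb field here is \emph{left-invariant}: since $R=X/(a^2+4a_+a_-)$ with $X=aH+2a_-E_++2a_+E_-\in sl(2,\R)$ extended left-invariantly, its flow on $PSL(2,\R)$ is right translation $g\mapsto g\exp(tR)$, and because left and right translations commute this descends to the flow $\Gamma g\mapsto\Gamma g\exp(tR)$ on $\Gamma\backslash PSL(2,\R)$. Rescaling time by the nonzero constant $a^2+4a_+a_-$ (which affects neither periodicity nor ergodicity), it suffices to understand $g\mapsto g\exp(tX)$, and this is governed by the eigenvalues of $X$: one has $\tr X=0$ and $\det X=-(a^2+4a_+a_-)$, so the eigenvalues of $X$ are $\pm\sqrt{-(a^2+4a_+a_-)}$.

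For part (a), $\alpha_-$ negative contact means $a^2+4a_+a_-<0$, so $X$ has two distinct purely imaginary eigenvalues $\pm i\nu$ with $\nu:=\sqrt{-(a^2+4a_+a_-)}>0$. Hence $\exp(tX)$ is diagonalizable over $\C$ with eigenvalues $e^{\pm it\nu}$; at $t=\pi/\nu$ both eigenvalues equal $-1$, so $\exp\!\big(\tfrac{\pi}{\nu}X\big)=-I$, which is the identity in $PSL(2,\R)$. Therefore $g\mapsto g\exp(tX)$ is periodic of period $\pi/\nu$ for \emph{every} $g$, and after undoing the constant time change the Reeb flow on $\Gamma\backslash PSL(2,\R)$ is periodic. (Equivalently: $\{\exp(tX)\}$ is a conjugate of the compact rotation subgroup of $PSL(2,\R)$.)

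For part (b), $\alpha_+$ positive contact means $a^2+4a_+a_->0$, so $X$ has real eigenvalues $\pm\mu$ with $\mu:=\sqrt{a^2+4a_+a_-}>0$; thus $X$ is conjugate in $SL(2,\R)$ to $\mu H$, and $\{\exp(tX)\}$ is a conjugate of the diagonal ($\R$-split Cartan) subgroup $A=\{\operatorname{diag}(e^s,e^{-s})\}$. Right translation by $A$ on $\Gamma\backslash PSL(2,\R)$ is exactly the geodesic flow on the unit tangent bundle of the compact hyperbolic surface $\Gamma\backslash\mathbb{H}^2$; conjugating $A$ by some $P\in SL(2,\R)$ just replaces $\Gamma$ by the still-cocompact lattice $P^{-1}\Gamma P$. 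So, up to a constant time change, the Reeb flow of $\alpha_+$ is the geodesic flow of a closed hyperbolic surface, and it remains to invoke ergodicity of that flow with respect to the Haar (Liouville) volume, which up to a constant is $\lambda_+\wedge\om$. This last ergodicity statement is the only substantive input and is where I expect the work to lie: it follows either from Moore's ergodicity theorem (the acting one-parameter subgroup is noncompact inside the simple group $PSL(2,\R)$, which acts ergodically on $\Gamma\backslash PSL(2,\R)$ by right translation) or from the classical Hopf argument using the stable and unstable horocycle subgroups $N^\pm$ together with the local product structure $N^-AN^+$ of $PSL(2,\R)$ near the identity.
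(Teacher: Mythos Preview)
Your argument is correct. Note, however, that the paper does not actually prove this theorem: it is stated as a citation to Auslander--Green--Hahn (Theorem~5.3 of~\cite{Aus}) and used as a black box. What you have supplied is the standard proof, which the reference would give: reduce to right translation by a one-parameter subgroup of $PSL(2,\R)$, classify that subgroup by the sign of $\det X=-(a^2+4a_+a_-)$ as elliptic (compact, hence periodic) or hyperbolic (conjugate to the diagonal subgroup, hence the geodesic flow of a closed hyperbolic surface), and then invoke classical ergodicity of the geodesic flow via Hopf or Moore. One small comment on your conjugation step: rather than changing the lattice to $P^{-1}\Gamma P$, it is slightly cleaner to observe that right multiplication by $P$ is a measure-preserving diffeomorphism of $\Gamma\backslash PSL(2,\R)$ conjugating the $\exp(tX)$-flow to the $\exp(t\mu H)$-flow on the \emph{same} quotient; either viewpoint works.
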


We elaborate a bit on the contact cases. 

{\em Case 1: }
$\alpha_-\in V$ is a negative contact form. By
Theorem~\ref{thm:left} (a), its Reeb vector field $R$ defines a
(locally free) circle action on $M:=\Gamma\setminus PSL(2,\R)$, so its
orbit space $\Sigma$ is a closed 2-dimensional oriented orbifold. 
Denote by $\pi\colon M\rightarrow \Sigma$ the projection and by $\LL$
the foliation defined by $R$. According to example~\ref{ex:circle},
the foliated cohomology is given by $H^2_\LL(M)\cong
H^2(\Sigma)\cong\R$. Since $d\alpha_-$ is nowhere vanishing on planes
transverse to $\LL$ it represents a nonzero class $[d\alpha_-]\in
H^2_\LL(M)$ and therefore generates $H^2_\LL(M)$.  
If $\theta$ is a closed $2$-form on $M$ with  
\begin{equation}
   i_R\theta=0,
\label{foliate}
\end{equation}
then $[\theta]=c[d\alpha_-]\in H^2_\LL(M)$ for a constant $c\in\R$,
hence there exists a $1$-form $\rho$ on $M$ such that
\begin{equation}
   \theta=cd\alpha_-+d\rho,\qquad i_R\rho=0. 
\label{psl2neg}
\end{equation} 

{\em Case 2: }$\alpha_+\in V$ is a positive contact form. Here the
corresponding discussion is even easier. Namely, let $R$ be the Reeb
vector field of $\alpha_+$ and let $\theta$ be a closed $2$-form on $M$
satisfying equation~\eqref{foliate}. Then $\theta$ is proportional to
$d\alpha$, i.e.~$\theta=fd\alpha$ for a function $f:M\to\R$. Now the
vector field $R$ preserves both $d\alpha$ and $\theta$, so it must
preserve the function $f$. By ergodicity of $R$
(Theorem~\ref{thm:left} (b)), the function $f$ is then constant, i.e.
\begin{equation} 
\theta=cd\alpha_+
\label{psl2pos}
\end{equation}
for some $c\in \R$.
In particular, we see that in both cases the foliated cohomology
$H^2_\LL(M)$ is 1-dimensional and generated by $[d\alpha_\pm]$, so
$\kappa:H^2_\LL(M)\to H^2(M)$ is the zero map.  
We will come back to this example in Section~\ref{sec:pert} and in
Section~\ref{subsec:largeh}.

\subsection{Magnetic flows}\label{subsec:mag}

A large and interesting class of SHS arises in magnetic flows. 
For this, consider the cotangent bundle $\tau:T^*Q\to Q$ of a closed
$n$-manifold $Q$. Given a closed 2-form $\sigma$ on $Q$ (the
``magnetic field''), 
$$
   \Om = dp\wedge dq + t\tau^*\sigma
$$
defines a symplectic form on $T^*Q$. Consider on $T^*Q$ a classical
Hamiltonian of the form
$$
    H(q,p) = \frac{1}{2}|p|^2
$$
for a metric $g=|\ |$ on $Q$.
Solutions of the Hamiltonian system defined by $\Om$ and $H$ describe
the motion of a charged particle in the magnetic field $\sigma$.
For each $t>0$ the level set $M_t:=H^{-1}(t)$ is canonically
diffeomorphic (by rescaling in $p$) to the unit cotangent bundle
$S^*Q$, so $\om_t:=\Om|_{M_t}$ defines a Hamiltonian structure on
$S^*Q$. We assume that $\sigma|_{\pi_2(Q)}=0$. Then the classical
theory of Hamiltonian systems associates to this situation a {\em
  Ma\~n\'e critical value} $c=c(g,\sigma)\in(0,\infty]$ (see
e.g.~\cite{CFP}). The following facts are proved resp.~illustrated by
examples in~\cite{CFP}. 

(1) In all known examples except one ($Q=T^2$), $M_t$ is stable for
    $t<c$ and non-stable for $t=c$. For $t>c$, $M_t$ is sometimes
    stable and sometimes not. 

(2) In most known examples, $M_t$ is stable except for finitely many
    $t$. However, there exist examples in which $M_t$ is non-stable
    for all $t$ in an open interval $(a,\infty)$. 

(3) There exist examples of level sets $M_s,M_t$ with $s<c<t$ that are
    tame stable but not tame stably homotopic, see
    Section~\ref{subsec:rfh}. 

Note that if $\tau^*\sigma$ is non-exact then $M_t$ cannot be of contact
type, so magnetic flows provide a large class of stable Hamiltonian
structures that are not contact. We conclude this subsection with an
explicit example (see~\cite{CFP}). 

\begin{example}\label{ex:horo}
Let $(Q,g)$ be a closed hyperbolic surface and $\sigma$ the area form
defined by the hyperbolic metric $g$. Then the Ma\~n\'e critical value
is $c=1/2$. For $t>1/2$, $M_t$ is of contact type with positive contact form
$p\,dq$, and the Reeb flow is ergodic with periodic orbits
representing all nontrivial free homotopy classes in $T^*Q$. 
For $t<1/2$, $M_t$ is of negative contact type, i.e.~$\om_t=d\alpha_t$
for a contact form defining the opposite orientation, and all Reeb
orbits are periodic and contractible in $T^*Q$. 
$M_{1/2}$ is non-stable and has no periodic
orbits at all (the flow on $M_{1/2}$ is the famous ``horocycle flow'' and
non-stability follows from Obstruction 2 in
Section~\ref{subsec:obstr}). Note that this example can also be
recovered from Theorem~\ref{thm:left} since $S^*Q=\Gamma\setminus
PSL(2,\R)$ for a lattive $\Gamma$. 
\end{example}

\subsection{An existence result}\label{subsec:exist}

The following existence result was proved in~\cite{NW} in dimension 3;
we thank C.~Wendl for suggesting its generalization to higher
dimensions. 

\begin{proposition}\label{prop:ex}
For any $(2n-1)$-dimensional closed contact manifold $(M,\xi)$ 
and any rational cohomology class $\eta\in H^2(M;\Q)$ there exists
a closed 2-form $\om$ with $[\om]=\eta$ and a contact form $\lambda$
defining $\xi$ such that $(\om,\lambda)$ is a stable Hamiltonian
structure. In the case $n=2$ the same holds for any real cohomology
class $\eta\in H^2(M;\R)$.  
\end{proposition}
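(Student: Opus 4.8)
The plan is to build $\om$ as a small perturbation of a contact form, exploiting the openness of the contact condition and the freedom in choosing $\lambda$. Fix any contact form $\alpha_0$ on $M$ defining $\xi$, with Reeb vector field $R_0$. Since the contact condition $\lambda\wedge(d\lambda)^{n-1}>0$ is open in the $C^1$-topology, for any closed 2-form $\beta$ the form
$$
  \om_\eps := d\alpha_0 + \eps\,\beta
$$
will, for $\eps$ small enough, still be a Hamiltonian structure, and $\alpha_0$ will still satisfy the first condition $\alpha_0\wedge\om_\eps^{n-1}>0$ in~\eqref{eq:def}; the issue is the second condition $\ker(\om_\eps)\subset\ker(d\alpha_0)$, i.e.~the new kernel vector field must lie in $\ker(d\alpha_0)=\R R_0$. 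That forces us to also move $\lambda$. So the real plan is: choose $\beta$ representing (a multiple of) $\eta$ that is ``adapted'' to the Reeb flow of some contact form defining $\xi$, then take $\lambda$ to be a suitably modified contact form whose Reeb field still spans $\ker(\om_\eps)$.

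The key construction is to find a contact form $\lambda$ defining $\xi$ whose Reeb vector field $R$ is periodic, or more generally such that there is a closed 2-form in the class $\eta$ (up to scaling) that is basic with respect to $R$, i.e.~$i_R\beta=0$ and $i_Rd\beta=0$ — the latter being automatic since $\beta$ is closed. Here is where the rationality of $\eta$ enters: the standard approach is to use a \textbf{Boothby–Wang / prequantization} type argument. First I would invoke the fact that any contact structure $\xi$ admits a compatible contact form whose Reeb flow can be $C^0$-approximated, in the relevant sense, by one with useful invariance; more robustly, for the existence statement one can argue as follows. Pick any contact form $\alpha$ defining $\xi$; then $\om_\eps=d\alpha+\eps\beta$ has kernel spanned by $R_\eps := R_\alpha + \eps\,V_\eps$ where $V_\eps\in\ker\alpha$ is determined by $i_{V_\eps}d\alpha = -\eps\,(i_{R_\alpha}\beta)\circ(\text{projection})$ — explicitly, $i_{R_\eps}\om_\eps=0$ gives $i_{R_\alpha}(\eps\beta) + \eps\,i_{V_\eps}d\alpha + O(\eps^2)=0$, solvable for $V_\eps$ using nondegeneracy of $d\alpha$ on $\ker\alpha$. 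We then want a 1-form $\lambda_\eps$ with $\lambda_\eps(R_\eps)=1$ and $i_{R_\eps}d\lambda_\eps=0$. Setting $\lambda_\eps = \alpha + \eps\,\gamma_\eps$ and expanding, this becomes a linear equation for $\gamma_\eps$ at each order in $\eps$; the leading-order obstruction is that $i_{R_\alpha}d\gamma_0 = i_{R_\alpha}(\text{something built from }\beta)$ be solvable, which it is iff the relevant 2-form is exact on the space of $R_\alpha$-basic forms — precisely a foliated-cohomology condition as in Section~\ref{ss:folcoh}.

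Therefore the honest main step, and the main obstacle, is arranging that $\eta$ (rationally, so some integer multiple $N\eta$) can be represented by a closed 2-form that is \emph{foliated-exact} relative to a conveniently chosen Reeb foliation — equivalently, choosing the contact form within $\xi$ so that $N\eta$ lies in the image of $H^2_{\LL}(M)\to H^2(M)$ in a controlled way, or is killed appropriately. In dimension $3$ ($n=2$) this is where the restriction to $\Q$ can be dropped: any line field admits nice models, and more importantly on a $3$-manifold one has much more flexibility — for instance one can realize any real class by a closed 2-form supported in a Reeb-invariant way using that $H^2_\LL$ is typically large (Section~\ref{ss:folcoh3}), or use the open book / Giroux picture together with the open-book construction of Theorems~\ref{obd1}–\ref{obd2} to directly produce a supported SHS in any real class $\eta$. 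So in dimension $3$ I would actually bypass the perturbative argument entirely and cite the open-book existence result: every contact $\xi$ is positively supported by an open book (Giroux), and by Theorem~\ref{obd1} every class $\eta\in H^2(M;\R)$ is realized by a SHS supported by that open book with all signs $+$, whose stabilizing $1$-form is then a contact form defining $\xi$ (positivity of the signs being exactly the statement that $\lambda$ is positive contact near the binding; one checks $d\lambda>0$ transverse to the pages too, shrinking if necessary). In higher dimensions the perturbative argument above is needed, and the rationality hypothesis is used to run an averaging/Boothby–Wang step so that the leading-order foliated-cohomology obstruction vanishes for $N\eta$ and hence, after rescaling $\beta$ by $1/N$, for $\eta$ itself; the higher-order terms are handled by a standard implicit-function-theorem iteration since at each stage the relevant operator $\gamma\mapsto i_{R_\alpha}d\gamma$ on basic forms has the needed surjectivity once the topological obstruction is removed.
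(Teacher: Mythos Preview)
Your perturbative strategy runs into a genuine obstruction that you name but do not actually overcome. Writing $\om_\eps=d\alpha+\eps\beta$ and $\lambda_\eps=\alpha+\eps\gamma$, the leading-order condition is precisely that $[\beta]$ lie in the image of $\kappa:H^2_\LL(M)\to H^2(M)$ for the Reeb foliation $\LL$ of $\alpha$. But this image can be \emph{zero}: for a left-invariant positive contact form on a compact quotient of $PSL(2,\R)$ the paper computes (Section~\ref{subsec:left}) that $\kappa$ vanishes identically, so no nonzero $\eta$ is reachable by your scheme for that $\alpha$. Your escape hatch---``run an averaging/Boothby--Wang step so the obstruction vanishes for $N\eta$''---presumes one can choose a contact form for $\xi$ with periodic (or otherwise symmetric) Reeb flow, which is false in general and is not implied by rationality of $\eta$. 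So the higher-dimensional part of the proposal has no mechanism that actually works.

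The paper's argument is entirely different and bypasses foliated cohomology. Rationality is used via Poincar\'e duality: after rescaling, $\eta\in H^2(M;\Z)$ is dual to a codimension-2 cycle, which by a result of Ibort--Martinez-Torres--Presas can be realized by a closed \emph{contact} submanifold $N\subset(M,\xi)$. One then writes down, in a tubular neighbourhood $D_N$ with connection form $\delta$, an explicit contact form $\lambda_\eps=\pi^*\lambda_N+\eps r^2\delta$ (using the inductively constructed SHS $(\om_N,\lambda_N)$ on $N$) and an explicit Thom form $T_F=-d(F(r)\delta)$. A direct computation shows $\ker d\lambda_\eps\subset\ker T_F$, so $\om:=K\,d\lambda+i_*T_F$ is stabilized by a contact form $\lambda$ defining $\xi$ that agrees with $\lambda_\eps$ near $N$; away from $N$ one has $\om=K\,d\lambda$ trivially. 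The point is that $\om$ differs from $K\,d\lambda$ by a compactly supported form whose kernel \emph{by construction} contains the Reeb direction---no obstruction theory is needed.

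For $n=2$ your open-book idea is in the right spirit but does not deliver what the proposition asks: the stabilizing $1$-form produced by Theorem~\ref{obd1} equals $d\theta$ away from the binding, hence is \emph{closed} there, not contact, and does not define $\xi$. The paper's $n=2$ argument instead represents an integer basis of $H^2(M;\Z)$ by disjoint knots $N_i$ transverse to $\xi$, builds Thom forms $T_i$ near them as above, and sets $\om=K\,d\lambda+\sum a_iT_i$ with real coefficients $a_i$; this is why the real (not just rational) case goes through in dimension~3.
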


\begin{proof}
We proceed by induction on $n$.
Let $n\geq 2$ and assume the statement has been established for all
contact manifolds of dimension $2(n-1)-1$ (for $n=2$ this hypothesis
is vacuous). Let $(M,\xi)$ be a contact manifold of dimension
$2n-1$ and $\eta\in H^2(M;\Q)$. After rescaling we may assume $\eta\in
H^2(M;\Z)$. By \cite{Presas} we can represent the Poincar\'e dual of the
cohomology class $\eta$ by an embedded closed contact submanifold
$N\subset M$. We apply the induction hypothesis
to $(N,\xi|_N,\eta|_N)$. This gives us a contact form $\lambda_N$ on
$N$ and a closed 2-form $\om_N$ on $N$ representing the class
$\eta|_N$ such that $(\om_N,\lambda_N)$ is a SHS (for $n=2$ we take
any $\lambda_N$ and $\om_N=0$).  

Pick some Riemannian metric on $N$. 
Let $\pi:D_N\rightarrow N$ denote the disc bundle associated to the
normal bundle of $N$ in $M$ and let $S_N:=\p D_N$ be the
corresponding circle bundle. Note that $\eta|_N$ is the Euler
class $e(S_N)\in H^2(N;\Z)$ of the circle bundle $S_N$.  
We proceed in two steps. First, we write down a normal form for a
contact form $\lambda$ on $D_N$ 
using a connection form on $S_N$. Then we use the same connection form
to write down a Thom form for $D_N$ and add it to $d\lambda$  to
obtain the desired 2-form $\om$. 

{\bf Constructing a contact form on $D_N$. }
We can always realize a closed $2$-form 
representing the Euler class of a principal $S^1$-bundle in terms of a
connection form. Thus, there exists a connection 
$1$-form $\delta$ on $S_N$ such that $d\delta=-\pi^*\om_N$ (here the
connection form $\delta$ is normalized to have integral $1$ over the
fiber). Let $r$ be the radial coordinate in a fiber of $D_N$
normalized such that $\p D_N=\{r=1\}$. Let $\dot D_N:=D_N\setminus N$
be the punctured disc bundle. We extend the  
$1$-form $\delta$ from $S_N$ to $\dot D_N$ in an $r$-invariant way.
Then the 1-form $r^2\delta$ on $\dot D_N$ extends as zero over $N$ 
to a smooth $1$-form on $D_N$, 

\begin{lemma}\label{lem:posit}
For $\eps>0$ sufficiently small the 1-form
$$ 
   \lambda_\eps:=\pi^*\lambda_N+\eps r^2\delta
$$
defines a contact form on $D_N$. Moreover, for each compactly
supported function $F:[0,1)\to\R$ constant near $0$ there exists a
constant $K_F>0$ such that for all $K\geq K_F$ we have 
$$
   \lambda_\eps\wedge
   \Bigl(d\lambda_\eps-d[K^{-1}F(r)\delta]\Bigr)^{n-1}>0.  
$$
\end{lemma}

\begin{proof}
Let us write out 
$$
   d\lambda_\eps = 
   d\pi^*\lambda_N+\eps r^2d\delta+
   2\eps rdr\wedge\delta 
   = [\pi^*d\lambda_N-\eps r^2\pi^*\om_N] + 2\eps rdr\wedge\delta
$$ 
and
$$
   d\lambda_\eps-d[K^{-1}F(r)\delta] =
   [\pi^*d\lambda_N+(K^{-1}F(r)-\eps r^2)\pi^*\om_N] + [2\eps
   r-K^{-1}F'(r)dr]\wedge\delta. 
$$
It follows that 
$$
   \lambda_\eps\wedge\Bigl(d\lambda_\eps-d[K^{-1}F(r)\delta]\Bigr)^{n-1} =
   (n-1)\alpha_1\wedge \alpha_2, 
$$
where 
$$
   \alpha_1 := \pi^*\lambda_N\wedge\Bigl(\pi^*d\lambda_N+[K^{-1}F(r)-\eps
   r^2]\pi^*\om_N\Bigr)^{n-2} 
$$
and
$$
   \alpha_2 := [2\eps r-K^{-1}F'(r)]dr\wedge \delta.
$$
Since $(\om_N,\lambda_N)$ is a SHS on $N$ and $\lambda_N$ is contact,
for small $\eps$ and large $K$ we get 
$$
 \ker\Bigl(\pi^*d\lambda_N+[K^{-1}F(r)-\eps
   r^2]\pi^*\om_N\Bigr)
   = \pi^*\ker d\lambda_N.
$$ 
Hence the kernel of $\alpha_1$ is exactly the tangent space to the
fiber. Since $\alpha_2$ restricts to the fiber as a positive area form
for $K$ large, the lemma follows.  
\end{proof}

Note that in the preceding proof we have shown (for $F\equiv 0$)  
$$
   \ker(\pi^*d\lambda_N-\eps r^2\pi^*\om_N)
   = \pi^*\ker d\lambda_N.
$$ 
The expression for $d\lambda_\eps$ now shows that
$$
   \pi^*\ker d\lambda_N\cap \ker dr\cap \ker\delta\subset\ker
   d\lambda_\eps, 
$$
and since $\lambda_\eps$ is contact we have in fact equality:
$$
  \ker d\lambda_\eps=\pi^*\ker d\lambda_N\cap \ker dr\cap \ker\delta. 
$$

{\bf Constructing a Thom form for the bundle $D_N$.} 
Let now $F$ be any nonnegative compactly supported
nonincreasing function on $[0,1)$ with $F\equiv 1$ near $r=0$. 
Then 
$$ 
   T_F:=-d(F(r)\delta)
$$
is a Thom form on $D_N$. Set 
$$
   \om_F:=K\,d\lambda_\eps+T_F
$$
on $D_N$, for some constant $K\geq K(F)>0$ as in
Lemma~\ref{lem:posit}. Then Lemma~\ref{lem:posit} yields
$\lambda_\eps\wedge \om_F^{n-1}>0$. On the other hand, we have 
$$
  \ker d\lambda_\eps=\pi^*\ker d\lambda_N\cap \ker dr\cap \ker\delta
  \subset \ker\Bigl(-F'(r)dr\wedge\delta + F(r)\pi^*\om_N\Bigr) = \ker
  T_F, 
$$
hence $\ker d\lambda_\eps\subset \ker\om_F$. Since $\lambda_\eps$ is
contact, this shows that $(\om_F,\lambda_\eps)$ is a SHS on $D_N$. 

Having defined the forms $\lambda_\eps$, $T_F$ and $\om_F$ on $D_N$, we
now finish the argument as follows. By the contact neighbourhood
theorem (see eg.~Theorem 2.5.15 in~\cite{Ge}) there exists a
positive $r_1<1$ and an embedding  
$$
  i:\{r\le r_1\}\rightarrow M
$$ 
such that $i_*\ker\lambda_\eps=\xi$. Thus there exists a positive
$r_0<r_1$ and a contact form $\lambda$ on $M$ defining 
$\xi$ that coincides with $i_*\lambda_\eps$ on 
$i(\{r\le r_0\})\subset M$. We choose the function 
$F$ in such a way that $supp(F)\subset \{r<r_0\}$ and
thus 
$$
   supp(T_F)\subset \{r<r_0\}.
$$ 
The pushforward $i_*T_F$ extends as zero to the complement of
$i_*(\{r<r_0\})$. Finally we set
$$
 \om := K\,d\lambda+i_*T_F
$$
on $M$ (with $K$ as above). The pair $(\om,\lambda)$ then constitutes
the desired SHS and Proposition~\ref{prop:ex} is proved for rational
cohomology classes. 
\medskip

{\bf The case $n=2$. }
In the case $n=2$ the argument can be adjusted to work for any
$\eta\in H^2(M;\R)$ as follows.  
Pick an integer basis $C_1,\dots,C_k$ of the free part of
$H^2(M;\Z)$ and write $\eta=\sum_{i=1}^ka_iC_i$ with coefficients
$a_i\in\R$. Represent the Poincar\'e duals of $C_1,\dots,C_k$ in
$H_1(M;\Z)$ by disjoint embedded loops $N_1,\dots,N_k$ that
are positively transverse to $\xi$, i.e.~$\lambda(\dot N_i)>0$. Pick
a contact form $\lambda$ defining $\xi$ which is standard near the
$N_i$ and Thom forms $T_i$ near $N_i$ as above. Then
$$
   \om := K\,d\lambda+\sum_{i=1}^k a_iT_i
$$
with a suficiently large constant $K$ yields the desired SHS
$(\om,\lambda)$. 
\end{proof}

In Section~\ref{sec:openbook} we will give an alternative proof of
Proposition~\ref{prop:ex} in the case $n=2$ using open books.

\section{Stable Hamiltonian structures in dimension three}\label{sec:dim3}

\subsection{Integrability}\label{subsec:integr}

When the underlying manifold $M$ is $3$-dimensional 
there are a number of simplifications. First, $\omega$ being maximally
nondegenerate means simply that it is nowhere zero. Second, the second
condition in~\eqref{eq:def} simplifies to the following 
\begin{equation}
   d\lambda=f\omega,
\label{stabker3}
\end{equation}
where $f$ is a smooth function on $M$. 
Since the Reeb vector field $R$ of the SHS $(\omega,\lambda)$
preserves both $\omega$ and $\lambda$, 
it must also preserve the proportionality coefficient $f$ between
$d\lambda$ and $\omega$. In other words, the function $f$ is an
integral of motion for the vector field $R$.

\begin{remark}
The function $f$ already appeared in Remark~\ref{rem:Beltrami} (c) as
proportionality coefficient between $R$ and its curl. To see that this
is really the same function, write $\curl R=fR$ and compute, using the
volume form $\mu=\lambda\wedge\om$: 
$$
   d\lambda = i_{\curl R}\mu = f\,i_R\mu = f\,\om. 
$$ 
\end{remark}

If $f$ is constant we have either $f\equiv 0$ or $f\equiv c\ne 0$.  
In the first case the closed 1-form $\lambda$ defines a taut foliation,
in the second case $\lambda$ defines a contact structure and
$\om=c^{-1}d\lambda$ is exact. Regions where $f$ is nonconstant
are foliated by the level sets of $f$. The level sets of $f$ are $2$-tori,
so any connected region where $df\ne 0$ is simply an interval times $T^2$.
This motivates the following

\begin{definition}
\label{def:integr}
Let $\LL$ be a stable Hamiltonian foliation on $M$.
Let $I\subset \R$ be an interval (open, closed or half open) and $U\subset
M$ be a region diffeomorphic to $I\times T^2$ 
such that $\LL$ is a subfoliation of the foliation by $T^2$'s. 
Then $U$ is called an {\em integrable region} for $\LL$ (and for any
SHS defining $\LL$).  
\end{definition}

A Hamiltian structure $\om$ on $I\times T^2$ is called {\em
  $T^2$-invariant} if $\om$ is invariant under the
obvious action of $T^2$. Similarly  
a SHS $(\om,\lambda)$ is called {\em $T^2$-invariant} if both forms 
$\om$ and $\lambda$ are invariant under the action of $T^2$. These are
discussed in detail in Section \ref{subsec:t2inv}. For now we state
the following consequence of the Arnold-Liouville Theorem~\cite{Ar-mechanics}.  

\begin{theorem}\label{thm:integr}
Let $(\om,\lambda)$ be a SHS with Reeb vector field $R$ on an integrable region 
$I\times T^2$. Then there exists an orientation preserving diffeomorphism $\Phi$
of $I\times T^2$ preserving the tori $\{r\}\times T^2$ such that 
$\Phi^*\om$, $\Phi^*R$ and the restrictions
$\Phi^*\lambda|_{\{r\}\times T^2}$ are $T^2$-invariant. If in addition
$f=d\lambda/\om$ is constant on the tori $\{r\}\times T^2$, then 
$\Phi^*\lambda$ is $T^2$-invariant.

Moreover, the assignment $(\om,\lambda)\mapsto \Phi$ is continuous
with respect to $C^1$ topologies on the space of stable Hamiltonian
structures and the group of diffeomorphisms. 
\end{theorem}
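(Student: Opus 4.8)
The plan is to construct $\Phi$ leafwise, $\Phi(r,x)=(r,\phi_r(x))$, from two closed $1$-forms that live naturally on each leaf torus $T_r:=\{r\}\times T^2$. First I would record the leaf-level picture. Since $\ker\om=\LL$ is tangent to $T_r$ and $\om$ is maximally nondegenerate (i.e.\ nowhere zero), a $2$-form on the surface $T_r$ having the nonzero vector $R$ in its kernel at each point must vanish, so $\om|_{T_r}=0$ for every $r$; hence in coordinates $(r,\theta_1,\theta_2)$ we may write $\om=dr\wedge\gamma$ with $\gamma$ a smooth $r$-family of $1$-forms on $T^2$ with no $dr$-component. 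Set $\gamma_r:=\gamma|_{T_r}$ and $\mu_r:=\lambda|_{T_r}$. Then $d\om=0$ yields $d\gamma_r=0$; $\lambda\wedge\om>0$ yields $\gamma_r$ nowhere zero; $d\lambda=f\om$ together with $\om|_{T_r}=0$ yields $d\mu_r=0$; and $\lambda(R)=1$ yields $\mu_r$ nowhere zero. Moreover $\lambda\wedge\om$ being a volume form is exactly the statement that $\mu_r\wedge\gamma_r$ is an area form on each $T_r$; thus $[\mu_r],[\gamma_r]$ is a basis of $H^1(T_r;\R)$ and $R_r:=R|_{T_r}$ is the unique vector field with $\mu_r(R_r)\equiv 1$, $\gamma_r(R_r)\equiv 0$. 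The point --- implicit in the Arnold--Liouville description of \cite{Ar-hydro} --- is that \emph{both} $\mu_r$ and $\gamma_r$ are closed; this is what makes the rectification below possible (and forces, e.g., equal periods for all periodic Reeb orbits in a given rational leaf).

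Next I would straighten $\om$, $R$ and $\lambda|_{T_r}$ simultaneously by a development map. Fix an integral basis of $H_1(T^2;\Z)$, let $M_r\in GL(2,\R)$ be the period matrix of $(\mu_r,\gamma_r)$, and define $G_r\colon T_r\to\R^2/M_r\Z^2$ by integrating $\mu_r$ and $\gamma_r$ from a fixed basepoint on the universal cover, so that $G_r$ pulls the two standard coordinate $1$-forms back to $\mu_r$ and $\gamma_r$. Since $\mu_r\wedge\gamma_r\ne 0$, $G_r$ is a local diffeomorphism, hence a covering ($T_r$ is compact), and $\int_{T_r}\mu_r\wedge\gamma_r=\pm\det M_r$ shows it has degree $\pm 1$, so $G_r$ is a diffeomorphism depending smoothly on $r$. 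Composing with the linear identification $\R^2/M_r\Z^2\cong T^2$ induced by $M_r^{-1}$ and inverting produces $\phi_r\colon T^2\to T^2$, orientation preserving after a fixed transposition of coordinates if needed (the relevant sign is constant on the connected parameter interval), with $\phi_r^*\mu_r$ and $\phi_r^*\gamma_r$ of constant coefficients. For $\Phi(r,x)=(r,\phi_r(x))$ a direct computation then gives $\Phi^*\om=dr\wedge\phi_r^*\gamma_r$ (the $dr$-terms of $\Phi^*\gamma$ are annihilated by $dr\wedge$), $\Phi^*\lambda|_{T_r}=\phi_r^*\mu_r$, and $\Phi^*R$ equal on each $T_r$ to the constant vector field $V$ with $(\phi_r^*\mu_r)(V)\equiv 1$ and $(\phi_r^*\gamma_r)(V)\equiv 0$; all three are $T^2$-invariant.

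Suppose now $f$ is constant on the tori. Then $f\circ\Phi=\tilde f(r)$, so $d(\Phi^*\lambda)=\tilde f(r)\,dr\wedge\phi_r^*\gamma_r$ is $T^2$-invariant; writing $\Phi^*\lambda=\phi_r^*\mu_r+g\,dr$ and subtracting the $T^2$-invariant form $d(\phi_r^*\mu_r)$ shows that $dg\wedge dr$ is $T^2$-invariant, which forces the $\theta$-derivatives of $g$ to be independent of $\theta$; being of mean zero on each $T_r$ they vanish, so $g=g(r)$ and $\Phi^*\lambda$ is $T^2$-invariant. For the final assertion, every ingredient of $\Phi$ --- the forms $\mu_r,\gamma_r$, the period matrix $M_r$, the path integrals defining $G_r$ (no elliptic theory is needed), and the inversion --- depends continuously on $(\om,\lambda)$ in $C^k$, in particular in $C^1$.

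The hard part, I expect, is not any single estimate but the bookkeeping of the middle step carried out uniformly in $r$: confirming that the development map $G_r$ is an honest diffeomorphism, pinning down orientations and the mapping class of $\phi_r$ consistently along the (possibly closed or half-open) parameter interval, and verifying that the one diffeomorphism $\Phi$ really does rectify $\om$, $R$ and $\lambda|_{T_r}$ at once --- all resting on the first-step observation that $\lambda|_{T_r}$ is closed.
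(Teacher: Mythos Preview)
Your argument is correct and essentially complete. The approach, however, is genuinely different from the paper's. The paper passes to the four-dimensional symplectization $\bigl((-\eps,\eps)\times I\times T^2,\ \Om=\om+t\,d\lambda+dt\wedge\lambda\bigr)$, introduces the Poisson-commuting functions $H=t$ and $F=r$, and runs the Arnold--Liouville argument: the commuting Hamiltonian vector fields $X_H=R$ and $X_F$ generate an $\R^2$-action with torus orbits and lattice stabilizers, from which one builds a $T^2$-action $\rho$; choosing a smooth section then defines $\Psi$ so that $\Psi^{-1}\rho\Psi$ is the standard $T^2$-action, and $\Phi:=\Psi|_{t=0}$. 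Invariance of $\om$, $R$, and $\lambda|_{T_r}$ is read off from invariance of $\Om$, $X_H$, $X_F$ under $\rho$.

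You instead stay in three dimensions and rectify the pair of leafwise closed $1$-forms $(\mu_r,\gamma_r)=(\lambda|_{T_r},\ i_{\partial_r}\om|_{T_r})$ directly via their period lattice and development map. This is more elementary---no auxiliary symplectic manifold, no flows, no stabilizer lattices---and makes it transparent that the single input is closedness of \emph{both} leaf forms. The paper's route, on the other hand, packages continuity in $(\om,\lambda)$ more uniformly (everything is a flow of vector fields depending $C^0$-continuously on $C^1$ data) and makes the connection to completely integrable Hamiltonian systems explicit, which is conceptually useful elsewhere in the paper. Your treatment of the extra hypothesis ($f$ constant on tori) is the same as the paper's in spirit: both reduce to showing that the $dr$-coefficient of $\Phi^*\lambda$ has $T^2$-invariant $\theta$-derivatives, hence is $T^2$-invariant by periodicity.
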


\begin{proof} We denote by $\mathcal{E}$ the space of SHS having
$I\times T^2$ as an integrable region, equipped with the
$C^1$-topology. Consider some $(\om,\lambda)\in \mathcal{E}$. 
We denote $U:=I\times T^2$ and view the flow of the Reeb vector field
$R$ as a Hamiltonian system 
as follows. Consider the product $(-\varepsilon,\varepsilon)\times U$
with the symplectic form 
\begin{equation}
   \Om=\om+td\lambda+dt\wedge \lambda.
\label{eq:symp}
\end{equation}
Then $R$ is the Hamiltonian vector field with respect to $\Om$ of the
Hamiltonian function given by the projection onto the first factor
$$
   H:(-\varepsilon,\varepsilon)\times U\longrightarrow
   \mathbb{R},\qquad H(t,x):=t.
$$ 
Let $p$ denote the projection to the first factor of $U:=I\times
T^2$. This gives us an integral of motion for this Hamiltonian vector
field
$$
   F:(-\varepsilon,\varepsilon)\times U\longrightarrow
   \mathbb{R},\qquad F(t,x):=p(x).
$$
Vanishing of the Poisson bracket $\{H,F\}=0$ is automatic as for any
integral of motion with the Hamiltonian, so the pair $(H,F)$ gives us
a complete system of integrals on $(-\varepsilon,\varepsilon)\times U$.
Now we argue as in the proof of the Arnold-Liouville Theorem
in~\cite{Ar-mechanics}.   
Let $X_H$ and $X_F$ be the Hamiltonian vector fields on $(-\eps,\eps)\times U$ 
of $H$ and $F$. 
Both vector fields $X_H$ and $X_F$
preserve $H$ and $F$, thus the vector fileds are tangent to the tori $\{t,r\}\times T^2$.
Vanishing of $\{H,F\}$ implies vanishing of the commutator $[X_H,X_F]$. Linear independence 
of $dH$ and $dF$ at every point yields that vector fields $X_H$ and $X_F$ are pointwise linear 
independent. Altogether, the flows of $X_H$ and $X_F$ define an $\R^2$ action on 
$(-\eps,\eps)\times I\times T^2$ whose orbits are the tori $\{(t,r)\}\times T^2$.
For each torus $\{(t,r)\}\times T^2$ the corresponding stabilizer group
$\Gamma_{(t,r)}^{(\om,\lambda)}$ that leaves one (and then any) point
of $\{(t,r)\}\times T^2$ fixed is a lattice in $\R^2$. Moreover, by
the implicit function theorem, these lattices vary continuously with
$((t,r),(\om,\lambda))$, and smoothly with $(t,r)$ for  fixed
$(\om,\lambda)$. Let
$$
  \Phi(X_H,X_F,\cdot):\R^2\to\Diff_+\bigl((-\eps,\eps)\times U\bigr)
$$ 
denote the
2-flow of the pair $(X_H,X_F)$ of vector fields. Let
$i:\R^2/\Z^2\to\R^2/\Gamma_{(t,r)}^{(\om,\lambda)}$ 
be a linear orientation preserving identification varying continuously
with $((t,r),(\om,\lambda))$. Given $\tau\in T^2=\R^2/\Z^2$ let 
$\tau_{(t,r)}^{(\om,\lambda)}$ denote the its image in
$\R^2/\Gamma_{(t,r)}^{(\om,\lambda)}$ under the identification
$i$. This gives us a free $T^2$-action parametrized by
$(\om,\lambda)\in \mathcal{E}$ 
$$
   \rho:T^2\times \mathcal{E}\longrightarrow
   \Diff_+\bigl((-\eps,\eps)\times U\bigr),\qquad 
  \bigl(\tau,(\om,\lambda)\bigr)\mapsto
  \Phi(X_H,X_F,\tau_{(t,r)}^{(\om,\lambda)}) 
$$
transitive on every torus. The parametrization by $(\om,\lambda)\in
\mathcal{E}$ is continuous because the triple
$(X_H,X_F,\tau_{(t,r)}^{(\om,\lambda)})$ depends continuously on
$((t,r),(\om,\lambda))$ and $\Phi$ is a continuous function of its
arguments. Here the space of vector fields and the space of  
diffeomorphisms are given the $C^1$-topologies.

Fix a smooth section $s$ (independent of $(\om,\lambda)$) of the trivial
fibration $(-\eps,\eps)\times I\times T^2\rightarrow T^2$. We use the
action $\rho$ and the section $s$ to define an orientation preserving
self-diffeomorphism of $(-\eps,\eps)\times U$ by the formula 
$$
   \Psi(t,r,\theta,\phi):= \rho(\theta,\phi)\bigl(t,r,s(t,r)\bigr).
$$
Note that the first two coordinates of $\Psi(t,r,\theta,\phi)$ are
$(t,r)$. Pulling back the action $\rho$ we get the action
$\Psi^{-1}\rho\Psi$ which is just the standard action of
$T^2$ on $(-\eps,\eps)\times U$ by shifts in $\theta$ and $\phi$. So
given a differential-geometric object on $(-\eps,\eps)\times U$
invariant under the action of $\rho$, its pullback under $\Psi$ is
invariant under the standard $T^2$ -action (below ``$T^2$-invariant''
always means invariance under the standard $T^2$ action). 

By construction of $\rho$ the vector fields $X_H$ and $X_F$ are
$\rho$-invariant, hence $\Psi^*X_H$ and $\Psi^*X_F$ are $T^2$
invariant. In other words, $\Psi^*X_H$ and $\Psi^*X_F$ are linear on
the tori $\{t,r\}\times T^2$. The symplectic form 
$\Om$ is invariant under the flows of $X_H$ and $X_F$ and thus under
$\rho$, hence the pullback $\Psi^*\Om$ is $T^2$ invariant. 
We view $\Phi:\Psi|_{\{0\}\times U}$ as a diffeomorpsim of $U$. 
Formula \eqref{eq:symp} shows that $\Psi^*\Om|_{\{0\}\times U}=\Phi^*\om$
and $X_H|_{\{t=0\}}=R$. This shows $T^2$-invariance of $\Phi^*\om$ and
of $\Phi^*R$. Set $X:=X_F|_{\{t=0\}}$ and note that $\Phi^*X$ is
$T^2$-invariant. We contract formula \eqref{eq:symp} with $X_F$
at $t=0$ to get 
$$
   \iota_{X}\Om=\iota_{X}\om-\lambda(X)dt=-dr.
$$ 
This shows that $\lambda(X)=0$ and $\iota_{X}\om=-dr$. We view the
last two equalities as living on $U$. Pulling back with $\Phi$ yields 
$$
   (\Phi^*\lambda)(\Phi^*X)=0,\qquad\iota_{\Phi^*X}\Phi^*\om=dr.
$$ 
On the other hand, we have $\Phi^*\lambda(\Phi^*R)=\lambda(R)=1$, so
the restriction of $\Phi^*\lambda$ to a torus $\{r\}\times T^2$ is the
unique 1-form which takes value $0$ on $\Phi^*X$ and $1$ on
$\Phi^*R$. Since the vector fields $\Phi^*X$ and $\Phi^*R$ are
$T^2$-invariant, this shows $T^2$-invariance of
$\Phi^*\lambda|_{\{r\}\times T^2}$. 

To analyze invariance of $\Phi^*\lambda$, we work in standard coordinates 
$(r,\theta,\phi)$ on $(-\eps,\eps)\times T^2$ and rename $\Phi^*\om,\,
\Phi^*R,\, \Phi^*X$ and $\Phi^*\lambda$ back to $\om,\, R,\, X$ and
$\lambda$ respectively. The invariance properties allow us to write
$\om$ and $\lambda$ uniquely as 
$$
   \om = dr\wedge\bigl(k_1(r)d\theta+k_2(r)d\phi\bigr),\qquad 
   \lambda=g_1(r)d\theta+g_2(r)d\phi+g_3(r,\theta,\phi)dr
$$ 
(cf.~the proof of Lemma~\ref{lem:t2inv} below). Here $k_1,k_2,g_1,g_2$
are functions of $r$ only, but $g_3$ will in general depend on
$(r,\theta,\phi)$, see Remark~\ref{rem:integr} below.  
However, if we in addition we assume that $f=d\lambda/\om$ is constant
on the tori $r\times T^2$, then we can conclude more. Indeed, the
relation $\lambda=f\om$ writes out as  
$$
   g_1'-\frac{\p g_3}{\p\theta} = fk_1,\qquad
   g_2'-\frac{\p g_3}{\p\phi} = fk_2. 
$$
Since the functions $g_1,g_2,k_1,k_2,f$ are $T^2$-invariant, so are
$\frac{\p g_3}{\p\theta}$ and $\frac{\p g_3}{\p\phi}$, which therefore
vanish by periodicity of $g_3$. So in this case $g_3$, and thus
$\lambda$, is $T^2$-invariant.
\end{proof}

\begin{remark}\label{rem:integr}
In general, we cannot achieve in Theorem~\ref{thm:integr} that
$\Phi^*\lambda$ is $T^2$-invariant. To see this, consider
$$\lambda=r^2d\theta+(1-r^2)d\phi$$ 
on $(0,1)\times T^2$ and $$\om=d\lambda=2rdr\wedge(d\theta-d\phi).$$ 
Let $\eta:=2\sin(\theta-\phi)rdr$. Note that
$$d\eta=2\cos(\theta-\phi)(d\theta-d\phi)r\wedge
dr=\cos(\theta-\phi)\om.$$ Thus for for small $\eps>0$ the form
$\lambda_\eps$ stabilizes $\om$ and
$$f_\eps=d\lambda_\eps/\om=1+\eps\cos(\theta-\phi).$$ This function is not 
constant on the tori $\{r\}\times T^2$. So for any diffeomorphism
$\Phi$ of $(0,1)\times T^2$ preserving the tori $\{r\}\times T^2$ we
have that $\Phi^*d\lambda_\eps/\Phi^*\om=\Phi^*f_\eps$ 
is not constant on the tori $\{r\}\times T^2$. Thus the form
$\Phi^*\lambda_\eps$ cannot be $T^2$-invariant (assuming that $\Phi^*\om$ is $T^2$-invariant). 
\end{remark}

\subsection{Slope functions}\label{subsec:slope}

In standard coordinates $(r,\theta,\phi)$ on an integrable region
$I\times T^2$, the pullback $\Phi^*R$ of the Reeb vector field in
Theorem~\ref{thm:integr} has the form  
$$
   \Phi^*R=w_1(r)\p_{\theta}+w_2(r)\p_{\phi}.
$$
In particular, it is $T^2$-invariant, linear on each invariant torus
$\{r\}\times T^2$, and 
it preserves the standard area form $d\theta\wedge d\phi$. In this
subsection we associate a ``slope function'' to such a vector field. 

Consider a nowhere zero vector field $X$ on a 2-torus $T$ preserving
an area form $\sigma$. By Cartan's formula, 
$$
   \beta := i_X\sigma
$$ 
is a closed $1$-form on $T$. 

\begin{lemma}\label{lem:area}
(a) There exists a closed 1-form $\alpha$ on $T$ with
$\alpha(X)>0$. 

(b) There exist coordinates $(\theta,\phi)$ on $T\cong T^2$ in which
$X$ defines a linear foliation. In particular, the flow lines of $X$
are either all dense or all closed. 

(c) Upon replacing $\sigma$ by a different $X$-invariant area form of
the same sign, the cohomology class $[i_X\sigma]\in H^1(T;\R)$ gets
multiplied by a positive constant. 

(d) For any vector field $\tilde X$ defining the same oriented $1$-foliation as $X$ and any $\tilde X$-invariant area form $\tilde \sigma$ of the same sign as $\sigma$, 
the cohomology class $[i_{\tilde X}\tilde \sigma]\in H^1(T;\R)$ is a positive multiple of 
$[i_X\sigma]\in H^1(T;\R)$.
  
\end{lemma}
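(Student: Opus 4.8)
The plan is to prove the four parts of Lemma~\ref{lem:area} more or less in the order stated, each one feeding into the next, with part (b) being the conceptual heart.

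First, for part (a) I would exploit the fact that $\beta = i_X\sigma$ is closed and nowhere vanishing (it is nowhere vanishing because $\sigma$ is nondegenerate and $X$ is nowhere zero), and that $\beta(X) = \sigma(X,X) = 0$, so $\ker\beta$ is exactly the line field spanned by $X$. Since $X$ is nowhere zero on the torus, its index vanishes, so by Poincar\'e--Hopf there is no obstruction; concretely, I would argue that $\beta$ represents a nonzero class in $H^1(T;\R)$ — if $\beta = dg$ were exact, $g$ would have a critical point by compactness, contradicting $\beta$ nowhere zero — and then pick a closed $1$-form $\alpha$ with $\int_\gamma \alpha \ne 0$ for a loop $\gamma$ on which $\int_\gamma \beta = 0$, adjusting by a constant multiple so that $\alpha(X)$ has a definite sign; more carefully, since $\ker\beta = \R X$ pointwise, any closed $\alpha$ that is nowhere proportional to $\beta$ (equivalently $[\alpha],[\beta]$ linearly independent in $H^1$) satisfies $\alpha(X) \ne 0$ everywhere, hence has constant sign, and we flip the sign if needed.

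For part (b), the classical fact is that a nowhere-zero divergence-free (area-preserving) vector field on $T^2$ is, after a diffeomorphism, a constant (linear) vector field. I would derive this from (a): choose closed $1$-forms $\alpha,\beta$ with $\beta = i_X\sigma$ and $\alpha(X)>0$; rescaling $\alpha$ by a positive constant we may assume $[\alpha],[\beta]$ form an integral basis of $H^1(T;\Z)\subset H^1(T;\R)$ up to adjusting — actually the cleanest route is: the map $T \to \R^2/\Lambda$ defined by integrating the pair $(\alpha,\beta)$ (with $\Lambda$ the period lattice, which has full rank since $\alpha,\beta$ are linearly independent in cohomology) is a diffeomorphism onto a torus, and in the resulting coordinates $(\theta,\phi)$ we have $d\phi = \beta = i_X\sigma$ and $d\theta = \alpha$, so $X$ is the kernel of $d\phi$, i.e.\ $X = h(\theta,\phi)\,\p_\theta$ for a positive function $h$; since additionally $X$ preserves an area form, a further shear and reparametrization in $\theta$ makes $X$ genuinely constant. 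The dichotomy dense-versus-closed is then the standard statement about linear flows on $T^2$ according to whether the slope is rational or irrational. I expect this to be the main obstacle: making the passage from ``kernel of a closed nonvanishing $1$-form'' to ``honestly linear after a diffeomorphism'' fully rigorous requires the area-preservation hypothesis and a little care, though it is classical (Kolmogorov / Arnold, cf.\ \cite{Ar-mechanics}).

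For part (c), if $\sigma' = \psi\sigma$ is another $X$-invariant area form of the same sign, then $0 = L_X\sigma' = L_X(\psi\sigma) = (X\cdot\psi)\sigma$, so $\psi$ is constant along the flow of $X$. In the linear coordinates from (b) with $X = a\p_\theta + b\p_\phi$, I would distinguish: if the flow lines are dense, $\psi$ is constant outright, so $[i_X\sigma'] = \psi[i_X\sigma]$ with $\psi>0$; if the flow lines are closed, $\psi$ descends to a positive function on the circle of leaves, and averaging shows $[i_X\sigma'] = c[i_X\sigma]$ with $c$ the (positive) average of $\psi$, since $i_X(\psi\sigma) - c\,i_X\sigma = i_X((\psi-c)\sigma)$ is exact (its integral over every closed leaf vanishes and over a transverse loop equals $\int(\psi - c)$ which is zero by choice of $c$). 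Finally part (d): given $\tilde X$ spanning the same oriented foliation, $\tilde X = \rho X$ for a positive function $\rho$, and $\tilde\sigma$ a $\tilde X$-invariant area form of the same sign; then $i_{\tilde X}\tilde\sigma = i_X(\rho\tilde\sigma)$, and $\rho\tilde\sigma$ is an area form satisfying $L_X(\rho\tilde\sigma) = L_X(\rho\tilde\sigma)$ — here I use that $\rho\tilde\sigma$ need not be $X$-invariant, but $L_{\tilde X}\tilde\sigma = 0$ gives $L_X(\tfrac1\rho \cdot \rho\tilde\sigma)\cdot$ — cleaner: set $\sigma'' := \rho\tilde\sigma$, then $i_{\tilde X}\tilde\sigma = i_X\sigma''$ and $L_X\sigma'' = L_X(\rho\tilde\sigma)$; since $L_{\tilde X}\tilde\sigma = 0 = \rho L_X\tilde\sigma + (\ldots)$, one checks $i_X(L_X\sigma'') $ integrates to zero appropriately, so the same averaging argument as in (c) applies and $[i_{\tilde X}\tilde\sigma] = [i_X\sigma'']$ is a positive multiple of $[i_X\sigma]$. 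So (d) reduces to (c) together with the observation that rescaling $X$ and simultaneously the area form leaves the relevant closed $1$-form unchanged up to the same mechanism.
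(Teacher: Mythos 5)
Your overall architecture (prove (a), use it to linearize in (b), then average in (c) and reduce (d) to (c)) matches the paper's, but part (a) — which is the one part with real content — has a genuine gap. You assert that a closed $1$-form $\alpha$ is ``nowhere proportional to $\beta$'' if and only if $[\alpha],[\beta]$ are linearly independent in $H^1(T;\R)$. The forward implication of your pointwise statement is fine (if $\alpha_p(X_p)=0$ then $\alpha_p\in\ker X_p^{\perp}=\R\beta_p$), but the parenthetical equivalence is false: cohomological independence says nothing about the pointwise behaviour of a representative, and a closed form in a class independent of $[\beta]$ can perfectly well be proportional to $\beta$ (or vanish) at isolated points after adding an exact term. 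Producing a closed representative that is \emph{everywhere} positive on $X$ is exactly the content of (a), so your argument is circular at the decisive step. The paper closes this gap by observing that $\beta$ is nowhere vanishing and transitive (checked on the universal cover $\R^2$), invoking Calabi's theorem to get a metric for which $\beta$ is harmonic, and taking $\alpha:=-*\beta$; some input of this kind (Calabi, or a Tischler-type argument) is unavoidable. Once (a) is granted, your developing-map proof of (b) is a legitimate variant of the paper's (which instead takes the frame dual to the closed coframe $(\alpha,\beta)$, notes it commutes, and integrates) — though your further claim that $X$ itself can be made ``genuinely constant'' is false in general (closed orbits may have non-constant periods); fortunately only linearity of the foliation is needed.

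In (d) you have the logic backwards at one point: you write that $\rho\tilde\sigma$ ``need not be $X$-invariant'' and then leave a garbled computation. In fact it \emph{is} $X$-invariant, and for a one-line reason: $i_X(\rho\tilde\sigma)=i_{\tilde X}\tilde\sigma$ is closed because $L_{\tilde X}\tilde\sigma=0$, and since $\rho\tilde\sigma$ is a top-degree form, $L_X(\rho\tilde\sigma)=d\,i_X(\rho\tilde\sigma)=0$. This is precisely the paper's argument, and it reduces (d) cleanly to (c) with no further averaging. Your part (c) is essentially correct and matches the paper, with the minor caveat that in the closed-orbit case the constant $c$ must be the average of $\psi$ weighted by $\beta(\p_\phi)$ (which has constant sign, so $c>0$), not the unweighted average.
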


\begin{proof}
(a) By looking at the universal cover $\R^2$, one easily sees that
$\beta$ is {\em transitive}, i.e.~any two points in $T$ can be
connected by a path $\gamma$ with $\beta(\gamma)>0$. By a theorem of
Calabi~\cite{Ca}, this implies the existence of a metric on $T$ for
which $\beta$ is harmonic. Then $\alpha:=-*\beta$, where $*$ is the
Hodge star operator of the metric, has the desired properties. 

(b) Let $\alpha$ be a 1-form as in (a). The vector field $\bar
X:=\alpha(X)^{-1}X$ satisfies $\beta(\bar X)=0$ and $\alpha(\bar
X)=1$. Define a vector field $Y$ on $T$ by $\alpha(Y)=0$ and 
$\beta(Y)=1$. Then $\bar X$ and $Y$ are commuting vector fields which
are linearly independent at every point, so they are linear in
suitable angular coordinates $(\theta,\phi)$ on $T\cong T^2$. 

(c) If $f\,\sigma$ is a different $X$-invariant area form of
the same sign, $f$ is a positive function on $T$ invariant under
$X$. If all flow lines of $X$ are dense this implies that $f$ is
constant and (c) follows. Otherwise we may choose coordinates
$(\theta,\phi)$ such that $X$ points in the direction $\p_\theta$; then
$f$ is a function of $\phi$ only and $\beta$ is a $\theta$-independent
multiple of $d\phi$, so $f\,\beta$ is cohomologous to a positive
multiple of $\beta$. 

(d) Let $f$ be any positive smooth function on $T^2$ and consider the
vector field $\tilde X:=fX$. Assume that $\tilde X$ preserves an area
form $\tilde\sigma$ of the same sign as $\sigma$.
Then $i_{\tilde X}\tilde\sigma=i_Xf\tilde\sigma$ is closed. In
particular, $f\tilde\sigma$ is an $X$-invariant area form of the same
sign as $\sigma$ and the result follows from part (c). 
\end{proof}

Let $X$ and $\sigma$ be as above. 
Note that $i_X\sigma$ is nowhere vanishing, so in
particular it is not exact (if $i_X\sigma=d\chi$ for a function
$\chi\in C^{\infty}(T)$ the maximum of $\chi$ would give a zero of
$i_X\sigma$). We say that $a$ and $b$ 
in $H^1(T;\mathbb{R})$ are equivalent and write $a\sim b$ if one is
the positive multiple of the other. The quotient
$$
   PH^1(T;\mathbb{R}):=H^1(T;\mathbb{R})\setminus\{0\}/\sim
$$ 
is called the projectivization. The descendant of $a\neq 0\in
H^1(T;\mathbb{R})$ 
in the projectivization will be denoted by $Pa$. Since the cohomology
class $[i_X\sigma]$ is not zero it descends to the projectivization to
give a quantity 
$$
   k_X:=-P[i_X\sigma]\in PH^1(T;\mathbb{R})
$$ 
called the {\em slope of} $X$. Note that by Lemma~\ref{lem:area} (d),
the slope $k_X$ depends only on the oriented foliation defined by $X$.
The integral cohomology $H^1(T,\mathbb{Z})$ sits
inside $H^1(T,\mathbb{R})$ as an integer lattice; we call images of   
nonzero integral cohomology classes in the projectivization {\em
rational} points. By Lemma~\ref{lem:area} (b) we have the following
dichotomy: Either $k_X$ is rational and all the orbits of $X$
are closed, or $k_X$ is irrational and all the orbits of $X$ are dense.  

There is a more concrete coordinate way to describe the first
cohomology of $T$ and its projectivization which will enable us to
write a formula for $k_X$ in terms of $X$.
Namely, let $T\cong\mathbb{R}^2/\mathbb{Z}^2$ be coordinatized by $(\theta,\phi)$
and oriented by $d\theta\wedge d\phi$.  
The form $i_X\sigma$ can be written as 
$$
   i_X\sigma=a_1(\theta,\phi)d\theta+a_2(\theta,\phi)d\phi, \qquad
   \frac{\p a_1}{\p\phi} = \frac{\p a_2}{\p\theta}. 
$$
In the identification $H^1(T;\R)\cong\R^2$ via the basis
$[d\theta],[d\phi]$ its cohomology class is given by
$$
   [i_X\sigma]=\Bigl(\int_{T^2}a_1d\theta d\phi, \int_{T^2} a_2d\theta
   d\phi\Bigr)
$$ 
and the slope by
\begin{equation}
k_X=-P[i_X\sigma]=-|i_X\sigma|^{-1}\Bigl(\int_{T^2}a_1d\theta d\phi,
\int_{T^2} a_2d\theta d\phi\Bigr),  
\label{slopesmooth}
\end{equation}
where 
$$
   |i_X\sigma|:=\Bigl((\int_{T^2}a_1d\theta d\phi)^2+(\int_{T^2} a_2d\theta
   d\phi)^2\Bigr)^{1/2}. 
$$
Consider now $I\times T$ for an interval $I\subset\R$, open or closed
or half open. Let $X$ be a vector field on $I\times T$ tangent to the
tori $\{r\}\times T$ and preserving a volume form $V$ on $I\times T$. 
The vector field $X$ can also be viewed as a family 
$\{X_r\}_{r\in I}$ of vector fields on $T$ parametrized by
$I$. Similarly, we can write the volume form as 
$$
   V=dr\wedge\sigma_r,
$$
where $r$ is the coordinate on $I$ and $\sigma_r$ is a smooth family
of area forms on $T$ preserved by $X_r$. This gives rise to a
smooth family of $1$-forms $\{i_{X_r}\sigma_r\}_{r\in I}$ as
above, and Formula \eqref{slopesmooth} applied to $X_r$ in place of
$X$ shows that $k_{X_r}$ depends smoothly on $r$ (since $a_1$ and $a_2$ depend smoothly on $r$). 

\begin{definition}
The smooth function
$$
   k:I\longrightarrow S^1,\qquad r\mapsto k_{X_r}
$$
is called the {\em slope function} of the oriented foliation $\LL$
defined by $X$. 
\label{slopef}
\end{definition}

We note one useful simplification of Formula \eqref{slopesmooth} for
future use. Indeed, by Lemma \ref{lem:area} (b) we may assume that
the vector field $X$ is linear in coordinates 
$(\theta,\phi)$, say $X=b_1\p_\theta+b_2\partial_\phi$ for constants
$b_1,b_2\in\R$. By Lemma \ref{lem:area} (d) we 
may choose $\sigma:=d\theta\wedge d\phi$, then $a_1=-b_2$ and $a_2=b_1$. So Formula 
\eqref{slopesmooth} simplifies to 
\begin{equation}\label{slopesimple}
   k_X=(b_2,-b_1)/\sqrt{b_1^2+b_2^2}.
\end{equation}

The situation considered in this subsection arises in integrable
4-dimensional Hamiltonian systems as follows. Consider a symplectic
4-manifold $(W,\om)$ and two Poisson commuting functions $H,F$ on $W$
whose differentials are linearly independent at every point. Then each
compact connected component of a level set of $(H,F)$ is a 2-torus
$T$. Define a 2-form $\sigma$ on $T$ by $\sigma(X_H,X_F):=1$. A short
computation using $[X_H,X_F]=0$ shows that $\sigma$ is preserved by
the flows of $X_H$ and $X_F$. So we can consider the slope function of
the foliation defined by $X_H$.

For a stable Hamiltonian structure $(\om,\lambda)$ on a 3-manifold $M$
this situation arises in regions where the proportionality factor
$f=d\lambda/\om$ is nonconstant. The unique vector field
$X_f\in\ker\lambda$ defined by $i_{X_f}\om+df=0$ commutes with the
Reeb vector field $R$ and is linearly independent of $R$ where $df\neq
0$. So  $\sigma(R,X_f):=1$ defines an invariant area form on level
sets of $f$ and we can consider the slope function of the foliation
defined by the Reeb vector field $R$. Of course, this description is
related to the 4-dimensional picture by symplectization as described
in Section~\ref{subsec:integr}.

\subsection{Persistence of invariant tori}\label{subsec:persist}

\begin{theorem}
Let $(\om_0,\lambda_0)$ be a SHS on $M$ and assume that it has  
has an integrable region $K_0\cong [a,b]\times T^2\subset M$ on which
the proportionality coefficient $f_0:=d\lambda_0/\om_0:K_0\to[a,b]$ is
the projection onto the first factor and $(\om_0,\lambda_0)$ is $T^2$ invariant on $K_0$. 
Denote by $\mu$ the Lebesgue measure on $[a,b]\times T^2$. Then: 

(a) There exist constants $C,\delta_0>0$ 
such that for any $\delta<\delta_0$ and any SHS $(\om,\lambda)$ which
is $\delta$-close to $(\om_0,\lambda_0)$ in the $C^2$-metric, 
has an integrable region $K$ with $\mu(K)\ge \mu(K_0)-C\delta$. 
Moreover there exists a diffeomorpism
$\Psi$ of $M$, $C^1$-close (and thus isotopic) to the identity, such
that $\Psi([a,b]\times T^2)=K$ and the pullback SHS
$(\Psi^*\om,\Psi^*\lambda)$ is $T^2$-invariant on $[a,b]\times T^2$. 
 
(b) If in addition the slope function $k_0$ of $\LL_0$ on $K_0$ is
not constant, then $\delta_0>0$ can be chosen such that for any SHS
$(\om,\lambda)$ as in (a), the kernel foliation $\LL$ contains
rational as well as irrational invariant tori.  
\label{thm1}
\end{theorem}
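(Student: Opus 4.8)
The plan is to reduce everything to a statement about the Reeb dynamics on the invariant tori via Theorem~\ref{thm:integr}, and then to exploit the continuity of that normalization in the $C^1$-topology. First I would use Theorem~\ref{thm:integr} to pull back $(\om,\lambda)$ on a slightly shrunk neighbourhood of $K_0$ by a diffeomorphism $\Phi$ which is $C^1$-close to the identity (by the continuity statement in that theorem, and because $(\om,\lambda)$ is $C^2$-close to $(\om_0,\lambda_0)$ which is already $T^2$-invariant). The subtle point is to produce the integrable region $K$ for $(\om,\lambda)$ in the first place: near $K_0$ the proportionality function $f_0$ is a submersion (projection onto $[a,b]$), so for $(\om,\lambda)$ $C^1$-close to $(\om_0,\lambda_0)$ the new proportionality function $f=d\lambda/\om$ is still a submersion on a slightly smaller set, hence its regular level sets there are $2$-tori and foliate a region $K$. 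The only region that can be ``lost'' is a collar of width $O(\delta)$ near the ends $\{a\}\times T^2$ and $\{b\}\times T^2$ where $f-f_0$ may push a level set of $f$ outside $K_0$; this gives the measure estimate $\mu(K)\ge\mu(K_0)-C\delta$ with $C$ depending only on a lower bound for $\|\nabla f_0\|=1$ on $K_0$ and on the $C^1$-size of $\om_0,\lambda_0$. Once $K$ is identified, apply Theorem~\ref{thm:integr} to get the $T^2$-invariant normal form and set $\Psi:=\Phi$ restricted appropriately; $\Psi$ is $C^1$-close to the identity because the whole construction (the Reeb $2$-flow, the lattices, the section) depends $C^1$-continuously on $(\om,\lambda)$ and is the identity for $(\om_0,\lambda_0)$.

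For part (b), after applying part (a) we may work on $[a,b]\times T^2$ with $(\Psi^*\om,\Psi^*\lambda)$ $T^2$-invariant and $C^1$-close to $(\om_0,\lambda_0)$. The slope function $k\colon[a,b]\to S^1$ of the kernel foliation is computed from the Reeb vector field by Formula~\eqref{slopesmooth}, which is a $C^1$-continuous functional of the SHS (the coefficients $a_1,a_2$ depend $C^1$-continuously on $\om,\lambda$). Hence if $\delta_0$ is small enough, $k$ is $C^0$-close to $k_0$. Since $k_0$ is assumed non-constant, it takes at least two distinct values on $[a,b]$, say $k_0(r_0)\ne k_0(r_1)$; by continuity $k(r_0)\ne k(r_1)$ as well for $\delta$ small. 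But $k$ is continuous on the interval $[a,b]$, so by the intermediate value theorem its image is a nondegenerate subarc of $S^1$, which therefore contains both rational and irrational points of $PH^1(T;\R)$. By the dichotomy recorded after Definition~\ref{slopef} (Lemma~\ref{lem:area}(b)), a rational value of $k$ at $r$ means the torus $\{r\}\times T^2$ carries only closed Reeb orbits and an irrational value means all Reeb orbits on it are dense; transporting back by $\Psi$ gives rational and irrational invariant tori in $\LL$.

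The main obstacle I expect is the first step of part (a): establishing that a genuine integrable region $K$ survives, with the claimed measure bound, and not merely that invariant tori persist locally. This requires controlling the level sets of the perturbed function $f$ globally on $K_0$ — one must check that for $r$ in the interior of $[a,b]$ (away from an $O(\delta)$-collar) the level set $f^{-1}(c)$ is a single embedded torus isotopic to $\{r\}\times T^2$ and that these sweep out an open set diffeomorphic to an interval times $T^2$. This is where the $C^2$-hypothesis is used: one needs $C^1$-control on $f$ (hence $C^2$-control on $\lambda$ and $C^1$ on $\om$) to keep $f$ a submersion and to bound how far its level sets move. The remaining ingredients — the $C^1$-continuity of the Arnold–Liouville normalization and of the slope functional, and the intermediate value argument — are essentially bookkeeping on top of Theorems~\ref{thm:integr} and the formulas in Section~\ref{subsec:slope}.
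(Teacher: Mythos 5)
Your proposal is correct and follows essentially the same route as the paper: part (a) is obtained by observing that $f=d\lambda/\om$ stays a submersion $C^1$-close to the projection $f_0$, so that its level sets sweep out an integrable region $K$ losing only an $O(\delta)$-collar at the ends, after which Theorem~\ref{thm:integr} supplies the $T^2$-invariant normalization via a diffeomorphism $C^1$-close to the identity; part (b) is the same $C^0$-continuity argument for the slope function combined with nonconstancy of $k_0$.
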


\begin{proof}
The hypothesis implies that the function $f:=d\lambda/\om:K_0\to\R$ is
$C^1$-close to $f_0$, more precisely, $||f-f_0||_{C^1}< C\delta$ for
some constant $C$ independent of $\delta$. To simplify notation, we
will replace $C\delta$ by $\delta$ and drop $C$ in the following. Set
$I:=[a,b]$ and  
$$
   I_{\delta}:=[a+\delta,b-\delta],\,\,\,I_{2\delta}:=[a+2\delta,b-2\delta],
   \,\,\,I_{3\delta}:=[a+3\delta,b-3\delta].
$$  
For the first step we will only need that $||f-f_0||_{C^0}<\delta$.
This implies  
$$
   f^{-1}(I_\delta)\subset \inn(I\times T^2) 
$$
and 
$$
   I_{3\delta}\subset f(I_{2\delta}\times T^2)\subset I_{\delta}.
$$
Combining these inclusions, we obtain
$$
   f^{-1}(f(I_{2\delta}\times T^2))\subset I\times T^2,
$$ 
i.e.~if a level set of $f$ meets $I_{2\delta}\times T^2$, then it is
contained in the interior of $I\times T^2$. If $\delta$ is small
enough, then $C^1$-closeness of $f$ and $f_0$ implies that all points
in $I\times T^2$ are regular for $f$. Combining this with the above
inclusion gives us an integrable region
$$
   K:=f^{-1}(f(I_{2\delta}\times T^2))\supset I_{2\delta}\times T^2 
$$
for $\LL$ containing $I_{2\delta}\times T^2$.
Set $J:=f(I_{2\delta}\times T^2)$ and define 
$$
   \Phi: K\to J\times T^2,\qquad  
   (r,x)\mapsto\bigl(f(r,x),x\bigr).
$$
This is a diffeomorphism $C^1$-close to the identity because the function
$f$ is $C^1$-close to $f_0$, the projection onto the first factor. We
extend $f|_K$ to a submersion $\tilde f:[a,b]\times T^2\to [a,b]$
which is $C^1$-close to $f_0$ and coincides with $f_0$ near
$\p[a,b]$. Using the last displayed formula with $\tilde f$ in place
of $f$ we extend $\Phi$ to a self-diffeomorphism $\tilde\Phi$ of
$[a,b]\times T^2$ which equals the identity near the boundary. We
extend $\tilde \Phi$ to a self-diffeomorphism (still called
$\tilde\Phi$) of $M$ in the obvious way. The pushforward SHS
$\Phi_*(\om,\lambda)$ is $C^1$-close to $(\om_0,\lambda_0)$ and
$J\times T^2$ is an integrable region for $\Phi_*(\om,\lambda)$. Thus 
by Theorem~\ref{thm:integr} there exists a diffeomorphism $\Theta$ of
$J\times T^2$, $C^1$-close to the indentity, such that
$\Theta_*\Phi_*(\om,\lambda)$ is $T^2$-invariant. The diffeomorphism 
$\Theta$ extends to a diffeomorphism of $[a,b]\times T^2$ supported
away from the boundary and the latter extends to a diffeomorphism of
$M$ (called $\tilde\Theta$) in the obvious way. 
Let $\Gamma:J\times T^2\to [a,b]\times T^2$ be the mapping which is
the identity on the second factor and a linear surjection on the first
factor. Since the integrable region $[a,b]\times T^2$ can be slightly
extended, the diffeomorphism $\Gamma$ can be extended to a
self-diffeomorphism $\tilde\Gamma$ of $M$ $C^1$-close to identity. Now
the diffeomorphism
$\Psi:=(\tilde\Gamma\circ\tilde\Theta\circ\tilde\Phi)^{-1}$ has the
required properties and part (a) follows.  

For part (b) assume that the slope function $k_0$ of $\LL_0$ is not
constant on $I$. By taking $\delta_0$ small enough, we can ensure that
$k_0$ is not constant on $I_{3\delta}$. Note that $I_{3\delta}\subset
J=f(I_{2\delta}\times T^2)$ by one of the inclusions
above. In particular, $k_0$ is not constant on $J$. 
The pushforward foliation $\Phi_*\LL$ on $J\times T^2$ is the kernel
foliation of $\Phi_*(\om,\lambda)$ and thus  
tangent to the tori $\{r\}\times T^2$. Let $k\colon J\rightarrow S^1$
be the slope function of $\Phi_*\LL$ on $J\times T^2$. 
Since the
diffeomorphism $\Phi$ is $C^1$-close to the identity and the foliation
$\LL$ on $K$ is $C^0$-close to $\LL_0$, we get that the foliation
$\Phi_*\LL$ is $C^0$-close  
to $\LL_0$, so the slope function $k$ is $C^0$-close to the restriction of the slope
function $k_0$ to $J$, which is nonconstant. Thus $k$ is not constant
and so attains rational and irrational values, therefore the foliation
$\Phi_{\star}\LL$ and thus $\LL$ has rational as well as irrational
invariant tori.  
\end{proof}

As a first application of Theorem~\ref{thm1}, we illustrate the difference
between a convergent sequence of stabilizable Hamiltonian structures
and a convergent sequence of stable Hamiltonian structures. 

\begin{theorem}\label{thm:seq}
There exists a stable Hamiltonian structure $(\om_0,\lambda_0)$ on $\R
P^3$ and a sequence of stabilizable Hamiltonian structures $\om_n$ which
$C^2$-converges to $\om_0$, but such that there is no sequence of
1-forms $\lambda_n$ stabilizing $\om_n$ which $C^2$-converges to
$\lambda_0$. 
\end{theorem}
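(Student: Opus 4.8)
The idea is to build $(\om_0,\lambda_0)$ on $\R P^3 = \Gamma\setminus PSL(2,\R)$ with a fairly large integrable region on which the slope function $k_0$ of the kernel foliation $\LL_0$ is \emph{constant} and rational (so all Reeb orbits on the region are closed), but which sits inside a model where that constancy is unstable. Concretely, take a negative contact form $\alpha_-$ on $M=\Gamma\setminus PSL(2,\R)$ with periodic Reeb flow as in Case~1 of Section~\ref{subsec:left}. Inside a flow box of the circle action, replace the contact SHS by a $T^2$-invariant SHS on a region $[a,b]\times T^2$ whose Reeb vector field is $\p_\theta$, i.e.\ with constant rational slope function. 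The point of working on $\R P^3$ is that it carries such periodic-Reeb SHS (it is the unit cotangent bundle of $S^2$, or a lens-space quotient of $S^3$) so that the interpolation between the contact piece and the $T^2$-invariant piece can actually be carried out globally; alternatively one can simply cite the $T^2$-invariant models of Section~\ref{subsec:t2inv} and glue.

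\textbf{Constructing the sequence $\om_n$.} On the integrable region, in standard coordinates $(r,\theta,\phi)$, write $\om_0 = dr\wedge d\theta$ and $\lambda_0 = d\phi + r\,d\theta$ (so $d\lambda_0 = dr\wedge d\theta = \om_0$ there — a concrete choice will be fixed in the final write-up, but the structural feature needed is $\LL_0 = \R\p_\theta$ with constant slope). Now perturb only $\om$: set
\begin{equation*}
   \om_n := \om_0 + \tfrac{1}{n}\,d\bigl(g(\theta,\phi)\,\beta_n(r)\,d\phi\bigr),
\end{equation*}
where $\beta_n$ is a bump function supported in $[a,b]$, chosen so that each $\om_n$ is still a HS in the same cohomology class as $\om_0$ (the added term is exact, cf.\ Remark~\ref{rem:exact}) and so that $\om_n\to\om_0$ in $C^2$. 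The function $g$ is chosen nonconstant so that the kernel foliation $\LL_n := \ker(\om_n)$ on the region is \emph{not} of constant slope: it is a small $C^1$-perturbation of $\R\p_\theta$ but with the $r$-dependence of the tilt varying in a way that forces the slope function $k_n$ to be genuinely nonconstant on a subinterval. One must check that each $\om_n$ is still stabilizable; this is where I would invoke the existence theory of Section~\ref{subsec:t2inv} for $T^2$-invariant (or integrable) SHS, choosing $g$ so that $\om_n$ is actually $T^2$-invariant (hence trivially stabilizable by Corollary~\ref{cor:geod}-type considerations or the basic example of mapping tori), or alternatively so that $\om_n$ coincides with $\om_0$ near $\p[a,b]$ and is a $T^2$-invariant HS inside, which is automatically Hamiltonian-stabilizable.

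\textbf{The obstruction.} Suppose for contradiction that $\lambda_n$ stabilizes $\om_n$ with $\lambda_n\to\lambda_0$ in $C^2$. Then $(\om_n,\lambda_n)$ is $C^2$-close to $(\om_0,\lambda_0)$, and $(\om_0,\lambda_0)$ has the integrable region $[a,b]\times T^2$ with $f_0 = d\lambda_0/\om_0$ the projection onto $[a,b]$ (or a suitable submersion after the normalization of Theorem~\ref{thm:integr}) and constant slope function $k_0$. By Theorem~\ref{thm1}(a) the perturbed SHS $(\om_n,\lambda_n)$ has, for $n$ large, an integrable region $K_n$ of nearly full measure with a $T^2$-invariant model; but then its kernel foliation $\LL_n$ restricted to $K_n$ has a \emph{slope function}, and by the $C^0$-closeness argument in the proof of Theorem~\ref{thm1}(b) this slope function is $C^0$-close to $k_0|_{K_n}$, hence close to a constant. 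This contradicts the construction of $\om_n$, for which $\LL_n = \ker\om_n$ was arranged to have genuinely nonconstant slope function (a property of the foliation alone, by Lemma~\ref{lem:area}(d), hence insensitive to the choice of $\lambda_n$). Therefore no such convergent sequence $\lambda_n$ exists. The main obstacle is the bookkeeping in the construction of $\om_n$: one needs simultaneously (i) $\om_n\to\om_0$ in $C^2$, (ii) $[\om_n]=[\om_0]$ with exact difference, (iii) each $\om_n$ stabilizable, and (iv) $\ker\om_n$ of nonconstant slope on a fixed subregion — reconciling (iii) and (iv) is the delicate point, and I expect it to be handled by staying within the explicit $T^2$-invariant models, where stabilizability is automatic and the slope of $\ker\om_n$ can be read off directly from the coefficient functions.
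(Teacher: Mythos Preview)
Your argument has a genuine gap in the contradiction step, and the overall strategy is essentially the reverse of what works.

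\textbf{The main flaw.} You arrange $k_0$ constant and $k_n$ nonconstant, then invoke Theorem~\ref{thm1} to say that on the persistent integrable region the slope function of $\LL_n$ is $C^0$-close to $k_0$, ``hence close to a constant,'' and claim this contradicts $k_n$ being nonconstant. But ``$C^0$-close to a constant'' and ``nonconstant'' are perfectly compatible. Your own perturbation $\om_n=\om_0+\tfrac{1}{n}(\cdots)$ produces exactly a slope function $k_n$ that is nonconstant yet $O(1/n)$-close to the constant $k_0$; Theorem~\ref{thm1} simply confirms this, it does not forbid it. There is no contradiction. (Also, your concrete model $\om_0=dr\wedge d\theta$, $\lambda_0=d\phi+r\,d\theta$ has $f_0=d\lambda_0/\om_0\equiv 1$, so the hypothesis of Theorem~\ref{thm1}(a) that $f_0$ be the projection onto the first factor is not met; and $\R P^3$ is not of the form $\Gamma\setminus PSL(2,\R)$, since the latter are aspherical.)

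\textbf{What the paper does.} The paper runs the logic in the opposite direction and uses a \emph{dynamical} obstruction, not a slope obstruction. It realizes $\R P^3$ as the unit cotangent bundle of the round $S^2$, takes $\om_0=d\alpha_0$ for the Liouville form $\alpha_0$, and chooses $\lambda_0=\alpha_0+\pi^*\rho$ so that $f_0=d\lambda_0/\om_0$ is \emph{nonconstant}. This yields an integrable region $K_0$ satisfying the hypotheses of Theorem~\ref{thm1}(a), so any SHS $(\om,\lambda)$ that is $C^2$-close to $(\om_0,\lambda_0)$ must have invariant tori filling a set of measure at least roughly $\mu(K_0)$. For the sequence, the paper invokes Katok's theorem: there exist Finsler metrics $g_n$ on $S^2$, $C^3$-close to the round metric, whose geodesic flows are ergodic on a set whose complement has arbitrarily small measure. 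The associated $\om_n:=d\alpha_n$ are stabilizable (by the contact forms $\alpha_n$) and $C^2$-converge to $\om_0$, but their kernel foliations have invariant tori of total measure at most $\mu(K_0)/2$. Hence no stabilizing $\lambda_n$ can $C^2$-converge to $\lambda_0$. The key missing ingredient in your approach is an input like Katok's ergodic examples that destroys invariant tori in a quantified way; a small $T^2$-invariant perturbation cannot do this.
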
  

\begin{proof}
We view $\R P^3$ as the unit cotangent bundle for the standard round
metric $g_0$ on $S^2$. The Liouville form $\alpha_0$ defines a contact
form on $\R P^3$ whose Reeb flow is the geodesic flow for $g_0$. This
is a periodic flow, so it gives $\R P^3$ the structure of a principal
$S^1$-bundle over a closed surface $\Sigma\cong S^2$ (the space of
oriented great circles on $S^2$). Let $\pi\colon R P^3\rightarrow
\Sigma$ denote the corresponding bundle projection. Note that
$\alpha_0$ defines a connection 1-form on this circle bundle and
$d\alpha_0=\pi^*\sigma$ for an area form $\sigma$ on $\Sigma$.  
We choose a $1$-form $\rho$ on $\Sigma$ such that $d\rho$ is somewhere
zero and somewhere nonzero. Then $\lambda_0:=\alpha_0+\pi^*\rho$
defines another connection form on the circle bundle, so $\lambda_0$
stabilizes the HS $\om_0:=d\alpha_0$. Since
$d\lambda_0=\pi^*(\sigma+d\rho)$, the proportionality
coefficient $f_0:=d\lambda_0/\om_0$ is given by
$(\sigma+d\rho)/\sigma$, which is nonconstant because $d\rho$ is
somewhere zero and somewhere nonzero. This produces an integrable
region $K_0$ for the kernel foliation $\LL_0$ of $\om_0$ meeting the
conditions of of Theorem \ref{thm1} (a). It follows that for any SHS
$(\om,\lambda)$ sufficiently $C^2$-close to $(\om_0,\lambda_0)$ there
persist invariant tori, and moreover, they fill a region of 
measure approximately $\mu(K_0)$. 

Now we invoke the following theorem of Katok~\cite{Ka}: For each $k\ge
2$ and $\eps,\delta>0$ there exists a Finsler metric $g$ on $S^2$,
$\delta$-close to $g_0$ in the $C^k$-norm, such that the geodesic 
flow of $g$ is ergodic on an open invariant region $U$ of the unit
cotangent bundle $S_g^*S^2$ whose complement $S_g^*S^2\setminus U$ has
measure $<\eps$. This implies that the invariant tori for the geodesic
flow of $g$ constitute a set of measure at most $\eps$. 

Now pick a sequence $g_n$ of such Finsler metrics which
$C^3$-converges to $g_0$ and such that the set of invariant tori for
the geodesic flow of $g_n$ has measure at most $\mu(K_0)/2$ for all
$n$. Let $\alpha_n$ be the contact form on $\R P^3$ obtained by
restricting the Liouville form to the unit cotangent bundle
$S_{g_n}^*S^2$. Then $\om_n:=d\alpha_n$ is a sequence of stabilizable (by
$\alpha_n$) Hamiltonian structures which $C^2$-converges to
$\om_0=d\alpha_0$. This sequence cannot be stabilized by a family of
1-forms $\lambda_n$ which $C^2$-converges to $\lambda_0$ because, by
the discussion above, this would imply that for large $n$ the set of
invariant tori for $\om_n$ would have measure bigger than
$\mu(K_0)/2$, contradicting the choice of the $g_n$. 
\end{proof}

\subsection{$T^2$-invariant Hamiltonian structures on $I\times
  T^2$}\label{subsec:t2inv}  

Let $I$ be an interval in $\R$ (open or closed or half open). 
Consider $I\times T^2$ with coordinates $(r,\theta,\phi)$ and the
$T^2$-action by shift in $(\theta,\phi)$. We orient $I\times T^2$ by
the volume form $dr\wedge d\theta\wedge d\phi$. 

{\bf Hamiltonian structures. }
For a path $h=(h_1,h_2):I\to\R^2$ consider the $T^2$-invariant 1-form 
$$
   \alpha_h := h_1(r)d\theta+h_2(r)d\phi
$$
and the $2$-form 
$$
   \om_h := d\alpha_h = h_1'(r)dr\wedge d\theta+h_2'(r)dr\wedge d\phi. 
$$
This defines a HS iff $h'(r)\neq 0$ for all $r\in I$, and in that case
its oriented kernel foliation is
\begin{equation}\label{kerfol}  
   \LL_h=Span_{\R}\{-h_2'(r)\p_\theta+h_1'(r)\p_\phi\}.
\end{equation}

Note that $\alpha_h$ is a positive contact form if and only if $h$
always turns clockwise. To see this, we viewing $\R^2$ as $\C$.
Then the contact condition $\alpha_h\wedge d\alpha_h>0$ reads as 
$\la h,ih'\ra > 0$, where $\la\cdot,\cdot\ra$ denotes the standard
scalar product on $\R^2$. Writing $h=\rho(r)e^{i\sigma(r)}$ we find
$\la h,ih'\ra=-|h|^2\sigma'(r)$, so the contact condition is 
equivalent to $\sigma'<0$. 

{\bf Slope functions. }
We define the slope function of the HS $\om_h$ on $I\times T^2$ 
as the slope function of its oriented kernel foliation
$\ker\om_h=\LL_h$ given by formula~\eqref{kerfol}.  
In terms of the function $h$, formula~\eqref{slopesimple} (with
$b_1=-h_2'$ and $b_2=h_1'$) translates to
$$
   k(r) = h'(r)/|h'(r)|. 
$$
Note that the slope function and all its properties are intrinsic to
the foliation $\LL_h$ and do not depend on the defining HS.

{\bf Winding numbers. }
We define the {\em winding number} of $h$ by
$w(h):=\sigma(b)-\sigma(a)\in\R$, where $h'(r)/|h'(r)|=e^{i\sigma(r)}$
for a function $\sigma:I=[a,b]\to\R$. Note that for immersions
$h_0,h_1$ which agree near $\p I$ we have $w(h_0)-w(h_1)\in\Z$, and
$h_0,h_1$ are regularly homotopic rel $\p I$ iff $w(h_0)=w(h_1)$. 
For a $T^2$-invariant HS $\om$ defining the foliation $\LL_h$ we
define its winding number by $w(\om):=w(h)$.  

{\bf Stabilizing 1-forms. }
For another $T^2$-invariant 1-form 
$$
   \lambda_g := g_1(r)d\theta+g_2(r)d\phi
$$
we have
$$
   d\lambda_g = g_1'(r)dr\wedge d\theta+g_2'(r)dr\wedge d\phi,\qquad
   \lambda_g\wedge\om_h = (h_1'g_2-h_2'g_1)dr\wedge d\theta\wedge d\phi. 
$$
So $\lambda_g$ stabilizes $\om_h$ iff
$$
   g_1'h_2'-g_2'h_1'=0,\qquad h_1'g_2-h_2'g_1>0. 
$$
Viewing $\R^2$ as $\C$, this can also be written as
\begin{equation}\label{eq:g}
   \la g',ih'\ra=0,\qquad \la g,ih'\ra>0. 
\end{equation}

\begin{lemma}\label{lem:t2inv}
Fix an interval $I$ and a relatively compact subinterval
$J\subset\subset I$. 

(a) Any $T^2$-invariant closed 2-form $\om$ on $I\times T^2$ can be
written as $\om=\om_h$ for a function $h:I\to\C$ which is unique up to
adding a constant. 

(b) Any 1-form $\alpha$ with $d\alpha=\om$ $T^2$-invariant
can be modified rel boundary to $\tilde\alpha$ with
$d\tilde\alpha=\om$ satisfying $\tilde\alpha|_J=\alpha_h$ for a
function $h:J\to \R$. Moreover, if $\alpha$ is $C^k$-small we can
choose $\tilde\alpha$ $C^k$-small as well. 

(c) Any $T^2$-invariant $1$-form $\lambda$ stabilizing $\om_h$ is
homotopic rel boundary through $T^2$-invariant stabilizing 1-forms to
$\tilde\lambda$ satisfying $\tilde\lambda|_J=\lambda_g$ for a
function $g:J\to \R$. 
\end{lemma}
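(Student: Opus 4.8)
The plan is to prove all three parts by passing to the standard coordinates $(r,\theta,\phi)$ on $I\times T^2$ and then removing the unwanted components with cut-off functions; the only place where anything beyond bookkeeping is needed is the period computation in~(b).

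For~(a), $T^2$-invariance lets me write $\om=a(r)\,dr\wedge d\theta+b(r)\,dr\wedge d\phi+c(r)\,d\theta\wedge d\phi$, and then $d\om=c'(r)\,dr\wedge d\theta\wedge d\phi$, so closedness forces $c$ to be constant; since $I\times T^2$ is an integrable region, i.e.\ $\ker\om$ is tangent to the tori $\{r\}\times T^2$, contracting $\om$ with a torus-tangent kernel vector forces $c=0$. Putting $h_1(r):=\int_{r_0}^{r}a$ and $h_2(r):=\int_{r_0}^{r}b$ gives $\om=\om_h$, and since $\om_h$ depends only on $h'$, the function $h$ is unique up to an additive constant in $\C\cong\R^2$.

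For~(b), since $\om=\om_h=d\alpha_h$ the $1$-form $\alpha-\alpha_h$ is closed on $I\times T^2$. Its period over $\{r\}\times\{\phi_0\}\times S^1_\theta$ does not depend on $\phi_0$ (two such loops cobound an annulus on which $\om_h$ vanishes, as $dr$ does there), and Stokes' theorem together with $d\alpha=\om_h$ gives $\tfrac{d}{dr}\oint_{S^1_\theta}\alpha=h_1'(r)$; hence $\oint_{S^1_\theta}(\alpha-\alpha_h)$ is a constant, and likewise for $S^1_\phi$. Replacing $h$ by $h$ plus a suitable constant in $\R^2$ — which does not change $\om_h=d\alpha_h$ — I may assume both periods vanish, so that $\alpha-\alpha_h=du$ for some $u\in C^\infty(I\times T^2)$; concretely $u$ is obtained by applying a standard homotopy operator for the de Rham complex of $I\times T^2$, which is continuous in every $C^k$-norm. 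Now fix $\rho\colon I\to[0,1]$ with $\rho\equiv 1$ on $J$ and $\rho\equiv 0$ near $\p I$, and set $\tilde\alpha:=\alpha-d(\rho u)$. Then $d\tilde\alpha=d\alpha=\om$, while $\tilde\alpha=\alpha$ near $\p I$ and $\tilde\alpha|_J=\alpha-du=\alpha_h$. If $\alpha$ is $C^k$-small then so is $\om=d\alpha$, hence so are $h$, $\alpha_h$, the primitive $u$, and $d(\rho u)$, so $\tilde\alpha$ is $C^k$-small.

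For~(c), a $T^2$-invariant $1$-form is $\lambda=\lambda_g+g_3(r)\,dr=\lambda_g+dG$ with $G(r):=\int_{r_0}^{r}g_3$. Because $dG$ is a multiple of $dr$ and each term of $\om_h$ contains $dr$, we have $dG\wedge\om_h=0$; together with $d(dG)=0$ this shows that the stabilization conditions~\eqref{eq:g} are insensitive to the $dr$-component, so $\lambda_g$ — and more generally $\lambda_g+d\bigl((1-s\rho)G\bigr)$ for $s\in[0,1]$ with $\rho$ as in~(b) — all stabilize $\om_h$. This family is $T^2$-invariant, equals $\lambda$ at $s=0$ and constantly equals $\lambda$ near $\p I$, and at $s=1$ restricts to $\lambda_g$ on $J$; thus $\tilde\lambda:=\lambda_g+d\bigl((1-\rho)G\bigr)$ is the required $1$-form. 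I expect the only non-formal point to be the period matching in~(b) — choosing the additive constant of $h$ so that $\alpha-\alpha_h$ is exact and then producing a $C^k$-controlled primitive — and this is settled by any continuous de Rham homotopy operator on $I\times T^2$.
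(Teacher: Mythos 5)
Your proof is correct, and parts (a) and (c) run essentially parallel to the paper's argument; part (b) takes a genuinely different route. The paper averages $\alpha$ over the $T^2$-action to get a $T^2$-invariant primitive $\bar\alpha$ of $\om$, checks that $\alpha-\bar\alpha$ is exact by comparing restrictions to the tori $\{r\}\times T^2$, cuts off the resulting function, and then needs a second cutoff to kill the residual $h_3(r)\,dr$-component of the averaged form. You instead compare $\alpha$ directly with $\alpha_h$, compute the two circle periods of the closed form $\alpha-\alpha_h$, absorb them into the additive constant of $h$, and cut off a primitive $u$; this saves the averaging and the second cutoff at the price of the (easy) period bookkeeping, and it delivers $\tilde\alpha|_J=\alpha_h$ on the nose. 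Both mechanisms give the same $C^k$-control. One further point in your favour: in (a) the paper asserts that closedness forces the $d\phi\wedge d\theta$-coefficient to vanish, whereas $d\om=0$ only forces it to be \emph{constant} (e.g.\ $d\theta\wedge d\phi$ is $T^2$-invariant, closed, and not of the form $\om_h$); you correctly observe that one must additionally use that $I\times T^2$ is an integrable region for $\om$, i.e.\ that $\ker\om$ is tangent to the tori — a hypothesis implicit in every application of the lemma — to kill that constant. Your part (c) is the paper's argument up to replacing $\rho(r)g_3(r)\,dr$ by the exact multiple of $dr$ coming from $d\bigl((1-\rho)G\bigr)$; either choice works since any $dr$-multiple is invisible to both stabilization conditions for $\om_h$.
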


\begin{proof}
(a) Any $T^2$-invariant $2$-form $\om$ can be written as 
$$
   \om=k_1(r)dr\wedge d\theta+k_2(r)dr\wedge d\phi+k_3(r)d\phi\wedge
   d\theta
$$
for smooth functions $k_1,k_2,k_3:I\rightarrow \R$. 
If $\om$ is closed, then $k_3=0$ and
integrating $k_1$ and $k_2$ we see that there exists a function
$h:I\to\R^2$, unique up to adding a constant, such that
$\om=d\alpha_h$. 

(b) Consider a 1-form $\alpha$ with $d\alpha=\om$ $T^2$-invariant. 
Denote by $\bar\alpha$ the $T^2$-invariant 1-form on $I\times
T^2$ obtained by averaging $\alpha$. Then $d\bar\alpha=\om=d\alpha$,
so $\alpha-\bar\alpha$ is closed. Since $\om$ vanishes on each torus
$\{r\}\times T^2$, the restriction $\alpha|_{\{r\}\times T^2}$ is
closed. Its de Rham cohomology class is preserved under averaging, so
we have $[\bar\alpha|_{\{r\}\times T^2}]=[\alpha|_{\{r\}\times T^2}]\in
H^1({\{r\}\times T^2})$. This shows that $[\bar\alpha-\alpha]=0\in
H^1(I\times T^2)$, so we can write $\bar\alpha-\alpha=df$ for
a function $f:I\times T^2\to\R$. Pick a cutoff function
$\rho:I\to[0,1]$ which equals $0$ near $\p I$ and $1$ on a
neighbourhood $K$ of $J\subset\subset I$. Then $\hat\alpha:=\alpha +
d\bigl(\rho(r)f\bigr)$ satisfies $d\hat\alpha=\om$, agrees with
$\alpha$ near $\p I\times T^2$ and is $T^2$-invariant on $K\times
T^2$. Thus $\hat\alpha|_{K\times T^2}$ can be written in the form  
$$
   h_1(r)d\theta+h_2(r)d\phi+h_3(r)dr = \alpha_h + h_3(r)dr
$$ 
for smooth functions $h_1,h_2,h_3:K\rightarrow \R$.
Pick a function $\sigma:I\to[0,1]$ which equals $0$ outside $K$ and $1$
on $J$. Then $\tilde\alpha:=\hat\alpha-\sigma(r)h_3(r)dr$ is the desired
1-form. The construction shows that $\tilde\alpha$ will be $C^k$-small
if $\alpha$ is. 

(c) Any $T^2$-invariant $1$-form $\lambda$ on $I\times T^2$ 
$$
   \lambda= g_1(r)d\theta+g_2(r)d\phi+g_3(r)dr = \lambda_g + g_3(r)dr
$$ 
for smooth functions $g_1,g_2,g_3:I\rightarrow \R$.
It stabilizes $\om_h$ if and only if $g=(g_1,g_2)$ satisfies~\eqref{eq:g}. 
Pick a function $\rho:I\to[0,1]$ which equals $1$ near $\p I$ and $0$
on $J$. Then $\tilde\lambda:=\lambda_g+\rho(r)g_3(r)dr$ is the desired
1-form and $(1-t)\lambda+t\tilde\lambda$ the desired homotopy. 
\end{proof}

In the remainder of this section we investigate the question of
stabilizability: Given an immersion $h:I\to\C$, does there exist a
function $g:I\to\C$ satisfying~\eqref{eq:g} with prescribed values near $\p I$? 

It turns out that the answer depends on the slope function
$k=h'/|h'|:I\to S^1$. 

{\bf Stabilization for constant slope. }
Let us first consider the case of {\em constant slope}, i.e.~with
constant slope function $k=h'/|h'|\in S^1$.
In this case~\eqref{eq:g} is equivalent to 
\begin{equation}\label{eq:g-lin}
   h'/|h'|\equiv k,\qquad \la g,ik\ra\equiv c>0
\end{equation}
for a constant $c>0$. Thus $h$ moves along a straight line in direction
$k$, and the function $g$ can move freely on the straight line $\la
g,ik\ra\equiv c$. Note that convex combinations of pairs
satisfying~\eqref{eq:g-lin} with the same slope $k$ (but possibly
different constants $c$) again satisfy~\eqref{eq:g-lin}. So we obtain
the following answer to the stabilizability question: 

Let $h:[0,1]\to\C$ be an immersion with constant slope $k$ and $g_0,g_1$
be functions near $0,1$ satisfying~\eqref{eq:g-lin} with constants
$c_0,c_1$. Then there exist a function $g:I\to\C$
satisfying~\eqref{eq:g} which agrees with $g_0$ near $0$ and $g_1$
near $1$ if and only if $c_0=c_1$.  

More generally, we can show a stabilization result for constant
slopes near the boundary. Fix $0<\delta<\eps$ and $\bar h,\bar
g:[0,\eps]\cup[1-\eps,1]\to\C$ satisfying
\begin{gather*}
   \bar h'/|\bar h'|\equiv k_- \text{ on }[0,\eps],\qquad 
   \bar h'/|\bar h'|\equiv k_+ \text{ on }[1-\eps,1],\cr
   \la \bar g,ik_-\ra\equiv c_- \text{ on }[0,\eps],\qquad 
   \la \bar g,ik_+\ra\equiv c_+ \text{ on }[1-\eps,1]
\end{gather*}
for unit vectors $k_\pm\in\C$ and constants $c_\pm>0$. The preceding
discussion shows that we cannot hope for a general stabilization
result unless $c_-=c_+$, which turns out to be also sufficient:

\begin{lemma}\label{lem:stablin}
Suppose $c_-=c_+$. Then there exists a continuous map that assigns to
every immersion $h:[0,1]\to\C$ with $h=\bar h$ on
$[0,\eps]\cup[1-\eps,1]$ a function $g:[0,1]\to\C$
satisfying~\eqref{eq:g} and $g=\bar g$ on
$[0,\delta]\cup[1-\delta,1]$. 
\end{lemma}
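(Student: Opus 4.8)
The plan is to rewrite~\eqref{eq:g} as a scalar ODE by decomposing $g$ in the moving orthonormal frame attached to $h$. Put $u:=h'/|h'|:[0,1]\to S^1\subset\C$; from $|u|\equiv 1$ we get $\la u,u'\ra=0$, hence $u'=i\mu u$ for a unique real function $\mu$, and $\mu\equiv 0$ on $[0,\eps]\cup[1-\eps,1]$ because there $h=\bar h$ has constant slope, i.e.~$u\equiv k_\pm$. Writing $g=(a+ib)u$ with real functions $a,b$ and using $u'=i\mu u$, one computes $g'=\bigl((a'-\mu b)+i(b'+\mu a)\bigr)u$, so that $\la g,ih'\ra=|h'|\,b$ and $\la g',ih'\ra=|h'|\,(b'+\mu a)$. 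Thus~\eqref{eq:g} is \emph{equivalent} to the system
$$
   b'+\mu a=0,\qquad b>0 .
$$
In the same notation the prescribed datum reads $b\equiv c_-$, $a\equiv\bar a_-:=\la\bar g,k_-\ra$ on $[0,\eps]$ and $b\equiv c_+$, $a\equiv\bar a_+:=\la\bar g,k_+\ra$ on $[1-\eps,1]$; indeed $\bar g=(\bar a_\pm+ic_\pm)k_\pm$ there, since $z=(\la z,w\ra+i\la z,iw\ra)w$ for any unit vector $w$.

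With this reformulation the construction is forced by the hypothesis $c:=c_-=c_+$: I would take $b\equiv c$, so that $b>0$ is automatic and the first equation collapses to $\mu a\equiv 0$. This holds as soon as $a$ vanishes on $[\eps,1-\eps]$, the only region where $\mu$ may be nonzero. So I would fix, depending only on $\bar h,\bar g,\eps,\delta$, a smooth cutoff $\chi:[0,1]\to[0,1]$ with $\chi\equiv 1$ on a neighbourhood of $[0,\delta]\cup[1-\delta,1]$ and $\chi\equiv 0$ on $[\eps,1-\eps]$ (possible since $\delta<\eps$), and a smooth $\tilde a:[0,1]\to\R$ extending $\bar a_-$ over $[0,\eps]$ and $\bar a_+$ over $[1-\eps,1]$, and define
$$
   g:=\bigl(\chi\tilde a+ic\bigr)\,\frac{h'}{|h'|}.
$$
Since $\chi,\tilde a,c$ do not depend on $h$, the map $h\mapsto g$ is a fixed smooth expression in $h$ and $h'$, hence continuous; on $[0,\delta]$ (where $u=k_-$ and $\chi\tilde a=\bar a_-$) it gives $g=(\bar a_-+ic_-)k_-=\bar g$, and symmetrically on $[1-\delta,1]$, while $g$ is smooth on all of $[0,1]$. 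It remains to verify~\eqref{eq:g}, which the frame picture predicts: with $\beta:=\chi\tilde a+ic$ one has $g'=(\beta'+i\mu\beta)u$ with $\beta'$ real, whence $\la g',ih'\ra=|h'|\,\mu\,\mathrm{Re}(\overline\beta)=|h'|\,\mu\,\chi\tilde a$, which vanishes identically because $\mu\equiv 0$ off $[\eps,1-\eps]$ while $\chi\equiv 0$ on it; and $\la g,ih'\ra=|h'|\,\mathrm{Re}(i\overline\beta)=|h'|\,c>0$.

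I do not expect a serious obstacle: the substance is the frame reformulation, after which everything is elementary. The one point demanding care is the placement of the cutoff: the transition of $\chi$ from $1$ to $0$ must take place inside the collars $[0,\eps]$, $[1-\eps,1]$, since it is exactly the identity $\mu\,(\chi\tilde a)\equiv 0$ that makes the first equation in~\eqref{eq:g} hold. The hypothesis $c_-=c_+$ is used precisely where it is needed --- it permits the $iu$-component $b$ of $g$ to be kept constant; if $c_-\ne c_+$ one would instead need $\int\mu a\ne 0$, i.e.~$h$ to genuinely rotate somewhere in the middle, which cannot be arranged for an arbitrary immersion, thereby recovering the necessity already observed above for constant-slope $h$.
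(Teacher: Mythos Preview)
Your proof is correct and leads to exactly the same function as the paper's: on $[\eps,1-\eps]$ you get $g=ic\,h'/|h'|$, which the paper simply writes down as the ``special solution''~\eqref{eq:g-special}, and on the collars both you and the paper interpolate linearly between $\bar g$ and this special solution using a cutoff (your $\chi$ is the paper's $1-\rho$). The only difference is presentational: the paper verifies~\eqref{eq:g} for $ic\,h'/|h'|$ by direct inspection and then notes that convex combinations of stabilizers with the same constant-slope $k_\pm$ are again stabilizers, whereas you recast~\eqref{eq:g} in the moving frame $(u,iu)$ as the scalar system $b'+\mu a=0$, $b>0$, from which the choice $b\equiv c$, $a=\chi\tilde a$ is forced --- a nice conceptual derivation that also makes the role of $c_-=c_+$ transparent, but not a different argument.
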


\begin{proof}
Note that a special solution to conditions~\eqref{eq:g} is given by
the formula 
\begin{equation}\label{eq:g-special}
   g(r):=ich'(r)/|h'(r)| 
\end{equation}
for some constant $c>0$. In other words, each immersion $h$ can be
stabilized by $g:I\to\C$ via~\eqref{eq:g-special}. To apply this, 
pick a function $\rho:[0,1]\to[0,1]$ which equals $0$ on
$[0,\delta]\cup[1-\delta,1]$ and $1$ on $[\eps,1-\eps]$. With 
$c:=c_-=c_+$, we define
$g$ by $(1-\rho)\bar g+\rho c i\bar h'/|\bar h'|$ on
$[0,\eps]\cup[1-\eps,1]$ and by~\eqref{eq:g-special} on
$[\eps,1-\eps]$. 
\end{proof}

\begin{remark}
The 1-form $\lambda_g$ corresponding to $g$ defined by
formula~\eqref{eq:g-special} is the Wadsley form associated to the
(suitably scaled) flat metric on $I\times T^2$.  
\end{remark}

The following corollary spells out the special case $\bar g=\bar h$. 

\begin{corollary}\label{cor:stablin}
Fix $0<\delta<\eps$, $c>0$ and unit vectors $k_\pm\in \C$. Suppose
that $\bar h:[0,\eps]\cup[1-\eps,1]\to\C$ satisfies $\bar h'/|\bar
h'|\equiv k_-$ and $\la\bar h,ik_-\ra\equiv c$ on $[0,\eps]$, and $\bar
h'/|\bar  h'|\equiv k_+$ and $\la\bar  h,ik_+\ra\equiv c$ on
$[1-\eps,1]$. 
Then there exists a smooth map that assigns to
every immersion $h:[0,1]\to\C$ with $h=\bar h$ on
$[0,\eps]\cup[1-\eps,1]$ a function $g:[0,1]\to\C$
satisfying~\eqref{eq:g} and $g=\bar h$ on
$[0,\delta]\cup[1-\delta,1]$. 
\end{corollary}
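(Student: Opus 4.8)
The plan is to deduce Corollary~\ref{cor:stablin} directly from Lemma~\ref{lem:stablin} by specializing the data $\bar g:=\bar h$ on $[0,\eps]\cup[1-\eps,1]$. First I would check that this substitution is legitimate: the hypotheses of Corollary~\ref{cor:stablin} say $\bar h'/|\bar h'|\equiv k_\mp$ near the two ends and $\langle\bar h,ik_\mp\rangle\equiv c$ there, which is precisely the pair of conditions $\la\bar g,ik_\pm\ra\equiv c_\pm$ with $\bar g=\bar h$ and $c_-=c_+=c$. Since $c_-=c_+$, Lemma~\ref{lem:stablin} applies verbatim and produces a continuous assignment $h\mapsto g$ with $g$ satisfying~\eqref{eq:g} and $g=\bar g=\bar h$ on $[0,\delta]\cup[1-\delta,1]$, which is exactly the claim.

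The one point that needs a word is the upgrade from ``continuous'' in Lemma~\ref{lem:stablin} to ``smooth'' in the corollary. I would address this by inspecting the explicit formula used in the proof of the lemma: there $g$ is given by $(1-\rho)\bar g+\rho\, c\,i\bar h'/|\bar h'|$ on the two end intervals and by the closed-form expression~\eqref{eq:g-special}, $g(r)=ich'(r)/|h'(r)|$, on $[\eps,1-\eps]$, with $\rho$ a fixed smooth cutoff. With $\bar g=\bar h$ this is a formula for $g$ depending on $h$ only through $h'$ and $|h'|$ (and the fixed data $\bar h,\rho,c,k_\pm$), hence the assignment $h\mapsto g$ is not merely continuous but smooth as a map between the relevant spaces of immersions and functions; in fact, when $h$ agrees with $\bar h$ near the ends the end pieces are constant in $h$, so only the middle-interval piece varies and it does so smoothly in $h$. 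This is why the corollary can assert ``smooth map'' where the lemma only claimed ``continuous map''.

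I do not anticipate a genuine obstacle here; the corollary is essentially a restatement of the lemma with a convenient normalization, and the only care required is to (i) match up the boundary data correctly so that the constants $c_\mp$ coincide, and (ii) note that the construction in the proof of Lemma~\ref{lem:stablin} is given by an explicit formula and hence depends smoothly, not just continuously, on $h$. If one wants to be completely self-contained one can simply rewrite that formula once more with $\bar g$ replaced by $\bar h$ and observe directly that it satisfies~\eqref{eq:g} (this is again the content of~\eqref{eq:g-special} together with the convexity remark preceding Lemma~\ref{lem:stablin}) and has the prescribed boundary values.
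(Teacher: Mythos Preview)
Your proposal is correct and matches the paper's own treatment exactly: the text simply states that the corollary ``spells out the special case $\bar g=\bar h$'' of Lemma~\ref{lem:stablin}, and your observation about the explicit formula yielding smoothness (not just continuity) in $h$ is the appropriate justification for the stronger word in the corollary.
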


One useful example to which the corollary applies is $\bar h(r)=\pm(r^2,1-r^2)$. 

\begin{remark}\label{rem:nonconst}
Moving the function $g$ on the line we can achieve that $g\equiv
const$ on some subinterval $J\subset I$. 
Then we can perturb $h$ slightly on $J$, so that $(h,g)$ still
satisfies \eqref{eq:g} and the slope $h'/|h'|$ is nonconstant on $J$.
\end{remark}  

{\bf Stabilization for nonconstant slope. }
We have seen that for constant slope there is an obstruction to
stabilizability. On the other hand, we will show that stabilization is
always possible if the slope function is nonconstant:

\begin{prop}\label{prop:t2inv-stab}
Let $h:[0,1]\to\C$ be an immersion such that $h'/|h'|$ is
not constant on $[\eps,1-\eps]$. Let $\bar
g:[0,\eps]\cup[1-\eps]\to\C$ be given such that $(h,\bar g)$
satisfies~\eqref{eq:g} on $[0,\eps]\cup[1-\eps,1]$. Then there exists
a function $g:[0,1]\to\C$ which agrees with $\bar g$ on
$[0,\eps]\cup[1-\eps,1]$ such that $(h,g)$ satisfies~\eqref{eq:g}. 
\end{prop}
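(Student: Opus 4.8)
The plan is to pass to the frame rotating with $h'$. Fix a smooth lift $\sigma\colon[0,1]\to\R$ of the slope function, so that $h'(r)=|h'(r)|\,e^{i\sigma(r)}$, and look for $g$ in the form
$$g(r)=\bigl(a(r)+ib(r)\bigr)\,e^{i\sigma(r)}$$
with real functions $a,b\colon[0,1]\to\R$; write also $\bar g=(\bar a+i\bar b)e^{i\sigma}$ with real $\bar a,\bar b$ on $[0,\eps]\cup[1-\eps,1]$. Since multiplication by $e^{i\sigma}$ is a rotation of $\R^2=\C$ and hence preserves $\langle\cdot,\cdot\rangle$, a one-line computation gives $\langle g,ih'\rangle=|h'|\,b$ and $\langle g',ih'\rangle=|h'|\,(b'+a\sigma')$. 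Thus condition~\eqref{eq:g} is equivalent to
$$b>0,\qquad b'=-a\,\sigma',$$
and the same two identities applied to $\bar g$ show $\bar b>0$ and $\bar b'=-\bar a\,\sigma'$ on the two boundary intervals. So, after fixing smooth extensions of $\bar a$ and $\bar b$ past the endpoints $\eps$ and $1-\eps$, it remains to produce a smooth extension $a$ of $\bar a$ to $[0,1]$ such that the function $b$ determined by $b(\eps)=\bar b(\eps)$ and $b'=-a\sigma'$ is everywhere positive, agrees with $\bar b$ to infinite order at $\eps$ and $1-\eps$, and satisfies $b(1-\eps)=\bar b(1-\eps)$; then $g:=(a+ib)e^{i\sigma}$ on $[\eps,1-\eps]$, glued to $\bar g$ outside, is the desired $1$-form. (The infinite-order matching of $b$ is automatic from that of $a$, by repeatedly differentiating $b'=-a\sigma'$ and $\bar b'=-\bar a\sigma'$.)

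To build $a$ on $[\eps,1-\eps]$ I use the hypothesis. Since $h'/|h'|$ is nonconstant on $[\eps,1-\eps]$, the lift $\sigma$ is nonconstant there, so $\sigma'$ does not vanish identically on $(\eps,1-\eps)$; choose a closed subinterval $[c,d]\subset(\eps,1-\eps)$ on which $\sigma'$ has no zeros. On a short collar $[\eps,\eps+\tau]$ let $a$ slide from $\bar a$ down to $0$ (apply a cutoff to the extension of $\bar a$), and put $a\equiv0$ on $[\eps+\tau,c]$. Since $|a|$ is bounded on the collar independently of $\tau$, choosing $\tau$ small makes $b$ stay within a small neighbourhood of $\bar b(\eps)$ on $[\eps,\eps+\tau]$, hence positive, and then $b\equiv B_0>0$ on $[\eps+\tau,c]$. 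Mirror this near $1-\eps$, so that $a\equiv0$ and $b\equiv B_1:=\bar b(1-\eps)>0$ on $[d,(1-\eps)-\tau']$. Finally, on $[c,d]$, where $\sigma'$ is nowhere zero, prescribe $b$ directly: pick any smooth $b\colon[c,d]\to(0,\infty)$ equal to $B_0$ near $c$, equal to $B_1$ near $d$, and taking values between $\min(B_0,B_1)$ and $\max(B_0,B_1)$; define $a:=-b'/\sigma'$ on $[c,d]$, which is smooth because $\sigma'\neq0$ there and which vanishes near $c$ and $d$, so it glues with the two pieces where $a\equiv0$. The resulting $a,b$ are smooth on $[\eps,1-\eps]$, $b$ is everywhere positive, $b(1-\eps)=\bar b(1-\eps)$, and everything matches $\bar a,\bar b$ to infinite order at the endpoints.

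The only real obstacle — and the sole place the hypothesis is used — is reconciling positivity of $b$ with the endpoint constraint $b(1-\eps)=\bar b(1-\eps)$, i.e.\ with $\int_\eps^{1-\eps}a\sigma'=\bar b(\eps)-\bar b(1-\eps)$. If the slope were constant on $[\eps,1-\eps]$ then $\sigma'\equiv0$ there, $b$ would be forced to be constant, and this constraint would be unsolvable unless $\bar b(\eps)=\bar b(1-\eps)$ — exactly the $c_-=c_+$ obstruction of Lemma~\ref{lem:stablin}. Nonconstancy of the slope provides the interior window $[c,d]$ on which $b\mapsto a=-b'/\sigma'$ is an unobstructed correspondence between positive profiles of $b$ (with flat ends) and admissible choices of $a$, and this is precisely what lets one steer $b$ from $B_0$ to $B_1$ while keeping it positive. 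Everything else — choosing cutoffs and extensions, and checking the infinite-order matching at $\eps$ and $1-\eps$ — is routine bookkeeping.
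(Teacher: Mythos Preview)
Your proof is correct, and the rotating-frame substitution $g=(a+ib)e^{i\sigma}$ is a genuinely cleaner route than the paper's. The paper establishes the proposition via two auxiliary results: Lemma~\ref{lem:stab-interpol}, which interpolates between two stabilizers of the same $h$ on a short interval where $k'\neq 0$ by a delicate piecewise-constant construction, and Lemma~\ref{lem:g-pert}, a perturbation argument showing surjectivity of the map $\sigma\mapsto\int\sigma h'$ and used to repair the endpoints after smoothing. Both are then applied to reduce to the special solution $g=ic\,h'/|h'|$ (which, in your coordinates, is exactly the intermediate state $a\equiv 0$, $b\equiv c$ that you also pass through). Your reduction to the scalar system $b>0$, $b'=-a\sigma'$ makes the steering on $[c,d]$ a one-line ODE argument and bypasses both lemmas entirely. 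What the paper's machinery buys in return is that Lemma~\ref{lem:g-pert} works smoothly in families, which is exactly what is needed for Proposition~\ref{prop:stabhom} and Corollary~\ref{cor:class}; your construction would need a separate (though not difficult) parametric upgrade to recover those.

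One small slip worth flagging: you set $B_1:=\bar b(1-\eps)$, but the value you actually need $b$ to hit at $d$ is $\bar b(1-\eps)+\int_{(1-\eps)-\tau'}^{1-\eps}a\sigma'$, since $a$ is not identically zero on the right collar. This is only approximately $\bar b(1-\eps)$. The fix is immediate --- for small $\tau'$ the true target is still positive and the same monotone interpolation of $b$ on $[c,d]$ works --- but the definition as written is not quite what you use.
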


The proof is based on two lemmata. 
The first lemma gives a criterion when a $T^2$-invariant
Hamiltonian perturbation of a $T^2$-invariant SHS is again
stabilizable. It will play a crucial role in
Section~\ref{sec:structure}. 

\begin{lemma}\label{lem:g-pert}
(a) Let $\bar h,\bar g:[0,1]\to\C$ satisfy~\eqref{eq:g} and suppose that
$\bar h'/|\bar h'|$ is not constant on $[\eps,1-\eps]$. Then for every
$h:[0,1]\to\C$ sufficiently $C^1$-close to $\bar h$ and
$g_0:[0,\eps]\cup[1-\eps,1]\to\C$ sufficiently $C^1$-close to
$\bar g|_{[0,\eps]\cup[1-\eps,1]}$ such that $(h,g_0)$
satisfies~\eqref{eq:g} on $[0,\eps]\cup[1-\eps,1]$ 
there exists $g:[0,1]\to\C$ which is $C^1$-close to $\bar g$ and
agrees with $g_0$ on $[0,\eps]\cup[1-\eps,1]$ such that $(h,g)$
satisfies~\eqref{eq:g}. 
Moreover, for fixed $\bar g$ the assignment $(h,g_0)\mapsto g$ works
smoothly in families. 

(b) Let $C^1$-small $\xi:[0,1]\to \C$ be given. Set $g_0=\bar g$ and
consider the family $h_t=\bar h+t\xi$, $t\in[0,1]$. Then for the
corresponding functions $g_t$ from (a) we have an estimate $|\dot
g_t|_{C^1}\leq C|\xi|_{C^1}$, for some constant $C>0$ and all
$t\in[0,1]$.  
\end{lemma}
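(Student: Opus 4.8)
The plan is to first reformulate~\eqref{eq:g}: since $h$ is an immersion, $h'$ never vanishes, and the condition $\langle g',ih'\rangle=0$ says exactly that $a:=g'/h'$ is a \emph{real}-valued function; together with $g(r)=g(0)+\int_0^r a\,h'$ this shows a solution $g$ is determined by the pair $\bigl(g(0),a\bigr)$, the second condition in~\eqref{eq:g} being $\langle g,ih'\rangle>0$. The requirement $g=g_0$ on $[0,\eps]\cup[1-\eps,1]$ then forces $g$ and forces $a=g_0'/h'$ (a real function, since $(h,g_0)$ satisfies~\eqref{eq:g} there) on those intervals, so the only remaining freedom is the choice of $a$ on $[\eps,1-\eps]$: it must match the $\infty$-jets of $g_0'/h'$ at $\eps$ and $1-\eps$, it must make $g$ ``close up'', i.e.
$$
   \int_\eps^{1-\eps}a\,h' \;=\; g_0(1-\eps)-g_0(\eps)\;=:\;v(g_0),
$$
and the resulting $g$ must keep $\langle g,ih'\rangle$ positive. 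This is exactly a finite-codimension, boundary-constrained version of Moser's trick, and the constant-slope discussion preceding Lemma~\ref{lem:stablin} already shows why the hypothesis is needed.

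The key step is the following linear observation, which is the only place the nonconstant-slope assumption enters. For an immersion $h$, the linear map $\delta a\mapsto\int_\eps^{1-\eps}\delta a\cdot h'\in\C$, defined on real functions $\delta a$ compactly supported in $(\eps,1-\eps)$, is surjective if and only if $h'/|h'|$ is nonconstant on $[\eps,1-\eps]$: if $h'/|h'|\equiv k$ the image is the line $\R k$, whereas if $h'(r_1),h'(r_2)$ are $\R$-linearly independent for some $r_1,r_2$ in the open interval $(\eps,1-\eps)$ --- which happens precisely when the slope is nonconstant (the image of a subinterval is a connected arc) --- then narrow bumps $\delta a_1,\delta a_2$ centred at $r_1,r_2$ already have $\R$-independent integrals against $h'$. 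Since $\bar h'/|\bar h'|$ is nonconstant on $[\eps,1-\eps]$, I fix two such bumps $\delta a_1,\delta a_2$ once and for all so that $(t_1,t_2)\mapsto t_1\!\int\delta a_1\bar h'+t_2\!\int\delta a_2\bar h'$ is an isomorphism $\R^2\to\C$; then the analogous map for $h$ is an isomorphism with uniformly bounded inverse for all $h$ that are $C^1$-close to $\bar h$.

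Next I would fix a continuous linear extension operator sending a real function on $[0,\eps]\cup[1-\eps,1]$ to a real function $a_0^{\mathrm{ext}}$ on $[0,1]$ with the same $\infty$-jets at $\eps,1-\eps$, normalised so that $g_0'/h'$ is sent to $\bar a:=\bar g'/\bar h'$ at the base point $(\bar h,\bar g)$, and seek $a=a_0^{\mathrm{ext}}+t_1\delta a_1+t_2\delta a_2$, solving the genuinely $2$-dimensional linear equation
$$
   t_1\!\int_\eps^{1-\eps}\delta a_1\,h'+t_2\!\int_\eps^{1-\eps}\delta a_2\,h'
   \;=\; v(g_0)-\int_\eps^{1-\eps}a_0^{\mathrm{ext}}\,h'\;=:\;E(h,g_0)
$$
for $(t_1,t_2)$, which is possible by the previous paragraph. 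At the base point $E(\bar h,\bar g)=v(\bar g)-\int_\eps^{1-\eps}\bar a\,\bar h'=0$, so for $(h,g_0)$ close to $(\bar h,\bar g)$ the error $E(h,g_0)$, the coefficients $(t_1,t_2)$, hence $a$, and hence the function $g$ defined by $g:=g_0$ on $[0,\eps]\cup[1-\eps,1]$ and $g(r):=g_0(\eps)+\int_\eps^r a\,h'$ on $[\eps,1-\eps]$, are all $C^1$-close to the base-point data. By construction $g$ is smooth (jets match at $\eps,1-\eps$), equals $g_0$ near $\partial[0,1]$, and satisfies the first equation of~\eqref{eq:g}; the positivity $\langle g,ih'\rangle>0$ holds on $[0,\eps]\cup[1-\eps,1]$ by hypothesis on $(h,g_0)$ and on the compact interval $[\eps,1-\eps]$ because $\langle\bar g,i\bar h'\rangle\ge c_0>0$ there and $(g,h)$ is $C^0$-close to $(\bar g,\bar h)$. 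Since every step --- applying the fixed extension operator, inverting the smoothly varying $2\times2$ matrix, integrating --- is smooth in its inputs, the assignment $(h,g_0)\mapsto g$ works smoothly in families, proving (a).

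For (b) I take $g_0=\bar g$, so $g_t\equiv\bar g$ and in particular $\dot g_t\equiv0$ on $[0,\eps]\cup[1-\eps,1]$ (this uses the hypothesis, implicit in invoking (a), that $(h_t,\bar g)$ satisfies~\eqref{eq:g} there), and differentiate the construction along $h_t=\bar h+t\xi$, so $\dot h_t=\xi$. Linearity of the extension operator gives $|\dot a_{0,t}^{\mathrm{ext}}|_{C^1}\le C|\xi|_{C^1}$; Lipschitz dependence of $E$ on $h$ together with $E(\bar h,\bar g)=0$ gives $|(t_1,t_2)|\le C|\xi|_{C^1}$, and differentiating the $2\times2$ system, using the uniformly bounded inverse, gives $|(\dot t_1,\dot t_2)|\le C|\xi|_{C^1}$, whence $|\dot a_t|_{C^1}\le C|\xi|_{C^1}$; then $\dot g_t'=\dot a_t\,h_t'+a_t\,\xi'$ and $\dot g_t(r)=\int_\eps^r(\dot a_t\,h_t'+a_t\,\xi')$ on $[\eps,1-\eps]$ yield $|\dot g_t|_{C^1}\le C|\xi|_{C^1}$, uniformly for $t\in[0,1]$ since $|h_t-\bar h|_{C^1}\le|\xi|_{C^1}$. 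The main obstacle is really just the surjectivity statement of the second paragraph, where the nonconstant-slope hypothesis is used; everything else is the bookkeeping needed to make the construction uniform in $h$ near $\bar h$ so that staying in the positive cone, the smooth-family refinement, and the linear estimate all come out.
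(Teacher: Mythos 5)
Your proof is correct and follows essentially the same route as the paper: reduce the first condition of~\eqref{eq:g} to finding a real proportionality function $a=g'/h'$ extending the boundary data, observe that the linear map $\sigma\mapsto\int\sigma h'$ is surjective onto $\C$ precisely because the slope is nonconstant, and invert it with a uniformly bounded right inverse (your explicit two bump functions are exactly the paper's abstract two-dimensional complement of the kernel), with positivity and the estimate in (b) following by the same closeness and differentiation arguments. The only cosmetic differences are your explicit jet-matching at $\eps,1-\eps$ and integrating from $\eps$ rather than $0$.
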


\begin{proof}
(a) By the first
condition in~\eqref{eq:g} we have $\bar g'(r)=\bar\rho(r)\bar h'(r)$ for a
function $\bar\rho:[0,1]\to\R$, and $g_0'(r)=\rho_0(r)h'(r)$ for a
function $\rho_0:[0,\eps]\cup[1-\eps,1]\to\R$ which is $C^0$-close to
$\bar\rho|_{[0,\eps]\cup[1-\eps,1]}$. Let us extend $\rho_0$ to a
function $\rho_0:[0,1]\to\R$ which is $C^0$-close to
$\bar\rho$. We look for $g$ satisfying
$g'(r)=\rho(r)h'(r)$ for a function $\rho:[0,1]\to\R$ which agrees
with $\rho_0$ on $[0,\eps]\cup[1-\eps,1]$, so that the
first condition in~\eqref{eq:g} holds. Given such $\rho$ we set
$$
   g(r) := g_0(0) + \int_0^r\rho(s)h'(s)ds. 
$$
This agrees with $g_0$ on $[0,\eps]$, and it agrees with $g_0$ on
$[1-\eps,\eps]$ iff
$$
   \int_0^1\rho(s)h'(s)ds = g_0(1)-g_0(0). 
$$
Denote by $V$ the space of smooth functions $\sigma:[0,1]\to\R$ with
support in $[\eps,1-\eps]$, equipped with the $C^0$ norm. Writing
$\rho=\rho_0+\sigma$, the preceding condition is equivalent to 
$$
   \delta:=g_0(1)-g_0(0)-\int_0^1h'(s)\rho_0(s)ds
$$
being in the image of the linear map 
$$
   L_{h'}:V\to\C,\qquad \sigma\mapsto\int_0^1\sigma(s)h'(s)ds. 
$$
Now a unit vector $v\in\C$ is orthogonal to the image of $L_{h'}$ iff $\la
k'(s),v\ra=0$ for all $s\in[\eps,1-\eps]$, which can only happen if
the slope $h'/|h'|$ is
constant on $[\eps,1-\eps]$. But this is not the case since by
assumption the slope $\bar h'/|\bar h'|$ is not constant on
$[\eps,1-\eps]$ and $h'$ is $C^0$-close to $\bar h'$. Thus $L_{h'}$ is
surjective. Closer inspection shows
that we find a right inverse for $L_{h'}$  whose norm is uniformly bounded
for $h'$ in a $C^0$-neighbourhood of $\bar h'$. Indeed, let $K\subset
V$ be the kernel of the operator $L_{\bar h'}$ and $T$ be a
$2$-dimensional algebraic complement, so $L_{\bar h'}|_T:T\to\C$ is an
isomorphism. Since the map $h'\mapsto L_{h'}$ is  continuous with respect
to the operator norm, $L_{h'}|_T$ is invertible with uniformly bounded
inverses for $h'$ in a $C^0$-neighbourhood of $\bar h'$. Thus
$(L_{h'}|_T)^{-1}:\C\to T\into V$ are the required right inverses of
$L_{h'}$.  

Note that, since the pair $(g_0,\rho_0)$ is $C^0$-close to the pair $(\bar g,\bar\rho)$,
the complex number $\delta$ is small and hence we find $g$ which is
$C^1$-close to $\bar g$ such that the first condition in~\eqref{eq:g}
holds. The second condition in~\eqref{eq:g} now follows from
$C^1$-closeness. 

For (b) note that $g_t$ is defined by 
$$
   g_t(r) := \bar g(0) + \int_0^r\rho_t(s)h_t'(s)ds, \qquad
   h_t(s)=\bar h(s)+t\xi(s),\qquad 
   \rho_t(s)=\rho_0(s)+\sigma_t(s). 
$$
Thus
$$
   \dot g_t(r) :=
   \int_0^r[\dot\sigma_t(s)h_t'(s)+\rho_t(s)\xi'(s)]ds. 
$$
Since $h_t'$ is $C^0$-bounded and $\xi'$ is $C^0$-small, for
$C^1$-smallness of $\dot g_t$ it remains to show $C^0$-smallness of
$\dot\sigma_t$. 

Note that $\sigma_t$ is defined by 
$$
   L_{h_t'}(\sigma_t) = L_{\bar h'}(\sigma_t)+tL_{\xi'}(\sigma_t) =
   \delta_t
$$
with 
$$
   \delta_t = \bar g(1)-\bar g(0)-\int_0^1(\bar
   h+t\xi)'(s)\rho_0(s)ds, \qquad
   L_{\xi'}(\sigma_t) = \int_0^1\sigma_t(s)\xi'(s)ds.  
$$
It follows that
$$
   L_{h_t'}(\dot\sigma_t) + L_{\xi'}(\sigma_t) = \dot\delta_t =
   -\int_0^1\xi'(s)\rho_0(s)ds.
$$
Since $\xi'$ is $C^0$-small, we see that $\dot\delta_t$ is small (and
independent of $t$). Since the operators $L_{h_t'}$ have uniformly
bounded right inverses, it follows that 
$$
   \dot\sigma_t = (L_{h_t'})^{-1}\Bigl(\dot\delta_t -
   L_{\xi'}(\sigma_t)\Bigr)  
$$
is uniformly $C^0$-small for all $t\in[0,1]$. This finishes the proof
of Lemma~\ref{lem:g-pert}. 
\end{proof}

\begin{lemma}\label{lem:stab-interpol}
Let $h,g_0,g_1:[0,1]\to\C$ be given such that the pairs $(h,g_0)$ and
$(h,g_1)$ both satisfy~\eqref{eq:g}. Suppose that
$h'/|h'|$ is not constant. Then there exists a function
$g:[0,1]\to\C$ which agrees with $g_0$ near $0$ and with $g_1$ near
$1$ such that $(h,g)$ satisfies~\eqref{eq:g}. 
\end{lemma}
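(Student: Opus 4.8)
The plan is to pass to a moving frame adapted to $h$, which decouples the two conditions in~\eqref{eq:g}. Since $h$ is an immersion, $h'$ is nowhere zero on $[0,1]$, so, using the identification $\R^2\cong\C$ and simple connectivity of $[0,1]$, I would write $h'(r)=|h'(r)|e^{i\phi(r)}$ for a smooth function $\phi\colon[0,1]\to\R$. Every map $g\colon[0,1]\to\C$ then decomposes uniquely as $g=p\,e^{i\phi}+q\,ie^{i\phi}$ with smooth real functions $p=\langle g,e^{i\phi}\rangle$, $q=\langle g,ie^{i\phi}\rangle$. A short computation (differentiating $e^{i\phi}$ and collecting the $e^{i\phi}$- and $ie^{i\phi}$-components) gives $\langle g,ih'\rangle=|h'|\,q$ and $\langle g',ih'\rangle=|h'|\,(q'+p\phi')$, so $(h,g)$ satisfies~\eqref{eq:g} exactly when $q>0$ and $q'=-p\phi'$ on $[0,1]$. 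Writing $g_0\leftrightarrow(p_0,q_0)$ and $g_1\leftrightarrow(p_1,q_1)$ in this frame, the hypotheses become $q_i>0$ and $q_i'=-p_i\phi'$, and the task is to produce $(p,q)$ with $q>0$, $q'=-p\phi'$, equal to $(p_0,q_0)$ near $0$ and to $(p_1,q_1)$ near $1$.

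The key observation is that where $\phi'\neq0$ one may prescribe $q$ freely and then recover $p=-q'/\phi'$, whereas where $\phi'=0$ the relation forces $q'=0$, so $q$ is rigid — this rigidity is precisely the obstruction $c_0=c_1$ encountered for constant slope after Lemma~\ref{lem:stablin}. Since $h'/|h'|$ is not constant, $\phi$ is non-constant, hence non-constant on $(0,1)$ by continuity, so there is a closed interval $[a,b]\subset(0,1)$ on which $\phi'$ is nowhere zero. I would keep $(p,q)=(p_0,q_0)$ on $[0,a]$ and $(p,q)=(p_1,q_1)$ on $[b,1]$, and interpolate only on $[a,b]$. Concretely, pick a cutoff $\beta\colon[0,1]\to[0,1]$ with $\beta\equiv0$ on $[0,a]$ and $\beta\equiv1$ on $[b,1]$, and set $q:=(1-\beta)q_0+\beta q_1$; this is smooth, is a convex combination of positive functions so $q>0$ everywhere, and equals $q_0$ near $0$ and $q_1$ near $1$. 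Then define $p:=p_0$ on $[0,a]$, $p:=-q'/\phi'$ on $[a,b]$ (smooth since $\phi'\neq0$ there), and $p:=p_1$ on $[b,1]$; the pieces match smoothly since near $a$ we have $q=q_0$, whence $-q'/\phi'=-q_0'/\phi'=p_0$, and likewise near $b$. By construction $q'=-p\phi'$ holds throughout $[0,1]$, and $g:=p\,e^{i\phi}+q\,ie^{i\phi}$ is the desired function: it satisfies~\eqref{eq:g}, equals $g_0$ on $[0,a]$, and equals $g_1$ on $[b,1]$.

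The only genuine subtlety is the positivity requirement $\langle g,ih'\rangle>0$: a naive convex interpolation $g=(1-\beta)g_0+\beta g_1$ would satisfy it automatically but would violate $g'\parallel h'$, since differentiating produces a term proportional to $\beta'(g_1-g_0)$ transverse to $h'$. Passing to the moving frame is exactly what confines the positivity to the single component $q$, so that it survives convex interpolation, while the differential constraint is absorbed into $p=-q'/\phi'$, solvable precisely on the region $[a,b]$ whose existence is guaranteed by the non-constancy hypothesis. I expect the only work to be the verification of the two frame identities above and the routine check that $p$ and $g$ glue smoothly at $a$ and $b$.
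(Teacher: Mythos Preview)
Your argument is correct and genuinely different from the paper's proof. Both proofs begin the same way: locate a subinterval $[a,b]\subset(0,1)$ on which the slope has nonzero derivative (your $\phi'\neq 0$), keep $g=g_0$ to the left and $g=g_1$ to the right, and do all the work on $[a,b]$. The difference is in how the interpolation on $[a,b]$ is carried out.

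The paper writes $g'=\rho\,h'$ for a scalar density $\rho$ and builds $g$ by integration, first with a piecewise constant $\rho$ concentrated near the endpoints and then smoothed; it then shows $g(1)$ lands close to $g_1(1)$ and invokes the perturbation Lemma~\ref{lem:g-pert} (surjectivity of $\sigma\mapsto\int\sigma h'$ when the slope is nonconstant) to correct the residual mismatch. Your moving-frame decomposition $g=p\,e^{i\phi}+q\,ie^{i\phi}$ is cleaner here: it turns the first condition in~\eqref{eq:g} into $q'=-p\phi'$, which on $\{\phi'\neq 0\}$ can be solved for $p$ once $q$ is prescribed, and confines the positivity to the single scalar $q$, so convex interpolation of $q_0,q_1$ preserves it automatically. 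You thereby avoid the approximation step and the appeal to Lemma~\ref{lem:g-pert} entirely. The paper's route, on the other hand, is aligned with the machinery it has already set up (Lemma~\ref{lem:g-pert} is needed elsewhere, e.g.\ for Proposition~\ref{prop:stabhom}), so reusing it here costs nothing. One small point: make explicit that your cutoff $\beta$ vanishes on a neighbourhood $[0,a+\delta]$ (not just on $[0,a]$) and equals $1$ on $[b-\delta,1]$, so that the gluing of $p$ at $a$ and $b$ is indeed $C^\infty$; you essentially say this with ``near $a$'', but it is worth being precise.
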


\begin{proof}
By assumption, there exists a point $p\in(0,1)$ at which the slope
function $k=h'/|h'|$ has nonzero derivative. We pick a small interval
$I\subset(0,1)$ on which $k'\neq 0$, so the unit vectors $k_0=k(0)$
and $k_1=k(1)$ are linearly independent and close to each other, and
rescale $I$ back to $[0,1]$. We set $c_j=\la g_j(j),ik_j\ra$ for $j=0,1$.
Linear independence of $k_0$ and $k_1$ implies that
there exists a unique pair $(s_0,s_1)\in \R^2$ such that 
\begin{equation}\label{eq:intersect}
  g_0(0)-g_1(1)=s_1k_1-s_0k_0
\end{equation}
We pick a small $\eps>0$ and define the following piece-wise constant function $\sigma$ 
on $[0,1]$: on $[0,\eps)$ we set $\sigma=\eps^{-1}s_0$, on $[\eps,1-\eps]$ we set $\sigma=0$ 
and finally on $(1-\eps,1]$ we set  $\sigma=-\eps^{-1}s_1$. We set 
$$
  g(r):=g_0(0)+\int_0^r\sigma(s)k(s)ds.
$$  
We claim the following:
\begin{itemize}
\item [(i)] $g(1)-g_1(1)$ is small and 
\item [(ii)] the pair $(h,g)$ satisfies \eqref{eq:g}.
\end{itemize}
For (i) we introduce the averages
$A_0:=\eps^{-1}\int_0^{\eps}k(s)ds,\,
A_1:=\eps^{-1}\int_{1-\eps}^1k(s)ds$ 
and note that $\lim_{\eps\to 0}A_0=k_0$ and $\lim_{\eps\to 
  0}A_1=k_1$. Using the explicit formula for $g$ we write  
$$
  g(1)-g_1(1)=g_0(0)-g_1(1)+s_0A_0-s_1A_1.
$$
This together with the above convergence properties of $A_0$ and $A_1$
and equation~\eqref{eq:intersect} implies (i) for small enough
$\eps$. 

For (ii) we first consider $r\in [0,\eps]$ and write out the estimate 
\begin{align*}
   |\la g(r)-g_0(0),ik(r)\ra|
   &\le |s_0|\eps^{-1}\int_0^\eps |\la k(s),ik(r)\ra| ds \cr
   \le |s_0|max_{r_1,r_2\in [0,\eps]}|\la k(r_1),ik(r_2)\ra|.
\end{align*}
Note that the latter maximum tends to zero as $\eps\to 0$. This shows
that if $\eps$ is small enough, then $\la g(r),ik(r)\ra$ is close to
$\la g_0(0),ik(r)\ra$ and the latter is close to  
$\la g_0(0),ik_0\ra=c_0>0$. Altogether, $\la g(r),ik(r)\ra$ is close
to $c_0$ and thus in particular  
positive. Similarly for $r\in [1-\eps,1]$, we write
$g(r)-g(1)=\eps^{-1}s_1\int_r^1k(s)ds$, 
use that $g(1)$ is close to $g_1(1)$ and that $max_{r_1,r_2\in
  [1-\eps,1]}|\la k(r_1),ik(r_2)\ra|$ 
tends to zero as $\eps\to 0$ to get that $\la g(r),ik(r)\ra$ is close
to $c_1$ and thus in particular 
positive. For the intermediate region $r\in [\eps,1-\eps]$ recall that
$k'\neq 0$ and thus 
any $k(r)$ ``sits between'' $k(\eps)$ and $k(1-\eps)$. Since $g$ is
constant on $[\eps,1-\eps]$, the number $\la g(r),ik(r)\ra$ belongs to
the interval bounded by 
$\la g(\eps),ik(\eps)\ra$ and $\la g(1-\eps),ik(1-\eps)\ra$,
which implies the required positivity.

Now approximate $\sigma$ by an $L^1$-close smooth function (still
denoted by $\sigma$) and define $g$ as above. The new function $g$ is
smooth and still satisfies conditions (i) and (ii). Recall that $k'$
is nonconstant. Now condition (i) and the equality $g(0)=g_0(0)$ allow
us to use Lemma~\ref{lem:g-pert} to adjust $\sigma$ such that $g=g_0$
near $0$ and $g=g_1$ near $1$. 
\end{proof}

\begin{proof}[Proof of Proposition~\ref{prop:t2inv-stab}]
We extend $\bar g$ from $[0,\eps]$ to a larger interval $[0,r_0]$ such
that~\eqref{eq:g} still holds and the derivative of the slope function
$k=h'/|h'|$ is nonzero at $r_0$. (If $k$ is not constant near
$\eps$ this can be done for $r_0$ slightly larger than $\eps$, and
if $k$ is constant near $\eps$ we extend $\bar g$ satisfying
equation~\eqref{eq:g-lin} until $k$ becomes nonconstant). Similarly,  
we extend $\bar g$ from $[1-\eps,1]$ to a larger interval $[r_1,1]$ such
that~\eqref{eq:g} still holds and $k'(r_1)\neq 0$. Now we use
Lemma~\ref{lem:stab-interpol} to find $g:[0,r_0]\cup[r_1,1]\to\C$
satisfying~\eqref{eq:g} which agrees with $\bar g$ on
$[0,\eps]\cup[1-\eps,1]$ and with $ik$ near $r_0$ and $r_1$, so we can
extend it as $ik$ over $[r_0,r_1]$ to the desired function
$g:[0,1]\to\C$.  
\end{proof}

Proposition~\ref{prop:t2inv-stab} answers the stabilization question
for a single function $h:[0,1]\to\C$. The  following proposition
answers the question for homotopies. 

\begin{proposition}\label{prop:stabhom}
Let $h_t:[0,1]\to \C$, $t\in [a,b]$ be a 
homotopy of immersions such that for each $t$ the slope $h_t'/|h_t'|$
restricted to $[\eps,1-\eps]$ is nonconstant. Let $\bar
g_t:[0,\eps]\cup [1-\eps,1]$, $t\in [a,b]$ be a homotopy such that
$(h_t,\bar g_t)$ satisfies~\eqref{eq:g} on $[0,\eps]\cup [1-\eps,1]$
for all $t\in [a,b]$. Then there exists a homotopy $g_t$ which agrees
with $\bar g_t$ on $[0,\eps]\cup [1-\eps,1]$ such that $(h_t,g_t)$
satisfies~\eqref{eq:g} on $[0,1]$ for all $t\in [a,b]$.
\end{proposition}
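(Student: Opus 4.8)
The plan is to build $g_t$ by patching together local solutions over the parameter interval $[a,b]$, using the elementary observation that, for a \emph{fixed} immersion $h$, the set of functions $g:[0,1]\to\C$ satisfying~\eqref{eq:g} with prescribed values on $[0,\eps]\cup[1-\eps,1]$ is \emph{convex}: the first condition $\la g',ih'\ra=0$ is linear in $g$, the boundary condition is affine, and the open condition $\la g,ih'\ra>0$ is convex.

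First I would produce continuous local sections. Fix $t_*\in[a,b]$. By Proposition~\ref{prop:t2inv-stab}, applied to $h_{t_*}$ and $\bar g_{t_*}$, there is $g_{t_*}:[0,1]\to\C$ satisfying~\eqref{eq:g} and agreeing with $\bar g_{t_*}$ on $[0,\eps]\cup[1-\eps,1]$. Since $h_{t_*}'/|h_{t_*}'|$ is nonconstant on $[\eps,1-\eps]$, Lemma~\ref{lem:g-pert}(a) applies with $(\bar h,\bar g)=(h_{t_*},g_{t_*})$: for every $h$ that is $C^1$-close to $h_{t_*}$ and every $g_0$ on $[0,\eps]\cup[1-\eps,1]$ that is $C^1$-close to $g_{t_*}|_{[0,\eps]\cup[1-\eps,1]}$ and makes $(h,g_0)$ satisfy~\eqref{eq:g} there, one gets a solution $g$ of~\eqref{eq:g} on $[0,1]$ agreeing with $g_0$ on $[0,\eps]\cup[1-\eps,1]$, and this $g$ depends smoothly on $(h,g_0)$. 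Because $t\mapsto h_t$ and $t\mapsto\bar g_t$ are continuous families and $g_{t_*}=\bar g_{t_*}$ on $[0,\eps]\cup[1-\eps,1]$, there is a relatively open neighbourhood $U_{t_*}\subset[a,b]$ of $t_*$ on which $h_t$ is $C^1$-close to $h_{t_*}$ and $\bar g_t|_{[0,\eps]\cup[1-\eps,1]}$ is $C^1$-close to $g_{t_*}|_{[0,\eps]\cup[1-\eps,1]}$; feeding $h=h_t$, $g_0=\bar g_t|_{[0,\eps]\cup[1-\eps,1]}$ into Lemma~\ref{lem:g-pert}(a) yields a smooth family $t\mapsto g_t^{(t_*)}$, $t\in U_{t_*}$, of solutions of~\eqref{eq:g}, each agreeing with $\bar g_t$ on $[0,\eps]\cup[1-\eps,1]$.

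Finally I would glue. By compactness pick a finite subcover $U_{t_1},\dots,U_{t_m}$ of $[a,b]$ and a subordinate smooth partition of unity $\chi_1,\dots,\chi_m$ with $\operatorname{supp}\chi_j\subset U_{t_j}$, and set $g_t:=\sum_{j=1}^m\chi_j(t)\,g_t^{(t_j)}$ (each summand extended by $0$ outside $U_{t_j}$). This is a smooth family in $t$ and in $r$. By the convexity discussed above, $\la g_t',ih_t'\ra=\sum_j\chi_j(t)\la (g_t^{(t_j)})',ih_t'\ra=0$, while $\la g_t,ih_t'\ra=\sum_j\chi_j(t)\la g_t^{(t_j)},ih_t'\ra>0$ since the $\chi_j(t)\ge0$ sum to $1$ and each $\la g_t^{(t_j)},ih_t'\ra>0$; and on $[0,\eps]\cup[1-\eps,1]$ we get $g_t=\sum_j\chi_j(t)\bar g_t=\bar g_t$. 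Hence $(h_t,g_t)$ satisfies~\eqref{eq:g} for all $t\in[a,b]$, as required. The only substantive point is the construction of the continuous local sections, which is precisely what the ``works smoothly in families'' clause of Lemma~\ref{lem:g-pert}(a) supplies; everything else is the soft convexity/partition-of-unity packaging, and in particular the $C^1$-estimate of Lemma~\ref{lem:g-pert}(b) is not needed here.
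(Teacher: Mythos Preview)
Your proof is correct and follows essentially the same approach as the paper: produce local smooth sections via Proposition~\ref{prop:t2inv-stab} and Lemma~\ref{lem:g-pert}(a), then glue by a partition of unity using convexity of the conditions~\eqref{eq:g} in $g$. You make the convexity step more explicit than the paper does, and you correctly observe that part (b) of Lemma~\ref{lem:g-pert} is not needed here.
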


\begin{proof} 
We apply Proposition~\ref{prop:t2inv-stab} for each time $s\in [a,b]$
to get a (possibly discontinuous) family $\{\tilde g_s\}_{s\in [a,b]}$
stabilizing $h_s$ on $[0,1]$ (i.e.~$(h_s,\tilde g_s)$
satisfies~\eqref{eq:g}) and restricting to $[0,\eps]\cup [1-\eps,1]$
as $\bar g_s$. Lemma~\ref{lem:g-pert} applied with $\bar g:=\tilde
g_s$ shows that each $s\in [a,b]$ has an open neighbourhood $U\subset [a,b]$ and a
smooth family $g^U_t$, $t\in U$ such that $(h_t,g^U_t)$
satisfies~\eqref{eq:g} and $g^U_t|_{[0,\eps]\cup [1-\eps,1]}=\bar g_t$
for all $t\in U$. Finitely many such neighbourhoods $U_i$ cover
$[a,b]$. Let $\{\rho_i\}$ be a finite partition of unity subordinate
this covering and set $g_t:=\sum_i\rho_ig^{U_i}_t$. This family is
smooth as a function of $t\in [a,b]$. Moreover, for each $t$ it is a
finite sum of functions stabilizing $h_t$, thus the pair $(h_t,g_t)$
satisfies~\eqref{eq:g}. On $[0,\eps]\cup [1-\eps,1]$ we have
$g_t=\sum_i\rho_i\bar g_t=\bar g_t$. This completes the proof.
\end{proof}

\begin{corollary}\label{cor:class}
Let $h_0,\, h_1:[0,1]\to \C$ be two immersions with with $h_0=h_1$ on
$[0,\eps]\cup [1-\eps,1]$ and the same winding number. Let
$g_0:[0,\eps]\cup [1-\eps,1]\to\C$ be such that $(h_0,g_0)$ satisfies
\eqref{eq:g}. Then there exists a homotopy $(h_t,g_t)$, $t\in [0,1]$,
satisfying ~\eqref{eq:g} and fixed on $[0,\eps]\cup [1-\eps,1]$, such
that $h_t$ connects $h_0$ and $h_1$.
\end{corollary}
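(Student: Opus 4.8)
\emph{Proof proposal.} The plan is to first produce a regular homotopy of the underlying immersions, and then lift it to a homotopy of stabilizing data by feeding it into Proposition~\ref{prop:stabhom}. Write $A:=[0,\eps]\cup[1-\eps,1]$. Since $h_0$ and $h_1$ agree on $A$ and have the same winding number, the classification of immersions of the interval recalled above (the winding number being a complete invariant of regular homotopy with fixed boundary data) gives a smooth homotopy of immersions $h_t:[0,1]\to\C$, $t\in[0,1]$, with $h_t|_A=h_0|_A$ for all $t$, joining $h_0$ to $h_1$. Because $h_t$ is fixed on $A$ (so in particular the $1$-jet of $h_t$ at $\eps$ and at $1-\eps$ is that of $h_0$), and $h_t'/|h_t'|$ depends continuously on $(t,r)$, the winding number of the restriction $h_t|_{[\eps,1-\eps]}$ is independent of $t$; call it $W$.

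The one point that needs care is that Proposition~\ref{prop:stabhom} requires the slope $h_t'/|h_t'|$ to be nonconstant on $[\eps,1-\eps]$ for \emph{every} $t$. If $W\neq 0$ this is automatic, since a constant slope forces vanishing winding on the interval. If $W=0$, I would perturb the homotopy, keeping it fixed on $A$ and at $t=0,1$, so as to enforce the nonconstant slope condition along the whole family: the set of immersions (with the prescribed boundary data) whose slope is constant on $[\eps,1-\eps]$ is of infinite codimension — for any $\eps<r_1<r_2<1-\eps$ it lies in the codimension-one subset $\{\,h'(r_1)/|h'(r_1)|=h'(r_2)/|h'(r_2)|\,\}$ — so a generic path with fixed endpoints avoids it, and near any residual bad time $t_i\in(0,1)$ one adds a small $t$-localized wiggle supported in $(\eps,1-\eps)$ with zero average (leaving the immersion property and the fixed endpoint data undisturbed), which destroys the degeneracy. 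For this to keep $h_0$ and $h_1$ fixed I use that when $W=0$ they themselves may be assumed to have nonconstant slope on $[\eps,1-\eps]$; in the degenerate situation where the slope of $h_0$ or $h_1$ is constant there, one instead argues directly near that end using Lemmas~\ref{lem:stablin} and~\ref{lem:g-pert} (as is forced anyway by the requirement that the sought homotopy $(h_t,g_t)$ satisfy~\eqref{eq:g} on all of $[0,1]$ at $t=0$ and $t=1$).

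With this arranged, I would apply Proposition~\ref{prop:stabhom} to $\{h_t\}_{t\in[0,1]}$ with $\bar g_t:=g_0$ (constant in $t$): its hypotheses hold because each $h_t'/|h_t'|$ is nonconstant on $[\eps,1-\eps]$ and, since $h_t|_A=h_0|_A$, the pair $(h_t,g_0)$ coincides with $(h_0,g_0)$ on $A$ and hence satisfies~\eqref{eq:g} there. The proposition then produces a smooth family $g_t:[0,1]\to\C$ with $g_t|_A=g_0$ and $(h_t,g_t)$ satisfying~\eqref{eq:g} on $[0,1]$ for all $t$, so $(h_t,g_t)_{t\in[0,1]}$ is the desired homotopy. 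I expect the main obstacle to be the reduction to nonconstant slope throughout: making the perturbation that removes the constant-slope times compatible with the fixed boundary data and smooth in $t$, together with the degenerate case in which $h_0$ or $h_1$ has constant slope on $[\eps,1-\eps]$.
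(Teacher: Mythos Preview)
Your proposal is correct and follows essentially the same route as the paper's proof: construct the regular homotopy of immersions from the winding-number classification, arrange nonconstant slope on $[\eps,1-\eps]$ throughout using the infinite-codimension observation, and then invoke Proposition~\ref{prop:stabhom} with $\bar g_t\equiv g_0$. The only cosmetic difference is that the paper dispatches the degenerate constant-slope endpoint case by appealing to Remark~\ref{rem:nonconst} (which packages the move of $g$ to a constant followed by a small wiggle of $h$) rather than naming Lemmas~\ref{lem:stablin} and~\ref{lem:g-pert} directly, but this is the same mechanism.
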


\begin{proof}
Since the immersions $h_0,h_1$ have the same winding number they are
homotopic through immersions $h_t$, $i\in[0,1]$, fixed on
$[0,\eps]\cap[1-\eps,1]$. If $h_0$ has constant slope on
$[\eps,1-\eps]$ we use Remark~\ref{rem:nonconst} to make it
nonconstant, and similarly for $h_1$. Since the condition to have
constant slope on $[\eps,1-\eps]$ is of infinite codimension, we can
choose the homotopy of immersions $h_t$ to have nonconstant slope on
$[\eps,1-\eps]$ for all $i\in[0,1]$. Now the stabilizing family $g_t$
is provided by Proposition~\ref{prop:stabhom}. 
\end{proof}

\begin{remark}\label{rem:contractible}
Clearly, Proposition~\ref{prop:stabhom} remains true with the interval
$[a,b]$ replaced by any compact manifold with boundary. Hence the
proof of Corollary~\ref{cor:class} shows that the space of pairs
$h,g:[0,1]\to\C$ satisfying~\eqref{eq:g}, fixed on
$[0,\eps]\cup[1-\eps,1]$ and with fixed winding number, is weakly
contractible.   
\end{remark}

\subsection{$T^2$-invariant Hamiltonian structures on $T^3$ and
  $S^3$}\label{subsec:t3} 

To illustrate the techniques developed in Section~\ref{subsec:t2inv},
we now classify $T^2$-invariant Hamiltonian structures on $T^3$ and
$S^3$ up to $T^2$-invariant stable homotopy.

We begin with the 3-torus $T^3=\R^3/\Z^3$ with coordinates
$(r,\theta,\phi)$. We let $T^2$ 
act on $T^3$ via shift along $\theta$ and $\phi$ and consider
$T^2$-invariant Hamiltonian structures on  
$T^3$. Any $T^2$-invariant Hamiltonian structure $\om$ on $T^3$ can be
written as $\om=\om_h$, where $h:\R\to\C$ is an immersion with
periodic $h'$. Now $T^2$-invariant HS on $T^3$ are classified up to
$T^2$-invariant homotopy by the winding number $w(h)\in\Z$, i.e.~the degree
of the map $h'/|h'|:S^1\to S^1$. Since every homotopy $\om_{h_t}$ of
$T^2$-invariant HS on $T^3$ can be stabilized by $\lambda_{g_t}$ with $g_t$
obtained from $h_t$ by formula~\eqref{eq:g-special}, we have shown

\begin{cor}
Two $T^2$-invariant SHS on $T^3$ are connected by a $T^2$-invariant
stable homotopy if and only if they have the same winding number. 
\end{cor}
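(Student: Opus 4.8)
The plan is to prove both directions of the equivalence, with the forward (necessity) direction being essentially trivial and the reverse (sufficiency) direction being the substantive one. Let $(\om_{h_0},\lambda_{g_0})$ and $(\om_{h_1},\lambda_{g_1})$ be two $T^2$-invariant SHS on $T^3$, where $h_j:\R\to\C$ are immersions with periodic derivative $h_j'$ and $g_j:\R\to\C$ satisfy the periodicity and the conditions~\eqref{eq:g}. First I would treat necessity: if there is a $T^2$-invariant stable homotopy between them, then in particular the underlying $T^2$-invariant Hamiltonian structures $\om_{h_0}$ and $\om_{h_1}$ are homotopic through $T^2$-invariant HS on $T^3$. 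Such a homotopy is recorded by a path of immersions $h_t'/|h_t'|:S^1\to S^1$, and the winding number $w(h)=\deg(h'/|h'|)$ is a homotopy invariant of the circle map, hence $w(h_0)=w(h_1)$.

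For sufficiency, assume $w(h_0)=w(h_1)$. The key point is that the maps $h_j'/|h_j'|:S^1\to S^1$ have the same degree, so they are homotopic as maps $S^1\to S^1$; lifting through the derivative, I would produce a smooth family of immersions $h_t:\R\to\C$ with periodic derivative connecting $h_0$ to $h_1$ (periodicity of $h_t'$ is automatic once the homotopy of circle maps is chosen, and $h_t$ itself can be taken periodic since on $T^3$ we only need $\om_{h_t}=d\alpha_{h_t}$ to be well-defined, which depends only on $h_t'$). Then $\om_{h_t}$ is a smooth family of $T^2$-invariant Hamiltonian structures on $T^3$ connecting $\om_{h_0}$ to $\om_{h_1}$. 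One should check that $[\om_{h_t}]$ is constant in $H^2(T^3;\R)$ — indeed $\om_{h_t}=h_{1,t}'(r)\,dr\wedge d\theta + h_{2,t}'(r)\,dr\wedge d\phi$ and its periods depend only on $h_t(1)-h_t(0)$, which can be kept fixed along the homotopy — so this is a genuine homotopy of HS in the sense of Definition~\ref{homdef}, and moreover $\dot\om_{h_t}$ is exact as required.

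It remains to stabilize the homotopy $\om_{h_t}$. Here I would invoke the explicit formula~\eqref{eq:g-special}: for each $t$ set $g_t(r):=i\,c\,h_t'(r)/|h_t'(r)|$ for a fixed constant $c>0$. Since $h_t'$ is periodic so is $g_t$, hence $\lambda_{g_t}$ descends to $T^3$; and one checks directly that $(h_t,g_t)$ satisfies~\eqref{eq:g}, namely $\la g_t',ih_t'\ra=0$ (because $g_t'$ is a real multiple of $h_t'$, by differentiating $g_t = i c h_t'/|h_t'|$ one sees $g_t'$ lands in $\R h_t'$ — more carefully, $g_t = ic\,(h_t'/|h_t'|)$ has derivative $ic\,(d/dr)(h_t'/|h_t'|)$ which is $ic$ times a real multiple of $i h_t'/|h_t'|$, i.e.\ a real multiple of $h_t'$) and $\la g_t,ih_t'\ra = c|h_t'| > 0$. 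Thus $\lambda_{g_t}$ stabilizes $\om_{h_t}$ for every $t$, giving a $T^2$-invariant stable homotopy. This is exactly the computation already indicated in the paragraph preceding the corollary, so the only thing requiring a little care is the first step — producing the homotopy of immersions $h_t$ from the equality of winding numbers while keeping $h_t'$ periodic and $h_t(1)-h_t(0)$ fixed — and that is a routine regular-homotopy argument for curves in $\C\setminus\{0\}$ (the space of immersed loops in $\C$ with given winding number is connected). I expect that packaging of the homotopy of $h_t$ to be the only mild obstacle; everything else is immediate from~\eqref{eq:g-special}.
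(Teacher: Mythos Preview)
Your proof is correct and follows essentially the same approach as the paper's brief argument preceding the corollary: reduce to the classification of $T^2$-invariant HS on $T^3$ by winding number (via regular homotopy of the periodic loop $h'$), then stabilize any resulting homotopy $\om_{h_t}$ using formula~\eqref{eq:g-special}. One small point you leave implicit (as does the paper) is that the stabilizing form produced by~\eqref{eq:g-special} at $t=0,1$ need not coincide with the originally given $\lambda_{g_0},\lambda_{g_1}$, but this is repaired by a linear interpolation of stabilizing forms at each endpoint, which stays $T^2$-invariant and stabilizing since the fibre of stabilizing $1$-forms over a fixed $\om$ is convex.
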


Next we consider the 3-sphere 
$$
   S^3=\{(x_1,y_1,x_2,y_2)\in \mathbb{R}^4|x_1^2+y_1^2+x_2^2+y_2^2=1\}.
$$
We introduce the radial coordinate $r\in[0,1]$ and two angular coordinates
$\phi$ and $\theta$ on $S^3$ as follows:  
$$
   r:=(x_1^2+y_1^2)^{1/2},\,\,\,  \tan\theta=\frac{y_1}{x_1},\,\,\,
   \tan\phi=\frac{y_2}{x_2}.
$$  
Let $T^2$ act on $S^3$ by rotations in $(x_1,y_1)$ and $(x_2,y_2)$
planes, i.e.~by shifts along $\theta$ and $\phi$.  
The goal of this subsection is to classify $T^2$-invariant SHS up to
$T^2$-invariant stable homotopy.  

Consider first a $T^2$-invariant HS $\om$ on $S^3$. Its Reeb vector
field $R$ is tangent to the level sets $r=\const$, in particular $\{r=0\}$
and $\{r=1\}$ are closed Reeb orbits. Define two signs $s_0,s_1$ by
$s_0=+$ iff the orientations on $\{r=0\}$ induced by $R$ and $\p_\phi$
coincide, and 
$s_1=+$ iff the orientations on $\{r=1\}$ induced by $R$ and $\p_\theta$
coincide. Clearly, these signs remain constant during a homotopy of
$T^2$-invariant HS. 

Recall from Lemma~\ref{lem:t2inv} that $\om$ can be written on
$\{0<r<1\}$ as $\om=\om_h=h_1'(r)dr\wedge d\theta+h_2'(r)dr\wedge
d\phi$ for an immersion $h:(0,1)\to\C$ which is unique up to adding a
constant. Smoothness and nondegeneracy at $r=0$ imply that
$h_1'(r)=ar+O(r^2)$ near $r=0$ for some $a\neq 0$. Note that the sign
of $a$ is $s_0$. 
Thus we can $T^2$-invariantly homotope
$\om=\om_h$ until $h'(r)=(s_0r,0)$ near $r=0$, i.e.~$\om$ agrees with $s_0r\,dr\wedge d\theta$ near $r=0$. Note that the orientation preserving 
diffeomorphism of $S^3$ mapping $x_1\mapsto x_2,\, x_2\mapsto x_1,\, y_1\mapsto y_2$ and $y_2\mapsto y_1$ exchanges $\theta$ with $\phi$ and $r^2$ with $1-r^2$. In particular 
$rdr$ pulls back to $-rdr$. Thus, arguing symmetrically we can further $T^2$-invariantly homotope $\om=\om_h$ until it agrees with  
$-s_1r\,dr\wedge d\phi$ near $r=1$. After this
standardization, we define the winding number 
as in Section~\ref{subsec:t2inv}. Then two $T^2$-invariant HS are
$T^2$-invariantly homotopic iff they have the same signs and winding
number.  

Next consider a $T^2$-invariant SHS $(\om,\lambda)$ on $S^3$. 
After a $T^2$-invariant stable homotopy as in 
Remark~\ref{rem:nonconst}, we may assume that the slope function
$h'/|h'|$ is nonconstant. By definition of the signs $s_0,s_1$, the HS
$\om$ is stabilized by the 1-form $s_0d\phi$ near $r=0$ and by
$s_1d\theta$ near $r=1$. By Proposition~\ref{prop:t2inv-stab}, there
exists a $T^2$-invariant stabilizing 1-form $\tilde\lambda$ for $\om$
which agrees with $s_0d\phi$ near $r=0$ and with $s_1d\theta$ near
$r=1$. Fixing the stabilizing 1-form $\tilde\lambda$, we can now
homotope $\om$ near $r=0,1$ to $\tilde\om$ which agrees with
$s_0r\,dr\wedge d\theta$ near $r=0$ and with $-s_1r\,dr\wedge d\phi$
near $r=1$.

Finally, consider two $T^2$-invariant SHS $(\om_i,\lambda_i)$, $i=0,1$
on $S^3$ with the same signs $s_0,s_1$ and the same winding
number. After applying the stable homotopies in the previous
paragraph, we may assume that both $(\om_i,\lambda_i)$ agree with 
$(s_0r\,dr\wedge d\theta,s_0 d\phi)$ near $r=0$ and with
$(-s_1r\,dr\wedge d\phi,s_1d\theta)$ near $r=1$. By assumption, $\om_0$
and $\om_1$ have the same winding number. Hence, by
Corollary~\ref{cor:class}, $(\om_0,\lambda_0)$ and $(\om_1,\lambda_1)$
are connected by a $T^2$-invariant stable homotopy fixed near
$r=0,1$. So we have shown

\begin{cor}
Two $T^2$-invariant SHS on $S^3$ are connected by a $T^2$-invariant
stable homotopy if and only if they have the same signs and winding
number.  
\end{cor}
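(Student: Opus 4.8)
The plan is to prove both implications by unwinding the three normalizations carried out in the paragraphs immediately preceding the statement.

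For the \emph{only if} direction: a $T^2$-invariant stable homotopy is in particular a $T^2$-invariant homotopy of the underlying Hamiltonian structures, so the signs $s_0,s_1$ and, after the standardization near $r=0,1$ described above, the winding number are preserved, by the classification of $T^2$-invariant HS on $S^3$ established just before the statement. Concretely, the signs are locally constant because each is read off the germ of the Reeb field along the fixed circle $\{r=0\}$ resp.\ $\{r=1\}$ of the $T^2$-action, and once the defining immersion $h\colon[0,1]\to\C$ has been put in standard form near $\p[0,1]$ it varies by a regular homotopy rel $\p[0,1]$, along which $w(h)\in\Z$ cannot jump.

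For the \emph{if} direction I would assemble the three deformations sketched before the statement. Start with two $T^2$-invariant SHS $(\om_0,\lambda_0)$ and $(\om_1,\lambda_1)$ with equal signs and winding number. First, use Remark~\ref{rem:nonconst} to make each slope function $h_i'/|h_i'|$ nonconstant by a $T^2$-invariant stable homotopy. Second, apply Proposition~\ref{prop:t2inv-stab} to replace $\lambda_i$ by a $T^2$-invariant stabilizing $1$-form equal to $s_0\,d\phi$ near $r=0$ and $s_1\,d\theta$ near $r=1$, and then deform $\om_i$ near the two ends, with this new $\lambda_i$ held fixed, to the standard germs $s_0\,r\,dr\wedge d\theta$ and $-s_1\,r\,dr\wedge d\phi$. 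Third, the two pairs now agree near $r=0,1$ and $h_0,h_1$ have the same winding number, so Corollary~\ref{cor:class} supplies a $T^2$-invariant stable homotopy between them that is fixed near the ends. Composing the first and second deformations for $(\om_0,\lambda_0)$, then the homotopy from the third step, then the reverses of the second and first deformations for $(\om_1,\lambda_1)$ yields the desired connection.

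The point that will need care is the second step: when $\om_i$ is deformed near $r=0,1$ with $\lambda_i$ frozen, one must check the pair stays a SHS throughout, i.e.\ that $\ker\om_i\subset\ker d\lambda_i$ and $\lambda_i\wedge\om_i>0$ persist. This is precisely why Proposition~\ref{prop:t2inv-stab} is invoked first, to fix a stabilizing form of the correct standard shape near the ends: within the region where this $\lambda_i$ remains a valid (contact-type) stabilizing form, $\om_i$ may then be moved freely to its standard germ. Everything else is bookkeeping with winding numbers and the constructions of Section~\ref{subsec:t2inv}; no ideas beyond that subsection are required.
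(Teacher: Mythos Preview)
Your proposal is correct and follows essentially the same approach as the paper: the argument in the paper is in fact given in the paragraphs immediately preceding the corollary, and your three-step outline (make the slope nonconstant via Remark~\ref{rem:nonconst}, standardize the stabilizing form near $r=0,1$ via Proposition~\ref{prop:t2inv-stab} and then the germ of $\om$, and finally apply Corollary~\ref{cor:class}) reproduces that argument verbatim. The only small point you leave implicit is that passing from the original $\lambda_i$ to the new stabilizing form provided by Proposition~\ref{prop:t2inv-stab} is itself a $T^2$-invariant stable homotopy; this follows from convexity of the space of stabilizing $1$-forms for a fixed $\om$, so linear interpolation does the job.
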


\begin{remark}
Similar arguments (cf.~Remark~\ref{rem:contractible}) show
that the space of $T^2$-invariant SHS on $T^3$ (resp.~$S^3$) with
fixed winding number (resp.~signs and winding number) is weakly
contractible. 
\end{remark}

\subsection{Thickening a level set}

In this subsection we prove the following technical result which will
play a crucial role in the remainder of this paper.

\begin{prop}\label{prop:thick}
Let $(\om,\lambda)$ be a SHS on a closed $3$-manifold $M$ and set
$f:=d\lambda/\om$. Let $Z\subset\R$ be any set of Lebesgue measure
zero containing a value $a\in Z\cap\im(f)$. 
Then there exists a stabilizing form $\tilde\lambda$ for $\om$ such
that $\tilde f:=d\tilde\lambda/\om$ can be written as 
$\tilde f=\sigma\circ f$ for a function $\sigma:\R\to\R$ 
which is locally constant on a open
neighbourhood of $Z$ (and thus $\tilde f$ is locally constant on an
open neighbourhood of $f^{-1}(Z)$) and $\tilde f\equiv a$ on an
open neighbourhood of $f^{-1}(a)$.  
Moreover, for every $s\in[0,1)$ we can achieve that $\tilde\lambda$ is
$C^{1+s}$-close to $\lambda$. 
\end{prop}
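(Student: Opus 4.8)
I would look for $\tilde\lambda=\lambda+\mu$ with $\mu$ a small $1$-form. Since $d\tilde\lambda=f\om+d\mu$, the requirement $d\tilde\lambda=(\sigma\circ f)\om$ is equivalent to $d\mu=(\sigma\circ f-f)\,\om$; once this is arranged, $\ker\om\subset\ker d\tilde\lambda$ holds automatically ($d\tilde\lambda$ being a scalar multiple of $\om$) and $\tilde\lambda\wedge\om>0$ survives the smallness of $\mu$, so $(\om,\tilde\lambda)$ is again a SHS. Thus the problem reduces to choosing $\sigma$ and then exhibiting, by hand, a suitably small $\tilde\lambda$ with $d\tilde\lambda=(\sigma\circ f)\om$; building $\tilde\lambda$ directly disposes of the only possible obstruction, $[(\sigma\circ f)\om]=0$, for free.

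\textbf{Choosing $\sigma$.}
Enlarge $Z$ to $Z^{\ast}:=Z\cup\Crit(f)$, still Lebesgue-null by Sard. I would pick $\sigma\colon\R\to\R$ which is $C^{1+s}$-close to the identity, equals the constant $a$ on a neighbourhood of $a$, is locally constant on some open neighbourhood $O$ of $Z^{\ast}$ whose boundary avoids $\Crit(f)$, and whose derivative is supported on the compact set $K:=\R\setminus O$. Throwing $\Crit(f)$ into $Z^{\ast}$ guarantees $f^{-1}(K)\subset M_{\reg}:=\{df\neq0\}$, so the non-trivial part of the modification takes place inside integrable regions. This is also the source of the loss of smoothness: when $Z$ is, say, dense, $\sigma'$ lives on a nowhere dense set and cannot be continuous while $\sigma$ stays $C^{0}$-close to the identity, but $\sigma$ can be taken Lipschitz, which yields $\tilde\lambda$ that is $C^{1+s}$-close to $\lambda$ for every $s<1$ (not $C^{2}$-close, in general). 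Finally, on any component $M_{0}$ of $M$ on which $f\equiv c$ one has $c\,[\om|_{M_{0}}]=[d\lambda|_{M_{0}}]=0$, so $c=0$ unless $[\om|_{M_{0}}]=0$; I additionally require $\sigma(c)=c$ on such values, so that $d\tilde\lambda=(\sigma\circ f)\om$ is consistent on $M_{0}$.

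\textbf{The construction.}
Since $f^{-1}(K)$ is a compact subset of the open set $M_{\reg}$, only finitely many components of $M_{\reg}$ meet it, and each such component is an integrable region $\cong I\times T^{2}$. On each I apply Theorem~\ref{thm:integr}; as $f$ is constant on the invariant tori, a $C^{1}$-small diffeomorphism puts $(\om,\lambda)$ into the $T^{2}$-invariant normal form $\om=\om_{h}$, $\lambda=\lambda_{g}$ of Section~\ref{subsec:t2inv} with $g'=(f\circ r)\,h'$; reparametrising the $r$-interval by $f$ I may take $f$ as the first coordinate on a slab $J\times T^{2}$, $J=f(\text{slab})$. I then set $\tilde\lambda:=\lambda_{\tilde g}$ with $\tilde g'(t):=\sigma(t)\,h'(t)$, so that $d\tilde\lambda=(\sigma\circ f)\om$ exactly and, by~\eqref{eq:g}, $(\om,\tilde\lambda)$ is a SHS once $\tilde g$ is $C^{1}$-close to $g$. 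To glue the local modifications, $\tilde\lambda$ must match $\lambda$ (hence the neighbouring piece) near the slab ends, which forces the flux conditions $\int_{J}(\sigma(t)-t)\,h'(t)\,dt=0$, one $\C$-valued constraint per slab; these finitely many linear constraints on $\sigma$ are met using our freedom in choosing $\sigma$ on each $J$ (locally constant near $Z^{\ast}\cap J$, $C^{0}$-close to the identity, $\sigma'$ supported off $Z^{\ast}$, $h'$ nowhere zero), and Lemma~\ref{lem:g-pert}, applied in the nonconstant-slope case (arranged via Remark~\ref{rem:nonconst}), promotes an approximate boundary match to an exact one with control on the $C^{1}$-size. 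Everywhere else, i.e.\ on $f^{-1}(O)$, $\sigma\circ f$ is already locally constant; there one only needs $d\tilde\lambda=c\,\om$ for the relevant constant $c$, and since $(c-f)\om=c\,\om-d\lambda$ is a small exact $2$-form near such level sets — exact because $[\om]$ vanishes there ($R\in\ker\om$ forces $\om$ to restrict to zero on the $2$-dimensional strata of $f^{-1}(c)$, while $\om=\tfrac1f\,d\lambda$ is exact on $3$-dimensional ones when $c\neq0$) — it has a small primitive, producing the required $\tilde\lambda$, which is glued to the slab pieces across the separating regular tori. In particular this handles a neighbourhood of $f^{-1}(a)$, where $c=a$.

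\textbf{Main obstacle.}
The technical heart is the simultaneous matching: inside each of the finitely many active integrable regions the set $K$ may still have infinitely many ``gaps'', so the flux conditions have to be arranged compatibly over infinitely many sub-slabs while all pieces stay uniformly $C^{1+s}$-small. I would handle this by exhausting each region by finitely many sub-slabs at a time and iterating, using the continuous dependence in Theorem~\ref{thm:integr} together with the quantitative estimate in Lemma~\ref{lem:g-pert}(b); the remaining bookkeeping near $\Crit(f)$ — and at $a$ itself, should $a\in\Crit(f)$ — is absorbed by the small-primitive argument above.
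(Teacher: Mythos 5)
Your reduction is the right one, but the step you dismiss as coming ``for free'' is exactly the crux, and your way around it does not close. The form $\beta=(\sigma\circ f-f)\,\om$ must be globally exact on $M$, i.e.\ $[(\sigma\circ f)\,\om]=[f\om]=0\in H^2(M)$; this is a genuine linear constraint on $\sigma$ that has to be satisfied \emph{simultaneously} with ``$\sigma$ locally constant near $Z$'', ``$\sigma(a)=a$'' and ``$\sigma$ $C^s$-close to the identity''. The paper's proof is devoted precisely to this point: it works with the function spaces $\EE=f^*\bigl(C^\infty([c,d])\bigr)\supset\DD=f^*(\CC)$ in the $C^s$-topology, shows that the cohomology map $H(h)=[h\om]$ is continuous with finite-dimensional image, splits $\DD=K_\DD\oplus T$ with $T$ finite-dimensional and either $K_\DD$ or $T$ contained in the zero slice of the evaluation functional $a^*$, and deduces the density statement $\bar K_\DD^a=K^a$ --- that is, $f$ itself can be $C^s$-approximated by $\tilde f=\sigma\circ f$ with $\sigma$ locally constant near $Z$, $\sigma(a)=a$ \emph{and} $[\tilde f\om]=[f\om]$. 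Your proposal replaces this single finite-dimensional global condition by one matching condition per gluing interface; since the support of $\sigma'$ can meet a single integrable region in a set with infinitely many gaps, you are imposing infinitely many constraints (each interface carries an $H^1(T^2)\cong\R^2$ worth of obstruction), which is strictly more than global exactness requires, and your ``exhaust and iterate'' paragraph gives no argument that this system is compatible or that the corrections stay uniformly small.

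Two further steps would fail as written. First, the exactness of $(c'-f)\om=c'\om-d\lambda$ on $f^{-1}(O_j)$ is justified by decomposing $f^{-1}(c)$ into $2$- and $3$-dimensional strata; for a general smooth first integral $f$ the level set of a critical value admits no such stratification (it can be, e.g., a Cantor family of tori together with wilder pieces), and $H^2$ of the open set $f^{-1}(O_j)$ is not controlled by your two cases. Second, even granting global exactness of $\beta$, you still need a primitive that is $C^{1+s}$-small; a primitive assembled from local pieces plus closed $1$-form corrections at infinitely many interfaces carries no such bound. The paper obtains it in one stroke from the $C^s$-smallness of $\beta$ via the global elliptic estimate of Lemma~\ref{lem:ellipt-small}. (A minor point: Lemma~\ref{lm:appro} produces \emph{smooth} $\sigma_n$ that are $C^s$-close to the identity; there is no need to pass to Lipschitz $\sigma$, since a smooth $\sigma$ with $\sigma'$ supported on a nowhere dense set is perfectly possible --- what necessarily fails is only $C^1$-closeness to the identity.)
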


We will use two special cases of this result. The first one is
$Z=\{a\}$ for some value $a$ of $f$:
 
\begin{cor}\label{cor:thick-a}
Let $(\om,\lambda)$ be a SHS on a closed $3$-manifold $M$ and set
$f:=d\lambda/\om$. Let $a\in\im(f)$ be any (singular or regular)
value of $f$.  
Then there exists a stabilizing form $\tilde\lambda$ for $\om$ such
that $\tilde f:=d\tilde\lambda/\om$ satisfies $\tilde f\equiv a$ on an
open neighbourhood of $f^{-1}(a)$.
\end{cor}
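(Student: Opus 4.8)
The plan is to obtain Corollary~\ref{cor:thick-a} as the special case $Z=\{a\}$ of Proposition~\ref{prop:thick}. First I would check that the hypotheses of the proposition hold for this choice: the singleton $Z:=\{a\}\subset\R$ has Lebesgue measure zero, and since $a\in\im(f)$ by assumption we have $a\in Z\cap\im(f)$, so Proposition~\ref{prop:thick} applies verbatim.

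Next I would simply quote its conclusion. Proposition~\ref{prop:thick} produces a stabilizing $1$-form $\tilde\lambda$ for $\om$, arbitrarily $C^{1+s}$-close to $\lambda$ for any $s\in[0,1)$, such that $\tilde f:=d\tilde\lambda/\om$ has the form $\sigma\circ f$ for a function $\sigma:\R\to\R$ that is locally constant on an open neighbourhood of $Z=\{a\}$, and moreover $\tilde f\equiv a$ on an open neighbourhood of $f^{-1}(a)$. This last property is exactly the assertion of the corollary; the extra information that $\tilde f$ factors through $f$ via a function locally constant near $a$ is simply discarded here (though it is what makes the stronger Proposition~\ref{prop:thick} the natural statement to prove), and the $C^{1+s}$-closeness is inherited without change.

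Accordingly there is no genuine obstacle at this level: all the work sits in Proposition~\ref{prop:thick}, which I am permitted to assume. Were one instead to give a self-contained argument, I would expect it to run along the same lines as the proposition: using that $d\lambda=f\om$ and that $f$ is a first integral of the Reeb flow, one looks for $\tilde\lambda$ solving $d\tilde\lambda=(\sigma\circ f)\,\om$ for a reparametrizing function $\sigma:\R\to\R$ that coincides with the constant value $a$ on a neighbourhood of $a$. The two delicate points would be (i) realizing the closed $2$-form $(\sigma\circ f)\om-d\lambda$ as the differential of a $1$-form small enough that $\tilde\lambda\wedge\om>0$ still holds, and (ii) controlling the loss of regularity caused by flattening $\sigma$ at the possibly singular value $a$, which is presumably why only $C^{1+s}$-closeness, rather than $C^{2}$, is claimed. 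Both of these are precisely what Proposition~\ref{prop:thick} addresses, so I would just invoke it.
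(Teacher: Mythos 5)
Your proposal is correct and is exactly the paper's argument: the corollary is stated as the special case $Z=\{a\}$ of Proposition~\ref{prop:thick}, whose hypotheses (Lebesgue measure zero, $a\in Z\cap\im(f)$) are trivially verified, and the conclusion is read off directly.
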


The second special case arises for $Z$ the set of critical values:

\begin{corollary}\label{cor:thick-Z}
Let $(\om,\lambda)$ be a SHS on a closed $3$-manifold $M$ and set
$f:=d\lambda/\om$. 
Then there exists a (possibly disconnected and possibly with boundary)
compact $3$-dimensional submanifold $N$ of $M$, invariant under the Reeb flow,
a finite family $\{U_i\}_{i=1,...,k}$ of disjoint open integrable regions
and a stabilizing 1-form $\tilde\lambda$ for $\om$ with the following
properties: 
\begin{itemize}
\item $\cup_iU_i\cup N=M$;
\item $\tilde\lambda$ is $C^1$-close to $\lambda$;
\item the proportionality coefficient 
$\tilde f:=d\tilde\lambda/\om$ is constant on each connected component of $N$; 
\item on each $U_i\cong (a_i,b_i)\times T^2$ the function $f$ is given
  by the projection onto 
the first factor and for $r$ sufficiently close to $a_i$ or $b_i$ we
have $\{r\}\times T^2\subset N$.  
\end{itemize}
Moreover, $\tilde f=\sigma\circ f$ for a function $\sigma:\R\to\R$
which is $C^0$-close to the identity. 
\end{corollary}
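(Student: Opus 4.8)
The plan is to look for $\tilde\lambda$ of the form $\tilde\lambda=\lambda+\beta$ with $\beta$ a $C^1$-small $1$-form, chosen so that
\[
   d\beta=(\sigma\circ f-f)\,\om
\]
for some function $\sigma\colon\R\to\R$. For such a $\beta$ one has $d\tilde\lambda=(\sigma\circ f)\,\om$, which is pointwise proportional to $\om$, so $\ker\om\subset\ker d\tilde\lambda$ automatically, and $\tilde\lambda\wedge\om=\lambda\wedge\om+\beta\wedge\om>0$ because $\beta$ is $C^0$-small; hence $\tilde\lambda$ stabilizes $\om$ and $\tilde f=d\tilde\lambda/\om=\sigma\circ f$. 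So the whole problem reduces to choosing (i)~a function $\sigma$ locally constant on an open neighbourhood $V\supset Z$, identically $a$ near $a$, and $C^{0,s}$-close to the identity, in such a way that (ii)~the $2$-form $(\sigma\circ f-f)\,\om$ is exact, and then (iii)~producing from this a sufficiently small primitive $\beta$.

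For (ii) set $G:=\sigma-\mathrm{id}$. Since $df\wedge\om=d(d\lambda)=0$ and $d\om=0$, the form $(G\circ f)\om$ is closed (for smooth $G$; in general by uniform approximation), and the same identity shows that $\int_C(G\circ f)\om$ depends only on the class $[C]\in H_2(M;\R)$. Fixing surfaces $C_1,\dots,C_k$ representing a basis and putting $\nu_j:=(f|_{C_j})_*(\om|_{C_j})$, a finite signed measure on $\R$, the form $(G\circ f)\om$ is exact iff $\int_\R G\,d\nu_j=0$ for all $j$. As $\int_\R t\,d\nu_j=\int_{C_j}f\om=\int_{C_j}d\lambda=0$, this is precisely the requirement $\int_\R\sigma\,d\nu_j=0$, a system of $k$ linear constraints that the identity already satisfies. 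After perturbing each $C_j$ within its homology class so that $f|_{C_j}$ is Morse — legitimate, the integral being a homological invariant — each $\nu_j$ has an $L^1$ density with respect to Lebesgue measure.

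The heart of the argument, and the step I expect to be the main obstacle, is the resulting one-dimensional construction: to produce $\sigma$ locally constant on an open neighbourhood of $Z$, identically $a$ near $a$, arbitrarily $C^{0,s}$-close to $\mathrm{id}$, and with $\int_\R\sigma\,d\nu_j=0$ for $j=1,\dots,k$. First I would build a ``devil's staircase'' $\sigma_0$ adapted to $Z$ and to $a$ meeting the first three requirements; taking the components of $V$ shorter than some $\ell_0$ forces the Hölder seminorm of $\sigma_0-\mathrm{id}$ to be of order $\ell_0^{1-s}$, which is small precisely because $s<1$ — for $s=1$ the flat spots forced by local constancy on a dense set obstruct $C^{0,1}$-smallness, which is exactly why the statement excludes $s=1$. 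Then $\int_\R\sigma_0\,d\nu_j=\eps_j$ is small, and one corrects by a small $\delta$ that is again locally constant on $V$, vanishes near $a$, and has $\int_\R\delta\,d\nu_j=-\eps_j$: this is possible because, once $V$ is chosen with small enough total measure that the densities $d\nu_j/dx$ stay linearly independent on $\R\setminus V$ with uniform bounds (uniform integrability of finitely many $L^1$ functions), the map $\delta\mapsto\bigl(\int_\R\delta\,d\nu_j\bigr)_{j}$ sends a bounded set of admissible corrections onto a neighbourhood of $0$ in $\R^k$.

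For (iii) the $2$-form $(\sigma\circ f-f)\om$ is closed, exact, and $C^{0,s}$-small; by the Hodge decomposition (the harmonic part vanishes since all periods are zero, the coexact part since the form is closed) it equals $d\beta$ for a $1$-form $\beta$ with $\|\beta\|_{C^{1,s}}\le \mathrm{const}\cdot\|(\sigma\circ f-f)\om\|_{C^{0,s}}$ by Schauder estimates, hence as $C^{1,s}$-small as desired. Then $\tilde\lambda:=\lambda+\beta$ is $C^{1+s}$-close to $\lambda$, stabilizes $\om$, and satisfies $\tilde f=\sigma\circ f$ with all the asserted properties; Corollary~\ref{cor:thick-a} is the case $Z=\{a\}$, and Corollary~\ref{cor:thick-Z} the case where $Z$ is the set of critical values of $f$, reading off $N$ and the $U_i$ from the level sets of $\tilde f$. (Alternatively, step (iii) could be carried out region by region over the integrable pieces supplied by Theorem~\ref{thm:integr}, using the $T^2$-invariant stabilization results of Section~\ref{subsec:t2inv}.)
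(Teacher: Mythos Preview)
Your approach to the underlying Proposition~\ref{prop:thick} is essentially the paper's, recast in measure-theoretic language: where the paper works abstractly with the subspace $\DD=f^*\{\sigma\text{ locally constant near }Z\}\subset\EE$ and shows by density that the cohomology map $h\mapsto[h\om]$ has the same (finite-dimensional) image $F$ on $\DD$ as on $\EE$, you write the exactness constraint explicitly as a finite system $\int\sigma\,d\nu_j=0$ for pushforward measures $\nu_j=(f|_{C_j})_*(\om|_{C_j})$ and correct a devil's-staircase approximation by a small admissible $\delta$. One wrinkle: your claim that the densities $d\nu_j/dx$ are linearly independent on $\R\setminus V$ need not hold (the image $F$ may have dimension $<k=\dim H^2(M)$); but since the error vector $(\eps_j)$ then lies in the same subspace, the correction still works with the target being $F$ rather than $\R^k$. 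The paper absorbs this by choosing a finite-dimensional complement $T\subset\DD$ on which $H|_T\colon T\to F$ is an isomorphism. Your Schauder step~(iii) is exactly the paper's Lemma~\ref{lem:ellipt-small}.

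Where your proposal is genuinely thin is the corollary itself --- the one sentence ``reading off $N$ and the $U_i$ from the level sets of $\tilde f$'' skips real content. The paper argues as follows: the critical set $Z$ is compact, hence covered by finitely many intervals $(c_j,d_j)$ on which $\sigma$ is constant, with endpoints chosen regular for $f$; set $N:=f^{-1}\bigl(\cup_j[c_j,d_j]\bigr)$. The finiteness of the $U_i$ then requires an argument: letting $d>0$ be the minimal gap between the $[c_j,d_j]$, any component $(a_i,b_i)$ of the regular-value set with $b_i-a_i<d$ is already contained in some $[c_j,d_j]$ and hence in $N$, while only finitely many components can have length $\ge d$. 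The final bullet --- that for $r$ near $a_i$ or $b_i$ the torus $\{r\}\times T^2$ lies in $N$ --- holds because each endpoint $a_i,b_i$ is a \emph{singular} value of $f$ and so lies in some open $(c_j,d_j)$. None of this is deep, but you should supply it rather than wave at it.
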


\begin{proof}
Let the 1-form $\tilde\lambda$ and the proportionality coefficient 
$\tilde f=d\tilde\lambda/\om=\sigma\circ f$ be obtained from
Proposition~\ref{prop:thick} with $Z$ the set of critical values of
$f$ (and any value $a\in Z$). Since $Z$ is compact, it is covered by
finitely many open intervals $(c_j,d_j)$, $j=1,...,n$ on which
$\sigma$ is constant. By shrinking these intervals slightly if
necessary we may assume that all $c_j,d_j$ are regular values of
$f$. Set $J:=\cup_{j=1}^n[c_j,d_j]$ and 
$N:=f^{-1}(J)$. Let $d$ be the minimal  
distance between any two of the intervals $[c_j,d_j]$. Let
$I=\cup_{i\in\N}(a_i,b_i)$ be the set of regular values of $f$,
written as a countable union of disjoint intervals. As the $a_i,b_i$
are singular values, those intervals $(a_i,b_i)$ with length
$b_i-a_i$ strictly smaller than $d$ must be contained in one of the
intervals $[c_j,d_j]$. After renumbering we may asume that $(a_i,b_i)$
for $i=1,...,m$ are all the intervals from $I$ with finite length
greater or equal than $d$. Set $I_d:=\cup_{i=1}^m(a_i,b_i)$. It is
clear that the image of $f$ is contained in $I_d\cup J$. 
Thus setting $U:=f^{-1}(I_d)$ gives us that $N\cup U=M$. The connected
components $U_i$ of $U$ are diffeomorphic to $(a_i,b_i)\times T^2$
with $f$ being the obvious projection. 
Finally, note that each $a_i$ and $b_i$ is itself a {\em singular}
value of $f$ and thus must be contained in $(c_j,d_j)$ for some $j$. 
\end{proof}

The proof of Proposition~\ref{prop:thick} consists in analysis of
suitable linear function spaces. We 
begin with a lemma about functions on the real line. 

\begin{lemma}\label{lm:appro}
Let $\sigma$ be a smooth real valued function on an interval $[c,d]$. 
Let $Z\subset[c,d]$ be a set of Lebesgue measure zero, $a\in Z$, and
$s\in[0,1)$. Then there exists a sequence of smooth functions
$\sigma_n$ on $[c,d]$ converging to $\sigma$ in the $C^s$-norm such
that each $\sigma_n$ is locally constant on an open neighbourhood of
$Z$ and $\sigma_n(a)=\sigma(a)$ for all $n$. 
\end{lemma}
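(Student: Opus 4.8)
The plan is to approximate $\sigma$ by functions whose derivative has been cut off to vanish near $Z$, and then to extract $C^s$-convergence for $s<1$ from a uniform Lipschitz bound. We may assume $Z$ is closed (this is the only case needed: in all applications $Z$ is a single point or the set of critical values of a smooth map on a closed manifold, hence closed of measure zero). Since $Z$ has Lebesgue measure zero, for each $n$ I would choose an open set $U_n\supset Z$ with $|U_n|<\min\{1/n,(d-c)/2\}$; as $Z$ and the compact set $[c,d]\setminus U_n$ are disjoint compacta, there is an open set $V_n$ with $Z\subset V_n\subset\overline{V_n}\subset U_n$, and I fix a smooth cutoff $\phi_n\colon[c,d]\to[0,1]$ with $\phi_n\equiv0$ on $V_n$ and $\phi_n\equiv1$ on $[c,d]\setminus U_n$.

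Then I set
$$
  \sigma_n(x):=\sigma(a)+\int_a^x\phi_n(t)\sigma'(t)\,dt .
$$
Each $\sigma_n$ is smooth with $\sigma_n(a)=\sigma(a)$, and $\sigma_n'=\phi_n\sigma'$ vanishes on the open neighbourhood $V_n\supset Z$, so $\sigma_n$ is locally constant there. Moreover $\sigma_n(x)-\sigma(x)=\int_a^x(\phi_n(t)-1)\sigma'(t)\,dt$ with $\phi_n-1$ supported in $U_n$ and $|\phi_n-1|\le1$, whence $\|\sigma_n-\sigma\|_{C^0}\le\|\sigma'\|_{C^0}\,|U_n|\to0$ as $n\to\infty$, while $\|\sigma_n'\|_{C^0}\le\|\sigma'\|_{C^0}$ for every $n$; thus $\sigma$ and all the $\sigma_n$ are Lipschitz with one common constant $L$.

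The one genuine point — and the reason the statement is phrased with $s\in[0,1)$ rather than with $C^1$ — is that this construction does \emph{not} converge in $C^1$ (on $U_n$ the derivatives $\sigma_n'$ and $\sigma'$ differ by as much as $\sigma'$ itself), so I must upgrade the $C^0$-estimate to a $C^s$-estimate by interpolation. Writing $[\psi]_s:=\sup_{x\ne y}|\psi(x)-\psi(y)|/|x-y|^s$, the function $\sigma_n-\sigma$ is $2L$-Lipschitz, so its difference quotient at scale $|x-y|\le\delta$ is at most $2L\delta^{1-s}$, while at scale $|x-y|>\delta$ it is at most $2\|\sigma_n-\sigma\|_{C^0}\delta^{-s}$. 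Given $\varepsilon>0$, I first pick $\delta$ with $2L\delta^{1-s}<\varepsilon$ (possible because $1-s>0$) and then $n$ large enough that $2\|\sigma_n-\sigma\|_{C^0}\delta^{-s}<\varepsilon$; hence $[\sigma_n-\sigma]_s\to0$, which together with $\|\sigma_n-\sigma\|_{C^0}\to0$ gives $\|\sigma_n-\sigma\|_{C^s}\to0$ and proves the lemma. I expect no other obstacles: everything else is soft.
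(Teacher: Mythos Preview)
Your construction is the same as the paper's: both define $\sigma_n(x)=\sigma(a)+\int_a^x\phi_n\sigma'$ with a cutoff $\phi_n$ vanishing near $Z$, and both observe the resulting $C^0$-convergence. The difference lies in the upgrade to $C^s$. You use the elementary interpolation inequality $[\psi]_s\le\max\bigl(\mathrm{Lip}(\psi)\,\delta^{1-s},\,2\|\psi\|_{C^0}\delta^{-s}\bigr)$, exploiting the uniform Lipschitz bound on $\sigma_n-\sigma$; the paper instead notes that $\sigma_n'-\sigma'$ is uniformly bounded and supported on a set of small measure, hence small in every $L^p$, and then invokes the Sobolev embedding $W^{1,p}([c,d])\hookrightarrow C^s$ for $s\le 1-1/p$. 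Your route is more self-contained; the paper's is the standard analyst's reflex. One bookkeeping point: the paper avoids your extra hypothesis that $Z$ be closed by reversing the roles of your $U_n$ and $V_n$ --- it takes the cutoff to equal $1$ on the small open neighbourhood $U\supset Z$ and to be supported in a slightly enlarged set $V$ (built interval by interval), so that $\sigma_n'$ already vanishes on all of $U$ and no separation of $Z$ from $\partial U$ is needed. Your argument is otherwise complete and correct.
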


\begin{proof}
Since $Z$ has Lebesgue measure $0$ it has an open neighbourhood
$U$ of arbitrarily small Lebesgue measure. We can represent $U$ as a
countable union of intervals $U=\cup_{i\in\N}U_i$. We can 
assume that all $U_i$ are disjoint by replacing any two of them which
intersect by their union. 
Moreover, we can assume that the distance between any two of them is
positive by replacing  
any pair $(x_1,x_2)$, $(x_2,x_3)$ of intervals by the interval
$(x_1,x_3)$. 
Then we can choose a set of intervals $V_i$ with the following
properties. For each $i$ we have $\bar U_i\subset V_i$, all the $V_i$
have positive distance from each other, and the measure of
$V:=\cup_{i\in\N} 
V_i$ does not exceed twice the measure of $U$. For each $i$ 
we choose a compactly suported cutoff function $\chi_i:V_i\rightarrow
[0,1]$ which equals $1$ on $U_i$. These cutoff
functions patch together to a cutoff function $\chi$ which is
compactly supported in $V$ and equals $1$ on $U$.
 
We set $h_U:=(1-\chi)\sigma'$. By construction $h_U$ is a smooth
function vanishing on $U$. 
Now $\sigma(x)=\sigma(a)+\int_a^x\sigma'(y)dy$.
Set $\sigma_U(x):=\sigma(a)+\int_a^xh_U(y)dy$. This defines a smooth
function on $[c,d]$ with $\sigma_U(a)=\sigma(a)$. Note that
$\sigma'$ and $\sigma_U'=(1-\chi)\sigma'$ are both bounded and differ
only on the support of $\chi$, which is the set of small measure. Thus  
$\sigma'$ and $\sigma_U'$ are $L^p$-close to each other for any
$0<p<\infty$. Next set $C:=max_{y\in [c,d]}|\sigma'(y)|$  
and compute 
$$
   |\sigma(x)-\sigma_U(x)|
   = |\int_a^x(\sigma'(y)-h_U(y))dy|
   \le \int_{[c,d]}|\chi(y)||\sigma'(y)|dy
   \le C|V|.
$$
Thus $\sigma$ and $\sigma_U$ are $C^0$-close. Combined with
$L_p$-closeness of their derivatives this also gives
$W^{1,p}$-closeness for any $0<p<\infty$. The Sobolev embedding
theorem yields a continuous  embedding $W^{1,p}([c,d])\into
C^s([c,d])$ for $s\leq 1-1/p$, so by choosing 
$p$ large we get $C^s$-closeness for any given $s\in[0,1)$.   
\end{proof}

\begin{remark}
If $Z$ is compact it is covered by finitely many of the intervals
$U_\alpha$ in the proof, so in this case we can achieve that for each
$n$ the neighbourhood of $Z$ on which $\sigma_n$ is constant is a {\em
  finite} union of intervals. 
\end{remark}

\begin{proof}[Proof of Proposition~\ref{prop:thick}]
Now we fix some interval $[c,d]$ containing $\im(f)$ and let $Z$, $a$,
$s$ be as in the proposition. Set

$$
   \CC := \{\sigma\in C^{\infty}([c,d])\mid \sigma \text{ is
   constant on some open set containing } Z\},
$$
Consider the space $C^{\infty}(M)$ of smooth functions on $M$. Let 
$$
   f^*:C^{\infty}([c,d])\longrightarrow C^\infty(M)
$$ 
denote the linear operator given by composing with the function $f$ on
the right and introduce the linear subspaces 
$$
   \DD:=f^*(\CC)\subset \EE:=f^*(C^{\infty}([c,d]))\subset
   C^\infty(M).
$$
We introduce the following evaluation linear functional 
$$
   a^*:\EE\longrightarrow \R,\qquad h=\sigma\circ f\mapsto \sigma(a). 
$$
In other words, $a^*(h)$ is the value the function $h$ takes on the
level set $f^{-1}(a)$. For any $b\in \R$ and any subset $S\subset \EE$
we denote $S^b:=S\cap (a^*)^{-1}(b)$. 

We equip $\EE$ with the $C^s$-topology (note that $\EE$ is not
complete with this  topology) and denote by $\bar S\subset\EE$
the closure of a subset $S\subset \EE$ in $\EE$ with respect to this
topology. We claim that for any $b\in \R$, 
\begin{equation}\label{closure.b}
   \bar \DD^b=\EE^b.
\end{equation}
Indeed, the estimate $\|\sigma\circ
f\|_{C^s(M)}\leq\|\sigma\|_{C^s([c,d])}\|f\|_{C^1(M)}^s$ shows
continuity of $f^*:C^\infty([c,d])\to \EE$ with respect to the
$C^s$-norms, so the claim follows from Lemma \ref{lm:appro}.
In partucular, we have
\begin{equation}\label{closure}
   \bar \DD=\EE
\end{equation}
The key observation is that for any $h=\sigma\circ f\in \EE$ the
$2$-form $h\om$ is closed:  
$$
   d(h\om)=\sigma'df\wedge \om=\sigma'd(f\om)=\sigma'd(d\lambda)=0.
$$ 
This allows us to define the operator
$$
   H:\EE\longrightarrow F\subset H^2(M),
$$
where $H(h):=[h\om]$ is the de Rham cohomology class and $F:=H(\EE)$. 
At this point we fix some reference Riemannian metric.
In terms of the Hodge decomposition, the operator of taking cohomology
is just the $L^2$-projection from the space of closed forms to the
space of harmonic forms and thus is continuous with respect to the
$L^2$ topology on the space of closed forms. Since the topology on
$\EE$ is stronger than $L^2$, we deduce that the operator $H$ is
continuous on $\EE$ and from~\eqref{closure} we get
$$
   H(\DD) = H(\EE) = F.
$$ 
The real vector space $F$ is finite dimensional as a subspace of the
finite dimensional space $H^2(M)$ and we set $k:=\dim F$. Set 
$$
   K:=\ker H,\qquad K_\DD:=\ker (H|_{\DD}) = K\cap\DD.
$$ 
We claim that the codimension of $K_D$ in $\DD$ is $k$. Indeed, assume
there were $(k+1)$ linearly independent vectors in $\DD$ spanning a
subspace intersecting $K_D$ trivially; then the restriction of $H$ to
this subspace would give us an injective map from this space to the
space $F$ of dimension one less. On the other hand the codimension  
of $K_D$ in $\DD$ cannot be smaller than $k$, for otherwise the
image of $H$ would have dimension smaller than $k$, contradicting
$H(\DD)=F$.   
Thus there exists a (nonunique) $k$-dimensional subspace
$T$ of $\DD$ such that we have the following (algebraic, not
topological) direct sum decomposition:
\begin{equation}\label{split1}
\DD=K_\DD\oplus T,
\end{equation}
with $H$ restricting as an isomorphism to $T$. Continuity of $H$
implies that 
\begin{itemize}
\item the kernel $K$ of $H$ is closed in $\EE$, and
\item the projection from $\EE$ onto $T$ along $K$ (understood as the
  composition of $H$ and the finite dimensional inverse of
  $H\bigl|_T$) is continuous. 
\end{itemize}
Now we use the freedom in the choice of $T$ to see that we can without
loss of generality assume 
that either $K_\DD\subset \DD^0$ or $T\subset \DD^0$. Indeed, if for
all $h\in K_\DD$ we have $a^*(h)=0$, then the first case is
realized. Otherwise, there exists $h\in K_\DD$ with $a^*(h)=c\ne 0$.
Let $t_1,...,t_k\in T$ be a basis of $T$. For any $j=1,...,k$ set
$c_j:=a^*(t_j)$ and $\hat
t_j:=t_j-\frac{c_j}{c}h$. Now $a^*(\hat t_j)=c_j-\frac{c_j}{c}c=0$, 
so $\hat t_j\in \DD^0$. Moreover, $\{\hat t_j\}_{j=1,...k}$ is the
basis of a linear subspace space $\hat T$ 
which complements $K_\DD$ in $\DD$ because 
$H(\hat t_j)=H(t_j)$ and $\{H(t_j)\}_{j=1,...,k}$ is a basis of $F$.

Now we come to the crucial assertion. We claim that for each $b\in \R$,  
\begin{equation}\label{closurefinal}
   \bar K_\DD^b=K^b 
\end{equation}
To see this let $h\in K^b$ be arbitrary.
According to \eqref{closure.b} we find 
a sequence $\{h_n\}_{n\in \N}\subset \DD^b$ converging to $h$.   
According to \eqref{split1} each $h_n$ can be uniquely
decomposed as $h_n=h_n^K+h_n^T$, with $h_n^K\in K_\DD$ and $h_n^T\in
T$. Continuity of the projection from $\EE$ to $T$ along $K$ and
$h\in K$ implies that the sequence $\{h_n^T\}_{n\in \N}\subset T$
converges to $0\in T$. Since $K$ is closed, we conclude 
$$
   h_n^K = h_n-h_n^T\,\rightarrow\, h\in K.
$$ 
Next, recall that one of the spaces in the direct sum $K_\DD\oplus T$
is a subspace of $\DD^0$.
Now if $K_\DD\subset \DD^0$, then $a^*(h_n^K)=0$ for all $n$ and thus $a^*(h)=0$.
If $T\subset \DD^0$, then $a^*(h_n^K)=a^*(h_n-h_n^T)=a^*(h_n)=b=a^*(h)$. In any case,
$$a^*(h_n^K)=a^*(h)$$ for all $n$. This shows \eqref{closurefinal}.

Note that $f\in K^a$. Therefore, by~\eqref{closurefinal} the function $f$ can
be arbitrarily well $C^s$-approximated by some $\tilde f\in
K_\DD^a$. Then the difference $\beta:=\tilde f\om-f\om$ is $C^s$-small and
exact. By Lemma~\ref{lem:ellipt-small} below, $\beta$ has a primitive 1-form
$\alpha$ that is $C^{1+s}$-small. 
The desired  1-form is now $\tilde\lambda:=\lambda+\alpha$. It is 
$C^{1+s}$-close to $\lambda$, so in particular it evaluates positively on the
Reeb vector field of $(\om,\lambda)$. As 
$d\tilde\lambda=\tilde f \om$ by construction, we see that  
$\tilde \lambda$ stabilizes $\om$. By
definition of $\DD^a$, the function $\tilde f=\sigma\circ f$ is
locally constant on a neighbourhood of 
$f^{-1}(Z)$ and takes value $a$ on $f^{-1}(a)$. 
This proves Proposition~\ref{prop:thick}.
\end{proof}

\begin{lemma}\label{lem:ellipt-small}
Let $\beta$ be a $C^s$-small exact 2-form on a closed manifold $M$,
for some $s\in (0,1)$. Then $\beta$ has a primitive 1-form $\alpha$
that is $C^{1+s}$-small. 
\end{lemma}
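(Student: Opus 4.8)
The plan is to produce the primitive by Hodge theory and then estimate it by elliptic (Schauder) regularity. First I would fix an auxiliary Riemannian metric on the closed manifold $M$ and introduce the Hodge Laplacian $\Delta=dd^*+d^*d$ on $2$-forms together with its Green operator $G$, i.e.\ the inverse of $\Delta$ on the $L^2$-orthogonal complement of the finite-dimensional space $\mathcal{H}$ of harmonic $2$-forms, extended by $0$ on $\mathcal{H}$. Since $\beta$ is exact it is $L^2$-orthogonal to $\mathcal{H}$, so $\beta=\Delta G\beta$; since moreover $\beta$ is closed and $G$ commutes with $d$, one has $dG\beta=Gd\beta=0$, whence
$$
   \beta=dd^*G\beta+d^*dG\beta=d\bigl(d^*G\beta\bigr).
$$
Thus $\alpha:=d^*G\beta$ is a primitive of $\beta$ (and it is smooth whenever $\beta$ is), and it remains only to estimate it.

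The estimate I would invoke is that $d^*G$ is a classical pseudodifferential operator of order $-1$ on $M$ and that, because $s\in(0,1)$ so that both $s$ and $1+s$ are non-integral, such an operator maps the H\"older space $C^s(M)$ boundedly into $C^{1+s}(M)$; hence $\|\alpha\|_{C^{1+s}}\le C\|\beta\|_{C^s}$ with $C$ depending only on $M$, the metric and $s$, and $C^s$-smallness of $\beta$ forces $C^{1+s}$-smallness of $\alpha$. A more hands-on route to the same linear bound: apply the Schauder a priori estimate $\|G\beta\|_{C^{2+s}}\le C\bigl(\|\beta\|_{C^s}+\|G\beta\|_{C^0}\bigr)$, absorb the lower-order term by an interpolation (Ehrling) inequality $\|u\|_{C^0}\le\eps\|u\|_{C^{2+s}}+C_\eps\|u\|_{L^2}$ together with $\|G\beta\|_{L^2}\le C\|\beta\|_{L^2}\le C\|\beta\|_{C^s}$ (boundedness of $G$ on $L^2$, using $\beta\perp\mathcal{H}$) to get $\|G\beta\|_{C^{2+s}}\le C'\|\beta\|_{C^s}$, and then lose one derivative to the first-order operator $d^*$.

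The one point deserving care is the \emph{uniformity} of the constant $C$ --- that this is a genuine linear bound, not merely an a priori estimate carrying an uncontrolled lower-order term; this is exactly where the normalization $G\beta\perp\mathcal{H}$ and the interpolation inequality enter, and where the hypothesis $s\in(0,1)$ is used to keep every H\"older exponent away from the integers, since both Schauder theory and the pseudodifferential mapping properties degenerate at integer exponents. All of this is standard and may be quoted from textbook treatments of elliptic regularity and Hodge theory on compact manifolds; beyond exactness of $\beta$, no special feature of the situation is used.
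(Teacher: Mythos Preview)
Your proof is correct and follows essentially the same approach as the paper: construct the primitive as $\alpha=d^*G\beta$ using the Hodge Green operator, then bound $\|G\beta\|_{C^{2+s}}\le C\|\beta\|_{C^s}$ via Schauder estimates for $\Delta$ (the paper cites Gilbarg--Trudinger for the local estimate and globalizes by a partition of unity) and lose one derivative to $d^*$. Your discussion of absorbing the lower-order term is more explicit than the paper's, which simply defers to ``standard Hodge theory arguments'' at that step.
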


\begin{proof}[Proof of Lemma~\ref{lem:ellipt-small}]
A primitive 1-form $\alpha$ is obtained from $\beta$ by first applying
the Green operator 
$G$ for the Laplace-Beltrami operator $\Delta$ and then taking the
co-differential $d^*$. Since $\Delta$ is elliptic (see
e.g.~\cite{War}), it satisfies for each $0<s<1$ and all smooth 2-forms
$\beta$ an estimate of H\"older norms
$$
   \|\beta\|_{2+s}\leq C(\|\Delta\beta\|_s+\|\beta\|_s). 
$$
Indeed, such an estimate is proved in~\cite{GT} for domains in
$\R^n$. 
From this the estimate on a compact manifold $M$ follows using a
finite partition of unity $\phi_i$ via
\begin{align*}
   \|\beta\|_{2+s} 
   &\leq \sum_i\|\phi_i\beta\|_{2+s}
   \leq C\sum_i(\|\Delta(\phi_i\beta)\|_s+\|\phi_i\beta\|_s) \cr
   &\leq C'(\sum_i\|\phi_i\Delta\beta\|_s+\|\beta\|_{s+1})
   \leq C''(\|\Delta\beta\|_s+\|\beta\|_{s+1}) 
\end{align*} 
and applying the same estimate to $\|\beta\|_{s+1}$. Now standard
Hodge theory arguments (see~\cite{War}) imply that the Green operator
satisfies an estimate 
$$
   \|G\beta\|_{C^{2+s}(M)}\leq C\|\beta\|_{C^s(M)}
$$
for all exact 2-forms $\beta$, from which the claim follows. 
\end{proof}

\subsection{Taut foliations}\label{subsec:taut}

In this subsection we investigate the special case of a SHS
$(\om,\lambda)$ with $d\lambda=0$, so $\ker\lambda$ defines a taut
foliation. After a small deformation of the closed 1-form we may
assume that $\lambda$ represents a rational cohomology class, and
after rescaling we may assume $[\lambda]\in H^1(M;\Z)$. Then
integration of $\lambda$ over paths from a fixed base point yields a
fibration $\pi:M\to S^1=\R/\Z$ over the circle. It follows that the
restriction $\bar\om$ of $\om$ to a fibre $W$ is
symplectic. Thus $M=W_\psi$ is the mapping torus of a
symplectomorphism $\psi$ of $(W,\bar\om)$ with $\om$ induced by
$\bar\om$ and $\lambda=\pi^*d\phi$, where $\phi$ is the coordinate on
$S^1$. Now every isotopy of symplectomorphisms $\psi_t$ induces a stable
homotopy on $M$. 
(For this, we always identify $W_{\psi_0}$ with
$W_{\psi_t}$ by the diffeomorphism
$(\phi,x)\mapsto\bigl(\phi,\psi_{t\rho(\phi)}\circ\psi_0^{-1}(x)\bigr)$,
for some fixed function $\rho:[0,1]\to[0,1]$   which equals $0$ near
$0$ and $1$ near $1$).  
Using Moser's theorem, this can be used to classify
SHS with $d\lambda=0$ in dimension 3. 
Here we content ourselves with the following observation that will be
needed later. 

\begin{lemma}\label{lem:taut}
For any symplectomorphism $\psi$ of a closed symplectic manifold
$(W,\om)$ there exists a symplectic isotopy $\psi_t$ such that
$\psi_0=\phi$ and $\psi_1=\id$ on some open subset $U\subset W$.  
\end{lemma}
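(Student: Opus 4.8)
The plan is to reduce $\psi$ to a symplectomorphism that is the identity near a chosen point by concatenating three symplectic isotopies of $W$. We may assume $W$ is connected; fix a point $p\in W$. \textbf{Step 1} makes $\psi$ fix $p$: since $W$ is connected, $\mathrm{Ham}(W)$ acts transitively on $W$, so there is a Hamiltonian isotopy $g_t$ with $g_0=\id$ and $g_1(\psi(p))=p$; then $t\mapsto g_t\circ\psi$ is a symplectic isotopy from $\psi$ to $\psi':=g_1\circ\psi$, which fixes $p$. \textbf{Step 2} makes the derivative at $p$ trivial: let $A:=d\psi'_p\in Sp(T_pW,\om_p)$, and using connectedness of $Sp(2n,\R)$ pick a smooth path $A_t$ from $\id$ to $A^{-1}$, which is the time-$t$ flow of a time-dependent quadratic Hamiltonian $Q_t$ in Darboux coordinates centred at $p$. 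Multiplying $Q_t$ by a cutoff function supported in a small ball around $p$ and identically $1$ near $p$ yields a Hamiltonian isotopy $h_t$ of $W$ fixing $p$ with $d(h_1)_p=A^{-1}$ (near $p$ the cutoff is constant, so it does not affect the linearisation at $p$). Then $t\mapsto h_t\circ\psi'$ is a symplectic isotopy from $\psi'$ to $\psi'':=h_1\circ\psi'$, which fixes $p$ and has $d\psi''_p=\id$.

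\textbf{Step 3} straightens $\psi''$ near $p$. In Darboux coordinates on a sufficiently small ball $B$ around $p$, the map $\psi''$ is $C^1$-close to the identity (it fixes $p$ with identity derivative), so by standard local arguments --- via generating functions, or via the Weinstein neighbourhood theorem applied to the graph of $\psi''$ in $(W\times W,\ \mathrm{pr}_1^*\om-\mathrm{pr}_2^*\om)$ --- it is the time-$1$ map of a Hamiltonian isotopy $\psi''_t$ with $\psi''_0=\id$, with generating Hamiltonians $G_t$, defined on some neighbourhood $V\Subset B$ of $p$ and fixing $p$ for all $t$. Choose a cutoff $\chi$ equal to $1$ near $p$ with $\supp\chi\Subset V$, and let $\mu_t$ be the Hamiltonian isotopy of $W$ generated by $\chi G_t$ (extended by $0$). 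Let $U$ be the set of points whose entire $\mu_t$-trajectory, $t\in[0,1]$, stays in the open set $\{\chi\equiv1\}$; on $\{\chi\equiv1\}$ the vector field of $\chi G_t$ agrees with that of $G_t$, so $\mu_t$ agrees with $\psi''_t$ on $U$, whence $\mu_1|_U=\psi''|_U$. Since $p$ is fixed by $\mu_t$, the set $U$ is a nonempty open neighbourhood of $p$ (openness follows from continuity of the flow and compactness of $[0,1]$). Therefore $t\mapsto \mu_t^{-1}\circ\psi''$ is a symplectic isotopy of $W$ from $\psi''$ to $\mu_1^{-1}\circ\psi''$, which restricts to the identity on $U$.

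Concatenating the three symplectic isotopies and reparametrising so that the result is smooth gives a symplectic isotopy $\psi_t$ with $\psi_0=\psi$ and $\psi_1=\id$ on the open set $U$, as desired. I expect the delicate point to be Step 3: after truncating the Hamiltonian near $p$ one must still know that the resulting global isotopy undoes $\psi''$ on an honest open neighbourhood of $p$, and this is exactly what passing to the trajectory-invariant set $U$ accomplishes. The rest is routine bookkeeping of standard facts (transitivity of $\mathrm{Ham}(W)$, connectedness of $Sp(2n,\R)$, and the local normal form for a symplectomorphism near a fixed point), cf.~\cite{MS}.
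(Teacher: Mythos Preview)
Your proof is correct. The overall strategy agrees with the paper's: arrange a fixed point $p$ by a Hamiltonian isotopy, then straighten the map near $p$ using the Weinstein neighbourhood picture for $\mathrm{gr}(\psi)\subset (W\times W,\om\oplus(-\om))$.

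The execution differs in two ways. First, the paper skips your Step~2: once $\psi(p)=p$, the graph of $\psi$ meets the diagonal at $(p,p)$, and one may choose the Weinstein identification so that the fibre at $(p,p)$ is a Lagrangian complement of the diagonal transverse to $T_{(p,p)}\mathrm{gr}(\psi)$; then $\mathrm{gr}(\psi)$ is locally the graph of an exact $1$-form $dH$ with $H(p,p)=0$, $dH(p,p)=0$, without having to first force $d\psi_p=\id$. Second, the paper's Step~3 is more direct than yours: instead of writing $\psi$ locally as the time-$1$ map of a Hamiltonian flow, cutting off the Hamiltonian, and then tracking the trajectory-invariant set $U$ to see where the truncated flow undoes $\psi$, the paper simply replaces $H$ by a function vanishing identically near $(p,p)$ (e.g.\ $(1-\chi)H$) and interpolates linearly. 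The resulting family of Lagrangians $\mathrm{gr}(dH_t)$, glued to $\mathrm{gr}(\psi)$ outside the modification region, are graphs of global symplectomorphisms $\psi_t$ with $\psi_1=\id$ near $p$.

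Your extra Step~2 buys transparency (the graph is then $C^1$-close to the diagonal, so transversality to the fibres is automatic), and your flow-cutoff argument in Step~3 is perfectly sound; the paper's route is just shorter. One small point: your set $U$ should be defined using the \emph{interior} of $\{\chi\equiv 1\}$ to make the openness argument clean.
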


\begin{proof}
After a Hamiltonian isotopy we may assume that $\psi$ has a fixed point
$p$. 
Now consider the graph ${\rm gr}(\psi)$ as a Lagrangian
submanifold of $(W\times W,\om\oplus-\om)$. A neighbourhood of the
diagonal $\Delta\subset W\times W$ is symplectomorphic to a
neighbourhood of the zero section in the cotangent bundle
$T^*\Delta$. Since ${\rm gr}(\psi)$ 
intersects the zero section at
$(p,p)$ it can be written nearby as the graph of an exact 1-form $dH$
on $\Delta$ with $H(p,p)=0$ and $dH(p,p)=0$. Replacing $H$ by a
function which vanishes identically near $(p,p)$ thus yields a
symplectic isotopy from $\psi$ to a symplectomorphism $\psi_1$ which
equals the identity near $p$. 
\end{proof}

\begin{cor}\label{cor:taut}
Any SHS $(\om,\lambda)$ with $d\lambda=0$ on a closed 3-manifold $M$
is stably homotopic to $(\om_1,\lambda_1)$ such that   
there exists an embedded solid torus $S^1\times D$ in
$M$ on which $(\om_1,\lambda_1)=(d\alpha_\st,\alpha_\st)$ with 
$$
   \alpha_\st = r^2d\theta + (1-r^2)d\phi.
$$
\end{cor}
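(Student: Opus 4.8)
The plan is to use Lemma~\ref{lem:taut} to reduce to a flat local model on a solid torus, and then to insert a standard contact region by a $T^2$-invariant stable homotopy, handling the core circle of the solid torus by hand and the rest by the theory of Section~\ref{subsec:t2inv}.

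By the discussion preceding the corollary we may realize $M=W_\psi$ as the mapping torus of a symplectomorphism $\psi$ of a closed symplectic surface $(W,\bar\om)$, with $\om$ induced by $\bar\om$ and $\lambda=\pi^*d\phi$, and symplectic isotopies of $\psi$ induce stable homotopies. By Lemma~\ref{lem:taut} there is such an isotopy from $\psi$ to a symplectomorphism which is the identity on an open set $U$; inspecting its proof, $U$ may be taken to be an arbitrarily small Darboux disk about the fixed point, so that $\bar\om|_U=dx\wedge dy$. After the induced stable homotopy, $U\times S^1\cong D^2\times S^1$ carries, in coordinates $(r,\theta,\phi)$ with $r\in[0,1]$, the flat model $(\sigma_0,d\phi)$ with $\sigma_0=r\,dr\wedge d\theta=d(\tfrac12 r^2 d\theta)$; in the language of Section~\ref{subsec:t2inv}, $\sigma_0=\om_{h_0}$ with $h_0(r)=(\tfrac12 r^2,0)$, and $d\phi=\lambda_{g_0}$ with $g_0(r)=(0,1)$.

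Next I would construct a stable homotopy supported in $U\times S^1$, from $(\sigma_0,d\phi)$ to a pair $(\om_1,\lambda_1)$ which equals $(\sigma_0,d\phi)$ near $\partial U\times S^1$ (hence extends over $M$) and equals $\bigl(d\lambda_1,\lambda_1\bigr)$ with $\lambda_1=\eps^{-2}\bigl(r^2 d\theta+(\eps^2-r^2)d\phi\bigr)$ on the small solid torus $\{r\le\eps\}\times S^1$; reading the latter in the coordinate $\rho=r/\eps$ identifies $\{r\le\eps\}\times S^1$ with $S^1\times D^2$ and this pair with $(d\alpha_\st,\alpha_\st)$, $\alpha_\st=\rho^2 d\theta+(1-\rho^2)d\phi$, which is the assertion. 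To build the homotopy I would take a homotopy of immersions $h_t\colon[0,1]\to\C$ with $h_0$ as above, $h_t\equiv(\tfrac12 r^2,0)$ near $r=1$, and $h_1(r)=\eps^{-2}(r^2,-r^2)$ on $[0,\eps]$, together with boundary data $\bar g_t$ on $[0,\eps]\cup[1-\eps,1]$: on $[0,\eps]$ I would set $h_t(r)=(a_t r^2,-b_t r^2)$ for a path $(a_t,-b_t)$ from $(\tfrac12,0)$ to $\eps^{-2}(1,-1)$ kept in the open right half-plane, and $\bar g_t(r)=(0,1)+s_t(r)\,(a_t,-b_t)$ with $s_t$ smooth, $s_t(r)=O(r^2)$ (so that $\lambda_{\bar g_t}$ is smooth across the core), which is a constant-slope stabilization as in Corollary~\ref{cor:stablin} with model $\pm(r^2,1-r^2)$ — indeed $(h_t,\bar g_t)$ then satisfies~\eqref{eq:g} on $[0,\eps]$ since $\langle\bar g_t',ih_t'\rangle=0$ and $\langle\bar g_t,ih_t'\rangle=2ra_t>0$; on $[1-\eps,1]$ I set $\bar g_t\equiv(0,1)$. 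Proposition~\ref{prop:stabhom} then extends $\bar g_t$ over $[\eps,1-\eps]$ to $g_t$ with $(h_t,g_t)$ satisfying~\eqref{eq:g} everywhere, once the slopes $h_t'/|h_t'|$ have been made nonconstant on $[\eps,1-\eps]$ (Remark~\ref{rem:nonconst}). The resulting $(\om_{h_t},\lambda_{g_t})$ is a smooth SHS on the solid torus for each $t$ (smoothness at the core coming from $h_{t}=O(r^2)$, $s_t=O(r^2)$, and positivity at the core from $\bar g_{t,2}(0)=1$), equals $(\sigma_0,d\phi)$ near $r=1$, and so is the desired stable homotopy of $M$.

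The step I expect to be the main obstacle is verifying that the homotopy of immersions $h_t$ exists at all, i.e.\ that $\sigma_0$ and the target $d\alpha_\st$ lie in the same connected component of the relevant space of $T^2$-invariant Hamiltonian structures on the solid torus (a homotopy rel $\{r=1\}$ but \emph{not} rel the collapsed circle $\{r=0\}$, so the winding number is not a priori constant, but the limiting direction of $h'/|h'|$ at $r=0$ is constrained to the open right half-circle for nondegeneracy along the core). Concretely, $\sigma_0$ has $h'/|h'|\equiv(1,0)$ (total turning $0$), while $d\alpha_\st$, read on $\{r\le\eps\}$ and transitioned to the flat model, has $h'/|h'|$ running from $(1,-1)/\sqrt2$ to $(1,0)$, which can be taken with total turning $\pi/4\in(-\pi/2,\pi/2)$ — so both lie in the same component, and the interpolation $h_t$ exists, compatible with the core models $(a_t,-b_t)$. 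Once this is in place, making the intermediate slopes nonconstant, invoking Proposition~\ref{prop:stabhom}, gluing smoothly in $r$ and $t$, and reparametrizing $\{r\le\eps\}$ by $\rho=r/\eps$ are routine and complete the proof.
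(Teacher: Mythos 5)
Your argument is correct, but it takes a genuinely different route from the paper's. Both proofs begin identically: reduce to a mapping torus, use Lemma~\ref{lem:taut} to make the monodromy the identity near a fixed point, and obtain the flat model $\bigl(d(r^2d\theta),d\phi\bigr)$ on an embedded solid torus. At that point the paper does \emph{not} homotope $\om$ at all: it simply re-parametrizes the solid torus by the shear $(r,\theta,\phi)\mapsto(r,\theta-\phi,\phi)$, in which coordinates $d(r^2d\theta)$ becomes literally $d\bigl(r^2d\theta+(1-r^2)d\phi\bigr)=d\alpha_\st$ while $\lambda=d\phi$ is unchanged; since the slope is then constant, the only remaining step is the interpolation of the stabilizing form from $d\phi$ to $\alpha_\st$ on a smaller torus via Lemma~\ref{lem:stablin} (both satisfy~\eqref{eq:g-lin} with the same constant $c=1/\sqrt2$). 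You instead keep the trivialization fixed and run a genuine compactly supported $T^2$-invariant homotopy of $\om$ through a family of immersions $h_t$ with quadratic core models, stabilized by Proposition~\ref{prop:stabhom} — essentially the machinery the paper reserves for the harder Proposition~\ref{prop:contact-region} and Theorem~\ref{thm3}. Your turning-number discussion correctly identifies why there is no obstruction (the direction at the core is free within the open right half-circle, and the target only needs turning $\pi/4$), so the approach goes through; what it buys is that the homotopy changes $\om$ only by an exact form supported in the open solid torus and never touches the parametrization, at the cost of being much longer. Two small points to tighten: since $h_0=(\tfrac12 r^2,0)$ has constant slope, the nonconstancy hypothesis of Proposition~\ref{prop:stabhom} requires first prepending the stable homotopy of Remark~\ref{rem:nonconst} (rather than perturbing $h_0$ itself, which would move the starting SHS); and $s_t$ should be taken to be a smooth function of $r^2$ so that $s_t(r)\,d\theta$ is smooth across the core.
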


\begin{proof}
By the discussion preceding Lemma~\ref{lem:taut}, after a
stable homotopy we may assume that $M=W_\psi$ is the mapping torus of a
symplectomorphism $\psi$ of $(W,\bar\om)$ with $\om$ induced by
$\bar\om$ and $\lambda=\pi^*d\phi$. By Lemma~\ref{lem:taut}, we may
further assume that $\psi=\id$ on some open region $U\subset W$. Pick
a disk $D=\{r\leq r_0\}\subset U$ on which $\bar\om=d(r^2d\theta)$ in
polar coordinates, so on $D_\psi\cong S^1\times D$ we have
$\om=d(r^2d\theta)$ and $\lambda=d\phi$. Define a new trivialization
of $D_\psi\cong S^1\times D$ by composing the previous one with the
diffeomorphism $(r,\theta,\phi)\mapsto(r,\theta-\phi,\phi)$ of
$S^1\times D$. In this trivialization we then have  
$$
   \om = d\bigl(r^2d\theta + (1-r^2)d\phi\bigr) = d\alpha_\st,\qquad
   \lambda=d\phi. 
$$
Since $\om$ has constant slope $(1-i)/\sqrt{2}$ on $S^1\times D$, we can homotope stabilizing form $\lambda$
to make it restrict as $(r^2d\theta + (1-r^2)d\phi)$ on a smaller solid torus $S^1\times D'$. 
\end{proof}

\subsection{Contact regions}\label{subsec:contact-region}

\begin{prop}\label{prop:contact-region}
Any SHS $(\om,\lambda)$ on a closed 3-manifold $M$ is stably homotopic
to $(\om_1,\lambda_1)$ such that 
there exists an embedded solid torus $S^1\times D$ in
$M$ on which $(\om_1,\lambda_1)=(d\alpha_\st,\alpha_\st)$ with 
$$
   \alpha_\st = r^2d\theta + (1-r^2)d\phi.
$$
\end{prop}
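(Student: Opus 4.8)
The plan is to reduce to the two situations that have already been dealt with: the taut case of Corollary~\ref{cor:taut}, and the $T^2$-invariant bookkeeping of Sections~\ref{subsec:t2inv}--\ref{subsec:t3}. If $d\lambda\equiv 0$ we are done by Corollary~\ref{cor:taut}, so assume the function $f:=d\lambda/\om$ is not identically zero and fix a value $a\in\im(f)$ with $a\neq 0$. Using Corollary~\ref{cor:thick-a} — after a preliminary stable homotopy replacing $\lambda$ by a $C^1$-close stabilizing form, which is legitimate because each fibre $\Pi^{-1}(\om)$ is convex (property (a)) — I may assume $f\equiv a$ on an open neighbourhood $V$ of the nonempty set $f^{-1}(a)$. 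On $V$ we then have $d\lambda=a\,\om$, so $\lambda|_V$ is a contact form, positive if $a>0$ and negative if $a<0$.

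Next I would work inside the open contact manifold $(V,\ker\lambda)$. Choose a knot transverse to $\ker\lambda$, oriented so that $\lambda$ is positive on it; such knots exist in any contact $3$-manifold. By the transverse neighbourhood theorem it has a tubular neighbourhood $S^1\times D$ with positively oriented coordinates $(r,\theta,\phi)$ in which $\lambda=h\,\beta$ with $h>0$ and $\beta:=d\phi+\sigma r^2\,d\theta$, where $\sigma:=\sgn(a)$ (this sign is the one making the coordinates positively oriented). Fix a cutoff $\eta:D\to[0,1]$ equal to $1$ on a smaller disc $D_1$ and to $0$ near $\p D$, and consider the homotopy
$$
   \lambda_t:=\bigl((1-t\eta)h+t\eta\bigr)\beta,\qquad \om_t:=a^{-1}d\lambda_t\qquad(t\in[0,1]),
$$
extended by $(\om,\lambda)$ outside $S^1\times D$. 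Each $\lambda_t$ is a contact form with the same kernel as $\lambda$, so $\ker\om_t=\ker d\lambda_t$ (whence the stability condition holds trivially), and
$$
   \lambda_t\wedge\om_t=a^{-1}\sigma\bigl((1-t\eta)h+t\eta\bigr)^2\cdot 2r\,dr\wedge d\theta\wedge d\phi
$$
is positive since $a^{-1}\sigma=|a|^{-1}>0$; thus $(\om_t,\lambda_t)$ is a SHS. Near $\p(S^1\times D)$ we have $\lambda_t=h\beta=\lambda$ and $\om_t=a^{-1}d\lambda=\om$, so this is a stable homotopy of the given SHS (note $\dot\om_t=a^{-1}d\dot\lambda_t$ is exact), after which $(\om,\lambda)=(a^{-1}d\beta,\beta)$ on $S^1\times D_1$; in particular $(\om,\lambda)$ is now $T^2$-invariant there.

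Finally I would connect, by a stable homotopy supported in $S^1\times D_1$, the $T^2$-invariant SHS $(a^{-1}d\beta,\beta)$ to $(d\alpha_\st,\alpha_\st)$ on a smaller solid torus $S^1\times D_2$. Both are $T^2$-invariant SHS on the positively oriented solid torus; both kernel foliations have constant slope, hence winding number $0$; and in both the Reeb field equals $\p_\phi$ along the core $\{r=0\}$, so the two have the same sign at the core. Running the argument behind the classification of $T^2$-invariant SHS on $S^3$ in Section~\ref{subsec:t3} — interpolating between the corresponding immersions $h:[0,r_1]\to\C$ subject only to the capping condition at $r=0$ (which leaves the slope at $r=0$ free), and carrying stabilizing forms along by Proposition~\ref{prop:stabhom} and Corollary~\ref{cor:class} — produces a $T^2$-invariant stable homotopy on $S^1\times D_1$, fixed near $\p(S^1\times D_1)$ and equal to $(d\alpha_\st,\alpha_\st)$ on $S^1\times D_2$. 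Extending by the identity gives the required stable homotopy of the global SHS.

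The hard part will be this last step: recognizing that the standard contact transverse-knot neighbourhood — including the negative contact model, where $f$ carries the ``wrong'' sign compared with $(d\alpha_\st,\alpha_\st)$ — is nevertheless stably homotopic to $(d\alpha_\st,\alpha_\st)$. This is exactly where the flexibility of stable (as opposed to contact) homotopies is used decisively, and it hinges on the fact that the capping condition at the core leaves the slope function unconstrained there, so that the winding number, not the naive turning of the slope, is the only invariant. A secondary point requiring care is the bookkeeping of supports: every homotopy above must agree with the ambient structure near the boundary of the solid torus in which it is performed, which is why the modifications of $\lambda$ are done inside the convex fibres of $\Pi$ and the modifications of $\om$ are taken of the form $a^{-1}d\lambda_t$.
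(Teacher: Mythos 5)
Your proposal is correct, and its skeleton coincides with the paper's: reduce the taut case to Corollary~\ref{cor:taut}, use Corollary~\ref{cor:thick-a} (plus convexity of $\Pi^{-1}(\om)$ for the interpolation of stabilizing forms) to produce a region where $d\lambda=a\,\om$ with $a\neq 0$, and then standardize near a transverse knot. Where you diverge is the endgame. The paper treats the two signs of $a$ asymmetrically: for $a>0$ a contact isotopy near the transverse knot finishes the job, while for $a<0$ it performs an explicit flip $\theta\mapsto-\theta$ followed by three hand-made homotopies (first of $\lambda$ to $d\phi$, then of $\om$ changing only the $d\phi$-summand, then of $\lambda$ again), each exploiting constancy of the slope. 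You instead funnel both signs into the single transverse-knot model $\beta=d\phi+\sigma r^2d\theta$, observe that $\om=a^{-1}d\beta=|a|^{-1}\,2r\,dr\wedge d\theta$ is independent of the sign, and then invoke the $T^2$-invariant classification (sign at the core plus winding number, both of which match) to reach $(d\alpha_\st,\alpha_\st)$. This buys a uniform treatment of the two cases and makes transparent \emph{why} the negative-contact model is reachable (winding number $0$ with a free slope at the core), at the cost of leaning on heavier machinery.

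One point you should make explicit rather than leave implicit: Corollary~\ref{cor:class} as stated requires $h_0=h_1$ on \emph{both} ends $[0,\eps]\cup[1-\eps,1]$, whereas your two curves disagree near $r=0$ (slope $(1,0)$ for the $\beta$-model versus $(1,-1)/\sqrt2$ for $\alpha_\st$). So the final interpolation is really the solid-torus-rel-outer-boundary analogue of the $S^3$ classification of Section~\ref{subsec:t3}: one must first homotope the slope at the core within the open half-circle allowed by the capping condition (carrying the stabilizing form along via~\eqref{eq:g-lin}, since the slope is constant there), and only then apply Corollary~\ref{cor:class} away from the core. You acknowledge this by saying the capping condition leaves the slope at $r=0$ free, and the argument does go through, but a careful write-up needs this intermediate standardization step spelled out; you must also first \emph{construct} the target structure $B$ (equal to your $\beta$-model near $\p(S^1\times D_1)$ and to $(d\alpha_\st,\alpha_\st)$ on $S^1\times D_2$) via Proposition~\ref{prop:t2inv-stab}, choosing the connecting curve with nonconstant slope and zero net winding, before you can interpolate to it.
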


\begin{proof}
Corollary~\ref{cor:taut} allows us to assume
that the function $f=d\lambda/\om$ is not identically zero. After
applying Corollary~\ref{cor:thick-a} to a value $a\neq 0$ of $f$ we
may assume that $d\lambda=a\,\om$ on some open region $U\subset M$. 
Suppose first that $a>0$, so after
rescaling we may assume $d\lambda=\om$ on $U$. Pick any contractible transverse
knot $\gamma$ in $U$. Pick a neighbourhood $S^1\times D$ of
$\gamma$ on which the contact structure is given by
$\ker\lambda=\ker\alpha_\st$. Then there exists a contact homotopy 
$\lambda_t$ rel $\p U$ (with fixed kernel on $S^1\times D$) from
$\lambda$ to a contact form $\lambda_1$ which equals 
$\alpha_\st$ on a neighbourhood $S^1\times D$ of $\gamma$, so we are
done in this case. 

If $a<0$ we rescale such that $d\lambda=-\om$ on $U$. Let $\bar U$ be the neighbourhood 
$U\subset M$ with the orienation reversed (opposite to that of $M$). Then $\lambda$ is a 
{\em positive} contact form on $\bar U$. Thus, there exists a contact homotopy  
$\lambda_t$ rel $\p U$ from $\lambda$ to a contact form $\lambda_1$ which equals
$r^2d\theta+(1-r^2)d\phi$ on a neighbourhood $S^1\times D^2$ of a contractible transverse 
knot $\gamma$ with 
$d\phi\wedge dr\wedge d\theta$ defining the opposite orientation on $M$. Now consider 
another embedding of $S^1\times D^2$ in $M$ by composing the old one with the flip map
$\theta\mapsto -\theta$ on the right. In the new coordinates $(\phi,r,\theta)$ the form
$\lambda_1$ equals $-r^2d\theta+(1-r^2)d\phi$ near $\gamma$ and the volume form 
$d\phi\wedge dr\wedge d\theta$ defines the positive orientation on $M$. Define $\om_t:=-d\lambda_t$. 
Note that $\om_1=-d\lambda_1=2rdr\wedge(d\theta+d\phi)$. 
Now we homotope rel $\p (S^1\times D^2)$ the stabilizing form 
$\lambda_1$ to $\lambda_2$ (still stabilizing $\om_1$) that restricts as $d\phi$
to a neighbourhood of $\gamma$. (This uses constancy of the slope of $\om_1$.) Then we homotope the HS $\om_1$ to $\om_2$ (supported in
the neighbourhood $\{\lambda_2=d\phi\}$ of $\gamma$) restricting as
$2rdr\wedge(d\theta-d\phi)$ to an even smaller  
neighbourhood of $\gamma$ (by changing the $d\phi$ summand only). The last step is to 
homotope (supported in  $\{\om_2=2rdr\wedge(d\theta-d\phi)\}$) the
stabilizing form $\lambda_2$ to $\lambda_3$ that restricts as
$r^2d\theta+(1-r^2)d\phi$ to a neighbourhood of $\gamma$.  
\end{proof}

\subsection{Genuine stable Hamiltonian structures} \label{subsec:genuine}

Let $h:I\to\C^2$ be an immersion. 
In the notation introduced at the beginning of Section~\ref{subsec:t2inv} set $\om_h:=d\alpha_h$.
Set $\LL_h:=\ker\om_h$

\begin{definition} 
We say that the slope function $k$ of $\om_h$ 
\begin{itemize}
\item {\em twists} if $k(r)\in S^1$
  makes one full turn clockwise and one full turn counterclockwise as $r$ runs
  through $I$;  
\item {\em is nowhere constant} if for any interval $(a,b)\subset I$
  the restriction  
$k\bigl|_{(a,b)}$ is not constant.
\end{itemize}
\label{twistnowconst}
\end{definition}  

\begin{lemma}\label{lem:tw}
If $k$ twists, then the form $\alpha_h$ is not
contact.
\end{lemma}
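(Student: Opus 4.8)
The plan is to argue by contraposition: assuming $\alpha_h$ is contact, I will show that its slope function $k=h'/|h'|$ makes a full turn in \emph{at most one} of the two rotational senses, so it cannot twist in the sense of Definition~\ref{twistnowconst}. It follows from the discussion at the start of Section~\ref{subsec:t2inv} that $\alpha_h\wedge d\alpha_h=\la h,ih'\ra\,dr\wedge d\theta\wedge d\phi$, so $\alpha_h$ is contact precisely when $g:=\la h,ih'\ra$ is nowhere zero on $I$; since $g$ is continuous and $I$ is connected, $g$ then has constant sign, i.e.\ $\alpha_h$ is either positively or negatively contact, and moreover $g$ nowhere zero forces $h$ to be nowhere zero, so $h:I\to\C\setminus\{0\}$.

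First I would treat the positively contact case $g>0$. Pick a continuous branch $\psi:I\to\R$ of the argument of $h$, so $h=|h|e^{i\psi}$; by the same computation $g=-|h|^2\psi'$, hence $\psi'<0$ throughout. Differentiating gives $h'=(|h|'+i|h|\psi')e^{i\psi}$, and the complex number $|h|'+i|h|\psi'$ has strictly negative imaginary part because $|h|>0$ and $\psi'<0$; we may therefore write $|h|'+i|h|\psi'=|h'|e^{i\eta}$ for a \emph{continuous} function $\eta:I\to(-\pi,0)$. Hence
$$
   k=\frac{h'}{|h'|}=e^{i(\psi+\eta)},
$$
so $\tau:=\psi+\eta$ is a continuous angular lift of $k$. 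The crucial feature is that $\eta$ is confined to an open arc of length $\pi$ --- this just says that the rotation of $h'$ relative to $h$ never exceeds half a turn, and it is forced by $\mathrm{Im}(h'/h)=\psi'$ having a fixed sign.

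Given this, for any subinterval $[c,d]\subseteq I$ the net counterclockwise winding of $k$ over $[c,d]$ is
$$
   \frac{\tau(d)-\tau(c)}{2\pi}=\frac{\bigl(\psi(d)-\psi(c)\bigr)+\bigl(\eta(d)-\eta(c)\bigr)}{2\pi}<\frac{0+\pi}{2\pi}=\tfrac12,
$$
using $\psi(d)<\psi(c)$ and $\eta(d)-\eta(c)<\pi$; so $k$ never completes a full counterclockwise turn. The negatively contact case $g<0$ is symmetric: then $\psi'>0$ and $\eta:I\to(0,\pi)$, so $\tau(d)-\tau(c)=(\psi(d)-\psi(c))+(\eta(d)-\eta(c))>0-\pi=-\pi$ for all $c<d$, and $k$ never completes a full clockwise turn. (Alternatively this case reduces to the previous one via the orientation-reversing diffeomorphism $(r,\theta,\phi)\mapsto(r,\theta,-\phi)$, i.e.\ replacing $h$ by $\bar h$, which flips both the contact sign and the sense of rotation of $k$, hence preserves the twisting property.) In either case, a twisting $k$ would have to make a full turn in \emph{each} sense, a contradiction; therefore $\alpha_h$ is not contact.

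I expect no genuine difficulty in carrying this out; the one step requiring care is producing the global continuous lift $\tau=\psi+\eta$ and recording that $\eta$ stays in an open arc of length $\pi$, which is really the entire content of the argument.
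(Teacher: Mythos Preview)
Your proof is correct and follows essentially the same idea as the paper's: the contact condition $\la h,ih'\ra\neq 0$ forces the relative angle between $h'$ and $h$ to stay in an open arc of length $\pi$, which combined with the monotone argument of $h$ prevents $k=h'/|h'|$ from winding a full turn in one of the two senses. The paper states this more tersely---arguing that if the angle of $h$ is monotone and $k$ twists then $h'$ must at some point become a real multiple of $h$, contradicting $\la h,ih'\ra\neq 0$---while you unpack the same observation into the explicit decomposition $\tau=\psi+\eta$ with $\eta$ confined to $(-\pi,0)$ (resp.\ $(0,\pi)$), yielding the quantitative bound $\tau(d)-\tau(c)<\pi$ (resp.\ $>-\pi$).
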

\begin{proof}
Assume by contradiction that 
$\alpha_h$ is contact. Recall that this means that $\la h,ih'\ra\ne 0$ or equivalently that $h$
always turns clockwise (counterclockwise). Monotonicity of the angle of $h$ and twisting of $h'/|h'|$ 
imply that for some $r\in I$ vectors $h(r)$ and $h'(r)$ are real multiples of each other, i.e. $\la h(r),ih'(r)\ra\ne 0$  which is a contradiction.
\end{proof}

\begin{lemma}
Let $(\om,\lambda)$ be any SHS defining the foliation $\LL_h$ above. 
If $k$ twists, then the proportionality coefficient  $f:=d\lambda/\om$ is not globally constant.
\label{B}
\end{lemma}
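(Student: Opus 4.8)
The plan is to argue by contradiction. Suppose $f := d\lambda/\om$ is globally constant. Since $(\om,\lambda)$ is a SHS, we have the three cases $f\equiv 0$, $f\equiv c>0$, or $f\equiv c<0$. In the first case $d\lambda = 0$, so $\ker(\om) = \LL_h = \ker(d\lambda)$ would be a taut foliation; but this contradicts the existence of a vector field $X$ with $\lambda(X)=1$ and $i_Xd\lambda=0$ only indirectly, so instead the cleaner route is: if $f\equiv c\neq 0$ then $\lambda$ is a contact form (since $\lambda\wedge d\lambda = c\,\lambda\wedge\om \neq 0$, using the first condition in~\eqref{eq:def}), hence $\ker(\om) = \ker(d\lambda)$ is the kernel of a contact form, i.e.\ $\om = c^{-1}d\lambda$ makes $(\om,\lambda)$ essentially a contact structure. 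The key point I want to extract is that in both remaining cases $\LL_h$ is defined by a contact form up to sign and rescaling, which I will then contradict using Lemma~\ref{lem:tw}.

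Concretely: first I would observe that $\LL_h$ depends only on the oriented foliation, and any SHS $(\om,\lambda)$ defining $\LL_h$ with $f\equiv c\neq 0$ has the property that $\lambda$ is contact with $\ker d\lambda = \ker\om = \LL_h$. Now I need to connect this back to the explicit model form $\alpha_h$. The forms $\alpha_h$ and $\lambda$ both define $1$-forms whose associated HS (namely $d\alpha_h = \om_h$ and $c^{-1}d\lambda = c^{-1}\cdot c\om = \om$) have the same kernel foliation $\LL_h$. The honest statement I want is simply: if $\LL_h$ is the kernel foliation of some contact form, then in particular the $T^2$-invariant model $\alpha_h$ — or at least \emph{a} contact form defining $\LL_h$ — exists. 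But Lemma~\ref{lem:tw} tells us precisely that when $k$ twists, $\alpha_h$ is \emph{not} contact. So I must be careful: Lemma~\ref{lem:tw} is about $\alpha_h$ specifically, not about arbitrary defining forms.

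The clean way around this is to work intrinsically with the slope. The slope function $k$ of $\LL_h$ is, by Lemma~\ref{lem:area}(d) and the discussion in Section~\ref{subsec:slope}, an invariant of the oriented foliation $\LL_h$ on each torus $\{r\}\times T^2$, hence of $\LL_h$ itself; it does not depend on the particular HS defining $\LL_h$. If $f\equiv c\neq 0$ then $\lambda$ is contact, and I claim a contact form on $I\times T^2$ whose kernel is tangent to the tori $\{r\}\times T^2$ has monotone slope function: indeed after applying Theorem~\ref{thm:integr} we may take $\om$ and $\lambda$ $T^2$-invariant (note $f$ constant is exactly the extra hypothesis there), so $\lambda = \lambda_g$ and $\om = \om_h$ in the normal form of Lemma~\ref{lem:t2inv}, and the contact condition $\la g,ih'\ra\neq 0$ combined with $d\lambda = c\,\om$ (which forces $g' \parallel h'$, hence $g$ and $h$ turn together) shows $h'/|h'|$ is monotone, i.e.\ $k$ is monotone — it cannot make a full clockwise turn \emph{and} a full counterclockwise turn. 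This contradicts that $k$ twists. Since $k$ twisting also rules out $f\equiv 0$ (a taut foliation would make $\LL_h$ the kernel of a \emph{closed} $1$-form, whence $\om_h$ would have a primitive $\alpha$ with $\alpha\wedge\om_h \equiv 0$, and by Obstruction~2 in Section~\ref{subsec:obstr} the HS $d\alpha$ — wait, one must check $\om_h$ itself is exact; better: if $d\lambda=0$ then $\LL_h = \ker\lambda$ and $\lambda$ is a nonsingular closed $1$-form, forcing the leaves to be quasi-periodic lines of a \emph{constant} slope on each torus, contradicting that $k$ is nonconstant, which twisting entails), we are done.

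The main obstacle is the bookkeeping in the contact case: making rigorous that "$\lambda$ contact and $\ker d\lambda$ tangent to the $T^2$-fibration" forces monotone slope. Everything reduces, via Theorem~\ref{thm:integr} (applicable since $f$ constant) and Lemma~\ref{lem:t2inv}, to the elementary plane-curve fact that if $g' = \rho\, h'$ for a real function $\rho$ and $\la g, ih'\ra$ never vanishes, then $h'/|h'|$ is monotone; this is the computation $\la h, ih'\ra = -|h|^2\sigma'$ already recorded after the definition of $\om_h$ in Section~\ref{subsec:t2inv}, transported from $h$ to $g$. Once that is in place the contradiction with twisting is immediate from Definition~\ref{twistnowconst}.
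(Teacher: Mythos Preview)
Your overall strategy---reduce to $T^2$-invariant forms via Theorem~\ref{thm:integr} (using that $f$ is constant) and split into the contact and closed cases---matches the paper's. But both case analyses contain genuine errors.

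\textbf{Contact case ($f\equiv c\neq 0$).} Your ``elementary plane-curve fact'' is false as stated. From $g'=ch'$ and $\la g,ig'\ra\neq 0$ you correctly get (via $\la g,ig'\ra=-|g|^2\tau'$) that the \emph{angle of $g$} is monotone; this does \emph{not} imply that $g'/|g'|$ (equivalently $h'/|h'|$) is monotone. For instance, take $h(r)=\rho(r)e^{ir}$ with $\rho(r)=e^{0.1\sin(10r)}$ and $g=h$: then $\la h,ih'\ra=-|h|^2\neq 0$, yet the slope $h'/|h'|$ oscillates. What you actually need---and what Lemma~\ref{lem:tw} delivers when applied to $g$ in place of $h$---is only that $g'/|g'|$ cannot \emph{twist}; monotonicity is too strong and not true. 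The paper's proof does exactly this: it observes that $\lambda_g$ is contact with $\ker d\lambda_g=\LL_h$, so $g'/|g'|=\pm h'/|h'|$ also twists, and then Lemma~\ref{lem:tw} (for $g$) gives the contradiction.

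\textbf{Closed case ($f\equiv 0$).} Your claim ``$\LL_h=\ker\lambda$'' is wrong: $\ker\lambda$ is a plane field, and $\lambda(R)=1$ means the Reeb line $\LL_h$ is \emph{transverse} to it, not contained in it. Your subsequent deduction of constant slope therefore does not follow. The paper's argument is direct and short: after making $\lambda$ $T^2$-invariant, $d\lambda=0$ forces $g=(g_1,g_2)$ to be constant; then $\lambda(R)$ is a positive multiple of $\la g,ik\ra$, and since $k$ twists (so $ik$ sweeps through all directions in $S^1$) while $g$ is a fixed nonzero vector, $\la g,ik\ra$ must vanish at some $r$, contradicting $\lambda(R)=1$.
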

\begin{proof}
Assume by contradiction that $f=d\lambda/\om$ is constant. Then by Theorem~\ref{thm:integr}
we can choose coordinates in which both the Hamiltonian structure $\om$
and the stabilizing form $\lambda$ are $T^2$ invariant. So we can write
$$
  \lambda=g_1d\theta+g_2d\phi+g_3dr=\lambda_g+g_3dr
$$ 
for some $g=(g_1,g_2):I\to \C$
and $g_3:I\to \R$. If $f\ne 0$ then $\lambda$ is contact. Now $\lambda\wedge d\lambda=(\lambda_g+g_3dr)\wedge d\lambda_g=
\lambda_g\wedge d\lambda_g$, so $\lambda_g$ is contact and $\ker d\lambda_g=\LL_h$. The contradiction follows from Lemma \ref{lem:tw} with $g$ in place of $h$. If $f=0$,
then the functions $g_1$ and $g_2$ above are constant. The Reeb vector field $R$ spans $\LL_h$
and thus is proportional to $ik$. So $\lambda(R)$ is a multiple of $\la g, ik\ra$. Constancy of $g$
and twisting of $k$ force the last expression to vanish for some $r\in I$ contradicting 
the condition $\lambda(R)=1$.
\end{proof} 

\begin{definition}
Let $\LL$ be a stable Hamiltonian foliation on a $3$-manifold
$M$. 
We say that $\LL$ is {\em genuine} if for {\em every} SHS
$(\om,\lambda)$ defining $\LL$ 
the proportionality coefficient $f:=d\lambda/\om$ is not globally
constant on $M$.  
An SHS defining a genuine $1$-foliation is called {\em genuine}. 
\label{genuine}
\end{definition}

\begin{theorem}
If a stable Hamiltonian foliation $\LL$ on 
$M$ has an integrable region $K$ in which the slope function $k$ 
twists, then for any SHS $(\om,\lambda)$ defining $\LL$ the proportionality coefficient
$f:=d\lambda/\om$ is not constant 
on $K$. In particular, $\LL$ is genuine. If in addition $k$ is nowhere constant, then
any SHS $(\om,\lambda)$ defining $\LL$ satisfies the assumptions
of Theorem \ref{thm1}. 
\label{thm2}
\end{theorem}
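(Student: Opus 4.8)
The plan is to argue locally on the integrable region $K$ where the twisting happens, using the $T^2$-invariant model from Theorem~\ref{thm:integr} together with the twisting obstruction already isolated in Lemma~\ref{B}. First I would apply Theorem~\ref{thm:integr} to bring $(\om,\lambda)$ into a form on $K\cong I\times T^2$ in which $\Phi^*\om$ and $\Phi^*R$ are $T^2$-invariant and $\Phi^*\lambda|_{\{r\}\times T^2}$ is $T^2$-invariant for every $r$; since $f=d\lambda/\om$ is a diffeomorphism invariant (it is intrinsic to $\om$ and $\lambda$), constancy of $f$ on $K$ is equivalent to constancy of $\Phi^*f$ on $I\times T^2$. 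So without loss of generality I may work with a SHS on $I\times T^2$ whose kernel foliation is $\LL_h$ for the given immersion $h:I\to\C$ (up to reparametrization of $I$), and whose Reeb field is the linear field $ik$. Suppose for contradiction that $f$ is constant on $K$. Then by the last assertion of Theorem~\ref{thm:integr} (constancy of $f$ on the tori forces $T^2$-invariance of the whole $\lambda$, not just its torus restrictions) I get $\Phi^*\lambda = \lambda_g + g_3(r)\,dr$ for some $g=(g_1,g_2):I\to\C$ and $g_3:I\to\R$. At this point the argument is literally that of Lemma~\ref{B}: if $f\neq 0$ then $\lambda_g$ is a contact form with $\ker d\lambda_g=\LL_h$, and Lemma~\ref{lem:tw} applied with $g$ in place of $h$ gives a contradiction with the twisting of $k$; if $f\equiv 0$ then $g_1,g_2$ are constant, the Reeb field spans $\LL_h$ hence is a positive multiple of $ik$, so $\lambda(R)$ is a nonvanishing multiple of $\la g,ik\ra$, and constancy of $g$ together with the full clockwise-then-counterclockwise turn of $k$ forces $\la g,ik\ra$ to vanish for some $r$, contradicting $\lambda(R)\equiv 1$. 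Hence $f$ is nonconstant on $K$, and in particular it is nonconstant on $M$, which is exactly the definition of $\LL$ being genuine.

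For the second assertion, assume in addition that $k$ is nowhere constant. Here the goal is to produce, inside $K$, a sub-integrable-region $K_0\cong[a,b]\times T^2$ meeting the hypotheses of Theorem~\ref{thm1}(a)--(b): namely one on which $f_0=d\lambda_0/\om_0$ is (after a diffeomorphism) the projection onto $[a,b]$, on which $(\om_0,\lambda_0)$ is $T^2$-invariant, and on which the slope function $k_0$ is nonconstant. Having shown $f$ is nonconstant on $K$, pick a regular value $c$ of $f$ in the interior of $f(K)$ and a small closed interval $[a,b]$ of regular values around $c$; then $K_0:=f^{-1}([a,b])\subset K$ is diffeomorphic to $[a,b]\times T^2$ with $f$ the projection. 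By Theorem~\ref{thm:integr}, since $f$ is constant on the tori $\{r\}\times T^2$ inside this region (they are level sets of $f$), the full $\lambda$ can be made $T^2$-invariant there, so after a further diffeomorphism $(\om,\lambda)$ is $T^2$-invariant on $K_0$ with $f_0$ the projection onto $[a,b]$. Finally, the slope function $k_0$ on $K_0$ is the restriction of the slope function on $K$ of the foliation $\LL_h$, which is $h'/|h'|$ up to reparametrization; since $k$ is nowhere constant, $k_0$ is nonconstant. Thus $(\om,\lambda)$ restricted to $K_0$ satisfies all hypotheses of Theorem~\ref{thm1}, and since these are local conditions near $K_0$ the full SHS $(\om,\lambda)$ on $M$ satisfies the assumptions of Theorem~\ref{thm1}.

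The main obstacle I anticipate is bookkeeping about which diffeomorphism from Theorem~\ref{thm:integr} to invoke and with respect to which torus decomposition: Theorem~\ref{thm:integr} only guarantees full $T^2$-invariance of $\lambda$ under the extra hypothesis that $f$ is constant on the invariant tori, and one must check that this hypothesis is automatically available in both places it is needed --- in the contradiction argument because we assumed $f$ globally constant, and in the second assertion because the relevant tori are exactly level sets of $f$. A secondary care point is that the slope function and the twisting/nowhere-constant properties are diffeomorphism-invariant features of the oriented foliation $\LL$ (by Lemma~\ref{lem:area}(d) and Definition~\ref{slopef}), so passing through $\Phi$ does not disturb them; this should be remarked explicitly but requires no computation. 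Everything else is a direct quotation of Lemma~\ref{B}, Lemma~\ref{lem:tw}, Theorem~\ref{thm:integr}, and Theorem~\ref{thm1}.
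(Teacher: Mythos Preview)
Your argument for the first assertion is correct and is exactly the paper's: it simply cites Lemma~\ref{B}.

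For the second assertion there is a genuine gap. You define $K_0:=f^{-1}([a,b])$ and then assert that ``the slope function $k_0$ on $K_0$ is the restriction of the slope function on $K$ of the foliation $\LL_h$''. This only holds if the level-set tori $f^{-1}(c)$ coincide with the original invariant tori $\{r\}\times T^2$ of $K$; equivalently, if $f$ depends only on $r$. You have not established that, and without it there is no reason the slope on the new torus fibration should be nonconstant (so you cannot verify the hypothesis of Theorem~\ref{thm1}(b)). Your parenthetical justification ``(they are level sets of $f$)'' is circular: it refers to the \emph{new} tori, for which constancy of $f$ is tautological, not to the original ones whose slope you know.

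The paper fills exactly this gap, and its argument is short: since $k$ is nowhere constant, irrational tori are dense in $K$; the Reeb-invariant function $f$ is constant on each irrational torus (the Reeb orbits are dense there), hence by continuity $f$ is constant on every torus $\{r\}\times T^2$. Thus $f=f(r)$, and one simply shrinks $I$ to a subinterval on which $f$ is a submersion. On that subinterval the invariant tori are the original ones, so the slope function is the restriction of $k$ and is nowhere constant. If you insert this density argument before choosing $K_0$, your proof becomes correct and essentially identical to the paper's.
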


\begin{proof} The first assertion follows from Lemma \ref{B}. Assume in addition that 
$S$ is nowhere constant and write $K\cong I\times T^2$. Then for any SHS $(\om,\lambda)$ the proportionality coefficient $f=d\lambda/\om$ must be constant on irrational and thus on all the tori $\{r\}\times T^2$, so by shrinking $I$, if necessary, we can achieve that $f$ 
is a submersion. This matches the assumptions of Theorem \ref{thm1}.
\end{proof}

The following is the main result of this subsection. 

\begin{theorem}\label{thm3} 
Any SHS on a closed 3-manifold can be homotoped to a SHS $(\om,\lambda)$
with the following properties. The $1$-dimensional foliation $\LL$
associated with $(\om,\lambda)$ satisfies the assumptions of
Theorem \ref{thm2} (has an integrable region $K$ in which the slope
function is nowhere constant and twists). 

Moreover, we can achieve that $K=\gamma\times(D^2\setminus\{0\})$ for
an embedded solid torus $\gamma\times D^2$ around a contractible
periodic orbit $\gamma$ of $(\om,\lambda)$. 
\end{theorem}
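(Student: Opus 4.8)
\emph{Strategy and reduction.} The plan is to reduce, via Proposition~\ref{prop:contact-region}, to the standard contact solid torus, and then insert a twisting, nowhere-constant slope near its core by a short sequence of $T^2$-invariant stable homotopies supported in the interior. By Proposition~\ref{prop:contact-region} we may assume, after a stable homotopy, that $(\om,\lambda)=(d\alpha_\st,\alpha_\st)$ on an embedded solid torus $V=\gamma\times D^2$ with coordinates $(\phi;r,\theta)$, $r\in[0,1]$, where $\gamma=\{r=0\}$ is a contractible periodic Reeb orbit. In the notation of Section~\ref{subsec:t2inv}, $\om=\om_{h_0}$, $\lambda=\lambda_{g_0}$ with $h_0(r)=(r^2,-r^2)$ and $g_0(r)=(r^2,1-r^2)$, so the slope is the constant $(1,-1)/\sqrt2$. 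Fix $0<\eps<\rho<1-\eps$. All moves below are $T^2$-invariant on $V$ and equal the identity near $\partial V$, hence extend by the identity over $M\setminus V$; since $H^2(V;\R)=0$, each is automatically a homotopy in the sense of Definition~\ref{homdef}.

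\emph{Move 1: producing a nonconstant slope.} Keeping $\om=d\alpha_\st$ fixed, slide $g_0$ within the line $\{g_1+g_2=1\}$ (along which $\lambda_g$ still stabilizes $d\alpha_\st$) to a stabilizer $g'$ equal to $g_0$ near $r=0,1$ and constant on $[\eps,1-\eps]$; this is a stable homotopy of $\lambda$ alone. Since $d\lambda_{g'}\equiv0$ on $[\eps,1-\eps]$, condition~\eqref{eq:def} there reduces to $\lambda_{g'}\wedge\om>0$, so a $C^\infty$-small perturbation of $h_0$ supported in $(\eps,1-\eps)$ (cf.\ Remark~\ref{rem:nonconst}) yields an immersion $h^{(1)}$ with $(\om_{h^{(1)}},\lambda_{g'})$ a SHS whose slope is nonconstant on $[\eps,1-\eps]$.

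\emph{Move 2: inserting the twist.} Now Proposition~\ref{prop:stabhom} applies to homotopies of immersions that keep the slope nonconstant on $[\eps,1-\eps]$. Applying it on $[\eps,1]$ — with inner boundary data near $r=\eps$ read off from the explicit positive-contact family $h_t'(r)=(2r+tr^3,-2r)$, $g_t(r)=(r^2+tr^4/4,\,1-r^2)$ on $[0,\eps]$, and outer boundary data $(h_0,g_0)$ near $r=1$ — and patching that family in over $[0,\eps]$, one stably homotopes $(\om_{h^{(1)}},\lambda_{g'})$ to $(\om_1,\lambda_1):=(\om_{\tilde h},\lambda_{\tilde g})$ with $\tilde h$ such that: near $r=0$ the data equals the $t=1$ member of that family, whence $\om_{\tilde h}$ extends smoothly over $\{r=0\}$ (because $\tilde h'(r)/r$ is smooth with $\tilde h_1'(r)/r|_{r=0}\neq0$), $\gamma$ remains a closed Reeb orbit, and the slope is nowhere constant on $(0,\eps]$ (its direction being $\propto(2+r^2,-2)$); on $[\eps,\rho)$ the slope makes one full clockwise turn and one full counterclockwise turn and is nowhere constant; and near $r=1$ the data equals $(h_0,g_0)$, so $(\om_1,\lambda_1)=(d\alpha_\st,\alpha_\st)$ near $\partial V$. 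The required homotopy of immersions exists because $h^{(1)}$ and $\tilde h$ agree near the ends of $[\eps,1-\eps]$ and have the same winding number, and it can be chosen with nonconstant slope throughout, exactly as in the proof of Corollary~\ref{cor:class}.

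\emph{Conclusion and main obstacle.} Composing the reduction with Moves~1 and~2 exhibits $(\om_1,\lambda_1)$ as stably homotopic to the original SHS; it equals $(d\alpha_\st,\alpha_\st)$ near $\partial V$, hence is a smooth SHS on $M$; $\gamma$ is a contractible periodic Reeb orbit; and on $K:=\gamma\times(D^2_\rho\setminus\{0\})\cong(0,\rho]\times T^2$, which is an integrable region for $\ker\om_1$, the slope function twists and is nowhere constant, so $\ker\om_1$ satisfies the hypotheses of Theorem~\ref{thm2}. Relabeling $D^2_\rho$ as $D^2$ completes the proof. The genuinely delicate point throughout is the degenerate core circle $\{r=0\}$: one has to keep both $\om$ and its stabilizing $1$-form smooth there while never allowing the slope to become locally constant — which is precisely why $K$ must be the punctured rather than the full solid torus, and why Move~2 cannot simply invoke Corollary~\ref{cor:class} (whose hypotheses would force $\tilde h=h_0$, hence constant slope, near $r=0$) but must be threaded through the explicit near-core family and Proposition~\ref{prop:stabhom} restricted to $[\eps,1]$. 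Verifying the smoothness and the stability condition~\eqref{eq:def} at the gluings $r=\eps$ and $r=0$ (where the reformulation~\eqref{eq:g} degenerates) is then a routine matter.
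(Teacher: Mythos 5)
Your proof is correct and follows the same route as the paper's: reduce via Proposition~\ref{prop:contact-region} to the standard contact solid torus around a contractible orbit $\gamma$, rewrite everything in the $T^2$-invariant normal form of Section~\ref{subsec:t2inv} on the punctured torus, deform the curve $h$ so that its slope twists, and stabilize the deformation. The difference is in how the core and the stabilization are handled. The paper takes a homotopy of immersions $h^s$ that is \emph{fixed equal to} $(r^2,1-r^2)$ near $r=0$ and near $r=r_0$ and stabilizes the entire family in one stroke by Corollary~\ref{cor:stablin} (constant slope with equal constants $c_-=c_+$ at both ends); the price is that the slope remains constant on small collars at both ends, so the twisting and nowhere-constancy really live on an interior subannulus of $K=\gamma\times(D^2\setminus\{0\})$ — which is all that the downstream applications (Theorem~\ref{thm2}, Theorem~\ref{thm1}, Corollary~\ref{cor:tame}) ever use. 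Your Move~2, threading the explicit family $h_t'=(2r+tr^3,-2r)$, $g_t=(r^2+tr^4/4,1-r^2)$ through $r=0$ and invoking Proposition~\ref{prop:stabhom} with moving boundary data, buys nowhere-constancy of the slope all the way down to the core, i.e., the literal reading of the ``Moreover'' clause; this is slightly more than the paper's own proof delivers, at the cost of extra bookkeeping. One small inaccuracy to fix: $h^{(1)}$ and $\tilde h$ do \emph{not} agree near $r=\eps$ (their derivatives are $(2r,-2r)$ versus $(2r+r^3,-2r)$ there), so the connecting homotopy of immersions must be produced with $t$-dependent endpoint data — which Proposition~\ref{prop:stabhom} indeed permits — rather than ``rel endpoints'' as in Corollary~\ref{cor:class}; with that phrasing corrected, the argument goes through.
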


\begin{proof} 
In view in Proposition~\ref{prop:contact-region}, we can achieve that
$(\om,\lambda)$ restricts to some embedded solid torus $S^1\times
D^2\cong W\subset M$ (where $D^2=\{r\le r_0\}\subset \R^2$ for some
$r_0<1$) as $(d\alpha_\st,\alpha_\st)$, with $\alpha_{st}$ as in
Proposition~\ref{prop:contact-region}. Moreover, we can arrange that
the periodic orbit $\gamma:=S^1\times (0,0)$ is contractible in
$M$. We write $\alpha_{st}=\alpha_{h^0}$ for
$h^0=(h^0_1,h^0_2)=(r^2,1-r^2)$. Note that $W\setminus \gamma$ is 
diffeomorphic to $(0,r_0]\times T^2$ with 
coordinates $(r,\theta,\phi)$ and we fall into the setup of
Section \ref{subsec:t2inv}. We consider a homotopy of immersions
$h^s=(h_1^s,h_2^s):(0,r_0]\to\R^2$ as shown in Figure~\ref{fig:contr}).
\begin{figure}[htb!]\label{fig:contr}
\centering
\includegraphics{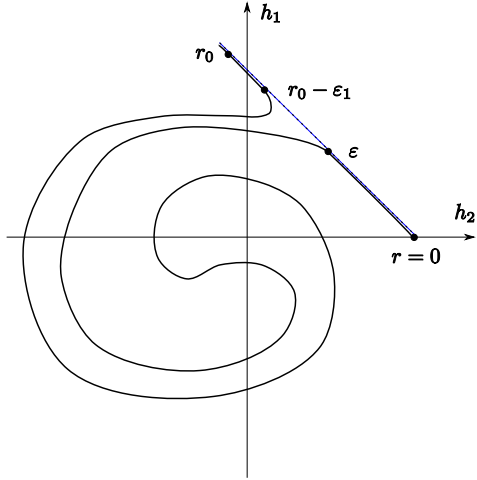}
\caption{The slope function twists}
\end{figure}
(The curve $h^0$ is dashed, the curve $h^1$ is bold, and the points $r=0$,
$\varepsilon$, $r_0-\varepsilon_1$, $r_0$ follow the direction of
increase of the parameter $r$ on the curve $h^1$.  
For Section~\ref{subsec:emb} note that we could have chosen $h^s$ to
have the image in the positive quadrant, though our picture shows a
different situation).  
This homotopy is fixed and equal to $(r^2,1-r^2)$ near $r=0$ and
$r=r_0$, so we can stabilize  it by Corollary~\ref{cor:stablin}. Since
the resulting homotopy of SHS is supported in the interior of
$W\setminus\gamma$, it extends to the whole of $M$. By construction  
the kernel foliation $\LL$ of $\om$ satisfies the assumptions of
Theorem \ref{thm2} on the integrable region $W\setminus\gamma$. 
\end{proof}

\subsection{Non-tame stable Hamiltonian
  structures}\label{subsec:tame} 

As a biproduct of the techniques developed in Section
\ref{subsec:t2inv}, we obtain examples of SHS which are not weakly
tame in the sense of~\cite{CFP}.

Consider a closed $(2n-1)$-manifold $M$ with a Hamiltonian structure
$\om$. Assume that $[\om]$ vanishes on $\pi_2(M)$. 
Denote by $\RR$ the space of contractible closed Reeb orbits.
For $\gamma\in\RR$ we define the {\em $\om$-energy} by 
$$
   E_\om(\gamma):=\int_D\bar\gamma^*\om,
$$
where $\bar\gamma\colon D \rightarrow M$ is a smooth map from the unit
disk 
with $\bar\gamma|_{\p D}=\gamma$. 
Note that this definition is unambiguous because of the assumption on
$[\om]$. For a stabilizing 1-form $\lambda$ we define the {\em
  $\lambda$-energy} of $\gamma$ by  
$$  
   E_\lambda(\gamma):=\int_\gamma\lambda.
$$ 

\begin{definition}\label{def:tame}
Following~\cite{CFP}, we say that a HS $\om$ as above is 
\begin{itemize}
\item {\em weakly tame} if for any compact 
interval $[a,b]\subset \R$ the set of $\gamma\in\RR$ with
$E_\om(\gamma)\in [a,b]$ is compact (with respect to the
$C^\infty$-topology);
\item {\em tame} if for some (and hence every) stabilizing 1-form
$\lambda$ there exists a constant $c_\lambda>0$ such that
$E_\lambda(\gamma)\leq c_\lambda|E_\om(\gamma)|$ for all
$\gamma\in\RR$.
\end{itemize}
\end{definition} 

\begin{remark}
Phrased differently, weak tameness means that the function
$E_\om:\RR\to\R$ is proper. 
It follows from Ascoli-Arcela Theorem that for any SHS the function
$(E_\om,E_\lambda):\RR\to\R^2$ is proper.
Thus a SHS is weakly tame
whenever $E_\lambda$ is bounded in terms of $E_\om$ on $\RR$, e.g.~if
$(\om,\lambda)$ is tame. Note that 
a SHS induced by a contact structure is tame because
$E_\lambda=E_\om$ in that case.   
\end{remark}

\begin{cor}\label{cor:tame}
Let $(\om,\lambda)$ be a stable Hamiltonian structure as in
Theorem~\ref{thm3}, constructed from a contractible knot $\gamma$. 
The $(\om,\lambda)$ is never tame, and it can be arranged not to be
weakly tame.
\end{cor}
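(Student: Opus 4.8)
The plan is to reduce everything to producing one sequence of \emph{contractible} closed Reeb orbits $\gamma_n\in\RR$ with $E_\lambda(\gamma_n)\to\infty$ while $E_\om(\gamma_n)$ stays bounded. Such a sequence rules out tameness at once: for any $c_\lambda$ one has $E_\lambda(\gamma_n)>c_\lambda\sup_m|E_\om(\gamma_m)|$ for $n$ large, contradicting $E_\lambda\le c_\lambda|E_\om|$. And since a $C^\infty$-limit of Reeb orbits has finite $\lambda$-energy, the $\gamma_n$ form a non-precompact subset of $E_\om^{-1}$ of a compact interval, so $E_\om$ is not proper on $\RR$ and $(\om,\lambda)$ is not weakly tame (compare the remark after Definition~\ref{def:tame}). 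For the weaker statement that $(\om,\lambda)$ is merely not tame it is enough to have $E_\lambda(\gamma_n)/|E_\om(\gamma_n)|\to\infty$.

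All the relevant orbits live in the integrable region $K\cong I\times T^2$ of Theorem~\ref{thm3} where the slope twists, which by construction sits inside an embedded solid torus $W=\gamma\times D^2$ with core the given contractible orbit $\gamma$; I use the standard coordinates $(r,\theta,\phi)$ on $W$, with $\theta$ the angular coordinate of $D^2$, so the $\theta$-circles are meridians bounding disks in $W$. By Theorem~\ref{thm:integr} I may assume $\om=\om_h$ and $\lambda|_{\{r\}\times T^2}=\lambda_{g(r)}$ for $h,g\colon I\to\C$, with $\om=d\alpha_h$ on $W$ and $\alpha_h$ smooth, so $h_1(0)=0$. For each $r$ with rational slope the torus $\{r\}\times T^2$ carries closed Reeb orbits $\gamma_{p,q,r}$ winding $q$ times in $\theta$ and $p$ times in $\phi$, with $(q,p)$ a positive multiple of the Reeb direction $ik(r)$; since $\gamma_{p,q,r}$ is freely homotopic in $W$ to $\gamma^p$ and $\gamma$ is contractible in $M$, all of them lie in $\RR$. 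A bookkeeping computation using $\alpha_h$, the disk $\Delta$ of $\om$-area $A:=E_\om(\gamma)$ bounding $\gamma$, and the standing assumption $[\om]|_{\pi_2(M)}=0$, gives
\begin{equation*}
E_\om(\gamma_{p,q,r})=q\,h_1(r)+p\,\bigl(h_2(r)+A-1\bigr)=|(q,p)|\,\la\tilde h(r),ik(r)\ra,\qquad E_\lambda(\gamma_{p,q,r})=|(q,p)|\,\la g(r),ik(r)\ra,
\end{equation*}
where $\tilde h(r):=\bigl(h_1(r),h_2(r)+A-1\bigr)$, and $\la g(r),ik(r)\ra=\la g(r),ih'(r)\ra/|h'(r)|>0$ by the stabilizing inequality in~\eqref{eq:g}. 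In particular $E_\lambda(\gamma_{p,q,r})$ is comparable to $|(q,p)|$.

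Now I exploit the twist. As $r$ runs through $I$ the Reeb direction $ik(r)$ sweeps out all of $S^1$ while $\tilde h(r)$ moves only continuously; choosing the twisting homotopy in Theorem~\ref{thm3} so that $\tilde h$ stays in a bounded region off the origin without winding around it (legitimate, since the twist constrains only the tangent $h'=\tilde h'$, not the position, and $A$ is already known when $h$ is chosen), the continuous function $r\mapsto\la\tilde h(r),ik(r)\ra$ must change sign, hence vanish at some $r_\dagger$. Since the slope is nowhere constant near $r_\dagger$ there are rational-slope parameters $r_n\to r_\dagger$ with $|(q_n,p_n)|\to\infty$, and then $E_\lambda(\gamma_n)/|E_\om(\gamma_n)|=\la g(r_n),ik(r_n)\ra/|\la\tilde h(r_n),ik(r_n)\ra|\to\infty$, so $(\om,\lambda)$ is not tame. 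For the stronger non-weak-tameness I sharpen the choice: I also arrange $h_1(r_*)=0$ at the parameter $r_*$ where $ik(r_*)\parallel\p_\theta$ — there $h_1'(r_*)=0$ automatically, so $h_1$ acquires a double zero at $r_*$ and the Reeb orbit on $\{r_*\}\times T^2$ is a meridian of $W$. Taking $\gamma_n:=\gamma_{1,q_n,r_n}$ with $r_n\to r_*$ and $q_n\to\infty$, the double zero and the estimate $r_n-r_*=O(1/q_n)$ force $q_nh_1(r_n)\to0$, so $E_\om(\gamma_n)\to h_2(r_*)+A-1$ stays bounded, whereas $E_\lambda(\gamma_n)=q_ng_1(r_n)+g_2(r_n)\to+\infty$ because $g_1(r_*)>0$ (from $\lambda(R)=1$ with $R\parallel\p_\theta$ on $\{r_*\}\times T^2$). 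This produces the sequence demanded in the first paragraph.

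I expect the main obstacle to be the sign-change (equivalently, perpendicularity) step: one has to be sure that $\la\tilde h(r),ik(r)\ra$ genuinely vanishes somewhere, which comes down to controlling the winding of the position curve $\tilde h$ against the large winding of $ik$ created by the twist — handled by the freedom in choosing the twisting homotopy, but requiring a careful statement. The only other delicate point is the orientation-and-disk bookkeeping underlying the two energy formulas, which relies on $[\om]|_{\pi_2(M)}=0$; the rest is routine.
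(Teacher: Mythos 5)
Your proposal is correct and follows essentially the same route as the paper: both arguments use the twist (via Lemma~\ref{lem:tw}, which in your write-up reappears as the winding argument for $\tilde h$ against $ik$ — the extra "no winding of $\tilde h$" hypothesis is actually unnecessary, since non-vanishing of $\la\tilde h,ik\ra$ already forces $\arg\tilde h$ to be monotone) to locate a torus where the $\om$-energy density $\beta(R)$ vanishes, and then take orbits on nearby rational tori to blow up $E_\lambda/|E_\om|$. For non-weak-tameness the paper's choice is slightly cleaner — it puts the vanishing at a \emph{rational}-slope torus with $h(r^*)\parallel h'(r^*)$, so that an entire $T^2$-family of closed orbits and all their iterates have $E_\om=0$ — whereas your limiting sequence of meridian-like orbits works too but needs the extra quantitative control ($q_nh_1(r_n)\to 0$ via a nondegenerate double zero of $h_1$) that you correctly flag as requiring an explicit choice of $h$.
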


\begin{proof}
We use the notation from the proof of Theorem~\ref{thm3}. 
By construction, $\om=d\beta$ for a 1-form $\beta$ on $M$ which 
is of the form $h_1(r)d\phi+h_2(r)d\theta$ on the region
$(0,r_0]\times T^2$ around the contractible knot $\gamma$. 
Moreover, we can arrange that $\gamma$ is contractible and
$E_\om(\gamma)=\int_\gamma\beta$. 

By construction, there exists an interval $(a,b)\subset(0,r_0]$ such
that on $(a,b)\times T^2$ the slope function of $\beta$ is nowhere
constant and twists. By Lemma~\ref{lem:tw}, twisting implies that
$\beta$ is not contact on $(a,b)\times T^2$, so there
exists $r^*\in(a,b)$ such that $\beta\wedge d\beta=0$ along
$\{r^*\}\times T^2$. In other words, $\beta(R)|_{\{r^*\}\times
T^2}\equiv 0$ for the Reeb vector field $R$ of $(\om,\lambda)$. 
Since the slope function $S$ is not constant near $r^*$, there exists a
sequence $\{r_p\}_{p\in \N}\subset S^{-1}(\Q)$ with $r_p\to r^*$ as
$p\to \infty$. Then the foliation $\LL$ is rational on the tori
$\{r_p\}\times T^2$ and $\eps_p:=\beta(R)|_{\{r_p\}\times T^2}\to 0$
as $p\to\infty$. Let $\gamma_p$ be a periodic orbit of $R$ on
$\{r_p\}\times T^2$ of period $T_p$. Then its $\lambda$- and
$\om$-energy satisfy 
$$
   E_\lambda(\gamma_p) = \int_0^{T_p}\lambda(R)dt = T_p, \qquad 
   E_\om(\gamma_p) = \int_0^{T_p}\beta(R)dt = \eps_pT_p, 
$$
which shows that $(\om,\lambda)$ is never tame. 

For the last statement, we choose the function $h(r)$ in the proof of
Theorem~\ref{thm3} such that for some $r^*$ the following holds: 
$h'(r^*)$ is rational and $h(r^*)$ is proportional to $h'(r^*)$
(e.g.~$h(r^*)=0$). Then each leaf $\gamma$ of $\LL$ on the torus
$\{r^*\}\times T^2$ is closed and $\int_\gamma\beta=0$, so $\om$ is
not weakly tame. 
\end{proof}


\subsection{Embeddability}\label{subsec:emb}

\begin{lemma}\label{lem:emb}
Let $h=(h_1,h_2)\colon(0,1)\longrightarrow \R_+\times\R_+$ be an {\em
embedding} such that
$(h_1,h_2)\bigl|_{(0,\varepsilon)\cup(1,1-\varepsilon)}=(r^2,1-r^2)$ 
for some $\varepsilon>0$. Then the corresponding $T^2$-invariant Hamiltonian structure 
$d\bigl(h_1(r)d\theta+h_2(r)d\phi\bigr)$ on 
$(0,1)\times T^2$ can be realized as a hypersurface in the
symplectization $\bigl(\R_+\times(0,1)\times
T^2,d(s\alpha_\st)\bigr)$.  
\end{lemma}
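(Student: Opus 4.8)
The plan is to identify the symplectization $\bigl(\R_+\times(0,1)\times T^2,\,d(s\alpha_\st)\bigr)$ (with $s$ the coordinate on $\R_+$) with a transparent ``cotangent--bundle type'' model in which the sought hypersurface is visibly just the graph of $h$. Consider
\[
   \Phi:\R_+\times(0,1)\times T^2\longrightarrow \R_+\times\R_+\times T^2,\qquad
   \Phi(s,r,\theta,\phi):=\bigl(sr^2,\ s(1-r^2),\ \theta,\ \phi\bigr),
\]
where the target is equipped with the symplectic form $\Om_0:=d(x\,d\theta+y\,d\phi)=dx\wedge d\theta+dy\wedge d\phi$ in coordinates $(x,y,\theta,\phi)$. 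Since $\Phi^{*}(x\,d\theta+y\,d\phi)=sr^2\,d\theta+s(1-r^2)\,d\phi=s\,\alpha_\st$, applying $d$ gives $\Phi^{*}\Om_0=d(s\,\alpha_\st)$. So once we know that $\Phi$ is a diffeomorphism onto $\R_+\times\R_+\times T^2$, it suffices to produce the hypersurface in that model.

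For the diffeomorphism claim I would check the elementary fact that $(s,r)\mapsto(sr^2,\,s(1-r^2))$ maps $\R_+\times(0,1)$ bijectively onto the open quadrant $\R_+\times\R_+$, the inverse being the smooth map $(x,y)\mapsto\bigl(x+y,\ \sqrt{x/(x+y)}\,\bigr)$ (note $x+y>0$ and $x/(x+y)\in(0,1)$ on the open quadrant); then $\Phi$ is this diffeomorphism times $\id_{T^2}$. In the model $(\R_+\times\R_+\times T^2,\Om_0)$ the hypersurface is simply the image of
\[
   \jmath:(0,1)\times T^2\longrightarrow \R_+\times\R_+\times T^2,\qquad
   \jmath(r,\theta,\phi):=\bigl(h_1(r),\ h_2(r),\ \theta,\ \phi\bigr),
\]
which lands in $\R_+\times\R_+\times T^2$ precisely because $h$ takes values in the open quadrant. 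As $h$ is an embedding, $\jmath=h\times\id_{T^2}$ is an embedding, and its image is a $3$-dimensional submanifold of the $4$-manifold $\R_+\times\R_+\times T^2$, hence a hypersurface. Moreover $\jmath^{*}\Om_0=d\bigl(\jmath^{*}(x\,d\theta+y\,d\phi)\bigr)=d\bigl(h_1(r)\,d\theta+h_2(r)\,d\phi\bigr)=\om_h$. Therefore $\iota:=\Phi^{-1}\circ\jmath$ is an embedding of $(0,1)\times T^2$ into the symplectization with $\iota^{*}d(s\,\alpha_\st)=\om_h$, which is exactly the assertion.

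I do not expect a genuine obstacle here: the only computation of substance is the verification that $\Phi$ is a diffeomorphism onto the open quadrant times $T^2$, which reduces to the inverse formula above, and everything else is a pullback identity together with the observation that an embedding crossed with the identity is an embedding. One may also note that the endpoint normalization $(h_1,h_2)=(r^2,1-r^2)$ near $r=0,1$ — not needed for the lemma as stated — forces $\iota$ to map both ends into the standard slice $\{s=1\}$ (there $sr^2=r^2$ and $s(1-r^2)=1-r^2$ give $s=1$), which is what will later allow this piece to be glued to the standard model near the binding.
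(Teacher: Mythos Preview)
Your proof is correct and essentially identical to the paper's: the paper defines the diffeomorphism $F:\R_+\times\R_+\times T^2\to\R_+\times(0,1)\times T^2$ (your $\Phi^{-1}$) with the same inverse formula, checks $F^*(s\alpha_\st)=x_1\,d\theta+x_2\,d\phi$, and then composes with the graph embedding $H=h\times\id_{T^2}$ (your $\jmath$). Your closing observation that the standardization near $r=0,1$ lands the ends in $\{s=1\}$ is a useful addendum not spelled out in the paper's proof.
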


\begin{proof}
Consider the map $f:\R_+\times\R_+\to\R_+\times(0,1)$, 
$$
   f(x_1,x_2) := \left(x_1+x_2,\sqrt{\frac{x_1}{x_1+x_2}}\right). 
$$
This map is a diffeomorphism with inverse given by
$$
   f^{-1}(s,r) = \bigl(sr^2,s(1-r^2)\Bigr). 
$$
Thus it induces a diffeomorphism 
$$
   F:\R_+\times\R_+\times T^2\to\R_+\times(0,1)\times T^2,\qquad
   (x_1,x_2,\theta,\phi)\mapsto \bigl(f(x_1,x_2),\theta,\phi)
$$
which satisfies
$$
   F^*(s\alpha_\st) = F^*\Bigl(sr^2d\theta+s(1-r^2)d\phi\Bigr) =
   x_1d\theta+x_2d\phi. 
$$
The embedding $h=(h_1,h_2):(0,1)\to\R_+\times\R_+$ induces an
embedding
$$
   H:(0,1)\times T^2\to\R_+\times\R_+\times T^2,\qquad
   (r,\theta,\phi)\mapsto \bigr(h_1(r),h_2(r),\theta,\phi\bigr). 
$$
Hence the composition 
$$
   F\circ H:(0,1)\times T^2\to\R_+\times(0,1)\times T^2
$$
is an embedding satisfying
$$
   (F\circ H)^*(s\alpha_\st) = H^*(x_1d\theta+x_2d\phi) =
   h_1(r)d\theta+h_2(r)d\phi.
$$
Taking the exterior derivative on both sides the lemma follows. 
\end{proof}

As an application of this lemma, let us apply the construction of
Theorem~\ref{thm3} (with the function $h^1$ taking values only in the first quadrant) 
in Section~\ref{subsec:genuine} to the SHS
$(\om=d\alpha,\lambda=\alpha)$ corresponding to a Stein fillable
contact manifold $(M,\alpha)$. Since the symplectization of
$(M,\alpha)$ then embeds into the completion of the Stein filling, 
Lemma~\ref{lem:emb} implies 

\begin{cor}
Let $(\om=d\alpha,\lambda=\alpha)$ be the SHS corresponding to a Stein
fillable contact manifold $(M,\alpha)$. Then the resulting stable Hamiltonian
structure $(\om_1,\lambda_1)$ in Theorem~\ref{thm3} embeds as a
hypersurface in (the completion of) the same Stein manifold. 
For example, applying this to the standard contact structure on $S^3$,
we obtain a hypersurface bounding a ball in standard $\R^4$ whose
characteristic foliation has the properties in Theorem~\ref{thm2}. 
\end{cor}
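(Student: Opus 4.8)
The corollary follows by combining Theorem~\ref{thm3} with Lemma~\ref{lem:emb}: the idea is to realize the output $(\om_1,\lambda_1)$ of Theorem~\ref{thm3} as a hypersurface inside the symplectization $(\R_+\times M,d(s\alpha))$, which embeds onto a collar neighbourhood of $\partial X$ in the completion $\hat X$ of a Stein filling $X$ of $(M,\alpha)$ (with $\{s=1\}=\partial X$), by perturbing $\partial X$ only over the solid torus produced in Theorem~\ref{thm3} and leaving it equal to $\partial X$ elsewhere.

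First I would run the construction of Theorem~\ref{thm3} on $(\om=d\alpha,\lambda=\alpha)$. Since $f=d\lambda/\om\equiv 1$, the appeal to Proposition~\ref{prop:contact-region} inside that proof goes through a homotopy of contact forms, so by Gray stability the contact structure, hence the Stein filling, is unchanged, and after renaming we may assume $\alpha=\alpha_\st$ on an embedded solid torus $W=\gamma\times D^2$, $D^2=\{r\le r_0\}$, $r_0<1$, around a contractible periodic orbit $\gamma$. The twist homotopy of Theorem~\ref{thm3} is supported in $\mathrm{int}(W\setminus\gamma)\cong(0,r_0)\times T^2$ and yields $(\om_1,\lambda_1)$ equal to $(d\alpha,\alpha)$ off that set and to $(d\alpha_{h^1},\lambda_g)$ on $W\setminus\gamma\cong(0,r_0]\times T^2$, where $h^1=(r^2,1-r^2)$ near $r=0$ and $r=r_0$ and has twisting, nowhere constant slope on a small middle interval. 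As noted in the proof of Theorem~\ref{thm3} and at the start of Section~\ref{subsec:emb}, I would take $h^1$ to be an \emph{embedding} into the open first quadrant; confining all the twisting to a small disc, I would in addition keep $h^1$ $C^0$-close to $(r^2,1-r^2)$ and inside the sector $\{h_1/h_2\le r_0^2/(1-r_0^2)\}$.

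Over $W$ the collar of $\partial X$ contains $(\R_+\times W,d(s\alpha_\st))$. Applying the construction in the proof of Lemma~\ref{lem:emb} over $W\setminus\gamma$, the diffeomorphism $F$ there carries the graph of $h^1$ to an embedded hypersurface $\tilde W$ with $d(s\alpha_\st)|_{\tilde W}=d\alpha_{h^1}$; in coordinates $(s,\rho,\theta,\phi)$ it equals $\{(h^1_1+h^1_2,\sqrt{h^1_1/(h^1_1+h^1_2)},\theta,\phi)\}$, so the sector condition keeps $\tilde W$ over $(0,r_0]\times T^2\subset W$ and the $C^0$-smallness keeps $s$ near $1$, inside the collar, while near $r=0$ and $r=r_0$ one has $\tilde W=\{s=1\}$. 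Setting $\tilde M\subset\hat X$ to be $\partial X$ over $M\setminus\mathrm{int}(W\setminus\gamma)$ glued to $\tilde W$ over $W\setminus\gamma$ then gives a smooth embedded hypersurface, and the tautological diffeomorphism $\iota\colon M\to\tilde M$ satisfies $\iota^*(\Om|_{\tilde M})=\om_1$; by Lemma~\ref{lem:hyper}, $\tilde M$ is automatically stable, which proves the first assertion. For $(S^3,\alpha_\st)$ we have $X=B^4$ and $\hat X=(\R^4,\Om_\st)$, so $\tilde M$ is an embedded $3$-sphere in $\R^4$; choosing the whole homotopy $h^s$ of Theorem~\ref{thm3} (Figure~\ref{fig:contr}) to run through embeddings into the sector, which is possible since $h^0$ and $h^1$ are embedded planar arcs with the same endpoint $1$-jets and turning number $0$ (the two full turns of $h^1$ cancel), the maps equal to $F\circ H^s$ over $W\setminus\gamma$ and to the standard inclusion elsewhere form an isotopy of embeddings from the round $S^3$ to $\tilde M$, supported in the collar; by isotopy extension $\tilde M$ is ambient isotopic to the round $S^3$ and therefore bounds a region diffeomorphic to $B^4$. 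Its characteristic foliation $\ker(\Om_\st|_{\tilde M})$ corresponds under $\iota$ to $\ker\om_1=\LL$, which by the construction in Theorem~\ref{thm3} satisfies the hypotheses of Theorem~\ref{thm2}.

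The point requiring care is not analytic but a piece of elementary planar topology: arranging $h^1$, and the homotopy $h^s$, to be embedded arcs lying in the thin subsector $\{h_1/h_2\le r_0^2/(1-r_0^2)\}$ of the first quadrant with all twisting confined to a small disc, so that Lemma~\ref{lem:emb} applies and $\tilde M$ is a genuine small perturbation of $\partial X$ inside the symplectization collar. Granting this, everything else is routine gluing.
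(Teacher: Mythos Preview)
Your proof is correct and follows the same approach as the paper: apply the construction of Theorem~\ref{thm3} with $h^1$ taking values in the open first quadrant, then invoke Lemma~\ref{lem:emb} together with the embedding of the symplectization into the completed Stein filling. The paper's own argument is a two-line sketch; you have carefully supplied several details it leaves implicit, notably the sector condition $h_1/h_2\le r_0^2/(1-r_0^2)$ ensuring the perturbed hypersurface stays over the solid torus $W$ (so that the gluing with $\{s=1\}$ outside $W$ produces an embedded hypersurface), and the explicit isotopy of embeddings for the $S^3$ case to justify that the resulting sphere bounds a ball.
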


\begin{remark}
Lemma~\ref{lem:emb} fails without the assumption that $(h_1,h_2)$
takes values only in the positive quadrant, see
Remark~\ref{rem:exotic} 
\end{remark}

\subsection{Foliated cohomology of integrable
  regions}\label{ss:folcoh3} 

\begin{prop}\label{prop:folcoh3}
Let $\om$ be a Hamiltonian structure on a closed 3-manifold $M$ with
$\ker(\om)=\LL$. If $\LL$ possesses an integrable region on which the
slope function is non-constant, then the foliated cohomology
$H^2_\LL(M)$ is infinite dimensional. 
\end{prop}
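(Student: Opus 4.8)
The plan is to exhibit infinitely many linearly independent classes in $H^2_\LL(M)$, all supported in the integrable region and detected by a dual family of linear functionals that pairs with them diagonally. Since $\LL$ admits an integrable region it is a stable Hamiltonian foliation, so I fix a SHS $(\om_0,\lambda_0)$ defining $\LL$ with Reeb field $R$, and invoke Theorem~\ref{thm:integr} to obtain coordinates $(r,\theta,\phi)$ on the integrable region $U\cong I\times T^2$ in which both $R$ and $\om_0$ are $T^2$-invariant; then $\om_0=dr\wedge\beta_r$ with $\beta_r=k_1(r)\,d\theta+k_2(r)\,d\phi$ having constant coefficients in $(\theta,\phi)$ and $\beta_r\neq 0$, and the slope function of $\LL$ on $U$ is the normalized path $r\mapsto v(r):=(k_1(r),k_2(r))\in\R^2\setminus\{0\}\cong H^1(T^2;\R)\setminus\{0\}$, with $[\beta_r]=v(r)$. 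Let $S\subset I$ be the open set of $r$ at which $v(r)$ and $v'(r)$ are linearly independent; this is exactly the set where the slope function has nonzero velocity, so since the slope function is nonconstant and smooth, $S\neq\emptyset$. Choose countably many pairwise disjoint closed subintervals $[a_n,b_n]\subset S\cap\inn I$ with interior points $r_n$, and bump functions $\rho_n\in C^\infty_c((a_n,b_n))$ with $\rho_n(r_n)\neq 0$. As test forms take $\theta_n:=\rho_n(r)\,\om_0$, extended by zero over $M$: one checks $i_R\theta_n=0$ and $d\theta_n=\rho_n'\,dr\wedge\om_0=0$, so $\theta_n\in\Om^2_\LL(M)$ represents a class in $H^2_\LL(M)$.

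For $r\in S$ define $\Phi_r\colon\Om^2_\LL(M)\to\R$ by $\Phi_r(\theta):=\langle g\rangle_r:=\int_{\{r\}\times T^2}(\theta/\om_0)\,d\theta\,d\phi$; this makes sense because pointwise $\theta$ is a multiple of $\om_0$ (both are annihilated by $R$, which spans the kernel line), so $\theta|_U=g\,\om_0$ for a smooth function $g$, and $i_Rd\theta=0$ forces $R\!\cdot\!g=0$, i.e.~$g$ is constant along the leaves. The crux is that $\Phi_r\circ d=0$ on $\Om^1_\LL(M)$. To prove it I would take $\alpha\in\Om^1_\LL(M)$ and work on $U$: from $i_R\alpha=0$ write $\alpha=\psi_r\beta_r+p\,dr$; expanding $d\alpha$, its purely $T^2$-directional part equals $d_{T^2}\psi_r\wedge\beta_r$, which must vanish since $g\,\om_0$ has none, so $\psi_r$ is leafwise constant and $d_{T^2}\psi_r=\mu_r\beta_r$. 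Applying $d_{T^2}$ to the $dr$-directional part of $d\alpha=g\,\om_0$ then produces (using constancy of the coefficients of $\beta_r$ and of $R$, and $R\!\cdot\!g=0$) a nonzero multiple of $\mu_r\,\beta_r'\wedge\beta_r$, which vanishes; on $S$, where $\beta_r'\wedge\beta_r\neq 0$, this gives $\mu_r\equiv 0$, so $\psi_r=P(r)$ depends on $r$ only. Finally, taking $H^1(T^2;\R)$-cohomology classes of the $dr$-directional part of $d\alpha=g\,\om_0$ — legitimate because $T^2$-invariance keeps all the $1$-forms on $\{r\}\times T^2$ in sight closed, and the classes are computed by averaging coefficients — yields on $S$ the identity $\bigl(P'(r)-\langle g\rangle_r\bigr)\,v(r)=P(r)\,v'(r)$. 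Linear independence of $v(r)$ and $v'(r)$ on $S$ forces $P\equiv 0$, hence $P'\equiv 0$, hence $\langle g\rangle_r=P'(r)=0$ for all $r\in S$; thus $\Phi_r(d\alpha)=0$.

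Granting the claim, each $\Phi_r$ descends to $H^2_\LL(M)$, and $\Phi_{r_m}(\theta_n)=\rho_n(r_m)=\rho_m(r_m)\,\delta_{nm}$ with $\rho_m(r_m)\neq 0$; so any finite relation $\sum_n c_n[\theta_n]=0$ in $H^2_\LL(M)$ forces all $c_m=0$, and $\{[\theta_n]\}_{n\ge 1}$ is linearly independent, whence $H^2_\LL(M)$ is infinite dimensional. I expect the main obstacle to be the middle step: carefully separating $d\alpha=g\,\om_0$ into its $T^2$-directional and $dr$-directional components, verifying that $T^2$-invariance of $\om_0$ makes every relevant $1$-form on $\{r\}\times T^2$ closed so that passing to $H^1(T^2;\R)$ is valid and computes by averaging, and recognizing that ``the slope function has nonzero velocity at $r$'' is precisely the linear independence of $[\beta_r]$ and $\tfrac{d}{dr}[\beta_r]$ that collapses the identity $(P'-\langle g\rangle)v=Pv'$. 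The remaining ingredients — the $T^2$-invariant normal form for $\om_0$, fitting infinitely many disjoint bumps into the nonempty open set $S$, and the linearity and well-definedness of $\Phi_r$ — are routine.
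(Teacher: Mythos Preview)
Your overall strategy is sound and close to the paper's, but the middle step --- deducing $\mu_r\equiv 0$ on $S$ by applying $d_{T^2}$ to the $dr$-directional part of $d\alpha=g\,\om_0$ --- contains a genuine error. Writing out that equation as
\[
(\partial_r\psi-g)\,\beta_r+\psi\,\beta_r'=d_{T^2}p
\]
and applying $d_{T^2}$, the two nontrivial contributions are $d_{T^2}(\partial_r\psi)\wedge\beta_r=\mu\,\beta_r'\wedge\beta_r$ (from $\partial_r(d_{T^2}\psi)=\partial_r(\mu\beta_r)=(\partial_r\mu)\beta_r+\mu\beta_r'$) and $d_{T^2}\psi\wedge\beta_r'=\mu\,\beta_r\wedge\beta_r'$. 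These cancel exactly, and since $R\cdot g=0$ gives $d_{T^2}g\wedge\beta_r=0$, the identity you obtain is $0=0$, not a nonzero multiple of $\mu\,\beta_r'\wedge\beta_r$. So this step does not force $\mu_r=0$.

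The gap is easily repaired in either of two ways. First, the claim $\mu_r\equiv 0$ on $S$ \emph{is} true, by the density argument the paper uses: on $S$ the slope map is a submersion, so the tori with irrational slope are dense; on each such torus every $R$-invariant function is constant, hence $d_{T^2}\psi_r=0$ there, and continuity gives $\mu_r\equiv 0$. Second --- and this makes the middle step unnecessary --- your final averaging argument already works without knowing $\psi_r$ is constant on tori: set $P(r):=\overline{\psi_r}=\int_{T^2}\psi_r\,d\theta\,d\phi$, average the displayed equation over $T^2$ (equivalently, take $H^1(T^2;\R)$-classes), and you get $(P'(r)-\langle g\rangle_r)\,v(r)+P(r)\,v'(r)=0$ directly, whence $P\equiv 0$ and $\langle g\rangle_r\equiv 0$ on $S$ as before.

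For comparison, the paper's proof streamlines the same computation: it first shrinks $I$ so the slope is nowhere constant (your set $S$), then uses density of irrational tori to conclude at the outset that every $R$-invariant function on $I\times T^2$ depends on $r$ only. This immediately gives $\Om^2_\LL(I\times T^2)=\{f(r)\,d\alpha\}$ and reduces the image computation to an ODE $\tfrac{d}{dr}(b(r)h'(r))=f(r)h'(r)$; nowhere-constancy of the slope forces $b\equiv 0$, hence $\im(d\!\!\restriction_{\Om^1_\LL})=0$ on the region, and compactly supported $f$'s inject into $H^2_\LL(M)$. Your dual-functional packaging is a perfectly good alternative, once the middle step is corrected or bypassed.
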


\begin{remark}
It follows that under the hypotheses of
Proposition~\ref{prop:folcoh3}, $\ker\bigl(\kappa:H^2_\LL(M)\to
H^2(M)\bigr)$ is infinite dimensional. 
\end{remark}

\begin{proof}
Let $I\times T^2$ be an open integrable region on which $\om=d\alpha$
with  
$$
   \alpha = h_1(r)d\theta+h_2(r)d\phi
$$
and non-constant slope function $S(r)=h'(r)/|h'(r)|$. After shrinking
$I$ we may assume that $S(r)$ is nowhere constant in $I$. Note that
the kernel foliation $\LL$ is positively generated  by 
the vector field 
$$
   R = h_1'(r)\p_\phi-h_2'(r)\p_\theta.
$$
spans the oriented foliation $\LL$.
Consider first a 2-form $\beta\in\Om^2_\LL(I\times T^2)$. Then $\beta
= f\,d\alpha$ for an $R$-invariant function $f$. As $f$ must be
constant on each irrational torus and irrational tori are dense
(because the slope of $R$ is nowhere constant), $f=f(r)$ depends only
on $r$. This shows that 
$$
   \Om^2_\LL(I\times T^2) = \ker(d:\Om^2_\LL\to\Om^3_\LL) =
   \{f(r)\,d\alpha\mid f:I\to\R\}. 
$$
Next we compute the image of $d:\Om^1_\LL\to\Om^2_\LL$. Thus consider
$\mu\in\Om^1_{\LL}$. The condition $i_R\mu=0$ implies 
$$
   \mu = a\,dr + b\bigl(h_1'(r)d\theta + h_2'(r)d\phi\bigr) 
$$
for functions $a,b:I\times T^2\to\R$. By the discussion above,
$i_Rd\mu=0$ implies that $d\mu = f(r)d\alpha$ for a function
$f:I\to\R$, which is equivalent to the conditions
\begin{align*}
   \frac{\p}{\p r}(bh_1') - \frac{\p a}{\p\theta} &= f(r)h_1'(r), \cr  
   \frac{\p}{\p r}(bh_2') - \frac{\p a}{\p\phi} &= f(r)h_2'(r), \cr  
   h_1'(r)\frac{\p b}{\p\phi} - h_2'(r)\frac{\p b}{\p\theta} &= 0.  
\end{align*}
The last equation shows that $b$ is invariant under $R$ and thus a
function $b=b(r)$ only of $r$. The first two equations then show that
$\frac{\p a}{\p\theta}$ and $\frac{\p a}{\p\phi}$ depend only on $r$,
so by periodicity $a=a(r)$ depends only on $r$. The first two
equations now combine to
$$
   b(r)h''(r) + b'(r)h'(r) = \frac{d}{dr}\Bigl(b(r)h'(r)\Bigr) =
   f(r)h'(r). 
$$
Since $h''(r)$ is linearly independent of $h'(r)$ for almost all $r$,
this implies $b\equiv 0$, and therefore $\mu=a(r)dr$ and $d\mu=0$. So
we have shown  
$$
   \im(d:\Om^1_\LL\to\Om^2_\LL) = 0
$$
and
$$
   H^2_\LL(I\times T^2) = \{f(r)\,d\alpha\mid f:I\to\R\}. 
$$
Now if $f\in C^\infty_0(I)$ has compact support the 2-form
$f(r)d\alpha$ extends by zero to a closed form in $\Om^2_\LL(M)$ which
by the preceding argument is exact iff $f\equiv 0$, so $H^2_\LL(M)$
contains the infinite dimensional subspace
$$
   \{f(r)\,d\alpha\mid f\in C^\infty_0(I)\}. 
$$
\end{proof}

\subsection{Overtwisted disks}\label{subsec:otd}

Legendrian and transverse knots play a central role in contact
topology. The basic reason for this is that, 
as a consequence of Gray's stability theorem, the homotopy types of the
spaces of Legendrian and transverse knots do not change during
contact homotopies (see below for the precise formulation). 
In this subsection we show that any SHS in dimension 3 is homotopic to
one containing an overtwisted disk, and as a consequence, the homotopy
types of the spaces of Legendrian and transverse knots may change
during stable homotopies. 

Recall that an {\em overtwisted disk} in a contact 3-manifold
$(M,\xi=\ker\lambda)$ is an embedded disk $D_\ot\subset M$ such that
$TD_\ot=\xi$ along $\p D_\ot$.   

\begin{prop}\label{prop:otd}
Any SHS $(\om_0,\lambda_0)$ on a closed oriented 3-manifold $M$ is
stably homotopic to a SHS $(\om_1,\lambda_1)$ such that $\om_1=
d\lambda_1$ on an open region $U\subset M$ and there exists an overtwisted
disk $D_\ot\subset U$. 
\end{prop}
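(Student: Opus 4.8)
The plan is to reduce, via Proposition~\ref{prop:contact-region}, to the case where $(\om_0,\lambda_0)$ equals the standard contact model $(d\alpha_\st,\alpha_\st)$, $\alpha_\st=r^2\,d\theta+(1-r^2)\,d\phi$, on an embedded solid torus $W=S^1\times D^2\subset M$ — here $\phi$ is the coordinate on the core $S^1$, $(r,\theta)$ are polar coordinates on $D^2=\{r\le r_0\}$, and $\gamma:=S^1\times\{0\}$ is the core — and then to alter $(\om_0,\lambda_0)$ by a stable homotopy supported in $\inn W$. On $W\setminus\gamma\cong(0,r_0]\times T^2$ we are in the setting of Section~\ref{subsec:t2inv}, with $\om_0=\om_{h^0}$, $\lambda_0=\lambda_{h^0}$, $h^0(r)=(r^2,1-r^2)$; note $(h^0)'=2r(1,-1)$ has constant direction, so $w(h^0)=0$. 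The one structural point to keep in mind is that the homotopy cannot be made through contact forms: a homotopy of contact forms fixed near $\p W$ keeps $\ker\lambda_t$ isotopic to the tight model and can never produce an overtwisted disk, so the intermediate stable Hamiltonian structures must genuinely leave the contact world — which is exactly what the stabilization results of Section~\ref{subsec:t2inv} permit.

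Next I would construct the target SHS $(\om_1,\lambda_1)=(\om_{h^1},\lambda_{g^1})$ on $(0,r_0]\times T^2$. On an initial interval $[0,r^*+\delta]$ I set $g^1=h^1$ and take $h^1$ to be a positive contact curve, i.e.\ (in the notation of Section~\ref{subsec:t2inv}) with $\la h^1,i(h^1)'\ra>0$, equivalently with the argument $\sigma(r)$ of $h^1(r)=|h^1(r)|e^{i\sigma(r)}$ strictly decreasing; I keep $h^1=h^0$ near $r=0$ and arrange $\sigma$ to run from its limiting value $\pi/2$ at $r=0$ down to $-\pi/2$ at $r=r^*$ (a ``half Lutz twist''), so that $h^1_1(r^*)=0$ while $h^1_2(r^*)<0$. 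On $[0,r^*+\delta]$ the SHS is then the honest positive contact SHS $(d\alpha_{h^1},\alpha_{h^1})$, and the meridian disk $D_\ot:=\{\phi=\phi_0,\ r\le r^*\}$ is an overtwisted disk: its boundary $\{\phi=\phi_0,\ r=r^*\}$ is tangent to $\p_\theta$, and along it the contact plane $\xi=\ker\lambda_1$ is spanned by $\p_r$ and $\p_\theta$, since $\lambda_1=h^1_1(r)\,d\theta+h^1_2(r)\,d\phi$ has no $dr$-term, $\lambda_1(\p_\theta)=h^1_1(r^*)=0$, and $\lambda_1(\p_\phi)=h^1_2(r^*)\ne 0$; hence $TD_\ot=\xi$ along $\p D_\ot$. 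On the remaining interval $[r^*+\delta,r_0]$ I extend $h^1$ to an immersion that equals $h^0$ near $r_0$, choosing its slope to perform one full twist in the sense of Definition~\ref{twistnowconst} (contributing $0$ to the winding, and making the slope nowhere constant) together with just the additional net turning needed so that $w(h^1)=0=w(h^0)$; since the slope is non-constant, Proposition~\ref{prop:t2inv-stab} yields $g^1$ on $[r^*+\delta,r_0]$ with $(h^1,g^1)$ satisfying~\eqref{eq:g}, equal to $h^1$ near $r^*+\delta$ and to $h^0$ near $r_0$. Together with $\om_{h^0}=d\alpha_\st$ over the core, this defines a SHS $(\om_1,\lambda_1)$ on $W$ which equals $(\om_0,\lambda_0)$ near $\p W$, which is contact with $\om_1=d\lambda_1$ on the open set $U:=\{r<r^*+\delta\}$, and which has $D_\ot\subset U$ overtwisted.

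Finally I would connect $(h^0,h^0)$ to $(h^1,g^1)$: both lie in the space of pairs satisfying~\eqref{eq:g} that agree with $(h^0,h^0)$ near $r=0$ and near $r=r_0$ and have winding number $0$, which is weakly contractible (in particular path-connected) by Remark~\ref{rem:contractible}, so there is a path $(h_t,g_t)$ between them in this space. This is a stable homotopy of $T^2$-invariant SHS on $(0,r_0]\times T^2$ fixed near $r=0$ and near $r=r_0$; extending it by $(d\alpha_\st,\alpha_\st)$ over $\gamma$ and by $(\om_0,\lambda_0)$ over $M\setminus\inn W$ produces the desired stable homotopy $(\om_t,\lambda_t)$ on $M$. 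I expect the main technical point to be the winding bookkeeping just sketched — the ``half Lutz'' contact piece forces a definite amount of turning that must be paid back on $[r^*+\delta,r_0]$ by the non-contact piece — together with the observation (used as in the proof of Corollary~\ref{cor:class}) that one can take the connecting family $h_t$ to have nowhere-constant slope on $[\eps,1-\eps]$, so that Proposition~\ref{prop:t2inv-stab} applies in families along the homotopy.
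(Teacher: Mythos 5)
Your proposal is correct and follows essentially the same route as the paper: after the reduction via Proposition~\ref{prop:contact-region}, both arguments deform the $T^2$-invariant curve $h$ rel its ends to one that is positive contact on an initial interval and meets the negative $h_2$-axis at some $r^*$ (so that the meridian disk $\{r\le r^*,\ \phi=\const\}$ becomes the overtwisted disk), and both invoke the stabilization machinery of Section~\ref{subsec:t2inv} to carry the stabilizing forms along the homotopy. The only immaterial difference is in the bookkeeping of stabilizing forms: the paper stabilizes the whole family of curves via Corollary~\ref{cor:stablin} and then adjusts the final stabilizing form to equal $\alpha_1$ on the contact region, whereas you build the endpoint pair $(h^1,g^1)$ with $g^1=h^1$ there and connect it to $(h^0,h^0)$ using Remark~\ref{rem:contractible}.
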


\begin{proof}
By Proposition~\ref{prop:contact-region}, after a stable homotopy we
may assume that there exists an embedded solid torus $S^1\times D$ in
$M$ on which $(\om_0,\lambda_0)=(d\alpha_\st,\alpha_\st)$ with 
$$
   \alpha_\st = r^2d\theta + (1-r^2)d\phi.
$$
Here $(\phi,r,\theta)$ are polar coordinates on $S^1\times D$ and
$D=\{r\leq r_2\}$ for some $r_2>0$. 

Let $h_t=(h_{1t},h_{2t}):[0,r_2]\to\R^2$ be a family of immersions and $0<r_*<r_0<r_1<r_2$
be points with the following properties
\begin{figure}[htb!]\label{fig:otd}
\centering
\includegraphics{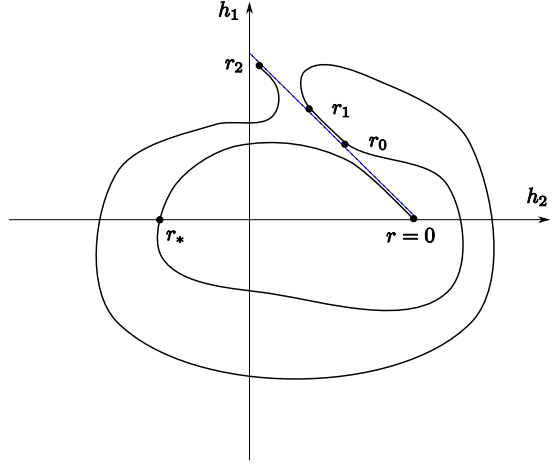}
\caption{Overtwisted disc}
\end{figure}
(see Figure \ref{fig:otd}); the curve $h_0$ is dashed, the curve $h_1$
is bold, and the points $r=0$, $r_*$, $r_0$, $r_1$, $r_2$ follow the
direction of increase of the parameter $r$ on the curve $h_1$; note
that the $h_1$-axis is vertical and the $h_2$-axis horizontal).  
\begin{itemize}
\item $h_0(r)=(r^2,1-r^2)$;
\item $h_t(r)=(r^2,1-r^2)$ near $r=0$ and $r=r_2$ for all $t$;
\item $h_1(r)=(r^2,1-r^2)$ for $r\in [r_0,r_1]$
\item $h_{11}'h_{21}-h_{21}'h_{11}>0$ on the interval $(0,r_1]$;
\item $h_{11}(r_*)=0$ and $h_{21}(r_*)<0$. 
\end{itemize}
This induces a homotopy of $T^2$-invariant HS $\om_t:=d\alpha_t$ with 
$$
   \alpha_t := h_{1t}(r)d\theta + h_{2t}(r)d\phi. 
$$
Note that, $\alpha_t=\alpha_\st$ near $r=0$ and $r=r_2$ for all $t$, and
$\alpha_1=\alpha_\st$ for $r\in [r_0,r_1]$. Moreover, $\alpha_1$ is a
positive contact form on $\{r\le r_1\}$. We apply Corollary~\ref{cor:stablin}  
twice. First, we apply it to the homotopy $\{\om_t\}_{t\in [0,1]}$
to get a family of $T^2$-invariant 1-forms $\lambda_t$ stabilizing
$\om_t$. Second, we apply Corollary~\ref{cor:stablin} to the constant homotopy
$\om_1\bigl|_{\{r\in [r_0,r_2]\}}$. This gives us a stabilizing form
$\hat\lambda$ for $\om_1\bigl|_{\{r\in [r_0,r_2]\}}$ which  
coincides with $\alpha_1$ on $\{r\in [r_0,r_0+\varepsilon]\}$ (both
forms equal $\alpha_{st}$ there). Since $\alpha_1$ is a contact form
on $\{r\le r_1\}$, the form $\hat\lambda$ can be extended
from $\{r\in [r_0,r_2]\}$ to $\{r\le r_1\}=S^1\times D$ as $\alpha_1$.
Thus $\hat\lambda$ stabilizes $\om_1$ and 
$\om_1=d\hat\lambda$ on $U:=\{r<r_0+\varepsilon\}$. 
We join $\lambda_1$ and $\hat\lambda$ by a linear homotopy. This
shows that $(\om_0,\lambda_0)$ is stably homotopic to the SHS
$(\om_1,\hat\lambda)$. We claim that for each angle $\phi_*$,  
$D_\ot:=\{r\leq r_*,\phi=\phi_*\}\subset U$ is an overtwisted disk for
the contact form $\hat\lambda|_U=\alpha_1|_U$. Indeed, the conditions 
$h_{11}(r_*)=0$ and $h_{21}(r_*)<0$ imply that along $\p D_\ot$ the
contact planes $\xi_1=\ker\alpha_1$ and the tangent spaces $TD_\ot$ are
both spanned by the vectors $\p_r$ and $\p_\theta$. Renaming
$\lambda_1:=\hat\lambda$, thus concludes the proof.
\end{proof}

Now consider a nowhere vanishing 1-form $\lambda$ on an
oriented 3-manifold $M$ with kernel distribution $\xi=\ker\lambda$. 
Following the terminology from contact topology (see e.g.~\cite{Et}),
we call an oriented knot $\gamma:S^1\to M$ {\em Legendrian} if
$\lambda(\dot\gamma)\equiv 0$ and {\em (positively) transverse} if
$\lambda(\dot\gamma)>0$. Denote by $\Om^1_\nv$ the space of nowhere
vanishing 1-forms on $M$ and by $\Lambda$ the space of embeddings
$S^1\into M$, both equipped with the $C^\infty$-topology, and consider
the projections 
\begin{align*}
   \pi^\Leg:\{(\lambda,\gamma)\in\Om^1_\nv\times\Lambda\mid
   \lambda(\dot\gamma)\equiv 0\}\to\Om^1_\nv, \cr
   \pi^\trans:\{(\lambda,\gamma)\in\Om^1_\nv\times\Lambda\mid
   \lambda(\dot\gamma)> 0\}\to\Om^1_\nv.
\end{align*}
Thus the fibres $(\pi^\Leg)^{-1}(\lambda)$
resp.~$(\pi^\trans)^{-1}(\lambda)$ are the spaces of Legendrian
resp.~transverse knots for $\lambda$. As a consequence of Gray's
stability theorem, the restrictions of $\pi^\Leg$ and $\pi^\trans$ to
the preimages of the space of contact forms are locally trivial
fibrations. By contrast, denote by $\SHS$ the space of SHS and
consider the projections 
\begin{align*}
   \Pi^\Leg:\{(\om,\lambda,\gamma)\in\SHS\times\Lambda\mid
   \lambda(\dot\gamma)\equiv 0\}\to\SHS, \cr
   \Pi^\trans:\{(\om,\lambda,\gamma)\in\SHS\times\Lambda\mid
   \lambda(\dot\gamma)> 0\}\to\SHS.
\end{align*}

\begin{cor}
The projections $\Pi^\Leg$ and $\Pi^\trans$ are not Serre fibrations. 
\end{cor}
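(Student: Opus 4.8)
The plan is to derive a contradiction from the homotopy lifting property, using Proposition~\ref{prop:otd} to connect a tight contact structure to a stable Hamiltonian structure containing an overtwisted disk, with the classical Bennequin inequality as the obstruction. The first point to establish is that the relevant contact invariants are locally constant on the total spaces of $\Pi^\Leg$ and $\Pi^\trans$. For a nowhere vanishing $1$-form $\lambda$ with cooriented plane field $\xi=\ker\lambda$ and an oriented Legendrian knot $\gamma$ (i.e.~$T\gamma\subset\xi$), pushing $\gamma$ off inside $\xi$ gives a well-defined framing; when $\gamma$ is nullhomologous this yields an integer $\tb(\gamma)$, and the relative Euler number of $\xi$ over a Seifert surface gives ${\rm rot}(\gamma)\in\Z$. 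These integers, together with the smooth isotopy class of $\gamma$, vary continuously --- hence locally constantly --- with $(\om,\lambda,\gamma)$ in the total space $E^\Leg$ of $\Pi^\Leg$; no contact hypothesis on $\lambda$ is used. The same holds for the self-linking number ${\rm sl}$ on the total space of $\Pi^\trans$.

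Suppose now that $\Pi^\Leg$ is a Serre fibration. Take $M=S^3$ with its standard contact form $\alpha_\st$ and set $b_0:=(d\alpha_\st,\alpha_\st)$. By Proposition~\ref{prop:otd} there is a stable homotopy from $b_0$ to some $b_1=(\om_1,\lambda_1)$ with $\om_1=d\lambda_1$ on an open region $U\subset S^3$ containing an overtwisted disk $D_\ot$. Since $(\om_1,\lambda_1)$ is a SHS, $\lambda_1\wedge d\lambda_1=\lambda_1\wedge\om_1>0$ on $U$, so $\lambda_1|_U$ is a positive contact form and $\ker\lambda_1|_U$ is overtwisted. The Legendrian unknot $\gamma_{\rm ot}:=\p D_\ot$ then has $\tb(\gamma_{\rm ot})=0$, because along $\gamma_{\rm ot}$ the planes $\ker\lambda_1$ agree with $TD_\ot$, so the contact framing coincides with the framing from the disk. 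I would then apply path lifting of $\Pi^\Leg$ to the reversed stable homotopy, a path running from $b_1$ to $b_0$, with initial point $(\om_1,\lambda_1,\gamma_{\rm ot})\in E^\Leg$; this produces a path in $E^\Leg$ with endpoint $(d\alpha_\st,\alpha_\st,\gamma')$ for some Legendrian $\gamma'$. Local constancy of $\tb$ and of the isotopy class along this path forces $\gamma'$ to be a Legendrian unknot in $(S^3,\ker\alpha_\st)$ with $\tb(\gamma')=0$, so $\tb(\gamma')+|{\rm rot}(\gamma')|\ge 0$, contradicting the Bennequin inequality $\tb(L)+|{\rm rot}(L)|\le -1$ for Legendrian unknots in the tight $(S^3,\ker\alpha_\st)$ (see e.g.~\cite{Et}).

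For $\Pi^\trans$ the argument is identical once one picks, in the overtwisted region $U$, a transverse unknot $\gamma_{\rm ot}^T$ with ${\rm sl}(\gamma_{\rm ot}^T)\ge 0$; for instance a transverse pushoff of $\p D_\ot$ works, since the two transverse pushoffs of a Legendrian knot have self-linking numbers whose sum equals twice its Thurston--Bennequin number, here $2\tb(\p D_\ot)=0$. Lifting the reversed stable homotopy then yields a transverse unknot in $(S^3,\ker\alpha_\st)$ of nonnegative self-linking number, contradicting Bennequin's inequality ${\rm sl}\le -1$. The steps that need a little care, and which I regard as the only real content beyond Proposition~\ref{prop:otd}, are (i) the well-definedness and continuity of $\tb$, ${\rm rot}$, ${\rm sl}$ for knots tangent/transverse to a merely cooriented (not necessarily contact) plane field, and (ii) checking that the homotopy lifting need only be carried out over a point --- i.e.~plain path lifting, which a Serre fibration provides --- and that every knot along the lifted path stays nullhomologous so that the invariants remain defined; both are routine.
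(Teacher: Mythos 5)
Your proposal is correct and follows essentially the same route as the paper: invoke Proposition~\ref{prop:otd}, observe that $\tb(\p D_\ot)=0$ because $TD_\ot=\ker\lambda_1$ along the boundary, note that $\tb$ (resp.~${\rm sl}$) is continuous and integer-valued hence constant along any lifted path, and contradict Bennequin's inequality in the tight $(S^3,\alpha_\st)$; lifting the reversed homotopy from a prescribed initial point is exactly the Serre-fibration input the paper uses implicitly. The only cosmetic difference is in the transverse case, where you derive a push-off with ${\rm sl}\ge 0$ from the sum formula for the two transverse push-offs, whereas the paper cites the computation ${\rm sl}=1$ for the positive push-off of the negatively oriented $\p D_\ot$ from~\cite{BCV} --- both suffice.
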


\begin{proof}
By Proposition~\ref{prop:otd} there exists a homotopy
$(\om_t,\lambda_t)$ of SHS on 
$M$ with $(\om_0,\lambda_0)=(d\lambda_\st,\lambda_\st)$ such that $\om_1=
d\lambda_1$ on an open region $U\subset M$ and there exists an overtwisted
disk $D_\ot\subset U$. We claim
that this path in $\SHS$ cannot be lifted to a continuous path
$\gamma_t$ in the total space of $\Pi^\Leg$ with $\gamma_1$
parametrizing $\p D_\ot$. 

Indeed, suppose such a lift $\gamma_t$ exists. For each $t$ let
$\tb(\gamma_t)$ be the {\em Thurston-Bennequin invariant}, defined as
the linking number of $\gamma_t$ with its push-off in the Reeb
direction. Then $t\mapsto\tb(\gamma_t)$ is continuous and integer
valued, hence constant, and therefore
$\tb(\gamma_0)=\tb(\gamma_1)=0$. But this contradicts Bennequin's
inequality $\tb(\gamma_0)\leq -1$ for every
Legendrian knot $\gamma_0$ in $(S^3,\lambda_\st)$.  

This proves that $\Pi^\Leg$ is not a Serre fibration. An analogous
argument shows that the path $(\om_t,\lambda_t)$ cannot be lifted to a
continuous path $\gamma_t$ in the total space of $\Pi^\trans$ with
$\gamma_1$ parametrizing a positive transverse push-off of the
negatively oriented boundary $\p D_\ot$. 

Indeed, suppose such a lift $\gamma_t$ exists. For each $t$ let
${\rm sl}(\gamma_t)$ be the {\em self-linking number}, defined as
the linking number of $\gamma_t$ with its push-off in the direction of
a nowhere vanishing section of $\ker\lambda$ over a spanning disk for
$\gamma_t$. Then $t\mapsto{\rm sl}(\gamma_t)$ is continuous and integer
valued, hence constant, and therefore
${\rm sl}(\gamma_0)={\rm sl}(\gamma_1)=1$ (see~\cite{BCV}). But this
contradicts Bennequin's inequality ${\rm sl}(\gamma_0)\leq -1$ for every
transverse knot $\gamma_0$ in $(S^3,\lambda_\st)$.  
\end{proof}

\subsection{Contact structures as stable Hamiltonian
  structures}\label{subsec:contact-SHS}

Every positive contact form $\lambda$ induces a SHS
$(d\lambda,\lambda)$ and homotopies of contact forms induce stable
homotopies, so we have a natural map 
$$
   \Cont/\sim\to \SHS_0/\sim
$$
from homotopy classes of positive contact forms to homotopy classes of
exact SHS.

\begin{thm}\label{thm:contact-SHS}
On $S^3$ the map $\Cont/\sim\to \SHS_0/\sim$ is not
bijective. In fact, if rational SFT exists as an invariant of SHS,
then the map is injective but not surjective. 
\end{thm}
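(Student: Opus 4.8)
The plan is to deduce both the non-bijectivity (unconditionally) and the refined statement (conditionally on SFT) from Eliashberg's classification of contact structures on $S^3$ together with the overtwisted-disk construction of Proposition~\ref{prop:otd} and the SFT invariance established in Corollary~\ref{cor:sft-intro}. First I would recall the relevant input from contact topology. On $S^3$ every positive contact structure is homotopic as a plane field to the standard tight one $\xi_\st$; by Eliashberg the tight contact structure is unique up to isotopy, and overtwisted contact structures are classified up to isotopy by their homotopy class of plane field (Eliashberg). So $\Cont/\!\sim$ consists of the class of $\alpha_\st$ together with one class $\alpha_\ot$ of overtwisted contact form inducing the same orientation and plane-field homotopy class as $\alpha_\st$ (plus the ones inducing the opposite orientation, which we ignore since we look at $\SHS_0$ with the fixed orientation). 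By the $h$-principle corollary after Theorem~\ref{thm:h}, $d\alpha_\st$ and $d\alpha_\ot$ are homotopic as Hamiltonian structures, since their contact distributions are homotopic as oriented plane fields.

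For non-surjectivity I would argue as follows. Proposition~\ref{prop:otd} produces, starting from $(d\alpha_\st,\alpha_\st)$, a stably homotopic SHS $(\om_1,\lambda_1)$ with $\om_1=d\lambda_1$ on an open set containing an overtwisted disk. If this SHS were stably homotopic to $(d\alpha,\alpha)$ for a contact form $\alpha$, then $\alpha$ would have to be overtwisted (a tight contact form on $S^3$ cannot be stably homotopic to one containing an overtwisted disk, since the Thurston--Bennequin and self-linking obstructions used in the proof of the Corollary to Proposition~\ref{prop:otd} rule out the standard tight structure), hence $\alpha\simeq\alpha_\ot$. But assuming rational SFT exists as a homotopy invariant of SHS, Corollary~\ref{cor:sft-intro} says $(d\alpha_\st,\alpha_\st)$ and $(d\alpha_\ot,\alpha_\ot)$ are \emph{not} stably homotopic, while $(d\alpha_\st,\alpha_\st)$ \emph{is} stably homotopic to $(\om_1,\lambda_1)$ by construction --- contradiction. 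Therefore $(\om_1,\lambda_1)$ lies in the image of $\SHS_0/\!\sim$ but not in the image of $\Cont/\!\sim$, proving non-surjectivity; this already shows the map is not bijective without any assumption, because unconditionally $(d\alpha_\st,\alpha_\st)$ and $(d\alpha_\ot,\alpha_\ot)$ give the two contact classes, and if the map were bijective the SHS $(\om_1,\lambda_1)$ --- stably homotopic to the standard one hence forced into the standard contact class --- could not simultaneously contain an overtwisted disk.

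For injectivity (under the SFT assumption) I would show: if $d\alpha_\st$ and $d\alpha_\ot$ were stably homotopic as SHS we would contradict Corollary~\ref{cor:sft-intro} directly, so the two contact classes have distinct images; and since $\Cont/\!\sim$ on $S^3$ consists precisely of these two classes (for the fixed orientation), the map is injective. The main obstacle I anticipate is the bookkeeping needed to make the "forced into the standard contact class" step rigorous: one must argue that \emph{any} contact form whose SHS is stably homotopic to $(d\alpha_\st,\alpha_\st)$ is homotopic to $\alpha_\st$ \emph{as a contact form} (not merely as a plane field), which is where Eliashberg's uniqueness of tight $S^3$ and the tightness/overtwistedness dichotomy enter --- and one must be careful that "stably homotopic" is a genuinely weaker relation than "homotopic through contact forms", so the argument really has to route through the SFT invariant rather than through any naive monotonicity. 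Everything else is assembling Proposition~\ref{prop:otd}, the $h$-principle, and Corollary~\ref{cor:sft-intro}.
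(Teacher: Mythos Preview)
Your argument has a genuine gap at its central step. You claim that ``a tight contact form on $S^3$ cannot be stably homotopic to one containing an overtwisted disk, since the Thurston--Bennequin and self-linking obstructions used in the proof of the Corollary to Proposition~\ref{prop:otd} rule out the standard tight structure.'' This is exactly backwards. Proposition~\ref{prop:otd} \emph{constructs} a stable homotopy from $(d\alpha_\st,\alpha_\st)$ to a SHS containing an overtwisted disk; the corollary then deduces that Legendrian and transverse invariants are \emph{not} preserved under stable homotopy. The TB and self-linking bounds obstruct lifting paths of SHS to paths of Legendrian or transverse knots, not the existence of the stable homotopy itself. Consequently your candidate $(\om_1,\lambda_1)$ \emph{is} in the image of $\Cont/\!\sim\to\SHS_0/\!\sim$ --- it is stably homotopic to $(d\alpha_\st,\alpha_\st)$ by construction --- so it cannot witness non-surjectivity. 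Your unconditional argument collapses for the same reason: there is no contradiction between ``stably homotopic to the tight contact SHS'' and ``contains an overtwisted disk.'' (A smaller issue: $\Cont/\!\sim$ on $S^3$ has infinitely many classes, one tight and one overtwisted per homotopy class of oriented plane field, not just two.)

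The paper's proof avoids this by a different device. It sets up a dichotomy: either $(d\lambda_\st,\lambda_\st)$ and $(d\lambda_\ot,\lambda_\ot)$ are stably homotopic (so the map is not injective), or they are not. In the second case it uses the orientation-reversing involution $\Psi$ of $S^3$: the pullbacks $\bar\lambda_\st,\bar\lambda_\ot$ give \emph{positive} SHS $(d\bar\lambda_\st,-\bar\lambda_\st)$ and $(d\bar\lambda_\ot,-\bar\lambda_\ot)$ whose underlying plane-field class $\bar\CC$ differs from $\CC$ (Lemma~\ref{lem:Hopf}, a Hopf-invariant computation). If the map were surjective these would be stably homotopic to positive contact forms $\alpha_1,\alpha_2$ in class $\bar\CC$; since the unique tight structure lives in $\CC$, both $\alpha_i$ are overtwisted, hence contact-homotopic by Eliashberg, forcing $(d\bar\lambda_\st,-\bar\lambda_\st)\sim(d\bar\lambda_\ot,-\bar\lambda_\ot)$ and thus $(d\lambda_\st,\lambda_\st)\sim(d\lambda_\ot,\lambda_\ot)$, a contradiction. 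Under the SFT hypothesis, Corollary~\ref{cor:sft} rules out Case~1, and injectivity then follows from Eliashberg's classification. The key idea you are missing is this passage via $\Psi$ to the plane-field class $\bar\CC$ not containing the tight structure.
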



For the proof, let us fix some conventions. All contact structures
will be oriented, so that the underlying formal structures are 
oriented plane fields. For a contact form $\alpha$ the orientation  
on the contact structure $\ker\alpha$ is given by $d\alpha$. 
More generally, for any plane field $\xi$ a $2$-form $\gamma$
restricting as an area form on $\xi$ induces an orientation on
$\xi$. The notation $(\xi,\gamma)$ will stand for the corresponding
oriented plane field. The corresponding homotopy class of  
oriented plane fields will be denoted by $[(\xi,\gamma)]$.

Consider 
$$
   S^3:=\{(x_1,y_1,x_2,y_2)\in \R^4|x_1^2+y_1^2+x_2^2+y_2^2=1\}
$$ 
with its standard orientation. Consider the orientation reversing
involution 
$$
   \Psi:S^3\to S^3,\qquad
   (x_1,y_1,x_2,y_2)\mapsto (-x_1,y_1,x_2,y_2).
$$ 
For a 1-form $\alpha$ on $S^3$ we set $\bar\alpha:=\Psi^*\alpha$. 
So the standard contact form and its pullback are given by
\begin{gather*}
   \lambda_\st=x_1dy_1-y_1dx_1+x_2dy_2-y_2dx_2,\cr
   \bar\lambda_\st=-x_1dy_1+y_1dx_1+x_2dy_2-y_2dx_2.
\end{gather*} 

\begin{lemma}\label{lem:Hopf}
The standard contact form an its pullback under $\Psi$ represent
different homotopy classes of oriented plane fields
$$
   \CC := [(\ker\lambda_\st,d\lambda_\st)]\ne \bar\CC :=
   [(\ker\bar\lambda_\st,d\bar\lambda_\st)].
$$ 
\end{lemma}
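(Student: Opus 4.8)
The plan is to distinguish the two oriented plane fields by a homotopy invariant of oriented $2$-plane fields on $S^3$. Recall that oriented plane fields on an oriented $3$-manifold are the same as nowhere-vanishing vector fields (via the metric-orthogonal complement together with the orientation), so they are classified up to homotopy by an element of $\pi_3(S^2)\cong\Z$ together with a primary obstruction in $H^2(S^3;\pi_2(S^2))=0$; on $S^3$ the relevant invariant is therefore the Hopf invariant of the associated Gauss map $S^3\to S^2$. An equivalent and more computable formulation: both $\ker\lambda_\st$ and $\ker\bar\lambda_\st$ are the horizontal distributions of the two Hopf fibrations (one for each orientation of the fibre), so the corresponding vector fields are, up to homotopy, the Reeb vector fields $R_{\st}$ and $\bar R_{\st}=\Psi_*R_{\st}$, which generate the two Hopf $S^1$-actions of opposite handedness. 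These are classified by their Euler/Hopf numbers $\pm1$, which differ.

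Concretely, first I would write down the Reeb vector fields: $R_{\st}$ generates the flow $(x_1,y_1,x_2,y_2)\mapsto$ rotation by a common angle in both $\C$-planes, while $\bar R_{\st}=\Psi_*R_{\st}$ generates rotation by opposite angles in the two planes (since $\Psi$ is complex conjugation in the first coordinate). The orbit spaces are both $S^2$, but the two circle bundles $S^3\to S^2$ have Euler numbers $+1$ and $-1$ respectively. Next I would recall that for oriented plane fields on $S^3$, or equivalently for homotopy classes of nowhere-vanishing vector fields, the complete invariant is the Hopf invariant of the Gauss map; alternatively one may use the $d_3$-invariant from \cite{Ge}. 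Since the Hopf invariant changes sign under an orientation-reversing diffeomorphism of the source applied to a fixed map, and $\Psi$ is orientation-reversing on $S^3$, we get $\Hel$-type argument: more directly, one computes that the helicity of $d\lambda_\st$ is $\int_{S^3}\lambda_\st\wedge d\lambda_\st>0$ while that of $d\bar\lambda_\st=\Psi^*d\lambda_\st$ equals $\int_{S^3}\bar\lambda_\st\wedge d\bar\lambda_\st=\int_{S^3}\Psi^*(\lambda_\st\wedge d\lambda_\st)=-\int_{S^3}\lambda_\st\wedge d\lambda_\st<0$ because $\Psi$ reverses orientation. As the sign of $\int\lambda\wedge d\lambda$ for a contact form representing a given homotopy class of oriented plane field is a homotopy invariant of that class (it equals, up to a positive factor, the Hopf invariant of the plane field — this is classical, cf.~\cite{Ge}), the two classes are distinct.

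The main obstacle is to pin down cleanly \emph{why} the sign of the self-linking/Hopf quantity is an invariant of the oriented plane field rather than of the contact form: one must invoke the identification of homotopy classes of oriented $2$-plane fields on $S^3$ with $\pi_3(S^2)\cong\Z$ via the Hopf invariant of the Gauss map, and then check that for a contact structure $\ker\alpha$ the Hopf invariant of this Gauss map is computed (up to a universal positive constant and the sign of the orientation) by $\int_{S^3}\alpha\wedge d\alpha$; this is where I would cite Geiges~\cite{Ge} (the $d_3$/Hopf-invariant discussion) rather than reprove it. Once that is in place, the computation $\int_{S^3}\bar\lambda_\st\wedge d\bar\lambda_\st=-\int_{S^3}\lambda_\st\wedge d\lambda_\st$ via $\Psi^*$ being orientation-reversing finishes the proof, since a positive number and a negative number cannot label the same class.
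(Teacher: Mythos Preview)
Your overall framing is correct: oriented plane fields on $S^3$ correspond (via the positively-transverse unit vector in a fixed trivialization of $TS^3$) to maps $S^3\to S^2$, classified by the Hopf invariant, and the Reeb fields of $\lambda_\st$ and $\bar\lambda_\st$ do generate the two Hopf $S^1$-actions of opposite chirality. That part matches the paper's approach.

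The gap is in the computational shortcut. The claim that the sign of $\int_{S^3}\alpha\wedge d\alpha$ is a homotopy invariant of the oriented plane field $(\ker\alpha,d\alpha)$ is \emph{false}. That integral only records whether $\alpha$ is a positive or negative contact form on oriented $S^3$. By Lutz--Martinet every homotopy class of oriented plane fields on $S^3$ contains a \emph{positive} contact structure; applying an orientation-reversing diffeomorphism to these, every class also contains a \emph{negative} one. Thus a single homotopy class carries contact forms with both signs of $\int\alpha\wedge d\alpha$, and nothing in Geiges' book says otherwise. The Whitehead integral formula for the Hopf invariant of a Gauss map $f:S^3\to S^2$ involves $f^*\omega_{S^2}$, not $d\alpha$; there is no identity of the form ``$\int\alpha\wedge d\alpha=(\text{positive constant})\cdot(\text{Hopf invariant})$'' for contact forms.

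What the paper does instead is a direct computation: it writes down an explicit global frame $(e_1,e_2,e_3)$ of $TS^3$ with $e_1=R_+$ transverse to $\ker\lambda_\st$, so that $R_+$ is constant in this frame (Hopf invariant $0$), then expands the transverse field $R_-$ for $\ker\bar\lambda_\st$ in this frame and recognises the resulting map $S^3\to S^2$ as the Hopf fibration (Hopf invariant $1$). Since the \emph{difference} of Hopf invariants is trivialization-independent, this gives $\CC\neq\bar\CC$. Your observation about the Euler numbers $\pm 1$ of the two Hopf bundles is correct and could in principle be turned into a proof, but you would still need a concrete link between the Euler number of the fibration and the homotopy class of the associated plane field, and the helicity argument does not supply it.
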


\begin{proof}
The vector fields 
\begin{gather*}
   R_+:=-y_1\p_{x_1}+x_1\p_{y_1}-y_2\p_{x_2}+x_2\p_{y_2},\cr 
   R_-:=-y_1\p_{x_1}+x_1\p_{y_1}+y_2\p_{x_2}-x_2\p_{y_2}
\end{gather*}
are transverse to $\ker\lambda_\st$ and $\ker\bar\lambda_\st$,
respectively. Moreover, the transverse orientations defined by these
vector fields together with the corresponding orientations 
of the plane fields define the standard orientation on $S^3$. We show
that (in some trivialization) the Hopf invariants of these vector
fields are different. For this we choose a framing 
of $TS^3$ as follows: 
\begin{align*}
   e_1 &:= R_+, \cr
   e_2 &:= (-x_2\p_{x_1}+y_2\p_{y_1}+x_1\p_{x_2}-y_1\p_{y_2}), \cr
   e_3 &:= (-y_2\p_{x_1}-x_2\p_{y_1}+y_1\p_{x_2}+x_1\p_{y_2}).
\end{align*} 
In this framing $R_+$ is constant and thus 
has Hopf invariant zero. It remains to compute $R_-$. For this we use
the Riemannian metric $\la\ ,\ \ra$ on $S^3$ induced by the standard
Euclidean metric on $\R^4$. This gives us: 
\begin{align*}
   \la R_-,e_1\ra &= (x_1^2+y_1^2)-(x_2^2+y_2^2),\cr 
   \la R_-,e_2\ra &= 2(y_1x_2+x_1y_2),\cr
   \la R_-,e_3\ra &= 2(y_1y_2-x_1x_2).
\end{align*}
So in complex coordinates $z_1=x_1+iy_1$, $z_2=x_2+iy_2$ the map
$S^3\rightarrow S^2$ induced by $R_-$ is given by 
$$
   (z_1,z_2)\mapsto (|z_1|^2-|z_2|^2,-2iz_1z_2)
$$ 
if we identify $S^2$ with the unit sphere in $\R\times \C$. Up to a
factor of $-i$ this is just the Hopf fibration and thus has Hopf
invariant $1$.
\end{proof}

\begin{proof}[Proof of Theorem~\ref{thm:contact-SHS}]
Let $\lambda_\ot$ be an overtwisted contact form on $S^3$ with the
same underlying class of oriented plane fields as $\lambda_\st$. We
distinguish two cases.

{\bf Case 1: }$(d\lambda_\st,\lambda_\st)$ and
$(d\lambda_\ot,\lambda_\ot)$ are stably homotopic. Then the map
$\Cont/\sim\to \SHS_0/\sim$ is not injective. 

{\bf Case 2: }$(d\lambda_\st,\lambda_\st)$ and
$(d\lambda_\ot,\lambda_\ot)$ are not stably homotopic. 

In this case, observe that $[(\ker\bar\lambda_\ot,d\bar\lambda_\ot)] = 
[(\ker\bar\lambda_\st,d\bar\lambda_\st)]=\bar\CC$.  
Consider the positive SHS $(d\bar\lambda_\st,-\bar\lambda_\st)$ and
$(d\bar\lambda_\ot,-\bar\lambda_\ot)$ on $S^3$ and note that their
underlying classes of oriented plane fields are both equal to
$\bar\CC$. Assume by contradiction that the map $\Cont/\sim\to
\SHS_0/\sim$ is surjective. Then there exist positive contact  
forms $\alpha_1$ and $\alpha_2$ on $S^3$ such that 
the SHS $(d\bar\lambda_\st,-\bar\lambda_\st)$ is homotopic to
$(d\alpha_1,\alpha_1)$ and the SHS
$(d\bar\lambda_\ot,-\bar\lambda_\ot)$ is homotopic to
$(d\alpha_2,\alpha_2)$. Note that the underlying oriented plane field
class for both $\alpha_1$ and $\alpha_2$ is $\bar\CC$ and the class of
$\lambda_\st$ is $\CC\ne \bar\CC$ by Lemma~\ref{lem:Hopf}.  
Thus, by uniqueness of the positive tight contact structure on $S^3$
(see~\cite{Elia-tight}), both $\alpha_1$ and $\alpha_2$ must be
overtwisted. The classification of overtwisted contact
structures~\cite{Elia-overt} implies that $\alpha_1$ and $\alpha_2$
are homotopic through contact forms. By transitivity, this implies
that the SHS $(d\bar\lambda_\st,-\bar\lambda_\st)$ and
$(d\bar\lambda_\ot,-\bar\lambda_\ot)$ on $S^3$ are homotopic, so the
SHS $(d\lambda_\st,\lambda_\st)$ and $(d\lambda_\ot,\lambda_\ot)$
are homotopic, contradicting the assumption of Case 2.

If rational SFT exists as an invariant of SHS, a standard computation
shows that it vanishes for $(d\lambda_\ot,\lambda_\ot)$ but not for
$(d\lambda_\st,\lambda_\st)$ (see Corollary~\ref{cor:sft}), so Case 1
does not occur. The preceding discussion of Case 2 shows that the map
$\Cont/\sim\to \SHS_0/\sim$ is not surjective, and injectivity follows
from the classification of contact structures on $S^3$
in~\cite{Elia-tight,Elia-overt}. 
\end{proof}

\section{Open books and stable Hamiltonian structures}\label{sec:openbook}

Open books play a central role in contact geometry in dimension three: 
Thurston and Winkelnkemper~\cite{TW} constructed contact strutures
supported by open books on every oriented closed 3-manifold, and 
Giroux~\cite{Gir} proved that every contact structure arises this
way. In this section we generalize the Thurston-Winkelnkemper result
to stable Hamiltonian structures in the following form: Every 
cohomology class in $H^2(M;\R)$ is
represented by a SHS supported by a given open book. Moreover, we prove
that all SHS supported by the same open book and representing the same
cohomology class are stably homotopic. 

To state the results more precisely we need some notation. 
Throughout this section, $M$ is a closed oriented $3$-manifold. 
When dealing with manifolds, cohomology always means de Rham
cohomology; on general topological spaces it means singular cohomology
with coefficients in $\R$. 

We start by recalling the definition of open book decomposition. 
\begin{definition}\label{def:obd}
An {\em open book decomposition} of $M$ is a pair $(B,\pi)$, where 
\begin{itemize}
\item [(B)] $B$ is an oriented link in $M$ (called the {\em binding}),
  and  
\item [(P)] $\pi:M\setminus B\rightarrow S^1$ is a fibration with
  fibre a punctured oriented surface $\Sigma$ (called the {\em page})
  satisfying the following condition near $B$. Each connected
  component $B_l$ of $B$ has a tubular neighbourhood 
  $S^1\times D^2\hookrightarrow M$ with orienting coordinates 
  $(\phi_l,r_l,\theta_l)$, where $\phi_l$ is an orienting coordinate
  along $S^1\cong B_l$ and $(r_l,\theta_l)$ are polar coordinates on
  $D^2$, such that on $(S^1\times D^2)\setminus B_l$ we have
  $\pi(\phi_l,r_l,\theta_l)=\theta_l$.
\end{itemize}
\end{definition}
We can (and will) chose a global coordinate $\theta$ 
on the target $S^1$ of the projection $\pi$ and particular polar
coordinates on the tubular neighbourhood of $B$ such that
$\theta=\theta_l=\pi(\phi_l,r_l,\theta_l)$ near each $B_l$. 

\begin{definition}\label{def:openbook}
Let $(\omega,\lambda)$ be a stable Hamiltonian structure on $M$ and
$R$ be its Reeb vector field. 
We say that the stable Hamiltonian structure $(\omega,\lambda)$ is
{\em positively supported by the open book $(B,\pi)$} if the following
conditions hold:
\begin{itemize}
\item[(OB1)] $\lambda|_B>0$, and
\item[(OB2)] $\omega$ restricts as a positive area form to each page
  of the open book. 
\end{itemize}
Or equivalently:
\begin{itemize}
\item[(OB3)] $R$ is positively tangent to $B$, and
\item[(OB4)] $R$ is positively transverse to the pages.
\end{itemize}
We say that the stable Hamiltonian structure $(\omega,\lambda)$ is
{\em supported by the open book $(B,\pi)$} if (OB2), or equivalently (OB4), holds.
Note that if a SHS $(\om,\lambda)$ is suported by $(B,\pi)$, then it is automatic
that the Reeb vector field $R$ is tangent to $B$. In that case to each component $B_l$
of the binding we associate a sign $s_l(\om,\lambda)\in \{+,-\}$ 
according to whether $R$ is positively or negatively tangent. 
\end{definition}
This definition generalizes the contact case: A cooriented contact
structure $\xi$ is supported an open book decomposition $(B,\pi)$ in
the sense of~\cite{Gir} iff the exists a contact form $\alpha$ defining
$\xi$ such that the SHS $(d\alpha,\alpha)$ is positively supported by $(B,\pi)$.

The goal of this section is to prove the following two results. 

\begin{theorem}\label{obd1}
Let $(B,\pi)$ be an open book decomposition of a closed oriented
3-manifold $M$, $s:\pi_0(B)\rightarrow \{+,-\}$ any function, and 
$\eta\in H^2(M;\R)$. Then there exists a SHS $(\om,\lambda)$ supported
by the open book $(B,\pi)$ with 
$s_l(\om,\lambda)=s(B_l)$ for each component $B_l$ of $B$ and
$[\om]=\eta\in H^2(M;\R)$.  
\end{theorem}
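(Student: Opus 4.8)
The plan is to construct the SHS explicitly using the structure of the open book, building it first in a neighbourhood of the binding and then extending over the mapping-torus part.

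\textbf{Step 1: Model near the binding.} Near each component $B_l$ of the binding we have coordinates $(\phi_l,r_l,\theta_l)$ on $S^1\times D^2$ with $\pi=\theta_l$. On this piece I would write down an explicit $T^2$-invariant SHS of the form $\om=d\bigl(h_1(r_l)d\theta_l+h_2(r_l)d\phi_l\bigr)$ and $\lambda=g_1(r_l)d\theta_l+g_2(r_l)d\phi_l$ (plus a $dr_l$-term if needed), using the techniques from Section~\ref{subsec:t2inv}. The constraints are: $\om$ must restrict as a positive area form to the pages $\{\theta_l=\text{const}\}$, i.e.~$h_2'(r_l)r_l>0$ (using that $r_l\,dr_l\wedge d\phi_l$ orients the page near $B_l$, with the appropriate sign depending on conventions), and smoothness at $r_l=0$ forces $h_2(r_l)=\text{const}+O(r_l^2)$; the sign $s_l$ is then read off from the sign of $\lambda(R)$ along $B_l=\{r_l=0\}$, which we can prescribe by choosing $g_1(0)$ to have the correct sign. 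Concretely $\bar h(r)=\pm(r^2,1-r^2)$-type models as in Corollary~\ref{cor:stablin} work, and that corollary gives us a stabilizing $1$-form extending any prescribed boundary data with constant slope near the ends.

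\textbf{Step 2: Extension over the mapping torus.} Removing small solid tori around $B$ leaves a manifold $M_0$ that fibres over $S^1$ with fibre a compact surface $\Sigma_0$ with boundary; this is a mapping torus $W_\psi$ of a diffeomorphism $\psi$ of $\Sigma_0$. On $W_\psi$ I would take $\lambda:=d\theta$ (the pullback of the angular form, so $d\lambda=0$, exactly the flat/foliation model of Section~\ref{subsec:ex}) and $\om$ a closed $2$-form positive on each page. The existence of such $\om$ positive on the fibres and matching the binding model on $\partial W_\psi$ is a standard Thurston-type argument: one averages a fibrewise area form around the $S^1$ using a partition of unity, then adds a large multiple of $d\theta\wedge(\text{fibre-primitive})$ if necessary; since $\lambda=d\theta$ here, the condition $\ker\om\subset\ker d\lambda=\ker d\theta$ is automatic because $\ker\om=R$ is transverse to the pages. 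Matching smoothly to the binding model requires interpolating between $\lambda=d\theta$ on $W_\psi$ and the $T^2$-invariant $\lambda_g$ near $B$; since near $B$ we are free to choose $g$, I would arrange the binding model so that $g_1\equiv 1$, $g_2\equiv 0$ near the gluing region, i.e.~$\lambda=d\theta_l$ there, so the forms literally agree.

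\textbf{Step 3: Fixing the cohomology class.} The $2$-form produced so far represents \emph{some} class. To hit a prescribed $\eta\in H^2(M;\R)$, I would add a closed $2$-form $\beta$ representing $\eta-[\om]$ supported away from $B$ and with $\ker\beta\supset\ker d\theta$ automatically — concretely, since $H^2(M;\R)$ of a closed oriented $3$-manifold is detected by periods over surfaces and we can represent $\eta-[\om]$ by a form which is a small multiple of the fibrewise area form (plus possibly $d\theta\wedge\gamma$ terms that do not affect the kernel being transverse to pages), and then choose the original $\om$ with large enough ``size'' that $\om+\beta$ is still positive on all pages. This uses the h-principle/explicit flexibility: positivity on pages is an open convex condition, so scaling up the page-positive part absorbs any fixed correction.

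\textbf{Main obstacle.} The delicate point is Step 3 together with compatibility: I need a closed $2$-form in the class $\eta-[\om]$ that (i) is supported in the mapping-torus region, (ii) is \emph{zero} near the binding (so as not to disturb the carefully chosen binding model), and (iii) keeps $\om$ positive on every page. Condition (ii) is a constraint because $H^2$ of the complement of $B$ versus $H^2(M)$ differ; I expect to handle this by Poincaré–Lefschetz duality on $M_0$, representing the relative class by a form supported in the interior, or alternatively by allowing a preliminary modification of the binding model's ``flux'' parameters $h_2(0)$ (the values $h_2$ takes at the cores), since these genuinely contribute to $[\om]$ and can be tuned. Reconciling the prescribed $\eta$, the prescribed signs $s_l$, and smoothness at the binding simultaneously is where the real bookkeeping lies; everything else is an assembly of Section~\ref{subsec:t2inv} lemmas and a classical Thurston-Winkelnkemper construction.
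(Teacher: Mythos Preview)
Your proposal is correct and follows the same three-step architecture as the paper: a $T^2$-invariant model near each binding component realizing the prescribed sign, an extension over the mapping-torus piece with $\lambda=d\theta$ and $\om$ a closed form positive on the fibres, and finally a cohomology correction.

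The one place where you overcomplicate matters is Step~3, which you flag as the main obstacle. The paper dissolves this obstacle with a clean cohomological lemma (Lemma~\ref{algtop}(a)): \emph{every} class in $H^2(M;\R)$ admits a representative of the form $d\theta\wedge\beta$ for some $1$-form $\beta$ compactly supported in $M\setminus B$. (This comes from the exact sequence of the mapping torus together with the observation that in the pair sequence $H^1(B)\stackrel{d^*}{\to}H^2(M,B)\stackrel{j^*}{\to}H^2(M)\to 0$ one may take the complement of $\Theta\wedge H^1(W,\p W)$ to lie in $\ker j^*=\im d^*$.) A correction of the form $d\theta\wedge\beta$ vanishes identically on every page, so page-positivity is completely unaffected --- no scaling, no flux-tuning of $h_2(0)$, no Poincar\'e--Lefschetz bookkeeping needed. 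And on the support of $\beta$ you have arranged $\lambda=d\theta$, so $d\lambda=0$ and the stabilization condition is automatic. Your instinct that ``$d\theta\wedge\gamma$ terms do not affect the kernel being transverse to pages'' was exactly right; the point you missed is that such terms already span all of $H^2(M;\R)$.
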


\begin{theorem}\label{obd2}
Let $(B,\pi)$ be an open book decomposition of a closed oriented
3-manifold $M$. Then any two SHS
$(\om,\lambda)$ and $(\tilde\om,\tilde\lambda)$ supported by the open book
$(B,\pi)$ with $s_l(\om,\lambda)=s_l(\tilde\om,\tilde\lambda)$ for
each connected component  
$B_l$ of $B$ and $[\om]=[\tilde\om]\in H^2(M;\R)$ are connected by a
stable homotopy supported by $(B,\pi)$.
\end{theorem}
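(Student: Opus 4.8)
The plan is to combine convexity arguments with a normalization near the binding. The strategy has three parts: (1) bring both structures, by stable homotopies supported by $(B,\pi)$, to a common model near $B$; (2) connect them on $M$ by the \emph{straight-line} path of Hamiltonian structures, which stays supported by $(B,\pi)$; (3) upgrade this path to a \emph{stable} homotopy using the convexity of the fibres of $\Pi$ and a single stabilizing $1$-form valid along the whole path.

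\textbf{Step 1: normalization near the binding.} Each component $B_l$ has a tubular neighbourhood $N(B_l)\cong S^1\times D^2$ in which $B_l=\{r_l=0\}$, the Reeb field is tangent to $B_l$ with the prescribed sign $s_l$, and, away from $B_l$, we are in the $(0,\eps)\times T^2$ situation of Section~\ref{subsec:t2inv} with $\theta_l$ the fibration coordinate. I would use the techniques in the proof of Theorem~\ref{obd1} and in Section~\ref{subsec:t3}: first put $\om$ into a $T^2$-invariant normal form $\om_h$ with $h$ agreeing to second order with $r_l\mapsto(r_l^2,\const)$ at $r_l=0$ (so that $\om$ is a genuine HS, equal to a constant multiple of $dx_l\wedge dy_l$ along $B_l$; the relevant deformation also follows from the h-principle Theorem~\ref{thm:h}), and then put the stabilizing form into the $T^2$-invariant model $\lambda_g$ by Corollary~\ref{cor:stablin} / Proposition~\ref{prop:t2inv-stab}. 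After this supported stable homotopy, $(\om,\lambda)$ agrees near $B$ with an explicit model $(\om^{\mathrm{mdl}},\lambda^{\mathrm{mdl}})$ depending only on the signs $s_l$. One extra point: the value $\int_{D_l}\om$ over a meridian disk depends on $\om$ (not only on $\eta$, although the meridian is null-homologous), but it can be changed arbitrarily within supported HS of class $\eta$ by adding exact forms supported near $B$; so these periods are first matched between $\om$, $\tilde\om$ and the model. Applying the same procedure to $(\tilde\om,\tilde\lambda)$ with the same model (legitimate because the signs agree), we may assume afterwards that $\om=\tilde\om=\om^{\mathrm{mdl}}$ near $B$.

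\textbf{Steps 2 and 3.} Set $\om_t:=(1-t)\om+t\tilde\om$. Positivity of an area form on a page is a convex condition, $[\om_t]=\eta$ for all $t$, and along $B$ the two forms are co-oriented (the support condition together with the orientations of $M$ and of the pages forces the sign of the transverse area form of a supported HS along $B_l$ to depend only on $s_l$), so $\om_t$ is nowhere zero and hence a HS supported by $(B,\pi)$, equal to $\om^{\mathrm{mdl}}$ near $B$, for every $t$. Next observe that on $M\setminus B$ the closed $1$-form $d\theta=d\pi$ stabilizes \emph{every} supported HS: the kernel condition is vacuous since $d(d\theta)=0$, and $d\theta\wedge\om>0$ is precisely positive transversality of the Reeb field (OB4) together with positivity of $\om$ on the pages (OB2). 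Gluing $d\theta$ on $M\setminus N(B)$ to $\lambda^{\mathrm{mdl}}$ on $N(B)$ across an intermediate annulus of nonconstant slope built into the model (so that Proposition~\ref{prop:t2inv-stab} supplies the interpolation through forms stabilizing $\om^{\mathrm{mdl}}$) produces a single $1$-form $\Lambda$ which stabilizes $\om_t$ for every $t$. Since the fibre $\Pi^{-1}(\om)$ of stabilizing forms is convex for each fixed $\om$, the concatenation of (a) $\{(\om,(1-u)\lambda+u\Lambda)\}_{u\in[0,1]}$, (b) $\{(\om_t,\Lambda)\}_{t\in[0,1]}$, and (c) $\{(\tilde\om,(1-u)\Lambda+u\tilde\lambda)\}_{u\in[0,1]}$ is a stable homotopy supported by $(B,\pi)$ from $(\om,\lambda)$ to $(\tilde\om,\tilde\lambda)$ (the cohomology class stays $\eta$ and every $\om$ in the path is supported).

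The main obstacle is Step 1: carrying out the normalization near $B$ so that it is simultaneously $T^2$-invariant, stable, supported by $(B,\pi)$, and cohomologically compatible — in particular, checking that the supported-HS condition really constrains the germ of $(\om,\lambda)$ along $B_l$ up to the sign $s_l$ and the (freely adjustable) meridian period, and arranging the model $(\om^{\mathrm{mdl}},\lambda^{\mathrm{mdl}})$ to carry a nonconstant-slope collar so that Proposition~\ref{prop:t2inv-stab} can be invoked both inside Step 1 and to build the universal stabilizing form $\Lambda$.
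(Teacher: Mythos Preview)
Your overall architecture matches the paper's proof: normalize near the binding, then run the straight-line interpolation of Hamiltonian structures stabilized by (essentially) $d\theta$. Steps~2--3 are precisely the content of the paper's Lemma~\ref{lm:mtl}, with one small organizational difference: the paper restricts to the mapping torus $X=M\setminus\{r<a\}$ and therefore needs an extra step (its Step~3, adjusting $[\tilde\om-\om]\in H^2(M,B)$ by Thom-type forms via Lemma~\ref{algtop}(b)) before Lemma~\ref{lm:mtl} applies. Your choice to interpolate on all of $M$, with $\om_t$ frozen to the model on a full neighbourhood of $B$, sidesteps this relative-cohomology bookkeeping---so your ``meridian period'' discussion is actually unnecessary once $\om=\tilde\om=\om^{\mathrm{mdl}}$ near $B$.

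The real gap is that your Step~1 references the wrong tools. A general supported SHS is \emph{not} $T^2$-invariant near $B_l$, and neither the construction in the proof of Theorem~\ref{obd1} (which builds a SHS from scratch rather than deforming a given one), nor the $T^2$-invariant classification in Section~\ref{subsec:t3}, nor the h-principle Theorem~\ref{thm:h} (which produces homotopies of HS, not \emph{stable} supported homotopies) gives you a supported stable homotopy from $(\om,\lambda)$ to a $T^2$-invariant model. The paper's machinery for this is Proposition~\ref{prop:obd}: one first uses Corollary~\ref{cor:thick-a} to make $f=d\lambda/\om$ \emph{constant} near $B_l$, reducing to the pure contact or pure foliation case, and then carries out a delicate interpolation (Lemmas~\ref{lem:obd-cont} and~\ref{lem:obd-fol}) that simultaneously preserves the contact/foliation condition, the support condition~$d\theta\wedge\om>0$, and smoothness across $r=0$. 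You correctly flag Step~1 as the obstacle, but the actual mechanism you would need is this case split via thickening a level set of $f$, not an appeal to $T^2$-invariant techniques or the h-principle.
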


Note that the direct analog of being ``supported'' for a contact form is being ``positively 
supported'' for a SHS. So why to weaken ``positively supported'' to ``supported''? The reason 
is Theorem~\ref{thm:contact-SHS}. Assuming that SFT exists as an
invariant of SHS, this theorem together with Theorem~\ref{obd2} 
implies that there exists an exact SHS $(\om,\lambda)$ on $S^3$ which is not homotopic
to a SHS positively supported by an open book decomposition. So in order to have any chance
to get a structure theorem like ``each SHS is homotopic to a one
supported by an open book decomposition'' we have to say ``supported''
not ``positively supported''. 

\begin{remark}
(1) Theorem~\ref{obd1} yields an alternative proof of the existence
result in Proposition~\ref{prop:ex}. Indeed, let $\LL$ be an oriented
line field on $M$ and $\AA$ a transverse oriented plane field. By the
classical results of Lutz and Martinet (see e.g.~\cite{Ge}), $\AA$ is
homotopic to a 
contact structure $\xi$. By Giroux' theorem~\cite{Gir} the contact
structure $\xi$ is supported by an open book $(B,\pi)$. 
Now Proposition~\ref{prop:ex} follows from Theorem~\ref{obd1} applied
to the open book $(B,\pi)$ and the easy
observation that any two SHS's supported by the same open book are
homotopic as oriented plane fields. 

(2) Any positive stabilization of an open book $(B,\pi)$ supports a
contact structure in the same isotopy class (cf.~\cite{Gir}). By
taking successive positive stabilizations we obtain countably many
open books $(B_i,\pi_i)$ supporting contact structures $\xi_i$, all 
the $\xi_i$ being isotopic (in particular homotopic as plane fields)
to each other.  
Applying Theorem~\ref{obd1} (with positively
supported!) to each $(B_i,\pi_i)$ we obtain countably
many SHS's, all of them homotopic as oriented plane fields and
defining the same cohomology class $\eta$. 
The following question arises naturally: Are these SHS's all homotopic
as SHS's? 
\end{remark}

The proofs of Theorems~\ref{obd1} and~\ref{obd2} occupy the remainder
of this section. The actual proofs are given in
Subsections~\ref{ss:obd1} and~\ref{ss:obd2}; the other subsections
provide technical results needed in the proofs, some of which are of
independent interest (e.g.~Subsection~\ref{ss:binding}). 
 
\subsection{Cohomology of mapping tori and open books}
\label{subsec:top} 

For a continuous map $f:X\to X$ of a topological space denote by
$X_f:=[0,1]\times X/(0,x)\sim(1,f(x))$ its mapping torus. Denote by
$i:X\to X_f$ the inclusion $x\mapsto[0,x]$ and by $\pi:X_f\to S^1$ the
projection onto $S^1=\R/\Z$. There exists a long exact sequence in singular homology 
 (see~\cite{Ha}, Section 2.2)
$$
   \dots\longrightarrow H_k(X)\overset{\id-f_*}{\longrightarrow}
   H_k(X)\overset{i_*}{\longrightarrow}
   H_k(X_f)\overset{\p}{\longrightarrow} H_{k-1}(X)\to\dots 
$$
Moreover, the construction of this sequence shows that the boundary
map $\p$ is given by intersection with the fibre $X$ over $0$. Now
suppose that $f$ maps a subset $A\subset X$ to itself. Then the
corresponding sequence in relative cohomology is 
$$
   \dots\longrightarrow H^k(X_f,A_f)\overset{i^*}{\longrightarrow}
   H^k(X,A)\overset{\id-f^*}{\longrightarrow}
   H^k(X,A)\overset{d}{\longrightarrow} H^{k+1}(X_f,A_f)\to\dots 
$$
To describe the coboundary map $d$, pick a function
$\rho:(-1/2,1/2)\to\R$ with compact support and integral $1$ and set
$$
   \Theta := \rho(\theta)d\theta\in\Om^1(S^1). 
$$
Then $\pi^*\Theta$ is dual to the fibre over $0$, and therefore $d$ is
given by the cup product with $[\pi^*\Theta]\in H^1(X_f)$. 

Now suppose that $\phi:W\to W$ is an orientation preserving diffeomorphism of a compact
connected surface with boundary. Then $\phi^*=\id$ on $H^2(W,\p W)$
and the exact sequence simplifies to
\begin{gather}\label{eq:exact-seq}
   0\longrightarrow 
   H^1(W_\phi,\p W_\phi)\overset{i^*}{\longrightarrow}
   H^1(W,\p W)\overset{\id-f^*}{\longrightarrow}
   H^1(W,\p W)\overset{\Theta\wedge}{\longrightarrow} \cr 
   \overset{\Theta\wedge}{\longrightarrow} 
   H^2(W_\phi,\p W_\phi)\overset{i^*}{\longrightarrow}
   H^2(W,\p W)\longrightarrow 0. 
\end{gather}
In particular, we have 
\begin{gather}\label{eq:mapping-torus}
   H^1(W_\phi,\p W_\phi)\cong\ker(\id-\phi^*)\subset H^1(W,\p W),\cr 
   H^2(W_\phi,\p W_\phi)=V\oplus \Theta\wedge H^1(W,\p
   W), 
\end{gather}
where $V\subset H^2(W_\phi,\p W_\phi)$ is a subspace such that the restriction 
$i^*|_V:V\to H^2(W,\p W)$ is an isomorphism.
Let us discuss the two terms in $H^2(W_\phi,\p W_\phi)$. The first
term is a $1$-dimensional $\R$-linear space generated a preimage (under the map $i^*$)
of a cohomology class in $H^2(W,\p W)$ represented 
by a compactly supported 2-form on $W$ of positive
area.
The second term is generated by
compactly supported closed 1-forms on $W$ spanning a complement of
$\im(\id-\phi^*)$ in $H^1(W,\p W)$. Each such 1-form $\alpha$ induces
a closed 2-form 
$$
   \Theta\wedge\alpha = d\theta\wedge\bigl(\rho(\theta)\alpha\bigr)
$$
on $W_\phi$. Note that $\rho(\theta)\alpha$ is a global 1-form on
$W_\phi$, but in general it is not closed. This finishes our
discussion of mapping tori. 

Now consider an open book $(M^3,\pi)$ with page $\Sigma$, and binding $B$. 
Let $(\phi_l,r_l,\theta)$ be standard coordinates as above near
each binding component $B_l$, $l=1,\dots,n$.   
Consider the following  
part of the exact cohomology sequence of the pair $(M,B)$:
$$
   \R^n\cong H^1(B)\stackrel{d^*}\longrightarrow
H^2(M,B)\stackrel{j^*}\longrightarrow H^2(M)\longrightarrow
H^2(B)=0,
$$ 
where $d^*$ is the connecting homomorphism and $j^*$ is the map
induced by the natural forgetful map. 

\begin{lemma}\label{algtop}
(a) Any de Rham cohomology class $\eta\in H^2(M)$ has a representative
of the form 
$d\theta\wedge \beta$ for some 1-form $\beta$ with compact support in
$M\setminus B$. 

(b) Any de Rham cohomology class $\eta\in\ker(j^*)\subset H^2(M,B)$ has a
representative of the form
$$
   \sum_{l=1}^nc_ld\bigl(\sigma(r_l)d\phi_l\bigr)
$$
with constants $c_l\in\R$ and a non-increasing function
$\sigma:[0,1]\to[0,1]$ which equals $1$ near $0$ and $0$ near $1$.  
\end{lemma}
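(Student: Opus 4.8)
The plan is to exploit the fact that both statements concern de Rham cohomology, so I am free to choose convenient smooth representatives adapted to the open book structure. For part (a), I would start from the long exact cohomology sequence of the pair $(M,B)$ displayed just before the lemma. Since $H^2(B)=0$, the map $j^*:H^2(M,B)\to H^2(M)$ is surjective, so every class $\eta\in H^2(M)$ lifts to a class in $H^2(M,B)$, i.e.\ is represented by a closed 2-form $\omega$ that vanishes identically on a neighbourhood of $B$ (hence has compact support in $M\setminus B$). Now $M\setminus B$ fibres over $S^1$ via $\pi$ with fibre the open page $\Sigma$, so $M\setminus B$ is the mapping torus $W_\phi$ of the monodromy $\phi$ (where $W$ is the compact page with boundary). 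Using the mapping torus discussion in Subsection~\ref{subsec:top}, in particular the decomposition~\eqref{eq:mapping-torus} and the fact that $\Theta\wedge$ captures all of $H^2$ modulo the subspace $V$ coming from $H^2(W,\p W)$: concretely, any class in $H^2(W_\phi,\p W_\phi)$ is represented by a sum $\Theta\wedge\alpha + d\theta\wedge(\text{something compactly supported coming from an area form on a page})$. Writing $\Theta = \rho(\theta)d\theta$, every such representative has the form $d\theta\wedge\beta$ for a 1-form $\beta$ with compact support in $M\setminus B$ (the area-form term is $d\theta\wedge(\rho(\theta)\gamma)$ for $\gamma$ a 1-form primitive on a page of a compactly supported area form, which exists because the page has boundary). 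This gives (a).

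For part (b), I take a class $\eta\in\ker(j^*)\subset H^2(M,B)$. By exactness, $\eta = d^*(\xi)$ for some $\xi\in H^1(B)\cong\R^n$, where $B=\coprod_{l=1}^n B_l$ and each $B_l\cong S^1$; write $\xi = (c_1,\dots,c_n)$ with $c_l$ the period of $\xi$ on $B_l$. The point is to realize the connecting homomorphism $d^*$ geometrically. Near each binding component $B_l$ we have the collar $S^1\times D^2$ with coordinates $(\phi_l,r_l,\theta)$; the form $c_l\,d\phi_l$ on $B_l$ extends to $c_l\sigma(r_l)\,d\phi_l$ on the collar, where $\sigma$ is non-increasing, $\equiv 1$ near $r_l=0$ and $\equiv 0$ near $r_l=1$, so that it extends by zero to all of $M$. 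Its differential $c_l\,d(\sigma(r_l)\,d\phi_l) = c_l\sigma'(r_l)\,dr_l\wedge d\phi_l$ is a compactly supported closed 2-form in the collar; summing over $l$ gives a closed 2-form on $M$ supported near $B$, hence defining a class in $H^2(M,B)$. The standard description of the connecting map in the long exact sequence of $(M,B)$ (extend a cochain on $B$ over a collar and take coboundary) shows this class equals $d^*(\xi)=\eta$, which is (b).

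The main obstacle I anticipate is bookkeeping: making precise that the concrete 2-forms I write down genuinely represent the prescribed cohomology classes, i.e.\ identifying the abstract connecting homomorphisms $d^*$ (for the pair $(M,B)$) and the mapping-torus maps $\Theta\wedge$ with the explicit "extend over a collar and differentiate" operations. For (a) one must check the area-form piece really contributes a term of the stated shape $d\theta\wedge\beta$ globally (the subtlety being that $\rho(\theta)\gamma$ need not be closed, exactly as flagged in Subsection~\ref{subsec:top}), but since we only need $d\theta\wedge\beta$ to be \emph{closed}, which it is automatically whenever $\beta$ has the form $\rho(\theta)\gamma$ with $\gamma$ a 1-form in the page directions, this is fine. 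For (b) the orientation conventions on $B_l$, on $d\phi_l$, and on the collar need to be matched so that the constants $c_l$ come out correctly, but this is routine once the coordinates $(\phi_l,r_l,\theta)$ fixed in Definition~\ref{def:obd} are used consistently. I do not expect any genuinely hard point beyond this.
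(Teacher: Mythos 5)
Your part (b) is correct and is essentially the paper's argument: $\ker(j^*)=\im(d^*)$, $H^1(B_l)$ is generated by $[d\phi_l]$, and the connecting homomorphism is realized by extending over the collar and differentiating, yielding $d^*[d\phi_l]=[d(\sigma(r_l)d\phi_l)]$.

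Part (a) has a genuine gap, located exactly where you wave it away. You propose to represent the summand $V$ of the splitting~\eqref{eq:mapping-torus} by $d\theta\wedge(\rho(\theta)\gamma)$ with $d\gamma$ a compactly supported area form on the page, and you assert that such a form is ``automatically closed''. It is not:
$$
   d\bigl(d\theta\wedge\rho(\theta)\gamma\bigr)
   = -\,d\theta\wedge\bigl(\rho'(\theta)d\theta\wedge\gamma+\rho(\theta)\,d\gamma\bigr)
   = -\,\rho(\theta)\,d\theta\wedge d\gamma \neq 0 .
$$
Closedness of $\Theta\wedge\alpha$ in Subsection~\ref{subsec:top} uses $d\alpha=0$, which is precisely what fails for a primitive of an area form. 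The obstruction is not merely computational: any form of type $d\theta\wedge\beta$ restricts to zero on each page, whereas $i^*$ is \emph{injective} on $V$, so no class with nonzero image in $H^2(W,\p W)$ can ever be represented by $d\theta\wedge\beta$, even up to exact terms. Hence the $V$ summand cannot be absorbed into the $\Theta\wedge H^1(W,\p W)$ summand at the level of $H^2(M,B)$; it can only be made to disappear after passing to $H^2(M)$. The missing idea --- which is the paper's proof of (a) --- is to use part (b) first: since $i^*d^*[d\phi_l]$ is represented on a page by $\sigma'(r_l)\,dr_l\wedge d\phi_l$, which has nonzero integral, one may \emph{choose} the (non-canonical) complement $V$ in~\eqref{eq:mapping-torus} to lie inside $d^*(H^1(B))=\ker(j^*)$. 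Then for $\eta\in H^2(M)$ any lift $\tilde\eta\in H^2(M,B)$ decomposes as $v+\Theta\wedge[\alpha]$ with $j^*(v)=0$, so $\eta=j^*(\Theta\wedge[\alpha])$ is represented by $d\theta\wedge(\rho(\theta)\alpha)$ with $\alpha$ \emph{closed} and compactly supported, which is the assertion.
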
 

\begin{proof}
Set $W:=\bar\Sigma=\Sigma\cup B$ and let $\phi$ be the monodromy of the open book.
Note that $H^2(M,B)=H^2(W_\phi,\p W_\phi)$. 
For part Part (b)
note that in the last displayed exact sequence $\ker(j^*)=Im(d^*)$. Now $H^1(B_l)$ is generated by $[d\phi_l]$ and
$d^*[d\phi_l]$ is represented by the 2-form
$d\bigl(\sigma(r_l)d\phi_l\bigr)$. For part (a) recall that the space $V$ in Equation~\eqref{eq:mapping-torus} was not uniquely defined.
The above description of $Im(d^*)$ implies that we can take $V:=d^*(H^1(B))$.
The discussion of the term $\Theta\wedge H^1(W,\p W)$
after Equation~\eqref{eq:mapping-torus} concludes the proof.
\end{proof}

\subsection{Proof of the Existence Theorem~\ref{obd1}}\label{ss:obd1}

Consider an open book $(B,\pi)$, a function $s:\pi_0(B)\rightarrow \{+,-\}$ 
and a cohomology class $\eta\in H^2(M)$ as in Theorem~\ref{obd1}. We proceed in two steps: First we
construct some SHS supported by $(B,\pi)$ whose Reeb vector field and
stabilizing 1-form are given by $\p_\theta$ resp.~$d\theta$ outside a
neighbourhood of the binding; in the second step we arrange the
desired cohomology class $\eta$. 

{\bf Step 1. }Since we are in dimension $3$, 
a stable Hamiltonian structure $(\om=i_RV,\lambda)$ is determined by a
triple $(R,V,\lambda)$ consisting of a vector field $R$, a volume form
$V$ and a 1-form $\lambda$ such that
$$
   L_RV=0,\qquad \lambda(R)=1,\qquad i_Rd\lambda=0.   
$$
We begin by constructing the volume form $V$. Pick a positive area
form $\tilde\om$ on $\Sigma$ satisfying $\tilde\om=d\phi_l\wedge
r_ldr_l$ near each component $B_l$ of the binding. A simple application
of Moser's trick shows that after a compactly supported isotopy of
the monodromy map $\phi$ we may assume that
$\phi^*\tilde\om=\tilde\om$. Thus $\tilde\om$ induces a maximally
nondegenerate form $i_*\tilde\om$ on $M\setminus B$ whose kernel is
spanned by the natural vector field $\partial_{\theta}$, and which
agrees with $d\phi_l\wedge r_ldr_l$ near each component $B_l$ of the
binding. Therefore, the volume form $d\theta\wedge i_*\tilde\om$ on
$M\setminus B$ coincides with 
$$
  V_{loc}:=d\theta\wedge d\phi_l\wedge r_ldr_l
$$ 
near $B_l$ and thus extends as a volume form $V$ on
$M$. Note that the vector field $\p_{\theta}$ on $M\setminus B$  
preserves the form $V\bigl|_{M\setminus B}=d\theta\wedge
i_*\tilde\om$, so the triple $(\p_\theta,V\bigl|_{M\setminus
  B},d\theta)$ defines a SHS on $M\setminus B$. 

Unfortunately, the vector field $\p_{\theta}$ on $M\setminus B$
extends as $0$ over $B$. So we need to modify $\p_{\theta}$ 
in a neighbourhood of the binding. This works as follows.
Let $W_{l}=\{r\le r_0\}$ denote a neighbourhood of $B_l$ where
$V=V_{loc}$. We set $R=s(B_l)\partial_{\phi}+\partial_{\theta}$ 
(i.e. $\om=s(B_l)rdr\wedge d\theta-rdr\wedge d\phi$) near  
and $\lambda=s(B_l)d\phi$ near $B_l$; we set $R=\p_\theta$ and $\lambda=d\theta$
near $\p W_l$. We extend $R$ to the whole of $W_l$ such that it is always linear on
the tori $\{r=const\}$ (i.e of the form $a(r)\p_{\theta}+b(r)\p_\phi$) and 
thus preserves the volume form $V_{loc}$. Moreover, we require that the
$\p_\theta$ component of $R$ is always positive. We
apply Lemma~\ref{lem:stablin} (with slopes $k_-=(s(B_l)1,-1)$ and
$k_+=(0,-1)$ and constants $c_-=c_+=1$)
to find $\lambda$ stabilizing $i_RV_{loc}$ and respecting the
conditions near $B_l$ and $\p W_l$. 
This way we have constructed a SHS $(i_RV,\lambda)$ supported by
$(B,\pi)$ such that $R$ restricts to $B_l$
positively or negatively according to $s(B_l)$. Moreover,
$R=\p_\theta$ and $\lambda=d\theta$ outside the neighbourhood
$\cup_{l=1,...,n}W_{l}$ of $B$. 

{\bf Step 2. }
We still have to take care of the cohomology class. 
Set $\tilde\eta:=[i_RV]\in H^2(M)$. According to Lemma~\ref{algtop}
(a) the cohomology class $\eta-\tilde\eta$ can be represented by
$d\theta\wedge\beta$ with a $1$-form $\beta$ supported in
$M\setminus B$. By modifying $\beta$ slightly if necessary,
we can achieve that its support is contained in $M\setminus
\cup_{l=1,...,n}W_{l}$. Define the closed 2-form
$$\om:=i_RV+d\theta\wedge\beta.$$ 
Its restriction to each page agrees with that of $i_RV$ and is hence a positive
area form, so it satisfies condition (OB2) in
Definition~\ref{def:openbook}. Moreover, $\om$ represents the desired
cohomology class: 
$$
[\om]=[i_RV]+[d\theta\wedge \beta]=\tilde\eta+(\eta-\tilde
\eta)=\eta.
$$ 
We claim that the 1-form $\lambda$ from Step 1 stabilizes
$\om$. Indeed, on $M\setminus supp(\beta)$ we have that $\om=i_RV$ and
so $\lambda$ stabilizes $\om$ by construction. 
On $supp(\beta)\subset M\setminus \cup_{l=1,...,n}W_{l}$ 
the vector field $R$ and the 1-form $\lambda$ are given by
$$
   R|_{supp(\beta)}=\p_{\theta}\,,\,\,\,\lambda|_{supp(\beta)}=d\theta.
$$
In particular, we have $d\lambda=0$ and 
$$
   \lambda\wedge \om=
   d\theta\wedge (i_{\p_{\theta}}V+d\theta\wedge\beta)=V.
$$  
This shows that $(\om,\lambda)$ is a SHS, and since $\lambda$
satisfies condition (OB1) by construction, $(\om,\lambda)$ is
supported by $(B,\pi)$. 
This finishes the proof of Theorem \ref{obd1}.

\begin{remark}
Note that the SHS $(\om,\lambda)$ constructed in the preceding proof
has the following property: Near each component 
$B_l$ of the binding, $(\om,\lambda)$ is $T^2$-invariant with respect 
to the $T^2$ action given by shifts along $\phi_l$ and $\theta$.
\end{remark}

\subsection{Hamiltonian structures on mapping tori}
\begin{lemma}\label{lm:mtl}
Let $(W,\p W)$ be a compact oriented surface with boundary and let
$X\rightarrow S^1$ be a fibration over the circle with fibre $W$. Let
$(\om_0,\lambda_0)$ and $(\om_1,\lambda_1)$ be two SHS on $X$ with the
following properties:
\begin{itemize}
\item on some neighbourhood $N$ of $\p X$ we have
  $\om_0=\om_1$ and $\lambda_0=c_0d\theta,\, \lambda_1=c_1d\theta$ for
  some positive constants $c_0,\,c_1$;  
\item $d\theta\wedge\om_0>0$ and $d\theta\wedge \om_1>0$; 
\item $[\om_1-\om_0]=0\in H^2(X,\p X)$. 
\end{itemize}
Then $(\om_0,\lambda_0)$ and $(\om_1,\lambda_1)$ are homotopic via a 
stable homotopy $(\om_t,\lambda_t)$ satisfying 
$d\theta\wedge \om_t>0$ and $[\om_t-\om_0]=0\in H^2(X,\p
X)$ for all $t$. Moreover, on $N$ we have $\om_t=\om_0=\om_1$ and
$\lambda_t=\rho(t)d\theta$ 
for a positive function $\rho:[0,1]\to \R$ with $\rho(0)=c_0$ and
$\rho(1)=c_1$.
\end{lemma}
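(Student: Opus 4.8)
The plan is to route both structures through the closed $1$-form $d\theta$ pulled back from the base $S^1$, which serves as a universal stabilizing form here, and to interpolate in three phases while keeping everything equal to $\om_0$ near $\partial X$ and the stabilizing form proportional to $d\theta$ there.

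The one substantive point is this: \emph{whenever a Hamiltonian structure $\om$ on $X$ satisfies $d\theta\wedge\om>0$, the pair $(\om,d\theta)$ is a SHS}. Indeed, the first condition in~\eqref{eq:def} reads precisely $d\theta\wedge\om>0$, and the second condition $\ker\om\subset\ker d(d\theta)$ is vacuous since $d(d\theta)=0$. Moreover, for such a fixed $\om$ the set of stabilizing $1$-forms is convex (it is a nonempty fibre of $\Pi$; cf.~property~(a) in the introduction): if $\lambda$ stabilizes $\om$ and $s\in[0,1]$, then $\lambda_s:=(1-s)\lambda+s\,d\theta$ satisfies $\lambda_s\wedge\om=(1-s)\,\lambda\wedge\om+s\,d\theta\wedge\om>0$ pointwise, and $d\lambda_s=(1-s)\,d\lambda$, which still vanishes on $\ker\om$. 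These two facts do all the work.

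Fix a smooth $\beta\colon[0,1]\to[0,1]$ with $\beta\equiv 0$ near $0$ and $\beta\equiv 1$ near $1$, and build the homotopy in three phases. For $t\in[0,\tfrac13]$ set $\om_t:=\om_0$ and $\lambda_t:=(1-\beta(3t))\lambda_0+\beta(3t)\,d\theta$; each $(\om_0,\lambda_t)$ is a SHS by the convexity observation, and on $N$ it equals $\bigl((1-\beta(3t))\,c_0+\beta(3t)\bigr)d\theta$. For $t\in[\tfrac13,\tfrac23]$ set $\lambda_t:=d\theta$ and $\om_t:=(1-\beta(3t-1))\om_0+\beta(3t-1)\om_1$; then $d\theta\wedge\om_t=(1-\beta(3t-1))\,d\theta\wedge\om_0+\beta(3t-1)\,d\theta\wedge\om_1>0$ pointwise, so $\om_t$ is a Hamiltonian structure and $(\om_t,d\theta)$ is a SHS, while on $N$ we have $\om_t=\om_0=\om_1$ and $\lambda_t=d\theta$. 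For $t\in[\tfrac23,1]$ set $\om_t:=\om_1$ and $\lambda_t:=(1-\beta(3t-2))\,d\theta+\beta(3t-2)\lambda_1$, again a SHS by convexity, equal on $N$ to $\bigl((1-\beta(3t-2))+\beta(3t-2)\,c_1\bigr)d\theta$. Since $\beta$ is locally constant near the endpoints, the three pieces patch to a smooth family $(\om_t,\lambda_t)_{t\in[0,1]}$.

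All the asserted properties are then immediate: $d\theta\wedge\om_t>0$ throughout; $\om_t-\om_0$ is either $0$ or $\beta(3t-1)(\om_1-\om_0)$, a closed form vanishing on $N$, so $[\om_t-\om_0]=0\in H^2(X,\partial X)$ for all $t$ (in particular $\dot\om_t$ is exact and $[\om_t]\in H^2(X)$ is constant, so this is an honest stable homotopy in the sense of Definition~\ref{homdef}); and on $N$ we have $\om_t=\om_0=\om_1$ and $\lambda_t=\rho(t)\,d\theta$, where $\rho$ is the smooth function assembled from the three phases, manifestly positive and with $\rho(0)=c_0$, $\rho(1)=c_1$. There is no real obstacle in the argument: the only things to check are that the convex interpolation of $\om$ preserves the pointwise positivity $d\theta\wedge\om_t>0$ and that the family is smooth at the junctions $t=\tfrac13,\tfrac23$ --- both arranged above --- the only genuine idea being the remark that $d\theta$ stabilizes every $\om$ with $d\theta\wedge\om>0$, which reduces the lemma to convexity of the fibres of $\Pi$.
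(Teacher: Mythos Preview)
Your proof is correct and follows essentially the same route as the paper: first interpolate the stabilizing form to $d\theta$ by convexity of the fibre $\Pi^{-1}(\om_0)$, then interpolate $\om_0$ to $\om_1$ linearly with $d\theta$ as the common stabilizer, then interpolate $d\theta$ back to $\lambda_1$. The only difference is cosmetic---you make the three phases and the smoothing at the junctions explicit, whereas the paper leaves the concatenation implicit. One tiny imprecision: in your third phase $\om_t-\om_0=\om_1-\om_0$, not $0$, but this is still a constant multiple of $\om_1-\om_0$ and the relative cohomology conclusion stands.
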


\begin{proof}
As both $\lambda_0$ and $d\theta$ stabilize $\om_0$, we can homotope
$(\om_0,\lambda_0)$ to $(\om_0,d\theta)$ by interpolating linearly
between $\lambda_0$ and $d\theta$. Since on $N$ we have $\lambda_0=c_0d\theta$,
this homotopy has the desired form when restricted to $N$. Similarly for $(\om_1,\lambda_1)$. 
So it remains to homotope $(\om_0,d\theta)$ to $(\om_1,d\theta)$.
Set 
$$
   \om_t := (1-t)\om_0+t\om_1,\qquad t\in [0,1].
$$ 
This a homotopy of closed $2$-forms rel $\p X$ joining $\om_0$ and
$\om_1$. Now $d\theta\wedge\om_0>0$ and $d\theta\wedge \om_1>0$ 
imply that $d\theta\wedge \om_t>0$ for all $t$, so
$\{(\om_t,d\theta)\}_{t\in [0,1]}$ is a homotopy of SHS transverse to
the fibers. The last condition holds since 
$[\om_t-\om_0]=[t(\om_1-\om_0)]=0\in H^2(X,\p X)$.
\end{proof}

\subsection{Standardization near the binding}\label{ss:binding}
For the remainder of Section \ref{sec:openbook} the following
terminology will prove useful. 

\begin{definition}\label{def:sspecial} 
Let $s\in \{+,-\}$ be a sign.
We call a SHS $(\om,\lambda)$ on $S^1\times D^2$ {\em $s$-special} (in
words --- positively or negatively special) if  
$$
   \om = rdr\wedge(sd\theta-d\phi)
$$
\end{definition}

It is important to note that if for a positive real number $C$ the SHS
$(C\om,\lambda)$ is $s$-special 
on $S^1\times D^2$ (with $D^2=\{r\le b\}$ for some $b>0$), then we can
homotope $\om$ rel $\p(S^1\times D^2)$, keeping the same kernel
foliation, to get that on a smaller solid torus the SHS
$(\om,\lambda)$ is $s$-special. Indeed, constancy of the slope allows  
us to keep $\lambda$ stabilizing $\om$ fixed throughout the
homotopy. Let $h(r):=\frac{1}{2}r^2(s1,-1)$. Take a diffeomorphism
$\rho$ of $(0,b]$ supported away from $b$ and such that for some
$\eps>0$ we have $\rho(r)=C^{1/2}r$ for $r\le \eps$.  
Since the linear interpolation between any two orientation preserving
diffeomorphisms of $(0,b]$ supported away from the $b$ is running
through diffeomorphisms of the same type, we set 
$\rho_t(r):=(1-t)r+t\rho(r)$ and $h_t:=h\circ\rho_t$. Now
$\om_t=d\alpha_{h_t},\, t\in [0,1]$ is the required homotopy, since
$\om_0=\om$ and $(\om_1,\lambda)$ is $s$-special on $\{r\le \eps\}$.  
 
Now we show that a SHS can be deformed to be special near the binding
of an open book, or more 
generally, near a finite collection of periodic orbits. 

\begin{prop}[Standardization near the binding]\label{prop:obd}
Let $(\om,\lambda)$ be a SHS supported by an open book $(B,\pi)$. Then
there exists a homotopy of SHS $(\om_t,\lambda_t)$ supported by
$(B,\pi)$ such that $(\om_0,\lambda_0)=(\om,\lambda)$ and
$(\om_1,\lambda_1)$ is $s(B_l)$-special near each binding component $B_l$. 
\end{prop}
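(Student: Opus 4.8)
The plan is to work near one binding component $B_l$ at a time, using the standard coordinates $(\phi_l,r_l,\theta)$ on a tubular neighbourhood $S^1\times D^2$ in which $\pi(\phi_l,r_l,\theta)=\theta$. Fix such a component and drop the subscript $l$. Since $(\om,\lambda)$ is supported by $(B,\pi)$, the Reeb vector field $R$ is tangent to $B$ and has a well-defined sign $s=s(B)$ there, and $\om$ restricts positively to the pages $\{\theta=\const\}$. First I would shrink the tube to a smaller solid torus $S^1\times D^2_b$ (with $D^2_b=\{r\le b\}$) on which the picture is as controlled as possible, and on which $\om$ has negative slope ... more precisely, one wants to exhibit $\om$ near $r=0$ as (up to a positive closed perturbation) a form for which the slope function is constant. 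The key local observation is that smoothness and nondegeneracy of $\om$ at the core circle $\{r=0\}$, together with the page-positivity condition (OB2) and tangency of $R$ to $B$ with sign $s$, force $\om$ to agree to leading order with $r\,dr\wedge(s\,d\theta-d\phi)$; concretely, writing $\om$ in the tube one finds that the $dr\wedge d\theta$-coefficient behaves like $s\,r+O(r^2)$ (positive page area) and the $dr\wedge d\phi$-coefficient like $-r+O(r^2)$ (sign of $R|_B$), after possibly rescaling coordinates on $D^2$.

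Next I would carry out a $T^2$-invariant standardization on the punctured tube $S^1\times(D^2_b\setminus\{0\})\cong (0,b]\times T^2$, which falls squarely under the machinery of Section~\ref{subsec:t2inv}. By Lemma~\ref{lem:t2inv}, after a stable homotopy fixed outside the tube we may assume that on $(0,b]\times T^2$ the SHS is $T^2$-invariant, $\om=\om_h$ for an immersion $h:(0,b]\to\C$ (extended smoothly over $r=0$ with $h'(r)=(sr,-r)+O(r^2)$), and $\lambda=\lambda_g$ for a function $g$ satisfying~\eqref{eq:g}. Now homotope the immersion $h$, rel its behaviour near $r=b$, to a new immersion $\tilde h$ which equals $\frac12 r^2(s,-1)$ on a smaller interval $[0,\eps]$ and still matches $h$ near $b$; such a homotopy exists because near $r=0$ the curve $h$ already points in the direction $(s,-1)$ up to higher order, so we only need a regular homotopy of immersions with fixed endpoint behaviour (the winding number near $r=0$ is forced). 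Along this homotopy of $\om_{h_t}$, the slope function may or may not be constant; in either case I would invoke Corollary~\ref{cor:stablin} (for the constant-slope stretches, with slopes $(s,-1)/\sqrt2$ near $r=0$ and the given slope near $r=b$, and constant $c$) together with Remark~\ref{rem:nonconst} if a genuinely nonconstant slope is needed to apply Proposition~\ref{prop:t2inv-stab}, to produce a matching homotopy of stabilizing $1$-forms $\lambda_{g_t}$. The homotopy is supported in the interior of the punctured tube, hence extends by the constant homotopy to all of $M$, and it preserves conditions (OB2) and (OB4) at every time because the $\p_\theta$-component of the Reeb field stays positive throughout (this is exactly the content of keeping $h_t'$ in the correct half-plane).

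Finally, once $(\om_1,\lambda_1)$ satisfies $\om_1=\frac12 r^2$-type data, i.e. $\om_1=r\,dr\wedge(s\,d\theta-d\phi)$ on the smaller solid torus $\{r\le\eps\}$, the remark preceding this proposition (constancy of the slope of an $s$-special form lets one shrink the region where it is $s$-special, keeping $\lambda$ fixed) shows $(\om_1,\lambda_1)$ is $s$-special near $B$ in the sense of Definition~\ref{def:sspecial}. Running this over all binding components $B_1,\dots,B_n$ in disjoint tubes and concatenating the (compactly supported) homotopies yields the desired stable homotopy $(\om_t,\lambda_t)$ supported by $(B,\pi)$ with $(\om_0,\lambda_0)=(\om,\lambda)$ and $(\om_1,\lambda_1)$ $s(B_l)$-special near each $B_l$.

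The main obstacle I expect is the bookkeeping in the second step: verifying that one really can homotope the immersion $h$ to the normal form $\frac12 r^2(s,-1)$ through immersions whose slope functions stay controlled enough that Corollary~\ref{cor:stablin}, Remark~\ref{rem:nonconst} and Proposition~\ref{prop:t2inv-stab} combine to give a continuous family of stabilizing forms agreeing with the prescribed ones near both ends of the interval — in particular making sure that the boundary data near $r=b$ (inherited from $(\om,\lambda)$ on the rest of $M$) is compatible with the $s$-special data near $r=0$, which is where the constant $c$ in~\eqref{eq:g-lin} must be matched. The page-positivity condition (OB2) throughout the homotopy is the safeguard that keeps everything in the admissible class.
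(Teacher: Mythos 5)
There is a genuine gap at the heart of your argument: the reduction to the $T^2$-invariant model on the punctured tube. You write that ``by Lemma~\ref{lem:t2inv}, after a stable homotopy fixed outside the tube we may assume that on $(0,b]\times T^2$ the SHS is $T^2$-invariant.'' Lemma~\ref{lem:t2inv} does not do this: it only puts forms that are \emph{already} $T^2$-invariant into the normal form $(\om_h,\lambda_g)$. The only mechanism in the paper for \emph{producing} $T^2$-invariance is Theorem~\ref{thm:integr}, and that requires the region to be integrable, i.e.\ the kernel foliation must be tangent to the tori $\{r\}\times T^2$ — which happens where $df\neq 0$ for $f=d\lambda/\om$. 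A neighbourhood of a binding component need not be integrable at all: for a generic contact form supported by the open book, $f$ is constant near $B_l$ and the Reeb field has no reason to preserve the tori $\{r=\const\}$. So the entire $T^2$-invariant machinery (Lemma~\ref{lem:t2inv}, Corollary~\ref{cor:stablin}, Proposition~\ref{prop:t2inv-stab}) is unavailable precisely in the main cases, and the step you flag only as ``bookkeeping'' is in fact the substance of the proposition.

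The paper proceeds differently. First it uses Corollary~\ref{cor:thick-a} (thickening a level set) to replace $\lambda$ by a stabilizing form whose proportionality coefficient is \emph{constant} near $B_l$; this reduces to two cases, $f\equiv a\neq 0$ (contact near $B_l$) and $f\equiv 0$ (taut foliation near $B_l$). Each case is then handled by a bespoke local normalization with no invariance assumption: in the contact case, Lemma~\ref{lem:obd-cont} interpolates $\lambda$ to $|T|\lambda_{\st}$ using the quantitative interpolation criteria of Lemmas~\ref{lem:interpol} and~\ref{lem:interpol2} and the logarithmic cutoff of Lemma~\ref{lm:cutoff} with $|r\rho'(r)|<\eps$, checking that both the contact condition and transversality to the pages survive; in the foliation case, Lemma~\ref{lem:obd-fol} runs a seven-step argument replacing the primitive of $\om$ by its linear part at $\gamma$ and then adding a controlled multiple of a ``transverse'' form to restore page-positivity. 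Your observations about the leading-order behaviour of $\om$ at the core circle and about shrinking the $s$-special region using constancy of slope are correct, but they do not substitute for these interpolation arguments. If you want to keep a $T^2$-invariant flavour, you would first have to prove a normalization statement that makes the SHS $T^2$-invariant near an arbitrary closed Reeb orbit — and that is essentially equivalent to what Lemmas~\ref{lem:obd-cont} and~\ref{lem:obd-fol} accomplish.
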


\begin{proof}
Let $f:=d\lambda/\om$. For each component $B_l$ of the binding we
distinguish three cases.\\ 
Case 1: $f\equiv a\neq 0$ on a neighbourhood of $B_l$.\\  
Case 2: $f\equiv 0$ on a neighbourhood of $B_l$.\\
Case 3: $f$ is not constant on any neighbourhood of $B_l$.  

In Case 3 we apply Corollary~\ref{cor:thick-a} to the value
$a:=f(B_l)$. So we find a new stabilizing 1-form $\tilde\lambda$ for
$\om$ such that $\tilde f=d\tilde\lambda/\om$ is constant on a
neighbourhood of $B_l$. Since $\om$ has not changed,
$(\om,\tilde\lambda)$ is still supported by $(B,\pi)$ and so are the
SHS $(\om_t,\lambda_t)=\bigl(\om,(1-t)\lambda+t\tilde\lambda\bigr)$,
$t\in[0,1]$. Thus Case 3 reduces to Cases 1 and 2, which are treated
in Lemmas~\ref{lem:obd-cont} and~\ref{lem:obd-fol} below.  

Note that in Case 1 the sign of $a$ coincides with the
sign of $\lambda$ as a contact form. If $a>0$, then $\lambda$ is positive.
We apply Lemma \ref{lem:obd-cont} to the form $\lambda$ and note that the sign of 
$T=\int_{\gamma}\lambda$ is just $s(B_l)$. This way we obtain a homotopy rel
$\p U$ of positive contact forms $\{\lambda_t\}_{\{t\in [0,1]\}}$ on $U$ from $\lambda_0=\lambda$ to
$\lambda_1$ which agrees near $\gamma$ with a positive multiple of
$r^2d\theta+(1-r^2)d\phi$ if $s(B_l)=+$, or $-r^2d\theta-(1+r^2)d\phi$ if $s(B_l)=-$. Then
$(\om_t=\frac{1}{a}d\lambda_t,\lambda_t)$ on $U$ gives the desired
stable homotopy. 

If $a<0$, then $\lambda$ negative. As before the sign of 
$T=\int_{\gamma}\lambda$ is just $s(B_l)$.
Let $Rev_\phi$ denote the map on a small tubular
neighbourhood $U$ of $B_l$ sending $\phi$ to $-\phi$. This is 
just reversing the direction along this binding component. Then the
pullback $Rev_\phi^*\lambda$ is a positive contact form. Moreover,
$d\theta\wedge d\lambda<0$ (as $a$ is negative) implies that
$d\theta\wedge d(Rev_\phi^*\lambda)>0$. This means that we can apply  
Lemma \ref{lem:obd-cont} to the form
$\tilde\lambda:=Rev_\phi^*\lambda$ with 
$T=\int_{\gamma}\tilde\lambda$ having the sign opposite to $s(B_l)$. Applying $Rev_\phi^*$ to the
homotopy provided by Lemma~\ref{lem:obd-cont} we obtain a homotopy rel
$\p U$ of negative contact forms $\{\lambda_t\}_{\{t\in [0,1]\}}$ on $U$ from $\lambda_0=\lambda$ to
$\lambda_1$ which agrees near $\gamma$ with a positive multiple of
$-r^2d\theta+(1+r^2)d\phi$ if $s(B_l)=+$, or $r^2d\theta-(1-r^2)d\phi$ if $s(B_l)=-$. Then
$(\om_t=\frac{1}{a}d\lambda_t,\lambda_t)$ on $U$ gives the desired
stable homotopy. Note that for both signs of $a$ we get that $\om_1$ restricts near $\gamma$
as a positive multiple of either 
$rdr\wedge(d\theta-d\phi)$ if $s(B_l)=+$ or $-rdr(d\theta+d\phi)$ if $s(B_l)=-$.
\end{proof}

\begin{corollary}[Standardization near periodic orbits]
Let $(\om,\lambda)$ be a SHS on a closed 3-manifold $M$ and
$\gamma_1,\dots,\gamma_k$ periodic orbits. Let $s_1,\dots s_k$ be a finite sequence of signs. 
Then there exists a homotopy of SHS $(\om_t,\lambda_t)$ having the $\gamma_i$ as a
periodic orbits such that $(\om_0,\lambda_0)=(\om,\lambda)$ and
$(\om_1,\lambda_1)$ is $s_i$-special near the orbits
$\{\gamma_i\}_{i=1,...,k}$.  
\end{corollary}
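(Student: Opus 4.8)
This is the local counterpart of Proposition~\ref{prop:obd}, and the plan is simply to localize the argument given there. I would work in pairwise disjoint solid‑torus neighbourhoods of the $\gamma_i$, run the three‑case analysis from the proof of Proposition~\ref{prop:obd} near each orbit, and arrange the prescribed signs by choosing the orientation of each $\gamma_i$ appropriately. Concretely, first pick pairwise disjoint tubular neighbourhoods $N_i\cong S^1\times D^2$ of the $\gamma_i$ with $\gamma_i=S^1\times\{0\}$, and orient each $\gamma_i$ so that $\mathrm{sign}\bigl(\int_{\gamma_i}\lambda\bigr)=s_i$; this is possible since $\int_{\gamma_i}\lambda=\pm T_i$ with $T_i>0$ the period, the two signs corresponding to the two orientations of $\gamma_i$. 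All the homotopies below are supported in $\bigcup_iN_i$ and keep each $\gamma_i$ a closed Reeb orbit, so they patch with the constant homotopy on $M\setminus\bigcup_iN_i$ and it suffices to treat a single orbit $\gamma=\gamma_i$ with prescribed sign $s=s_i$. Near $\gamma$ set $f:=d\lambda/\om$ and distinguish, as in the proof of Proposition~\ref{prop:obd}, the cases (1) $f\equiv a\neq0$ near $\gamma$, (2) $f\equiv0$ near $\gamma$, (3) $f$ not locally constant near $\gamma$. In Case 3, Corollary~\ref{cor:thick-a} applied to the value $a:=f(\gamma)$ produces a stabilizing $1$-form $\tilde\lambda$, $C^1$-close to $\lambda$, with $\tilde f:=d\tilde\lambda/\om$ constant near $\gamma$; since $\om$ is unchanged, $\gamma$ remains a Reeb orbit of $(\om,\tilde\lambda)$ and the linear homotopy from $\lambda$ to $\tilde\lambda$ reduces Case 3 to Cases 1 and 2.

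In Case 1, $\lambda$ is a positive (if $a>0$) or negative (if $a<0$) contact form near $\gamma$, and $\gamma$ is transverse to $\ker\lambda$ with $\mathrm{sign}\bigl(\int_\gamma\lambda\bigr)=s$. Applying Lemma~\ref{lem:obd-cont} to $\lambda$ — preceded, when $a<0$, by the flip $Rev_\phi$ exactly as in the proof of Proposition~\ref{prop:obd} — yields a homotopy of contact forms rel $\partial N$, keeping $\gamma$ a Reeb orbit, after which $\om=a^{-1}d\lambda$ equals a positive multiple of the $s$-special model near $\gamma$. Case 2 is identical with Lemma~\ref{lem:obd-fol} in place of Lemma~\ref{lem:obd-cont}. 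Finally, the discussion following Definition~\ref{def:sspecial} lets us pass from ``positive multiple of the $s$-special model'' to ``$s$-special'' on a slightly smaller solid torus by one more homotopy rel boundary that keeps the kernel foliation, and hence $\gamma$, fixed. Carrying this out simultaneously on all the $N_i$ and concatenating the resulting stable homotopies gives the desired $(\om_t,\lambda_t)$ on $M$.

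The step that requires the most care is the sign bookkeeping: one must verify that orienting $\gamma_i$ so that $\int_{\gamma_i}\lambda$ has sign $s_i$ genuinely forces Lemma~\ref{lem:obd-cont} (respectively Lemma~\ref{lem:obd-fol}) to output the $s_i$-special model rather than its mirror image, and that this remains correct after the $Rev_\phi$ flip used in the $a<0$ sub-case — this is precisely the dependence on $\mathrm{sign}(T)$ that appears in the proof of Proposition~\ref{prop:obd}. Everything else is a routine transcription of that proof, with ``neighbourhood of the binding component $B_l$'' replaced throughout by ``neighbourhood of the periodic orbit $\gamma_i$'', together with the observation that each homotopy involved leaves the relevant circle a closed Reeb orbit.
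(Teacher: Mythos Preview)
There is a genuine gap. Lemmas~\ref{lem:obd-cont} and~\ref{lem:obd-fol} both carry the hypothesis~\eqref{eq:supported}, i.e.\ $d\theta\wedge\omega>0$ on $\{r>0\}$: the Reeb vector field must be transverse to the pages $\{\theta=\mathrm{const}\}$. In Proposition~\ref{prop:obd} this comes for free from the assumption that the SHS is supported by the open book, but for an \emph{arbitrary} closed Reeb orbit it does not. For example, for the contact form $\lambda=d\phi+r^2\,d\theta$ on $S^1\times D^2$ the core circle is a closed Reeb orbit, yet $d\theta\wedge d\lambda=0$ identically, so~\eqref{eq:supported} fails and neither lemma applies. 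Your proposal invokes these lemmas without ever checking this hypothesis; reorienting $\gamma_i$ only flips the sign of $T=\int_{\gamma_i}\lambda$ and does nothing to produce~\eqref{eq:supported}.

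This is exactly why the paper does not merely transcribe the proof of Proposition~\ref{prop:obd}. Instead it manufactures a local open book near $\gamma$ by setting $\pi_k(\phi,r,\theta):=k\phi+\theta$ and observes that for $|k|$ large the pages of $\pi_k$ are nearly transverse to $\partial_\phi$, hence to $R$; thus~\eqref{eq:supported} holds in the twisted angular coordinate and the two lemmas become applicable. The prescribed sign $s_i$ is then arranged through the sign of $k$ (together with a possible reversal of $\phi$), rather than by your orientation trick alone. So the missing idea is precisely the construction of the local open book $\pi_k$; without it, the argument does not go through.
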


\begin{proof}
It suffices to consider the case of one periodic orbit $\gamma$ and one sign $s\in \{-,+\}$. 
If $f:=d\lambda/\om$ is not constant on any neighbourhood of $\gamma$
we modify $\lambda$ as in the preceding proof. So we may assume that
$f$ is constant on a neighbourhood $U\cong S^1\times D^2$ of
$\gamma$. 

Assume that $s=+$. Let $\phi$ be the coordinate along
$\gamma=S^1\times\{0\}$ and $(r,\theta)$ be polar coordinates on
$D^2$. For any $k\in\Z$, the map $\pi_k(\phi,r,\theta) := k\phi +
\theta$ (for $r>0$) defines an open book decomposition near
$\gamma$. A short computation shows that the Reeb vector field of
$(\om,\lambda)$ is transverse to the pages of this open book for $k$
sufficiently large. 
Applying Lemmas~\ref{lem:obd-cont} and~\ref{lem:obd-fol} below to this
open book decomposition on $U$ yields the result. 

If $s=-$. Let $\phi$ be the coordinate along $\gamma=S^1\times\{0\}$ in the reversed direction.
Let $r,\theta,\pi_k$ be as above. The Reeb vector field of $(\om,\lambda)$ is transverse to the pages of this open book for $k$ negative and sufficiently large in the absolute value.

\end{proof}

For the remainder of this subsection, we consider a neighbourhood $U$
of a component $\gamma$ of the binding. We fix coordinates 
$(\phi,r,\theta)$ on 
$$
   U \cong S^1\times D^2
$$
supported by the open book, so $\phi$ is the coordinate along
$\gamma=S^1\times\{0\}$ and $(r,\theta)$ are polar coordinates on
$D^2$ such that the open book projection is given for $r>0$ by
$\pi(\phi,r,\theta)=\theta$. We will also sometimes use cartesian
coordinates $(x,y)$ on $D^2$. Note that a SHS $(\om,\lambda)$ on $U$
is supported by the open book iff 
\begin{equation}\label{eq:supported}
   d\theta\wedge \om > 0 \text{ on the set }\{r>0\}. 
\end{equation}
(It follows that the 3-form extends continuously to $r=0$ but may
become zero there). We consider a SHS $(\om,\lambda)$
satisfying~\eqref{eq:supported} with $f=d\lambda/\om$ being {\em
  constant}.  

In subsequent constructions we will repeatedly use the following 
technical lemma.

\begin{lemma}\label{lm:cutoff}
For all $\delta,\varepsilon>0$ there exists a smooth function
$\rho:[0,\delta]\rightarrow[0,1]$ with the following properties:
$\rho$ is nonincreasing, constant $1$ in a neighbourhood of $0$,
constant $0$ in a neighbourhood of $\delta$, and 
$$
   |x\rho'(x)|<\varepsilon
$$ 
for all $x\in [0,\delta]$. 
\end{lemma}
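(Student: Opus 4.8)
The plan is to reduce the problem to a standard smooth step function by passing to the logarithmic coordinate $t=\log x$, which turns the weight $x$ into an ordinary translation-invariant derivative: if $\rho(x)=P(\log x)$, then $x\rho'(x)=P'(\log x)$. Since $t=\log x$ ranges over the unbounded interval $(-\infty,\log\delta]$ as $x$ runs over $(0,\delta]$, we have an arbitrarily long $t$-interval at our disposal on which to carry out the transition from the value $1$ to the value $0$, so the $t$-derivative of $P$, and hence $x\rho'(x)$, can be made as small as we like.

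Concretely, I would first fix once and for all an auxiliary smooth nonincreasing function $\beta\colon\R\to[0,1]$ with $\beta\equiv 1$ on $(-\infty,1/4]$ and $\beta\equiv 0$ on $[3/4,\infty)$, and set $L:=\max_{s\in\R}|\beta'(s)|<\infty$. Given $\delta,\varepsilon>0$, I would choose an integer $N$ with $N>L/\varepsilon$, put $b:=\delta/2$ and $a:=b\,e^{-N}\in(0,b)$, and define $\rho\colon[0,\delta]\to[0,1]$ by $\rho(x):=1$ for $x\in[0,a]$, by $\rho(x):=\beta\bigl((\log x-\log a)/N\bigr)$ for $x\in[a,b]$, and by $\rho(x):=0$ for $x\in[b,\delta]$.

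It then remains to check the asserted properties, all of which are routine. Smoothness on $[0,\delta]$, including at the junctions $x=a$ and $x=b$, follows because $\beta$ is locally constant near the endpoints $0$ and $1$ of its transition interval, so the three pieces of $\rho$ agree to infinite order where they meet; in particular $\rho$ is constant $1$ on a neighbourhood of $0$ and constant $0$ on $[\delta/2,\delta]$, which is a neighbourhood of $\delta$. Monotonicity of $\rho$ is immediate since $\beta$ is nonincreasing and $\log$ is increasing. Finally, for $x\in(a,b)$ one computes
\[
   x\rho'(x)=x\cdot\beta'\!\Bigl(\tfrac{\log x-\log a}{N}\Bigr)\cdot\frac{1}{Nx}=\frac1N\,\beta'\!\Bigl(\tfrac{\log x-\log a}{N}\Bigr),
\]
so $|x\rho'(x)|\le L/N<\varepsilon$ there, while on $[0,a]$ and $[b,\delta]$ we have $\rho'\equiv 0$ and hence $x\rho'(x)=0$.

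I do not expect any genuine obstacle here: the one point requiring thought is the substitution $x\,\partial_x=\partial_{\log x}$, after which the freedom to take $a>0$ arbitrarily small — note that $b<\delta$ is constrained from above, but $a$ is not bounded below — provides exactly the room needed; everything else is bookkeeping with a fixed model bump function.
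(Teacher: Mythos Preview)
Your proof is correct and uses essentially the same idea as the paper: passing to the logarithmic coordinate so that the operator $x\,\partial_x$ becomes an ordinary derivative, and then exploiting the unbounded length of $(-\infty,\log\delta]$ to make the transition arbitrarily slow. The paper phrases this slightly differently---it starts directly from $f(x)=\varepsilon\ln(\delta/x)$, for which $xf'(x)=-\varepsilon$ identically, and then caps and smoothens---but this is the same mechanism as your rescaled bump in $\log x$.
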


\begin{proof}
The function $f(x):=\eps\ln(\delta/x)$ satsifies $x f'(x)=-\eps$,
$f(\delta)=0$ and $f(\delta e^{-1/\eps})=1$. Now shift the
function $\max(f,1)$ slightly to the left, extend by $0$ to the right
and smoothen it. 
\end{proof}

{\bf The contact case. }

\begin{lemma}\label{lem:obd-cont}
Let $\lambda$ be a positive contact form on $U=S^1\times D^2$ 
satisfying~\eqref{eq:supported} (with $\om=d\lambda)$. Then
there exists a homotopy rel $\p U$ of contact forms $\lambda_t$ 
satisfying~\eqref{eq:supported} such that $\lambda_0=\lambda$
and $\lambda_1=|T|\lambda_\st$ near $\gamma=S^1\times\{0\}$, where
$T=\int_\gamma\lambda$ and
$$
   \lambda_\st: = \begin{cases}
   r^2d\theta + (1-r^2)d\phi & \text{ if }T>0, \\
   -r^2d\theta - (1+r^2)d\phi & \text{ if }T<0.
   \end{cases}
$$
\end{lemma}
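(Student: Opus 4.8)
The plan is to prove this in two main steps, after a preliminary reduction.

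\emph{Preliminary reduction.} First I would observe that $\gamma = S^1\times\{0\}$ is automatically a closed Reeb orbit of $\lambda$, transverse to $\ker\lambda$, and positively transverse exactly when $T>0$. Indeed, writing $d\lambda = P\,dx\wedge dy + Q\,dx\wedge d\phi + S\,dy\wedge d\phi$ in cartesian coordinates $(\phi,x,y)$ on $U = S^1\times D^2$, one checks that condition~\eqref{eq:supported} is equivalent to $xQ+yS<0$ on $\{r>0\}$; letting $r\to0$ along the direction of $(Q,S)$ forces $Q$ and $S$ to vanish along $\gamma$, and then $\lambda\wedge d\lambda>0$ forces $P\neq0$ along $\gamma$, so from $i_Rd\lambda=0$ the Reeb field $R$ is tangent to $\gamma$ with $\lambda(\dot\gamma)$ of constant sign $\sgn(T)$. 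I would then treat the case $T>0$, so that $\lambda_\st = r^2d\theta+(1-r^2)d\phi$, and set $c:=T>0$; the case $T<0$ is entirely analogous, with the other form $\lambda_\st$ from the statement. By a $C^\infty$-small homotopy supported near $\gamma$ through contact forms satisfying~\eqref{eq:supported} (both conditions being open), together with a reparametrization of $\phi$ near $\gamma$ realized by a homotopy equal to the identity near $\p U$, I may assume $\lambda|_\gamma = c\,d\phi$ and that $\ker\lambda$ along $\gamma$ is spanned by $\p_x,\p_y$.

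\emph{Step 1: a $T^2$-invariant normal form near $\gamma$.} I claim that, after a homotopy rel $\p U$ through contact forms satisfying~\eqref{eq:supported}, one may assume that on a small neighbourhood $\{r\le\delta\}$ of $\gamma$ the form is $T^2$-invariant, $\lambda = \alpha_h = h_1(r)d\theta + h_2(r)d\phi$, for an immersion $h=(h_1,h_2):[0,\delta]\to\R^2$ with $h_1(r)=O(r^2)$, $h_1(0)=0$, $h_2(0)=c$, such that $h$ turns clockwise (equivalently, $\alpha_h$ is contact) and $h_2$ is strictly decreasing for $r>0$ (equivalently, $\alpha_h$ satisfies~\eqref{eq:supported}). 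The mechanism is to remove, one Taylor order in $r$ at a time, the components of $\lambda$ near $\gamma$ obstructing $T^2$-invariance by subtracting cut-off versions of them, using the logarithmic cutoff function of Lemma~\ref{lm:cutoff} to keep each correction $C^1$-small (hence inside the two open classes), and then to replace the remaining, flat-along-$\gamma$, non-invariant part by its average over the $T^2$-action, again a $C^1$-small modification supported in $\{r\le\delta\}$. \emph{This is the main obstacle of the proof:} one must carry out this normal-form reduction near the transverse Reeb orbit $\gamma$ while never leaving the intersection of the open conditions ``$\lambda$ contact'' and ``$d\theta\wedge d\lambda>0$ on $\{r>0\}$''; this is precisely why the quantitative cutoff of Lemma~\ref{lm:cutoff} is used rather than a naive linear interpolation between $\lambda$ and its $T^2$-average.

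\emph{Step 2: finishing within $T^2$-invariant forms.} Now $\lambda = \alpha_h$ near $\gamma$ and the target is $c\,\lambda_\st = \alpha_{\tilde h}$ with $\tilde h = (cr^2,c(1-r^2))$, whose slope $\tilde h'/|\tilde h'|$ is the constant $e^{-i\pi/4}$. Writing $h'/|h'| = e^{i\sigma}$, condition~\eqref{eq:supported} says $\sin\sigma<0$ (so $\sigma$ stays in one branch of $\{\sin\sigma<0\}$), while $h_1(0)=0$, $h_2(0)=c>0$ together with contactness and~\eqref{eq:supported} force the limiting slope $\lim_{r\to0^+}h'(r)/|h'(r)|$ to lie in the fourth quadrant; hence, once the value of $\sigma$ near $r=\delta$ is fixed, $\alpha_h$ and $c\,\lambda_\st$ lie in the same connected component of the space of $T^2$-invariant contact forms on $\{0<r\le\delta\}$ satisfying~\eqref{eq:supported} and agreeing with the fixed exterior data near $r=\delta$. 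An explicit homotopy of immersions bending $h$ towards $\tilde h$ near $r=0$, while keeping $h$ turning clockwise and $h_2$ decreasing, and cut off to be fixed near $r=\delta$, then produces the required homotopy of $T^2$-invariant contact forms; this is a special case of the homotopy-extension arguments behind Corollary~\ref{cor:stablin} and Corollary~\ref{cor:class}. Finally, since $c\,\lambda_\st$ has constant slope near $\gamma$, a rescaling of the $r$-coordinate leaving $\lambda$ unchanged (as in Subsection~\ref{ss:binding}) upgrades the conclusion to hold on a full neighbourhood of $\gamma$. Concatenating the homotopies of the preliminary reduction and the two steps --- all rel $\p U$ and all through contact forms satisfying~\eqref{eq:supported} --- gives $\lambda_1 = |T|\lambda_\st$ near $\gamma$, as desired.
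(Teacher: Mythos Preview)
Your route—first make $\lambda$ $T^2$-invariant near $\gamma$, then homotope within $T^2$-invariant forms—is different from the paper's and could in principle work, but Step~1 has a real gap. You justify each perturbation (killing the $dx,dy$ components along $\gamma$, removing Taylor terms, averaging) by saying ``both conditions are open'', but condition~\eqref{eq:supported} is \emph{not} open under perturbations supported near $\gamma$. Writing $d\lambda=P\,dx\wedge dy+Q\,dx\wedge d\phi+S\,dy\wedge d\phi$, one has $d\theta\wedge d\lambda=-\tfrac{xQ+yS}{r^2}\,dx\wedge dy\wedge d\phi$; condition~\eqref{eq:supported} forces $Q,S$ to vanish along $\gamma$, so the coefficient (relative to the smooth volume form) is merely bounded, not bounded below, as $r\to0$. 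Subtracting a cut-off of $q_1(\phi)\,dx+q_2(\phi)\,dy$ (as your preliminary reduction requires) creates $Q,S$ terms proportional to $q_1',q_2'$ along $\gamma$, making $d\theta\wedge d\lambda$ blow up like $1/r$ with indefinite sign. The logarithmic cutoff of Lemma~\ref{lm:cutoff} controls $|r\rho'|$, not $|\rho'|$, and does not rescue this. The same difficulty recurs throughout your Taylor-removal and averaging scheme: you have given no mechanism that keeps $d\theta\wedge d\lambda>0$ near $r=0$.

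The paper's proof is more direct. After normalizing $\lambda|_\gamma$ by an isotopy $(\phi,r,\theta)\mapsto(f(\phi,r),r,\theta)$ (which preserves $d\theta$ and hence~\eqref{eq:supported} automatically), it first \emph{quantifies} the condition: adding $\beta h(r)\,d\phi$ with $h$ decreasing upgrades~\eqref{eq:supported} to the uniform bound $d\theta\wedge d\lambda\ge\beta>0$, which \emph{is} robust under the subsequent modifications. It then performs a single interpolation $\lambda_t=(1-t\rho(r))\lambda+t\rho(r)\lambda_\st$ and checks both the contact condition and~\eqref{eq:supported} via the interpolation Lemma~\ref{lem:interpol2}, using Taylor estimates for $\lambda-\lambda_\st$ along $\gamma$. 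There is no intermediate $T^2$-invariant stage. If you want to repair your argument, the quantification step $d\theta\wedge d\lambda\ge\beta>0$ is the missing ingredient; but once you have it, the paper's direct interpolation to $\lambda_\st$ is shorter than manufacturing a $T^2$-invariant intermediate form.
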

As preparation for the proof, let us derive under which conditions the
interpolation between two contact forms is again contact. 

\begin{lemma}\label{lem:interpol}
Let $\mu,\lambda$ be two positive contact forms with Reeb vector
fields $R_\mu,R_\lambda$ on a compact oriented
3-manifold $M$ (possibly with boundary). Fix a metric on $M$ to
define norms of differential forms. Let $f:M\to[0,1]$ be a smooth
function. Then
$$
   \tilde\lambda := (1-f)\lambda + f\mu
$$
is again a positive contact form provided that $\lambda(R_\mu)\geq 0$,
$\mu(R_\lambda)\geq 0$, and
\begin{equation}\label{eq:interpol}
   |df|\,|\lambda|\,|\mu-\lambda| < \frac{1}{2}\min(\lambda\wedge
   d\lambda,\mu\wedge d\mu). 
\end{equation}
\end{lemma}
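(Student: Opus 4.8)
The plan is to verify directly that the $3$-form $\tilde\lambda\wedge d\tilde\lambda$ is everywhere a strictly positive multiple of the Riemannian volume form $\vol_g$, which is exactly the statement that $\tilde\lambda$ is a positive contact form. First I would expand, using $d\tilde\lambda=(1-f)\,d\lambda+f\,d\mu+df\wedge(\mu-\lambda)$ together with $\lambda\wedge df\wedge\lambda=\mu\wedge df\wedge\mu=0$, to obtain the identity
$$
   \tilde\lambda\wedge d\tilde\lambda
   =(1-f)^2\,\lambda\wedge d\lambda+f^2\,\mu\wedge d\mu
   +f(1-f)\bigl(\lambda\wedge d\mu+\mu\wedge d\lambda\bigr)
   -df\wedge\lambda\wedge\mu ,
$$
where the last term arises from the cross terms via $(1-f)\,\lambda\wedge df\wedge\mu-f\,\mu\wedge df\wedge\lambda=-df\wedge\lambda\wedge\mu$. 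The strategy is then to show that the first three terms dominate $\tfrac12\min(\lambda\wedge d\lambda,\mu\wedge d\mu)$ pointwise, while \eqref{eq:interpol} makes the last term strictly smaller than this in absolute value.

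For the third term I would evaluate both sides on a local frame $(R_\lambda,e_1,e_2)$ with $e_1,e_2\in\ker\lambda$ and $d\lambda(e_1,e_2)=1$; since $i_{R_\lambda}d\lambda=0$, this gives the pointwise identity $\mu\wedge d\lambda=\mu(R_\lambda)\,\lambda\wedge d\lambda$, and symmetrically $\lambda\wedge d\mu=\lambda(R_\mu)\,\mu\wedge d\mu$. Writing $\mu\wedge d\mu=g\,\lambda\wedge d\lambda$ with a positive function $g$ (both forms being positive volume forms), we get
$$
   \lambda\wedge d\mu+\mu\wedge d\lambda
   =\bigl(g\,\lambda(R_\mu)+\mu(R_\lambda)\bigr)\,\lambda\wedge d\lambda\geq 0
$$
thanks to the hypotheses $\lambda(R_\mu)\geq 0$ and $\mu(R_\lambda)\geq 0$. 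Since $f\in[0,1]$ we have $f(1-f)\geq0$ and $(1-f)^2+f^2\geq\tfrac12$, so writing $\lambda\wedge d\lambda=a\,\vol_g$ and $\mu\wedge d\mu=b\,\vol_g$ with $a,b>0$, the first three terms together are at least $\tfrac12\min(a,b)\,\vol_g$.

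Finally, since $df\wedge\lambda\wedge\lambda=0$ we may replace $\mu$ by $\mu-\lambda$ in the last term, so Hadamard's inequality applied in a local orthonormal coframe yields the pointwise bound $|df\wedge\lambda\wedge\mu|=|df\wedge\lambda\wedge(\mu-\lambda)|\leq|df|\,|\lambda|\,|\mu-\lambda|$. Combining this with the previous estimate and \eqref{eq:interpol} gives
$$
   \tilde\lambda\wedge d\tilde\lambda\geq
   \Bigl(\tfrac12\min(a,b)-|df|\,|\lambda|\,|\mu-\lambda|\Bigr)\vol_g>0 ,
$$
so $\tilde\lambda$ is nowhere zero and $\tilde\lambda\wedge d\tilde\lambda$ is a positive volume form; hence $\tilde\lambda$ is a positive contact form. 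All the computations are elementary; I expect the only slightly delicate points to be the two frame identities for $\mu\wedge d\lambda$ and $\lambda\wedge d\mu$ and keeping the signs straight in the wedge-product expansion, which together constitute the (minor) main obstacle.
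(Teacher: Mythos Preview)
Your proof is correct and follows essentially the same approach as the paper's: both expand $\tilde\lambda\wedge d\tilde\lambda$ into the same four terms, bound the first two from below by $\tfrac12\min(\lambda\wedge d\lambda,\mu\wedge d\mu)$ via $(1-f)^2+f^2\ge\tfrac12$, use the Reeb conditions to show nonnegativity of the cross term, and rewrite the $df$-term as $df\wedge(\mu-\lambda)\wedge\lambda$ to estimate it by $|df|\,|\lambda|\,|\mu-\lambda|$. Your version is in fact more explicit than the paper's, which asserts the nonnegativity of $\lambda\wedge d\mu+\mu\wedge d\lambda$ without writing out the frame identities $\mu\wedge d\lambda=\mu(R_\lambda)\,\lambda\wedge d\lambda$ and $\lambda\wedge d\mu=\lambda(R_\mu)\,\mu\wedge d\mu$ that you supply.
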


\begin{proof}
We compute
\begin{align*}
   d\tilde\lambda 
   &= df\wedge(\mu-\lambda) + (1-f)d\lambda + f\,d\mu,\cr
   \tilde\lambda\wedge d\tilde\lambda 
   &= (1-f)^2\lambda\wedge d\lambda + f^2\mu\wedge d\mu +
   f(1-f)[\lambda\wedge d\mu+\mu\wedge d\lambda] +
   df\wedge\mu\wedge\lambda. 
\end{align*}
On the right hand side of the last equation the sum of the first two
terms is $\geq \frac{1}{2}\min(|\lambda\wedge
d\lambda|,|\mu\wedge d\mu|)$. The third term in nonnegative under
the hypothesis $\lambda(R_\mu)\geq 0$, $\mu(R_\lambda)\geq 0$. The
last term can be rewritten as $df\wedge(\mu-\lambda)\wedge\lambda$, so
its norm is at most $|df|\,|\lambda|\,|\mu-\lambda|$ and the lemma
follows. 
\end{proof}

\begin{lemma}\label{lem:interpol2}
Let $\lambda$ and $\mu$ be two contact forms on $S^1\times D^2$ 
Let $\phi$ be the angular coordinate on
$S^1$ and $(r,\theta)$ be polar coordinates on $D^2$. 
Let $\sigma:[0,1]\to\R$ be a radial function, constant near $r=0$ and
supported in $[0,\delta]$, with $|r\sigma'(r)|\le \varepsilon$. 
Define the 1-form $\tilde\lambda := (1-\sigma(r))\lambda +
\sigma(r)\mu$. 

(i) If the core circle $\{r=0\}$ (oriented by $d\phi$)  
is a common Reeb orbit for $\lambda$ and $\mu$ and $\lambda=\mu$
along $\{r=0\}$, then $\tilde\lambda$ is again contact for $\delta,
\varepsilon$ sufficiently small. 

(ii) If $d\theta\wedge d\lambda\geq \beta>0$, $d\theta\wedge
d\mu\geq \beta>0$ and $dr\wedge d\theta\wedge (\mu-\lambda)=O(r)$ near
$r=0$, then $\tilde\lambda$ satisfies $d\theta\wedge
d\tilde\lambda>0$ for $\delta, \varepsilon$ sufficiently small. 
\end{lemma}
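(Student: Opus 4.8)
The plan is to handle both parts on the collar $\{r\le\delta\}$, since $\sigma$ and hence $\tilde\lambda-\lambda$ are supported there, while $\tilde\lambda=\lambda$ (already contact and, in the situation of (ii), already satisfying $d\theta\wedge d\lambda>0$) outside it. Throughout, write $f:=\sigma(r)$, so $df=\sigma'(r)\,dr$; note $\sigma'$ vanishes near $r=0$, so $df$ is genuinely smooth, and the normalization $|r\sigma'(r)|\le\varepsilon$ gives the pointwise bound $|df|\le\varepsilon/r$.

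For part (i) I would simply verify the hypotheses of Lemma~\ref{lem:interpol} on the compact manifold-with-boundary $M:=\{r\le\delta\}$. The quantity $\tfrac12\min(\lambda\wedge d\lambda,\mu\wedge d\mu)$ is bounded below by a positive constant $c_0$ on all of $S^1\times D^2$. Since $\lambda=\mu$ at points of $\{r=0\}$, we have $\lambda(R_\mu)=\mu(R_\mu)=1$ and $\mu(R_\lambda)=\lambda(R_\lambda)=1$ along $\{r=0\}$, so by continuity $\lambda(R_\mu)>0$ and $\mu(R_\lambda)>0$ on $\{r\le\delta\}$ once $\delta$ is small. Finally, as $\mu-\lambda$ is a smooth $1$-form vanishing along the core circle $\{r=0\}$ we have $|\mu-\lambda|\le Cr$, hence
\[
   |df|\,|\lambda|\,|\mu-\lambda|\;\le\;\frac{\varepsilon}{r}\cdot C_1\cdot Cr\;=\;C'\varepsilon\;<\;c_0
\]
for $\varepsilon$ small. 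Lemma~\ref{lem:interpol} then yields that $\tilde\lambda$ is a positive contact form on $\{r\le\delta\}$, and it agrees with the contact form $\lambda$ elsewhere, proving (i).

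For part (ii) I would just compute. From $\tilde\lambda=(1-\sigma)\lambda+\sigma\mu$,
\[
   d\tilde\lambda=\sigma'(r)\,dr\wedge(\mu-\lambda)+(1-\sigma)\,d\lambda+\sigma\,d\mu,
\]
so, wedging with $d\theta$,
\[
   d\theta\wedge d\tilde\lambda=(1-\sigma)\,d\theta\wedge d\lambda+\sigma\,d\theta\wedge d\mu+\sigma'(r)\,d\theta\wedge dr\wedge(\mu-\lambda).
\]
The first two terms form a convex combination (with weight $\sigma\in[0,1]$) of $3$-forms each $\ge\beta$ with respect to the fixed volume form, hence their sum is $\ge\beta$. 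For the last term, the hypothesis $dr\wedge d\theta\wedge(\mu-\lambda)=O(r)$ near $r=0$ combined with $|\sigma'(r)|\le\varepsilon/r$ gives $|\sigma'(r)\,d\theta\wedge dr\wedge(\mu-\lambda)|\le(\varepsilon/r)\cdot Cr=C\varepsilon$ on $\{r\le\delta\}$ for $\delta$ small enough that the $O(r)$ estimate holds there. Hence $d\theta\wedge d\tilde\lambda\ge\beta-C\varepsilon>0$ once $\varepsilon<\beta/C$.

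The only delicate point — and the reason the cutoff is produced via Lemma~\ref{lm:cutoff} rather than an arbitrary bump function — is the competition near the core circle between the $1/r$ blow-up of $\sigma'$ and the first-order vanishing in $r$ of $\mu-\lambda$ (respectively of $dr\wedge d\theta\wedge(\mu-\lambda)$); the product stays $O(\varepsilon)$ precisely because $|r\sigma'(r)|\le\varepsilon$. Everything else is bookkeeping against Lemma~\ref{lem:interpol} in part (i) and a short exterior-algebra computation in part (ii), so I anticipate no further obstacle.
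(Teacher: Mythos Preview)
Your proposal is correct and follows essentially the same approach as the paper: part (i) is reduced to verifying the hypotheses of Lemma~\ref{lem:interpol} on $\{r\le\delta\}$ using $|\mu-\lambda|=O(r)$ and $|r\sigma'(r)|\le\varepsilon$, and part (ii) is the same direct computation of $d\theta\wedge d\tilde\lambda$ with the convex-combination term bounded below by $\beta$ and the cross term by $C\varepsilon$.
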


\begin{proof}
For (i) let us check the conditions in Lemma~\ref{lem:interpol} (with
$f=\sigma$ and and $M$ the region $\{r\leq\delta\}$). 
Since $\lambda=\mu$ and $R_\lambda=R_\mu$ at $r=0$, the
conditions $\lambda(R_\mu)\geq 0$ and $\mu(R_\lambda)\geq 0$ are
satisfied for $\delta$ sufficiently small. For
condition~\eqref{eq:interpol} set  
$$
   C := \frac{1}{2}\min(\lambda\wedge d\lambda,\mu\wedge d\mu) > 0
$$
and note that $|d\sigma|=|\sigma'(r)|$ and $|\mu-\lambda|=O(r)$,
so the estimate $|r\sigma'(r)|\le \varepsilon$ gives us
$$
   |d\sigma|\,|\lambda|\,|\mu-\lambda| = r|\sigma'(r)|O(1) = \eps
   O(1) < C 
$$
for $\eps$ sufficiently small. Thus $\tilde\lambda$ is a contact
form.

(ii) follows from 
\begin{align*}
   d\theta\wedge d\tilde\lambda 
   &= (1-\sigma)d\theta\wedge d\lambda +
   \sigma d\theta\wedge d\mu + \sigma'(r)d\theta\wedge
   dr\wedge(\mu-\lambda) \cr 
   &\geq \beta - C|\sigma'(r)r| \geq \beta-C\eps
   > 0
\end{align*}
for $\delta, \varepsilon$ sufficiently small. 
\end{proof}

\begin{proof}[Proof of Lemma~\ref{lem:obd-cont}]
{\bf Step 1. }
By assumption, $\gamma=S^1\times\{0\}$ is an orbit of the Reeb vector
fields $R,R_\st$ of $\lambda,\lambda_\st$. In particular,
$\lambda|_\gamma,\lambda_\st|_\gamma$ are volume forms of total
volume $T=\int_\gamma\lambda$ and $sign(T)$, respectively. So after pulling
back $\lambda$ by an isotopy rel $\p U$ of diffeomorphisms of the form
$$
   F_t(\phi,r,\theta)= \bigl(f_t(\phi,r),r,\theta\bigr)
$$
we may assume that $\lambda|_\gamma = |T|\lambda_\st|_\gamma$. Note that 
condition~\eqref{eq:supported} is preserved because
$F^*d\theta=d\theta$. To simplify notation, we will replace $\lambda$
by $\lambda/|T|$ (and insert back $|T|$ at the end of the proof), so
we have $\lambda|_\gamma = 
\lambda_\st|_\gamma$. Note that now $R=sign(T)\p_\phi$ along $\gamma$. 

{\bf Step 2. }
Next we improve condition~\eqref{eq:supported}. Pick a non-increasing
function $h;[0,1]\to[0,1]$ which vanishes near $1$ and equals
$1-r^2/2$ near $0$. Since $\lambda$ is a contact form, for
sufficiently small $\beta>0$ the form $\lambda+\beta h(r)d\phi$ is
also contact. Note that  
$$
   d\theta\wedge d(\lambda+\beta h(r)d\phi) = d\theta\wedge
   d\lambda - \beta h'(r)dr\wedge d\theta\wedge d\phi.
$$ 
Both terms are nonnegative and near $r=0$ the second term equals
$\beta r\,dr\wedge d\theta\wedge d\phi$, so the whole expression has
norm $\geq\beta$. Therefore, after replacing $\lambda$ by
$\lambda+\beta h(r)d\phi$, is satisfies the following quantified
version of condition~\eqref{eq:supported}:
\begin{equation}\label{eq:supported2}
   d\theta\wedge d\lambda \geq \beta > 0. 
\end{equation}
Note that the linear homotopy from $\lambda$ to $\lambda+\beta
h(r)d\phi$ is contact and satisfies~\eqref{eq:supported}, and the
condition $\lambda|_\gamma = \lambda_\st|_\gamma$ from Step 1 is
preserved. 

{\bf Step 3. }
We write 
$$
   \lambda = l_1dr+l_2d\theta+l_3d\phi
$$
with functions $l_i(\phi,r,\theta)$. Recall that in cartesian
coordinates 
$$
   dr = \frac{x\,dx + y\,dy}{r},\qquad d\theta =
   \frac{x\,dy-y\,dx}{r^2}. 
$$
Thus smoothness of $\lambda$ at $r=0$ implies that $l_1=O(r)$ and
$l_2=O(r^2)$ near $r=0$. Moreover, the condition $\lambda|_\gamma =
\lambda_\st|_\gamma$ from Step 1 implies $l_3=sign(T)+O(r)$. Let us compute 
$$
   d\lambda = (l_{2r}-l_{1\theta})dr\wedge d\theta +
   (l_{3r}-l_{1\phi})dr\wedge d\phi +
   (l_{2\phi}-l_{3\theta})d\phi\wedge d\theta. 
$$
From $i_{\p_\phi}d\lambda=0$ along $\gamma$ we deduce
$l_{3r}|_{r=0}=l_{1\phi}|_{r=0}=0$ (since $l_1=O(r)$), so we have found
the conditions
\begin{equation}\label{eq:l}
   l_1=O(r),\qquad l_2=O(r^2),\qquad l_3=sign(T)+O(r^2). 
\end{equation}
{\bf Step 4. }
Take now a cutoff function $\rho$ as in Lemma~\ref{lm:cutoff}, with
small constants $\delta,\eps>0$ to be specified later, and consider
the 1-form
$$
   \lambda_t := (1-t\rho(r))\lambda + t\rho(r)\lambda_\st. 
$$
Let us check the conditions in Lemma~\ref{lem:interpol2} (with $\sigma=t\rho$,
$\mu=\lambda_\st$ and $M$ the region $\{r\leq\delta\}$). Since
$\lambda=\lambda_\st=sign(T)d\phi$ and $R=R_\st=sign(T)\p_\phi$ at $r=0$, the
conditions of Lemma~\ref{lem:interpol2} (i) are satisfied. The
conditions of Lemma~\ref{lem:interpol2} (ii) hold in view
of~\eqref{eq:supported2} (and the analogous condition for
$\lambda_\st$) and 
$$
   dr\wedge d\theta\wedge(\lambda-\lambda_\st) =
   \Bigl(l_3-sign(T)(1+r^2)\Bigr)dr\wedge d\theta\wedge d\phi =
   O(r^2)dr\wedge d\theta\wedge d\phi = O(r),
$$
where we have used~\eqref{eq:l}.  
Hence $\lambda_t$ is a contact form that satisfies
condition~\eqref{eq:supported} and 
agrees with $\lambda$ near $\p U$. Since
$\lambda_1=\lambda_\st$ near $\gamma$, this concludes (after
inserting back the constant $|T|$ from Step 1) the proof of
Lemma~\ref{lem:obd-cont}.  
\end{proof}

{\bf The foliation case.}

\begin{lemma}\label{lem:obd-fol}
Let $(\om,\lambda)$ be a SHS on $U=S^1\times D^2$ with $d\lambda=0$
and satisfying~\eqref{eq:supported}. Then
there exists a homotopy rel $\p U$ of SHS $(\om_t,\lambda_t)$
with $d\lambda_t=0$ and satisfying~\eqref{eq:supported} such that
$(\om_0,\lambda_0)=(\om,\lambda)$ and 
$$
   \lambda_1=T\,d\phi,\qquad \om_1 = k\,r\,dr\wedge (sign(T)d\theta - d\phi) 
$$
near $\gamma=S^1\times\{0\}$, for some constants $k>0$ and
$T=\int_\gamma\lambda$. 
\end{lemma}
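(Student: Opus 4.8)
Throughout, write $(\phi,r,\theta)$ for the coordinates on $U\cong S^1\times D^2$ and $\gamma=S^1\times\{0\}$. Since $(\om,\lambda)$ is supported by the local open book $\pi(\phi,r,\theta)=\theta$, condition~\eqref{eq:supported} forces the Reeb field $R$ to be tangent to $\gamma$, so along $\gamma$ one has $R=c\,\p_\phi$ with $c=1/\lambda(\p_\phi)$ of constant sign; hence $T=\int_\gamma\lambda\neq0$ and $sign(T)=sign(c)$. Reversing $\phi$ as in the proof of Proposition~\ref{prop:obd} we may assume $T>0$; the general case follows by reversing back at the end. The plan is to carry out the standardization in two stages, keeping $\lambda$ closed throughout (so a closed $\lambda_t$ stabilizes $\om_t$ as soon as $\lambda_t\wedge\om_t>0$): \emph{first} straighten $\lambda$ to $T\,d\phi$ near $\gamma$ by an isotopy that moves points only in the $\p_\phi$-direction, \emph{then}, with $\lambda\equiv T\,d\phi$ near $\gamma$, homotope the closed $2$-form $\om$ to the model.

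\emph{Stage 1.} On $U$ we may write $\lambda=T\,d\phi+dg$ since $[d\phi]$ generates $H^1(U)$. The path $\lambda_s:=sT\,d\phi+(1-s)\lambda$ of closed $1$-forms has $\lambda_s(\p_\phi)=sT+(1-s)\lambda(\p_\phi)>0$ on a neighbourhood of $\gamma$ (as $\lambda(\p_\phi)|_\gamma>0$), so there it is a path of nowhere vanishing closed $1$-forms from $\lambda$ to $T\,d\phi$. Since $\dot\lambda_s=-dg$, the Moser trick for closed $1$-forms asks for $X_s$ with $\lambda_s(X_s)=g$, and we may take $X_s=g\,\lambda_s(\p_\phi)^{-1}\,\p_\phi$; cutting $X_s$ off away from $\gamma$, the time-$1$ flow $\Theta$ of the resulting family is the identity near $\p U$, moves points only in the $\p_\phi$-direction (hence preserves $r$, $\theta$, and in particular $d\theta$), is isotopic to the identity, and satisfies $(\Theta^{-1})^*\lambda=T\,d\phi$ near $\gamma$. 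Replacing $(\om,\lambda)$ by $\bigl((\Theta^{-1})^*\om,(\Theta^{-1})^*\lambda\bigr)$ is therefore a stable homotopy rel $\p U$ preserving $d\lambda=0$ and~\eqref{eq:supported}, and afterwards $\lambda\equiv T\,d\phi$ on some $\{r\le\eps\}$.

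\emph{Stage 2.} Now the stabilizing form $T\,d\phi$ is fixed near $\gamma$, so it suffices to homotope $\om$, keeping~\eqref{eq:supported} and $T\,d\phi\wedge\om>0$, to a form that near $\gamma$ equals $k\,r\,dr\wedge(d\theta-d\phi)$ for some $k>0$. First I would average $\om$ over the $T^2$-action by shifts in $(\theta,\phi)$ on a small $\{r\le\eps_1\}$: the average $\bar\om$ is closed, $T^2$-invariant, smooth across $\gamma$, and still satisfies both positivity conditions, and by Lemma~\ref{lem:t2inv}(a) it has the form $\om_{\bar h}$ with $\bar h'(r)/r$ smooth and even at $0$. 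In the notation of Section~\ref{subsec:t2inv} the two positivity conditions translate (using $T>0$ and~\eqref{eq:g} with $g=(0,T)$) into $\bar h_2'<0$ and $\bar h_1'>0$ on $(0,\eps_1]$ with $\bar h_1'/r|_0>0$. The space of immersions with the given boundary behaviour and these sign conditions is convex, so a linear homotopy of $h$, performed rel $r=\eps_1$ and extended by the identity, brings $\om_{\bar h}$ to constant slope $(1,-1)/\sqrt2$ near $\gamma$ through $T^2$-invariant Hamiltonian structures with $T\,d\phi$ stabilizing throughout; the rescaling observation following Definition~\ref{def:sspecial} then puts $\om$ into the form $k\,r\,dr\wedge(d\theta-d\phi)$ near $\gamma$. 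Reinstating the sign of $T$ gives $\lambda_1=T\,d\phi$ and $\om_1=k\,r\,dr\wedge(sign(T)\,d\theta-d\phi)$ as required.

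\emph{The hard part.} The delicate step is the passage from $\om$ to its $T^2$-average near $\gamma$ by a homotopy that is rel the outer boundary $r=\eps_1$ and never loses positivity on the pages or against $d\phi$: writing $\bar\om-\om=d\nu$, the primitive $\nu$ is $O(r)$ near $\gamma$ but not smaller (since $\bar\om$ and $\om$ need not coincide at the core), so naively cutting off $\nu$ produces uncontrolled transition terms $\chi'\,dr\wedge\nu$. I expect this to be handled exactly as in Steps~2--4 of the proof of Lemma~\ref{lem:obd-cont}: localize to a sufficiently small $\{r\le\eps_1\}$, use that~\eqref{eq:supported} and $T\,d\phi\wedge\om>0$ force the off-core components of $\om$ (the $dy\wedge d\phi$ and $d\phi\wedge dx$ parts in Cartesian coordinates) to vanish to first order at $\gamma$, so that $\om|_\gamma$ is rotation invariant, and choose $\nu$ adapted to the coordinate directions so that its $d\phi$-component is controlled; once these smallness estimates are in place, the remaining interpolations are routine.
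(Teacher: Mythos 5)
Your overall architecture (first make $\lambda\equiv T\,d\phi$ near $\gamma$ while keeping it closed, then homotope $\om$ with the stabilizer frozen) matches the paper's, and your Stage~1, though routed through a Moser flow in the $\p_\phi$-direction rather than the paper's direct exact modification $\lambda_t=\lambda+t\,d(gf)$, is a legitimate variant. The problem is Stage~2, and it sits exactly where you flag ``the hard part'': you never actually produce the homotopy from $\om$ to its $T^2$-average, and the remedy you point to does not exist. Steps~2--4 of Lemma~\ref{lem:obd-cont} interpolate contact \emph{1-forms} and their positivity estimates lean on the contact inequality $\lambda\wedge d\lambda>0$ via Lemmas~\ref{lem:interpol} and~\ref{lem:interpol2}; there is no analogue of those estimates for a closed 2-form stabilized by $d\phi$. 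Concretely, writing $\bar\om-\om=d\nu$ and cutting off, the transition term $\chi'(r)\,dr\wedge\nu$ contributes $\chi'(r)\nu_\phi\,dr\wedge d\phi$ to $\om_t$, and its wedge with $d\theta$ must be dominated by $d\theta\wedge\om$ --- but the 3-form $d\theta\wedge\om$ is only continuous at $r=0$ and may \emph{vanish} along $\gamma$ (the paper notes this right after~\eqref{eq:supported}), so there is no uniform lower bound to absorb the error, no matter how small you take $\eps_1$. Requiring both $d\phi\wedge\om_t>0$ and $d\theta\wedge\om_t>0$ throughout is a genuinely harder constraint than the lemma needs, and your sketch gives no mechanism for preserving the second one.

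The paper's proof resolves this with an idea your proposal is missing: during the intermediate normalizations of a primitive $\alpha=A\,dx+B\,dy+C\,d\phi$ (killing $C$, linearizing $A,B$, making the linear parts $\phi$-independent), it maintains only stabilization by $d\phi$ together with a uniform \emph{bound} on $d\theta\wedge\om_t$, not its positivity. Positivity is then restored in one stroke at the end (Step~6) by adding $\AA(t)\,\om_{tr}$ with $\om_{tr}=d(F(r)d\phi)=F'(r)dr\wedge d\phi$ and $\AA(1)=k$ large: this form satisfies $d\phi\wedge\om_{tr}=0$, so it is invisible to the stabilizer $d\phi$, while $d\theta\wedge\om_{tr}=d\phi\wedge r\,dr\wedge d\theta$ is positive with the correct degeneration rate at $\gamma$, so a large multiple overwhelms the bounded error terms. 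This is also where the constant $k$ in the statement comes from. Either import this ``relax to boundedness, then add a large $d(F(r)d\phi)$'' mechanism into your averaging step, or find a quantitative replacement for it; as written, Stage~2 does not go through.
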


\begin{proof}
{\bf Step 1. } 
With $T=\int_\gamma\lambda$ the forms $\lambda$ and $T\,d\phi$ are
cohomologous, thus $Td\phi=\lambda+df$ for some function
$f:U\to\R$. Moreover, both forms stabilize $\om$ on some neighbourhood
$V\subset U$ of $\gamma$. Pick a cutoff function $g:U\to\R$ which
vanishes near $\p U$ and equals $1$ on $V$. Then $\lambda_t := \lambda
+ td(gf)$ is a homotopy rel $\p U$ of closed 1-forms stabilizing $\om$
with $\lambda_0=\lambda$ and $\lambda_1=T\,d\phi$ on the neighbourhood
$V$ of $\gamma$. In the following steps we will keep $\lambda_1$ fixed
and modify $\om$ in $V$ through HS stabilized by $\lambda_1$, or
equivalently, by $Td\phi$. 

{\bf Step 2. }
Recall that $(x,y)$ are cartesian coordinates on $D^2$. Let
$$
   \alpha = A\,dx+B\,dy+C\,d\phi
$$ 
be the chosen primitive of $\om$ in $U$. Consider the function  
$$
   h(\phi,x,y):=-A(\phi,0,0)x-B(\phi,0,0)y
$$ 
and the cutoff function $g$ from Step 1. Then $\alpha+d(gh)$ is a
primitive of $\om$ which agrees with $\alpha$ near $\p U$ and whose
coefficients in front of $dx$ and $dy$ vanish at $\gamma$. We denote
this new primitive again by $\alpha$. 

{\bf Step 3. }
Let $\alpha=A\,dx+B\,dy+C\,d\phi$ be the primitive of $\om$ from Step
2 with $A\bigl|_{\gamma}=B\bigl|_\gamma=0$. This implies that $A=O(r)$
and $B=O(r)$ at $\gamma$, thus $d\theta\wedge d(A\,dx+B\,dy)$ is bounded
in a neighborhood of $\gamma$. As $\gamma$ is a Reeb orbit of
$(\om,\lambda)$, the expression $d\theta\wedge \om$ is bounded and
thus $d\theta\wedge d(C\,d\phi)$ is also bounded. Let $\sigma(r)$ be a
radial cutoff function supported in $V$ which equals $1$ on a
neighbourhood $W\subset V$ of $\gamma$ Consider the compactly
supported homotopy $\{\alpha_t\}_{t\in [0,1]}$ of $1$-forms 
$$
   \alpha_t:=\alpha-t\sigma(r)C\,d\phi
$$   
on $V$ and set $\om_t:=d\alpha_t$. Since by construction $\om_t-\om$
is proportional to $d\phi$, it has zero wedge with $d\phi$ we thus
$\om_t$ is a HS stabilized by $d\phi$ for all $t\in [0,1]$. 
Note that $\alpha_0=\alpha$ and 
$$
   \alpha_t|_W = \alpha-tC\,d\phi,\qquad \alpha_1|_W = A\,dx+B\,dy
$$  
Observe that {\em $d\theta\wedge \om_t$ is bounded uniformly in $t$} 
since both $d\theta\wedge d\alpha$ and $d\theta\wedge d(C\,d\phi)$ are
bounded. This boundedness will be traced throughout as we go in Steps
4--5 and then used in Step 6 to get transversality to the pages. 

{\bf Step 4. } 
After the homotopy in Step 3 we can assume that 
$$
   \alpha=A\,dx+B\,dy
$$ 
is a primitive of $\om$ in a neighbourhood $W\subset V$ of $\gamma$
and $A$ and $B$ both vanish at $\gamma$. Next we want to replace $A$
and $B$ with their linear (in $(x,y)$) parts at $\gamma$. For this we set
\begin{gather*}
   A_1(\phi,x,y) := A_x(\phi,0,0)x+A_y(\phi,0,0)y,\cr
   B_1(\phi,x,y) := B_x(\phi,0,0)x+B_y(\phi,0,0)y.
\end{gather*}
Let $\rho(r)$ be a cutoff function as in Lemma~\ref{lm:cutoff}, with
small $\delta,\eps$ to be specified later and support in $W$, and set  
$$
   \alpha_{lin}:=A_1dx+B_1dy,\qquad 
   \alpha_t:=\alpha-t\rho(r)(\alpha-\alpha_{lin}),\qquad
   \om_t:=d\alpha_t 
$$
for $t\in [0,1]$. Note that at $\gamma$ we have the following
estimates:  
$$
   \alpha-\alpha_{lin}=O(r^2), \qquad d(\alpha-\alpha_{lin})=O(r).
$$ 
They imply
$$
   d\phi\wedge d\bigl(\rho(r)(\alpha-\alpha_{lin})\bigr) = \rho\,
   d\phi\wedge d(\alpha-\alpha_{lin})+\rho'(r)d\phi\wedge dr\wedge
   (\alpha-\alpha_{lin}) = O(r). 
$$
Thus $d\phi\wedge\om_t>0$ for $\delta$ sufficiently small, which shows
that the $\om_t$ are HS stabilized by $d\phi$.  
Note that $\alpha_0=\alpha$ and $\alpha_1=\alpha_{lin}$ on a
neighbourhood of $\gamma$. Moreover, $d\theta\wedge \om_t$ remains
bounded because the first jet of $\alpha_t$ at $\gamma$ remains
unchanged during the homotopy. 

{\bf Step 5. }
After the homotopy in Step 4 we can assume that $\alpha=Adx+Bdy$ is a
primitive of $\om$ in some neighbourhood $Z\subset W$ of
$\gamma$ and $A$ and $B$ both linear in $x$ and $y$ on $Z$, i.e.
$$
   A=A_1x+A_2y,\qquad B=B_1x+B_2y
$$ 
with functions $A_i,B_i$ of $\phi$. Note that 
$$
   Td\phi\wedge d\alpha=T(B_1-A_2)d\phi\wedge dx\wedge dy>0,
$$
thus $T(B_1-A_2)>0$ for all $\phi\in S^1$. We set 
$$
   c:=min_{S^1}|B_1-A_2|>0,\qquad 
   M:=max_{S^1}sign(T)B_1,\qquad   
   m:=min_{S^1}sign(T)A_2.
$$ 
Let $\rho$ be a function as in Lemma~\ref{lm:cutoff}, with 
$\delta>0$ so small that $\{r^2\le \delta\}\subset Z$ and
$\varepsilon>0$ to be specified later. Set 
\begin{align*}
   A_t &:= \bigl(1-t\rho(r^2)\bigr)A_1x+\bigl[t\rho(r^2)sign(T)m+
   \bigl(1-t\rho(r^2)\bigr)A_2\bigr]y, \cr
   B_t &:= \bigl[t\rho(r^2)sign(T)M+\bigl(1-t\rho(r^2)\bigr)B_1\bigr]x +
   \bigl(1-t\rho(r^2)\bigr)B_2y, \cr
   \alpha_t &:= A_tdx+B_tdy,\quad t\in[0,1].
\end{align*}
This is a homotopy of $1$-forms supported in $Z$. First we check that 
the $\om_t:=d\alpha_t$ are HS stabilized by $d\phi$. So we need to check
the inequality
$$ 
   Td\phi\wedge d\alpha=T(B_{tx}-A_{ty})d\phi\wedge dx\wedge dy>0.
$$
We write out 
\begin{align*}
   T(B_{tx}-A_{ty}) = &[t\rho(r^2)|T|M+T(1-t\rho(r^2))B_1] \cr
   &+ T\{2t(M-B_1)x^2\rho'(r^2)-2tB_2xy\rho'(r^2)\} \cr
   &- [t\rho(r^2)|T|m+T(1-t\rho(r^2))A_2] \cr
   &- T\{2t(m-A_2)y^2\rho'(r^2)-2tA_1xy\rho'(r^2)\}.
\end{align*}
Using $M-m\geq c$, we estimate terms in square brackets from below and
the other terms from above to apply the triangle inequality:
\begin{align*}
   &[t\rho(r^2)|T|M+T(1-t\rho(r^2))B_1]-[t\rho(r^2)|T|m+T(1-t\rho(r^2))A_2] \cr
   &= t\rho(r^2)|T|(M-m)+T(1-t\rho(r^2))(B_1-A_2)\ge
   |T|(t\rho(r^2)(M-m)+(1-t\rho(r^2))c) \cr
   &\ge |T|c, \cr
   &|\{2t(M-B_1)x^2\rho'(r^2)-2tB_2xy\rho'(r^2)\} -
   \{2t(m-A_2)y^2\rho'(r^2)-2tA_1xy\rho'(r^2)\}| \cr 
   &\le 2t\varepsilon(|M-B_1|+|B_2|+|m-A_2|+|A_1|) \cr
   &< |T|c
\end{align*}
for $\varepsilon$ small enough. 

Second, near $r=0$ the coefficients of $\alpha_t$ are linear in
$(x,y)$. In particular $A_t=O(r)$ and $B_t=O(r)$ at $\gamma$, thus
$d\theta\wedge \om_t$ is bounded uniformly in $t$. Moreover, for 
$t=1$ we have 
$$
   \alpha_1=sign(T)my\,dx+sign(T)Mx\,dy,\qquad \om_1=sign(T)(M-m)dx\wedge dy.
$$
{\bf Step 6. } 
In Steps 2-5 we constructed a homotopy $\{\alpha_t\}_{t\in [0,1]}$ of
1-forms supported is $V$ such that the HS $\om_t=d\alpha_t$ are    
all stabilized by $d\phi$, and $d\theta\wedge \om_t$ is bounded
uniformly in $t\in [0,1]$. Note that $\om_t=\om$ outside the
relatively compact neighbourhood $W\subset V$ of $\gamma$ from Step
3. We may assume that all neighbourhoods are chosen radially
symmetric. 

Pick a function $F(r)$ with support in $V$ and 
$$
   F\bigl|_W = 1 - r^2/2
$$
Set 
$$
   \alpha_{tr}:=F(r)d\phi,\qquad \om_{tr}:=d\alpha_{tr} =
   F'(r)dr\wedge d\phi.
$$ 
Note the following two crucial properties of $\om_{tr}$: 
$$
   d\phi\wedge\om_{tr}=0,
$$ 
and
$$
   d\theta\wedge\om_{tr}|_W = d\phi\wedge rdr\wedge d\theta,
$$ 
i.e.~$d\theta\wedge\om_{tr}$ is positive and bounded from below. The
first property shows that for any $t\in [0,1]$ and any constant $k$
the form $\om_t+k\om_{tr}$ is stabilized by $d\phi$. The second
property together with boundedness of $d\theta\wedge \om_t$ uniformly
in $t$ shows that for large enough positive $k$ we have
$d\theta\wedge(\om_t+k\om_{tr})>0$ on $M$ for all $t$. Since
$d\theta\wedge\om_0>0$, there exists a nonnegative function
$\mathcal{A}:[0,1]\to\R$ with $\AA(0)=0$ 
 and $\AA(1)=k$ such that the homotopy  
$$
   \{(\tilde\om_t=\om_t+\AA(t)\om_{tr},d\phi)\}_{t\in [0,1]}
$$ 
of SHS has the property that 
$$
   d\theta\wedge\tilde\om_t>0
$$ 
for all $t\in[0,1]$. Finally, note that 
$$
   \tilde\om_1 = r\,dr\wedge\bigl(sign(T)(M-m)d\theta - k\,d\phi\bigr)
$$
in some neighbourhood $U_1=\{r\leq r_1\}$ of $\gamma$.

{\bf Step 7. }For $r_1$ as in Step 6, let $\rho(r)$ be a positive
function which equals $\frac{M-m}{k}-1$ near $0$ and $0$ near $r_1$. The form
$\tilde\om_1$ on $U_1=\{r\leq r_1\}$ from Step 6 can be linearly
homotoped to 
$$
   r\,dr\wedge\bigl(sign(T)(M-m)d\theta - k(\rho+1)\,d\phi\bigr).
$$
The resulting homotopy of HS is stabilized by $Td\phi$ and
transverse to the pages. Moreover the cohomology class remains
constant as the form $\rho(r)rdr\wedge d\phi$ has a primitive
compactly supported in $U_1$. The last $2$-form agrees with  
$$
   (M-m)\,r\,dr\wedge\bigl(sign(T)d\theta - d\phi\bigr)
$$
in some smaller neighbourhood of $\gamma$. This
completes the proof of Lemma~\ref{lem:obd-fol}, and hence of
Proposition~\ref{prop:obd}. 
\end{proof}

\subsection{Proof of the Uniqueness Theorem~\ref{obd2}}\label{ss:obd2}

Let $(\phi_l,r_l,\theta)$ be standard coordinates as above near
each binding component $B_l$, $l=1,\dots,n$.
Let $(\om,\lambda)$ and $(\tilde\om,\tilde\lambda)$ be two SHS
supported by the open book $(B,\pi)$ such that $[\om]=[\tilde\om]$ and $s_l(\om,\lambda)=s_l(\tilde\om,\tilde\lambda)=:s_l$. 

{\bf Step 1. } We apply Proposition \ref{prop:obd} to the SHS
$(\om,\lambda)$ and $(\tilde\om,\tilde\lambda)$ to homotope them to
$(\om_1,\lambda_1)$ resp.~$(\tilde\om_1,\tilde\lambda_1)$ which are
$s_l$-special near each $B_l$. 

{\bf Step 2. } Now we may assume that $(\om,\lambda)$ and
$(\tilde \om,\tilde\lambda)$ are both $s_l$-special when restricted to 
some tubular neighbourhood $V_l$ of a  binding component $B_l$. In
particular, $\om=\tilde\om$ on $V_l$.  
After a further homotopy of stabilizing forms rel $\p V_l$ we may
assume that $\tilde\lambda=m_l\lambda$ for some $m_l>0$ on some
smaller tubular neighbourhood (called $V_l$ again) of $B_l$. Now we
homotope $\lambda$ and $\tilde\lambda$ rel $\p V_l$ to achieve that 
$\lambda|_{\{r_l\in [a,b]\}}=n_ld\theta$ for some $n_l>0$ (and thus
$\tilde\lambda|_{\{r_l\in [a,b]\}}=m_ln_ld\theta$). The parameters $a$
and $b$ depend on $l$, but we suppress the dependence from the
notation. 
 
{\bf Step 3. } Next we adjust the relative cohomology class 
$\eta:=[\tilde\om-\om]\in H^2(M,B)$. Since $[\tilde\om-\om]=0\in H^2(M)$, it follows from
Lemma~\ref{algtop} that $\eta$ has a representative of the form
$$
   \sum_{l=1}^nc_ld\bigl(\sigma(r_l)d\phi_l\bigr)
$$
with constants $c_l\in\R$ and a non-increasing function
$\sigma:[a,b]\to[0,1]$ which equals $1$ near $a$ and $0$ near $b$. For
$t\in[0,1]$ set  
$$
   \om_t := \om + t\sum_{c_l\geq 0}c_ld\bigl(\sigma(r_l)d\phi_l\bigr),
   \qquad 
   \tilde\om_t := \tilde\om +
   t\sum_{c_l<0}(-c_l)d\bigl(\sigma(r_l)d\phi_l\bigr). 
$$
Now $\sigma'\leq 0$ implies
$$
   d\theta\wedge\om_t = d\theta\wedge\om - t\sum_{c_l\geq
   0}c_l\sigma'(r_l)dr_l\wedge d\theta\wedge d\phi_l > 0,
$$
which in view of $\lambda|_{\{r_l\in [a,b]\}}=n_ld\theta$ shows that
$(\om_t,\lambda)$ is a homotopy of SHS supported by
$(B,\pi)$. Similarly for $(\tilde\om_t,\tilde\lambda)$. The resulting
HS $\om_1$ and $\tilde\om_1$ coincide in some neighbourhood of 
$\{r\leq a\}$ and
$\lambda=n_ld\theta,\, \tilde\lambda=n_lm_ld\theta$ near $\{r=a\}$, and moreover
$[\tilde\om_1-\om_1]=0\in H^2(M,B)$. 

{\bf Step 4. } Now we restrict $(\om_1,\lambda)$ and
$(\tilde\om_1,\tilde\lambda)$ from Step 3 to the mapping torus 
$$
   X:=M\setminus \{r<a\}\overset{\pi}{\longrightarrow} S^1
$$
with fibre $W:=\Sigma\setminus \{r<a\}$. Let $N$ be a neighbourhood
of $\p X$ on which $(\om_1,\lambda)$ and
$(\tilde\om_1,\tilde\lambda)$ coincide and are special with
$\tilde\lambda=m_l\lambda=m_ln_ld\theta$. Since $[\tilde\om_1-\om_1]=0\in H^2(X,\p X)$, 
Lemma~\ref{lm:mtl} connects $(\om_1,\lambda)$ and
$(\tilde\om_1,\tilde\lambda)$ by a stable homotopy supported by $(B,\pi)$.
The Hamiltonian part of this homotopy is rel $\p X$ and any two stabilizing forms from the homotopy 
differ by a (time dependent) constant in $N$. Thus the homotopy extends over $M$. 
This completes the proof of Theorem~\ref{obd2}.

\section{Structure results in dimension three}\label{sec:structure}

\subsection{A structure theorem}\label{ss:structure}

\begin{theorem}\label{thm:structure}
Let $(\om,\lambda)$ be a SHS on a closed $3$-manifold $M$ and set
$f:=d\lambda/\om$. 
Then there exists a (possibly disconnected and possibly with boundary)
compact $3$-dimensional submanifold $N$ of $M$, invariant under the Reeb flow,
a (possibly empty) disjoint union
$U=U_1\cup\dots\cup U_k$ of compact integrable regions
and a stabilizing 1-form $\tilde\lambda$ for $\om$ with the following
properties: 
\begin{itemize}
\item $\inn U\cup \inn N=M$;
\item the proportionality coefficient 
$\tilde f:=d\tilde\lambda/\om$ is constant on each connected component of $N$; 
\item on each $U_i\cong [0,1]\times T^2$ the SHS $(\om,\lambda)$ is
  $T^2$-invariant and $f(r,z)=\alpha_ir+\beta_i$ for constants
  $\alpha_i>0$, $\beta_i\in\R$;
\item $\tilde\lambda$ is $C^1$-close to $\lambda$. 
\end{itemize}
Moreover, if $\om$ is exact we can arrange that $\tilde f$ attains
only nonzero values on $N$. 
\end{theorem}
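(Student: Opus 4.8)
The backbone of the argument is Corollary~\ref{cor:thick-Z}, which is essentially the theorem except that its integrable regions are open and it does not assert $T^2$-invariance of $(\om,\lambda)$. If $f=d\lambda/\om$ is constant we simply take $N:=M$, $U:=\emptyset$, $\tilde\lambda:=\lambda$; everything is immediate, and if $\om=d\beta$ is exact then $f\not\equiv0$ on any component of $M$, since otherwise $0<\int\lambda\wedge\om=-\int d(\lambda\wedge\beta)=0$ by Stokes, so $\tilde f=f$ is a nonzero constant on each component. Assume henceforth $f$ is non-constant. Corollary~\ref{cor:thick-Z} then produces a compact Reeb-invariant $3$-submanifold $N_0\subseteq M$, finitely many disjoint \emph{open} integrable regions $V_i\cong(a_i,b_i)\times T^2$ on which $f$ is the projection onto the first factor (in particular $\{r\}\times T^2\subseteq N_0$ for $r$ near $a_i$ or $b_i$), and a stabilizing $1$-form $\tilde\lambda$, $C^1$-close to $\lambda$, with $\bigl(\bigcup_iV_i\bigr)\cup N_0=M$, $\tilde f:=d\tilde\lambda/\om$ constant on each component of $N_0$, and $\tilde f=\sigma\circ f$ for a function $\sigma:\R\to\R$ which is $C^0$-close to the identity and locally constant near the compact set $Z$ of critical values of $f$.

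Next I would upgrade the regions $V_i$. On $V_i$ the original $f$ is the projection, hence constant on the tori $\{r\}\times T^2$, so Theorem~\ref{thm:integr} supplies a diffeomorphism of $V_i$ preserving these tori under which $(\om,\lambda)$ becomes genuinely $T^2$-invariant; post-composing with a reparametrization $(r,z)\mapsto(\psi_i(r),z)$ of the $r$-coordinate — which commutes with the $T^2$-action and hence preserves $T^2$-invariance — we may assume $f(r,z)=\alpha_ir+\beta_i$ with $\alpha_i>0$. Now choose for each $i$ points $a_i<a_i'<b_i'<b_i$ and a small $\eps>0$ such that $\sigma$ is constant on $[a_i,a_i'+\eps]$ and on $[b_i'-\eps,b_i]$, and set
$$
   U_i:=f^{-1}\bigl([a_i',b_i']\bigr)\cong[0,1]\times T^2,\qquad
   N:=N_0\cup\bigcup_i\Bigl(f^{-1}\bigl([a_i,a_i'+\eps]\bigr)\cup f^{-1}\bigl([b_i'-\eps,b_i]\bigr)\Bigr),
$$
after rescaling $[a_i',b_i']$ affinely to $[0,1]$. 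Then $N$ is a compact Reeb-invariant $3$-submanifold; $\tilde f$ is constant on each connected component of $N$, each such component being a component of $N_0$ possibly thickened by collars on which $\sigma\circ f$ takes the same constant value; on each $U_i\cong[0,1]\times T^2$ the SHS $(\om,\lambda)$ is $T^2$-invariant and $f$ is affine with positive leading coefficient; $\tilde\lambda$ is $C^1$-close to $\lambda$; and, because the collars in $N$ overlap the interiors of the $U_i$, we have $\inn U\cup\inn N=M$. This proves everything except the last sentence.

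Finally suppose $\om=d\beta$ is exact. On any closed component of $M$ on which $\tilde f\equiv0$ we would get the contradiction $0<\int\tilde\lambda\wedge\om=-\int d(\tilde\lambda\wedge\beta)=0$, so no component of $M$ is flat; since $\sigma$ is $C^0$-close to the identity, a component $N_1$ of $N$ can carry the value $0$ of $\tilde f$ only if $f(N_1)$ lies in the constancy slot of $\sigma$ around $0$, which forces $0\in Z$ and $N_1$ to have nonempty boundary. For each such $N_1$ the class $[\om|_{N_1}]=0\in H^2(N_1)$ (restriction of the exact class), so $\om|_{N_1}=d\mu_0$ for a $1$-form $\mu_0$ on $N_1$; extend $\mu_0$ to a $1$-form $\mu$ on $M$ supported in a small neighbourhood $W_1$ of $N_1$ disjoint from the other components of $N$ and with $d\mu=\om$ on $N_1$, and replace $\tilde\lambda$ by $\tilde\lambda+\delta\mu$ for small $\delta>0$. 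In dimension three $d(\tilde\lambda+\delta\mu)$ is automatically a function times $\om$, so the kernel condition in the definition of a SHS is automatic, and $(\tilde\lambda+\delta\mu)\wedge\om>0$ by $C^0$-closeness; thus $\tilde\lambda+\delta\mu$ is again stabilizing, still $C^1$-close to $\lambda$, its coefficient $d(\tilde\lambda+\delta\mu)/\om$ is unchanged outside $W_1$ and equals the nonzero constant $\delta$ on $N_1$, while $\om$ and $\lambda$ — hence the conclusions about the $U_i$ — are untouched. Performing this for the finitely many flat components of $N$ completes the proof.

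The routine but fiddly part is the collar bookkeeping in the second paragraph, arranged so that the interiors of $N$ and of the $U_i$ cover $M$ while all the local normal forms persist. The one genuinely non-formal point is the exact case: one must exclude flat components of $N$, and the delicate situation — when $0$ is a critical value of $f$ — is dealt with by a $C^1$-small perturbation of $\tilde\lambda$ built from a local primitive of $\om$ on the flat component, exploiting that in dimension three every $1$-form positive on $\ker\om$ automatically stabilizes $\om$.
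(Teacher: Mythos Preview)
Your main construction is essentially the paper's: invoke Corollary~\ref{cor:thick-Z}, shrink the open integrable regions to compact ones while thickening $N_0$ by collars so that the interiors still cover, and then apply Theorem~\ref{thm:integr} (using that $f$ is constant on tori) to make $(\om,\lambda)$ genuinely $T^2$-invariant on each $U_i$. That part is fine.

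The treatment of the exact case, however, contains a genuine error. You write that ``in dimension three $d(\tilde\lambda+\delta\mu)$ is automatically a function times $\om$'' and later that ``in dimension three every $1$-form positive on $\ker\om$ automatically stabilizes $\om$''. Both assertions are false: the stabilization condition $\ker\om\subset\ker d\lambda$ is a genuine constraint, not a consequence of $\lambda\wedge\om>0$. Concretely, your $1$-form $\mu$ satisfies $d\mu=\om$ on $N_1$, but on the collar $W_1\setminus N_1$ it is produced by cutting off, so $d\mu$ picks up terms involving the derivative of the cutoff and there is no reason for $i_R d\mu=0$ there. Hence $\tilde\lambda+\delta\mu$ need not stabilize $\om$ on $W_1\setminus N_1$, and since $W_1$ may meet the regions $U_i$, the third bullet of the theorem could also be destroyed.

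The paper's remedy is much simpler and avoids local primitives entirely: since $\om=d\beta$ is \emph{globally} exact, set $\lambda_t:=\tilde\lambda+t\beta$ for small $t>0$. Then $d\lambda_t=(\tilde f+t)\om$, so $\lambda_t$ genuinely stabilizes $\om$ everywhere, and the proportionality coefficient is shifted uniformly by $t$. Choosing $0<t<-\min_{N^-}\tilde f$ (where $N^-$ is the union of components with $\tilde f<0$) makes $\tilde f+t$ positive on $N^+\cup N^0$ and still negative on $N^-$; for small $t$ the new form remains $C^1$-close to $\lambda$. No localization, no cutoff, no risk to the kernel condition.
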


\begin{proof}
Let $\tilde f$ and the decomposition $M=\cup_{i=1}^kU_i'\cup N=M$ be
obtained from Corollary~\ref{cor:thick-Z}. Recall that the $U_i'\cong
(a_i',b_i')\times T^2$ are open and for $r$ sufficiently close to $a_i'$
or $b_i'$ we have $\{r\}\times T^2\subset N$. Thus we may replace each
interval by a slightly smaller closed interval
$[a_i,b_i]\subset(a_i',b_i')$ such that the closed integrable regions
$U_i:=[a_i,b_i]\times T^2$ satisfy $\cup_i\inn U_i\cup \inn N=M$. 
    
Next recall from Corollary~\ref{cor:thick-Z} that on each
$U_i\cong[a_i,b_i]\times T^2$ the function $f$ is given by the
projection onto the first factor and $\tilde f=\sigma\circ f$ for a
function $\sigma:\R\to\R$. So the function $\tilde f$ is
constant on the tori $\{r\}\times T^2$, $r\in [a_i,b_i]$. Therefore,
for each integrable region $U_i$ the full version of
Theorem~\ref{thm:integr} applies to give an identification
$U_i\cong[0,1]\times T^2$ (linear on the first factor) in which both
$\om$ and $\tilde\lambda$ are $T^2$-invariant.   

For the last statement, denote by $N^+,N^0,N^-$ the union of
components of $N$ on which $\tilde f$ is positive (resp.~zero,
negative). We want to get rid of $N^0$ if $[\om]=0$.
Pick a primitive $\beta$ of $\om$
and consider the 1-form $\lambda_t:=\tilde\lambda+t\beta$ for small positive
$t$. Then $d\lambda_t/\om=\tilde f+t$, so $\lambda_t$ stabilizes $\om$ and
has proportionality factor is shifted by $t$. We choose $t$ such that
$0<t<-\min_{N^-}\tilde f$. Then $\tilde f+t$ is positive on $N^+\cup N^0$ and
negative on $N^-$, so the stabilizing 1-form $\lambda_t$ is nonzero on $N$.
Clearly, we can choose $\lambda_t$ $C^1$-close to $\tilde\lambda$ and
hence to $\lambda$. Renaming back $\lambda_t$ to $\tilde\lambda$, this
concludes the proof of Theorem~\ref{thm:structure}. 
\end{proof}

Theorem~\ref{thm:structure} is the formulation of the structure
theorem that will be most useful for applications. The following
corollary gives an alternative formulation in which the structure is
more restrictive but we lose $C^1$-closeness.

\begin{corollary}\label{cor:structure}
Every stable Hamiltonian structure on a closed 3-manifold $M$ is 
stably homotopic to a SHS $(\om,\lambda)$ for which 
there exists a (possibly disconnected and possibly with boundary)
compact $3$-dimensional submanifold $N=N^+\cup N^-\cup N^0$ of $M$,
invariant under the Reeb flow, and a (possibly empty) disjoint union
$U=U_1\cup\dots\cup U_k$ of compact integrable regions with the
following properties:  
\begin{itemize}
\item $\inn U\cup \inn N=M$;
\item $d\lambda=\pm\om$ on $N^\pm$ and $d\lambda=0$ on $N^0$;
\item on each $U_i\cong [0,1]\times T^2$ we have
  $(\om,\lambda)=(\om_h,\lambda_g)$ for functions\\ $h,g:[0,1]\to\C$
  satisfying~\eqref{eq:g}. 
\end{itemize}
Moreover, we can always arrange that $N^+$ is nonempty, and if $\om$
is exact we can arrange that $N^0$ is empty. 
\end{corollary}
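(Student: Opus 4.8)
The plan is to deduce Corollary~\ref{cor:structure} from Theorem~\ref{thm:structure} by a chain of stable homotopies, almost all of which keep $\om$ fixed (so that the cohomology constraint built into the notion of stable homotopy is automatically satisfied) and serve only to normalize the proportionality coefficient. First I would apply Proposition~\ref{prop:contact-region} to replace the given SHS by a stably homotopic one equal to $(d\alpha_\st,\alpha_\st)$ on an embedded solid torus; there $f:=d\lambda/\om\equiv 1$, and this region is what will ultimately make $N^+$ nonempty. Then I would apply Theorem~\ref{thm:structure} to obtain $(\om,\tilde\lambda)$ with $\om$ unchanged, a compact Reeb-invariant $N=\bigsqcup_jN_j$ with $\tilde f:=d\tilde\lambda/\om\equiv c_j$ constant on $N_j$, and compact integrable regions $U_i\cong[0,1]\times T^2$ on which $\om$ and $\tilde\lambda$ are $T^2$-invariant and near whose boundary $\tilde f$ equals the constant value of the adjacent component of $N$; moreover $c_j\ne 0$ for all $j$ when $\om$ is exact. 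After renaming $\tilde\lambda$ to $\lambda$ and $\tilde f$ to $f$, I would, on each $U_i$: (i) linearly homotope away the $dr$-component of the $T^2$-invariant form $\lambda$ — this changes neither $\om$ nor $d\lambda$ and keeps $\lambda$ stabilizing, since a summand $g_3(r)\,dr$ is invisible to the SHS conditions — putting $(\om,\lambda)$ in the form $(\om_h,\lambda_g)$ with $h,g:[0,1]\to\C$ satisfying~\eqref{eq:g}; and (ii) perturb $h$ in the interior, rel $\p U_i$ and away from the collars where $f$ is already constant, so that $h'/|h'|$ is nowhere constant there, re-stabilizing $\lambda$ by Proposition~\ref{prop:stabhom}. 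Being supported in $\inn U_i$, step (ii) is cohomologically trivial.

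The key step is the normalization of the constants $c_j$, performed by a single stable homotopy with $\om$ held fixed. On each $N_j$ with $c_j\ne 0$ set $\lambda_t:=\bigl((1-t)+t|c_j|^{-1}\bigr)\lambda$; this is at every time a positive constant multiple of $\lambda$, hence stabilizes the fixed $\om$, and at $t=1$ the proportionality coefficient is $d\lambda_1/\om=c_j/|c_j|=\sgn(c_j)$. On $N_j$ with $c_j=0$ do nothing. The issue is to extend this across the $U_i$: near $\p U_i$ the form $\lambda=\lambda_g$ must be rescaled by exactly the constant $|c_{j_0}|^{-1}$, resp.~$|c_{j_1}|^{-1}$, dictated by the two adjacent components of $N$, and I would use the nowhere-constant slope together with Proposition~\ref{prop:t2inv-stab} to produce a $T^2$-invariant stabilizing form $\hat\lambda=\lambda_{\hat g}$ for the (fixed) $\om_h$ that equals $|c_{j_0}|^{-1}\lambda_g$ near one end of $U_i$ and $|c_{j_1}|^{-1}\lambda_g$ near the other. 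The homotopy from $\lambda_g$ to $\hat\lambda$ on $U_i$ is then just the straight line $\lambda_t=(1-t)\lambda_g+t\hat\lambda$, which stays in $\Pi^{-1}(\om_h)$ by convexity of the fibre and which, near $\p U_i$, is precisely the constant rescaling above; so the homotopies on the $N_j$ and on the $U_i$ patch. Since $\om$ never changes, this whole family is a bona fide stable homotopy of $M$.

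Finally I would read off the asserted structure at the endpoint: take $N^\pm$ to be the union of the $N_j$ with $\pm c_j>0$ (carrying the normalized $\lambda$, so $d\lambda=\pm\om$ there), $N^0$ the union of those with $c_j=0$ (so $d\lambda=0$), and take for the new integrable regions the ``middle'' pieces of the $U_i$ outside $\inn N$, slightly enlarged so as to overlap $N$; on these $(\om,\lambda)=(\om_h,\lambda_g)$ by construction. One then checks that $\inn U\cup\inn N=M$, that $N$ is invariant under the new Reeb field (which on each piece is a positive multiple of the old one, hence has the same oriented kernel foliation), that $N^+$ contains the image of the contact solid torus from Proposition~\ref{prop:contact-region} and is therefore nonempty, and that $N^0=\emptyset$ if $\om$ is exact.

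I expect the patching in the middle paragraph to be the real obstacle. Normalizing $c_j$ on $N_j$ by itself is trivial, but it forces a modification of $\lambda$ near each wall $\p N_j\subset\inn U_i$ that must be propagated across $U_i$ while remaining a stable Hamiltonian structure — and a naive rescaling $\lambda\mapsto\chi(r)\lambda$ across $U_i$ fails, since $\chi(r)\lambda_g$ does not stabilize $\om_h$ unless $\chi$ is constant. The resolution is to keep $\om$ fixed throughout (which removes any cohomological obstruction to the homotopy being stable) and to exploit that near the walls the required change of $\lambda$ is merely a constant rescaling, which is simultaneously what the $N_j$-homotopy does and what can be propagated across $U_i$ by the flexibility of $T^2$-invariant stable Hamiltonian structures; arranging the nowhere-constant slope in step (ii) is exactly what makes this propagation possible.
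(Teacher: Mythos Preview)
Your proposal is correct and follows essentially the same route as the paper: apply Proposition~\ref{prop:contact-region} to create a positive-contact region, invoke Theorem~\ref{thm:structure} to get the decomposition $M=\bigcup U_i\cup N$ with $\tilde f\equiv c_j$ on $N_j$, put $(\om,\lambda)$ into the standard $T^2$-invariant form on each $U_i$, make the slope nonconstant on $U_i\setminus N$, and then rescale $\lambda$ on each $N_j$ by $|c_j|^{-1}$ while extending across the $U_i$ with $\om$ held fixed.

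Two remarks. First, a minor misattribution in your step~(ii): Proposition~\ref{prop:stabhom} presupposes that the slope is already nonconstant for every time in the homotopy, so you cannot invoke it to \emph{produce} a nonconstant slope from a possibly constant one; the correct tool here is Remark~\ref{rem:nonconst} (first move $g$ to be constant on a subinterval, then wiggle $h$ there while keeping the same $g$), exactly as the paper does. Second, for the extension across $U_i$ the paper applies Proposition~\ref{prop:stabhom} directly to the boundary homotopy $\bar g_t=((1-t)+t|c_j|^{-1})g$, whereas you instead build the endpoint $\hat g$ via Proposition~\ref{prop:t2inv-stab} and linearly interpolate, relying on convexity of $\Pi^{-1}(\om_h)$; both arguments are valid and yours is arguably tidier, since the matching near $\p U_i$ is then automatic.
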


\begin{proof}
Let a SHS $(\om,\lambda)$ be given. After applying the homotopy in
Proposition~\ref{prop:contact-region}, we may assume that
$d\lambda=\om$ on some open region $V\subset M$, so $1$ is a singular
value of the function $f=d\lambda/\om$. 

After applying
Theorem~\ref{thm:structure} and renaming $\tilde\lambda,\tilde f$ back
to $\lambda,f$ we may assume that there exist invariant compact
$3$-dimensional submanifolds $N$ with the following properties: 
\begin{itemize}
\item $\inn U\cup \inn N=M$;
\item on each connected component $N_i$ of $N$ we have $f\equiv c_i$
  for some constant $c_i\in\R$; 
\item on each $U_i\cong [0,1]\times T^2$ the SHS $(\om,\lambda)$ is
  $T^2$-invariant.
\end{itemize}
Moreover, if $\om$ is exact we can arrange that $c_i\neq 0$ for all
$i$. Since $1$ was a singular value of the original function $f$,
there will be some positive $c_i$. 

By Lemma~\ref{lem:t2inv}, after shrinking the $U_i$ may assume that on
each $U_i\cong[0,1]\times T^2$ we have
$(\om,\lambda)=(\om_h,\lambda_g)$ for functions $h,g:[0,1]\to\C$.  
By Remark~\ref{rem:nonconst}, after a stable homotopy supported in
$\inn U$ we can assume that the slope function $h'/|h'|$ is
nonconstant on each $U_i\setminus N$. 

Now we define a homotopy of stabilizing 1-forms $\lambda_t$,
$t\in[0,1]$, for $\om$ as follows. On each component $N_i$ with
$c_i=0$ we set $\lambda_t:=\lambda$. On each component $N_i$ with
$c_i\neq 0$ we set 
$$
   \lambda_t := (1-t)\lambda + t|c_i|^{-1}\lambda.
$$
We use Proposition~\ref{prop:stabhom} to extend this homotopy to a
homotopy of stabilizing 1-forms over all integrable regions $U_i$. The
homotopy starts at $\lambda_0=\lambda$, and the proportionality
coefficient $f_1=d\lambda_1/\om$ of $\lambda_1$ takes only values
$0,1,-1$ on $N$. Renaming $\lambda_1$ back to $\lambda$, this
consludes the proof of Corollary~\ref{cor:structure}.  
\end{proof}

\subsection{Discreteness of homotopy classes of stable Hamiltonian structures}

\begin{thm}\label{thm:discrete}
Let $(\bar\om,\bar\lambda)$ be a SHS on a closed 3-manifold $M$. Then
for every SHS $(\om,\lambda)$ which is sufficiently $C^2$-close to
$(\bar\om,\bar\lambda)$ and satisfies
$[\om]=[\bar\om]\in H^2(M)$ there exists a stable homotopy
$(\om_t,\lambda_t),\, t\in [0,1]$ such that $\om_0=\bar\om$ and
$\om_1=\om$. Moreover, the homotopy $\om_t$ can be chosen $C^1$-small.
\end{thm}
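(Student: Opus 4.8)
The plan is to use the structure theorem to cut $M$ along a finite collection of Reeb-invariant $2$-tori into pieces on which $(\bar\om,\bar\lambda)$ is either (positive or negative) contact, a taut foliation, or $T^2$-invariant on an integrable region, and then to connect $(\om,\lambda)$ to $(\bar\om,\bar\lambda)$ piece by piece. On each type of piece a rigidity result is available --- Gray stability on the contact pieces, Moser's theorem on the taut-foliation pieces, and persistence of invariant tori (Theorem~\ref{thm1}) together with the $T^2$-invariant classification (Corollary~\ref{cor:class} and Remark~\ref{rem:contractible}) on the integrable pieces --- while the gluing across the dividing tori is controlled by discrete invariants (cohomology classes of the restrictions, winding numbers, endpoint slopes and stabilization constants) that cannot jump under a $C^2$-small perturbation. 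Everything will be carried out by $C^1$-small stable homotopies, which also yields the final assertion.

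\textbf{Reduction to normal form and matching on integrable regions.} First I would apply Theorem~\ref{thm:structure} to $(\bar\om,\bar\lambda)$, replacing $\bar\lambda$ by a $C^1$-close stabilizing form and interpolating linearly (which keeps $\bar\om$ fixed and stays a SHS, since the proportionality function of the interpolation is a convex combination); after this $C^1$-small stable homotopy we may assume $M=\inn\bar N\cup\inn\bar U$ with $\bar U=\bar U_1\sqcup\cdots\sqcup\bar U_k$, on each $\bar U_i\cong[0,1]\times T^2$ the SHS is $T^2$-invariant with $\bar f=d\bar\lambda/\bar\om=\alpha_ir+\beta_i$ ($\alpha_i>0$), the bounding tori of the $\bar U_i$ lie in $\bar N$, and $\bar f$ equals a constant $c_j\in\R$ on each component $\bar N_j$ of $\bar N$. (Since this homotopy is $C^1$-small it suffices to connect the normal-form $(\bar\om,\bar\lambda)$ to $(\om,\lambda)$; the derivative loss coming from Proposition~\ref{prop:thick} affects only $\bar\lambda$ and is absorbed by the $C^1$-estimates below.) Then, for $(\om,\lambda)$ $C^2$-close to the normal form, Theorem~\ref{thm1}(a) applied near each $\bar U_i$ produces a diffeomorphism $\Psi$ of $M$, $C^1$-close to the identity and equal to it off a neighbourhood of $\bar U$, with $\Psi^*(\om,\lambda)$ $T^2$-invariant on each $\bar U_i$; realizing $\Psi$ as the time-one map of a $C^1$-small isotopy and pulling back gives a $C^1$-small stable homotopy (the deformation of $\om$ is exact and the cohomology class is preserved). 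After replacing $(\om,\lambda)$ by $\Psi^*(\om,\lambda)$ and invoking Lemma~\ref{lem:t2inv}, we may assume that on each $\bar U_i$ we have $(\om,\lambda)=(\om_{h_i},\lambda_{g_i})$ with $(h_i,g_i)$ $C^1$-close to $(\bar h_i,\bar g_i)$, with $h_i=\bar h_i$, $g_i=\bar g_i$ near $\p\bar U_i$ (arranged by a further $C^1$-small homotopy via Lemma~\ref{lem:t2inv}(b) and Lemma~\ref{lem:g-pert}) and with winding numbers and endpoint data equal to those of $(\bar h_i,\bar g_i)$, since these are locally constant in the $C^1$-topology.

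\textbf{Matching on $\bar N$ and conclusion.} It then remains to connect $(\om,\lambda)$ to $(\bar\om,\bar\lambda)$ on a neighbourhood of $\bar N$ by a $C^1$-small stable homotopy fixed near $\p\bar U$: once this is done the two SHS differ only inside $\bigsqcup\bar U_i$, agree near the bounding tori and have equal winding numbers, so Corollary~\ref{cor:class} and Remark~\ref{rem:contractible} supply the last $C^1$-small $T^2$-invariant stable homotopy. On a component $\bar N_j$ with $c_j\neq 0$ both $\lambda$ and $\bar\lambda$ are contact forms (of sign $\mathrm{sgn}\,c_j$), and I would use Gray stability on the $C^1$-small straight-line path of contact forms from $\lambda$ to $\bar\lambda$, cut off away from the bounding tori (possible exactly because the path is $C^1$-small), followed by a rescaling to match $\bar\lambda$ and a contractible adjustment of the Hamiltonian $2$-form inside its convex fibre $\Pi^{-1}$. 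On a component with $c_j=0$, $\bar\lambda$ is a closed $1$-form while $\lambda$ is only $C^1$-close (so $f$ is merely $C^0$-small there); one first uses Corollary~\ref{cor:thick-a} at the value $0$, or shifts $\lambda$ by a small multiple of a primitive of $\om$, to reduce to $d\lambda=0$ near $\bar N_j$, and then applies the foliated Moser Lemma~\ref{lem:Moser} on the resulting mapping-torus piece, again cut off away from the boundary.

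\textbf{Main obstacle.} The technical heart is this last step: performing the Gray- and Moser-type identifications \emph{relative to} the bounding tori of the integrable regions --- i.e.\ making the ambient isotopies compactly supported while keeping them $C^1$-small --- and arranging the closed-$1$-form and cohomological normalizations without disturbing the boundary data already matched in the integrable regions. The foliation components $\bar N^0$ are the subtlest point, precisely because a $C^2$-small perturbation of a taut foliation need not be one (the Moser--Gray stability failure that motivates the whole paper); hence the thickening Proposition~\ref{prop:thick}/Corollary~\ref{cor:thick-a} must be used to restore $d\lambda=0$ there before any Moser argument applies. Organising the cut-offs so that all these local moves assemble into a single globally defined $C^1$-small stable homotopy is the bookkeeping that the proof must handle with care.
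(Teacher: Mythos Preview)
Your setup is essentially the same as the paper's: apply the structure theorem (Theorem~\ref{thm:structure}) to get the decomposition $M=N\cup\bigcup U_i$ with $\bar f\equiv c_j$ on the components of $N$, use Theorem~\ref{thm1}(a) to make the perturbed SHS $T^2$-invariant on the $U_i$, write $\om=\bar\om+d\alpha$ with $\alpha$ $C^1$-small (Lemma~\ref{lem:ellipt-small}) and $T^2$-invariant on each $U_i$ (Lemma~\ref{lem:t2inv}(b)), and finish on the integrable regions via Proposition~\ref{prop:stabhom}. All of this matches.

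The divergence is on $N$, and here you miss the simple trick that makes the whole thing work. The paper does \emph{not} invoke Gray stability, Moser, or any further thickening. It just writes down, on each component $N^c$ where $\bar f\equiv c$,
\[
   \om_t:=\bar\om+\tau(t)\,d\alpha,\qquad
   \lambda_t:=\bar\lambda+\tau(t)\,c\,\alpha,
\]
and observes that $d\lambda_t=d\bar\lambda+\tau(t)c\,d\alpha=c\bigl(\bar\om+\tau(t)d\alpha\bigr)=c\,\om_t$. Thus $\lambda_t$ stabilizes $\om_t$ for every $t$, uniformly in $c$ --- in particular for $c=0$, where $\lambda_t\equiv\bar\lambda$ is closed and trivially stabilizes every $\om_t$. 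The homotopy on $N$ is then fed into Proposition~\ref{prop:stabhom} as boundary data on the $U_i$, exactly as you propose for the last step.

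By contrast, your treatment of $N$ has a real gap on the foliation components $N^0$. Shifting $\lambda$ by a multiple of a primitive of $\om$ moves $f$ by a constant and cannot force $f\equiv 0$ (and $\om$ need not be exact anyway). Corollary~\ref{cor:thick-a} at the value $0$ only flattens $\tilde f$ on a neighbourhood of $f^{-1}(0)$, not on all of $N_j$, and requires $0\in\im f$. Most seriously, even if you did arrange $d\lambda=0$ there, Lemma~\ref{lem:Moser} needs $\ker\om_t$ \emph{constant} along the path, whereas $\ker\om\neq\ker\bar\om$; so the foliated Moser argument does not apply. The ``main obstacle'' you flag is genuine for your route but evaporates once you use the one-line formula above, which is the actual content of Step~2 of the paper's proof.
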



\begin{cor}\label{cor:countable}
For any closed oriented 3-manifold $M$ and cohomology class $\eta\in
H^2(M;\R)$ there are at most countably many homotopy classes of SHS
representing $\eta$.  
\end{cor}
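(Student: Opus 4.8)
The statement is essentially a soft consequence of the discreteness theorem (Theorem~\ref{thm:discrete}), combined with separability of the function spaces involved. Fix a closed oriented $3$-manifold $M$ and $\eta\in H^2(M;\R)$, and equip $\SHS_\eta(M)$ with the $C^2$-topology. Stable homotopy of SHS (Definition~\ref{homdef}) keeps the cohomology class of $\om_t$ fixed, so the relation ``stably homotopic'' partitions $\SHS_\eta(M)$ into equivalence classes, each of which lies entirely inside $\SHS_\eta(M)$. I want to show there are at most countably many such classes.

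\textbf{Step 1: the equivalence classes are open.} By Theorem~\ref{thm:discrete}, any $(\om,\lambda)\in\SHS_\eta(M)$ has a $C^2$-neighbourhood $\mathcal{U}$ in $\SHS_{[\om]}(M)=\SHS_\eta(M)$ such that every SHS in $\mathcal{U}$ is stably homotopic to $(\om,\lambda)$ (the stable homotopy produced there even stays in $\SHS_\eta$, since its Hamiltonian part has exact derivative). Hence $\mathcal{U}$ is contained in the equivalence class of $(\om,\lambda)$, which is therefore open in the $C^2$-topology. Thus the equivalence classes form a partition of $\SHS_\eta(M)$ into pairwise disjoint nonempty open subsets.

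\textbf{Step 2: separability forces countability.} The space $\SHS_\eta(M)$ with the $C^2$-topology is a topological subspace of $C^2(M;\Lambda^2T^*M)\times C^2(M;\Lambda^1T^*M)$, which, $M$ being compact, is a separable metrizable space (the $C^2$-sections of a vector bundle over a compact manifold form a separable Banach space). A separable topological space cannot be written as a disjoint union of uncountably many nonempty open subsets: a countable dense subset would have to meet each of them, which is impossible if there are uncountably many and they are disjoint. Applying this to the partition from Step~1 shows that there are at most countably many equivalence classes, i.e.\ at most countably many stable homotopy classes of SHS representing $\eta$.

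\textbf{Remark on the obstacle.} The whole mathematical content is Theorem~\ref{thm:discrete}; the argument above is routine topology. The only point requiring a little care is the choice of topology: one works with the $C^2$-topology so that Theorem~\ref{thm:discrete} applies verbatim and separability is manifest. This does not affect the count, since $C^2$-neighbourhoods of a given SHS are also neighbourhoods in any finer ($C^k$, $k\ge 2$, or $C^\infty$) topology, so the partition into stable homotopy classes is the same and still splits a separable space into disjoint open pieces.
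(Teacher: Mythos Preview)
Your proof is correct and follows essentially the same approach as the paper: both use Theorem~\ref{thm:discrete} to show that stable homotopy classes are open in $\SHS_\eta(M)$ with the $C^2$-topology, and then invoke separability/second countability of this space to conclude that there can be only countably many pairwise disjoint nonempty open sets. The only cosmetic difference is that the paper phrases the topological fact via second countability rather than separability, which is equivalent here.
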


\begin{proof}
Consider a family of pairwise non-homotopic SHS
$(\om_i,\lambda_i)_{i\in I}$ representing the class $\eta$, for some
index set $I$. Consider the space $\SHS_\eta$ of
SHS representing $\eta$, equipped with the $C^2$-topology. By
Theorem~\ref{thm:discrete}, each $\om_i$ has an open 
neighbourhood $\UU_i$ in $\SHS_\eta$ such that all elements in $\UU_i$
are stably homotopic to $(\om_i,\lambda_i)$. Hence
$\UU_i\cap\UU_j=\emptyset$ for $i\neq j$, so the $(\UU_i)_{i\in I}$
are disjoint open sets in $\SHS_\eta$. Since $\SHS_\eta$ is second
countable (as a subset of the second countable linear space of pairs
of 2-and 1-forms), this is only possible if $I$ is countable.   
\end{proof}

\begin{proof}[Proof of Theorem~\ref{thm:discrete}]
{\bf Step 1. }
Fix a SHS $(\bar\om,\bar\lambda)$ on a closed 3-manifold
$M$. Pick an adapted decomposition $M=\cup_iU_i\cup N$
as in Theorem~\ref{thm:structure}.

Consider a SHS $(\om,\lambda)$ sufficiently $C^2$-close to
$(\bar\om,\bar\lambda)$ with $[\om]=[\bar\om]$. By 
Theorem~\ref{thm1} (a), after pulling back $(\om,\lambda)$ by a
diffeomorphism $C^1$-close to identity, we may assume that
$(\om,\lambda)$ is $T^2$-invariant on $U_i$ and  
$C^1$-close to $(\bar\om,\bar\lambda)$. By Remark~\ref{rem:nonconst}
we can adjust $(\bar\om,\bar\lambda)$ 
so that it has nonconstant slope on each $U_i$. 

By Lemma~\ref{lem:ellipt-small} we can write $\om=\bar\om+d\alpha$ for
a $C^1$-small 1-form $\alpha$. After 
applying Lemma~\ref{lem:t2inv} (b) and shrinking the $U_i$, we may
assume that $\alpha$ is $T^2$-invariant on each $U_i$ (and the slope
of $\bar\om$ is still nonconstant on the new $U_i$).  

Now we replace $\bar\lambda$ by the new
stabilizing 1-form provided by Theorem~\ref{thm:structure}
($C^1$-close to the old one and still
denoted by $\bar\lambda$) such that $\bar f=d\bar\lambda/\bar\om$ is
constant on each connected component of $N$. 

{\bf Step 2. }
Consider first the region $N$. Fix a smooth function
$\tau:[0,1]\to[0,1]$ which equals $0$ near $0$ and $1$ near $1$ and
define $\om_t:=\bar\om+\tau(t)d\alpha$ on $N$.
Denote by $N^c$ the union of components of $N$ on which $\bar f\equiv
c\in\R$ and define 1-forms $\lambda_t$ on $N$ by
$\lambda_t:=\bar\lambda+\tau(t)c\alpha$ on $N^c$. These 1-forms are
$C^1$-close to $\bar\lambda$ and satisfy
$d\lambda_t=c(\bar\om+\tau(t)d\alpha)=c\om$ on $N^c$, 
so $\lambda_t$ stabilizes $\om_t$ on $N$. It remains to extend the homotopy 
$(\om_t,\lambda_t)$ over the integrable regions $U_i$.  

{\bf Step 3. }
Consider an integrable region $U_i\cong[0,1]\times T^2$. After 
applying Lemma~\ref{lem:t2inv} (c) and shrinking the $U_i$, we can write
$\bar\om=d\alpha_{\bar h}$, $\bar\lambda=\lambda_{\bar g}$, $\om=d\alpha_h$,
$\lambda=\lambda_g$, and $\alpha=\alpha_\xi$ for functions $\bar
h,\bar g,\xi,h=\bar h+\xi,g:[0,1]\to\C$, where the pairs $(\bar h,\bar g)$
and $(h,g)$ satisfy~\eqref{eq:g} and $\xi$ is
$C^1$-small. For some small $\eps$ the homotopy $\om_t$ is already defined on 
$[0,\eps]\cup [1-\eps,1]\times T^2$ and we can write it as
$\om_t=d\alpha_{h_t}$ for $h_t=\bar h+\tau(t)\xi$. 
Using Remark~\ref{rem:nonconst} we adjust $\xi$ on $[\eps,1-\eps]$ so
that the slope $(\bar h+\xi)'/|(\bar h+\xi)'|$ is nonconstant. We
extend the homotopy $h_t$ from $[0,\eps]\cup [1-\eps,1]$ to $[0,1]$ so
that the slope $h_t'/|h_t'|$ is nonconstant on $[\eps,1-\eps]$
throughout the homotopy and $h_t$ stays in a small $C^1$-neighbourhood
of $\bar h$. Since on  $[0,\eps]\cup [1-\eps,1]\times T^2$ the
homotopy $\om_t$ is stabilized by $\alpha_t$,
Proposition~\ref{prop:stabhom} allows us to extend this homotopy
thoughout $[0,1]\times T^2$. 
This concludes the proof of Theorem~\ref{thm:discrete}. 
\end{proof}

\subsection{Approximation of stable Hamiltonian structures by
  Morse-Bott ones}\label{subsec:Morse-Bott}

\begin{theorem}\label{thm:Morse-Bott}
For any SHS $(\om,\lambda)$ on a closed oriented 3-manifold $M$ and
any $\eps>0$ there exists a stable homotopy
$\gamma=\{(\om_t=\om+d\mu_t,\lambda_t)\}_{t\in [0,1]}$ such that
$(\om_1,\lambda_1)$ is Morse-Bott and 
$$
   \|\gamma\|_{C^1} :=
   \max_{t\in[0,1]}(\|\dot\mu_t\|_{C^1}+\|\dot\lambda_t\|_{C^1})<\eps.  
$$
\end{theorem}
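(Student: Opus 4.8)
The plan is to combine the structure theorem with separate genericity perturbations on the contact, flat and integrable pieces. First I would apply Theorem~\ref{thm:structure} to pass from $(\om,\lambda)$ to $(\om,\tilde\lambda)$, where $\tilde\lambda$ is $C^1$-close to $\lambda$, via the linear stable homotopy $(\om,\lambda+t\alpha)$ with $\alpha:=\tilde\lambda-\lambda$ $C^1$-small and $\dot\mu_t\equiv 0$; this has $C^1$-length as small as desired. We are then left with a decomposition $M=\inn U\cup\inn N$ in which $N$ is Reeb-invariant, $\tilde f:=d\tilde\lambda/\om$ is constant on each component of $N$, each $U_i\cong[0,1]\times T^2$ carries a $T^2$-invariant SHS with $f$ an affine nonconstant function of $r$, and $\partial N=\partial U$ consists of Reeb-invariant $2$-tori lying inside the $T^2$-invariant regions $U_i$. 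Because $N$ is Reeb-invariant, every closed orbit lies entirely in a single $U_i$ or in a single component of $N$, so it suffices to make each piece Morse-Bott by a $C^1$-small stable homotopy supported in its interior (preserving $[\om]$) and then concatenate them into one homotopy, reparametrising in $t$ so that at each time the total $\|\dot\mu_t\|_{C^1}+\|\dot\lambda_t\|_{C^1}$ stays below $\eps$.

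On a component of $N$ with $\tilde f\equiv c\ne 0$ the SHS is $\pm$ contact with $\om=\pm c^{-1}d\tilde\lambda$, and a standard genericity argument for Reeb flows of contact forms --- the set of contact forms having all closed orbits of action at most $L$ nondegenerate is $C^\infty$-open and dense, intersected over $L\in\N$ --- yields a $C^\infty$-small perturbation of $\tilde\lambda$, supported in the interior, after which all closed orbits in this component are nondegenerate; since $\om$ changes only by an exact form, $[\om]$ is unchanged. On a component with $\tilde f\equiv 0$ we have $d\tilde\lambda=0$, so away from the $T^2$-invariant collars inherited from the $U_i$ the piece carries a taut foliation transverse to $R$ and, after a small preliminary deformation as in Section~\ref{subsec:taut}, may be viewed as a mapping torus of an area-preserving surface map; a generic symplectic isotopy of the monodromy (which induces a stable homotopy keeping $d\tilde\lambda=0$ and $[\om]$ fixed) makes all periodic points, hence all closed orbits, nondegenerate. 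In both cases the perturbation is kept away from $\partial N$, where the relevant closed orbits instead sit on the invariant tori $\partial N=\partial U$ and are dealt with from the $U_i$-side.

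On each $U_i\cong[0,1]\times T^2$ write $(\om,\tilde\lambda)=(\om_h,\lambda_g)$ with $(h,g)$ satisfying~\eqref{eq:g}. The closed Reeb orbits in $U_i$ fill the rational invariant tori $\{r_0\}\times T^2$; computing the linearised time-$T$ return map in the coordinates $(r,\theta,\phi)$ shows that such a torus $\{r_0\}\times T^2$ is a Bott-nondegenerate family of orbits exactly when the normalised Reeb vector field $r\mapsto(w_1(r),w_2(r))$ has nonvanishing derivative at $r_0$, and for each period bound only finitely many rational tori carry orbits of bounded period, so this gives the Morse-Bott property on $U_i$. The required condition (non-constant slope, and no degenerate rational torus) is a countable intersection of $C^\infty$-open dense conditions, hence holds after a generic $C^\infty$-small perturbation of $h$ rel boundary, to which Proposition~\ref{prop:stabhom} (or Lemma~\ref{lem:g-pert}) supplies a matching $C^\infty$-small perturbation of $g$, giving a $C^\infty$-small $T^2$-invariant stable homotopy on $U_i$ fixed near $\partial U_i$. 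I expect the main obstacle to be the interface $\partial N=\partial U$: one must check that the $U_i$-perturbations (which govern the Morse-Bott structure precisely on those invariant tori) and the interior perturbations of the contact and flat pieces are genuinely compatible there, that the ``generic area-preserving monodromy'' step on the flat pieces can be carried out rel a neighbourhood of the boundary tori without disturbing the adjacent integrable regions, and that all the size estimates combine into a single stable homotopy of $C^1$-length $<\eps$.
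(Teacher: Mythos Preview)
Your overall architecture---apply Theorem~\ref{thm:structure}, then perturb separately on the pieces---is exactly the paper's, and you have correctly located the one genuine difficulty: the interface between $N$ and the $U_i$. What is missing is the idea that resolves it. The paper inserts an extra step \emph{before} perturbing: inside each $U_i$, near the overlap with $N$ (where $g=c_\pm h+d_\pm$), it makes a $C^\infty$-small perturbation of $h$ supported in the collars $(0,2\eps)\cup(1-2\eps,1)$ so that the slope $h'/|h'|$ becomes \emph{constant irrational} near $r=\eps$ and $r=1-\eps$. After this, the Reeb flow has \emph{no closed orbits at all} near the new interfaces $\{\eps\}\times T^2$ and $\{1-\eps\}\times T^2$. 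One can then redefine $N$ to be $M\setminus\bigcup_i(\eps,1-\eps)\times T^2$ and perturb on each $[\eps,1-\eps]\times T^2$ and on each component of $N$ completely independently, with supports strictly interior to each piece; there is nothing to match at the boundary because there are no periodic orbits there. Without this irrational buffer, your plan of ``deal with the boundary tori from the $U_i$-side while keeping the $U_i$-perturbation fixed near $\partial U_i$'' is self-contradictory when the boundary slope happens to be rational.

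Your treatment of the pieces of $N$ is also more delicate than necessary. The paper does not split into contact and flat cases: on any component with $d\tilde\lambda=c\,\om$ (for any $c\in\R$), once there are no periodic orbits near $\partial N$ one invokes the genericity result for Hamiltonian structures (Theorem~B.1 of~\cite{CF}, which works with boundary provided the boundary is orbit-free) to find a $C^\infty$-small $1$-form $\nu$ compactly supported in $\inn N$ with $\om+d\nu$ Morse; then $\tilde\lambda+c\nu$ stabilises it and the homotopy $(\om+td\nu,\tilde\lambda+tc\nu)$ is $C^1$-small. Your mapping-torus argument on the flat pieces is not wrong in spirit, but the appeal to ``a small preliminary deformation as in Section~\ref{subsec:taut}'' is misleading---that section's homotopies (rationalising $\lambda$ and then isotoping the monodromy via Lemma~\ref{lem:taut}) are not $C^1$-small, and in any case $N^0$ has boundary, so extra care would be needed to set up the fibration and keep the perturbation rel boundary. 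The uniform approach avoids all of this.
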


We will see in Section~\ref{subsec:sft} that this theorem is a crucial
ingredient in the proof that SFT defines a homotopy invariant of SHS
in dimension 3. 

Let us first recall the definition of Morse-Bott from~\cite{BEHWZ}. 
A SHS $(\om,\lambda)$ with Reeb vector field $R$ is called {\em
  Morse-Bott} if the following holds: For all $T>0$
the set $\mathcal{N}_T\subset M$ formed by the $T$-periodic Reeb
orbits is a closed submanifold, the rank of
$d\lambda|_{\mathcal{N}_T}$ is locally constant, and
$T_p\mathcal{N}_T=\ker(D_p\Phi_T-\id)$ for all
$p\in\mathcal{N}_T$, where $\Phi_t$ is the Reeb flow. Note that
the case $\dim\mathcal{N}_T=1$ corresponds to nondegeneracy of
closed Reeb orbits. In this case we call the SHS {\em Morse}. 

Next we prove a version of Theorem~\ref{thm:Morse-Bott} for
$T^2$-invariant SHS. Consider an integrable region $I\times T^2$ with
SHS $(\om_h,\lambda_g)$ as in Section~\ref{subsec:t2inv},
$h=(h_1,h_2)$ and $g=(g_1,g_2)$.  
Its Reeb vector field is given by
$$
   R = \frac{-h_2'\p_\theta+h_1'\p_\phi}{h_1'g_2-h_2'g_1}.
$$
So $T^2$-families of periodic Reeb orbits occur whenever the slope
$h'/|h'|$ is rational.  

\begin{lemma}\label{lem:Morse-Bott}
(a) The SHS $(\om_h,\lambda_g)$ on $I\times T^2$ is Morse-Bott iff
$h_1''h_2'-h_1'h_2''\neq 0$ whenever $h'/|h'|$ is rational.
 
(b) Assume that $(\om_h,\lambda_g)$ is already Morse-Bott in a
neighbourhood of $\p I\times T^2$. Then for each $\eps>0$ there exists
a stable homotopy
$\gamma=\{(\om_{h_t},\lambda_{g_t})\}_{t\in [0,1]}$ such that
$(\om_{h_1},\lambda_{g_1})$ is Morse-Bott and $\|\gamma\|_{C^1}<\eps$.   
\end{lemma}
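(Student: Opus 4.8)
I plan to prove (a) by an explicit computation of the linearized Reeb flow along a rational invariant torus, and (b) by a transversality (Baire category) argument combined with the re-stabilization machinery of Section~\ref{subsec:t2inv}. For (a), I would first observe that $h_1''h_2'-h_1'h_2''=-\det(h',h'')$ vanishes exactly where $h'$ and $h''$ are linearly dependent, i.e.~where the slope $k=h'/|h'|\colon I\to S^1$ has vanishing derivative; since the slope is intrinsic to the foliation (cf.~Lemma~\ref{lem:area}), this condition is invariant under $\mathrm{GL}(2,\Z)$-changes of the $(\theta,\phi)$-coordinates. So fix $r_0$ with $k(r_0)$ rational; after such a coordinate change I may assume $h_1'(r_0)=0\ne h_2'(r_0)$. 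Using $\lambda_g(R)=1$ and the identities forced by~\eqref{eq:g} — which give $-g_1(r_0)h_2'(r_0)>0$, so $g_1(r_0)\ne 0$, and, differentiating $\langle g',ih'\rangle=0$ at $r_0$, also $g_1'(r_0)=0$ — one finds $R=\tfrac{1}{g_1(r_0)}\p_\theta$ along $\{r_0\}\times T^2$, so this torus is filled by periodic orbits of minimal period $T_0=|g_1(r_0)|$. By $T^2$-invariance $\Phi_{T_0}$ restricts to the identity on $\{r_0\}\times T^2$; hence $D_p\Phi_{T_0}$ fixes the torus directions pointwise and sends $\p_r$ to $\p_r+s$, where the shear $s\in T_p(\{r_0\}\times T^2)$ equals $T_0 R'(r_0)$. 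Computing $R'(r_0)$ from $R=\frac{-h_2'\p_\theta+h_1'\p_\phi}{h_1'g_2-h_2'g_1}$ with $h_1'(r_0)=g_1'(r_0)=0$, the $\p_\phi$-component of $s$ comes out a nonzero constant multiple of $h_1''(r_0)$, so $s\ne 0\iff h_1''(r_0)\ne 0\iff h_1''h_2'-h_1'h_2''\ne 0$ at $r_0$.

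It then remains to match this with the definition of Morse--Bott. If $s\ne 0$ then $\ker(D_p\Phi_{T_0}-\id)=T_p(\{r_0\}\times T^2)$; and since $k'(r_0)\ne 0$ the slope is strictly monotone near $r_0$, so $T_0R(r)\notin\Z^2$ for nearby $r\ne r_0$ and $\mathcal N_{T_0}=\{r_0\}\times T^2$ locally (with $d\lambda$ of automatically constant — indeed zero — rank on a torus), so the Morse--Bott condition holds. If $s=0$ then $h_1''(r_0)=0$, and if $k$ is not locally constant at $r_0$ then $\mathcal N_{T_0}$ has empty interior while $\ker(D_p\Phi_{T_0}-\id)=T_pM$, contradicting Morse--Bott; the remaining case ($k$ locally constant and rational near $r_0$) is ruled out by a direct look at $\mathcal N_{T_0}$. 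I expect this case analysis of the shape of $\mathcal N_{T_0}$ to be the fiddliest point of (a).

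For (b), part (a) reduces the Morse--Bott property to: every rational point of $S^1$ is a regular value of $k=h'/|h'|$ on $I$; by hypothesis this already holds near $\p I$, so I keep $h$, hence $g$, unchanged there and modify it only on a compact subinterval $J$ of the interior. First, applying Remark~\ref{rem:nonconst} on each maximal subinterval of $J$ where the slope is constant, I perform a $C^1$-small stable homotopy after which $k$ is nonconstant on $J$. Next, for each rational $q\in S^1$ the set of immersions $h$ (fixed near $\p I$) for which $q$ is a regular value of $h'/|h'|$ is $C^1$-open (the critical set being compact) and dense in the $C^\infty$-topology (a Sard-type perturbation near the critical points carrying value $q$, realized by perturbing $h'$, hence $h$ one order higher); intersecting over the countably many rational $q$ and using the Baire property, I obtain a Morse--Bott immersion $h^1$ which is $C^1$-close to $h=h^0$. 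Finally I connect $h^0$ to $h^1$ by a $C^1$-small homotopy $h_t$, chosen generically so that each $h_t$ has nonconstant slope on $J$ (possible since constancy there is of infinite codimension, cf.~the proof of Corollary~\ref{cor:class}); Proposition~\ref{prop:stabhom} then produces a homotopy $g_t$ with $(h_t,g_t)$ satisfying~\eqref{eq:g} and $g_t=g$ near $\p I$, and Lemma~\ref{lem:g-pert}(b), applied along the homotopy, bounds $\|\dot g_t\|_{C^1}$ in terms of $\|\dot h_t\|_{C^1}$, so $\gamma=\{(\om_{h_t},\lambda_{g_t})\}$ satisfies $\|\gamma\|_{C^1}<\eps$ once the perturbation of $h$ is small enough. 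The main obstacle in (b) is the density statement — achieving, by a $C^1$-small perturbation of $h$ fixed near $\p I$, that all rational points become regular values of $k$ — where one exploits the freedom to modify $h''$ (which the $C^1$-norm of the homotopy does not control) while keeping $h,h'$ nearly fixed, and must check that the perturbation creates no new critical points near $\p I$.
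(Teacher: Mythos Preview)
Your approach is correct and follows essentially the same strategy as the paper, though with more elaboration in places. For (a), the paper computes the flow $\Phi_T(r,\theta,\phi)=(r,\theta-k_2(r)T,\phi+k_1(r)T)$ with $k_i=h_i'/(h_1'g_2-h_2'g_1)$ directly in the given coordinates, reads off $D\Phi_T$ as an explicit shear matrix, and checks that $(k_1',k_2')\neq(0,0)$ is equivalent to $h_1''h_2'-h_1'h_2''\neq 0$; your $GL(2,\Z)$ reduction is not needed. Your discussion of the shape of $\mathcal N_{T_0}$ and the rank of $d\lambda$ is more thorough than the paper's, which records only the kernel condition.

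For (b), the paper simply asserts that a $C^\infty$-small compactly supported perturbation $\xi$ of $h$ achieves the condition of (a) while keeping the slope nonconstant, and then applies Lemma~\ref{lem:g-pert} directly to the \emph{linear} family $h_t=h+t\xi$: part (a) of that lemma produces the family $g_t$ smoothly in $t$, and part (b) gives $\|\dot g_t\|_{C^1}\le C\|\xi\|_{C^1}$. Your Baire argument makes explicit the existence of such a $\xi$, which the paper leaves to the reader. However, your route through Proposition~\ref{prop:stabhom} is an unnecessary detour: that proposition builds $g_t$ via a partition of unity and carries no bound on $\dot g_t$, so invoking Lemma~\ref{lem:g-pert}(b) afterward applies the estimate to a different family than the one you constructed. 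Use Lemma~\ref{lem:g-pert} alone on the linear homotopy, as the paper does. Your preliminary step (``applying Remark~\ref{rem:nonconst} on each maximal subinterval where the slope is constant'') is also superfluous and potentially ill-posed (there may be infinitely many such subintervals); the paper instead observes that if the SHS is not already Morse--Bott then the slope is automatically nonconstant on $I$, which is all that Lemma~\ref{lem:g-pert} requires.
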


\begin{proof}
(a) The Reeb flow for time $T$ is given by
$$
   \Phi_T \begin{pmatrix} r \\ \theta \\ \phi \end{pmatrix}
   = \begin{pmatrix} r \\ \theta-k_2(r)T \\ \phi + k_1(r)T
   \end{pmatrix}, \qquad
   k_i := \frac{h_i'}{h_1'g_2-h_2'g_1}
$$
and its linearization equals 
$$
   D\Phi_T = \begin{pmatrix} 1 & 0 & 0 \\ 
      -k_2'T & 1 & 0 \\
      k_1'T & 0 & 1 \end{pmatrix}.
$$
Thus $\ker(D\Phi_T-\id)={\rm span}\{\p_\theta,\p_\phi\}$ iff
$(k_1',k_2')\neq(0,0)$. A short computation shows that the latter
condition is equivalent to $h_1''h_2'-h_1'h_2''\neq 0$. 

(b) Suppose that $(\om_h,\lambda_g)$ is Morse-Bott
near $\p I$, but not on all of $I$ (otherwise there is nothing to
show). Then $h'/|h'|$ is not constant on $I$. By replacing $h$ with
$\tilde h:=h+\xi$ for a $C^\infty$-small perturbation $\xi$ supported
away from the boundary we can arrange that
$\tilde h_1''\tilde h_2'-\tilde h_1'\tilde h_2''\neq 0$ whenever
$\tilde h'/|\tilde h'|$ is rational, and 
$\tilde h'/|\tilde h'|$ is still nonconstant. Then according to
Lemma~\ref{lem:g-pert} we can $C^1$-perturb $g$ accordingly rel
boundary so that~\eqref{eq:g} continues to hold.

It remains to check $C^1$-smallness of the corresponding stable
homotopy $\gamma$. It can be written as
$(\om_t,\lambda_t)=(d\alpha_h+td\alpha_\xi,\lambda_{g_t})$, $t\in [0,1]$,
where $g_t$ is obtained from $h+t\xi$ by Lemma~\ref{lem:g-pert}. 
We can take as  primitives $\mu_t=t\alpha_\xi$, so that
$\dot\mu_t=\alpha_\xi$, which is $C^1$-small if $\xi$ is. The
derivative $\dot\lambda_{g_t}$ is $C^1$-small because $\dot g_t$ is
due to the estimate by $\|\dot g_t\|_{C^1}\leq \|\xi\|_{C^1}$ from
Lemma~\ref{lem:g-pert} (b).  
\end{proof}

\begin{proof}[Proof of Theorem~\ref{thm:Morse-Bott}]
We will construct the homotopy $\gamma$ in 4 steps, each step
providing a $C^1$-small homotopy $\{\gamma_i(t)\}_{t\in[0,1]}$. The
desired homotopy $\{\gamma(t)\}_{t\in[0,1]}$ will then be given by
$\gamma(t)=\gamma_i(4t)$ for $t\in[(i-1)/4,i/4]$. Since
$\|\gamma\|_{C^1}\leq 4\max_i\|\gamma_i\|_{C^1}$, the homotopy
$\gamma$ will be $C^1$-small as well. 
 
{\bf Step 1. }
Given $(\om,\lambda)$, let $\tilde\lambda$ be the stabilizing $1$-form
for $\om$ given by Theorem~\ref{thm:structure}.
We linearly homotope $\lambda$ to $\tilde\lambda$ by
$\lambda_t:(1-t)\lambda+t\tilde\lambda$. Since $\tilde\lambda$ is
$C^1$-close to $\lambda$ are $C^1$ the corresponding stable homotopy
$\gamma=\{\om,\lambda_t\}_{t\in[0,1]}$ is $C^1$-small. We rename
$\tilde\lambda$ back to $\lambda$ and $\tilde f$ back to $f$. 

{\bf Step 2. }
Let $U_i\cong [0,1]\times T^2$ be one of the integrable regions from
Theorem~\ref{thm:structure}.
Recall that in the situation of Theorem~\ref{thm:structure}, for some
$\varepsilon>0$ and some constants $c_-,c_+\in\R$ we have 
$d\lambda=c_-\om$ for $r\in [0,2\eps]$ and $d\lambda=c_+\om$ for
$r\in [1-2\eps,1]$. Since $(\om,\lambda)$ is $T^2$-invariant on
$[0,1]\times T^2$, by Lemma~\ref{lem:t2inv} we can write
$\om=d\alpha_h$ and $\lambda=\lambda_g+g_3(r)dr$ 
for functions $h,g:[0,1]\to\C$ satisfying~\eqref{eq:g} and
$g_3:[0,1]\to\R$. (The term $g_3(r)dr$ does not affect the
stabilization property and will remain unchanged throughout the
following discussion). Moreover, we have
$g=c_\pm h+d_\pm$ on $[0,2\eps]$ resp~$[1-2\eps,1]$, for constants
$d_\pm\in\C$. Consider a $C^\infty$-small perturbation $\xi$ supported
in $(0,2\eps)\cup(1-2\eps,1)$ of $h$ such that the slope
$(h+\xi)'/|(h+\xi)'|$ is constant irrational near $\eps$ and
$1-\eps$. The corresponding stable homotopy
$\gamma=\{\om_{h_t},\lambda_{g_t}+g_3(r)dr\}_{t\in[0,1]}$ with
$h_t=h+t\xi$ and $g_t=c_\om h_t+d_\pm$ on $[0,2\eps]$
resp.~$[1-2\eps,1]$ is clearly $C^1$-small. We rename $(h_1,g_1)$ back
to $(h,g)$.

{\bf Step 3.} 
After Step 2 we may assume that the slope $h'/|h'|$ is constant
irrational near $\eps$ and $1-\eps$. 
According to Lemma~\ref{lem:Morse-Bott}, we can modify $(h,g)$ by a
$C^1$-small homotopy supported in $(\eps,1-\eps)$ to make the SHS
Morse-Bott on a neighbourhood of $[\eps,1-\eps]\times T^2$. After
having performed this homotopy, denote by $Y\subset M$ the union of the
$[\eps,1-\eps]\times T^2$ for all integrable regions. Then
$M\setminus \inn Y=\cup_iN_i$ is a disjoint union of compact regions
$N_i$ on which $d\lambda=c_i\om$  for constants $c_i\in\R$. Moreover,
each boundary component of $N_i$ is a 2-torus belonging to an
integrable region in which the Reeb vector field has irrational
slope.

{\bf Step 4. }
Consider a region $N=N_i$ as in Step 3 on which $d\lambda=c\om$ for
some constant $c\in\R$. Note that the Reeb vector field has no
periodic orbits near $\p N$. According to Theorem B.1
in~\cite{CF}, we find a $C^\infty$-small 1-form $\nu$ compactly
supported in $\inn N$ such that all periodic orbits of $\om+d\nu$ in
$N$ are nondegenerate. (This is proved in~\cite{CF} in the case
without boundary; it carries over to the case with boundary provided
there are no periodic orbits near the boundary). We replace $\lambda$
by the stabilizing form $\lambda+c\nu$ on $N$. The corresponding
stable homotopy $\{\om+td\nu,\lambda+tc\nu\}_{t\in[0,1]}$ is
$C^1$-small. Performing such a perturbation on
every region $N_i$, we obtain the desired Morse-Bott SHS and
Theorem~\ref{thm:Morse-Bott} is proved.  
\end{proof}


\section{Deformations of stable Hamiltonian structures}\label{sec:pert}

Throughout this section, $(\omega,\lambda)$ is a fixed stable
Hamiltonian structure with Reeb vector field $R$. 

\begin{definition}
A {\em (possibly non-exact) deformation of $\om$} is (the germ of) a smooth family
$\{\omega_t\}_{t\in [0,\varepsilon)}$ of closed 2-forms with
$\om_0=\om$. It is called {\em exact} if $[\dot\om_t]=0$. 
A {\em (possibly non-exact) stable deformation of $(\om,\lambda)$} is (the germ of) a smooth
family $\{(\omega_t,\lambda_t)\}_{t\in [0,\varepsilon)}$ of SHS with
$(\om_0,\lambda_0)=(\om,\lambda)$.  
We call a deformation $\{\omega_t\}_{t\in [0,\varepsilon)}$ on $\om$
{\em stabilizable} if for some $\delta\leq\eps$ there exists a smooth
family $\{\lambda_t\}_{t\in [0,\delta)}$ of stabilizing 1-forms for
$\om_t$ with $\lambda_0=\lambda$.  
\end{definition}

Note that, by contrast to the rest of the paper, in this section we
consider exact as well as non-exact deformations. 

Our goal in this section is to describe the set of those deformations
that are stabilizable. In full generality this question seems to be
quite hard and we were able to answer it only in some special
cases. In particular, we provide examples of (exact as well as
non-exact) deformations that are not stabilizable. 

\subsection{Linear stabilizability}\label{subsec:linstab}

Linearizing at $t=0$ gives a necessary linear condition for
stabilizability. To derive it, consider a (possibly non-exact) stable
deformation $\{(\omega_t,\lambda_t)\}_{t\in [0,\varepsilon)}$ of
$(\om_0,\lambda_0)=(\om,\lambda)$ with Reeb vector fields $R_t$.
Differentiating the equations $i_{R_t}\om_t=0=i_{R_t}d\lambda_t$ at
$t=0$ yields 
$$
   i_{\dot R_0}\om+i_R\dot\om_0 = 0 = i_{\dot
   R_0}d\lambda+i_Rd\dot\lambda_0.  
$$
For given $\dot\om_0$, we consider the first equation
\begin{equation}
   i_{\hat R}\om = -i_R\dot\om_0 
\label{hatR}
\end{equation}
as an equation for $\hat R=\dot R_0$. 
Note, first, that Equation \eqref{hatR} is always solvable because its
right-hand side of it evaluates to zero on the kernel of $\omega$
and, second, that the difference between any two solutions of this
equation is a multiple of $R$. Fixing a solution $\hat R$ of Equation
\eqref{hatR}, we consider the second equation 
\begin{equation}  
   i_Rd\hat\lambda=-i_{\hat R}d\lambda
\label{obstr1}
\end{equation}
as an equation for the $1$-form $\hat\lambda$. 
Note that the right hand side of Equation \eqref{obstr1} does not
depend on the choice of $\hat R$ because the difference between any
two such choices is a multiple of $R$ contracting to zero with
$d\lambda$. 

We call a deformation $\{\omega_t\}_{t\in [0,\varepsilon)}$ of $\om$
{\em linearly stabilizable} if equation~\eqref{obstr1} has a solution,
where $\hat R$ is determined by
equation~\eqref{hatR}. Note that this condition depends only on
$\dot\om_0$. We can reformulate it as follows:

\begin{lemma}\label{reform}
A (possibly non-exact) deformation $\{\omega_t\}_{t\in
  [0,\varepsilon)}$ is linearly 
stabilizable if and only if there exists a smooth family
$\{(\lambda_t,R_t)\}_{t\in [0,\eps)}$ of 1-forms and
vector fields satisfying $(\lambda_0,R_0)=(\lambda,R)$,
$i_{R_t}\om_t=0$, and $i_{R_t}d\lambda_t=O(t^2)$.  
\end{lemma}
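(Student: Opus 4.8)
The plan is to prove both implications by differentiating the defining identities at $t=0$ and using the observations already recorded in the text after equations~\eqref{hatR} and~\eqref{obstr1} (namely that~\eqref{hatR} is always solvable, that its solution is unique up to a multiple of $R$, and that the right-hand side of~\eqref{obstr1} is unchanged under such a modification of $\hat R$).

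For the ``only if'' direction, I would first build a smooth family of Reeb vector fields. Since maximal rank is an open condition and $M$ is closed, $\om_t$ is a Hamiltonian structure for all small $t$, so $\ker\om_t$ is a smoothly varying line field; at $t=0$ it equals $\R R$, which is transverse to $\ker\lambda$ because $\lambda(R)=1$, hence it stays transverse for small $t$. Therefore there is a unique smooth family $R_t$ with $i_{R_t}\om_t=0$ and $\lambda(R_t)=1$, and $R_0=R$. Differentiating $i_{R_t}\om_t=0$ at $t=0$ shows that $\dot R_0$ solves~\eqref{hatR}. Now let $\hat\lambda$ be a solution of~\eqref{obstr1} (which exists by hypothesis, and which is insensitive to the particular solution of~\eqref{hatR} chosen), and set $\lambda_t:=\lambda+t\hat\lambda$. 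Then $i_{R_t}d\lambda_t|_{t=0}=i_Rd\lambda=0$, and $\tfrac{d}{dt}\big|_{t=0}\,i_{R_t}d\lambda_t=i_{\dot R_0}d\lambda+i_Rd\hat\lambda$, which vanishes by~\eqref{obstr1} (after replacing $\hat R$ there by $\dot R_0$, using that the two differ by a multiple of $R$ and $i_Rd\lambda=0$). Hence $i_{R_t}d\lambda_t=O(t^2)$, and $(\lambda_0,R_0)=(\lambda,R)$, as required.

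For the ``if'' direction, given such a family $\{(\lambda_t,R_t)\}$, I would simply set $\hat R:=\dot R_0$ and $\hat\lambda:=\dot\lambda_0$. Differentiating $i_{R_t}\om_t=0$ at $t=0$ gives $i_{\hat R}\om+i_R\dot\om_0=0$, i.e.~$\hat R$ solves~\eqref{hatR}; differentiating $i_{R_t}d\lambda_t=O(t^2)$ at $t=0$ (the function vanishes to second order, so both it and its first derivative vanish at $0$) gives $i_{\hat R}d\lambda+i_Rd\hat\lambda=0$, i.e.~$\hat\lambda$ solves~\eqref{obstr1} with $\hat R$ determined by~\eqref{hatR}. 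That is exactly the definition of linear stabilizability.

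The only step that is not pure formal differentiation — and hence the one I would be most careful with — is the construction of the smooth family $R_t$ with $R_0=R$ in the ``only if'' direction: one must check that $\om_t$ remains of maximal rank and that $\ker\om_t$ remains transverse to $\ker\lambda$ for small $t$, after which $R_t$ is the solution of a pointwise invertible, smoothly varying linear system and so depends smoothly on $t$. Everything else reduces to computing $\tfrac{d}{dt}\big|_{t=0}$ of the contraction identities.
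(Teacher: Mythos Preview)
Your argument is correct and follows essentially the same route as the paper: construct $R_t$ spanning $\ker\om_t$, set $\lambda_t=\lambda+t\hat\lambda$, and differentiate the contraction identities at $t=0$ to pass between the linearized equations~\eqref{hatR},~\eqref{obstr1} and the condition $i_{R_t}d\lambda_t=O(t^2)$. The only cosmetic difference is that the paper normalizes $R_t$ by $\lambda_t(R_t)=1$ rather than $\lambda(R_t)=1$, and it dispatches the ``if'' direction in one line by referring back to the preceding discussion; your more explicit treatment of the smoothness of $R_t$ is a welcome addition but not a different idea.
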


\begin{proof} 
The ``if'' follows from the preceding discussion. For the ``only if'',
assume that there exists $\hat\lambda$ solving Equation
\eqref{obstr1}. Consider the linear family of $1$-forms
$\lambda_t:=\lambda_0+t\hat\lambda$ for $t\in [0,\varepsilon)$. 
Pick a smooth family of vector fields $R_t$ spanning $\ker\om_t$ with
$R_0=R$ (we may normalize it by $\lambda_t(R_t)=1$ for small $t$). 
As above, differentiating $i_{R_t}\om_t=0$ at $t=0$ shows that $\hat
R=\dot R_0$ solves Equation \eqref{hatR}. In order to check that
$i_{R_t}d\lambda_t=O(t^2)$ we differentiate 
the expression $i_{R_t}d\lambda_t$ at $t=0$. This gives us 
$$
   \ddtt i_{R_t}d\lambda_t=
\ddtt i_{R_t}(d\lambda_0+td\hat\lambda)=i_{\dot
   R_0}d\lambda+i_Rd\hat\lambda
$$
and the last expression is zero because $\hat\lambda$ solves Equation
\eqref{obstr1}. 
\end{proof}

\subsection{Deformations of contact
  structures}\label{subsec:pert-contact} 

In this subsection we assume that the stable Hamiltonian structure is
positive or negative contact, i.e.~$\om=\pm d\lambda$. To warm up, suppose that
the deformation $\{\om_t\}_{[0,\varepsilon)}$ is exact, i.e.~$[\dot
\om_t]=0$. Then we can choose a smooth path of primitives $\sigma_t$
of $(\om_t-\om)$ and $\lambda_t:=\lambda\pm\sigma_t$ is a path of
contact forms stabilizing  $\pm d\lambda_t=\om_t$. So every exact
deformation of a contact structure is stabilizable. 

For non-exact deformations, the linearized problem simplifies because
of the extra information that $\omega=\pm d\lambda$. Indeed, Equation 
\eqref{obstr1} rewrites using equation~\eqref{hatR} as 
$$
   i_Rd\hat\lambda=-i_{\hat R}d\lambda=\mp
   i_{\hat R}\omega=\pm i_R\dot\omega_0,
$$ 
and hence
\begin{equation}
   i_R(d\hat\lambda\mp\dot\omega_0)=0.
\label{solvcont}
\end{equation}
Solvability of this equation for $\hat\lambda$ is equivalent to
existence of a closed $2$-form $\theta$ ($=\dot\om_0\mp d\hat\lambda$)
with 
\begin{equation}
   [\theta]=[\dot\omega_0]\in H^2(M,\R), \qquad i_R\theta=0.
\label{obst}
\end{equation}
This condition has a natural interpretation in terms of foliated
cohomology (see Section~\ref{ss:folcoh}) of the foliation $\LL$
defined by $R$: For a deformation $\{\om_t\}$ of a posive or  negative
contact SHS, condition~\eqref{obst} for linear stabilizability is equivalent
to the condition 
\begin{equation}\label{obst2}
   [\dot\om_0]\in\im[\kappa:H^2_\LL(M)\to H^2(M)]. 
\end{equation}
Thus any deformation with $[\dot\om_0]\notin\im(\kappa)$ will not be
stabilizable. Below we give some examples in which this happens. 

\begin{example}
As in Section~\ref{subsec:left}, let $M$ be a compact quotient of
$PSL(2,\R)$. Let $\alpha_\pm$ be a positive resp.~negative contact form 
coming from a left invariant form on $PSL(2,\R)$, and let $\LL$ be the
foliation generated by its Reeb vector field $R$. By the discussion at
the end of  Section~\ref{subsec:left}, $\kappa:H^2_\LL(M)\to H^2(M)$
is the zero map. 
Hence according to~\eqref{obst2}, a deformation $\om_t$ of the SHS
$(\om,\lambda)=(d\alpha_\pm,\pm\alpha_\pm)$ is linearly stabilizable
if and only if $[\dot\om_0]=0$.   
\end{example} 


As another example, 
let $T^n$ be the $n$-torus with the standard flat Euclidean metric and
$\pi:S^*T^n\cong T^n\times S^{n-1}\to T^n$ its unit cotangent bundle. 
Let 
$$
   i \colon T^n \to T^n \times S^{n-1}=S^* T^n
$$ 
be the inclusion of $T^n\times\{p\}$ for a fixed point $p\in S^{n-1}$.  
Let $R$ be the Reeb vector field of the (contact) Liouville form
$\lambda$ on $S^*T^n$ and $\LL$  the foliation generaged by $R$. 
The following lemma was pointed out to us by J.~Bowden.

\begin{lemma}\label{jonathan}
In the notation above,
$$
   \im[\kappa:H^2_\LL(S^*T^n)\to H^2(S^*T^n)] = \ker[i^*:H^2(S^*T^n)\to
   H^2(T^n)]. 
$$
Therefore, a deformation $\{\om_t\}$ of $(d\lambda,\lambda)$ with
$i^*[\dot\om_0]\neq 0$ is not linearly stabilizable. 
\end{lemma}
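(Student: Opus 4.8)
The plan is to prove the identity of subspaces by a double inclusion, using the explicit geometry of $S^*T^n$, and then to derive the non-stabilizability statement immediately from~\eqref{obst2}.

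First I would set up coordinates. Write $T^n=\R^n/\Z^n$ with coordinates $q=(q_1,\dots,q_n)$ and let $p=(p_1,\dots,p_n)$ be the dual fibre coordinates, so $S^*T^n=\{|p|=1\}\cong T^n\times S^{n-1}$. The Liouville form is $\lambda=\sum_j p_j\,dq_j$ and its Reeb vector field is $R=\sum_j p_j\,\p_{q_j}$, the generator of the (co-)geodesic flow, which on $S^*T^n$ is the linear flow in the $q$-directions with constant velocity $p$. Since $H^2_\LL$ consists of classes of closed $2$-forms $\theta$ with $i_R\theta=0$ and $i_Rd\theta=0$, the map $\kappa$ simply forgets the foliated condition. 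For the inclusion $\im\kappa\subset\ker i^*$: if $\theta\in\Om^2_\LL(S^*T^n)$ then $i_R\theta=0$, so $i^*\theta$ — the restriction to a fibre $T^n\times\{p\}$ — annihilates the vector field $R$ tangent to that fibre; since at a \emph{generic} (in fact every, after moving $p$) point the restriction of a $2$-form on $T^n$ that contracts trivially with a fixed irrational-direction vector field to the whole $T^n$ is cohomologically trivial, one gets $i^*[\theta]=0$. (More robustly: $i^*\theta$ is a closed $2$-form on $T^n$ with a nowhere-zero kernel direction, so integrating it against the $2$-tori spanned by $R$ and any coordinate direction gives zero by the invariance/averaging argument used repeatedly in the paper, hence all its periods vanish and $[i^*\theta]=0$.)

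For the reverse inclusion $\ker i^*\subset\im\kappa$ I would argue cohomologically. The bundle $\pi:S^*T^n\to T^n$ is trivial, $S^*T^n\cong T^n\times S^{n-1}$, so by Künneth $H^2(S^*T^n)\cong H^2(T^n)\oplus \big(H^0(T^n)\otimes H^2(S^{n-1})\big)$ for $n\ge 3$ (and one handles $n=2$ separately, where $S^1$ contributes an $H^1$ term). The map $i^*$ is projection onto the first summand, so $\ker i^*$ is exactly the ``spherical'' part, spanned by $\pi_{S}^*[\sigma_{S^{n-1}}]$ when $n-1=2$, i.e.\ $n=3$, and is zero when $n>3$. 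It therefore suffices to realize the class of (the pullback of) the area form of $S^2$ in the case $n=3$ by a foliated-closed form: but the round area form $\sigma$ on the unit sphere $\{|p|=1\}$ in each fibre patches to a closed $2$-form $\theta$ on $S^*T^3$, and since $R$ is tangent to the base $T^3$ directions while $\theta$ lives in the fibre directions, $i_R\theta=0$ and $i_Rd\theta=0$ automatically. Hence $[\theta]\in\im\kappa$ and $i^*[\theta]=0$; as $n>3$ forces $\ker i^*=0$ trivially, the inclusion holds in all dimensions. The case $n=2$: $S^*T^2\cong T^3$, $\ker i^*$ is spanned by $d q_1\wedge d\phi$-type classes involving the fibre $S^1$ coordinate, and these are again represented by forms built from $d\phi$ wedged with a base $1$-form pulled back along the non-Reeb directions, which contract trivially with $R$.

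Finally, the non-stabilizability conclusion is immediate: combining the displayed identity with the reformulation~\eqref{obst2}, a deformation $\{\om_t\}$ of $(d\lambda,\lambda)$ is linearly stabilizable iff $[\dot\om_0]\in\ker i^*$; so if $i^*[\dot\om_0]\neq 0$ it fails to be even linearly stabilizable, hence is not stabilizable. The main obstacle I anticipate is the low-dimensional bookkeeping — correctly identifying $\ker i^*$ via Künneth across the cases $n=2$, $n=3$, $n\ge 4$, and making the ``restriction of a foliated form to a fibre is cohomologically trivial'' step clean (the cleanest route is the averaging-over-$R$-invariant-$2$-tori argument rather than a pointwise linear-algebra claim, since $R$ need not have irrational slope at every fibre point). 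Everything else is routine given the explicit product structure of $S^*T^n$.
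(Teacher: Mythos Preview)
Your first inclusion $\im\kappa\subset\ker i^*$ is on the right track, and your closing remark about $R$-invariant $2$-tori is exactly the paper's idea. Note, though, that the argument must vary the fibre point $p$: for a single fixed $p$ the restriction $i^*\theta$ need not have all periods zero (e.g.\ with $p=e_1$ you learn nothing about $\int_{T_{23}}\theta$). The paper pairs $\theta$ with the $R$-invariant torus $T_{jk}\times\{e_j\}$ to get zero and then uses that this is homologous to $T_{jk}\times\{p\}$. Your K\"unneth analysis for $n\geq 3$ is correct; in fact your treatment of $n=3$ (realizing the $S^2$ area class by a foliated form) is more careful than the paper's, which asserts that $i^*$ is an isomorphism for all $n\geq 3$ --- this is only literally true for $n\geq 4$.

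The genuine gap is the case $n=2$. Here $S^*T^2\cong T^3$ with coordinates $(x,y,\phi)$ and $R=\cos\phi\,\p_x+\sin\phi\,\p_y$. The classes $[dx\wedge d\phi]$ and $[dy\wedge d\phi]$ span $\ker i^*$, but these forms do \emph{not} lie in $\Om^2_\LL$: one has $i_R(dx\wedge d\phi)=\cos\phi\,d\phi\neq 0$. The natural candidate $(\cos\phi\,dy-\sin\phi\,dx)\wedge d\phi$ does satisfy $i_R\theta=0$ and is closed, but both its periods over $\{x=0\}$ and $\{y=0\}$ vanish, so it is cohomologically trivial. The paper's remedy is to allow a coefficient function: set
\[
   \theta = f(\phi)\bigl(\cos\phi\,dy-\sin\phi\,dx\bigr)\wedge d\phi,
\]
which is still closed with $i_R\theta=0$, and whose two periods are $\int_0^{2\pi}f(\phi)\cos\phi\,d\phi$ and $\int_0^{2\pi}f(\phi)\sin\phi\,d\phi$; these can be prescribed independently by choosing $f$. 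This explicit computation is the missing ingredient in your $n=2$ argument. Once the identity $\im\kappa=\ker i^*$ is established, your deduction of non-stabilizability from~\eqref{obst2} is correct.
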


\begin{proof}
Assume first that $\theta$ is a closed $2$-form on $S^*T^n$ which
satisfies $\iota_R \theta=0$. We claim that $i^*[\theta]=0$,
i.e.~$\langle [\theta],i_*b \rangle=0$ for each $b \in H_2(T^n)$. 
This immediately follows from the fact that $i_*\big(H_2(T^n)\big)$
is generated by homology classes which are represented by smooth
tori invariant under the Reeb flow. To see the latter fact, write
$T^n$ as $\mathbb{R}^n /\mathbb{Z}^n$ and let $\{e_1,\cdots,e_n\}$
be the standard basis of $\mathbb{R}^n$. It is well known that
the homology of $T^n$ in degree two is generated by the
$2$-dimensional subtori
$$
   T_{jk}=\langle e_j,e_k \rangle/\mathbb{Z}^2 \subset T^n, \quad 
1 \leq j <k \leq n.
$$
Hence we can choose as generators of $i_*\big(H_2(S^*T^n)\big)$ the
$2$-tori 
$$
   \widetilde{T}_{jk}=T_{jk} \times \{e_j\} \subset T^n \times 
S^{n-1}, \quad 1 \leq j <k \leq n.
$$
The Reeb flow $\phi^{\tau}_R$ for $\tau \in \mathbb{R}$ on $S^* T^n$
is given by the geodesic flow. Since geodesics on the torus are
just straight lines we obtain the simple formula
$$
   \phi^{\tau}_R(x,y)=(x+\tau y,y), \quad (x,y) \in T^n \times
   S^{n-1},\,\, \tau \in \mathbb{R}.
$$
Hence the $2$-tori $\widetilde{T}_{jk}$ are invariant under the
geodesic flow. This proves the claim and thus
$\im(\kappa)\subset\ker(i^*)$. 

For $n\geq 3$ we are done since in that case $i^*:H^2(S^*T^n)\to
H^2(T^n)$ is an isomorphism and therefore $\im(\kappa)=\{0\}$. It
remains to show $\ker(i^*)\subset\im(\kappa)$ in the case $n=2$. 
For this, pick coordinates $(x,y,\phi)$ on $S^*T^2$, where $x$ and $y$
are coordinates on $T^2=\R^2/\Z^2$ and $\phi\in S^1$ is the angular
coordinate on the fibre. Then the Reeb vector field is given by
$$
   R = \cos\phi\,\p_x + \sin\phi\,\p_y. 
$$
A short computation shows that a 2-form $\theta$ is closed and
satisfies $i_R\theta=0$ precisely if it is of the form
$$
   \theta = f(\phi)\bigl(\cos\phi\,dy - \sin\phi\,dx\bigr)\wedge d\phi 
$$
for a $2\pi$-periodic function $f:\R\to\R$. Then the values of $[\theta]$
on the subtori $C_x=\{x=0\}$ and $C_y=\{y=0\}$ are
$$
   [\theta](C_x) = \int_0^{2\pi}f(\phi)\cos\phi\,d\phi,\qquad 
   [\theta](C_y) = \int_0^{2\pi}f(\phi)\sin\phi\,d\phi.
$$
Since these values can be arbitrarily prescribed by the choice of $f$,
this shows that every cohomology class in $\ker(i^*)$ can be
represented by such a form $\theta$, and hence
$\ker(i^*)\subset\im(\kappa)$.  
\end{proof}

The case $n=2$ is particularly interesting:

\begin{prop}\label{prop:t3}
In the notation of Lemma~\ref{jonathan}, let $\beta$ be the standard
area form on $T^2\cong \R^2/ \Z^2$ and consider the (non-exact) family of
Hamiltonian structures $\om_t=d\lambda+t\beta$ for $t\in \R$. Then 
\begin{enumerate}
\item each $\om_t$ is stabilizable, but
\item there is no {\it smooth} family $\{\lambda_t\}_{t\in \R}$ of
$1$-forms $\lambda_t$ stabilizing $\omega_t$. (More precisely, 
smoothness always fails at the point $t=0$.)
\end{enumerate}
\end{prop}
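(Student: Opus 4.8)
\textbf{Proof proposal for Proposition~\ref{prop:t3}.}

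The plan is as follows. For part (1), I would exhibit, for each fixed $t\in\R$, an explicit stabilizing $1$-form for $\om_t=d\lambda+t\beta$ on $S^*T^2\cong T^2\times S^1$ with coordinates $(x,y,\phi)$ and $\beta=dx\wedge dy$. Since $\om_t$ is a Hamiltonian structure (one should first check $\om_t^1=\om_t$ is nowhere zero, i.e.\ maximal rank in dimension $3$ — this is immediate because $d\lambda$ is a volume form and $t\beta$ has rank $2$ but lies in the ``horizontal'' directions, so a short computation shows $\om_t$ is nondegenerate for every $t$), its kernel is spanned by some vector field $R_t$; I would compute $R_t$ directly from $i_{R_t}\om_t=0$ and then look for $\lambda_t$ with $\lambda_t(R_t)=1$ and $i_{R_t}d\lambda_t=0$. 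The natural guess is a $1$-form built from $\lambda$ and the ``angular'' combination $\cos\phi\,dx+\sin\phi\,dy$ appearing in the proof of Lemma~\ref{jonathan}; because $S^*T^2=\Gamma\backslash PSL(2,\R)$ is homogeneous and the relevant dynamics is the geodesic (horocycle-adjacent) flow, each individual $\om_t$ should be stabilizable by the geodesibility criterion (Theorem~\ref{thm:Wad}), the Reeb flow of the perturbed structure being a reparametrized linear flow on the invariant tori $\{y=\mathrm{const}\}$ (in the notation $\phi^\tau_R(x,y)=(x+\tau y,y)$ of Lemma~\ref{jonathan}, suitably adapted). Alternatively, since $S^*T^2$ is a $T^2$-bundle (even a $T^2$-invariant situation in appropriate coordinates), one can invoke the $T^2$-invariant stabilization results of Section~\ref{subsec:t2inv} (Proposition~\ref{prop:t2inv-stab} or Corollary~\ref{cor:stablin}) once one checks that the slope function of $\ker\om_t$ is nonconstant for $t\neq 0$ and constant for $t=0$.

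For part (2), the obstruction is exactly the linearized condition from Section~\ref{subsec:linstab}. Suppose a smooth family $\lambda_t$ of stabilizing $1$-forms exists on a neighbourhood of $t=0$. Then $\{\om_t\}$ is linearly stabilizable at $t=0$, so by the discussion following Lemma~\ref{reform} (the contact case, $\om_0=d\lambda$), condition~\eqref{obst} must hold: there is a closed $2$-form $\theta$ with $[\theta]=[\dot\om_0]=[\beta]\in H^2(S^*T^2;\R)$ and $i_R\theta=0$. But by Lemma~\ref{jonathan}, $\im(\kappa)=\ker(i^*:H^2(S^*T^2)\to H^2(T^2))$, and $i^*[\beta]=[\beta]\neq 0$ in $H^2(T^2)$ since $\beta$ is an area form on $T^2$. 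This contradicts $[\dot\om_0]\in\im(\kappa)$, so no smooth family can exist, and the failure is precisely at $t=0$. (For $t$ in any interval not containing $0$, the structures are genuine — the slope function of $\ker\om_t$ is nonconstant — so one expects local smooth families to exist there; this is consistent with the ``more precisely'' clause. One could make this remark rigorous using Theorem~\ref{thm1} or Proposition~\ref{prop:stabhom}, but it is not needed for the statement.)

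The main obstacle I anticipate is the bookkeeping in part (1): writing down $\om_t$ in coordinates, computing $\ker\om_t$, and either producing the explicit $\lambda_t$ or verifying the hypotheses needed to quote the $T^2$-invariant stabilization machinery — in particular checking nondegeneracy of $\om_t$ for all $t$ and identifying the relevant invariant tori and slope function. Part (2) is short once Lemma~\ref{jonathan} and the linearization of Section~\ref{subsec:linstab} are in hand; the only subtlety is to phrase the ``smoothness fails exactly at $t=0$'' assertion carefully, noting that the argument rules out smoothness on any interval containing $0$ and that linear stabilizability genuinely holds for all $t\neq 0$ by the genuineness of those structures.
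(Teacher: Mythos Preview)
Your argument for part (ii) has a genuine gap. The proposition explicitly allows the hypothetical smooth family $\{\lambda_t\}$ to have \emph{arbitrary} $\lambda_0$, not $\lambda_0=\lambda$; the paper stresses this point. The linearized obstruction~\eqref{obst2} from Section~\ref{subsec:pert-contact} applies only when the SHS $(\om_0,\lambda_0)$ is positive or negative contact, i.e.\ when $f_0:=d\lambda_0/\om_0$ is a nonzero constant. You jump directly to ``the contact case, $\om_0=d\lambda$'', but $\om_0=d\lambda$ says nothing about $d\lambda_0$. One must first argue that if $f_0$ is constant then $f_0\neq 0$ (otherwise $\lambda_0$ is closed, defining a taut foliation whose leaves carry the exact area form $d\lambda$, which is impossible), and then, crucially, dispose of the case where $f_0$ is \emph{nonconstant}. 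The paper handles this second case by a dynamical argument: for $t\neq 0$ the kernel of $\om_t$ is spanned by $\cos\phi\,\partial_x+\sin\phi\,\partial_y-t\,\partial_\phi$, whose orbits project to circles in $T^2$ and hence are all closed, so $\ker\om_t$ has no irrational invariant tori; but if $f_0$ were nonconstant then $(d\lambda,\lambda_0)$ would satisfy the hypotheses of Theorem~\ref{thm1} on the integrable region $(S^1\setminus\{\phi_*\})\times T^2$ (where the slope function $S_0(\phi)=\phi$ is nowhere constant), forcing irrational invariant tori to persist for nearby $\om_t$ --- a contradiction. This entire case analysis is missing from your proposal.

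For part (i), your approach is more complicated than needed and contains some misconceptions: $S^*T^2\cong T^3$ is not of the form $\Gamma\backslash PSL(2,\R)$, and for $t\neq 0$ the Reeb flow is \emph{not} tangent to the tori $\{\phi=\mathrm{const}\}$ (it has a $-t\,\partial_\phi$ component), so the $T^2$-invariant machinery of Section~\ref{subsec:t2inv} does not apply in the way you suggest. The paper simply observes that for $t\neq 0$ the closed form $\mathrm{sign}(t)\,d\phi$ stabilizes $\om_t$, since $d\phi\wedge\om_t=t\,dx\wedge dy\wedge d\phi$; for $t=0$ one uses $\lambda$ itself.
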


Put simply: stabilizing forms exists, but they never form a smooth
family. We emphasize that in (ii) it is {\em not} assumed that
$\lambda_0=\lambda$. 

\begin{proof}
(i) To see stabilizability, note that $\om_0=d\lambda$ is stabilizable
because $\lambda$ is a contact form. For $t\ne 0$ consider the
connection form $\lambda_\st$ for the flat metric on the torus. In
coordinates $(x,y,\phi)$ where $x$ and $y$ are coordinates on 
$T^2$ and $\phi$ is the angular coordinate on the fibre the Liouville
form, the connection form and the standard area form on $T^2$ write
out as 
$$
   \lambda=\cos\phi dx\wedge+\sin\phi dy,\qquad
   \lambda_\st=d\phi,\qquad \beta=dx\wedge dy.
$$ 
It follows that $d\lambda_\st=0$ and $\lambda_\st\wedge
\om_t=t\,dx\wedge dy\wedge d\phi$, so the $1$-form
$sign(t)\lambda_\st$ defines a taut foliation and stabilizes $\om_t$
for $t\ne 0$. 

(ii) To see the second assertion, assume by contradiction there exists
a smooth family $\{\lambda_t\}_{t\in \R}$ stabilizing $\om_t$. We
distinguish two cases according to the proportionality coefficient
$f_0=d\lambda_0/\om_0$ for $t=0$.  

Case 1: $f_0$ is constant. 
Note that in this case $f_0\ne 0$: 
otherwise, the closed stabilizing form $\lambda_0$ would define a
foliation with the exact $2$-form 
$d\lambda$ restricting as an area form to the leaves, which is
impossible. Thus we are in the contact situation for $t=0$, and
therefore the existence of the smooth family $\{\lambda_t\}_{t\in \R}$
stabilizing $\{\om_t\}_{t\in \R}$ contradicts Lemma \ref{jonathan}. 

Case 2: $f_0$ is not constant. 
We will show that this case cannot occur. For this, we need some
dynamical information about the kernel foliation $\ker\om_t$ of
$\om_t$. Since 
$$
   \om_t=-\sin\phi\,d\phi\wedge dx+\cos\phi\,d\phi\wedge
   dy+t\,dx\wedge dy,
$$
the lift of $\ker\om_t$ from $S^1\times T^2$
to $S^1\times \R^2$ is generated by the vector field
$$
   X_t = \cos\,\phi\p_x+\sin\phi\,\p_y - t\,\p_\phi.
$$
It is easy to see that the projection of an orbit of $X_t$ to the
$(x,y)$-plane is a curve, parametrized by arc length, with the
property that its tangent vector is rotating with constant speed $t$.
This means that the projection of any orbit to the $(x,y)$-plane is a
circle. Thus the kernel foliation of $\om_t$ consists of closed
leaves. In particular, $\ker\om_t$ does not possess any
irrational invariant tori. But this contradicts Theorem \ref{thm1}
applied to $(d\lambda,\lambda_0)$. Namely, fix some $\phi_*\in
S^1$. Then  $(S^1\setminus \{\phi_*\})\times T^2\cong (0,2\pi)\times
T^2$ and $\ker\om_0$ descends to each torus $\{\phi\}\times T^2$ as a
linear foliation with slope function $S_0(\phi)=\phi$.  
In particular, $(0,2\pi)\times T^2$ is an integrable region on which
the slope function of $\ker\om_0$ is nowhere constant. The latter
implies that $f_0$ is $T^2$-invariant. Since by assumption $f_0$ is
nonconstant, we can apply Theorem \ref{thm1} and conclude the
existence of irrational invariant tori. This contradiction shows that
Case 2 does not occur and concludes the proof of Proposition~\ref{prop:t3}.  
\end{proof}

We have seen several examples of non-exact deformations which are not
stabilizable. Our next goal is to give an example of an {\em exact}
non-stabilizable deformation. 
For this we need some preparation from functional analysis.

\subsection{Preliminaries from analysis}\label{subsec:anal}

Let $V$ be a smooth real vector bundle over a compact manifold $M$
(possibly with boundary). We consider the space $\Gamma(V)$ 
of all smooth sections of $V$ as a Fr\'echet space and let $F$ be any
closed linear subspace of $\Gamma(V)$. Fix a base point $x_0\in F$ and
set    
$$
   \tilde F:=\{x\in C^{\infty}([0,1],F)\mid x(0)=x_0\}
$$ 
to be the smooth path space of $F$.
Our next goal is give the affine space $\tilde F$
the structure of an affine Fr\'echet space. 
Indeed, let $\pi:[0,1]\times M\rightarrow M$ be the natural projection.
Any path $x\in \tilde F$ can be considered as a section $s_x$ of the
pullback bundle $\pi^*V$  
as follows: for $t\in [0,1]$ and $p\in M$ we set 
$$
   s_x(t,p):=x(t)(p)\in\pi^*V.
$$ 
We identify the path space $\tilde F$ with the corresponding affine
subspace in the space $\Gamma(\pi^*V)$. Now the space of sections
$\Gamma(\pi^*V)$ is Fr\'echet and the subspace $\tilde F$ is closed 
because $F$ is closed in $\Gamma(V)$. Thus $\tilde F$ is itself an
affine Fr\'echet space. The following straightforward lemma will be
crucial for us.
 
\begin{lemma}
For each $T\in (0,1]$ the evaluation map 
$$
   ev_{T}\colon\tilde F\longrightarrow F,\qquad ev_{T}(x):=x(T)
$$
is open and continuous.
\label{ev}
\end{lemma}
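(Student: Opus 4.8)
The plan is to exhibit $\tilde F$ and $F$ as Fréchet spaces with explicit seminorm systems, show that $ev_T$ is a continuous linear (affine) map, and then derive openness from a direct retraction argument. First I would fix a countable family of seminorms $\{\|\cdot\|_j\}_{j\in\N}$ generating the topology of $\Gamma(V)$ (e.g.~$C^j$-norms with respect to a fixed connection and metric on $V$), which restrict to seminorms on the closed subspace $F$. For $\tilde F\subset\Gamma(\pi^*V)$ one gets a corresponding countable family of seminorms $\|\cdot\|_j'$ built from $C^j$-norms on $[0,1]\times M$. With these in hand, continuity of $ev_T$ is immediate: for $x\in\tilde F$ we have $\|x(T)\|_j = \|s_x(T,\cdot)\|_j \le \|s_x\|_j' = \|x\|_j'$, since restricting a section of $\pi^*V$ to the slice $\{T\}\times M$ can only decrease (or preserve) each $C^j$-norm. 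Hence $ev_T$ is continuous (and affine, being the restriction of a linear map shifted by the base point $x_0$).

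For openness, the key observation is that $ev_T$ admits a continuous right inverse (a continuous section). Concretely, pick a smooth function $\chi:[0,1]\to[0,1]$ with $\chi(0)=0$ and $\chi(T)=1$, and define $\Sigma:F\to\tilde F$ by $\Sigma(y)(t) := x_0 + \chi(t)\,(y - x_0)$. This is well-defined: $\Sigma(y)(0)=x_0$, $\Sigma(y)(T)=y$, and $t\mapsto x_0+\chi(t)(y-x_0)$ is a smooth path in $F$ since $F$ is a linear subspace. Moreover $\Sigma$ is continuous and affine, because the assignment $y\mapsto s_{\Sigma(y)}$, with $s_{\Sigma(y)}(t,p) = x_0(p)+\chi(t)(y(p)-x_0(p))$, satisfies an estimate of the form $\|s_{\Sigma(y)}-s_{\Sigma(y')}\|_j' \le C_j\,\|y-y'\|_j$, where $C_j$ depends only on $\|\chi\|_{C^j([0,1])}$ and combinatorial factors from the Leibniz rule applied to the product $\chi(t)\cdot(y-x_0)(p)$. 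Since $ev_T\circ\Sigma = \id_F$, the map $ev_T$ is a continuous surjection with a continuous section.

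Finally, a continuous map between topological spaces that has a continuous section is automatically open: if $U\subset\tilde F$ is open and $y\in ev_T(U)$, choose $x\in U$ with $ev_T(x)=y$; then $x - \Sigma(y)\in\tilde F - \tilde F$ lies in the underlying Fréchet space, and the map $z\mapsto \Sigma(z) + (x-\Sigma(y))$ is a continuous map $F\to\tilde F$ sending $y$ to $x$, so the preimage of $U$ under it is an open neighbourhood of $y$ contained in $ev_T(U)$. Hence $ev_T(U)$ is open, and $ev_T$ is an open map. The only mildly delicate point, and the one I would write out with a little care, is verifying the seminorm estimate for $\Sigma$ — i.e.~that multiplication by the fixed smooth scalar function $\chi(t)$ is a continuous operation on the relevant Fréchet spaces of sections — but this is a routine application of the Leibniz rule and poses no real obstacle.
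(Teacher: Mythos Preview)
Your proof is correct and takes a genuinely different route from the paper. The paper simply observes that $ev_T$ is a continuous surjective linear map between Fr\'echet spaces and invokes the open mapping theorem. You instead construct an explicit continuous affine section $\Sigma(y)(t)=x_0+\chi(t)(y-x_0)$ and use the linear structure to translate it, producing for each $x\in U$ a section $\phi_x(z)=\Sigma(z)+(x-\Sigma(y))$ through $x$; this yields openness by an entirely elementary argument. Your approach has the virtue of avoiding the (nontrivial) open mapping theorem and being constructive, while the paper's approach is a one-line citation.

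One small expository point: the sentence ``a continuous map between topological spaces that has a continuous section is automatically open'' is false as a general statement about topological spaces --- a single section only shows that $s^{-1}(U)\subset ev_T(U)$ is open, not that $ev_T(U)$ itself is. What makes your argument work is precisely the affine structure, which lets you translate the section so that it passes through \emph{any} chosen preimage $x$ of $y$. The argument you actually write out is correct; just rephrase the lead-in sentence to reflect that you are using the linear structure, not merely the existence of one section.
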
 

\begin{proof}
The map $ev_T$ is a surjective linear map between affine Fr\'echet
spaces. It 
is continuous because it is obtained by composing on the right with
the map $i_T$ sending $p\in M$ to $(T,p)\in [0,1]\times M$ as follows:
$ev_T(s_x)=s_x\circ i_T$. Finally, $ev_T$ is open by the open mapping
theorem. 
\end{proof}

Note that for any open and continuous map its restriction to any open 
subset of the domain remains open and continuous. We apply this to the
map $ev_T$ as follows: Let $E\subset F$ be any open subset with $x_0\in
E$. The set  
\begin{equation}
\tilde E:=\{x\in C^{\infty}([0,1],E)|x(0)=x_0\} 
\label{paths}
\end{equation}
is an open subset of $\tilde F$, so the restriction of $ev_T$ to
$\tilde E$ is open and continuous. 

Recall that a subset $B$ of a topological space $X$ is called {\em
  Baire set} (or of second category) if it contains a countable
intersection of open and dense sets.
Note that if we have a continuous open map $f\colon X\rightarrow Y$ 
between topological spaces and $B\subset Y$ is a Baire set, then
  $f^{-1}(B)$ is Baire in $X$.  
Indeed, the preimage of an open set is open because the map is
continuous, and the preimage of a dense set is dense because the map
is open, so the preimage of a countable intersection of open and dense
sets is a countable intersection of open and dense sets. We apply this
  to the map $ev_T$ and summarize our discussions as follows. 

\begin{lemma} Let $M$ be a closed manifold, $V\rightarrow M$ a smooth
real vector bundle, $F\subset \Gamma(V)$ a closed linear subspace,
$E\subset F$ an open subset, $x_0\in E$ a base point, $B\subset E$ a
Baire set, and $T\in (0,1]$ a point. 
Then for $\tilde E$ defined by \eqref{paths} and the evaluation map   
$ev_{T}\colon\tilde E\rightarrow E$ at $T$ the preimage $ev_T^{-1}(B)$
is Baire in $\tilde E$. 
\label{genbair}
\end{lemma}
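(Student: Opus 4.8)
The plan is to obtain Lemma~\ref{genbair} directly from Lemma~\ref{ev} together with the elementary observations about open continuous maps recorded immediately before the statement; no new ingredient is needed, and the only point requiring a little care is the interplay between openness of $ev_T$ and the shrinking of the target from $F$ to $E$.

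First I would check that $\tilde E$, as defined in~\eqref{paths}, is an \emph{open} subset of the affine Fr\'echet space $\tilde F$. Identifying a path $x$ with the section $s_x$ of $\pi^*V$ over $[0,1]\times M$ as in the discussion preceding Lemma~\ref{ev}, the assignment $x\mapsto\bigl(t\mapsto x(t)\bigr)$ defines a continuous map $\tilde F\to C\bigl([0,1],F\bigr)$. Since $E$ is open in $F$ and $[0,1]$ is compact, the set $\{c\in C([0,1],F)\mid c([0,1])\subset E\}$ is open in $C([0,1],F)$ (a compact subset of an open set in the metrizable space $F$ admits a uniform neighbourhood), so its preimage $\tilde E=\{x\in\tilde F\mid x([0,1])\subset E\}$ is open in $\tilde F$. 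Then, restricting the continuous open map $ev_T\colon\tilde F\to F$ of Lemma~\ref{ev} to the open subset $\tilde E$ yields a continuous open map $ev_T|_{\tilde E}\colon\tilde E\to F$; and since $ev_T(\tilde E)\subset E$ with $E$ open in $F$, this map is still continuous and open when regarded as a map $\tilde E\to E$ (the open sets of $E$ in the subspace topology being exactly the open sets of $F$ contained in $E$).

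Finally I would apply the general principle stated before the lemma to $f=ev_T|_{\tilde E}\colon\tilde E\to E$ and the Baire set $B\subset E$. Writing $B\supset\bigcap_{n}U_n$ with each $U_n$ open and dense in $E$, each preimage $ev_T^{-1}(U_n)$ is open in $\tilde E$ by continuity, and it is dense in $\tilde E$ by openness: for any nonempty open $W\subset\tilde E$ the set $ev_T(W)$ is nonempty and open in $E$, hence meets $U_n$, hence $W$ meets $ev_T^{-1}(U_n)$. Therefore $ev_T^{-1}(B)\supset\bigcap_n ev_T^{-1}(U_n)$ is a countable intersection of open dense subsets of $\tilde E$, i.e.\ $ev_T^{-1}(B)$ is Baire in $\tilde E$. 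There is no real obstacle here; the only step worth spelling out is the verification that $\tilde E$ is open and that corestricting $ev_T$ to $E$ preserves openness, after which the conclusion is purely formal.
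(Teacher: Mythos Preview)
Your proposal is correct and follows essentially the same approach as the paper: the paper presents Lemma~\ref{genbair} as a summary of the discussion preceding it, using Lemma~\ref{ev} together with the observations that $\tilde E$ is open in $\tilde F$, that restricting an open continuous map to an open subset preserves these properties, and that preimages of Baire sets under open continuous maps are Baire. You simply spell out these steps in more detail, including the (harmless) corestriction of the target from $F$ to the open subset $E$, which the paper leaves implicit.
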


The next step is to see that the space $\tilde E$ is a {\em Baire
space}, i.e.~any Baire subset is dense. This holds because $\tilde
E$ is an open subset of the affine Fr\'echet space $\tilde F$ and open
subsets of complete metrizable spaces have the Baire property. 

Now we fix a Baire set $B\subset E$ and define the following subset of
$\tilde E$:  
$$
   \tilde B:=\{x\in \tilde E\mid\exists\, t_n\to 0:x(t_n)\in B\}.
$$ 
Clearly, 
$$
   \tilde B\supset \bigcap_{n=1}^{\infty}\{x\in \tilde
   E\mid x\left(\frac{1}{n}\right)\in B\} =
   \bigcap_{n=1}^{\infty}ev_{\frac{1}{n}}^{-1}(B).
$$
Each set $ev^{-1}_{\frac{1}{n}}(B)$ is Baire by Lemma \ref{genbair}, 
so their intersection is Baire, thus $\tilde B$ is a Baire set. In
particular, $\tilde B$ is dense in $\tilde E$. 

\subsection{Exact deformations}\label{subsec:experturb}

We now apply the discussion of the previous subsection to Hamiltonian
structures on a closed oriented manifold $M$ as follows (using the
same notation):
\begin{itemize}
\item $V=\Lambda^2(M)$ is the bundle of exterior $2$-forms over $M$;
\item $F\subset\Gamma(V)=\Om^2(M)$ is the space of closed $2$-forms of
  a given cohomology class $\eta$;  
\item $E=\HS_\eta\subset F$ is the open subset of Hamiltonian structures of
  cohomology class $\eta$;  
\item the base point $x_0$ is a fixed Hamiltonian structure $\om_0$.
\end{itemize}

\begin{lemma}
The subset $B\subset \HS_\eta$ of Hamiltonian structures of
cohomology class $\eta$ that are {\em Morse}, i.e.~all periodic
orbits are nondegenerate, is a Baire set. 
\end{lemma}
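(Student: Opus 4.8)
The plan is to prove that the set $B$ of Morse Hamiltonian structures in $\HS_\eta$ is a countable intersection of open dense sets. First I would recall the meaning of Morse: all closed orbits of the kernel foliation $\ker(\om)$ (equivalently, of any generating vector field normalized by a stabilizing form, but the nondegeneracy condition is intrinsic to the line field) are nondegenerate, i.e.\ for a closed orbit $\gamma$ of period $T$ the linearized return map has no eigenvalue equal to $1$ on the normal bundle. The standard trick is to stratify by period: for $n\in\N$ let $B_n\subset\HS_\eta$ be the set of HS all of whose closed orbits of period $\le n$ are nondegenerate. Then $B=\bigcap_{n}B_n$, so it suffices to show each $B_n$ is open and dense in the $C^\infty_\loc$ (here $C^\infty$, since $M$ is closed) topology, and invoke that $\HS_\eta$ is a Baire space (it is an open subset of the closed affine subspace $F\subset\Om^2(M)$ of closed $2$-forms representing $\eta$, hence completely metrizable, as set up in Section~\ref{subsec:anal}).

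For openness of $B_n$, the key point is compactness: the closed orbits of period $\le n$ of a HS $\om$ form a compact set depending upper-semicontinuously on $\om$ (Arzel\`a--Ascoli applied to the flow of a smoothly varying generating vector field), and nondegeneracy is an open condition on the $1$-jet of the return map, which varies continuously with $\om$. A standard argument then shows that if all period-$\le n$ orbits of $\om_0$ are nondegenerate, the same holds for nearby $\om$, possibly after slightly enlarging $n$ to $n+\eps$; one has to be a little careful that new short orbits cannot suddenly appear, which again follows from the compactness/continuity of the return map near a nondegenerate orbit (the orbits persist and stay isolated). For density of $B_n$, I would use the parametric transversality / Sard--Smale machinery exactly as in the references already cited in the paper: Theorem~B.1 of~\cite{CF} (quoted in the proof of Theorem~\ref{thm:Morse-Bott}) states precisely that a generic $C^\infty$-small perturbation $\om+d\nu$ of a HS has all closed orbits nondegenerate. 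Applying this locally and patching shows every HS in $\HS_\eta$ can be $C^\infty$-approximated within $\HS_\eta$ by one in $B_n$ (in fact in $B$), so each $B_n$ is dense.

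Concretely I would argue: given $\om\in\HS_\eta$ and $\eps>0$, by~\cite{CF} there is a $C^\infty$-small exact $2$-form $d\nu$ with $\|d\nu\|<\eps$ such that $\om+d\nu$ is Morse; since $d\nu$ is exact, $\om+d\nu\in\HS_\eta$ for $\eps$ small (maximal rank is an open condition), proving density of $B=\bigcap B_n$ directly, and a fortiori of each $B_n$. Combined with openness of each $B_n$, the Baire category theorem in the complete metric space $\HS_\eta$ gives that $B=\bigcap_n B_n$ is a dense $G_\delta$, hence a Baire set in the sense of the paper.

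The main obstacle, and the step requiring the most care, is the openness of $B_n$ --- specifically ruling out the sudden birth of new closed orbits of period $\le n$ under a small perturbation when the original HS has degenerate orbits only of period slightly larger than $n$, or when a family of orbits (a Bott manifold) of period exactly $n$ is present. The clean fix is the classical one: choose $n$ to avoid the (countable, or at worst measure-zero) set of periods that are problematic, i.e.\ work with $B_n$ for $n$ in a cofinal set of ``good'' values, or phrase the stratification using half-open period windows; since we only need $B=\bigcap_{n} B_n$ with $B_n$ open dense, and the union of the excluded periods over all windows is harmless, this causes no real difficulty. Everything else is a routine application of Arzel\`a--Ascoli, the implicit function theorem for persistence of nondegenerate orbits, and the Sard--Smale theorem as packaged in~\cite{CF}.
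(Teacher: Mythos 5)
Your proof is correct and follows essentially the same route as the paper's: the decomposition $B=\bigcap_N B_N$ by period, openness of each $B_N$ via the Arzel\`a--Ascoli compactness argument, and density of $B$ (hence of each $B_N\supset B$) imported from Theorem B.1 of~\cite{CF}. The extra precautions you propose for openness (restricting to ``good'' period thresholds) are unnecessary, since the contrapositive argument you also sketch --- degenerate orbits of period $\le N$ for a convergent sequence $\om_k$ accumulate, after passing to a subsequence, onto a degenerate orbit of period $\le N$ of the limit --- already shows the complement of $B_N$ is closed.
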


\begin{proof}
Consider a HS $\om\in\HS_\eta$. As in Section~\ref{subsec:hyper}, we
realize $M$ as the hypersurface $\{0\}\times M$ in its symplectization 
\begin{equation*}
   \Bigl(X:=[-\varepsilon,\varepsilon]\times M,\ 
   \Om:=\om+td\lambda+dt\wedge \lambda\Bigr)
\end{equation*}
Nearby hypersurfaces $M'\subset X$ are graphs over $\{0\}\times M$, so
pulling back $\Om|_{M'}$ under the projection $M\to M'$ along $\R$
yields HS $\om'$ on $M$ cohomologous to $\om$. 
By Theorem B.1 in~\cite{CF}, there exist hypersurfaces $M'$
$C^\infty$-close to $\{0\}\times M$ for which $\Om|_{M'}$ is
Morse. This shows that the set $B$ is dense.

Let us fix a Riemannian metric on $M$ and parametrize all orbits of
$\ker(\om)$, for $\om\in\HS_\eta$, with unit speed. For $N\in\N$
denote by $B_N\subset\HS_\eta$ the set of HS of class $\eta$ for which 
all periodic orbits of period $\leq N$ are nondegenerate. A standard
argument shows that $B_N$ is open. (Consider a sequence $\om_k\notin
B_N$ converging to $\om\in\HS_\eta$; let $\gamma_k$ be degenerate
periodic orbits of $\om_k$ of period $\leq N$; by the Arzela-Ascoli
theorem a subsequence of $\gamma_k$ converges to a degenerate periodic
orbit of $\om$ of period $\leq N$; hence $\om\notin B_N$). Since $B\subset B_N$
and $B$ is dense, we get that $B_N$ is dense. Hence $B=\cap_{N\in\N}B_N$
is a Baire set. 
\end{proof}

We continue in the notation of the previous subsection. Thus
\begin{itemize}
\item $\tilde E$ is the space of smooth paths $\{\om_t\}_{t\in[0,1]}$
  in $\HS_\eta$ starting at $\om_0$;
\item $\tilde B\subset\tilde E$ is the subset of those paths for which
  there exists a sequence $t_n\to 0$ such that $\om_{t_n}$ is Morse. 
\end{itemize}
We will refer to $\tilde E$ as the {\em space of exact deformations of
  $\om_0$}. By the discussion in the previous subsection, $\tilde B$
is a Baire set. This is what we mean when we say ``a Hamiltonian
structure can be deformed slightly to make it Morse''. 

If the HS $\om_0$ is contact, i.e.~$\om_0=d\lambda_0$ for a contact
form $\lambda_0$, the above argument justifies the folk lemma that we
can always deform a contact form slightly to make it Morse. This is in
sharp contrast to the situation with general SHS: Theorem~\ref{thm3}
provides (``quite a lot of'') SHS which cannot even be
$C^2$-approximated by Morse SHS because any $C^2$-close SHS
must have rational invariant tori by Theorem~\ref{thm1}. In
particular, for a SHS $(\om_0,\lambda_0)$ as in Theorem~\ref{thm1}
the deformations in the Baire set $\tilde B$ cannot be
stabilized. This discussion shows

\begin{thm}\label{thm4}
Consider a $1$-dimensional foliation $\LL$ as in
Theorem~\ref{thm2}. Then for any HS $\om_0$ defining $\LL$ there exists
a Baire set $\tilde B$ in the space $\tilde E$ of exact deformations
of $\om_0$ that cannot be stabilized, no matter what stabilizing
$1$-form $\lambda_0$ we take for $\om_0$. 
\end{thm}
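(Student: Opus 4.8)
The plan is to assemble the theorem from the machinery already developed in Sections~\ref{sec:dim3} and~\ref{sec:pert}. Fix a $1$-dimensional foliation $\LL$ as in Theorem~\ref{thm2}, i.e.~$\LL$ has an integrable region $K$ on which the slope function is nowhere constant and twists, and let $\om_0$ be any HS defining $\LL$. The cohomology class $\eta:=[\om_0]$ is fixed, and by Theorem~\ref{thm2} any SHS $(\om,\lambda)$ defining $\LL$ satisfies the hypotheses of Theorem~\ref{thm1}; in particular, on the integrable region $K$ the proportionality coefficient $f=d\lambda/\om$ is nonconstant, and on a suitable sub-region the slope function is nonconstant. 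First I would pass to the space $\tilde E$ of exact deformations of $\om_0$ introduced in Section~\ref{subsec:experturb} and recall that the subset $\tilde B\subset\tilde E$ of deformations $\{\om_t\}_{t\in[0,1]}$ for which some sequence $t_n\to 0$ makes $\om_{t_n}$ Morse is a Baire set (this was established in Section~\ref{subsec:experturb} via the functional-analytic preliminaries of Section~\ref{subsec:anal}: $\HS_\eta$ is open in the closed affine subspace of closed $2$-forms of class $\eta$, the Morse locus $B$ is Baire by Theorem~B.1 of~\cite{CF}, and $\tilde B$ is Baire by Lemma~\ref{genbair} and the subsequent discussion).

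The core point is then that a deformation in $\tilde B$ cannot be stabilized, regardless of the choice of stabilizing $1$-form $\lambda_0$ for $\om_0$. Suppose, for contradiction, that $\{\om_t\}_{t\in[0,1]}\in\tilde B$ admits a smooth family $\{\lambda_t\}_{t\in[0,\delta)}$ of stabilizing $1$-forms with $\lambda_0$ an arbitrary stabilizing form for $\om_0$. Then $(\om_0,\lambda_0)$ is a SHS defining $\LL$, so by Theorem~\ref{thm2} it satisfies the hypotheses of Theorem~\ref{thm1}; in particular, the slope function of $\LL_0$ on the relevant integrable region is nonconstant. By Theorem~\ref{thm1}~(b) there exists $\delta_0>0$ such that every SHS $C^2$-close to $(\om_0,\lambda_0)$ has kernel foliation containing \emph{rational} (hence degenerate, by a $T^2$-family of closed orbits) invariant tori. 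But for $t$ small the SHS $(\om_t,\lambda_t)$ is $C^2$-close to $(\om_0,\lambda_0)$ (smoothness of the family gives $C^\infty$-convergence as $t\to 0$), so for small $t$ the HS $\om_t$ has degenerate periodic Reeb orbits and is therefore not Morse. This contradicts $\{\om_t\}\in\tilde B$, which forces $\om_{t_n}$ to be Morse along a sequence $t_n\to 0$. Hence no deformation in $\tilde B$ is stabilizable.

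Since the argument works for \emph{every} stabilizing $1$-form $\lambda_0$ — indeed, any such $\lambda_0$ produces a SHS defining $\LL$ to which Theorem~\ref{thm2}, and hence Theorem~\ref{thm1}~(b), applies — the Baire set $\tilde B$ is the desired set, and the theorem follows. The step I expect to be the main obstacle is the precise bookkeeping in the contradiction: one must check that $C^2$-closeness of $(\om_t,\lambda_t)$ to $(\om_0,\lambda_0)$ for small $t$ really does feed into Theorem~\ref{thm1}~(b) with a uniform $\delta_0$ independent of $t$, and that the persistence of rational invariant tori genuinely obstructs the Morse property (a $T^2$-family of closed orbits on a rational torus is never nondegenerate). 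Both of these are already contained in, or immediate from, Theorem~\ref{thm1} and the definition of Morse, so the proof is essentially a synthesis rather than new analysis; most of the real work was done in proving Theorems~\ref{thm1}, \ref{thm2}, and the Baire-category statements of Section~\ref{subsec:experturb}.
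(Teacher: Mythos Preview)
Your proposal is correct and follows essentially the same route as the paper: the paper's proof is in fact just the discussion immediately preceding the theorem statement, assembling the Baire set $\tilde B$ of deformations with Morse $\om_{t_n}$ along $t_n\to 0$ (from Section~\ref{subsec:experturb}) together with the persistence of rational invariant tori from Theorems~\ref{thm1} and~\ref{thm2}. You have spelled out the contradiction argument in more detail than the paper does, but the logical structure and the ingredients used are identical.
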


\section{Homotopies and cobordisms of stable Hamiltonian
  structures}\label{sec:hom}

A smooth homotopy of contact forms $(\lambda_t)_{t\in[0,1]}$ on a
closed manifold $M$ gives rise to a symplectic cobordism
$\bigr([0,1]\times M,d(e^{ct}\lambda_t)\bigr)$ from $(M,d\lambda)$ to
$(M,e^cd\lambda)$ for a sufficiently large constant $c>0$. The
corresponding statement for stable Hamiltonian structures fails (see
e.g.~Corollary~\ref{cor:exotic} below). In this section we investigate
the relation between stable homotopies and various generalized notions
of symplectic cobordism. We discuss obstructions to symplectic
cobordisms, as well as obstructions to stable homotopies arising from
Rabinowitz Floer homology and symplectic field theory. 

Recall that for all homotopies of HS $\om_t$ we assume that the
cohomology class of $\om_t$ is constant. 

\subsection{Various notions of cobordism}\label{ss:cob-def}

For a SHS $(\om,\lambda)$ with $\ker(\om)=\LL$ we introduce the
following notation: 
\begin{align*}
   \delta^+_{(\om,\lambda)} &:= \sup\{\tau>0\mid
   \ker(\om+t\,d\lambda)=\LL \text{ for all }t\in[0,\tau)\}, \cr
   \delta^-_{(\om,\lambda)} &:= \sup\{\tau>0\mid
   \ker(\om+t\,d\lambda)=\LL \text{ for all }t\in(-\tau,0]\}, \cr
   \delta_{(\om,\lambda)} &:=
   \min\{\delta^+_{(\om,\lambda)},\delta^-_{(\om,\lambda)}\}, \cr 
   I_{(\om,\lambda)} &:=
   (-\delta^-_{(\om,\lambda)},\delta^+_{(\om,\lambda)}), \cr
   C_{(\om,\lambda)} &:= \{\om+t\,d\lambda\mid t\in
   I_{(\om,\lambda)}\}. 
\end{align*}
Moreover, for a HS $\om$ we define
$$
   D_\om^+ := \{\hat\om\in \Omega^2(M)\mid
   d\hat\om=0,\;[\hat\om]=[\om]\in
   H^2(M),\;\ker\hat\om=\ker\om,\;\hat\lambda\wedge\hat\om^{n-1}>0\},
$$
where $\hat\lambda$ is any 1-form with $\hat\lambda|_{\ker(\om)}>0$. Note that
the definition of $D_{\om}^+$ only depends on the oriented kernel foliation
$\ker(\om)$ and the cohomology class of $\om$. Also note that
$C_{(\om,\lambda)}\subset D_{\om}^+$ for a SHS $(\om,\lambda)$. 

Now we turn to symplectic cobordisms. We will only consider cobordisms
that are {\em topologically trivial}, i.e.~of the form $[a,b]\times
M$. 

\begin{definition}\label{def:cob1}
A {\em (topologically trivial) symplectic cobordism} between HS
$\om_a$ and $\om_b$ on $M$ is a symplectic manifold $([a,b]\times M,\Om)$
such that $\Om|_{\{i\}\times M}=\om_i$ for $i=a,b$.  
A symplectic cobordism $([a,b]\times M, \Om)$ is called {\em trivial} if
$$
   \Om=\om+d\bigl(f(t)\lambda\bigr)
$$ 
for some SHS $(\om,\lambda)$ on $M$ and an increasing function
$f:[a,b]\to I_{(\om,\lambda)}$.   
A {\em homotopy of symplectic cobordisms} between $\om_a$ and $\om_b$
is a smooth family $\{\Om^s\}_{s\in [0,1]}$ of symplectic forms on
$[a,b]\times M$ with $\Om^s|_{\{i\}\times M}=\om_i$ for $i=a,b$ and
all $s$. 
\end{definition}

Note that if $([a,b]\times M, \Om)$ is a symplectic cobordism, then 
$\om_t:=\Om|_{\{t\}\times M}$ defines a homotopy of HS from $\om_a$ to
$\om_b$. Note that the $\om_t$, so there exists a smooth path of
1-forms $\mu_t$ with $\mu_0=0$ and $\om_t=\om_a+d\mu_t$. In the
following {\em all homotopies will come with a chosen smooth path of
  primitives $\mu_t$}.  
Conversely, a homotopy of HS $\om_t=\om_a+d\mu_t$ induces
a closed 2-form 
$$
   \Om := \om_a + d\mu_t + dt\wedge\dot\mu_t
$$
on $[a,b]\times M$ which is symplectic iff
\begin{equation}\label{eq:pos}
   \dot\mu_t|_{\ker(\om_t)} > 0. 
\end{equation}
The above definition of symplectic cobordism is too rigid. For
example, a HS is not cobordant to itself with this
definition. Therefore, we now introduce two more flexible notions. 

\begin{definition}
A {\em strong symplectic cobordism} between SHS
$(\om_a,\lambda_a)$ and $(\om_b,\lambda_b)$ on $M$ is a symplectic
manifold $([a,b]\times M,\Om)$ such that $\Om|_{\{i\}\times M}\in
C_{(\om_i,\lambda_i)}$ for $i=a,b$.  
A {\em weak symplectic cobordism} between HS
$\om_a$ and $\om_b$ on $M$ is a symplectic manifold $([a,b]\times M,\Om)$
such that $\Om|_{\{i\}\times M}\in D^+_{\om_i}$ for $i=a,b$. 
Two SHS $(\om_a,\lambda_a)$ and $(\om_b,\lambda_b)$ are called {\em
  (strongly resp.~weakly) bicobordant} if they are (strongly
resp.~weakly)  cobordant both ways. Homotopies of strong resp.~weak
cobordisms are defined analogously to Definition~\ref{def:cob1}. 
\end{definition}

Note that a strong cobordism is in particular a weak cobordism, but
not the other way around. Also note that a trivial cobordism is in
particular a strong cobordism from $(\om,\lambda)$ to itself and such
trivial cobordisms exist for all SHS. Finally we point out that, while
cobordisms in the sense of Definition~\ref{def:cob1} can obviously be
composed, this is not true for strong or weak cobordisms. We will
discuss this in Section~\ref{subsec:largeh}.

\subsection{Short homotopies and strong
  cobordisms}\label{subsec:shorth}

Consider now a homotopy of {\em stable} HS
$(\om_t=\om_a+d\mu_t,\lambda_t)$ with Reeb vector fields $R_t$. We
try to build a symplectic form on $[a,b]\times M$ by 
$$
   \Om := \om_a + d\nu_t + dt\wedge\dot\nu_t,\qquad \nu_t:= \mu_t +
   f(t)\lambda_t 
$$
for some smooth function $f:[a,b]\to\R$. The form $\Om$ is symplectic
iff $\om_t+f(t)d\lambda_t$ is maximally nondegenerate for all $t$ and $\nu_t$
satisfies~\eqref{eq:pos}. The first condition holds if $|f(t)| <
\delta_{(\om_t,\lambda_t)}$ (as defined in Section~\ref{ss:cob-def}). 
Condition~\eqref{eq:pos} for $\nu_t$ is equivalent to 
$$
   (\dot\mu_t + \dot f(t)\lambda_t + f(t)\dot\lambda_t)(R_t)>0. 
$$
We associate to the homotopy
$\gamma:=\{\om_t=\om_a+d\mu_t,\lambda_t\}$ the following quantities:
$$
   \delta:=\delta(\gamma) := \min_t\delta_{(\om_t,\lambda_t)},\quad 
   A:=A(\gamma) := \max_t|\dot\lambda_t(R_t)|,\quad 
   B:=B(\gamma) := \max_t|\dot\mu_t(R_t)|.
$$
Then symplecticity of $\Om$ is implied by
$$
   |f(t)|<\delta,\qquad \dot f(t) > A |f(t)| + B. 
$$
These conditions are solved by the linear function 
$$
   f(t) = (A\delta+B+(b-a)^{-1}\delta)(t-a/2-b/2)
$$ 
provided that
$$
   (A\delta+B)(b-a)<\delta. 
$$
Define the {\em length} of the homotopy $\gamma$ as 
$$
   L(\gamma) := (A+B/\delta)(b-a). 
$$
Then we have shown

\begin{lemma}
To every homotopy $\gamma:=\{(\om_t,\lambda_t)\}_{t\in [a,b]}$ of
length $L(\gamma)<1$ we can canonically associate a symplectic
cobordism  
$$
   C(\gamma):=([a,b]\times M,\Om)
$$ 
such that for all $t\in [a,b]$ we have $\Om\bigl|_{\{t\}\times M}\in
C_{(\om_t,\lambda_t)}$. If $\gamma$ is a constant homotopy, then  
$C(\gamma)$ is a trivial cobordism and if $\{\gamma^s\}_{s\in[0,1]}$ is a 
path of homotopies, then $C(\gamma^s)$ is a homotopy of cobordisms.
\label{basicshort}
\end{lemma}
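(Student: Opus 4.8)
The plan is to check that the $2$-form $\Om$ assembled in the discussion immediately preceding the lemma has all the required properties; essentially all the analytic work is already there, so what remains is bookkeeping. Concretely, given $\gamma=\{(\om_t=\om_a+d\mu_t,\lambda_t)\}_{t\in[a,b]}$ with $L(\gamma)<1$, I would fix $\nu_t:=\mu_t+f(t)\lambda_t$ with $f$ the explicit affine function displayed above (its slope and size determined by $A(\gamma),B(\gamma),\delta(\gamma)$ and $[a,b]$), and put
$$
   \Om := \pi^*\om_a + d\nu_t + dt\wedge\dot\nu_t ,
$$
where $\pi\colon[a,b]\times M\to M$ is the projection and $d$ the fibrewise differential. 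Closedness is immediate: regarding $\nu_t$ as a $1$-form on $[a,b]\times M$ with no $dt$-component, $\Om=\pi^*\om_a+d\nu_t$ with $d$ now the full exterior derivative, i.e.\ a closed pullback plus an exact form. I would then compute the restriction to a slice, $\Om|_{\{t\}\times M}=\om_t+f(t)\,d\lambda_t$, and observe that $L(\gamma)<1$ forces $|f(t)|<\delta(\gamma)\le\delta_{(\om_t,\lambda_t)}$ for every $t$; hence $\om_t+f(t)\,d\lambda_t$ has the same kernel foliation as $\om_t$ and $f(t)\in I_{(\om_t,\lambda_t)}$, so indeed $\Om|_{\{t\}\times M}\in C_{(\om_t,\lambda_t)}$.

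The one point where the length hypothesis is really used is nondegeneracy of $\Om$. Since $\sigma_t:=\om_t+f(t)\,d\lambda_t$ is maximally nondegenerate on $M$ with one-dimensional kernel $\R R_t$ (here $R_t$ is the Reeb field of $(\om_t,\lambda_t)$, and this step needs $|f(t)|<\delta(\gamma)$), one has $\sigma_t^{n-1}$ nowhere zero with $i_{R_t}\sigma_t^{n-1}=0$, so $\Om^n=n\,dt\wedge\dot\nu_t\wedge\sigma_t^{n-1}$, which is nonzero precisely when $\dot\nu_t(R_t)\ne0$. But by the choice of $f$,
$$
   \dot\nu_t(R_t)=\dot\mu_t(R_t)+\dot f(t)+f(t)\,\dot\lambda_t(R_t)\ge\dot f(t)-B(\gamma)-\delta(\gamma)A(\gamma)=(b-a)^{-1}\delta(\gamma)>0 .
$$
Canonicity of $\gamma\mapsto C(\gamma)$ is then clear, since $A(\gamma),B(\gamma),\delta(\gamma)$, and hence $f$, are determined by $\gamma$ and the interval $[a,b]$.

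Finally I would dispatch the two special cases. For the constant homotopy at $(\om,\lambda)$ (so $\mu_t\equiv0$) one has $A=B=0$, $f$ strictly increasing with image $[-\tfrac{1}{2}\delta(\gamma),\tfrac{1}{2}\delta(\gamma)]\subset I_{(\om,\lambda)}$, and $\Om=\om+d\bigl(f(t)\lambda\bigr)$, which is exactly the normal form of a trivial cobordism in Definition~\ref{def:cob1}. For a smooth path $\{\gamma^s\}_{s\in[0,1]}$ of homotopies I would note that $\Om^s$ is given by the same formula, so it suffices that all its ingredients depend smoothly on $s$; the only nuisance is that $\delta(\gamma^s)$, being a supremum, may fail to be smooth, which I would circumvent by replacing it in the formula for $f$ by a smooth positive function of $s$ bounding it from below (such a function exists whenever $L(\gamma^s)<1$ throughout), after which the family is manifestly smooth. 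I do not anticipate a genuine obstacle: the substantive content is the length estimate preceding the lemma, and the rest is the verification just outlined, the only mildly delicate point being this last smoothness-in-families issue.
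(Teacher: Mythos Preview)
Your proposal is correct and follows exactly the paper's approach: the paper's proof is the discussion immediately preceding the lemma (the lemma is stated as ``Then we have shown\ldots''), and you are simply filling in the verifications that the paper leaves implicit. Your observation about the potential non-smoothness of $\delta(\gamma^s)$ in families is a valid technical point that the paper glosses over; your proposed fix (replacing it by a smooth positive lower bound) is the natural remedy.
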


\begin{example}\label{ex:length}
If $[a,b]=[0,1]$ and $\om_t=\om$ is independent of $t$ we have $B=0$ and
$L(\gamma)=A$. Moreover, in this case the stabilizing 1-forms
$\lambda_0$ and $\lambda_1$ can be connected by a linear homotopy
$\lambda_t=(1-t)\lambda_0+t\lambda_1$. A short computation yields
$$
   R_t = \frac{R_0}{1-t+t\lambda_1(R_0)},\qquad
   \dot\lambda_t(R_t)=\frac{\lambda_1(R_0)-1}{1-t+t\lambda_1(R_0)}.
$$
So $L(\gamma)$ is small whenever $\lambda_1$ is $C^0$-close to
$\lambda_0$. 
\end{example}

Let us collect the dependence of the length on certain natural
operations on homotopies. 

(Reparametrization). 
Given a homotopy $\gamma=\{\gamma_t\}_{t\in[a,b]}$ and a smooth
function $\tau:[a',b']\to [a,b]$ consider the reparametrized homotopy
$\gamma\circ\tau=\{\gamma_{\tau(t)}\}_{t\in[a',b']}$. With
$K:=\max_t|\dot\tau(t)|$ the constants change as follows:
\begin{gather*}
   A(\gamma\circ\tau)\leq K\,A(\gamma),\qquad 
   B(\gamma\circ\tau)\leq K\,B(\gamma),\cr 
   \delta(\gamma\circ\tau) = \delta(\gamma),\qquad 
   L(\gamma\circ\tau)\leq \frac{K(b'-a')}{(b-a)}L(\gamma). 
\end{gather*}
In particular, reparametrization by a nonconstant linear function
$\tau$ does not change the length. Moreover, we can reparametrize a
homotopy to make it constant near $a$ and $b$, or to turn a piecewise
smooth homotopy into a smooth one, with arbitrarily small change in
length. 

(Rescaling). 
Replacing the stabilizing 1-forms $\lambda_t$ by $c\lambda_t$ for a
constant $t$ has the following effect on the constants:
$$
   R_t\mapsto R_t/c,\qquad A\mapsto A,\qquad B\mapsto B/c,\qquad
   \delta\mapsto \delta/c,\qquad L\mapsto L. 
$$
Replacing the 2-forms $\om_t$ by $c\om_t$ for a
constant $t$ has the following effect on the constants:
$$
   \mu_t\mapsto c\mu_t,\qquad A\mapsto A,\qquad B\mapsto cB,\qquad
   \delta\mapsto c\delta,\qquad L\mapsto L. 
$$
In particular, the length is invariant under both rescalings. 

(Concatenation). 
Two homotopies $\gamma=\{\gamma_t\}_{t\in[a,b]}$ and
$\gamma'=\{\gamma_t'\}_{t\in[b,c]}$ with $\gamma_b=\gamma_b'$ can be
concatenated in the obvious way to a homotopy $\gamma\#\gamma'$ on the
interval $[a,c]$. Then we have
\begin{gather*}
   A(\gamma\#\gamma') = \max\{A(\gamma),A(\gamma')\},\qquad
   B(\gamma\#\gamma') = \max\{B(\gamma),B(\gamma')\},\cr
   \delta(\gamma\#\gamma') =
   \min\{\delta(\gamma),\delta(\gamma')\},\qquad 
   L(\gamma\#\gamma') \geq L(\gamma)+L(\gamma'). 
\end{gather*}
(Restriction). Restricting a homotopy $\gamma$ to a subinterval
$[a',b']\subset[a,b]$ does not increase $A$ and $B$ and does not
decrease $\delta$, so
$$
    L(\gamma|_{[a',b']}) \leq \frac{b'-a'}{b-a}L(\gamma).  
$$
In particular, any homotopy $\gamma$ can be decomposed into homotopies
of arbitrarily small length. 

(Reversal).
For a homotopy $\gamma=\{\gamma_t\}_{t\in [a,b]}$ the reversed
homotopy $\gamma^{-1}=\{\gamma_{2b-t}\}_{t\in [b,2b-a]}$ retains the
same constants and in particular the same length. The homotopy
$\gamma\#\gamma^{-1}$, which runs from $\gamma_1$ to $\gamma_b$ and
back to $\gamma_a$ over the interval $[a,2b-a]$, has length 
$$
   L(\gamma\#\gamma^{-1}) = 2 L(\gamma). 
$$

We see that that for concatenation there is no triangle type
inequality in general. Nevertheless, the following lemma provides an
upper bound on $L(\gamma\#\gamma')$ in terms of $L(\gamma)$ and
$L(\gamma')$ for certain concatenations.
For a stable homotopy \\ $\gamma=\{\om_t=\om_0+d\mu_t,\lambda_t\}_{t\in[0,1]}$ 
we define the quantity
$$
   \|\gamma\|_{C^1} :=
   \max_{t\in[0,1]}(\|\dot\mu_t\|_{C^1}+\|\dot\lambda_t\|_{C^1}). 
$$
We say that $\gamma$ is {\em $C^1$-small} if $\|\gamma\|_{C^1}$ is
small. 

\begin{lemma}\label{lem:concat}
For every SHS $(\om,\lambda)$ and every $\eps>0$ there exists $\rho>0$
with the following property. Whenever
$\gamma=\{\om_t,\lambda_t\}_{t\in[-1,0]}$ is a stable homotopy ending
at $(\om,\lambda)$ and $\gamma'=\{\om_t,\lambda_t\}_{t\in[0,1]}$ is a
stable homotopy starting at $(\om,\lambda)$ such that
$\|\gamma'\|_{C^1}<\rho$, then $L(\gamma\#\gamma')\leq
L(\gamma)+\eps$.  
\end{lemma}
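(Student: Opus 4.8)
The statement is a quantitative continuity/stability result for the length functional $L$ under concatenation. Recall $L(\gamma) = \bigl(A(\gamma)+B(\gamma)/\delta(\gamma)\bigr)(b-a)$ and that concatenation satisfies $A(\gamma\#\gamma') = \max\{A(\gamma),A(\gamma')\}$, $B(\gamma\#\gamma') = \max\{B(\gamma),B(\gamma')\}$, $\delta(\gamma\#\gamma') = \min\{\delta(\gamma),\delta(\gamma')\}$, while the time interval for $\gamma\#\gamma'$ has total length $2$ (it runs over $[-1,1]$). The idea is: if $\gamma'$ is $C^1$-small, then $A(\gamma')$ and $B(\gamma')$ are small, and $\delta(\gamma')$ is bounded below, so the ``$\max$'' and ``$\min$'' in the concatenation formulas are governed by $\gamma$ rather than $\gamma'$; after reparametrizing back to a unit-length interval the contribution of $\gamma'$ to $L(\gamma\#\gamma')$ becomes negligible.

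More precisely, first I would fix $(\om,\lambda)$ and note that $\delta_{(\om,\lambda)}>0$; by the (already established, via the definition of $\delta$ as a supremum over an open condition) continuity/lower-semicontinuity of $(\om',\lambda')\mapsto \delta_{(\om',\lambda')}$ in the $C^1$-topology, there is a $C^1$-neighbourhood of $(\om,\lambda)$ on which $\delta_{(\om',\lambda')}\geq \delta_0 := \tfrac12\delta_{(\om,\lambda)}$. Shrinking $\rho$ if necessary, any $\gamma'$ with $\|\gamma'\|_{C^1}<\rho$ stays in that neighbourhood, so $\delta(\gamma')\geq\delta_0$. Also $A(\gamma') = \max_t|\dot\lambda_t(R_t)|$ and $B(\gamma') = \max_t|\dot\mu_t(R_t)|$; since $R_t$ is controlled by $\lambda_t$ (normalized by $\lambda_t(R_t)=1$ with $i_{R_t}\om_t=0$) and stays in a bounded set for $C^1$-small $\gamma'$, both $A(\gamma')$ and $B(\gamma')$ are $O(\|\gamma'\|_{C^1})$, hence $<\eta$ for any prescribed $\eta$ once $\rho$ is small enough.

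Next I would deal with the interval length. As stated, $L(\gamma\#\gamma')$ refers to the concatenated homotopy on $[-1,1]$, an interval of length $2$; but by the reparametrization property, rescaling the parameter by a nonconstant linear map does not change the length, so I may reparametrize $\gamma\#\gamma'$ back onto $[0,1]$ without affecting $L$. (Alternatively, and more transparently, I can reparametrize $\gamma$ onto $[0,1-s]$ and $\gamma'$ onto $[1-s,1]$ for small $s>0$ to be chosen; reparametrization of $\gamma$ by a linear factor $\tfrac{1-s}{b-a}$ scales its length by $1-s<1$, so $L$ of the reparametrized $\gamma$ is at most $L(\gamma)$.) Then on the $\gamma'$-part, which now occupies an interval of length $s$, one has $A\leq\eta$, $B\leq\eta$, $\delta\geq\delta_0$, so its length contribution is at most $(\eta+\eta/\delta_0)s$. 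Choosing first $s$ (say $s=1/2$) and then $\eta$ small enough that $(\eta+\eta/\delta_0)s<\eps$, we get $L(\gamma\#\gamma')\leq L(\gamma) + \eps$. One must check that the concatenation formulas $A(\gamma\#\gamma') = \max$, etc., still apply after these reparametrizations — they do, since those formulas are pointwise in $t$ and reparametrization only changes constants by the derivative of $\tau$, which for a piecewise-linear $\tau$ is controlled on each piece separately.

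The main obstacle — and the only genuinely delicate point — is the careful bookkeeping of how $L$ behaves under a \emph{piecewise} linear reparametrization that compresses $\gamma$ into $[0,1-s]$ and $\gamma'$ into $[1-s,1]$: one has to verify that on the $\gamma$-piece the factor $K=\tfrac{1-s}{b-a}<1$ really does multiply $A$ and $B$ (and hence the per-piece length) by something $\leq 1$, while on the $\gamma'$-piece the smallness of $A(\gamma'),B(\gamma')$ dominates the (bounded) reparametrization factor $\tfrac{s}{1}$ divided by the original interval length of $\gamma'$. Since the original $\gamma'$ sits on $[0,1]$, that factor is just $s$, and everything closes up. The rest is the continuity statement for $\delta$ and the estimates $A(\gamma'),B(\gamma') = O(\|\gamma'\|_{C^1})$, which follow directly from the definitions together with the implicit-function-theorem-type continuity of $R_t$ in $(\om_t,\lambda_t)$ already used repeatedly in the paper (e.g.\ in Theorem~\ref{thm:integr}).
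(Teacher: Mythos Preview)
Your first two paragraphs reproduce the paper's argument almost verbatim: lower semicontinuity of $(\om',\lambda')\mapsto\delta_{(\om',\lambda')}$ in the $C^0\times C^1$ topology gives $\delta(\gamma')\geq\delta_{(\om,\lambda)}-\eps/2\geq\delta(\gamma)-\eps/2$, and $A(\gamma'),B(\gamma')=O(\|\gamma'\|_{C^1})$; the paper then writes $A(\gamma\#\gamma')\leq A(\gamma)+\rho$, $B(\gamma\#\gamma')\leq B(\gamma)+\rho$ and simply asserts that ``these estimates combine to $L(\gamma\#\gamma')\leq L(\gamma)+\eps$''. So on the core estimates you and the paper agree.

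The trouble is your third paragraph, where you try to dispose of the interval-length factor $(b-a)=2$. Linear reparametrization of $\gamma\#\gamma'$ from $[-1,1]$ to $[0,1]$ leaves $L$ unchanged, so that first move buys nothing. Your piecewise-linear alternative (squeeze $\gamma$ into $[0,1-s]$ and $\gamma'$ into $[1-s,1]$) produces a \emph{different} homotopy, and bounding its length does not bound $L(\gamma\#\gamma')$ as defined under (Concatenation). Your bookkeeping there also slips: linear reparametrization of $\gamma$ does not scale its length by $1-s$ (length is invariant), the $A,B$ of the reparametrized $\gamma'$ pick up a factor $1/s$, and the length of a concatenation is computed via the $\max$ of the pieces' (rescaled) $A,B$ over the whole interval, not by summing per-piece contributions. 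In fact the literal statement fails: take $\gamma'$ constant, so $\|\gamma'\|_{C^1}=0$; then $A(\gamma\#\gamma')=A(\gamma)$, $B(\gamma\#\gamma')=B(\gamma)$, $\delta(\gamma\#\gamma')=\delta(\gamma)$, and $L(\gamma\#\gamma')=2L(\gamma)$, which violates $L(\gamma\#\gamma')\leq L(\gamma)+\eps$ for every $\eps<L(\gamma)$. The paper's one-line conclusion glosses over exactly this point. What is true, and what the application in Corollary~\ref{cor:Morse-Bott} actually needs, is either the weaker bound $L(\gamma\#\gamma')\leq 2L(\gamma)+\eps$, or the version in which one first reparametrizes $\gamma'$ onto an interval of length $s$ chosen small in terms of $L(\gamma)$ (so $\rho$ depends on $\gamma$); your piecewise argument essentially proves the latter once you are explicit that the bound is on the reparametrized concatenation, not on $\gamma\#\gamma'$ as literally defined.
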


\begin{proof}
It follows imediately from the definition that the assignment
$(\om,\lambda)\mapsto \delta_{(\om,\lambda)}$ is lower semi-continuous
with respect to the $C^0\times C^1$-topology, i.e.~for each SHS
$(\om,\lambda)$ and $\eps>0$ there exists a $C^0\times
C^1$-neighbourhood $V$ of $(\om,\lambda)$ such that for all 
$(\tilde\om,\tilde\lambda)\in V$ we have 
$$
   \delta_{(\tilde\om,\tilde\lambda)}\ge
   \delta_{(\om,\lambda)}-\eps/2.  
$$
Note that for each $t\in[0,1]$ we have
$\|\om_t-\om\|_{C^0}+\|\lambda_t-\lambda\|_{C^1}\leq\|\gamma'\|_{C^1}$, 
hence for $\rho>\|\gamma'\|_{C^1}$ sufficiently small we have
$(\om_t,\lambda_t)\in V$ and thus 
$$
   \delta(\gamma')\geq \delta_{(\om,\lambda)}-\eps/2 \geq
   \delta(\gamma)-\eps/2. 
$$
Moreover, by the (Concatenation) properties we have
$$
   A(\gamma\#\gamma')\leq A(\gamma)+A(\gamma')\leq A(\gamma)+\rho,
   \qquad  
   B(\gamma\#\gamma')\leq B(\gamma)+B(\gamma')\leq B(\gamma)+\rho. 
$$
These estimates combine to $L(\gamma\#\gamma')\leq L(\gamma)+\eps$ for
$\rho$ sufficiently small. 
\end{proof}

The main result of this subsection is 

\begin{proposition} 
Let $\gamma=\{\om_t,\lambda_t\}_{t\in [0,1]}$ be a homotopy of SHS's of
length $L(\gamma)<1/3$. Then there exists a symplectic cobordism
$([0,3]\times M,\Om)$ with the following properties:
\begin{itemize}
\item $\Om|_{\{t\}\times M}\in C_{(\om_{\tau(t)},\lambda_{\tau(t)})}$,
    where $\tau:[0,3]\to[0,1]$ is the function 
$$
   \tau(t) = \begin{cases}
      t &: t\in [0,1],\cr
      2-t &: t\in [1,2],\cr
      t-2 &: t\in [2,3].
   \end{cases}
$$
\item $([0,2]\times M,\Om)$ and $([1,3]\times M,\Om)$ are homotopic
to trivial cobordisms. 
\end{itemize}
\label{maincob}
\end{proposition}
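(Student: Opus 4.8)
The plan is to realise $\Om$ as the cobordism produced by Lemma~\ref{basicshort} from a single long stable homotopy over $[0,3]$: namely the \emph{composite} homotopy
$$\Gamma:=\{(\om_{\tau(t)},\lambda_{\tau(t)})\}_{t\in[0,3]},$$
which runs $\gamma$ forwards over $[0,1]$, backwards over $[1,2]$ and forwards again over $[2,3]$, so that $\Gamma=\gamma\#\gamma^{-1}\#\gamma$. After a preliminary reparametrisation of $\gamma$ making it constant near $t=0,1$ — which changes $A(\gamma),B(\gamma),\delta(\gamma)$, and hence $L(\gamma)$, by an arbitrarily small amount, so that still $L(\gamma)<1/3$ — the piecewise linearity of $\tau$ causes no loss of smoothness, its corners sitting over the constant locus of $\gamma$. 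By the (Reversal) and (Concatenation) properties listed above, $A(\Gamma)=A(\gamma)$, $B(\Gamma)=B(\gamma)$ and $\delta(\Gamma)=\delta(\gamma)$, so $L(\Gamma)=\bigl(A(\gamma)+B(\gamma)/\delta(\gamma)\bigr)\cdot 3=3L(\gamma)<1$. Lemma~\ref{basicshort} then applies and yields a symplectic cobordism $([0,3]\times M,\Om)$ with $\Om|_{\{t\}\times M}\in C_{(\om_{\tau(t)},\lambda_{\tau(t)})}$ for every $t$, which is the first bullet.

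For the second bullet, observe that $\Gamma|_{[0,2]}=\gamma\#\gamma^{-1}$ is a \emph{loop} of stable homotopies based at $(\om_0,\lambda_0)$, and $\Gamma|_{[1,3]}=\gamma^{-1}\#\gamma$ is a loop based at $(\om_1,\lambda_1)$. I would contract the first loop to the constant homotopy at $(\om_0,\lambda_0)$ through a smooth family $\{G_u\}_{u\in[0,1]}$ of stable homotopies over $[0,2]$, all based at $(\om_0,\lambda_0)$, constant near $t=0,2$, and of length uniformly $<1$ (indeed roughly $\le 2L(\gamma)<2/3$): concretely $G_u(t):=\gamma\bigl((1-u)\xi(t)\bigr)$ for a fixed smooth $\xi:[0,2]\to[0,1]$ equal to $0$ near $t=0,2$ and to $1$ near $t=1$, so that $G_0=\gamma\#\gamma^{-1}$ up to a harmless reparametrisation and $G_1\equiv(\om_0,\lambda_0)$. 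By the last sentence of Lemma~\ref{basicshort}, $\{C(G_u)\}_{u\in[0,1]}$ is then a homotopy of symplectic cobordisms, from $C(G_0)$ to the trivial cobordism $C(G_1)$. Finally $\Om|_{[0,2]\times M}$ and $C(G_0)$ are both obtained from the recipe of Lemma~\ref{basicshort} applied to the same homotopy (over $[0,3]$ and then restricted, respectively over $[0,2]$) and differ only in the choice of the increasing auxiliary function $f$; since the set of admissible increasing $f$ (those with $f(t)\in I_{(\om_{\tau(t)},\lambda_{\tau(t)})}$ and with $\dot\nu_t$ satisfying~\eqref{eq:pos}, conditions affine in $(f(t),\dot f(t))$) is convex, linear interpolation between them gives a homotopy of symplectic cobordisms connecting the two. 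Hence $([0,2]\times M,\Om)$ is homotopic to a trivial cobordism, and the argument for $([1,3]\times M,\Om)$ is identical with $(\om_1,\lambda_1)$ in place of $(\om_0,\lambda_0)$, contracting the loop $\gamma^{-1}\#\gamma$.

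I expect the main obstacle to be purely bookkeeping: keeping the composite homotopy and all the contracting families genuinely smooth while keeping every length below $1$. This is exactly why the hypothesis reads $L(\gamma)<1/3$ rather than $L(\gamma)<1$ — the factor $3$ is the length of the bundled interval $[0,3]$ over which Lemma~\ref{basicshort} is invoked, and the sub-loops of length $\le 2L(\gamma)$ must also stay admissible. A secondary subtlety is that the interpolation of auxiliary functions in the last step moves the two ends of the cobordism within $C_{(\om_0,\lambda_0)}$ (resp.\ $C_{(\om_1,\lambda_1)}$); if one wants a homotopy of symplectic cobordisms with genuinely fixed ends, one should instead fix once and for all a single increasing $f:[0,3]\to\R$ with $|f|<\delta(\gamma)$ and $\dot f(t)>|\dot\tau(t)|\bigl(A(\gamma)\delta(\gamma)+B(\gamma)\bigr)$ — possible because $L(\gamma)<1/3$ leaves room to take $\dot f\equiv c'$ with $2\bigl(A(\gamma)\delta(\gamma)+B(\gamma)\bigr)<c'<\delta(\gamma)$ on $[0,2]$ — build $\Om$ directly from it, and run the contraction $G_u$ with this same $f$ and with $(1-u)\xi$ constant in $t$ near $t=0,1,2$ for all $u$; then $C(G_u)|_{\{0\}\times M}=\om_0- c'\,d\lambda_0$ and $C(G_u)|_{\{2\}\times M}=\om_0+ c'\,d\lambda_0$ are independent of $u$ and $C(G_1)=\om_0+d\bigl(c'(t-1)\lambda_0\bigr)$ is visibly trivial.
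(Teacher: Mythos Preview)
Your proof is correct and follows essentially the same route as the paper: apply Lemma~\ref{basicshort} to the concatenation $\gamma\#\gamma^{-1}\#\gamma$ over $[0,3]$ (the paper writes this as $\gamma\circ\tau$ and uses the reparametrisation estimate with $|\tau'|\le 1$), and then contract the loops $\gamma\#\gamma^{-1}$ and $\gamma^{-1}\#\gamma$ via a one-parameter family (the paper uses $\tau^s(t)=s\tau(t)$, which is your $G_u$ up to relabelling). You are actually more careful than the paper on two points it glosses over: matching the restriction $\Om|_{[0,2]\times M}$ with the cobordism produced directly from Lemma~\ref{basicshort} (via convex interpolation of the auxiliary functions $f$), and keeping the endpoints of the homotopy of cobordisms genuinely fixed.
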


More colloquially, this means that short stable homotopies give rise to
strong bicobordisms whose compositions are defined and homotopic to
trivial cobordisms. 

\begin{proof}
Since $|\tau'|\leq 1$, the reparametrized homotopy $\gamma\circ\tau$
has length $L(\gamma\circ\tau)\leq 3L(\gamma) < 1$. Hence by
Lemma~\ref{basicshort} there exists a symplectic cobordism
$C(\gamma\circ\tau) = ([0,3]\times M,\Om)$ with $\Om|_{\{t\}\times
  M}\in C_{(\om_{\tau(t)},\lambda_{\tau(t)})}$. By the same lemma, 
the path of homotopies $\{\gamma\circ\tau^s\}_{s\in[0,2]}$ with fixed
endpoints, where $\tau^s(t)=s\tau(t)$, induces a homotopy
of cobordisms $C(\gamma\circ\tau^s)$ from $([0,2]\times M,\Om)$ to a
trivial cobordism. An analogous argument applies to $([1,3]\times
M,\Om)$. 
\end{proof}

\subsection{Large homotopies and weak cobordisms}\label{subsec:largeh} 

If we drop the shortness condition on the homotopy both
Lemma~\ref{basicshort} and Proposition~\ref{maincob} fail. 

\begin{prop}\label{prop:hom-weakcob}
On any closed oriented 3-manifold $M$ there exist a homotopy
$(\om_t,\lambda_t)_{t\in [0,1]}$ of SHS such that there exists no
strong symplectic cobordism from $(\om_0,\lambda_0)$ to $(\om_1,\lambda_1)$.
\end{prop}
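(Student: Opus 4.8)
The plan is to obstruct strong cobordisms by means of helicity (Section~\ref{ss:hel}), using a monotonicity principle in two forms. First, if $([a,b]\times M,\Om)$ is a symplectic cobordism between \emph{exact} Hamiltonian structures $\om_a,\om_b$, then $[\Om]=0$ in $H^2([a,b]\times M)$, so $\Om=d\beta$; the restriction $\beta|_{\{i\}\times M}$ is a primitive of $\Om|_{\{i\}\times M}$, and Stokes' theorem gives $\int_{[a,b]\times M}\Om\wedge\Om=\Hel(\om_b)-\Hel(\om_a)$. Since $\Om^2>0$ for the orientation $dt\wedge\lambda\wedge\om$ of $[a,b]\times M$ --- as one checks directly on the symplectization model~\eqref{trivcob} near the boundary --- this forces $\Hel(\om_b)>\Hel(\om_a)$: helicity strictly increases from the bottom to the top of a symplectic cobordism. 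Second, writing $\om=d\alpha$ one computes $\tfrac{d}{dt}\Hel(\om+t\,d\lambda)=2\int_M\lambda\wedge(\om+t\,d\lambda)=2\int_M(1+tf)\,\lambda\wedge\om$, where $f=d\lambda/\om$; since $1+tf$ is positive at $t=0$ and never vanishes for $t\in I_{(\om,\lambda)}$, helicity is strictly increasing along $C_{(\om,\lambda)}$. Applied to a positive contact SHS $(d\alpha,\alpha)$, where $f\equiv1$ and $\int_M\alpha\wedge d\alpha>0$, this gives $\Hel(\om+t\,d\lambda)=(1+t)^2\int_M\alpha\wedge d\alpha$, which runs through all of $(0,\infty)$; so \emph{every} element of $C_{(d\alpha,\alpha)}$ has positive helicity. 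Applied to a negative contact SHS $(d\alpha_-,-\alpha_-)$, where $f\equiv-1$ and $\int_M\alpha_-\wedge d\alpha_-<0$, it gives $(1-t)^2\int_M\alpha_-\wedge d\alpha_-$ with $t\in(-\infty,1)$, which runs through all of $(-\infty,0)$; so every element of $C_{(d\alpha_-,-\alpha_-)}$ has negative helicity.

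Granting this, it suffices to produce, on the given $M$, a positive contact form $\alpha_+$ and a negative contact form $\alpha_-$ such that the SHS $(d\alpha_+,\alpha_+)$ and $(d\alpha_-,-\alpha_-)$ are connected by a stable homotopy. Indeed, a strong symplectic cobordism from $(d\alpha_+,\alpha_+)$ to $(d\alpha_-,-\alpha_-)$ would in particular be a symplectic cobordism whose bottom $\tilde\om_0\in C_{(d\alpha_+,\alpha_+)}$ and top $\tilde\om_1\in C_{(d\alpha_-,-\alpha_-)}$ are both exact (they equal $(1+t)d\alpha_+$ and $(1-t)d\alpha_-$); the monotonicity above would then give $\Hel(\tilde\om_1)>\Hel(\tilde\om_0)$, whereas $\Hel(\tilde\om_1)<0<\Hel(\tilde\om_0)$ --- a contradiction. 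Hence no such strong cobordism exists, and the stable homotopy is the one required by the Proposition.

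The main obstacle is the construction of this stable homotopy, and this is where the real work lies. My plan is to fix an open book decomposition $(B,\pi)$ of $M$ (one exists by Alexander's theorem) and to arrange that \emph{both} $(d\alpha_+,\alpha_+)$ and $(d\alpha_-,-\alpha_-)$ are supported by $(B,\pi)$, in the sense of Definition~\ref{def:openbook}, with the \emph{same} signs at the binding components; since both represent the zero class in $H^2(M;\R)$, the uniqueness Theorem~\ref{obd2} then produces the desired stable homotopy. For $\alpha_+$ one takes a contact form positively supported by $(B,\pi)$, which exists by Thurston--Winkelnkemper and Giroux. The delicate point is to exhibit a \emph{negative} contact form $\alpha_-$ on $M$ whose associated SHS $(d\alpha_-,-\alpha_-)$ is supported by the same $(B,\pi)$ with matching binding signs: for this I would work on the orientation reversal $\overline M$, where $\ker\alpha_-$ becomes a positively supported contact structure for a suitable open book, and then carefully track how the orientations of the pages, of the binding, and of the Reeb direction transform when that open book is reinterpreted as an open book of $M$; an alternative is to build $(d\alpha_-,-\alpha_-)$ directly, mimicking the proof of Theorem~\ref{obd1} and using the standardization result Proposition~\ref{prop:obd} near the binding. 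Either way, verifying condition (OB2) and computing the binding signs $s_l$ is the heart of the argument.
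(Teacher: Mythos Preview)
Your helicity monotonicity argument is correct and is exactly the obstruction the paper uses. The approaches diverge in how to realize the obstruction. The paper does \emph{not} use positive versus negative contact structures: it starts from Proposition~\ref{prop:contact-region} to get a solid torus on which $(\om,\lambda)=(d\alpha_\st,\alpha_\st)$, performs a $T^2$-invariant homotopy of immersions $h_t:[0,1]\to\C$ enclosing positive signed area (so $\Hel(\om_1)<\Hel(\om_0)$), and then invokes Lemma~\ref{lem:hom-weakcom} to replace the stabilizing forms by ``exotic'' ones supported on an integrable region of constant slope, forcing $|\Hel(\hat\om)-\Hel(\om_i)|<\eps$ for every $\hat\om\in C_{(\om_i,\lambda_i)}$. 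Thus the paper narrows $C_{(\om_i,\lambda_i)}$ in helicity rather than arranging it to lie entirely on one side of $0$.

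Your route, while conceptually clean, has a genuine gap at the point you flag as ``the heart of the argument'': a negative contact form $\alpha_-$ with $(d\alpha_-,-\alpha_-)$ supported by $(B,\pi)$ and with \emph{all} binding signs $+$ simply does not exist. Indeed, for any exact $\om=d\alpha$ supported by $(B,\pi)$, Stokes on a page $\Sigma$ gives $\sum_l\int_{B_l}\alpha=\int_\Sigma\om>0$. For $(d\alpha_-,-\alpha_-)$ the Reeb vector field is $R=-R_{\alpha_-}$, so sign $+$ at $B_l$ means $\alpha_-(R)|_{B_l}=-1$, i.e.\ $\alpha_-|_{B_l}<0$; if all signs are $+$ this forces $\sum_l\int_{B_l}\alpha_-<0$, a contradiction. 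Since you took $\alpha_+$ \emph{positively} supported (all signs $+$), matching signs is impossible. One could try to salvage the idea with an open book having several binding components and mixed signs, but then you would need to construct a \emph{positive} contact form supported with some signs $-$ and a \emph{negative} one with the same mixed signs; neither is provided by Thurston--Winkelnkemper or Theorem~\ref{obd1} (which produces SHS, not contact forms), and this is substantially more than what you outlined.
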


We will discuss in this subsections two obstructions to symplectic
cobordisms. The proof of Proposition~\ref{prop:hom-weakcob} is based
on the 

{\bf First obstruction: helicity. }
Suppose that $\dim M=3$ and recall from Section~\ref{ss:hel} the
helicity $\Hel(\om)=\int_M\alpha\wedge\om$ of an exact 2-form
$\om=d\alpha$. Consider a symplectic cobordism $([0,1]\times M,\Om)$
with $\Om|_{\{i\}\times M}=\om_i=d\alpha_i$ for $i=0,1$. 
This $\Om$ is exact, i.e.~$\Om=d\gamma$ for some $1$-form $\gamma$ on
$[0,1]\times M$, and we obtain
\begin{align*}
   0 &< \int_{[0,1]\times M}\Om\wedge\Om = \int_{[0,1]\times
     M}d(\gamma\wedge\Om) \cr
   &= \int_{\{1\}\times M}\gamma\wedge d\alpha_1 - \int_{\{0\}\times
     M}\gamma\wedge d\alpha_0 \cr
   &= \int_M\alpha_1\wedge \om_1 - \int_M\alpha_0\wedge \om_0 \cr
   &= \Hel(\om_1) - \Hel(\om_0). 
\end{align*}
Hence helicity is monotone under symplectic cobordisms. To obtain from
this an obstruction to strong cobordisms we need to control the
helicity $\Hel(\hat\om)$ for $\hat\om\in C_{(\om,\lambda)}$. In the
presence of  integrable regions, this can be done by choosing a
sufficiently "exotic" stabilizing 1-form:

\begin{lemma}\label{lem:hom-weakcom}
Let $(\om=d\alpha,\lambda_0)$ be an exact SHS on a closed 3-manifold
$M$. Suppose that there exists an integrable region $I\times T^2$ in
$M$ on which $(d\alpha,\lambda_0)$ has a constant slope $v\in S^1$ in
the sense of Section~\ref{subsec:t2inv}. 
Then for every $\eps>0$ there exists a stabilizing 1-form $\lambda$
for $\om$ such that  $|\Hel(\hat\om)-\Hel(\om)|<\eps$ for all
$\hat\om\in C_{(\om,\lambda)}$. 
\end{lemma}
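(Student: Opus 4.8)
The plan is to compute the helicity along the deformation $\hat\om=\om+t\,d\lambda$ explicitly and then to kill the troublesome first-order term by forcing the interval $I_{(\om,\lambda)}$ to be extremely short, which is possible precisely because we may choose an ``exotic'' stabilizing $1$-form on the integrable region.

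First I would record the algebra. Since $\hat\om=\om+t\,d\lambda=d(\alpha+t\lambda)$, integrating $d(\alpha\wedge\lambda)$ over $M$ gives $\int_M\alpha\wedge d\lambda=\int_M\lambda\wedge\om$, so for every $t\in I_{(\om,\lambda)}$
\[
   \Hel(\hat\om)-\Hel(\om) = 2t\int_M\lambda\wedge\om + t^2\,\Hel(d\lambda).
\]
Hence it suffices to produce a stabilizing $1$-form $\lambda$ for $\om$ such that $\delta_{(\om,\lambda)}$ is as small as we like while $\bigl|\int_M\lambda\wedge\om\bigr|$ and $|\Hel(d\lambda)|$ stay bounded independently of the construction; then $|\Hel(\hat\om)-\Hel(\om)|\le 2\,\delta_{(\om,\lambda)}\bigl|\int_M\lambda\wedge\om\bigr|+\delta_{(\om,\lambda)}^2|\Hel(d\lambda)|$ becomes $<\eps$. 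As a normalization step I would apply Theorem~\ref{thm:integr} on the integrable region (by a diffeomorphism supported there, which changes neither exactness nor helicities), together with averaging over $T^2$ and Lemma~\ref{lem:t2inv}, to arrange on a slightly smaller integrable subregion $[0,1]\times T^2$ that $\om=\om_h$ with constant slope $h'/|h'|\equiv v$ and that the stabilizing form restricts there to the Wadsley form $\lambda_{icv}$ of a flat metric (cf.\ \eqref{eq:g-special}), while outside a collar it still equals $\lambda_0$; choosing coordinates with $v=(1,0)$ this reads $\om=h_1'\,dr\wedge d\theta$ with $h_1'>0$, $\LL=\mathrm{span}\{\p_\phi\}$, and $\lambda|_{[0,1]\times T^2}=c\,d\phi$.

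Now comes the construction of the exotic form. For constant slope, \eqref{eq:g-lin} allows $g$ to range over the whole line $\la g,iv\ra\equiv c$, so I would take $g:=icv+\psi\,v$ for a function $\psi$ compactly supported in the interior of $[0,1]$ whose derivative $\psi'$ attains both values $+K$ and $-K$ (e.g.\ a suitably scaled sine bump), interpolating to the collar through genuine stabilizing forms via Lemma~\ref{lem:stablin}. On the subregion one then has $\om+t\,d\lambda=(h_1'+t\psi')\,dr\wedge d\theta$, which loses maximal rank at the point where $\psi'=-K$ as soon as $t\ge(\max h_1')/K$, and at the point where $\psi'=+K$ as soon as $t\le-(\max h_1')/K$; therefore $\delta_{(\om,\lambda)}\le(\max h_1')/K\to0$. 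For the bounds, off the subregion $\lambda=\lambda_0$ is fixed, and on the subregion $\lambda_g\wedge\om_h=c\,h_1'\,dr\wedge d\theta\wedge d\phi$ and $\lambda_g\wedge d\lambda_g=c\,\psi'\,dr\wedge d\theta\wedge d\phi$, whose integrals over $[0,1]\times T^2$ are $c\int_0^1 h_1'$ and $c\int_0^1\psi'=0$ — both independent of $K$ (the collar and interpolation contributions are uniformly bounded because Lemma~\ref{lem:stablin} confines the wild behaviour to the subinterval). Plugging these into the displayed inequality and letting $K\to\infty$ finishes the proof.

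The main obstacle is the normalization step: removing the $dr$-component of $\lambda_0$ on the integrable region and matching it smoothly to the collar without leaving the space of stabilizing forms of a constant-slope $\om_h$. For irrational $v$ this is automatic — the Reeb flow is ergodic on each torus, so $f=d\lambda_0/\om$ is constant on tori and Theorem~\ref{thm:integr} yields full $T^2$-invariance — and in general it is handled by the averaging argument together with Lemma~\ref{lem:t2inv}(c) and a cutoff, plus a small homotopy to reach the Wadsley form; this is routine but is the point demanding care. Everything else is bookkeeping with the explicit coordinate expressions above.
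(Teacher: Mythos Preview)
Your proof is correct and is essentially the paper's argument: expand $\Hel(\hat\om)$ in $t$, then on the constant-slope integrable region push $g$ far along the line $\la g,ik\ra\equiv c$ so that $g'$ becomes large at two points of opposite sign, forcing $\delta_{(\om,\lambda)}\to0$ while the two integrals stay bounded. The paper phrases the deformation as ``deform $g_0$ rel $\p I$ so that $g'(r_\pm)=\pm h'(r_\pm)/\eps$'' and then observes $\lambda_g\wedge\om_h=c\,\vol$ and $\lambda_g\wedge d\lambda_g=O(1/\eps)$ pointwise; your parametrization $g=g_0+\psi\,v$ with $\psi'$ attaining $\pm K$ is the same thing, and your explicit computation $\int_I\lambda_g\wedge d\lambda_g=c\int\psi'=0$ is a slightly sharper version of their bound.

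One simplification you can make: the detour through the Wadsley form is unnecessary and is exactly the place you flag as delicate. After Lemma~\ref{lem:t2inv}(c) has put $\lambda_0$ into the form $\lambda_{g_0}$ on a subinterval, simply set $g:=g_0+\psi\,v$ with $\psi$ compactly supported in that subinterval. Then $\la g,iv\ra=\la g_0,iv\ra$ is unchanged, $g=g_0$ near the ends so the new $\lambda$ glues to the old one globally with no interpolation needed, and the computation goes through verbatim. This removes the matching issue entirely.
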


\begin{proof}
Consider $\hat\om\in C_{(\om,\lambda)}$, i.e.~$\hat\om=\om+t\,d\lambda$
with $t\in I_{(\om,\lambda)}$. Its helicity is
\begin{align*}
   \Hel(\hat\om) 
   &= \int_M(\alpha+t\lambda)\wedge(d\alpha+t\,d\lambda) \cr
   &= \Hel(\om) + 2t\int_M\lambda\wedge\om + t^2\int_M\lambda\wedge
   d\lambda. 
\end{align*}
Now suppose that $(\alpha,\lambda_0)=(\alpha_h,\lambda_{g_0})$ has
a constant unit slope $v\in\C$ on $I\times T^2$,
i.e.~$ih'/|h'|\equiv v$ and $\la g_0,v\ra\equiv c>0$. 
Given $\eps>0$,
we deform $g$ rel $\p I$, keeping the condition $\la g,v\ra\equiv c$,
such that $g'(r_\pm)=\pm h'(r_\pm)/\eps$ for some $r_\pm\in I$. Now
$\om+t\,d\lambda\neq 0$ iff $h'(r)+tg'(r)\neq 0$ for all $r\in I$,
which implies $|t|\leq \eps$. The term $\lambda_g\wedge\om_h=\la
g,ih'\ra dr\wedge d\theta\wedge d\phi = c\,\vol$ is independent of
$\eps$, and the term $\lambda_g\wedge d\lambda_g = \la g,ig'\ra\vol$
is of order $1/\eps$. Hence the above formular for the helicity yields 
$\Hel(\hat\om) = \Hel(\om) + O(\eps)$ and the lemma is proved.  
\end{proof}

\begin{proof}[Proof of  Proposition~\ref{prop:hom-weakcob}]
By Proposition~\ref{prop:contact-region}, there exists a SHS
$(\om_0,\lambda_0)$ on $M$ ($(\om_1,\lambda_1)$ in the notation of Proposition~\ref{prop:contact-region}) restricting as $(d\alpha_{st}, \alpha_{st})$
to some embedded solid torus $S^1\times D^2$ with $\alpha_{st}=\alpha_{h_0}$ for $h_0=(r^2,1-r^2)$.    Set $\gamma:=S^1\times \{(0,0)\}$ and  write $S^1\times D^2\setminus \gamma=I\times T^2$. Let $h_1:I\to \C$ be an immersion which is homotopic to $h_0$ rel $\p I$
and such that the signed area $\Delta A$ enclosed between the curves $h_0$ and $h_1$ is positive.
We use Corollary \ref{cor:stablin} to find a connecting homotopy $(h_t,g_t)$, $t\in [0,1]$ satisfying \eqref{eq:g}. Thus the SHS
$(\om_t,\lambda_t)$ defined by $(h_t,g_t)$ satisfy 
$$
   \Hel(\om_1)-\Hel(\om_0) = \int_Ih_1^*(y\,dx-x\,dy) -
   \int_Ih_0^*(y\,dx-x\,dy) = -2\Delta A < 0,
$$
where $x+iy$ denote coordinates on $\C$. Now pick $0<\eps<\Delta A$
and replace $\lambda_j$, $j=0,1$, by the stabilizing forms (still
denoted $\lambda_j$) provided by
Lemma~\ref{lem:hom-weakcom}. Then for all $\hat\om_i\in
C_{(\om_i,\lambda_i)}$ we have
$$
   \Hel(\hat\om_1)-\Hel(\hat\om_0) \leq -2\Delta A+2\eps < 0, 
$$
so by monotonicity of  helicity there exists no
strong symplectic cobordism from $(\om_0,\lambda_0)$ to $(\om_1,\lambda_1)$.
\end{proof}

Proposition~\ref{prop:hom-weakcob} shows that the notion of strong
cobordism is too rigid for large homotopies. This motivates the following

\begin{question}[Large Cobordism Question]
Given a stable homotopy $(\om_t,\lambda_t)_{t\in[0,1]}$, are $\om_0$ and
$\om_1$ weakly bicobordant?
\label{LCQ}
\end{question} 

A positive answer to this question would be very useful for the
following reason: Obstructions to homotopies of SHS beyond the
homotopy class of almost contact structures are hard to construct (the
only known ones arise from rational symplectic field theory, and in a
more restricted setting from Rabinowitz Floer homology, see later
subsections). On the other hand, there are simple obstructions to weak
symplectic cobordisms, two of which we will now describe.  

The first obstruction is again based on monotonicity of helicity on
cobordisms. To apply this to weak cobordisms, 
suppose that we have two exact HS's $\om_0$ and $\om_1$ such that 
$$
   \Hel(\hat\om_0) > 0,\qquad \Hel(\hat\om_1) \leq 0
$$ 
for all $\hat\om_i\in D_{\om_i}^+$. Then $\om_0$ and $\om_1$ are not
weakly cobordant. This situation occurs in the example discussed in
Section~\ref{subsec:left}: Let
$M$ be a compact quotient of $PSL(2,\mathbb{R})$ and let $\alpha_+$ a
positive contact form coming from a left-invariant one on
$PSL(2,\mathbb{R})$. Since any $\hat\omega\in D_{d\alpha_+}^{+}$ is a
closed form contracting to zero with the Reeb vector field of
$\alpha_+$, equation~\eqref{psl2pos} holds with $\hat\om$ in place of
$\theta$, so $\hat\om = c\,d\alpha_+$ for some constant $c\neq 0$. Thus 
$$
   \Hel(\hat\omega) = \Hel(c\,d\alpha_+) = c^2\Hel(d\alpha_+) =
   c^2\int_M\alpha_+\wedge d\alpha_+>0.
$$
Let $\alpha_-$ be a negative contact form coming from a left-invariant
one on $PSL(2,\mathbb{R})$. Since $[d\alpha_-]$ generates $\ker\kappa
= H^2_\LL(M)$, any $\hat\om\in D_{d\alpha_-}^+$ has foliated
cohomology class $[\hat\om] = c[d\alpha_-]\in\ker\kappa$ for a
constant $c\in\R$, and therefore
$$
   \Hel(\hat\omega) = \Hel(c\,d\alpha_-) = c^2\Hel(d\alpha_-) \leq 0
$$ 
because $\alpha_-$ is a negative contact form. 
On the other  hand, $d\alpha_+$
and $d\alpha_-$ are homotopic as HS because any two nonzero
left-invariant 2-forms on $PLS(2,\R)$ are homotopic through nonzero
left-invariant 2-forms and this homotopy descends to any compact quotient.
Therefore, we have shown 

\begin{prop}
The stable Hamiltonian structures $(d\alpha_+,\alpha_+)$ and
$(d\alpha_-,-\alpha_-)$ above on $\Gamma\setminus PSL(2,\R)$ are
homotopic as HS, but there exists no weak symplectic cobordism from  
$d\alpha_+$ to $d\alpha_-$. 
\end{prop}

We conjecture that the two SHS $(d\alpha_+,\alpha_+)$ and
$(d\alpha_-,-\alpha_-)$ are not stably homotopic. 

The second obstruction to weak symplectic cobordisms is fillability:
We will show that the standard tight contact structure on $S^3$ is not
weakly cobordant to any overtwisted one. 
Before showing this, we first discuss 

{\bf Composability of weak cobordisms. }
Consider two symplectic cobordisms $([a,b]\times M,\Om_-)$
and $([b,c]\times M,\Om_+)$ with $\Om_\pm|_{\{b\}\times M}\in
D^+_\om$. Assume that there exists a constant $C>0$ and an
orientation preserving diffeomorphism $\Phi:M\to M$ such that 
$$
   \Phi^*(\Om_+)|_{\{b\}\times M} = C\Om_-|_{\{b\}\times M}.    
$$
Then 
$$
   W := ([a,b]\times M)\cup_\Phi([b,c]\times M)
$$
with the symplectic form given by $\Om_+$ on $[b,c]\times M$ and by
$C\Om_-$ on $[a,b]\times M$ defines a symplectic cobordism from
$(\{a\}\times M,C\Om_-)$ to $(\{c\}\times M,\Om_+)$. 
In general, however, such a diffeomorphism need not exist.  

\begin{example}
Consider a circle bundle $\pi:M\to W$ over a closed symplectic manifold
$(W,\bar\om)$ with the pullback (stabilizable) Hamiltonian structure
$\om=\pi^*\bar\om$. Then forms in $D_\om^+$ descend to the quotient and
$D_\om^+$ is homeomorphic to the space of positive symplectic forms on
$W$ whose pullback to $M$ is cohomologous to $\om$. In general, two
such symplectic forms need not be diffeomorphic.  
\end{example}

However, in dimension 3, Lemma~\ref{lem:Moser} and the preceding
discussion implies 

\begin{lemma}\label{lem:gluing}
Suppose that $\dim M=3$ and the foliated cohomology $H^2_\LL(M)$ for
$\LL=\ker(\om)$ is 1-dimensional. Then for all $\om_0,\om_1\in
D_\om^+$ there exists a constant $C>0$ and an 
orientation preserving diffeomorphism $\Phi:M\to M$ such that 
$$
   \Phi^*\om_1 = C\om_0. 
$$
Thus weak cobordisms as above can be composed at $(M,\om)$ after
rescaling. In 
particular, this situation occurs for a circle bundle $\pi:M\to W$
over a closed symplectic 2-manifold $(W,\bar\om)$ with the pullback
(stabilizable) Hamiltonian structure $\om=\pi^*\bar\om$. 
\end{lemma}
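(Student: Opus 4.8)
The plan is to prove Lemma~\ref{lem:gluing} in two steps: first establish the existence of the diffeomorphism $\Phi$ and constant $C$, and then deduce the composability and the circle-bundle special case. The essential input is Lemma~\ref{lem:Moser}: a Moser-type stability statement saying that a family of Hamiltonian structures with fixed kernel foliation $\LL$ and fixed \emph{foliated} cohomology class is conjugate by a family of diffeomorphisms preserving $\LL$.

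First I would observe that any two forms $\om_0,\om_1\in D_\om^+$ are closed 2-forms with $\ker\om_i=\LL$, hence they define classes $[\om_0],[\om_1]\in H^2_\LL(M)$. Since both restrict as positive area forms to a local transversal of $\LL$ (this is exactly the condition $\hat\lambda\wedge\hat\om^{n-1}>0$ in the definition of $D_\om^+$, with the orientation on $\LL$), their foliated classes both lie in the ``positive cone'' of $H^2_\LL(M)$: concretely, evaluating on any foliation cycle — equivalently, noting $\om_i=f_i\om$ for positive $\LL$-invariant functions $f_i$ in the $3$-dimensional picture — shows they are nonzero and positively proportional. Here I use the hypothesis $\dim H^2_\LL(M)=1$ crucially: the line $H^2_\LL(M)\cong\R$ forces $[\om_1]=C[\om_0]\in H^2_\LL(M)$ for a unique constant $C$, and the positivity just discussed forces $C>0$. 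Then $[\om_1]=[C\om_0]\in H^2_\LL(M)$, so the linear family $\om_t:=(1-t)C\om_0+t\om_1$, $t\in[0,1]$, consists of Hamiltonian structures (convex combinations of forms positive on the transversal with kernel $\LL$ are again of this type, since nondegeneracy is an open convex condition once the kernel is fixed) with constant kernel $\LL$ and constant foliated class $C[\om_0]$. Now Lemma~\ref{lem:Moser} applies directly and produces a diffeomorphism $\phi_1$ with $\phi_1^*\LL=\LL$ and $\phi_1^*\om_1=C\om_0$; this $\phi_1$ is isotopic to the identity, hence orientation preserving, and we set $\Phi:=\phi_1$.

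For the composability statement: given symplectic cobordisms $([a,b]\times M,\Om_-)$ and $([b,c]\times M,\Om_+)$ with $\Om_\pm|_{\{b\}\times M}=\om_\pm\in D_\om^+$, apply the first part to get $\Phi$ and $C>0$ with $\Phi^*\om_+=C\om_-$. Then the ``composability of weak cobordisms'' paragraph immediately preceding the lemma gives the glued symplectic cobordism $W=([a,b]\times M)\cup_\Phi([b,c]\times M)$, symplectic with form $\Om_+$ on the upper piece and $C\Om_-$ on the lower piece, running from $(\{a\}\times M,C\Om_-|_{\{a\}\times M})$ to $(\{c\}\times M,\Om_+|_{\{c\}\times M})$. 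Finally, for a circle bundle $\pi:M^3\to W^2$ over a closed symplectic surface with $\om=\pi^*\bar\om$, the fibres $\LL$ are the orbits of the circle action, so by Example~\ref{ex:circle} we have $H^2_\LL(M)\cong H^2(W)\cong\R$ (as $W$ is a closed oriented surface), which is $1$-dimensional; hence the hypothesis is satisfied.

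The main obstacle, and the point that needs care, is verifying that the positivity condition built into $D_\om^+$ (namely $\hat\lambda\wedge\hat\om^{n-1}>0$ for one, hence any, $\hat\lambda$ positive on $\LL$) translates into the statement that $[\om_0]$ and $[\om_1]$ lie on the \emph{same side} of $0$ in the line $H^2_\LL(M)$, so that $C$ comes out positive rather than merely nonzero — and, relatedly, that the linear family $\om_t$ stays maximally nondegenerate throughout. In dimension $3$ this is transparent: write $\om_i=f_i\,\om$ with $f_i>0$ and $R$-invariant, so $\om_t=((1-t)Cf_0+tf_1)\om$ is nowhere zero and $[\om_1]=C[\om_0]$ in $H^2_\LL(M)$ just records $\int f_1 = C\int f_0$ against a fixed foliation cycle, forcing $C>0$. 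In higher dimensions one argues the same way using that the set of $2$-forms with kernel exactly $\LL$ that are positive on the transversal is an open convex cone, so convexity of the cone handles both the nondegeneracy of $\om_t$ and the sign of $C$; the one-dimensionality of $H^2_\LL(M)$ then does the rest.
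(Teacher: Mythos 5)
Your proof is correct and is exactly the argument the paper intends: the paper's own ``proof'' is the single sentence preceding the lemma, which just invokes Lemma~\ref{lem:Moser} applied to the linear family $(1-t)C\om_0+t\om_1$, and you have filled in precisely those details (constancy of the kernel, constancy of the foliated class, and orientation preservation of $\phi_1$ via isotopy to the identity). One precision on the step you yourself flag: ``evaluating on a foliation cycle'' should be made concrete as the functional $\beta\mapsto\int_M\lambda\wedge\beta$ for a stabilizing $1$-form $\lambda$ of $\om$ --- it descends to $H^2_\LL(M)$ because $\int_M\lambda\wedge d\mu=\int_M d\lambda\wedge\mu=0$ for $\mu\in\Om^1_\LL(M)$ (using $i_Rd\lambda=0$), and it is positive on every $\om_i=f_i\om\in D_\om^+$, which simultaneously rules out $[\om_0]=0$ and forces $C>0$; this tacitly uses stabilizability of $\om$, which holds in every situation where the lemma is applied (and without which the sign of $C$ would not follow from this argument).
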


{\bf Second obstruction: fillability}. 
As a first application of this observation, we now have 

\begin{corollary}
There is no weak symplectic cobordism from
$(d\alpha_\st,\alpha_\st)$ to $(d\alpha_\ot ,\alpha_\ot )$, 
where $\alpha_\st$ is the standard tight contact form on $S^3$ and
$\alpha_\ot $ is an overtwisted contact form defining the same
orientation.  
\end{corollary}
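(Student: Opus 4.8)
The plan is to assume such a cobordism exists and to cap off its $d\alpha_\st$-end with the standard symplectic ball, thereby producing a weak symplectic filling of an overtwisted $(S^3,\xi_\ot)$ — which is impossible by the Gromov–Eliashberg non-fillability theorem.

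So suppose $([0,1]\times S^3,\Om)$ is a weak symplectic cobordism with $\Om|_{\{0\}\times S^3}\in D^+_{d\alpha_\st}$ and $\om_1:=\Om|_{\{1\}\times S^3}\in D^+_{d\alpha_\ot}$, where (with the product orientation on $[0,1]\times S^3$) the sphere $\{0\}\times S^3$ is the incoming and $\{1\}\times S^3$ the outgoing boundary. The first observation is that the kernel foliation $\LL$ of $d\alpha_\st$ is the Hopf fibration $S^3\to S^2$, so by Example~\ref{ex:circle} the foliated cohomology $H^2_\LL(S^3)\cong H^2(S^2)$ is one-dimensional and Lemma~\ref{lem:gluing} is available at this end. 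The second observation is that $(S^3,\xi_\st)$, $\xi_\st=\ker\alpha_\st$, is the boundary of the standard symplectic ball $(B^4,\Om_\st)\subset(\R^4,\Om_\st)$, that $\Om_\st|_{\partial B^4}=d\alpha_\st\in D^+_{d\alpha_\st}$, and that near $\partial B^4$ the form $\Om_\st$ is the symplectization form of $(S^3,\alpha_\st)$, so that the characteristic foliation of every sphere near the boundary is again $\LL$.

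Now for the gluing step: Lemma~\ref{lem:gluing} furnishes a constant $C>0$ and an orientation-preserving diffeomorphism $\Phi$ of $S^3$ with $\Phi^*\bigl(\Om|_{\{0\}\times S^3}\bigr)=C\,\Om_\st|_{\partial B^4}$. Rescaling $\Om_\st$ by $C$ on $B^4$ (which keeps it symplectic) and gluing the ball to the incoming end of the cobordism along $\Phi$, exactly as in the composability discussion of Section~\ref{subsec:largeh}, yields a compact symplectic $4$-manifold $(X,\Om_X)$ whose only boundary component is the outgoing end; thus $\partial X\cong S^3$ and $\Om_X|_{\partial X}=\om_1\in D^+_{d\alpha_\ot}$. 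It then remains to read off that $(X,\Om_X)$ is a weak symplectic filling of $(S^3,\xi_\ot)$: along $\partial X$ one has the splitting $TS^3=\xi_\ot\oplus\R R_\ot$, and membership of $\om_1$ in $D^+_{d\alpha_\ot}$ means precisely that $\ker\om_1=\R R_\ot$ and $\alpha_\ot\wedge\om_1>0$, i.e.\ that $\om_1$ restricts to a positive area form on the cooriented plane field $\xi_\ot$; a check of orientations (using the product conventions above) shows $S^3$ carries the boundary orientation of $X$. Since an overtwisted contact $3$-manifold admits no weak symplectic filling, this is a contradiction, and no weak cobordism from $(d\alpha_\st,\alpha_\st)$ to $(d\alpha_\ot,\alpha_\ot)$ can exist.

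I expect the delicate point to be the gluing in the third paragraph — that attaching the non-cylindrical ball to the cylindrical cobordism along $\Phi$ produces a genuinely \emph{smooth} symplectic manifold. This is exactly where Lemma~\ref{lem:gluing} (through the foliated Moser Lemma~\ref{lem:Moser}) is needed: it reduces the matching of the two symplectic forms in a collar of the gluing hypersurface to the matching of their characteristic foliations, both of which equal $\LL$. The only other thing requiring attention is the orientation book-keeping that guarantees $X$ fills $(S^3,\xi_\ot)$ rather than being attached to it the wrong way; this is forced by the product-orientation conventions in the definition of weak cobordism together with the fact that $B^4$ is a filling (not a cap) of $(S^3,\xi_\st)$.
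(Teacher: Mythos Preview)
Your proof is correct and follows essentially the same approach as the paper: assume a weak cobordism exists, use Lemma~\ref{lem:gluing} (applicable since the Hopf foliation has one-dimensional foliated $H^2$) to glue a rescaled standard ball to the $d\alpha_\st$-end, and derive a contradiction with the Gromov--Eliashberg theorem that overtwisted contact structures are not weakly fillable. Your version is more explicit about why Lemma~\ref{lem:gluing} applies and about the orientation and smoothness issues in the gluing, but the argument is the same.
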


\begin{proof}
Arguing by contradiction, suppose $([0,1]\times S^3,\Om)$ is a
symplectic cobordism with $\om_0:=\Om|_{\{0\}\times S^3}\in
D_{d\alpha_\st}^+$ and $\om_1:=\Om|_{\{1\}\times S^3}\in
D_{d\alpha_\ot}^+$. By Lemma~\ref{lem:gluing}, we can glue the
standard symplectic $4$-ball $(B^4,C\Om_\st)$, rescaled by some constant
$C>0$, to $([0,1]\times S^3,\Om)$ along $\{0\}\times S^3$ to get a 
symplectic form $\Om$ on $B^4$ with $\Om|_{\p B^4}\in
D_{d\alpha_\ot}^+$. By definition of $D_{d\alpha_\ot}^+$, the form
$\Om|_{\p B^4}$ restricts positively to $\ker\alpha_\ot $. Thus
$(B^4,\Om)$ is a weak symplectic filling of $(S^3,\alpha_\ot)$, 
which contradicts the theorem by Eliashberg and Gromov~\cite{El,Gr}
that weakly fillable contact manifolds are tight.
\end{proof}

Using rational symplectic field theory, we will show in
Section~\ref{subsec:sft} that $(d\alpha_\st,\alpha_\st)$ to
$(d\alpha_{ot}  ,\alpha_{ot}  )$ are not stably homotopic.

\subsection{$T^2$-invariant cobordisms in dimension three}

We return to the setup of Section~\ref{subsec:t2inv}. For an interval
$I\subset\R$ we consider the 4-manifold 
$$
   [0,1]\times I\times T^2
$$ 
with coordinates $(t,r,\theta,\phi)$, equipped with the $T^2$-action given
by shift in $(\theta,\phi)$, viewed as a topologically trivial
cobordism between $\{0\}\times I\times T^2$ and $\{0\}\times I\times
T^2$. 
Consider a $T^2$-invariant closed 2-form $\Om$ on $[0,1]\times I\times
T^2$. We can write it as $$\Om=dt\wedge\beta_t+\om_t$$ for $t$-dependent
$T^2$-invariant forms $\beta_t,\om_t$ on $I\times T^2$. Closedness of
$\Om$ is equivalent to $d\om_t=0$ and $d\beta_t=\dot\om_t$. By
Lemma~\ref{lem:t2inv}, $\om_t$ can be written as $\om_t=d\alpha_t$ for 
a smooth family of 1-forms $\alpha_t$, $t\in[0,1]$, of the form 
$$
   \alpha_t = h_1(t,r)d\theta + h_2(t,r)d\phi. 
$$
It follows that $d\beta_t=\dot\om_t=d\dot\alpha_t$ and thus
$\beta_t=\dot\alpha_t+\gamma_t$ for a family of $T^2$-invariant closed 1-forms
$\gamma_t$. After writing $\gamma_t=\dot\delta_t$ for a family of
closed 1-forms $$\delta_t=a_1(t)d\theta+a_2(t)d\phi+f_t(r)dr$$ and
replacing $\alpha_t$ by $$\alpha_t+a_1d\theta+a_2d\phi,$$ we thus have
$$
   \Om = dt\wedge(\dot\alpha_t+\dot f_t'dr)+d\alpha_t. 
$$
Denoting $t$-derivatives by $\dot h_i$ and $r$-derivatives by $h_i'$,
we compute
\begin{align*}
   \Om\wedge \Om 
   &= 2 dt\wedge \dot\alpha_t\wedge d\alpha_t \cr
   &= 2 dt\wedge (\dot h_1d\theta+\dot h_2d\phi)\wedge dr\wedge
   (h_1'd\theta+h_2'd\phi) \cr 
   &= 2(h_1'\dot h_2 - h_2'\dot h_1)dt\wedge dr\wedge d\theta\wedge
   d\phi. 
\end{align*}
Hence $\Om$ is a positive $T^2$-invariant symplectic form iff
\begin{equation*}
   h_1'\dot h_2 - h_2'\dot h_1 > 0
\end{equation*}
for all $(t,r)$. Geometrically, this condition means that the velocity
vector $h_t'=(h_1',h_2')$ and the $t$-derivative $\dot h_t=(\dot h_1,\dot
h_2)$ of the family of curves
$h_t=(h_1(t,\cdot),h_2(t,\cdot)):I\to\C$, $t\in[0,1]$, satisfy
\begin{equation}\label{eq:mon-hom}
   \la\dot h_t,ih_t'\ra>0,
\end{equation}
i.e.~$(\dot h_t,h_t')$ is a positive basis of $\C$ for all $t$. Note
that the functions $f_t$ do not enter
condition~\eqref{eq:mon-hom}. Therefore, from now on we will assume
$f_t=0$  and only consider $\Om$ of the form
\begin{equation}\label{eq:Om}
   \Om = dt\wedge\dot\alpha_t+d\alpha_t. 
\end{equation}

\begin{definition}
A {\em monotone homotopy} is a smooth family of 
curves $h_t:I\to\C$ satisfying condition~\eqref{eq:mon-hom}. 
\end{definition}

The preceding discussion shows

\begin{lemma}
Two $T^2$-invariant Hamiltonian structures $d\alpha_0$ and
$d\alpha_1$ on $I\times  T^2$ are cobordant by a $T^2$-invariant
cobordism iff there exists a monotone homotopy from $h_0:I\to\R^2$ to
$h_1:I\to\R^2$. 
\end{lemma}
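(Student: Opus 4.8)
The plan is to prove both directions by unwinding the computation already carried out in the preceding paragraphs, so the statement is essentially a bookkeeping corollary of the formula $\Om\wedge\Om = 2(h_1'\dot h_2 - h_2'\dot h_1)\,dt\wedge dr\wedge d\theta\wedge d\phi$.

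For the ``only if'' direction, suppose $\Om$ is a $T^2$-invariant symplectic form on $[0,1]\times I\times T^2$ with $\Om|_{\{i\}\times I\times T^2} = d\alpha_i$ for $i=0,1$. The discussion above (splitting $\Om = dt\wedge\beta_t + \om_t$, writing $\om_t = d\alpha_t$ via Lemma~\ref{lem:t2inv}, absorbing the closed $1$-form correction, and discarding the $f_t\,dr$ term which does not affect $\Om\wedge\Om$) shows that after a $t$-dependent change of the potentials we may take $\Om$ in the normal form~\eqref{eq:Om}, with $\alpha_t = h_1(t,r)d\theta + h_2(t,r)d\phi$ and $h_0, h_1$ the given curves (here I would note that the modifications to $\alpha_t$ are supported in $H^1(T^2)$ and hence do not change $d\alpha_i$ at the endpoints, so the boundary conditions are preserved). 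Then positivity of $\Om$ as a symplectic form is exactly $\Om\wedge\Om > 0$ with respect to the chosen orientation, which by the displayed formula is $h_1'\dot h_2 - h_2'\dot h_1 > 0$, i.e.~$\la\dot h_t, i h_t'\ra > 0$ for all $(t,r)$; this is precisely condition~\eqref{eq:mon-hom}, so $\{h_t\}$ is a monotone homotopy from $h_0$ to $h_1$. One should also check nondegeneracy of each $d\alpha_t$ as a HS on $I\times T^2$: this follows since $h_t'\neq 0$ for all $t$ (it is implied by $\la\dot h_t, ih_t'\ra>0$), so the $h_t$ are genuinely immersions and $d\alpha_{h_t}$ is a Hamiltonian structure.

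For the ``if'' direction, given a monotone homotopy $h_t:I\to\C$ from $h_0$ to $h_1$ (so $\la\dot h_t, i h_t'\ra > 0$ and in particular each $h_t$ is an immersion), set $\alpha_t := h_1(t,r)d\theta + h_2(t,r)d\phi$ and define $\Om := dt\wedge\dot\alpha_t + d\alpha_t$ on $[0,1]\times I\times T^2$. This is manifestly closed ($d\Om = dt\wedge d\dot\alpha_t - dt\wedge d\dot\alpha_t = 0$, using $d(d\alpha_t)=0$ and $\frac{d}{dt}d\alpha_t = d\dot\alpha_t$), it is $T^2$-invariant, and $\Om|_{\{i\}\times I\times T^2} = d\alpha_i = d\alpha_{h_i}$. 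By the computation above $\Om\wedge\Om = 2\la\dot h_t, i h_t'\ra\,dt\wedge dr\wedge d\theta\wedge d\phi > 0$, so $\Om$ is a symplectic form, hence a $T^2$-invariant symplectic cobordism from $d\alpha_0$ to $d\alpha_1$ in the sense of Definition~\ref{def:cob1}.

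I do not expect any real obstacle here; the only points requiring a line of care are (i) making sure the normalization to the form~\eqref{eq:Om} does not disturb the boundary data (handled by observing that the correction $1$-forms $a_1 d\theta + a_2 d\phi$ and the family $\gamma_t$ of closed $1$-forms can be chosen to vanish at $t=0,1$, or alternatively that replacing $\alpha_t$ by a cohomologous family leaves $d\alpha_t$ unchanged at the endpoints), and (ii) recording that monotonicity forces $h_t' \neq 0$ so that each slice is an honest Hamiltonian structure. Everything else is a direct transcription of the formulas preceding the statement.
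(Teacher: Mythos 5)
Your proof is correct and takes essentially the same route as the paper, which states the lemma as an immediate consequence of the preceding computation (the normal form~\eqref{eq:Om} together with $\Om\wedge\Om=2\la\dot h_t,ih_t'\ra\,dt\wedge dr\wedge d\theta\wedge d\phi$). The two details you flag — that the normalization of $\alpha_t$ only changes it by closed $1$-forms and so leaves the boundary $2$-forms untouched, and that $\la\dot h_t,ih_t'\ra>0$ forces $h_t'\neq 0$ so each slice is an honest Hamiltonian structure — are sound and are exactly the routine checks implicit in the paper's ``the preceding discussion shows''.
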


Let us now restrict our attention to curves $h:[0,1]\to\C$ which are
{\em standardized} in the sense that $h(r)$ is a positive
multiple of $(r^2,1-r^2)$ near $\p[0,1]$.  
Note that each monotone homotopy $h_t$ is in particular a regular
homotopy, and if the $h_t$ are standardized then by
Corollary~\ref{cor:stablin} the corresponding homotopy of 
Hamiltonian structures $d\alpha_t$ can be stabilized by
$T^2$-invariant 1-forms. On the other hand, we will now construct
examples of standardized immersions $h_0,h_1$ which are regularly homotopic
(i.e.~have the same rotation number), but for which there exists no
standardized monotone homotopy from $h_0$ to a positive multiple of $h_1$. 

The first obstruction to monotone homotopies comes from winding
numbers as defined in Section~\ref{subsec:t2inv}. 

\begin{lemma}
Let $h_0,h_1:[0,1]\to\C\setminus\{0\}$ be standardized immersions
missing the origin. If there exists a standardized monotone
homotopy from $h_0$ to a positive multiple of $h_1$, then $w(h_1)\leq
w(h_0)$. Moreover, if in addition $w(h_1)<w(h_0)$, then the
corresponding symplectic cobordism contains an exact Lagangian 2-torus.  
\end{lemma}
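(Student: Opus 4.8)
The plan is to prove this by relating the winding number to a winding/rotation computation of the vector $(\dot h_t, h_t')$ along the homotopy, and then to read off an exact Lagrangian torus in the strict-inequality case. First I would use the fact that $h_t$ is standardized and misses the origin, so on $[0,1]$ the curve $h_t$ winds around the origin by a well-defined integer (the ordinary winding number of $h_t$ about $0$), which is a homotopy invariant through standardized loops missing the origin; call it $W_0(h_t)$. Since the homotopy $h_t$ passes through immersions missing the origin, $W_0(h_t)$ is constant in $t$, so $W_0(h_1)=W_0(h_0)$. That alone is not the desired inequality, so the real content is the relation between $w(h)$ (the winding number of the \emph{slope} $h'/|h'|$) and the angular behaviour of $h$ itself.

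The key observation is the monotonicity condition $\la \dot h_t, i h_t'\ra > 0$. Writing $h_t(r) = \rho_t(r)e^{i\psi_t(r)}$ and $h_t'(r) = |h_t'(r)| e^{i\sigma_t(r)}$, the winding number is $w(h_t) = \sigma_t(1) - \sigma_t(0)$ and, since $h_t$ is standardized, $\sigma_t$ and $\psi_t$ agree with the standard ones near the endpoints, so these boundary values do not move. I would differentiate $\la \dot h_t, i h_t'\ra$ in the appropriate way: the condition says the pair $(\dot h_t, h_t')$ is always a positive basis, so as $t$ increases the argument of $h_t'(r)$ cannot ``overtake'' the argument of $\dot h_t(r)$; more precisely, at any fixed $r$ the angle $\sigma_t(r)$ of $h_t'(r)$ is \emph{non-increasing} in $t$ modulo the motion of $h_t$, because $\dot h_t$ points into the half-plane strictly to the left of $h_t'$. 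Carrying this through at the two endpoints $r=0,1$ (where everything is pinned) and integrating over $r$ forces $w(h_1) \le w(h_0)$. The cleanest way to make this rigorous is probably to consider the map $F: [0,1]\times[0,1] \to \C\setminus\{0\}$, $(t,r)\mapsto h_t'(r)$, note $\la \p_t F, iF\ra = \la \dot h_t, i h_t'\ra > 0$ pointwise (after discarding the irrelevant $\la \dot h_t, i h_t'\ra$ versus $\la \p_t h_t', i h_t'\ra$ discrepancy — here one uses that $\dot h_t$ and $h_t'$ span $\C$, forcing the desired sign on $\p_t(\arg h_t')$), and then a degree/monotone-argument argument on $F$ gives the inequality on the boundary-value difference $w$. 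The main obstacle I anticipate is precisely this last technical point: translating $\la \dot h_t, i h_t'\ra > 0$ (a condition on $\dot h_t$, the $t$-derivative of the \emph{curve}, not of its \emph{tangent direction}) into a monotonicity statement for $\arg h_t'(r)$; one has to argue that the two are compatible at the level of signs because of the standardization pinning the endpoints, and a naive differentiation does not immediately close.

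For the second assertion, suppose $w(h_1) < w(h_0)$. Then by the monotonicity just established, the argument $\sigma_t(r)$ must at some $(t_*,r_*)$ in the interior ``pass through'' a configuration where $h_{t_*}(r_*)$ and $h_{t_*}'(r_*)$ are $\R$-linearly dependent; concretely, since $w$ strictly decreases while the endpoint data is fixed, there is $(t_*,r_*)$ with $\la h_{t_*}(r_*), i h_{t_*}'(r_*)\ra = 0$, i.e.\ $\alpha_{h_{t_*}}$ fails the contact condition at $r_*$. I would then locate the Lagrangian torus inside the symplectic cobordism $\Om = dt\wedge \dot\alpha_t + d\alpha_t$ of~\eqref{eq:Om}: the torus $L := \{(t_*, r_*)\}\times T^2$. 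A direct computation shows $\Om|_L = (dt\wedge\dot\alpha_t + d\alpha_t)|_L$; the $d\alpha_t$ term restricted to $\{r=r_*\}$ vanishes because $\alpha_{t_*}|_{T^2} = h_1(t_*,r_*)d\theta + h_2(t_*,r_*)d\phi$ is constant-coefficient, and the $dt\wedge\dot\alpha_t$ term vanishes because $dt|_L=0$; so $L$ is Lagrangian. For exactness, I would exhibit a primitive: $\alpha_{h}$ restricted to $L$ is the closed $1$-form $h_1(t_*,r_*)d\theta + h_2(t_*,r_*)d\phi$ on $T^2$, and the condition $\la h_{t_*}(r_*), i h_{t_*}'(r_*)\ra = 0$ says $h_{t_*}(r_*)$ is proportional to $h_{t_*}'(r_*)$, which via~\eqref{kerfol} means $h_{t_*}(r_*)$ is proportional to the kernel direction; one then checks that $\alpha_h|_L$ is cohomologous on $T^2$ to a multiple of $i_R\sigma$ (a slope $1$-form) and that the relevant periods vanish, exhibiting $L$ as an exact Lagrangian. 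I expect the bookkeeping of periods here — confirming that the linear-dependence condition is exactly what kills $[\alpha_h|_L]$ in $H^1(T^2)$ — to be the delicate part of this half.
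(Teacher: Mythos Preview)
Your proposal has two genuine gaps, and both stem from looking at the wrong quantity.

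\textbf{First part.} You assume the intermediate curves $h_t$ miss the origin and hence that $W_0(h_t)$ is constant; but only $h_0$ and $h_1$ are assumed to miss $0$, not the whole homotopy. This is exactly the point: the paper's argument is that the winding of $h_t$ \emph{around the origin} can only change when $h_t$ passes through $0$, and the monotone condition $\la\dot h_t, ih_t'\ra>0$ forces every such crossing to go the same way, namely with $0$ passing from the left of the oriented curve to its right, which drops the winding by $1$. That gives $w(h_1)\le w(h_0)$ directly. Your alternative plan, to show $\arg h_t'$ is monotone in $t$, cannot succeed: the monotone condition implies $h_t'\neq 0$ everywhere, so $h_t'/|h_t'|:[0,1]\to S^1$ varies continuously in $t$ with fixed endpoint values (by standardization), and its total turning is therefore \emph{constant} in $t$. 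There is no mechanism to get a strict inequality from the tangent direction alone.

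\textbf{Second part.} You locate the torus at a point where the contact condition fails, i.e.\ $\la h_{t_*}(r_*), ih_{t_*}'(r_*)\ra=0$, meaning $h$ and $h'$ are $\R$-proportional. The torus $\{(t_*,r_*)\}\times T^2$ is indeed Lagrangian for any $(t_*,r_*)$, but the primitive restricts as $\alpha|_L=h_1(t_*,r_*)\,d\theta+h_2(t_*,r_*)\,d\phi$, a constant-coefficient $1$-form on $T^2$; this is exact iff both coefficients vanish, i.e.\ iff $h_{t_*}(r_*)=0$. Proportionality of $h$ and $h'$ does not give that. The paper instead observes that a strict drop $w(h_1)<w(h_0)$ forces some $(t_*,r_*)$ with $h_{t_*}(r_*)=0$ (this is exactly when the winding around $0$ jumps), and at such a point $\alpha|_L=0$, so the torus is exact for free. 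So the correct target is a \emph{zero} of $h$, not a failure of the contact inequality.
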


\begin{proof}
Condition~\eqref{eq:mon-hom} implies that during a standardized
monotone homotopy $h_t$, the winding number around $0$ decreases by
$1$ each time $h_t$ 
passes through the origin. This proves the first statement. If in
addition $w(h_1)<w(h_0)$, then there exists a $(t,r)$ with $h_t(r)=0$,
so the primitive $\alpha$ of the symplectic form $\Om$ given
by~\eqref{eq:Om} vanishes on the torus $\{(t,r)\}\times T^2$.   
\end{proof}

\begin{example}\label{ex:nocob}
\begin{figure}[htb!]\label{fig:exot}
\centering
\includegraphics{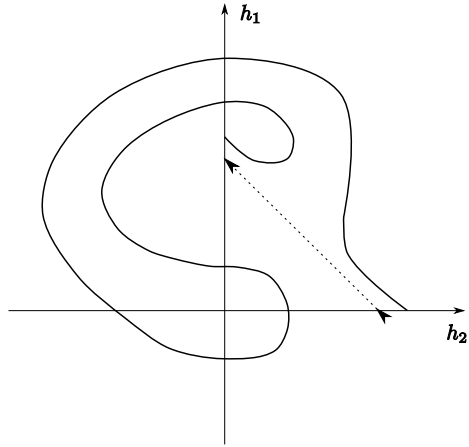}
\caption{Monotone homotopy}
\end{figure}
Figure \ref{fig:exot} shows a curve $h$ which is obtained from 
$h_\st(r)=(r^2,1-r^2)$ by a standardized monotone homotopy. (The curve $h_\st$ is
dashed, the curve $h$ is bold. The direction of increase of the
parameter $r$ on the curve $h_\st$ is designated by the two  
arrows. The curve $h$ is oriented accordingly.) On the other hand, it
has $w(h)=-1$, so there is no standardized monotone homotopy from $h$ to a positive
multiple of $h_\st$.  
\end{example}

\subsection{An exotic symplectic ball}

The following construction 
was communicated to the second author by Y.~Chekanov, who remembers
having seen it somewhere in the literature, but we were not able to
trace its origins. 

We consider $\R^4$ with its standard $T^2$-action and the standard
symplectic form
$$
   \Om_\st:=d\alpha_\st,\qquad \alpha_\st :=
   \frac{1}{2}\sum_{j=1}^2(x_jdy_j-y_jdx_j).  
$$
Denote by $B^4(r)$ the ball of radius $r$
around the origin and by $S^3(r):=\p B^4(r)$ its boundary sphere. 

\begin{proposition}\label{prop:exotic}
There exists an exact symplectic form $\Om$ on $B^4(1)$ with the following
properties: 
\begin{enumerate}
\item $\Om$ is $T^2$-invariant and $\Om=\Om_\st$ on $B^4(1/2)$. 
\item $\Om|_{S^3(r)}$, $r\in[1/2,1]$, can be stabilized to a
  $T^2$-invariant stable homotopy from $\frac{1}{2}(d\alpha_\st,\alpha_\st)$
  to the induced stable Hamiltonian structure $(\om,\lambda)$ on $S^3$.  
\item $(B^4,\Om)$ cannot be symplectically embedded into
  $(\R^4,\Om_\st)$. 
\end{enumerate}
\end{proposition}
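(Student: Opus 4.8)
The plan is to transplant the monotone homotopy of Example~\ref{ex:nocob} into the radial direction of the ball, and then to obstruct symplectic embeddings into $(\R^4,\Om_\st)$ by producing an exact Lagrangian torus inside $(B^4(1),\Om)$. First I would fix coordinates adapted to the $T^2$-action. Deleting the two coordinate planes $\{z_1=0\}$ and $\{z_2=0\}$, the change of variables used in the proof of Lemma~\ref{lem:emb} identifies $(\R^4\setminus\{z_1z_2=0\},\Om_\st)$ with $(\R_+\times(0,1)\times T^2,\,d(s\alpha_{h_\st}))$, where $h_\st(r)=(r^2,1-r^2)$, the parameter $s$ is an increasing function of the Euclidean radius, and $B^4(1)$, resp.\ $B^4(1/2)$, corresponds to a slab $\{0<s\le s_1\}$, resp.\ $\{0<s\le s_0\}$, for suitable $0<s_0<s_1$. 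On the shell $\{s_0\le s\le s_1\}\cong[0,1]\times S^3$, viewed as a topologically trivial cobordism, the standard form becomes the $T^2$-invariant cobordism form~\eqref{eq:Om} of the family of curves $t\mapsto s(t)h_\st$, which is a monotone homotopy in the sense of Section~\ref{subsec:t2inv} because $h_\st$ is a positive contact curve. Since these curves are standardized near $r=0,1$, everything extends smoothly across the two deleted planes, and a short computation shows that near those planes the monotone homotopy inequality $\la\dot h_t,ih_t'\ra>0$ is precisely the condition guaranteeing that the resulting $2$-form on $B^4$ remains nondegenerate there.

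Next I would construct $\Om$. By Example~\ref{ex:nocob} (Figure~\ref{fig:exot}) there is a standardized monotone homotopy from $h_\st$ to a standardized immersion $h$ with winding number $w(h)=-1$; after reparametrizing and composing with a radial rescaling, I may assume we have a standardized monotone homotopy $\{h_t\}_{t\in[0,1]}$ that agrees near $t=0$ with the family $t\mapsto s(t)h_\st$ defining $\Om_\st$ on the shell and with $h_1$ a positive multiple of $h$. Because a monotone homotopy can only decrease the winding number about the origin, dropping it by exactly one each time the curve passes through $0$, there is a parameter value $(t_*,r_*)$ with $h_{t_*}(r_*)=0$. Now let $\Om$ be the $2$-form equal to $\Om_\st$ on $B^4(1/2)$ and equal on the shell to the cobordism form~\eqref{eq:Om} of $\{h_t\}$; the two definitions match near $s=s_0$, so $\Om$ is a smooth closed $T^2$-invariant $2$-form on $B^4(1)$ which is symplectic (by the monotone homotopy condition, together with the nondegeneracy along the deleted planes noted above) and equals $\Om_\st$ on $B^4(1/2)$; this is (i). For (ii): each $h_t$ is standardized, so by Corollary~\ref{cor:stablin} (applied on $[0,1]\times T^2$ with the prescribed boundary behaviour near the two core circles, exactly as in Section~\ref{subsec:t3}) the family $\Om|_{S^3(t)}=d\alpha_{h_t}$ admits a $T^2$-invariant family of stabilizing $1$-forms $\lambda_t=\lambda_{g_t}$ defined on all of $S^3$. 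The resulting $T^2$-invariant stable homotopy runs from the inner endpoint, which after rescaling $\alpha_\st$ is $\tfrac12(d\alpha_\st,\alpha_\st)$, to the SHS $(\om,\lambda):=(\Om|_{\p B^4(1)},\lambda_1)$, which is (ii).

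For (iii) I would argue by contradiction: suppose $\psi\colon(B^4(1),\Om)\hookrightarrow(\R^4,\Om_\st)$ is a symplectic embedding. The torus $L:=\{(t_*,r_*)\}\times T^2$ inside the shell is Lagrangian for $\Om$, since $dt$ and $dr$ both vanish on $TL$, and the explicit primitive $\alpha_{h_t}=h_{t,1}(r)d\theta+h_{t,2}(r)d\phi$ of $\Om$ on the shell satisfies $\alpha_{h_t}|_L=0$ because $h_{t_*}(r_*)=0$. On the (simply connected) shell, $\psi^*\alpha_\st$ is another primitive of $\psi^*\Om_\st=\Om$, hence differs from $\alpha_{h_t}$ by an exact $1$-form; therefore $\alpha_\st$ restricts to an exact $1$-form on the Lagrangian torus $\psi(L)\subset(\R^4,\Om_\st)$. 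This contradicts Gromov's theorem~\cite{Gr} that standard $\C^n$ contains no exact Lagrangian submanifolds, so no such embedding exists.

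The conceptual content of the argument lies entirely in the monotone homotopy of Example~\ref{ex:nocob} and the exact Lagrangian torus it forces. The main technical obstacle I expect is the coordinate bookkeeping in the first two steps: verifying that $\Om$ extends smoothly and nondegenerately across the two coordinate $2$-planes and glues $C^\infty$-smoothly to $\Om_\st$ along $\p B^4(1/2)$, and tracking the various rescaling constants so that the inner endpoint of the stable homotopy is exactly $\tfrac12(d\alpha_\st,\alpha_\st)$.
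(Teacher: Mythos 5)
Your proposal is correct and follows essentially the same route as the paper: a standardized monotone homotopy from $\tfrac12 h_\st$ to the curve $h$ of Example~\ref{ex:nocob}, the cobordism form~\eqref{eq:Om} glued to the standard half-ball, Corollary~\ref{cor:stablin} for the stabilization, and the exact Lagrangian torus forced by $w(h)=-1$ together with Gromov's theorem for the non-embeddability. You merely spell out in more detail the coordinate identification and the primitive-comparison argument for (iii), which the paper leaves implicit.
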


\begin{remark}
%
    The existence of an exotic symplectic ball is of
    course not new: An example can be obtained by cutting a large ball
    out of an exotic $\R^4$. Historically, the first exotic symplectic
    structure on $\R^{2n}$ was found by Gromov in his
    paper~\cite{Gr}. Later an easy explicit construction of an
    exotic symplectic structure on $\R^4$ was given by Bates and
    Peschke~\cite{BP}. 
\end{remark}

\begin{proof}
The proof combines the construction in Example~\ref{ex:nocob} with a
result of Gromov.  
Let $h_t$, $t\in[1/2,1]$, be a standardized monotone homotopy from
$h_{1/2}(r):=\frac{1}{2}h_\st(r)=\frac{1}{2}(r^2,1-r^2)$ to the function $h_1=h$ in
Example~\ref{ex:nocob}. Due to the standardization, the associated
1-forms $\alpha_t$ extend to $S^3$, so~\eqref{eq:Om} defines an exact
symplectic structure $\Om$ on 
$[1/2,1]\times S^3$. Since $w(h)=-1$, $([1/2,1]\times S^3,\Om)$
contains an exact Lagrangian 2-torus. Since $h_{1/2}=\frac{1}{2}h_\st$, we can
glue a standard ball of radius $1/2$ to this cobordism to obtain an exact
symplectic manifold $(B^4,\Om)$. Properties (i) and (ii) in
the proposition follow immediately from the construction and
Corollary~\ref{cor:stablin}.  
Property (iii) follows from the fact that $(B^4,\Om)$ contains an
exact Lagrangian 2-torus, but $(\R^4,\Om_\st)$ does not by a theorem
of Gromov in~\cite{Gr}. 
\end{proof}

\begin{corollary}\label{cor:exotic}
There exists no symplectic cobordism $([1,2]\times S^3,\Om)$ with
$\Om_{\{1\}\times S^3}=\om$ the Hamiltonian structure in
Proposition~\ref{prop:exotic} and $\Om_{\{2\}\times S^3}\in
D_{d\alpha_\st}^+$. 
\end{corollary}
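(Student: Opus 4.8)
The plan is to argue by contradiction. Suppose a symplectic cobordism $([1,2]\times S^3,\Om')$ exists with $\Om'|_{\{1\}\times S^3}=\om$ and $\Om'|_{\{2\}\times S^3}\in D^+_{d\alpha_\st}$. I would first recall, from the proof of Proposition~\ref{prop:exotic}, that the exotic ball $(B^4,\Om)$ is built by gluing the standard half-ball $(B^4(1/2),\tfrac12\Om_\st)$ onto the symplectic cobordism $([1/2,1]\times S^3,\Om)$ associated via~\eqref{eq:Om} to the standardized monotone homotopy $h_t$, $t\in[1/2,1]$, from $\tfrac12 h_\st$ to the curve $h$ of Example~\ref{ex:nocob}; its outgoing Hamiltonian structure at $\{1\}\times S^3$ is precisely $\om$, and because $w(h)=-1$ it contains an embedded exact Lagrangian $2$-torus $L$ in its interior.

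Next I would glue $([1,2]\times S^3,\Om')$ on top of this exotic ball. Both $([1/2,1]\times S^3,\Om)$ and $([1,2]\times S^3,\Om')$ are topologically trivial symplectic cobordisms in the sense of Definition~\ref{def:cob1}, and they induce the same closed $2$-form $\om$ on the common boundary component $\{1\}\times S^3$; hence, just as with the concatenation of homotopies of Hamiltonian structures discussed in Section~\ref{subsec:largeh}, after reparametrizing both cobordisms to be collared near $\{1\}\times S^3$ and interpolating the primitives (the relevant positivity condition being convex, hence preserved), they can be composed to a topologically trivial symplectic cobordism $([1/2,2]\times S^3,\Om'')$ from $\tfrac12 d\alpha_\st$ to $\Om'|_{\{2\}\times S^3}$. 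Capping its incoming end $\{1/2\}\times S^3$ with $(B^4(1/2),\tfrac12\Om_\st)$ exactly as in the construction of the exotic ball then yields a symplectic manifold $(\widehat B,\widehat\Om)$ diffeomorphic to $B^4$ into which $(B^4,\Om)$, and hence the torus $L$, embeds symplectically. Since $H^2(\widehat B)=0$, the form $\widehat\Om$ is exact and $L$ is an exact Lagrangian torus; and since $n=2$, the boundary form $\widehat\Om|_{\p\widehat B}=\Om'|_{\{2\}\times S^3}\in D^+_{d\alpha_\st}$ restricts as a positive area form to $\ker\alpha_\st$, so $(\widehat B,\widehat\Om)$ is a weak symplectic filling of the standard tight contact sphere $(S^3,\ker\alpha_\st)$.

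To conclude, I would use that $[\widehat\Om|_{\p\widehat B}]=0\in H^2(S^3)$ together with Eliashberg's deformation of a weakly filling symplectic form near the boundary (see~\cite{El}) to convert $(\widehat B,\widehat\Om)$ into a Liouville filling of $(S^3,\ker\alpha_\st)$ without altering it on the interior, so that $L$ persists. By Gromov's theorem in~\cite{Gr}, such a filling is symplectomorphic to a star-shaped domain in $(\R^4,\Om_\st)$, and in particular embeds symplectically into $(\R^4,\Om_\st)$; composing with the embedding $(B^4,\Om)\hookrightarrow(\widehat B,\widehat\Om)$ would then embed the exotic ball into $(\R^4,\Om_\st)$, contradicting Proposition~\ref{prop:exotic}(iii). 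Equivalently, $(\R^4,\Om_\st)$ would contain the exact Lagrangian torus $L$, which Gromov's theorem in~\cite{Gr} forbids.

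The main obstacle is to make the gluing and capping rigorous: one must verify that two topologically trivial symplectic cobordisms agreeing on the common boundary can be composed smoothly (following the template of the exotic ball construction and the paper's remark on composable cobordisms in Section~\ref{subsec:largeh}), and that Eliashberg's boundary trick together with Gromov's classification of fillings of $(S^3,\ker\alpha_\st)$ apply to the possibly non-standard weak filling $(\widehat B,\widehat\Om)$ while leaving the exact Lagrangian torus $L$ untouched in its interior.
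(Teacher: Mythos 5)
Your argument is correct, and its first half (gluing the hypothetical cobordism on top of the exotic ball along $\{1\}\times S^3$, where the forms agree on the nose) coincides with the paper's. The second half diverges: the paper glues the \emph{exterior} $(\R^4\setminus B^4(R),\Om_\st)$ onto $\{2\}\times S^3$ using Lemma~\ref{lem:gluing} --- which rests on the $1$-dimensionality of $H^2_\LL(S^3)$ for the Hopf foliation to match $\Om'|_{\{2\}\times S^3}$ with a rescaled round form --- and then applies Gromov's uniqueness theorem for exact symplectic structures on $\R^4$ standard at infinity, contradicting Proposition~\ref{prop:exotic}(iii). You instead read the glued object as a weak filling of $(S^3,\ker\alpha_\st)$, upgrade it to a Liouville filling by Eliashberg's collar deformation (legitimate here since $H^2(S^3)=0$, and exactness then rules out blow-ups), and invoke the Gromov--Eliashberg--McDuff uniqueness of such fillings to embed it into $(\R^4,\Om_\st)$. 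Both routes terminate in Gromov's theorem; yours trades the paper's internal Lemma~\ref{lem:gluing} and its foliated-cohomology input for the external weak-to-strong filling deformation (not stated in the paper, though it appears in the cited~\cite{NW}), and it has the mild advantage of not needing to normalize the boundary form explicitly. Two small imprecisions worth fixing: the conclusion of the filling classification that you should use is merely a symplectic \emph{embedding} of $(\widehat B,\widehat\Om)$ into $(\R^4,\Om_\st)$ (obtained, as in the paper, by capping with the standard exterior and applying Gromov), not literally a symplectomorphism onto a star-shaped domain; and you should confirm that the Eliashberg deformation is supported in a collar of $\{2\}\times S^3$, hence disjoint from both the embedded exotic ball and the exact Lagrangian torus $L$, so that either form of the final contradiction goes through.
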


\begin{proof}
Suppose such a cobordism exists. Then we can glue this cobordism to
the ball in Proposition~\ref{prop:exotic} to obtain a symplectic
manifold $(B^4\cup [1,2]\times S^3,\Om)$, which in turn can be glued
using Lemma~\ref{lem:gluing} to $(\R^4\setminus S^3(R),\Om_\st)$
along a sphere $S^3(R)$ of suitable radius $R$. This yields an exact
symplectic structure $\Om$ on $\R^4$ with $\Om=\Om_\st$ outside
$B^4(R)$. By Gromov's uniqueness theorem for symplectic structures on
$\R^4$ standard at infinity~\cite{Gr}, $(\R^4,\Om)$ is
symplectomorphic to $(\R^4,\Om_\st)$, contradicting property (iii) in
Proposition~\ref{prop:exotic}. 
\end{proof}

\begin{cor}\label{cor:exotic2}
The SHS $(\om,\lambda)$ on $S^3$ in Corollary~\ref{cor:exotic} is 
stably homotopic to $(d\alpha_\st,\alpha_\st)$ but cannot be embedded
in $(\R^4,\Om_\st)$. 
\end{cor}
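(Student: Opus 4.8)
The statement has two halves, which I would treat separately.

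\emph{Stable homotopy.} This is the easy half and follows directly from Proposition~\ref{prop:exotic}(ii): restricting $\Om$ to the spheres $S^3(r)$ for $r\in[\tfrac12,1]$ and stabilizing produces a ($T^2$-invariant) stable homotopy from $\tfrac12(d\alpha_\st,\alpha_\st)$ to $(\om,\lambda)$. It remains to connect $(d\alpha_\st,\alpha_\st)$ with $\tfrac12(d\alpha_\st,\alpha_\st)$, and the linear rescaling $\bigl((1-t/2)d\alpha_\st,(1-t/2)\alpha_\st\bigr)_{t\in[0,1]}$ does this: the cohomology class stays $0$, the derivative $-\tfrac12 d\alpha_\st$ is exact, and each $(1-t/2)\alpha_\st$ stabilizes $(1-t/2)d\alpha_\st$. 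Concatenating the two homotopies gives the claim.

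\emph{Non-embeddability.} I would argue by contradiction, mirroring the proof of Corollary~\ref{cor:exotic}. Assume $\iota:S^3\hookrightarrow\R^4$ satisfies $\iota^*\Om_\st=\om$. Since $\om$ is stabilizable, Lemma~\ref{lem:hyper}(c) shows $\iota(S^3)$ is a stable hypersurface, hence has a tubular neighbourhood symplectomorphic to a slice of a symplectization of $(S^3,\om)$. By Jordan--Brouwer, $\iota(S^3)$ bounds a compact domain $W\subset\R^4$; for $R$ large with $W\subset B^4(R)$ the region $X:=B^4(R)\setminus\inn W$ is a compact symplectic cobordism from $(S^3,\om)$ to $(S^3(R),\Om_\st|_{S^3(R)})$, and $\Om_\st|_{S^3(R)}=d(\alpha_\st|_{S^3(R)})\in D_{d\alpha_\st}^+$. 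To finish, I would glue the exotic ball $(B^4(1),\Om)$ of Proposition~\ref{prop:exotic} to the $(S^3,\om)$-boundary of $X$ along matching symplectization collars, and glue the exterior of a large sphere $(\R^4\setminus B^4(R'),\Om_\st)$ to the other boundary via Lemma~\ref{lem:gluing} --- legitimate since $\ker(\Om_\st|_{S^3})$ is the Hopf foliation, whose foliated cohomology $H^2_\LL(S^3)\cong H^2(S^2)$ is one-dimensional. The resulting exact symplectic $4$-manifold agrees with $(\R^4,\Om_\st)$ at infinity, is diffeomorphic to $\R^4$ (one has excised $\inn W$ and capped off with a ball), and contains the exact Lagrangian $2$-torus built inside $(B^4(1),\Om)$ from Example~\ref{ex:nocob}. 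By Gromov's rigidity theorem such a manifold is symplectomorphic to $(\R^4,\Om_\st)$, which carries no exact Lagrangian torus --- a contradiction, which is precisely the one underlying Corollary~\ref{cor:exotic} and Proposition~\ref{prop:exotic}(iii).

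\textbf{Main obstacle.} The content is entirely in the two gluings of the last paragraph: (i) matching the symplectization collar of $\p B^4(1)=(S^3,\om)$ with the stable-hypersurface neighbourhood of $\iota(S^3)$ --- one must check the co-orientations agree and use a Moser-type argument to identify the two collar forms --- and (ii) ensuring the capped-off manifold is the standard smooth $\R^4$, so that Gromov's theorem applies verbatim; this is where one uses that a compact domain in $\R^4$ with embedded $S^3$-boundary is suitably standard. One can instead route the last step through Corollary~\ref{cor:exotic}, provided the cobordism $X$ has first been normalized to the product $[1,2]\times S^3$. I expect (ii), together with the bookkeeping of co-orientations in (i), to be the only real difficulty; everything else is formal.
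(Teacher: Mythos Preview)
The paper offers no proof, treating the corollary as immediate; your plan is the natural argument, and the stable-homotopy half is correct as written.

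For non-embeddability there is a real gap in your handling of obstacle~(ii). The assertion that the capped-off manifold $V = B^4\cup_{S^3}(\R^4\setminus\inn W)$ is diffeomorphic to $\R^4$ --- equivalently, that the compact domain $W\subset\R^4$ bounded by a smoothly embedded $S^3$ is a smooth $4$-ball --- is the $4$-dimensional smooth Schoenflies problem, which is \emph{open}. Without it you can neither invoke Gromov's uniqueness theorem (which is stated for the smooth manifold $\R^4$) nor normalise $X$ to a product $[1,2]\times S^3$ so as to quote Corollary~\ref{cor:exotic} verbatim.

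The fix is to bypass the identification $V\cong\R^4$ and use Gromov's exact-Lagrangian result directly. The glued manifold $V$ is exact symplectic, has $\pi_1(V)=0$ and $H_2(V)=0$ (Mayer--Vietoris plus Alexander duality applied to $W\subset\R^4$), and coincides with $(\R^4,\Om_\st)$ outside a compact set. Gromov's holomorphic-curve argument excluding closed exact Lagrangians in $\C^n$ uses only this convex-at-infinity structure together with symplectic asphericity; it runs on $V$ unchanged and contradicts the exact Lagrangian torus inherited from the exotic ball. Obstacle~(i) is genuine but standard: a relative Moser argument puts both collars into the symplectization form~\eqref{trivcob}, after first arranging the monotone homotopy $h_t$ in Proposition~\ref{prop:exotic} to be affine in $t$ near $t=1$ with slope a function $g$ satisfying~\eqref{eq:g}.
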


\begin{remark}\label{rem:exotic}
Note that this corollary does not contradict Lemma~\ref{lem:emb}
because the curve $h:[0,1]\to\C$ defining $\om$ does not remain in the
positive quadrant.  
\end{remark}

\begin{remark}
$(d\alpha_\st,\alpha_\st)$ and $(\om,\lambda)$ in
Proposition~\ref{prop:exotic} are candidates for stable Hamiltonian
structures that are stably homotopic and such that there exists no
weak cobordism from $\om$ to $d\alpha_\st$. However, the above
argument breaks down if we relax the assumption $\Om_{\{1\}\times
  S^3}=\om$ in Corollary~\ref{cor:exotic} to $\Om_{\{1\}\times S^3}\in 
D_\om^+$ because the foliated cohomology $H^2_\LL(S^3)$ for
$\LL=\ker(\om)$ is infinite dimensional
(Proposition~\ref{prop:folcoh3}), so we cannot apply 
Lemma~\ref{lem:gluing} to glue the cobordisms.  
\end{remark}

\subsection{Ambient homotopies and Rabinowitz Floer
  homology}\label{subsec:rfh} 

Any embedding $\psi:M\into W$ of a hypersurface in a symplectic
manifold $(W,\Om)$ induces a Hamiltonian structure $\om=\psi^*\Om$ on
$M$, and any smooth isotopy of embeddings $\psi_t:M\into W$ induces an
homotopy of HS $\om_t=\psi_t^*\Om$. We call such a homotopy
$\om_t$ an {\em ambient homotopy}. 
Corollary~\ref{cor:exotic2} shows that not every homotopy
of $\om=\psi^*\Om$ can be realized by an ambient homotopy.

Now we turn to the question of ambient stable homotopies: Given two 
stable hypersurfaces $M_0,M_1$ in a symplectic manifold $(W,\Om)$, are
they stably homotopic? Under an additional technical hypothesis, this 
question has a negative answer and can be addressed by Rabinowitz
Floer homology~\cite{CFP}. We first give the relevant definitions. 

For the remainder of this subsection, let $(W,\Om)$ be a fixed
symplectic manifold of dimension $2n$. We assume that
$\Om|_{\pi_2(M)}=0$ and $(W,\Om)$ is convex at infinity or
geometrically bounded (see~\cite{CFP}). For a hypersurface $M\subset
W$ denote by $\RR(M)$ the space of periodic orbits on $M$ that
are contractible in $W$. Recall that to $\gamma\in\RR(M)$ and a 1-form
$\lambda$ on $M$ we associate the energies 
$$
   E_\Om(\gamma) = \int_D^2\bar\gamma^*\om,\qquad
   E_\lambda(\gamma) = \int_\gamma\lambda,
$$
where $\bar\gamma:D^2\to W$ with $\bar\gamma|_{\p D^2}=\gamma$. All
hypersurfaces $M\subset W$ are assumed to be closed and {\em
  separating}, i.e.~$W\subset M$ consists of two connected
components. 

\begin{definition}
A hypersurface $M$ in $(W,\Om)$ is
called {\em stable} if the HS $\Om|_M$ is stabilizable. A smooth
homotopy of hypersurfaces $(M_t)_{t\in[0,1]}$ in $W$ is called {\em
stable} if there exists a smooth homotopy of stabilizing 1-forms
$\lambda_t$ for $\Om|_{M_t}$. Recall that a stable hypersurface $M$ is
called {\em tame} if for some (and hence every) stabilizing 1-form
$\lambda$ there exists a constant $c_\lambda>0$ such that
$E_\lambda(\gamma)\leq c_\lambda|E_\Om(\gamma)|$ for all
$\gamma\in\RR(M)$. A stable homotopy $M_t$ is called {\em tame} if there
exists a smooth homotopy of stabilizing 1-forms $\lambda_t$ and a
constant $c$ such that $E_{\lambda_t}(\gamma)\leq
c|E_\Om(\gamma)|$ for all $\gamma\in\RR(M_t)$ and all $t\in[0,1]$.  
\end{definition}

{\em Rabinowitz Floer homology (RFH)}~\cite{CF} associates to every
(closed, separating) stable tame hypersurface $M\subset W$ a
$\Z_2$-vector space $RFH(M)$ which is invariant under tame stable
homotopies. Moreover, it has the following properties:
\begin{enumerate}
\item If $M$ is displaceable (by a Hamiltonian isotopy) then
  $RFH(M)=0$. 
\item If $RFH(M)=0$ then $M$ carries a periodic orbit
  contractible in $W$. 
\end{enumerate}
In particular, if $RFH(M_0)\neq RFH(M_1)$ for two tame stable
hypersurfaces, then they are not tame stably homotopic. Using this,
many examples of smoothly but not tame stably homotopic hypersurfaces
are constructed in~\cite{CFP}. Here we just give one example. 

\begin{example}[\cite{CFP} Theorem 1.6]\label{ex:Heisenberg}
Let $G$ be the 3-dimensional Heisenberg group of matrices
\[\left(\begin{array}{ccc}
1&x&z\\
0&1&y\\
0&0&1\\
\end{array}\right),\]
where $x,y,z\in \R$. The 1-form $\gamma:=dz-xdy$ is left-invariant and we
let $\sigma:=d\gamma$ be the exact magnetic field. If $\Gamma$ is a co-compact
lattice in $G$, $Q:=\Gamma\setminus G$ is a closed 3-manifold and $\sigma$
descends to an exact 2-form on $Q$. Equip the cotangent bundle
$\tau:T^*Q\to Q$ with the symplectic form $\Om = dp\wedge dq +
\tau^*\sigma$ and the left-invariant Hamiltonian  
\[H:=\frac{1}{2}\bigl(p_{x}^2+(p_{y}+xp_{z})^2+p_{z}^2\bigr).\]
Then each level set $M_t=H^{-1}(t)$ for $t\neq 1/2$ is
stable and tame. For $t>1/2$, $M_t$ has no contractible periodic
orbits and thus $RFH(M_t)\neq 0$. For $t<1/2$, $M_t$ is displaceable
and thus $RFH(M_t)=0$. Therefore, two level sets
$M_s,M_t$ with $s<1/2<t$ are smoothly homotopic and
tame stable, but not tame stably homotopic.  
\end{example}

\begin{remark}
In the preceding example, $t=1/2$ is the Ma\~n\'e critical value of
the Hamiltonian system defined by $\Om$ and $H$ (see~\cite{CFP}). It
appears to be a 
general feature (though not a proven theorem) that level sets below
and above the Ma\~n\'e critical value are not tame stably homotopic. 
\end{remark}

We expect that the hypersurfaces $M_s,M_t$ with $s<1/2<t$ in the
preceding example are not stably homotopic (i.e.~without the tameness
assumption), and more generally $\Om|_{M_s},\Om|_{M_t}$ are not stably
homotopic through SHS on the unit cotangent bundle $S^*Q$. This
appears to lie outside the scope of established techniques such as
RFH, but may be approached using stronger invariants such as
symplectic field theory discussed in the next subsection.

\subsection{Symplectic field theory}\label{subsec:sft}

The motivation for this paper came from {\em symplectic field theory
(SFT)} introduced in~\cite{EGH}. We conclude this section by
discussing the relevance of some of our results to the foundations of
SFT, and conversely, the relevance of SFT (once it is defined) for the
homotopy classification of stable Hamiltonian structures. 

We begin by formalizing the expected TQFT properties of SFT and how
they give rise to a homotopy invariant for stable Hamiltonian
structures. 

\begin{definition}
A {\em (homological) SFT functor} is a contravariant
functor $\FF$ from the category of SHS to a category $\CC$. Thus
$\FF$ associates 
\begin{itemize}
\item to every SHS $(M,\om,\lambda)$ an object $\FF(M,\om,\lambda)$ in
  $\CC$; 
\item to every symplectic cobordism $(W,\Om)$ a morphism
  $\FF(W,\Om)$ in $\CC$; 
\item to homotopic symplectic cobordisms the same morphism in $\CC$;  
\item to a trivial cobordism $\bigl([0,1]\times
  M,\om+d(f(t)\lambda\bigr))$ an isomorphism in $\CC$
\end{itemize}
such that the usual functoriality properties hold. 
\end{definition}

\begin{remark}\label{rem:category}
(a) We are a little sloppy when speaking about the ``category of
SHS''. For example, this category has no identity morphisms, and
morphisms should be concatenations of cobordisms rather that single
cobordisms. See~\cite{El-ICM} for a more thorough discussion. 

(b) For concreteness, we will assume that objects in $\CC$ are vector
spaces with some additional structure, and morphisms are linear maps
preserving this structure, so that a morphism is invertible iff it is
injective and surjective. This is the case for the algebraic
formulation of SFT in~\cite{CL}. 
\end{remark}

For two HS $\om_0,\om_1$ with the same kernel $\LL$ and a stabilizing
1-form $\lambda$ for $\LL$ we write $\om_0<_\lambda\om_1$ iff
$\om_1=\om_0+\tau d\lambda$ for a $\tau>0$ such that $\ker(\om_0+t
d\lambda)=\LL$ for all $t\in[0,\tau]$. Note that for fixed $\lambda$
this defines a partial ordering on HS with kernel $\LL$, and a
complete ordering on each equivalence class $C(\om,\lambda)$. 

\begin{prop}\label{prop:SFT-functor}
For an SFT functor $\FF$ and SHS $(\om_i,\lambda_i)$ on $M$ the
following holds. 

(a) For $\om_0<_\lambda\om_1$ there exist canonical isomorphisms
$$
   \psi_{(\om_0,\om_1;\lambda)}:\FF(\om_1,\lambda_1)\to
   \FF(\om_0,\lambda_0)
$$ 
such that for $\om_0<_\lambda\om_1<_\lambda\om_2$ we have 
$$
   \psi_{(\om_0,\om_1;\lambda)}\psi_{(\om_1,\om_2;\lambda)} =
   \psi_{(\om_0,\om_2;\lambda)}.  
$$
(b) Each stable homotopy $\gamma=\{\om_t,\lambda_t\}_{t\in[0,1]}$
induces a canonical isomorphism 
$$
   \phi_\gamma:\FF(\om_1,\lambda_1)\to
   \FF(\om_0,\lambda_0)
$$ 
such that for stable homotopies $\gamma$, $\gamma'$ with
$\gamma_1=\gamma_0'$ we have 
$$
   \phi_{\gamma}\phi_{\gamma'} = \phi_{\gamma\#\gamma'}. 
$$
\end{prop}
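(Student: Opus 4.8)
The plan is to manufacture both families of isomorphisms from the canonical symplectic cobordisms attached to (short) stable homotopies in Lemma~\ref{basicshort} and Proposition~\ref{maincob}, and to read off the stated compatibilities from the SFT functor axioms (homotopy invariance and functoriality of $\FF$, together with the isomorphism axiom for trivial cobordisms).

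For part (a) I would argue directly. Given $\om_0<_\lambda\om_1$, write $\om_1=\om_0+\tau d\lambda$; by definition $\ker(\om_0+td\lambda)=\ker(\om_0)$ for all $t\in[0,\tau]$, and since maximal nondegeneracy is an open condition the segment $[0,\tau]$ is in fact contained in the open interval $I_{(\om_0,\lambda)}$. Hence every strictly increasing $f\colon[0,1]\to I_{(\om_0,\lambda)}$ with $f(0)=0$, $f(1)=\tau$ produces a trivial cobordism $\Om_f:=\om_0+d\bigl(f(t)\lambda\bigr)$ on $[0,1]\times M$ between $(\om_0,\lambda)$ and $(\om_1,\lambda)$ (positivity is the computation preceding Lemma~\ref{basicshort}). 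I would then set $\psi_{(\om_0,\om_1;\lambda)}:=\FF(\Om_f)$; it is an isomorphism by the trivial‑cobordism axiom. Independence of $f$ is immediate: for two choices the convex combinations $f_s=(1-s)f_0+sf_1$ are again strictly increasing with values in the convex set $I_{(\om_0,\lambda)}$ and with the prescribed endpoints, so $\{\Om_{f_s}\}_{s\in[0,1]}$ is a homotopy of symplectic cobordisms rel ends and the homotopy axiom gives $\FF(\Om_{f_0})=\FF(\Om_{f_1})$. For the cocycle relation, when $\om_0<_\lambda\om_1<_\lambda\om_2$ with $\om_1=\om_0+\tau_1 d\lambda$, $\om_2=\om_1+\tau_2 d\lambda$, I would pick $f$ with $f(1/3)=\tau_1$ and $f(1)=\tau_1+\tau_2$; then $\Om_f$ is literally the concatenation of a trivial cobordism from $\om_0$ to $\om_1$ with one from $\om_1$ to $\om_2$, and functoriality of $\FF$ yields $\psi_{(\om_0,\om_1;\lambda)}\psi_{(\om_1,\om_2;\lambda)}=\psi_{(\om_0,\om_2;\lambda)}$. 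In particular all elements of a fixed class $C_{(\om,\lambda)}$, which is totally ordered by $<_\lambda$, have their $\FF$‑values canonically and compatibly identified.

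For part (b), given a stable homotopy $\gamma=\{\om_t,\lambda_t\}_{t\in[0,1]}$, the restriction property $L(\gamma|_{[a',b']})\le\frac{b'-a'}{b-a}L(\gamma)$ lets me choose a subdivision $0=t_0<\cdots<t_N=1$ with every piece $\gamma_i:=\gamma|_{[t_{i-1},t_i]}$ of length $<1/3$. To each $\gamma_i$, Lemma~\ref{basicshort} (or Proposition~\ref{maincob}) attaches a symplectic cobordism $C(\gamma_i)$ whose ends lie in $C_{(\om_{t_{i-1}},\lambda_{t_{i-1}})}$ and $C_{(\om_{t_i},\lambda_{t_i})}$; composing $\FF(C(\gamma_i))$ with the $\psi$‑identifications of part (a) along those $C$‑classes gives $\Phi_i\colon\FF(\om_{t_i},\lambda_{t_i})\to\FF(\om_{t_{i-1}},\lambda_{t_{i-1}})$, and I would define $\phi_\gamma:=\Phi_1\circ\cdots\circ\Phi_N$. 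Each $\Phi_i$ is an isomorphism: by Proposition~\ref{maincob} both $C(\gamma_i)\#C(\gamma_i^{-1})$ and $C(\gamma_i^{-1})\#C(\gamma_i)$ are homotopic to trivial cobordisms, so $\FF$ sends them to isomorphisms, forcing $\FF(C(\gamma_i))$ and hence $\Phi_i$ to be both injective and surjective, i.e.\ invertible (Remark~\ref{rem:category}(b)). Canonicity of $\phi_\gamma$ (independence of the subdivision) and the concatenation formula $\phi_\gamma\phi_{\gamma'}=\phi_{\gamma\#\gamma'}$ both reduce to the single local statement that refining one step $\gamma_i=\gamma_i'\#\gamma_i''$ into two short pieces replaces $\Phi_i$ by $\Phi_i'\Phi_i''$: I would reparametrize $\gamma_i$ so the splitting point sits at the centre (harmless, since by the ``path of homotopies gives homotopy of cobordisms'' clause of Lemma~\ref{basicshort} reparametrization does not change $\FF(C(\gamma_i))$), use the freedom in the auxiliary function $f$ of the construction to make the outgoing end of $C(\gamma_i')$ coincide with the incoming end of $C(\gamma_i'')$ inside the class $C_{(\om_{s},\lambda_{s})}$ at the junction, and observe that the glued cobordism is an admissible $C(\gamma_i)$, differing from the standard one by a homotopy of cobordisms whose ends slide within the $C$‑classes — a slide exactly absorbed by the cocycle relation for the $\psi$'s. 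Common refinements then give independence of the subdivision, and a subdivision of $\gamma\#\gamma'$ refining the junction gives the concatenation formula; the same argument shows $\phi_\gamma$ depends on $\gamma$ only up to homotopy rel endpoints and agrees with $\psi_{(\om_0,\om_1;\lambda)}$ on the straight‑line homotopy $\om_t=\om_0+t\tau d\lambda$, $\lambda_t\equiv\lambda$.

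I expect the main obstacle to be precisely this last local compatibility step. The cobordisms $C(\gamma_i)$ do not carry $\om_{t_i}$ itself at their ends but only some element of $C_{(\om_{t_i},\lambda_{t_i})}$, so gluing adjacent pieces and comparing with the cobordism of the union forces one to track these end shifts through the $\psi$‑isomorphisms of part (a) and to invoke their cocycle property; everything else — isomorphism of the $\Phi_i$, associativity, the concatenation formula — is a formal consequence of the functor axioms and the bookkeeping lemmas on lengths of homotopies.
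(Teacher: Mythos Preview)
Your proposal is correct and follows essentially the same strategy as the paper: trivial cobordisms and their convex homotopies for part (a), subdivision into short pieces plus Proposition~\ref{maincob} for part (b), with independence reduced via common refinement to the local case of splitting one short piece in two. The only packaging difference is that the paper, for a short $\gamma$, works directly with the middle third $W_2$ of the $[0,3]$-cobordism from Proposition~\ref{maincob} (so that $\FF(W_1W_2)$ and $\FF(W_2W_3)$ being isomorphisms forces $\FF(W_2)$ to be one), and for the local independence step simply observes that the linear function $f$ in Lemma~\ref{basicshort} vanishes at the midpoint, so after reparametrizing $\gamma=\gamma_1\#\gamma_2$ on $[0,2]$ the cobordism restricts to $\om_1$ exactly at $t=1$ and literally factors --- this is precisely your ``freedom in $f$ to make the ends coincide'' made explicit, and it spares you the end-sliding bookkeeping you anticipate as the main obstacle.
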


\begin{proof}
(a) For $\om_0<_\lambda\om_1$ there exists a trivial cobordism
from $\om_0$ to $\om_1$ which induces an isomorphism
$\psi_{(\om_0,\om_1;\lambda)}:\FF(\om_1,\lambda_1)\to
\FF(\om_0,\lambda_0)$. This isomorphism does not depend on the trivial
cobordism since any two trivial cobordisms from $\om_0$ to $\om_1$ are
homotopic. The composition property follows from
functoriality of $\FF$ and the fact that the composition of trivial
cobordisms for the same $\lambda$ is again a trivial cobordism. 

(b) Consider a stable homotopy
$\gamma=\{\om_t,\lambda_t\}_{t\in[0,1]}$. Assume first that
$L(\gamma)<1/3$ and let $([0,3]\times M,\Om)$ be the symplectic
cobordism provided by Proposition~\ref{maincob}. This is the
composition $W_1W_2W_3$ of three cobordisms from $\om_0^-$ to
$\om_1^-$ to $\om_0^+$ to $\om_1^+$, where $\om_i^\pm\in
C_{(\om_i,\lambda_i)}$. Without loss of generality we may assume
$\om_i^-<_{\lambda_i}\om_i<_{\lambda_i}\om_i^+$. Denote by $\FF(W_i)$
the induced morphisms. Since $W_1W_2$ and $W_2W_3$ are homotopic
to trivial cobordisms, the compositions
$\FF(W_1)\FF(W_2)$ and $\FF(W_2)\FF(W_3)$ are isomorphisms, hence
$$
   \FF(W_2):\FF(\om_0^+,\lambda_0)\to\FF(\om_1^-,\lambda_1)
$$
is an isomorphism (see Remark~\ref{rem:category}). Define the isomorphism
$$
   \phi_\gamma := \psi_{(\om_1^-,\om_1;\lambda_1)}^{-1} \FF(W_2)
   \psi_{(\om_0,\om_0^+;\lambda_0)}^{-1} : \FF(\om_0,\lambda_0)\to  
   \FF(\om_1,\lambda_1). 
$$
By the composition property in (a), this isomorphism does not depend
on the choice of $\om_0^+$ and $\om_1^-$ and is thus canonically
defined. 
 
Now let us drop the assumption $L(\gamma)<1/3$. By the (Restriction)
property in Section~\ref{subsec:shorth}, $\gamma$ can be written as a
concatenation $\gamma=\gamma_1\#\dots\#\gamma_N$ of homotopies of
length $L(\gamma_i)<1/3$. We define 
$$
   \phi_\gamma := \phi_{\gamma_1}\cdots\phi_{\gamma_N}:
   \FF(\om_0,\lambda_0)\to \FF(\om_1,\lambda_1)
$$
with the isomorphisms $\phi_{\gamma_i}$ defined above. We need to show
that this is independent of the decomposition of $\gamma$ into short
homotopies $\gamma_i$. After taking a common refinement of two
decompositions, this reduces to showing
$\phi_\gamma=\phi_{\gamma_1}\phi_{\gamma_2}$ for a homotopy
$\gamma=\gamma_1\#\gamma_2$ of length $L(\gamma)<1/3$. After
reparametrization, we may assume that
$\gamma=\{(\om_t,\lambda_t)\}_{t\in[0,2]}$ and $\gamma_1,\gamma_2$ are
the restrictions to the intervals $[0,1]$ and $[1,2]$,
respectively. By definition,
$$
   \phi_\gamma = \psi_{(\om_0^-,\om_0;\lambda_0)}^{-1} \FF(W)
   \psi_{(\om_2,\om_2^+;\lambda_2)}^{-1} : \FF(\om_2,\lambda_2)\to  
   \FF(\om_0,\lambda_0)
$$
for a cobordism $(W,\Om)$ from $\om_0^-$ to $\om_2^+$ provided by
Proposition~\ref{maincob}. Without loss of generality we may assume
that $\Om_{\{1\}\times M}=\om_1$, so $W=W_1W_2$ for cobordisms
$W_1$ from $\om_0^-$ to $\om_1$ and $W_2$ from $\om_1$ to
$\om_2^+$. Then we have 
$$
   \phi_{\gamma_1} = \psi_{(\om_0^-,\om_0;\lambda_0)}^{-1}
   \FF(W_1),\qquad 
   \phi_{\gamma_2} = \FF(W_2)\psi_{(\om_2^+,\om_2;\lambda_2)}^{-1},  
$$
which together with $\FF(W_1)\FF(W_2)=\FF(W)$ yields
$$
   \phi_{\gamma_1}\phi_{\gamma_2}  
   = \psi_{(\om_0^-,\om_0;\lambda_0)}^{-1}
   \FF(W_1)\FF(W_2)\psi_{(\om_2^+,\om_2;\lambda_2)}^{-1} =
   \phi_\gamma. 
$$
This proves that the isomorphism $\phi_\gamma$ is independent of the
decomposition of $\gamma$ into short homotopies and thus canonically
defined. The same argument also shows the composition property. 
\end{proof}

\begin{remark}
More generally, one could define a {\em homotopical} SFT functor from
the 2-category of SHS to a 2-category $\CC$, with homotopies of
cobordisms giving rise to homotopies in $\CC$ and a trivial cobordism
inducing a homotopy equivalence. Then the arguments in the proof of
Proposition~\ref{prop:SFT-functor} show that stable homotopies give rise
to homotopy equivalences.   
\end{remark}

Let us now discuss how $\SFT$ as introduced in~\cite{EGH} should give 
rise to an SFT functor in the sense above. For this, let us first
consider only SHS $(M,\om,\lambda)$ that are {\em Morse-Bott}. 

An $\R$-invariant almost complex structure $J$ on $\R\times M$ is
called {\em adjusted to $(\om,\lambda)$} if $J(\p_r)=R$, $J(\xi)=\xi$,
and $J|_\xi$ is {\em tamed} by $\om|_\xi$ in the sense that
$\om(v,Jv)>0$ for all $0\neq v\in\xi$. $\SFT$ associates to such $(M,J)$
an algebraic object (Poisson algebra, Weyl algebra etc, depending on
the version of SFT) $\SFT(M,J)$ by counting suitable holomorphic
curves in $(\R\times M,J)$. 

Next consider a symplectic cobordism $(W,\Om)$ with stable boundaries
$(M_\pm,\om_\pm=\Om|_{M_\pm},\lambda_\pm)$. Note that we have
canonical vector fields $X_\pm$ along $M_\pm$ defined by
$i_{X_\pm}\Om=\lambda_\pm$, so $X_+$ is outward pointing and $X_-$ is
inward pointing. An almost complex structure $J$ on $W$ is
called {\em adjusted to $(\Om,\lambda_\pm)$} if it is tamed by $\Om$
and $J(X_\pm)=R_\pm$, $J(\xi_\pm)=\xi_\pm$ along $M_\pm$. Denote by
$(\hat W,\hat J)$ the almost complex manifold obtained by gluing
semi-infinite collars $R_\pm\times M_\pm$ to $W$ along $M_\pm$ and
extending $J$ $\R$-invariantly by $J_\pm$ to the collars. $\SFT$
associates to such $(W,J)$ a morphism $\SFT(W,J)$ from $(M_+,J_+)$ to
$(M_-,J_-)$ by counting suitable holomorphic curves in $(\hat W,\hat
J)$. 

Composition of cobordisms induces composition of morphisms and
homotopies of $J_t$ adjusted to the same $(\Om,\lambda_\pm)$ give
rise to the same morphisms. Since the space of $J$ adjusted to
$(\Om,\lambda_\pm)$ is contractible, this implies that $\SFT$ is in
fact independent of the adjusted $J$, so objects and morphisms can be
written as $\SFT(M,\om,\lambda)$ and $\SFT(W,\Om)$. 

By construction, $\SFT$ will satisfy the first three axioms of an SFT
functor given above. For the last axiom, consider a trivial cobordism 
$\bigl(W=[0,1]\times M,\Om=\om+d(f(t)\lambda\bigr)$ between SHS
$\bigl(\om_0=\om+f(0)d\lambda,\lambda\bigr)$ and
$\bigl(\om_1=\om+f(1)d\lambda\bigr)$ on $M$. We wish to show that
$\SFT(W,\Om)$ is an isomorphism. After cutting $[0,1]$ into smaller
intervals and using functoriality, we may assume that the
$\om_t:=\Om|_{\{t\}\times M}$, $t\in[0,1]$, are arbitrarily
$C^1$-close. Since taming is an open condition, we then find an almost
complex structure $J$ on $M$ that is tamed by all the $\om_t$. For
this $t$-independent $J$ the morphism $\SFT(W,J)$ is the identity and
the last axiom follows. 

\begin{remark}\label{rem:SFT}
At the time of writing, $\SFT$ has not yet been rigorously defined.
The solution of the relevant transversality problems for holomorphic
curves is work in progress by Hofer, Wysocki and
Zehnder~\cite{HWZ}. Apart from these well-known problems, we wish to
point out another issue that needs to be addressed. A priori, the
construction of $\SFT$ only works for stable Hamiltonian structures
that are {\em Morse-Bott}. On the other hand, in the proof of homotopy
invariance in Proposition~\ref{prop:SFT-functor} we need to cut a stable
homotopy into short ones and we cannot guarantee that the SHS at which
we cut are Morse-Bott. So in order to define $\SFT$ as a homotopy
invariant of SHS, one needs to define $\SFT$ also for SHS that are not
Morse-Bott. Moreover, in view of Theorem~\ref{thm:main} one cannot
approximate a given SHS by {\em Morse} ones. The counterexample in
Theorem~\ref{thm:main} is still Morse-Bott, which would suffice to
define SFT, but it appears hopeless to try to prove that any SHS can be
approximated by Morse-Bott ones. One way out may be perturbing the
holomorphic curve equation used in $\SFT$ by a suitable Hamiltonian
term. 
\end{remark}

{\bf Dimension 3. } 
In dimension three, the problem in Remark~\ref{rem:SFT} can be
overcome due to Theorem~\ref{thm:Morse-Bott} and its following
consequence. 
 
\begin{cor}\label{cor:Morse-Bott}
Let $M$ be a closed oriented 3-manifold $M$. Then any stable homotopy
$\gamma$ between Morse-Bott SHS on $M$ can be written as a concatenation
$\gamma=\gamma_1\#\dots\#\gamma_N$ of stable homotopies $\gamma_i$
between Morse-Bott SHS of length $L(\gamma_i)<1/3$, $i=1,\dots,N$. 
\end{cor}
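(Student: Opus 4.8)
The plan is to begin from the decomposition provided by the (Restriction) property of Section~\ref{subsec:shorth}: choose $0=t_0<t_1<\dots<t_N=1$ so that each $\delta_i:=\gamma|_{[t_{i-1},t_i]}$ has length $L(\delta_i)<1/6$. The only defect of this decomposition is that the intermediate stable Hamiltonian structures $(\om_{t_i},\lambda_{t_i})$, $i=1,\dots,N-1$, need not be Morse-Bott. To cure this I would, at each interior cutting time $t_i$, invoke Theorem~\ref{thm:Morse-Bott} to obtain a stable homotopy $\beta_i$ from $(\om_{t_i},\lambda_{t_i})$ to a Morse-Bott SHS $\beta_i(1)$, with $\|\beta_i\|_{C^1}$ as small as I wish, and then splice the null loop $\beta_i\#\beta_i^{-1}$ into $\gamma$ at $t_i$ (reparametrizing to keep the homotopy smooth, at negligible cost in length) so that the modified homotopy passes through the Morse-Bott SHS $\beta_i(1)$ at time $t_i$. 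Setting $\beta_0$ and $\beta_N$ to be the constant homotopies at the endpoints $(\om_0,\lambda_0)$ and $(\om_1,\lambda_1)$, which are Morse-Bott by hypothesis, the $i$-th piece of the modified homotopy is $\gamma_i:=\beta_{i-1}^{-1}\#\delta_i\#\beta_i$, a stable homotopy between the Morse-Bott structures $\beta_{i-1}(1)$ and $\beta_i(1)$; and $\gamma=\gamma_1\#\dots\#\gamma_N$ up to the inserted null loops $\beta_i\#\beta_i^{-1}$, each of which is contractible rel endpoints, so that $\gamma_1\#\dots\#\gamma_N$ is homotopic to $\gamma$ relative to its endpoints.

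The remaining work is the bound $L(\gamma_i)<1/3$, and here Lemma~\ref{lem:concat} is exactly the right tool. Appending $\beta_i$ on the right: $\delta_i$ ends at $(\om_{t_i},\lambda_{t_i})$ and $\beta_i$ starts there and is $C^1$-small, so Lemma~\ref{lem:concat} gives $L(\delta_i\#\beta_i)\le L(\delta_i)+\eps$ once $\|\beta_i\|_{C^1}$ is small enough. Prepending $\beta_{i-1}^{-1}$ on the left does not fit Lemma~\ref{lem:concat} directly, so I would pass to the reverse: by the (Reversal) property $L(\gamma_i)=L(\gamma_i^{-1})$ and $\gamma_i^{-1}=(\delta_i\#\beta_i)^{-1}\#\beta_{i-1}$, whose left factor ends at $(\om_{t_{i-1}},\lambda_{t_{i-1}})$ --- the starting structure of $\delta_i$ --- while $\beta_{i-1}$ starts there and is $C^1$-small, so a second application of Lemma~\ref{lem:concat} bounds $L(\gamma_i)\le L(\delta_i\#\beta_i)+\eps\le L(\delta_i)+2\eps$. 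Choosing $\eps<1/12$ and noting that only the finitely many structures $(\om_{t_i},\lambda_{t_i})$, $i=0,\dots,N$, enter these estimates, a single sufficiently $C^1$-small choice of all the $\beta_i$ satisfies the finitely many smallness requirements at once and yields $L(\gamma_i)<1/6+2\eps<1/3$.

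I do not anticipate any serious difficulty: the two genuine ingredients, Theorem~\ref{thm:Morse-Bott} (to move the cutting points onto Morse-Bott structures) and Lemma~\ref{lem:concat} (to absorb the small detours $\beta_i^{\pm1}$ into the length budget), are already available, and the rest is bookkeeping. The one point that must be handled with care is that concatenation satisfies no triangle inequality in general --- only $L(\gamma\#\gamma')\ge L(\gamma)+L(\gamma')$ --- which is precisely why both the right splice and (via reversal) the left splice must be controlled through Lemma~\ref{lem:concat} rather than naively, and is the reason the $C^1$-smallness of the $\beta_i$ in Theorem~\ref{thm:Morse-Bott} is essential here. A minor cosmetic matter is that, strictly, $\gamma$ equals $\gamma_1\#\dots\#\gamma_N$ only after the null-loop insertions, i.e.~up to homotopy rel endpoints; this is all that is needed in the proof of Proposition~\ref{prop:SFT-functor} (cf.~Remark~\ref{rem:SFT}), where one may work with the modified homotopy throughout.
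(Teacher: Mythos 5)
Your proposal is correct and follows essentially the same route as the paper's proof: cut $\gamma$ into short pieces via the (Restriction) property, move the interior cut points onto Morse-Bott structures using the $C^1$-small homotopies from Theorem~\ref{thm:Morse-Bott}, and absorb these detours into the length budget with Lemma~\ref{lem:concat}. In fact your write-up is slightly more careful than the paper's on two points it glosses over — using the (Reversal) property to make Lemma~\ref{lem:concat} applicable to the left-hand splice, and noting that the resulting concatenation agrees with $\gamma$ only up to the inserted null loops $\beta_i\#\beta_i^{-1}$.
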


\begin{proof}
By the (Restriction) property in Section~\ref{subsec:shorth}, $\gamma$
can be written as a concatenation
$\gamma=\bar\gamma_1\#\dots\#\bar\gamma_N$ of homotopies of length
$L(\bar\gamma_i)<1/3$. Denote the end points of $\gamma_i$ by
$(\om_{i-1},\lambda_{i-1})$ and $(\om_i,\lambda_i)$. Thus
$(\om_0,\lambda_0)$ and $(\om_N,\lambda_N)$ are Morse-Bott but the
other ones need not be. Since the length in invariant under linear
rescaling, we may assume that each $\gamma_i$ is parametrized over an
interval of length 1. 

According to Theorem~\ref{thm:Morse-Bott}, there exists for each
$i=1,\dots,N=1$ a stable homotopy $\delta_i$ (parametrized over
$[0,1]$) from $(\om_i,\lambda_i)$ to a Morse-Bott SHS with arbitrarily
small $\|\delta_i\|_{C^1}$. We let $\delta_0=(\om_0,\lambda_0)$ and
$\delta_N=(\om_N,\lambda_N)$ be the constant homotopies. Applying
Lemma~\ref{lem:concat} twice for each $i=1,\dots,N$, we can thus
achieve that each concatenation
$\gamma_i:=\delta_{i-1}^{-1}\#\bar\gamma_i\#\delta_i$ has length
$L(\gamma_i)<1/3$. Since the end points of each $\gamma_i$ are
Morse-Bott, this concludes the proof. 
\end{proof}

Using Corollary~\ref{cor:Morse-Bott}, the proof of
Proposition~\ref{prop:SFT-functor} can be carried out using in the
full subcategory of Morse-Bott SHS to obtain an invariant of
Morse-Bott SHS up to stable homotopies (through SHS that need not
be Morse-Bott). Since every SHS is stably homotopic to a Morse-Bott
one, this invariant extends to all SHS and we have shown:

\begin{thm}\label{thm:sft}
Suppose transversality for holomorphic curves can be achieved. Then $\SFT$
in~\cite{EGH} gives rise to a homotopy invariant of SHS in dimension 3. 
\end{thm}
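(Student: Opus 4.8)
The plan is to combine the formal TQFT package of Proposition~\ref{prop:SFT-functor} with the dimension-three Morse-Bott surgery results of Section~\ref{subsec:Morse-Bott}, using Corollary~\ref{cor:Morse-Bott} as the bridge. First I would invoke the sketch of $\SFT$ given in the paragraphs preceding Remark~\ref{rem:SFT}: granting transversality for holomorphic curves, counting holomorphic curves in symplectizations $(\R\times M,J)$ of Morse-Bott SHS produces objects $\SFT(M,\om,\lambda)$, and counting holomorphic curves in completions $(\hat W,\hat J)$ of symplectic cobordisms between Morse-Bott stable boundaries produces morphisms $\SFT(W,\Om)$. Composition of cobordisms induces composition of morphisms, homotopic cobordisms induce the same morphism (the space of adjusted $J$ being contractible), and a trivial cobordism $\bigl([0,1]\times M,\om+d(f(t)\lambda)\bigr)$ induces an isomorphism: cut $[0,1]$ into short subintervals, use openness of the taming condition to find a $t$-independent $J$ tamed by all the $\om_t$ on each piece, for which $\SFT$ of the piece is the identity, and compose. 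Thus $\SFT$ restricted to the full subcategory of Morse-Bott SHS satisfies all four axioms of an SFT functor.

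Second, I would run the argument of Proposition~\ref{prop:SFT-functor} verbatim but inside the Morse-Bott subcategory. The one place in that proof where one leaves the given class of SHS is the decomposition of a stable homotopy $\gamma$ into short pieces $\gamma_1\#\cdots\#\gamma_N$ (via the (Restriction) property of Section~\ref{subsec:shorth}): the intermediate SHS $(\om_i,\lambda_i)$ at which one cuts need not be Morse-Bott, and $\SFT$ is only defined on Morse-Bott SHS. This is exactly what Corollary~\ref{cor:Morse-Bott} repairs in dimension three: any stable homotopy between Morse-Bott SHS on a closed oriented $3$-manifold $M$ can be written as a concatenation of short stable homotopies ($L(\gamma_i)<1/3$) whose endpoints are all Morse-Bott. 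With such a decomposition in hand, Proposition~\ref{maincob} gives for each $\gamma_i$ a strong bicobordism whose sub-compositions are homotopic to trivial cobordisms, so $\SFT$ assigns to $\gamma_i$ a canonical isomorphism $\phi_{\gamma_i}$ between the (now well-defined) Morse-Bott invariants of its endpoints, exactly as in part (b) of Proposition~\ref{prop:SFT-functor}. Independence of $\phi_\gamma:=\phi_{\gamma_1}\cdots\phi_{\gamma_N}$ of the chosen decomposition is proved as there, by passing to a common refinement of two Morse-Bott decompositions (again furnished by Corollary~\ref{cor:Morse-Bott}) and checking $\phi_\gamma=\phi_{\gamma_1}\phi_{\gamma_2}$ for $\gamma=\gamma_1\#\gamma_2$ short; the composition property $\phi_\gamma\phi_{\gamma'}=\phi_{\gamma\#\gamma'}$ follows the same way.

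Third, I would extend the invariant from Morse-Bott SHS to all SHS. By Theorem~\ref{thm:Morse-Bott} every SHS $(\om,\lambda)$ on a closed oriented $3$-manifold is connected by a $C^1$-small stable homotopy to a Morse-Bott SHS; choose one such Morse-Bott representative and set $\SFT(\om,\lambda)$ to be its Morse-Bott invariant. To see this is independent of all choices, given two Morse-Bott representatives connected to $(\om,\lambda)$ by stable homotopies $\gamma,\gamma'$, the concatenation $\gamma^{-1}\#\gamma'$ is a stable homotopy between Morse-Bott SHS, hence (by the previous paragraph) induces a canonical isomorphism between the two Morse-Bott invariants, and these identifications are compatible under further stable homotopies by the composition property. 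Thus one obtains, for every stable homotopy $\gamma$ between arbitrary SHS, a canonical isomorphism $\phi_\gamma$, depending only on the homotopy class of $\gamma$ rel endpoints and satisfying the composition property --- i.e.\ a homotopy invariant of SHS in dimension three.

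The main obstacle is the foundational status of $\SFT$ itself --- transversality for holomorphic curves --- which is precisely why the theorem is stated conditionally; this is the content of Remark~\ref{rem:SFT-intro} and Remark~\ref{rem:SFT}, and nothing in the present argument attempts to remove it. The second, genuinely mathematical, difficulty --- that one would need $\SFT$ defined for non-Morse-Bott SHS in order to cut a general stable homotopy into short pieces --- is exactly the obstacle that Corollary~\ref{cor:Morse-Bott} (itself resting on Theorem~\ref{thm:Morse-Bott} and the structure theorem) circumvents in dimension three; verifying that the proof of Proposition~\ref{prop:SFT-functor} goes through with ``short homotopy'' replaced by ``short homotopy with Morse-Bott endpoints'' is then essentially bookkeeping.
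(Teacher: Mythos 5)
Your proposal is correct and follows exactly the paper's route: establish the SFT-functor axioms for Morse-Bott SHS, rerun the proof of Proposition~\ref{prop:SFT-functor} inside the Morse-Bott subcategory using Corollary~\ref{cor:Morse-Bott} to arrange that all cut points of a short decomposition are Morse-Bott, and then extend to arbitrary SHS via Theorem~\ref{thm:Morse-Bott}. Your third paragraph spelling out well-definedness of the extension is slightly more detailed than the paper's one-line remark, but it is the same argument.
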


\begin{cor}\label{cor:sft}
Suppose transversality for holomorphic curves can be achieved. 
Then the SHS $(d\alpha_\st,\alpha_\st)$ and $(d\alpha_\ot,\alpha_\ot)$ on
$S^3$ are not stably homotopic, where $\alpha_\st$ is the standard
contact form and $\alpha_\ot$ is an overtwisted contact form defining
the same orientation. 
\end{cor}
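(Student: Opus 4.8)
The plan is to invoke Theorem~\ref{thm:sft}, which (under the stated transversality assumption) makes $\SFT$ into a homotopy invariant of SHS in dimension $3$. Thus it suffices to exhibit a version of SFT which distinguishes $(d\alpha_\st,\alpha_\st)$ from $(d\alpha_\ot,\alpha_\ot)$. The natural candidate is \emph{rational SFT} (or its simplest numerical shadow, contact homology, or even the purely algebraic count of the empty curve invariant / the full SFT potential). The key computational input is classical: for an overtwisted contact form, the presence of an overtwisted disk forces the Bishop family of holomorphic disks, and a standard argument (going back to Eliashberg--Gromov--Hofer, and worked out for SFT in~\cite{EGH,Ya}) shows that the SFT of an overtwisted contact $3$-manifold is trivial --- e.g.\ the contact homology algebra has $1=0$, equivalently rational SFT vanishes. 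On the other hand, for the standard tight $(S^3,\alpha_\st)$ the Reeb flow is the Hopf flow, all orbits are elliptic, and the relevant SFT invariant is well-known to be nontrivial (contact homology is nonzero, $1\neq 0$). Hence the two SHS have non-isomorphic SFT, so by Theorem~\ref{thm:sft} they are not stably homotopic.

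The steps, in order, would be: (1) Recall from Section~\ref{subsec:sft} that $\SFT$ is a homotopy invariant of SHS in dimension $3$ once transversality is available; in particular it is invariant under stable homotopies. (2) Recall that a contact form $\alpha$ gives the SHS $(d\alpha,\alpha)$ and that the SFT of this SHS agrees with the usual contact SFT of $(M,\ker\alpha)$. (3) Cite the vanishing of (rational) SFT / contact homology for overtwisted contact structures in dimension $3$: the overtwisted disk produces a holomorphic plane which kills the algebra. (4) Cite the non-vanishing of the same invariant for $(S^3,\alpha_\st)$. (5) Conclude that an isomorphism $\SFT(d\alpha_\st,\alpha_\st)\cong\SFT(d\alpha_\ot,\alpha_\ot)$ is impossible, so by (1) the two SHS are not stably homotopic.

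The main obstacle is entirely the transversality issue, which is why the statement is conditional --- but that is explicitly assumed, so within the logic of the paper there is no obstacle there. The one genuine content point to be careful about is that the vanishing/non-vanishing computations quoted in steps (3)--(4) are computations of \emph{contact} SFT, whereas Theorem~\ref{thm:sft} produces an invariant of SHS; one must note that for the specific SHS coming from contact forms these two coincide (this is built into the construction of $\SFT$ in Section~\ref{subsec:sft}, where $J$ adjusted to $(d\alpha,\alpha)$ is exactly an adjusted $J$ in the contact sense). A secondary subtlety is choosing a version of SFT for which both computations are clean: contact homology (with its $1=0$ criterion for overtwistedness) or rational SFT both work, and one should fix one and reference the standard computations rather than reproving them. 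With these caveats the argument is short and essentially a bookkeeping exercise combining Theorem~\ref{thm:sft} with established SFT computations.
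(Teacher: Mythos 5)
Your proposal is correct and follows essentially the same route as the paper: invoke Theorem~\ref{thm:sft} to make rational SFT a stable-homotopy invariant, then cite the vanishing of RSFT for the overtwisted form (as in~\cite{Ya,BN}) versus its non-vanishing for $\alpha_\st$ (where the paper notes all closed Reeb orbits have even degree, so the RSFT Hamiltonian vanishes and the invariant is the free Poisson algebra on the orbits). Your remark that contact SFT coincides with the SFT of the associated SHS is implicit in the paper's construction, exactly as you say.
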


\begin{proof}
By the assumption on transversality and Theorem~\ref{thm:sft}, rational
symplectic field theory $RSFT$ in~\cite{EGH} gives rise to a homotopy
invariant of SHS in dimension 3. 
It is well-known (see e.g.~\cite{Ya,BN}) that
$RFST(S^3,\alpha_\ot)=\{0\}$. On the other hand, for 
the standard contact form $\alpha_\st$ all closed Reeb orbits have
even degree, so the RSFT Hamiltonian (without differential forms) for
$\alpha_\st$ vanishes $RSFT(S^3,\alpha_\st)$ is the free Poisson
algebra generated by the closed Reeb orbits. Hence
$(d\alpha_\st,\alpha_\st)$ and $(d\alpha_\ot,\alpha_\ot)$ have
different RSFT and therefore are not homotopic. 
\end{proof}

\begin{remark}
$\SFT$ is certailny {\em not} invariant under {\em non-exact} stable
homotopies $(\om_t,\lambda_t)$, i.e.~with varying cohomology class
$\om_t$. For example, consider $T^3$ with coordinates $(x,y,z)$ and
the non-exact stable homotopy 
$$
   \om_t = dx\wedge dy + t\,dx\wedge dz,\qquad \lambda_t=dz. 
$$    
(This corresponds to the mapping torus of a rotation of $T^2$ by angle
$t$). Note that the Reeb vector field is $R_t=\p_z-t\p_y$. For $t=0$ all
Reeb orbits are closed and rational SFT is nontrivial. On the other hand,
for $t$ irrational there are no closed Reeb orbits and hence rational
SFT is trivial. 
\end{remark}


\end{document}